\DeclareMathOperator{\cHom}{\mathscr{H}\text{\kern -3pt {\calligra\large om}}\,}
\numberwithin{equation}{section}
\newtheorem{Proposition}[equation]{Proposition} 
\newtheorem{Lemma}[equation]{Lemma}
\newtheorem{Corollary}[equation]{Corollary}
\theoremstyle{definition}
\newtheorem{Remark}[equation]{Remark}
\newtheorem{Example}[equation]{Example}
\newtheorem{Definition}[equation]{Definition}
\numberwithin{figure}{section}
\def\cl{{\mathrm{cl}}}
\def\cn{\leq 0}
\newcommand{\A}{\mathbb{A}}
\newcommand{\C}{\mathbb{C}}
\renewcommand{\P}{\mathbb{P}}
\newcommand{\Q}{\mathbb{Q}}
\newcommand{\Z}{\mathbb{Z}}
\newcommand{\cF}{\mathcal{F}}
\newcommand{\cG}{\mathcal{G}}
\newcommand{\cH}{\mathcal{H}}
\newcommand{\cK}{\mathcal{K}}
\newcommand{\cN}{\mathcal{N}}
\newcommand{\cO}{\mathcal{O}}
\newcommand{\catC}{{\mathscr{C}}}
\newcommand{\catD}{{\mathscr{D}}}
\newcommand{\catE}{{\mathscr{E}}}
\newcommand{\bul}{\bullet}
\newcommand{\ga}{\gamma}
\newcommand{\eop}{\overline{e}}
\newcommand{\ot}{\otimes}
\newcommand{\wh}{\widehat}
\newcommand{\into}{\hookrightarrow}
\newcommand{\id}{{id}}
\newcommand{\hR}{{\mathcal{R}}}
\newcommand{\QCoh}{\mathrm{QCoh}}
\newcommand{\Coh}{\mathrm{Coh}}
\newcommand{\IndCoh}{\mathrm{IndCoh}}
\newcommand{\Fun}{\mathrm{Fun}}
\newcommand{\PreStkk}{\mathrm{PStk}_{\kk}}
\newcommand{\PreStkkleqn}{\mathrm{PStk}_{\kk, \leq n}}
\newcommand{\PreStkkconv}{\widehat{\mathrm{PStk}}_{\kk}}
\newcommand{\Stk}{\mathrm{Stk}}
\newcommand{\Stkk}{\mathrm{Stk}_\kk}
\newcommand{\Stkkleqn}{\mathrm{Stk}_{\kk, \leq n}}
\newcommand{\Stkkleqzero}{\mathrm{Stk}_{\kk, \leq 0}}
\newcommand{\Stkkconv}{\widehat{\mathrm{Stk}}_{\kk}}
\newcommand{\oneStkkconv}{1\text{-}\widehat{\mathrm{Stk}}_{\kk}}
\newcommand{\Spc}{\mathscr{S}}
\newcommand{\Fin}{\mathrm{Fin}}
\newcommand{\Ind}{\mathrm{Ind}}
\newcommand{\CAlg}{\mathrm{CAlg}}
\newcommand{\CAlgk}{\mathrm{CAlg}_\kk}
\newcommand{\CAlgkleqn}{\tau_{\leq n}\mathrm{CAlg}_{\kk}}
\newcommand{\CAlgkleqinfty}{\tau_{< \infty} \mathrm{CAlg}_{\kk}}
\newcommand{\CAlgvarleqn}[1]{\tau_{\leq n}\mathrm{CAlg}_{#1}}
\newcommand{\CAlgvarleqinfty}[1]{\tau_{< \infty}\mathrm{CAlg}_{#1}}
\newcommand{\Corr}{\mathrm{Corr}}
\newcommand{\CORR}{\mathrm{CORR}}
\newcommand{\pt}{\mathrm{pt}}
\newcommand{\op}{\mathrm{op}}
\newcommand{\Mod}{\mathrm{Mod}}
\newcommand{\kk}{k}
\newcommand{\indGStk}{\mathrm{indGStk}}
\newcommand{\indGStkk}{\mathrm{indGStk}_{\kk}}
\newcommand{\indGStkkreas}{\mathrm{indGStk}_{\kk}^{reas}}
\newcommand{\indGStkkcoh}{\mathrm{indGStk}_{\kk}^{coh}}
\newcommand{\indGStkkNoeth}{\mathrm{indGStk}_{\kk}^{Noeth}}
\newcommand{\indGStkkprop}{\indGStk_{\kk,\,prop}}
\newcommand{\indGStkkfcd}{\indGStk_{\kk,\,fcd}}
\newcommand{\indGStkkcl}{\indGStk_{\kk,\,cl}}
\newcommand{\indGStkkreasprop}{\indGStk_{\kk,\,prop}^{reas}}
\newcommand{\indGStkkreasclafp}{\indGStk_{\kk,\,cl,\,a\!f\!p}^{reas}}
\newcommand{\indGStkkcohclafp}{\indGStk_{\kk,\,cl,\,a\!f\!p}^{coh}}
\newcommand{\GStk}{\mathrm{GStk}}
\newcommand{\GStkk}{\mathrm{GStk}_{\kk}}
\newcommand{\GStkkplus}{\mathrm{GStk}_{\kk}^+}
\newcommand{\GStkkpluscoh}{\mathrm{GStk}_{\kk}^{coh,+}}
\newcommand{\GStkkplusprop}{\GStk_{\kk,\,prop}^+}
\newcommand{\GStkkpluscl}{\GStk_{\kk,\,cl}^+}
\newcommand{\GStkkcl}{\GStk_{\kk,\,cl}}
\newcommand{\GStkkplusclafp}{\GStk_{\kk,\,cl,\,a\!f\!p}^+}
\newcommand{\GStkkpluscohclafp}{\GStk_{\kk,\,cl,\,a\!f\!p}^{coh,+}}
\newcommand{\AlgSpk}{\mathrm{AlgSp}_{\kk}}
\newcommand{\PrSt}{\mathcal{P}\mathrm{r}^{\mathrm{St}}}
\newcommand{\LurieCathatinfty}{\widehat{\mathrm{Cat}}_{\infty}}
\newcommand{\Cathatinfty}{\mathrm{Cat}_{\infty}}
\newcommand{\Cathatinftysup}[1]{\mathrm{Cat}{}^{#1}_{\infty}}
\newcommand{\Cathat}{\mathrm{Cat}}
\newcommand{\Catinftytwo}{\mathrm{Cat}_{(\infty,2)}}
\newcommand{\Catinfty}{\mathrm{Cat}_{\infty}}
\newcommand{\CATinfty}{\mathrm{CAT}_{\infty}}
\newcommand{\LFun}{\mathrm{LFun}}
\newcommand{\al}{\alpha}
\newcommand{\be}{\beta}
\newcommand{\ep}{\epsilon}
\newcommand{\Affk}{\mathrm{Aff}_{\kk}}
\newcommand{\Affkftd}{\mathrm{Aff}_{\kk,ftd}}
\newcommand{\PrL}{\mathcal{P}\mathrm{r}^{\mathrm{L}}}
\newcommand{\PrR}{\mathcal{P}\mathrm{r}^{\mathrm{R}}}
\newcommand{\PrStk}{\mathcal{P}\mathrm{r}^{\mathrm{St}}_{\kk}}
\renewcommand{\Pr}{\mathcal{P}\mathrm{r}}
\newcommand{\Maps}{\mathrm{Maps}}
\newcommand{\Map}{\mathrm{Map}}
\newcommand{\Spec}{\mathrm{Spec}\,}
\newcommand{\clalg}[1]{H^0(#1)}
\newcommand{\whC}{\wh{\catC}}
\newcommand{\whD}{\wh{\catD}}
\newcommand{\wcC}{\wc{\catC}}
\newcommand{\wcD}{\wc{\catD}}
\newcommand{\LAd}{\mathrm{LAd}}
\newcommand{\RAd}{\mathrm{RAd}}
\renewcommand{\vert}{fwd}
\newcommand{\horiz}{bkwd}
\newcommand{\isom}{isom}
\newcommand{\PrStbacpl}{\mathcal{P}\mathrm{r}^{\mathrm{St,b}}_{\mathrm{acpl}}}
\newcommand{\PrStbcpl}{\mathcal{P}\mathrm{r}^{\mathrm{St,b}}_{\mathrm{cpl}}}
\newcommand{\PrStbrcpl}{\mathcal{P}\mathrm{r}^{\mathrm{St,b}}_{\mathrm{rcpl}}}
\newcommand{\PrSttexacpl}{\mathcal{P}\mathrm{r}^{\mathrm{St,t\text{-}ex}}_{\mathrm{acpl}}}
\newcommand{\PrSttexrcpl}{\mathcal{P}\mathrm{r}^{\mathrm{St,t\text{-}ex}}_{\mathrm{rcpl}}}
\newcommand{\Grothinftylexrcpl}{\mathrm{Groth}^{\mathrm{lex}}_{\infty}}
\DeclareFontFamily{U}{mathx}{\hyphenchar\font45}
\DeclareFontShape{U}{mathx}{m}{n}{
	<5> <6> <7> <8> <9> <10>
	<10.95> <12> <14.4> <17.28> <20.74> <24.88>
	mathx10
}{}
\DeclareSymbolFont{mathx}{U}{mathx}{m}{n}
\DeclareMathAccent{\widecheck}{0}{mathx}{"71}
\newcommand{\wc}{\widecheck}
\newcommand{\Sp}{\mathrm{Sp}}
\DeclareMathSymbol{\shortminus}{\mathbin}{AMSa}{"39}
\DeclareRobustCommand{\SkipTocEntry}[5]{}
\DeclareMathOperator*{\colim}{colim}
\newcommand{\congto}{\xrightarrow{\sim}}
\newcommand{\arrtip}{latex'}
\begin{document}
\title{Ind-geometric stacks}

\author[Sabin Cautis]{Sabin Cautis}
\address[Sabin Cautis]{University of British Columbia \\ Vancouver BC, Canada}
\email{cautis@math.ubc.ca}

\author[Harold Williams]{Harold Williams}
\address[Harold Williams]{University of Southern California \\ Los Angeles CA, USA}
\email{hwilliams@usc.edu}

\begin{abstract}
	We develop the theory of ind-geometric stacks, in particular their coherent and ind-coherent sheaf theory. This provides a convenient framework for working with equivariant sheaves on ind-schemes, especially in derived settings. Motivating examples include the coherent Satake category, the double affine Hecke category, and related categories in the theory of Coulomb branches. 
\end{abstract}

\maketitle

\setcounter{tocdepth}{1}

\tableofcontents

\section{Introduction}
\thispagestyle{empty}

This paper and \cite{CWtm} provide a collection of foundational results on coherent sheaf theory in infinite-dimensional derived algebraic geometry. These results are applied in \cite{CW2} to the study of Coulomb branches of 4d $\cN=2$ gauge theories, whose mathematical theory was pioneered by Braverman-Finkelberg-Nakajima. In \cite{BFN} these authors associate to a reductive group $G$ and representation $N$ an ind-scheme $\hR_{G,N}$ with an action of the jet group $G_\cO$. The relevant Coulomb branch is the affine variety whose coordinate ring is the equivariant $K$-theory $K^{G_\cO}(\hR_{G,N})$, equipped with a certain convolution product. In \cite{CW2} we study the derived category $\Coh^{G_\cO}(\hR_{G,N})$ of equivariant coherent sheaves and construct a nonstandard t-structure on it, which in turn equips $K^{G_\cO}(\hR_{G,N})$ with a canonical basis.

A central difficulty in carrying this out is that $\hR_{G,N}$ is an object of both non-Noetherian and derived algebraic geometry. 
The combination of these creates technical complications in coherent sheaf theory (as in Example \ref{ex:selfintAinfty}), and places $\Coh^{G_\cO}(\hR_{G,N})$ beyond the scope of most foundational treatments in the literature. Indeed, to our knowledge the only explicit accounts of equivariant coherent sheaf theory in this generality are \cite[Sec. 6]{Ras20} and \cite[Sec. 4]{Gai14}. However, our study of convolution requires a more comprehensive treatment of the different functorialities of coherent (and ind-coherent) sheaves than is provided by these references. 

For example, convolution in $\Coh^{G_\cO}(\hR_{G,N})$ involves the (derived) pullback of (bounded complexes of) coherent sheaves along certain morphisms of infinite Tor-dimension. We need to know that under applicable hypotheses such pullback commutes with pushforward, \mbox{!-pullback}, and sheaf Hom. In \cite[Sec. 6]{Ras20}, however, only a subset of these compatibilities are established and only for finite Tor-dimension pullbacks. Note that this is a purely infinite-type issue, since in finite type coherent sheaves \emph{only} admit pullback along morphisms of finite Tor-dimension. 

Roughly speaking, the present paper covers the needed foundational results which can be treated without coherence hypotheses (in the sense of relating to coherent rings), and \cite{CWtm} covers those which depend on these. For reasons indicated below, we find that the best language in which to formalize these results is that of ind-geometric stacks, a class of objects which includes the quotient $\hR_{G,N}/G_\cO$. Before turning to sheaf theory, a more basic task of the present paper is thus to document the theory of ind-geometric stacks, synthesizing the theory of geometric stacks as treated in \cite[Ch. 9]{LurSAG} with that of (derived) ind-schemes as treated in \cite{GR14}. In the rest of the introduction we give more details first on the main technical themes developed in the paper, and then on the relationship of our work with the literature. 

\subsection{Technical overview}

In approaching $\Coh^{G_\cO}(\hR_{G,N})$, a first point is that the language of sheaves on stacks will ultimately serve us better than the language of equivariant sheaves.  We want the flexibility to discuss the stack $\hR_{G,N}/G_\cO$ independently of $\hR_{G,N}$ and $G_\cO$ for the same reasons we would want to discuss a smooth manifold independently of an atlas: many results are stated and proved most clearly in coordinate-free terms, and fixing coordinates can obscure symmetries. For example, the atlas $\hR_{G,N}$ obscures the convolution product on $\Coh^{G_\cO}(\hR_{G,N})$, whose existence becomes more manifest after identifying $\hR_{G,N}/G_\cO$ with the fiber product $N_\cO/G_\cO \times_{N_\cK/G_\cK} N_\cO/G_\cO$ of stacks. 

While one can develop a version of ind-coherent sheaf theory on an arbitrary stack (or prestack), it does not appear that outside finite type this can be done in a way which gives the version relevant to applications (Remark \ref{rem:IndCohnaive}). Instead, the natural scope of the theory we develop is that of ind-geometric stacks. We use the term geometric stack in the sense of \cite[Sec. 9]{LurSAG}, i.e. a geometric stack is an fpqc stack (on the site of nonpositively graded commutative dg algebras, if our ground ring is a field of characteristic zero) with affine diagonal and which admits a flat cover by an affine scheme. 

We caution that the term geometric stack is used flexibly in the literature \cite{Sim96,TV08}, and other variants are more natural in other contexts. Algebraic (i.e. Artin) stacks with affine diagonal are examples of geometric stacks in the above sense, but we avoid this term since the geometric stacks of interest to us are not algebraic (for example, $BG$ is algebraic since $G$ is smooth, but $BG_\cO$ is merely geometric since $G_\cO$ is merely pro-smooth). 

An ind-geometric stack is a filtered colimit of geometric stacks along closed immersions, though following the treatment of ind-schemes in \cite{GR14} this must be interpreted with care in derived settings (where convergence issues arise). While the ind-geometric stacks of interest to us can be expressed as quotients of ind-schemes, this perspective is technically inconvenient --- it is better to systematically treat such objects as direct limits of quotients of schemes, rather than as quotients of direct limits of schemes. This makes for a more compact overall theory, as the separate treatment of non-affine schemes becomes redundant once geometric stacks are treated. More pointedly, this avoids the need for multiple (ultimately redundant) renormalization/anticompletion steps in the treatment of ind-coherent sheaves. 

After setting up the needed geometric prerequisites in Sections \ref{sec:reasgeomstacks} and \ref{sec:indgeom}, we turn to our main topic, coherent and ind-coherent sheaf theory on ind-geometric stacks, in Section \ref{sec:cohandindcohsheaves}. Our applications require this theory to be developed without Noetherian hypotheses, and in fact it is convenient to develop it as far as possible without coherence hypotheses. This is because the category of coherent rings has poor formal properties, such as failing to be closed under tensor products \cite[Sec. 7.3.13]{Gla89}. With this in mind, for an arbitrary geometric stack $X$ we write $\Coh(X) \subset \QCoh(X)$ for the subcategory of bounded almost perfect objects (i.e. bounded pseudocoherent complexes when $X$ is a classical scheme \cite{Ill71}). When $X$ is ind-geometric but reasonable (i.e. it is a colimit along almost finitely presented closed immersions), $\Coh(X)$ is then defined as a colimit over reasonable geometric substacks. 

Our development of ind-coherent sheaf theory in this generality is based on the theory of anticomplete t-structures from \cite[Sec. C.5.5]{LurSAG}. That is, when $X$ is an arbitrary geometric stack we define $\IndCoh(X)$ as the (left) anticompletion of the category $\QCoh(X)$. This definition characterizes $\IndCoh(X)$ by a bounded, colimit-preserving functor $\IndCoh(X) \to \QCoh(X)$ satisfying a universal property. When $X$ is ind-geometric, $\IndCoh(X)$ is then defined as a colimit over closed geometric substacks. 

We caution that, while we have kept the terminology of \cite{Gai13a}, this notion of $\IndCoh(X)$ does not always coincide with the ind-completion of $\Coh(X)$ outside of finite type, and indeed this ind-completion can be poorly behaved in general. By Proposition \ref{prop:IndCohisIndofCoh} the two notions coincide when $X$ is coherent in the following sense. If $X$ is geometric, it is coherent if (1) there is a flat cover $\Spec A \to X$ with $A$ a coherent ring, and (2) the abelian category $\QCoh(X)^\heartsuit$ is compactly generated (i.e. there are ``enough'' coherent sheaves in the abelian sense). By \cite[Prop. 9.5.2.3]{LurSAG} and Lemma \ref{lem:affcgen} the second condition follows in particular if $X$ is admissible, meaning it admits an affine morphism to a geometric stack with a Noetherian flat cover. If $X$ is ind-geometric, it is coherent if it is reasonable and every reasonable geometric substack is coherent. 

A further caution is that $\IndCoh$ does not satisfy descent with respect to the kinds of quotients we are interested in. For example, though in \cite{CW2} we often write $\IndCoh^{G_\cO}(\hR_{G,N})$ for $\IndCoh(\hR_{G,N}/G_\cO)$, this category is not equivalent to the category $\IndCoh_{naive}^{G_\cO}(\hR_{G,N})$  of $G_\cO$-equivariant objects of $\IndCoh(\hR_{G,N})$. Indeed, this is already the case if we replace $\hR_{G,N}$ with a point, since $\IndCoh_{naive}^{G_\cO}(\pt) \cong \QCoh(BG_\cO)$ is not anticomplete (concretely, the structure sheaf is not compact in $\QCoh(BG_\cO)$). This is to be contrasted with the finite type situation, where e.g. $\IndCoh(BG) \cong \QCoh(BG)$ since algebraic groups are of finite cohomological dimension in characteristic zero. While we can recover $\IndCoh(\hR_{G,N}/G_\cO)$ from $\IndCoh_{naive}^{G_\cO}(\hR_{G,N})$ by anticompletion, this is needlessly circuitous: defining $\IndCoh(\hR_{G,N})$ already requires anticompleting $\QCoh$ of the closed subschemes of $\hR_{G,N}$ (and then taking a colimit), so one may as well directly construct $\IndCoh(\hR_{G,N}/G_\cO)$ by anticompleting $\QCoh$ of the closed substacks of $\hR_{G,N}/G_\cO$ (and then taking a colimit). 

Sections \ref{sec:upper!} and \ref{sec:sheafHom} study $!$-pullback and sheaf Hom for ind-coherent sheaves on ind-geometric stacks, in particular establishing their continuity properties and compatibility with pushforward and flat $*$-pullback. While ind-coherent sheaves do not have an internal tensor product in general, they do admit external tensor products. This is sufficient to have a useful notion of ind-coherent sheaf Hom, which satisfies various compatibilities with the usual quasi-coherent notion. We emphasize that even though coherent sheaves are our primary interest, it is necessary to introduce ind-coherent sheaves in order to define these adjoint functorialities. These are in turn necessary to study the duality properties of the convolution product on $\Coh^{G_\cO}(\hR_{G,N})$. 

\subsection{Relations to existing literature}\label{sec:literature}

Let us briefly survey the literature on which we build most directly. As already mentioned, our most immediate antecedent is \cite[Sec. 6]{Ras20}, which develops the ind-coherent sheaf theory of reasonable (dg) ind-schemes and renormalizable prestacks (flat quotients of ind-finite-type ind-schemes). The quotient $\hR_{G,N}/G_\cO$ is not renormalizable in the strict sense ($\hR_{G,N}$ is not ind-finite-type), but is close enough that the treatment of \cite[Sec. 6]{Ras20} largely applies. However, the additional functorialities needed for \cite{CW2} are more efficiently developed in the ind-geometric formalism (for reasons already alluded to, such as the absence of multiple renormalization steps). 

Looking further back, we build on the ind-coherent sheaf theory of (locally almost of) finite-type prestacks developed in \cite{Gai13a,GR17}, whose language we largely follow. These in turn build on the treatment of quasi-coherent sheaf theory in derived algebraic geometry in \cite{TV08} or \cite{LurSAG}. Some discussion of ind-coherent sheaves on infinite-type schemes appears in \cite[Sec. 2]{Gai13a} and of equivariant ind-coherent sheaves on placid ind-schemes in \cite[Sec. 4]{Gai14}, though the parts relevant to us are subsumed by \cite[Sec. 6]{Ras20} (in the language of \cite[Sec. 4]{Gai14}, it is ind-coherent *-sheaves which are of direct interest to us, rather than the dual theory of !-sheaves). 

The theory of ind-coherent sheaves on classical Noetherian schemes is considered from a different point of view (and with different terminology) in \cite{Kra05}, and our use of anticompletions implicitly prioritizes this point of view in generalizing to the non-Noetherian setting. That is, when $X$ is a classical geometric stack our $\IndCoh(X)$ is the enhanced homotopy category of injective complexes in $\QCoh(X)^\heartsuit$, and may not be compactly generated when $X$ is not locally Noetherian. 

In derived algebraic geometry the theory of ind-schemes is complicated by distinction between convergent and arbitrary stacks (i.e. stacks on the site of all commutative dg algebras or of those with bounded cohomology). These issues are treated in \cite{GR14}, with issues more specifically related to the derived notion of reasonableness treated in \cite{Ras20}. Our treatment of the corresponding ind-geometric issues largely follows this, though some deformation-theoretic tools are no longer available, and as needed we appeal to Tannaka duality (in the sense of \cite[Ch. 9]{LurSAG}) in their place. 

In the constructible setting, some themes parallel to ours are treated in \cite{BKV22}. A placid 1-geometric stack in the terminology of \cite{BKV22} is an example of classical geometric stack in the present terminology. We omit ``1-'' since unlike \cite{BKV22} we do not explicitly consider higher geometric stacks, while conversely \cite{BKV22} does not explicitly consider derived geometric stacks (constructible sheaves being insensitive to derived structures). Restricting to the placid setting is not needed for any constructions in the present paper, though in \cite{CWtm} we restrict to the related setting of tamely presented stacks. The notion of ind-geometric stack plays a similar role to the notion of placidly stratified stack in \cite{BKV22}, i.e. formalizing a class of stacks built from geometric ones. While we do not elaborate on this here, ind-geometric stacks also provide a convenient framework for treating the functorialities of ind-constructible sheaves in this generality (these being renormalizations of the ``locally ind-constructible'' presentable sheaf categories considered in \cite{BKV22}). 

Finally, let us mention the treatment of equivariant coherent sheaves on classical, coherent ind-schemes in \cite[Sec. 1]{VV10}, which is motivated by similar applications (the spherical DAHA being a particular example of a quantized K-theoretic Coulomb branch). On the other hand, while the categories of $G_\cO$-equivariant coherent sheaves on $\hR_{G,N}$ and its underlying classical ind-scheme $\hR_{G,N}^{\cl}$ have the same Grothendieck group, they are not themselves equivalent, and for the constructions of \cite{CW2} it is not sufficient to work with~$\hR_{G,N}^{\cl}$. 

\addtocontents{toc}{\SkipTocEntry}
\subsection*{Acknowledgements}
We are deeply grateful to Sam Raskin, Hiro Lee Tanaka, Aaron Mazel-Gee, and Chang-Yeon Chough for taking the time to discuss numerous technical issues that arose in the preparation of this paper and its companions \cite{CWtm, CW2}. S.C. was supported by NSERC Discovery Grant 2019-03961 and H. W. was supported by NSF grants DMS-1801969 and DMS-2143922. 

\section{Conventions}\label{sec:convnot}

We collect here our notational and terminological conventions. Our default references for categorical and geometric background are \cite{LurHTT,LurHA,LurSAG}, and we follow their conventions up to a few exceptions noted below. 

\begin{itemize}
\item We use the terms category and $\infty$-category interchangeably, and say ordinary category when we specifically mean a category in the traditional sense. We write $\Map_\catC(X,Y)$ for the mapping space between $X, Y \in \catC$, and regard ordinary categories as $\infty$-categories with discrete mapping spaces. 

\item We write $\CAlg(\catC)$ for the category of commutative algebra objects of a monoidal category $\catC$. We write $\CAlg$ for $\CAlg(\Sp^{cn})$, where $\Sp^{cn}$ is the category of connective spectra (this would be $\CAlg^{cn}$ in \cite{LurSAG}). 

\item We fix once and for all a Noetherian base $\kk \in \CAlg$.

\item We use cohomological indexing for t-structures. If $\catC$ has a t-structure $(\catC^{\leq 0}, \catC^{\geq 0})$ with heart $\catC^\heartsuit$ we write $\tau^{\leq n} : \catC \to \catC^{\leq n}$, $H^n: \catC \to \catC^\heartsuit$, etc., for the associated functors. In this notation, the condition that $\kk$ is Noetherian is the condition that $H^0(\kk)$ is an ordinary Noetherian ring and $H^n(\kk)$ is finitely generated over $H^0(\kk)$ for all $n < 0$. We use the terms left bounded and right bounded interchangeably with (cohomologically) bounded below and bounded above. 

\item We write $\tau_{\leq n} \catD$ for the subcategory of $n$-truncated objects in an $\infty$-category $\catD$. In particular, $\tau_{\leq 0} \CAlg$ is the ordinary category of ordinary commutative rings. Note the distinction between subscripts and superscripts in this and the previous convention, e.g. $\tau_{\leq n}(\catC^{\leq 0})$ and $\catC^{[-n, 0]}$ refer to the same subcategory of $\catC$. 

\item Given $A \in \CAlg$, we write $\Mod_A$ for the category of $A$-modules (i.e. $A$-module objects in the category of spectra). If $A$ is an ordinary ring this is the (enhanced) unbounded derived category of ordinary $A$-modules (i.e. of $\Mod_A^{\heartsuit}$).   

\item An $A$-module $M$ is coherent if it is bounded and almost perfect (i.e. $\tau^{\geq n}M$ is compact in $\Mod_A^{\geq n}$ for all $n$). If $A$ is coherent (i.e. $\clalg{A}$ is a coherent ordinary ring and $H^n(A)$ is a finitely presented $\clalg{A}$-module for all $n$), then $M$ is coherent if and only if it is bounded and $H^n(M)$ is a finitely presented $\clalg{A}$-module for all $n$. We write $\Coh_A \subset \Mod_A$ for the full subcategory of coherent modules. 

\item Given $A \in \CAlg$, we write $\CAlg_A := \CAlg_{A/} \cong \CAlg(\Mod_A^{\leq 0})$, and write $\CAlgvarleqinfty{A} := \cup_n \CAlgvarleqn{A}$ for the subcategory of truncated $A$-algebras (i.e. $n$-truncated for some~$n$). If $A$ is an ordinary ring containing $\Q$, then $\CAlg_A$ is equivalently the (enhanced homotopy) category of nonpositively graded commutative dg $A$-algebras, or of simplicial/animated commutative $A$-algebras.  

\item  We implicitly fix two universes and associated category sizes: small and large. We write $\Cathatinfty$ for the $\infty$-category of large $\infty$-categories (in \cite{LurHTT} this would be $\LurieCathatinfty$, and $\Cathatinfty$ would be its subcategory of small $\infty$-categories). We write $\PrL \subset \Cathatinfty$ for the subcategory of presentable $\infty$-categories and left adjoints, and $\PrSt \subset \PrL$ for the further subcategory of presentable stable $\infty$-categories. 

\item Given categories $\catC$ and $\catD$, we write both $\Fun(\catC, \catD)$ and $\catD^\catC$ for the category of functors from $\catC$ to $\catD$. 

\item All limit or colimit diagrams are implicitly small unless otherwise stated. Thus in ``let $X \cong \colim X_\al$ be a filtered colimit'' the indexing diagram is assumed to be small. By extension, $\Ind(\catC)$ will refer to the category freely generated by $\catC$ under small filtered colimits even if $\catC$ is large (as in \cite[Def. 21.1.2.5]{LurSAG}). 

\item If $\catC$ admits filtered colimits, a functor $F: \catC \to \catD$ is continuous if it preserves them. Suppose further that $\catC$, $\catD$ are presentable, stable, and equipped with t-structures that are compatible with filtered colimits, and that $F$ is exact. Then $F$ is almost continuous if its restriction to $\catC^{\geq n}$ is continuous for all $n$ (equivalently, for $n = 0$). 

\item A prestack (implicitly over $\Spec \kk$) is a functor from $\CAlgk$ to the category of (possibly large) spaces. We write $\PreStkk$ for the category of prestacks, and $\PreStkkconv$, $\PreStkkleqn$ for the variants with $\CAlgkleqinfty$, $\CAlgkleqn$  in place of $\CAlgk$. We write $\Spec: \CAlgk \to \PreStkk$ for the Yoneda embedding.  

\item A stack is a prestack which is a sheaf for the fpqc topology \cite[Prop. B.6.1.3]{LurSAG}. We write $\Stkk \subset \PreStkk$ for the category of stacks, and $\Stkkconv \subset \PreStkkconv$, $\Stkkleqn \subset \PreStkkleqn$ for its variants. (Note that $\CAlgkleqinfty$ does not admit arbitrary pushouts, but the use of \cite[Prop. A.3.2.1]{LurSAG} in defining the fpqc topology only requires closure under flat pushouts.)

\item If $\catC$ admits finite limits, we write $\Corr(\catC)$ for the $\infty$-category of correspondences in $\catC$ (e.g. \cite[Def. 3.3]{Bar13}, \cite[Sec. 7.1.2]{GR17}). This has the same objects as $\catC$, but a morphism from $X$ to $Z$ in $\Corr(\catC)$ is a diagram $X \xleftarrow{h} Y \xrightarrow{f} Z$ in $\catC$. 

\item Let $\vert$ and $\horiz$ be classes of morphisms in $\catC$ which contain all isomorphisms and are stable under composition, and under base change along each other. Suppose also that $\catC' \subset \catC$ is a full subcategory such that $Y \in \catC'$ whenever $h: Y \to X$ is in $\horiz$ and $X \in \catC'$. Then we write $\Corr(\catC')_{\vert,\horiz}$ for the 1-full subcategory of $\Corr(\catC)$ which only includes correspondences $X \xleftarrow{h} Y \xrightarrow{f} Z$ such that $h \in \horiz$, $f \in \vert$, and $X, Z \in \catC'$ (hence $Y \in \catC'$). Note that $\catC'$ need not be closed under arbitrary pullbacks. The subcategory $\Corr(\catC')_{\vert,\isom} \subset \Corr(\catC')_{\vert,\horiz}$ which only includes correspondences in which $h$ is an isomorphism is equivalent to the 1-full subcategory $\catC'_{\vert} \subset \catC'$, likewise for the subcategory where $f$ is an isomorphism and $\catC'^{\op}_{\horiz} \subset \catC'^{\op}$. 

\item We presume our constructions and results remain valid if we replace $\CAlg$ with the category $\CAlg^\Delta$ of simplicial/animated commutative rings. We work with $\CAlg$ mostly to make some references easier to pinpoint. However, we do appeal to Tannaka duality in the proofs of Propositions~\ref{prop:immisaffforgeo} and \ref{prop:gstkconvergent}, and we do not explicitly know how to adapt this to the simplicial setting. On the other hand, any derived prestack has an underlying spectral prestack, and by definition these share the same category of quasi-coherent sheaves. Since our focus is on sheaves, it is in this sense more natural to work in the spectral setting. This distinction is also irrelevant to our intended applications, in which our base is $\C$ and we have $\CAlg_\C \cong \CAlg_\C^\Delta$. 
\end{itemize}

\section{Geometric stacks}\label{sec:reasgeomstacks}

In this section we recall the basic theory of geometric stacks, and collect some results we will need in later sections. Most of these are extensions of documented results about Artin stacks, but which seem to lack references in the needed generality. In particular we collect the basic properties of the main classes of morphisms we will need: morphisms of finite Tor-dimension or finite cohomological dimension, proper morphisms, and closed immersions. Two important technical results are that geometric stacks are convergent (Proposition \ref{prop:gstkconvergent}) and are compact in the category of convergent 1-stacks (Proposition \ref{prop:oneStkkconvcompactness}). These will play an important role in the next section, but require a different approach than that used for Artin stacks in \cite[Prop. 3.4.4.9]{GR17}.

\subsection{Definitions}
Recall our convention that a stack refers to a functor $\CAlgk \to \Spc$ satisfying fpqc descent (here $\kk$ is our fixed Noetherian base), and that the category of stacks is denoted by $\Stkk$. Our terminology follows \cite[Ch. 9]{LurSAG} (up to the presence of the base $\kk$). 

\begin{Definition}\label{def:1}
	A stack $X$ is geometric if its diagonal $X \to X \times X$ is affine and there exists faithfully flat morphism $\Spec B \to X$ in $\Stkk$. A morphism $X \to Y$ in $\Stkk$ is geometric if for any morphism $\Spec A \to Y$, the fiber product $X \times_Y \Spec A$ is geometric. We write $\GStkk \subset \Stkk$ for the full subcategory of geometric stacks. 
\end{Definition}

Note here that products are taken in $\Stkk$, hence are implicitly over $\Spec \kk$. 
Also note that affineness of $X \to X \times X$ implies that any morphism $\Spec B \to X$ is affine. In particular, (faithful) flatness of such a morphism is defined by asking that its base change to any affine scheme is such. More generally, a morphism $X \to Y$ in $\GStkk$ is (faithfully) flat if its composition with any faithfully flat $\Spec A \to X$ is (faithfully) flat. A faithfully flat morphism of geometric stacks will also be called a flat cover.

\begin{Proposition}\label{prop:gstkprops}
	 Geometric morphisms are stable under composition and base change in~$\Stkk$. If $f: X \to Y$ is a morphism in $\Stkk$, then $f$ is geometric if $X$ and $Y$ are, and $X$ is geometric if $f$ and $Y$ are. In particular, $\GStkk$  is closed under fiber products in $\Stkk$. 
\end{Proposition}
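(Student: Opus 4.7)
The plan is to reduce everything to two elementary facts: that affine and (faithfully) flat morphisms are closed under composition and base change, and the \emph{graph argument} that if $f\colon U\to V$ and $g\colon V\to W$ have $g$ separated and $gf$ affine, then $f$ itself is affine (factor $f$ as the graph $U\to U\times_W V$ followed by projection, where the graph is a closed immersion since $g$ is separated, and the projection is a base change of $gf$). A preliminary lemma to extract is that for any geometric morphism $f\colon X\to Y$, the relative diagonal $\Delta_f\colon X\to X\times_Y X$ is affine. The main obstacle for this lemma is that $X$ and $Y$ need not themselves be geometric, only the base changes $X_A:=X\times_Y \Spec A$ for $\Spec A\to Y$. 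One shows it by testing affineness after pullback to $\Spec A\to Y$: the pulled-back map $X_A\to X_A\times_{\Spec A}X_A$ fits into a factorization $X_A\to X_A\times_{\Spec A}X_A\to X_A\times X_A$ whose composition is affine (the diagonal of the geometric stack $X_A$) and whose second factor is the base change of the closed immersion $\Spec A\to \Spec A\times \Spec A$, hence affine; the graph argument finishes.

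With this lemma in hand, base change stability of geometric morphisms is immediate from the definition. For composition, given geometric $f\colon X\to Y$ and $g\colon Y\to Z$ and $\Spec A\to Z$, set $Y_A:=Y\times_Z\Spec A$ and pick a flat cover $\Spec B\to Y_A$. Then $X\times_Y\Spec B$ is geometric (since $f$ is), admits a flat cover $\Spec C\to X\times_Y\Spec B$, and the composite $\Spec C\to X\times_Y Y_A=X\times_Z\Spec A$ is faithfully flat. The diagonal of $X\times_Z\Spec A$ factors as the base change of $\Delta_f$ followed by the base change of $\Delta_{Y_A}$, both affine, establishing geometricity.

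For the statement that $f\colon X\to Y$ is geometric when both $X$ and $Y$ are, fix $\Spec A\to Y$. A flat cover of $X\times_Y\Spec A$ is built by pulling back a flat cover $\Spec B\to X$: the product $\Spec B\times_Y\Spec A$ is affine because $\Delta_Y$ is affine (so $\Spec B\times_Y\Spec A\to \Spec B\times\Spec A$ is affine over an affine). For the diagonal, one shows $\Delta_{X/Y}$ is affine by the graph argument applied to $X\to X\times_Y X\to X\times X$ with $\Delta_X$ affine and the second factor the base change of the affine $\Delta_Y$; base-changing $\Delta_{X/Y}$ along $\Spec A\to Y$ and composing with the pullback of $\Delta_A$ gives the diagonal of $X\times_Y\Spec A$. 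Symmetrically, if $f$ and $Y$ are geometric, pick a flat cover $\Spec A\to Y$ and then a flat cover $\Spec B\to X\times_Y\Spec A$; the composite $\Spec B\to X$ is faithfully flat, and the diagonal of $X$ is affine because it factors through $X\times_Y X$ with the first factor the now-known-affine $\Delta_f$ and the second the pullback of the affine $\Delta_Y$. Finally, closure of $\GStkk$ under fiber products follows by combining the previous two statements: for $X\to Z\leftarrow Y$ in $\GStkk$, the map $X\to Z$ is geometric, so its base change $X\times_Z Y\to Y$ is geometric, and since $Y$ is geometric so is $X\times_Z Y$.
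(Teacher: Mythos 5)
Your proof is correct, but it takes a genuinely different route from the paper's. The paper does not argue directly at all: it invokes \cite[Prop.~9.3.1.2, Ex.~9.3.1.10]{LurSAG} for the absolute case over the sphere spectrum $S$, and then reduces the relative statement via the equivalence $\Stkk \cong \Stk_{/\Spec \kk}$ by checking that $\GStkk$ is the preimage of $\GStk$ under the forgetful functor. Flat covers visibly correspond, and the affineness of the two diagonals $X \to X \times_{\Spec \kk} X$ and $X \to X \times_{\Spec S} X$ is compared using the very same cancellation principle you rely on (if $g$ is affine, then $f$ is affine if and only if $g \circ f$ is), applied to $X \times_{\Spec \kk} X \to X \times_{\Spec S} X$, a base change of $\Spec \kk \to \Spec(\kk \otimes_S \kk)$. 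You instead reprove the content directly in the relative setting, with your preliminary lemma --- that the relative diagonal $\Delta_f$ of a geometric morphism is affine --- serving as the key technical input that the paper gets for free from the citation; your reduction of the affineness of $\Delta_f$ to the fibers (any test map $\Spec T \to X \times_Y X$ factors through $X_T \times_{\Spec T} X_T$) and the subsequent diagonal factorizations are all sound. Your approach is longer but self-contained and makes explicit exactly where each affine-diagonal hypothesis enters; the paper's is shorter but leans on Lurie and on a formal comparison of base rings.

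One wording correction is needed: in stating the graph argument you say the graph $U \to U \times_W V$ is a closed immersion because $g$ is separated, and then conclude $f$ is affine. In this spectral setting a closed immersion into a non-geometric stack need not be affine (the paper makes exactly this point just before Proposition \ref{prop:immisaffforgeo}), so separatedness is not the right hypothesis; what you need is that $g$ has \emph{affine diagonal}, so that the graph, being a base change of $\Delta_g$, is affine. This costs you nothing: in every instance where you invoke the cancellation, the morphism playing the role of $g$ is itself affine (a base change of $\Spec A \to \Spec A \times \Spec A$, of $\Delta_Y$, or of $\Delta_{Y_A}$), and affine morphisms have affine diagonal, so the argument goes through verbatim with that substitution.
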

\begin{proof}
Over the sphere spectrum $S$ this is \cite[Prop. 9.3.1.2, Ex. 9.3.1.10]{LurSAG}. Note that by \cite[Cor. 5.1.6.12]{LurHTT}, \cite[Prop. A.3.3.1]{LurSAG} we have an equivalence $\Stkk \cong \Stk_{/\Spec \kk}$, where $\Stk := \Stk_S$. It then suffices to show $\GStkk$ is the preimage of $\GStk$ under the forgetful functor to $\Stk$. 
Clearly $X \in \Stkk$ has a flat cover $\Spec A \to X$ in $\Stkk$ if and only if its image in $\Stk$ does. 
Now note that if $f: Y \to Z$, $g: Z \to W$ are morphisms in $\Stk$ and $g$ is affine, then $f$ is affine if and only if $g \circ f$ is (since any $\Spec B \to Z$ factors through $Z \times_W \Spec B \to Z$). The morphism $X \times_{\Spec \kk} X \to X \times_{\Spec S} X$ is affine since it is a base change of $\Spec \kk \to \Spec (\kk \ot_S \kk)$, hence $X \to X \times_{\Spec \kk} X$ is affine if and only if $X \to X \times_{\Spec S} X$ is. 
\end{proof}

Recall that an ordinary commutative ring $A$ is coherent if every finitely generated ideal is finitely presented. More generally, $A \in \CAlgk$ is coherent (resp. Noetherian) if $\clalg{A}$ is coherent (resp. Noetherian) and $H^n(A)$ is a finitely presented $\clalg{A}$-module for all $n$. 

\begin{Definition}\label{def:coherentGstack}
	A geometric stack $X$ is locally coherent (resp. locally Noetherian) if there exists a flat cover $\Spec A \to X$ such that $A$ is coherent (resp. Noetherian). It coherent if it is locally coherent and $\QCoh(X)^\heartsuit$ is compactly generated. 
\end{Definition}

A locally Noetherian geometric stack is coherent by \cite[Prop. 9.5.2.3]{LurSAG}. 

\subsection{Truncated and classical geometric stacks}
The definition of ind-geometric stack will involve the following class of geometric stacks. 

\begin{Definition}\label{def:truncgstack}
	A geometric stack $X$ is $n$-truncated if it admits a flat cover $\Spec A \to X$ such that $A$ is $n$-truncated. We say $X$ is classical if it is zero-truncated, and truncated if it is $n$-truncated for some $n$. We denote by $\GStkkplus \subset \GStkk$ the full subcategory of truncated geometric stacks.  
\end{Definition}

Alternatively, note that the restriction functor $(-)_{\leq n}: \PreStkk \to \PreStkkleqn$ takes $\Stkk$ to $\Stkkleqn$ \cite[Prop. A.3.3.1]{LurSAG}. Write $i_{\leq n}: \Stkkleqn \to \Stkk$ for the left adjoint of this restriction and $\tau_{\leq n}: \Stkk \to \Stkk$ for their composition. Then if $X$ is geometric, $\tau_{\leq n} X$ is an $n$-truncated geometric stack called the $n$-truncation of $X$, and $X$ is $n$-truncated if and only if the natural map $\tau_{\leq n} X \to X$ is an isomorphism \cite[Cor. 9.1.6.8, Prop. 9.1.6.9]{LurSAG}. 

In particular, if $\kk$ is a field and $X \in \Stkkleqzero$ is an ordinary algebraic variety, then $i_{\leq 0} X$ is a zero-truncated geometric stack. The functor $i_{\leq 0}: \Stkkleqzero \to \Stkk$ embeds the category of ordinary varieties (more generally, ordinary quasi-compact, semi-separated schemes, or quasi-compact Artin stacks with affine diagonal) as a full subcategory of $\GStkkplus$, and by default we will identify these categories with their images in $\GStkkplus$. 

Our terminology follows \cite[Def. 9.1.6.2]{LurSAG}, but we caution  that what we call $n$-truncatedness is called $n$-coconnectedness in \cite{GR17}. We also note that this use of the symbol $\tau_{\leq n}$ and of the term truncation are different from their usual meaning in terms of truncatedness of mapping spaces, but in practice no ambiguity will arise (and this abuse has the feature that $\tau_{\leq n} \Spec A \cong \Spec \tau_{\leq n} A$). 

\subsection{Coherent sheaves}
Recall that for any stack $X$, the category $\QCoh(X)$ of quasi-coherent sheaves on $X$ is the limit of the categories $\Mod_A$ over all maps $\Spec A \to X$. If $X$ is geometric $\QCoh(X)$ is presentable and is equivalent to the corresponding limit over the Cech nerve of any flat cover \cite[Prop. 9.1.3.1]{LurSAG}. We say an $A$-module $M$ is coherent if it is bounded and almost perfect (i.e. $\tau^{\geq n} M$ is compact in $\Mod_A^{\geq n}$ for all $n$). 

\begin{Definition}
	If $X \in \GStkkplus$, then $\cF \in \QCoh(X)$ is coherent if~$f^*(\cF)$ is a coherent $A$-module for some (equivalently, any) flat cover $\Spec A \to X$. We write $\Coh(X) \subset \QCoh(X)$ for the full subcategory of coherent sheaves. 
\end{Definition}

While the above definition makes sense when $X$ is not truncated, without additional hypotheses the resulting category $\Coh(X)$ may be degenerate (for example, it may contain no nonzero objects). It will be convenient to exclude such degenerate cases from our discussion, though our treatment of coherent sheaves on ind-geometric stacks will include well-behaved non-truncated geometric stacks within its scope. 

If $X$ is locally coherent, the standard t-structure on $\QCoh(X)$ restricts to one on $\Coh(X)$. If $X$ is coherent, it follows from \cite[Prop. 9.1.5.1]{LurSAG} that specifically $\QCoh(X)^{\heartsuit}$ is compactly generated by $\Coh(X)^\heartsuit$. If $X$ is zero-truncated but not locally coherent, our use of the term coherent sheaf corresponds to the notion of bounded pseudocoherent complex in \cite{Ill71}, see \cite[Rem. 2.8.4.6]{LurSAG}. 

Coherent sheaves have the following basic functoriality. A morphism $f: X \to Y$ in $\GStkk$ is of Tor-dimension~$\leq n$ if $f^*(\QCoh(Y)^{\geq 0}) \subset \QCoh(Y)^{\geq n}$, and is of finite Tor-dimension if it is of Tor-dimension~$\leq n$ for some $n$. We have the following variant of standard results. 

\begin{Proposition}\label{prop:ftdprops}
	Morphisms of Tor-dimension $\leq n$ are stable under base change in $\GStkk$, and morphisms of finite Tor-dimension are also stable under composition. A morphism $f: X \to Y$ of geometric stacks is of Tor-dimension $\leq n$ if and only if its base change along any given flat cover $h: \Spec A \to Y$ is. In this case $f^*: \QCoh(Y) \to \QCoh(X)$ takes $\Coh(Y)$ to $\Coh(X)$. 
\end{Proposition}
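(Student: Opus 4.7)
The plan is to prove the four sub-claims (base change stability of Tor-dimension $\leq n$, composition stability of finite Tor-dimension, the flat-cover criterion, and preservation of $\Coh$) in an order that extracts the flat-cover criterion as the main technical step. First I would record three preliminary facts, all consequences of the corresponding module-theoretic statements via flat descent: (P1) for any affine morphism $\pi$ in $\GStkk$, $\pi_*$ is $t$-exact and reflects the $t$-structure (so $\pi_* M \in \QCoh^{\geq k}$ iff $M \in \QCoh^{\geq k}$); (P2) the base change identity $g^* \pi_* \cong \pi'_* g'^*$ holds whenever $\pi$ is affine; (P3) flat pullback is $t$-exact, and faithfully flat pullback is additionally conservative. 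Recall that any morphism from an affine scheme to a geometric stack $Y$ is automatically affine by affineness of the diagonal of $Y$.

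Next I would establish the flat-cover criterion by working with the base change square
\[
\begin{array}{ccc}
X_A & \xrightarrow{\tilde h} & X \\
f_A \downarrow & & \downarrow f \\
\Spec A & \xrightarrow{h} & Y,
\end{array}
\]
in which $\tilde h$ is affine and (when $h$ is flat) faithfully flat. For the direction ``$f$ of Tor-dim $\leq n$ $\Rightarrow$ $f_A$ of Tor-dim $\leq n$,'' take $\cG \in \Mod_A^{\geq 0}$; by (P1) applied to $h$ we have $h_* \cG \in \QCoh(Y)^{\geq 0}$, so by hypothesis $f^* h_* \cG \in \QCoh(X)^{\geq n}$; by (P2) this equals $\tilde h_* f_A^* \cG$, and (P1) applied to $\tilde h$ then forces $f_A^* \cG \in \QCoh(X_A)^{\geq n}$. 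Notice this argument only used affineness of $h$, so it in fact shows that the base change of $f$ along any morphism from an affine scheme has Tor-dim $\leq n$. For the converse, take $\cF \in \QCoh(Y)^{\geq 0}$; by (P3) $h^* \cF \in \Mod_A^{\geq 0}$, so by hypothesis $f_A^* h^* \cF \in \QCoh(X_A)^{\geq n}$; identifying this with $\tilde h^* f^* \cF$ and applying (P3) to the faithfully flat cover $\tilde h$ yields $f^* \cF \in \QCoh(X)^{\geq n}$.

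From here the remaining claims follow. For general base change stability along $g \colon Y' \to Y$, a flat cover $\Spec A \to Y'$ composes to a morphism $\Spec A \to Y$, and the base change of $f$ along this composite has Tor-dim $\leq n$ by the affineness-only version above; then the flat-cover criterion applied on the $Y'$ side concludes. Composition is a shift argument: if $f$ has Tor-dim $\leq m$ and $g$ has Tor-dim $\leq n$, then for $\cF \in \QCoh(Z)^{\geq 0}$ one has $g^* \cF [-n] \in \QCoh(Y)^{\geq 0}$, so $f^* g^* \cF \in \QCoh(X)^{\geq m+n}$. For the last claim, if $\cF \in \Coh(Y) \subset \QCoh(Y)^{[a,b]}$, right $t$-exactness of $f^*$ bounds $f^* \cF$ above by $b$, and finite Tor-dim bounds it below by $a+n$; almost perfectness is preserved by any $f^*$ as a standard consequence of $f^*$ being symmetric monoidal and preserving colimits, yielding $f^* \cF \in \Coh(X)$.

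The main technical obstacle is the ``$f \Rightarrow f_A$'' direction of the flat-cover criterion, in which a conclusion on $X_A$ must be extracted from a statement on $X$; the crucial ingredient is that affine pushforward not merely preserves but \emph{reflects} the $t$-structure, so that no information is lost when passing from $\tilde h_* f_A^* \cG$ back to $f_A^* \cG$. Everything else either reduces to this step or is a direct shift computation.
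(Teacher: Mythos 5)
Your proposal is correct and follows essentially the same route as the paper's proof: the key identity $f^*h_* \cong h'_*f'^*$ for the affine morphism $h$ together with the fact that affine pushforward is $t$-exact and conservative (hence reflects coconnectivity) handles base change along any map from an affine scheme, faithfully flat pullback being conservative and $t$-exact gives the converse direction of the flat-cover criterion, and arbitrary base change along $Y' \to Y$ reduces to these by composing with a flat cover $\Spec B \to Y'$. Your treatment of composition (the shift argument, modulo the paper's sign convention for Tor-dimension) and of the $\Coh$-preservation claim (stability of almost perfect objects under pullback plus the boundedness estimate) matches what the paper leaves implicit.
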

\begin{proof}
	Stability under composition is immediate. Flat locality on the target follows since if $h'$ and $f'$ are defined by base change, then $h'^*$ is conservative, t-exact, and satisfies $h'^* f^* \cong f'^* h^*$. If $h: \Spec A \to Y$ is arbitrary, then $h'$ is affine, hence $h'_*$ is conservative, t-exact, and satisfies $f^*h_* \cong h'_* f'^*$ \cite[Prop. 9.1.5.7]{LurSAG}. Stability under base change along affine morphisms follows, and arbitrary base change now follows by composing an arbitrary $Y' \to Y$ with a flat cover $\Spec B \to Y'$. The last claim then follows since almost perfect modules are stable under extension of scalars. 
\end{proof}

\subsection{Pushforward and base change}\label{sec:geompropsofmors}
Given a morphism $f: X \to Y$ in $\Stkk$, the pushforward $f_*: \QCoh(X) \to \QCoh(Y)$ is defined as the right adjoint of $f^*$. In general $f_*$ is poorly behaved, but in the geometric case we have the following results. A different proof in a related context is sketched in \cite[Prop. 5.5.6]{LurDAG}, \cite[Lem. A.1.3]{HLP23}. The template used below will be used again in proving Propositions \ref{prop:upper!almostcont} and \ref{prop:cHomalmostcont}, and follows a parallel result about $!$-pullback in \cite[Prop. 6.4.1.4]{LurSAG}. Recall that $f_*$ being almost continuous means its restriction to $\QCoh(X)^{\geq n}$ is continuous for all $n$. 

\begin{Proposition}\label{prop:lower*almostcont}
If  $f: X \to Y$ is a morphism of geometric stacks, then $f_*: \QCoh(X) \to \QCoh(Y)$ is almost continuous. 
\end{Proposition}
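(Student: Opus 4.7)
The plan is to follow the template for the parallel statement about $!$-pullback in \cite[Prop. 6.4.1.4]{LurSAG}: reduce $f_*$ to a totalization of pushforwards between affine schemes, then exploit a cosimplicial approximation that is controlled by the t-structure.

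First I would reduce to the case $Y = \Spec A$ affine. Choose a flat cover $p: \Spec A \to Y$ and form the base change $X' := X \times_Y \Spec A$ with projections $q: X' \to X$ and $f': X' \to \Spec A$. Flat base change (a consequence of flat descent for $\QCoh$) gives $p^* f_* \cong f'_* q^*$; since $p^*$ is conservative and t-exact and $q^*$ is t-exact, the almost continuity of $f_*$ follows from that of $f'_*$. Next, with $Y$ affine, pick a flat cover $g: \Spec B \to X$ and let $g_n: \Spec B_n \to X$ denote the terms of its \v{C}ech nerve; each $g_n$ is affine (as $X$ has affine diagonal) and flat (as an iterated base change of $g$ composed with $g$). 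Flat descent \cite[Prop. 9.1.3.1]{LurSAG} gives $\cF \cong \Tot(g_{n,*} g_n^* \cF)$ for any $\cF \in \QCoh(X)$, and applying the right adjoint $f_*$ yields
\[
f_* \cF \;\cong\; \Tot\bigl(M^\bullet(\cF)\bigr), \qquad M^n(\cF) := (f g_n)_*\, g_n^* \cF.
\]
For each fixed $n$ the functor $M^n$ is t-exact (since $g_n$ is flat, and $f g_n$ is a morphism of affine schemes, so $(f g_n)_*$ is restriction of scalars) and preserves all colimits (for the same reasons), so $M^n$ sends $\QCoh(X)^{\geq m}$ into $\QCoh(Y)^{\geq m}$.

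Fix a filtered diagram $\{\cF_\al\}$ in $\QCoh(X)^{\geq m}$ with colimit $\cF$. To conclude, I must commute $\Tot$ with $\colim_\al$ for the cosimplicial object $M^\bullet(\cF_\al)$, whose terms lie uniformly in $\QCoh(Y)^{\geq m}$. Here I invoke the standard cosimplicial approximation: in a presentable stable category with t-structure compatible with filtered colimits, the fiber of the canonical map from $\Tot N^\bullet$ to the $k$-th partial totalization of $N^\bullet$ lies in degrees $\geq m + k + 1$ when each $N^n$ lies in degree $\geq m$ (via the Bousfield--Kan tower). Thus for each $j$ we get $\tau^{\leq j} \Tot N^\bullet \cong \tau^{\leq j} (\mathrm{partial\ Tot}_{\leq j - m}\, N^\bullet)$, and since partial totalizations are finite limits they commute with $\colim_\al$. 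Combined with the fact that $\tau^{\leq j}$ commutes with filtered colimits, this gives
\[
\tau^{\leq j} f_*\bigl(\colim_\al \cF_\al\bigr) \;\cong\; \tau^{\leq j} \colim_\al f_* \cF_\al
\]
for every $j$, yielding the desired equivalence before truncation. The main technical point to watch is this cosimplicial approximation step: it requires uniform t-connectivity of the $M^n(\cF_\al)$, which is what makes the t-exactness of both $g_n^*$ and $(f g_n)_*$ essential, and in turn forces the initial reduction to affine $Y$ together with the use of an honest flat (not merely faithfully flat) cover of $X$.
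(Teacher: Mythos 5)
Your proof is correct, but its engine is genuinely different from the paper's. For the affine-target case the paper (Lemma \ref{lem:lower*almostcontaffine}) gives a three-line argument from compact generation: $\QCoh(\Spec A)$ is compactly generated by perfect complexes, $\tau^{\geq n} f^*$ preserves compact objects, so its right adjoint, the restriction of $f_*$ to $\QCoh(X)^{\geq n}$, is continuous by \cite[Prop.~5.5.7.2]{LurHTT}. You instead resolve $\cF$ by the cobar construction of a flat affine cover of the \emph{source} and commute $\Tot$ past filtered colimits via the Bousfield--Kan connectivity estimate. That estimate is right: the codegeneracy splittings make the conormalized pieces retracts of the $N^n$, so the fiber of $\Tot_k \to \Tot_{k-1}$ lies in degrees $\geq m+k$, and your final passage from agreement on all $\tau^{\leq j}$ to an actual equivalence uses left completeness of the t-structure, which holds by \cite[Cor.~9.1.3.2]{LurSAG} (both sides lie in $\QCoh(Y)^{\geq m}$ by left t-exactness of $f_*$). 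Your route is heavier but, as you intended, closer to \cite[Prop.~6.4.1.4]{LurSAG}, and it has the virtue that the same Tot-plus-connectivity technique also \emph{proves} the base change you invoke; the paper's adjoint argument is shorter but more special.

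One step is under-justified, and it is exactly where the paper does its real work: flat base change $p^* f_* \cong f'_* q^*$ is not a formal ``consequence of flat descent,'' and the unbounded statement you wrote is not available in this generality --- the paper establishes it only on bounded-below objects (Lemma \ref{lem:lower*flatcoverbasechange}, Proposition \ref{prop:lower*ftdbasechange}), which suffices for you since all objects in play are in $\QCoh(X)^{\geq m}$. More seriously, the paper's proof of that lemma runs through the affine projection-formula map $\theta_M$ (Lemma \ref{lem:lower*affineftdbasechange}), whose proof invokes the affine-target almost continuity --- the very statement you are proving. Your argument escapes circularity only because your affine case is independent of any base change, so the logical order must be: affine case first, then base change, then reduction; you should say this explicitly rather than presenting the reduction as step one. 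Better still, the reduction is unnecessary: since $X \to Y$ has affine source terms, each $f g_n: \Spec B_n \to Y$ is affine even for non-affine geometric $Y$ (affineness of the diagonal), and affine pushforward is t-exact and continuous (Proposition \ref{prop:fcdprops}, or \cite[Prop.~9.1.5.7]{LurSAG}), so the functors $M^n = (f g_n)_* g_n^*$ retain every property you use and your totalization argument proves the proposition in one stroke, with no base change at all. Finally, your parenthetical ``honest flat (not merely faithfully flat)'' is backwards --- faithful flatness is the stronger condition, and it is what descent along the cover of $X$ requires.
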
 

\begin{Proposition}\label{prop:lower*ftdbasechange}
	Let the following be a Cartesian diagram of geometric stacks.
	\begin{equation*}
		\begin{tikzpicture}
			[baseline=(current  bounding  box.center),thick,>=\arrtip]
			\node (a) at (0,0) {$X'$};
			\node (b) at (3,0) {$Y'$};
			\node (c) at (0,-1.5) {$X$};
			\node (d) at (3,-1.5) {$Y$};
			\draw[->] (a) to node[above] {$f' $} (b);
			\draw[->] (b) to node[right] {$h $} (d);
			\draw[->] (a) to node[left] {$h' $}(c);
			\draw[->] (c) to node[above] {$f $} (d);
		\end{tikzpicture}
	\end{equation*}
	If $h$ is of finite Tor-dimension, then the Beck-Chevalley map $h^* f_*(\cG) \to f'_* h'^*(\cG)$ is an isomorphism for all $\cG \in \QCoh(X)^+$.  
\end{Proposition}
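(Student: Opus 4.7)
The plan is to reduce to bounded $\cG$ using almost continuity, then compute via a Cech cover of $X$ by an affine scheme, exploiting the fact that affine morphisms admit unrestricted base change. First I would reduce to the case $\cG \in \QCoh(X)^{[a,b]}$. Since $h$ is of finite Tor-dimension $d$, so is its base change $h'$ by Proposition \ref{prop:ftdprops}, so both $h^*$ and $h'^*$ have cohomological amplitude at most $d$. For $\cG \in \QCoh(X)^{\geq -N}$ I would write $\cG \cong \colim_n \tau^{\leq n}\cG$ as a filtered colimit in $\QCoh(X)^{\geq -N}$. Almost continuity of $f_*$ and $f'_*$ (Proposition \ref{prop:lower*almostcont}), combined with the fact that $h^*$ and $h'^*$ preserve colimits and are bounded, implies that both sides of the Beck-Chevalley map are the colimit of their values on the truncations $\tau^{\leq n}\cG$, reducing to the bounded case.

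Next I would pick a flat cover $p: U = \Spec B \to X$ and let $U^\bullet$ be its Cech nerve. Because $X$ has affine diagonal, each $U^n$ is affine and each $p^n: U^n \to X$ is affine and flat; because $Y$ also has affine diagonal, each composite $fp^n: U^n \to Y$ is affine. Flat descent (\cite[Prop.~9.1.3.1]{LurSAG}) gives $\cG \cong \Tot((p^\bullet)_*(p^\bullet)^*\cG)$, and since $f_*$ preserves limits this yields $f_*\cG \cong \Tot((fp^\bullet)_*(p^\bullet)^*\cG)$. Writing $q: U' := U \times_X X' \to X'$ for the base-changed cover, affine base change (\cite[Prop.~9.1.5.7]{LurSAG}) produces termwise isomorphisms $h^*(fp^n)_*(p^n)^*\cG \cong (f'q^n)_*(q^n)^* h'^*\cG$. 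Granting that $h^*$ commutes with the relevant totalization, descent applied to $h'^*\cG$ via $q$ on $X'$ then identifies the result with $f'_* h'^*\cG$, completing the argument.

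The main obstacle is showing that $h^*$ commutes with this totalization, since left adjoints do not preserve arbitrary limits. What saves us is uniform boundedness: because $\cG \in \QCoh(X)^{[a,b]}$ and both flat $*$-pullback and affine pushforward are t-exact, the cosimplicial diagram $M^\bullet := (fp^\bullet)_*(p^\bullet)^*\cG$ lies entirely in $\QCoh(Y)^{[a,b]}$. For such a uniformly bounded cosimplicial object the partial totalizations $\Tot_k M^\bullet$ are finite limits, and the tower $\{\Tot_k M^\bullet\}_k$ stabilizes in each fixed cohomological degree at a finite stage (by convergence of the associated Bousfield-Kan-type spectral sequence, which is concentrated in a bounded range). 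Since $h^*$ is exact and of bounded cohomological amplitude, it preserves both the finite limits $\Tot_k$ and this degreewise stabilization, giving $h^*\Tot(M^\bullet) \cong \Tot(h^*M^\bullet)$ for this specific diagram.
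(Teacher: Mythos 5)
Your proposal is correct, but it takes a genuinely different route from the paper. The paper covers the \emph{target}: it first proves the case where $Y$ and $Y'$ are affine by reducing, via conservativity of $h_*$, to the projection-formula map $f_*(\cG)\otimes M \to f_*(\cG \otimes f^*(M))$, and then writing the finite-Tor-dimension module $M$ as a filtered colimit of perfect modules of bounded Tor-amplitude (\cite[Prop.~9.6.7.1]{LurSAG}) and invoking almost continuity of $f_*$; it then handles a faithfully flat $h$ by working at the level of categories, expressing $\QCoh(-)^{\geq 0}$ as a limit over the Cech nerve of $h$ and applying \cite[Cor.~4.7.5.18]{LurHA}, and finally combines the two cases in a cube diagram. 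You instead cover the \emph{source} $X$, so that every term $fp^n\colon U^n \to Y$ of the Cech resolution is affine and enjoys \emph{unrestricted} base change (\cite[Prop.~9.1.5.7]{LurSAG}); the finite Tor-dimension of $h$ then enters only through the bounded amplitude of $h^*$, which is exactly what lets $h^*$ commute with the totalization, since the layers of the $\Tot$-tower of a cosimplicial object in $\QCoh(Y)^{\geq a}$ lie in increasingly coconnective ranges and the t-structures involved are left complete. This localizes the role of the hypothesis very transparently and avoids the perfect-module approximation entirely; what the paper's category-level formulation buys in exchange is that \cite[Cor.~4.7.5.18]{LurHA} packages the coherence of the Beck--Chevalley maps automatically, whereas in your object-level argument you still owe a (routine) verification that your composite equivalence \emph{is} the Beck--Chevalley map, via the pasting compatibility of base change squares and naturality of the transformation with respect to the augmented cosimplicial resolution. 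Two further remarks: your opening reduction to $\cG \in \QCoh(X)^{[a,b]}$ is superfluous, since your $\Tot$-convergence argument only uses the uniform lower bound; and dropping it is advisable for another reason, namely that it invokes Proposition \ref{prop:lower*almostcont}, whose proof in the paper passes through Lemma \ref{lem:lower*flatcoverbasechange} --- a special case of the statement you are proving --- so while citing it as a black box is not formally circular, a self-contained development should avoid it, which your argument happily can.
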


\begin{Lemma}\label{lem:lower*almostcontaffine}
	Proposition \ref{prop:lower*almostcont} is true when $Y$ is affine. 
\end{Lemma}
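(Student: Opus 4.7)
The plan is to reduce the claim to a standard fact about bounded-below cosimplicial diagrams by expressing $f_*$ via flat descent.

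Choose a flat cover $p \colon \Spec B \to X$ and let $p_\bullet \colon \Spec B_\bullet \to X$ denote its \v{C}ech nerve. Each $p_n$ is affine and flat (as an iterated base change of $p$), so $f \circ p_n \colon \Spec B_n \to Y$ is an affine morphism between affine schemes. By flat descent \cite[Prop.~9.1.3.1]{LurSAG}, for any $\cF \in \QCoh(X)$ we have $\cF \cong \Tot\bigl(p_{\bullet,*} p_\bullet^*(\cF)\bigr)$; applying $f_*$ and using that right adjoints preserve limits yields
\[ f_*(\cF) \cong \Tot\bigl((f p_\bullet)_* p_\bullet^*(\cF)\bigr). \]
Each $p_n^*$ is continuous (as a left adjoint) and t-exact (as $p_n$ is flat), and each $(f p_n)_*$ is continuous and t-exact (as restriction of scalars along a map of $\CAlgk$, using that $\QCoh$ of an affine scheme is the corresponding module category). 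Hence $(fp_n)_* p_n^*$ preserves filtered colimits and carries $\QCoh(X)^{\geq 0}$ into $\Mod_A^{\geq 0}$.

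It therefore suffices to show that if $Z^\bullet_\alpha$ is a small filtered diagram of cosimplicial objects lying degreewise in $\Mod_A^{\geq 0}$, then the canonical map $\colim_\alpha \Tot(Z^\bullet_\alpha) \to \Tot(\colim_\alpha Z^\bullet_\alpha)$ is an isomorphism. For any such $Z^\bullet$, the fibers of the partial totalization tower $\{\Tot^{\leq m}(Z^\bullet)\}_m$ are given by shifts $\Omega^{k+1} N^{k+1}(Z^\bullet)$ of normalized cochains, each of which lies in $\Mod_A^{\geq k+1}$ since $N^{k+1}(Z^\bullet)$ is a summand of $Z^{k+1} \in \Mod_A^{\geq 0}$. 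It follows that the fiber of $\Tot(Z^\bullet) \to \Tot^{\leq m}(Z^\bullet)$ lies in $\Mod_A^{\geq m+1}$, so $\tau^{\leq m}\Tot(Z^\bullet) \cong \tau^{\leq m}\Tot^{\leq m}(Z^\bullet)$ is computed by a finite limit. Finite limits commute with filtered colimits in $\Mod_A$, and $\Mod_A$ is left complete, so objects of $\Mod_A^{\geq 0}$ are recovered from their Postnikov towers; checking the desired identification on each $\tau^{\leq m}$ thus completes the proof.

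The only subtlety is the cosimplicial convergence estimate in the last paragraph. This is a standard feature of bounded-below cosimplicial objects in any stable $\infty$-category with a filtered-colimit-compatible, left complete t-structure, and is a consequence of the conditionally convergent Bousfield-Kan spectral sequence; it should be citeable from \cite[Sec.~1.2.4]{LurHA}.
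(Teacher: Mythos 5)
Your proof is correct, but it takes a genuinely different route from the paper's. The paper's proof is a two-line compact-generation argument: since $Y$ is affine, $\QCoh(Y)$ is compactly generated by perfect objects, $f^*$ preserves perfect (hence almost perfect) objects, so $\tau^{\geq n} f^*: \QCoh(Y) \to \QCoh(X)^{\geq n}$ preserves compact objects, and \cite[Prop. 5.5.7.2]{LurHTT} then gives continuity of its right adjoint, which is the restriction of $f_*$ to $\QCoh(X)^{\geq n}$. You instead run descent on the source, writing $f_*(\cF) \cong \Tot\bigl((fp_\bullet)_* p_\bullet^*(\cF)\bigr)$ for a flat affine cover $p$ of $X$ and invoking the Tot-tower convergence estimate for uniformly coconnective cosimplicial objects; this is essentially the strategy of the alternative proofs the paper itself points to (\cite[Prop. 5.5.6]{LurDAG}, \cite[Lem. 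A.1.3]{HLP23}). Each approach buys something: the paper's argument is shorter, requires no convergence analysis, and its template recurs verbatim elsewhere (e.g. Lemma \ref{lem:cHomalmostcontaffine} and the $!$-pullback analogues), whereas yours makes transparent exactly where the bounded-below hypothesis enters (convergence of the Tot tower) and never uses compact generation of the target --- it only uses that the composites $fp_n$ are affine, so pushforward along them is t-exact and continuous, which makes it more portable to settings without good compact generation. Two small polish points: in justifying coconnectivity of the associated graded of the tower, the cleanest argument is that $N^{k+1}(Z^\bullet)$ is the fiber of the map from $Z^{k+1}$ to a matching object, a finite limit of objects of $\Mod_A^{\geq 0}$, which is closed under limits (your Dold--Kan retract argument also works, since $\Mod_A^{\geq 0}$ is closed under retracts); and you should remark that the identification $f_*(\cF) \cong \Tot\bigl((fp_\bullet)_* p_\bullet^*(\cF)\bigr)$ is natural in $\cF$, so that the comparison map $\colim f_*(\cF_\al) \to f_*(\colim \cF_\al)$ is genuinely identified with the map $\colim \Tot \to \Tot \colim$ that you analyze --- routine, but it is the step that ties the reduction together.
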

\begin{proof}
	In this case $\QCoh(Y)$ is compactly generated by perfect sheaves. If $\cF \in \QCoh(Y)$ is perfect so is $f^*(\cF)$, hence the claim follows by applying \cite[Prop. 5.5.7.2]{LurHTT} to $\tau^{\geq n} f^*: \QCoh(Y) \to \QCoh(X)^{\geq n}$. 
\end{proof}

\begin{Lemma}\label{lem:lower*affineftdbasechange}
	Proposition \ref{prop:lower*ftdbasechange} is true when $Y$ and $Y'$ are affine. 
\end{Lemma}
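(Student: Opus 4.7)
The plan is to choose a flat affine cover of $X$, use fpqc descent to reduce the base-change claim to a termwise statement about morphisms of affine schemes (where it is essentially formal), and then commute $h^*$ past the resulting cosimplicial totalization using the finite Tor-dimension of $h$ and the bounded-below hypothesis on $\cG$.

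First I would choose a flat cover $p \colon U = \Spec A \to X$. Since $X$ has affine diagonal, $p$ is affine, so its Cech nerve $U_\bullet$ consists of affine schemes $U_n$, with augmentations $p_n \colon U_n \to X$. Base change along $h$ gives a flat cover $p' \colon U' = U \times_Y Y' \to X'$ whose Cech nerve $U'_\bullet$ has $U'_n = U_n \times_Y Y'$ affine. By fpqc descent for $\QCoh$ on geometric stacks applied to $\cG$ and $h'^* \cG$, together with the fact that $f_*$ and $f'_*$ preserve limits,
\begin{equation*}
f_* \cG \;\cong\; \Tot\bigl((fp_n)_* p_n^* \cG\bigr), \qquad f'_* h'^* \cG \;\cong\; \Tot\bigl((f'p'_n)_*(p'_n)^* h'^* \cG\bigr).
\end{equation*}
For each $n$ the morphism $fp_n \colon U_n \to Y$ is a map of affine schemes whose base change along $h$ is $f'p'_n$, and for a map of affine schemes the Beck-Chevalley map is an isomorphism applied to any input (it is tensor-product associativity on the underlying module categories, requiring no Tor-dimension hypothesis). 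This yields a termwise isomorphism of cosimplicial objects $h^*(fp_n)_* p_n^* \cG \cong (f'p'_n)_*(p'_n)^* h'^* \cG$, and it remains to show that $h^*$ commutes with the cosimplicial totalization.

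This last commutation is the main obstacle. Because $p_n$ is flat, $p_n^* \cG \in \QCoh(U_n)^{\geq m}$ whenever $\cG \in \QCoh(X)^{\geq m}$, and because $fp_n$ is an affine morphism, $(fp_n)_*$ is t-exact. Thus the cosimplicial object $M^\bullet := (fp_n)_* p_n^* \cG$ lies in $\QCoh(Y)^{\geq m}$ uniformly in $n$. For such a uniformly left-bounded cosimplicial object, the fiber of the natural map from $\Tot M^\bullet$ to its $k$-th partial totalization lies in $\QCoh(Y)^{\geq k+m-c}$ for a constant $c$, so on any fixed truncation $\tau^{\leq n}$ the full totalization agrees with a finite partial totalization once $k$ is sufficiently large. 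Since $h^*$ has cohomological amplitude bounded by the Tor-dimension of $h$, the analogous conclusion holds after applying $h^*$, i.e. the fiber of $h^* \Tot M^\bullet$ over each partial $h^* \Tot_{\leq k} M^\bullet$ is concentrated in a range of degrees tending to $+\infty$ with $k$. Combined with the fact that $h^*$ commutes with finite limits, this shows the natural map $h^* \Tot M^\bullet \to \Tot h^* M^\bullet$ becomes an isomorphism after $\tau^{\leq n}$ for every $n$, hence is itself an isomorphism.
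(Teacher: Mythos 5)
Your proposal is correct, but it takes a genuinely different route from the paper's. The paper reduces the claim, via conservativity of $h_*$ and the isomorphism $h_* f'_* \cong f_* h'_*$, to the projection-formula map $\theta_M: f_*(\cG) \ot M \to f_*(\cG \ot f^*(M))$ at $M = B$, and then runs a d\'evissage on the module variable: the full subcategory of $M \in \Mod_A$ for which $\theta_M$ is an isomorphism is stable and closed under retracts, hence contains all perfect modules, and it captures $B$ because a module of Tor-dimension $\leq n$ is a filtered colimit of perfect modules of Tor-dimension $\leq n$ \cite[Prop. 9.6.7.1]{LurSAG}; the passage to the colimit is then controlled by the almost continuity of $f_*$ (Lemma \ref{lem:lower*almostcontaffine}) together with the uniform lower bound on the objects $\cG \ot f^*(M_\al)$. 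You instead descend along a flat affine cover of the \emph{source} $X$ (affineness of the Cech nerve uses the affine diagonal, as you note), where base change is unconditional, and you place all the analytic content in commuting $h^*$ past the totalization via the convergence estimate for totalizations of uniformly coconnective cosimplicial objects. Both arguments consume the hypotheses in the same way --- boundedness of $\cG$ yields uniform coconnectivity, finite Tor-dimension of $h$ yields bounded amplitude of $h^*$ --- but yours trades \cite[Prop. 9.6.7.1]{LurSAG} and Lemma \ref{lem:lower*almostcontaffine} for the standard Tot-convergence lemma (the dual of \cite[Prop. 1.2.4.5]{LurHA}), and it mirrors the Cech-nerve technique the paper itself deploys in Lemma \ref{lem:lower*flatcoverbasechange}, only with descent on the source rather than the target. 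Two points should be made explicit in a final writeup: first, the termwise affine base-change equivalences must be assembled into a map of cosimplicial objects so as to identify the global Beck--Chevalley map with the canonical comparison $h^*\Tot(M^\bullet) \to \Tot(h^* M^\bullet)$ (a routine but nontrivial coherence step); second, the concluding passage from isomorphisms after every $\tau^{\leq n}$ to an isomorphism should invoke that both sides lie in $\Mod_B^{\geq m-d}$, where $d$ bounds the Tor-dimension of $h$ --- the left t-exactness of $f_*$ together with the amplitude bound on $h^*$ supplies this, as your argument implicitly does.
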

\begin{proof}
	Let $Y \cong \Spec A$ and $Y' \cong \Spec B$. Since $h$ is affine $h_*$ conservative, hence it suffices to show $ h_* h^* f_*(\cG) \to h_* f'_* h'^*(\cG)$ is an isomorphism. Rewriting the second term using $h_* f'_* \cong f_* h'_*$, one sees this is the specialization of the Beck-Chevalley map $\theta_M: f_*(\cG) \ot M \to f_*(\cG \ot f^*(M))$ in the case $M = B$.
	
	Write $\catC$ for the full subcategory of  $M \in \Mod_A$ such that $\theta_{M}$ is an isomorphism. The assignment $M \mapsto \theta_{M}$ extends to a functor $\Mod_A \to \Mod_A^{\Delta^1}$, which is exact since the source and target of $\theta_M$ are exact in $M$. It follows that $\catC$ is a stable subcategory closed under retracts, as isomorphisms form such a subcategory of $\Mod_A^{\Delta^1}$. Clearly $A \in \catC$, hence $\catC$ contains all perfect $A$-modules. If $M$ is of Tor-dimension $\leq n$, then we can write it as a filtered colimit $M \cong \colim_\al M_\al$ of perfect $A$-modules of Tor-dimension $\leq n$ \cite[Prop. 9.6.7.1]{LurSAG}. The claim now follows since tensoring is continuous, since the $\cG \ot f^*(M_\al)$ are uniformly bounded below, and since $f_*$ is almost continuous by Lemma~\ref{lem:lower*almostcontaffine}. 
\end{proof}

\begin{Lemma}\label{lem:lower*flatcoverbasechange}
	Proposition \ref{prop:lower*ftdbasechange} is true when $Y'$ is affine and $h$ is faithfully flat. 
\end{Lemma}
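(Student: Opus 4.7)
The plan is to reduce to the case where the source of the pushforward is affine---where the map is automatically affine and hence satisfies arbitrary base change---by descending along a flat affine cover of $X$. First, choose a flat cover $p \colon U = \Spec B \to X$ and form its Cech nerve $p_n \colon U^n \to X$. Since $X$ has affine diagonal, each $U^n$ is affine and each $p_n$ is affine, and since $Y$ has affine diagonal, each composition $f p_n \colon U^n \to Y$ is likewise affine. Base changing the cover along $h'$ yields a flat affine cover $p'_n \colon U'^n := U^n \times_Y Y' \to X'$ with $U'^n$ affine, together with transition maps $h^{(n)} \colon U'^n \to U^n$. By flat descent \cite[Prop. 9.1.3.1]{LurSAG}, $\cG \cong \Tot_{[n] \in \Delta}(p_{n,*} p_n^* \cG)$ in $\QCoh(X)$, and applying the limit-preserving functor $f_*$ yields
\[
f_*(\cG) \cong \Tot_{[n]}\bigl((f p_n)_* p_n^* \cG\bigr),
\]
with an analogous expression for $f'_* h'^* \cG$ coming from the base-changed cover.

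For each $n$, the affine morphism $f p_n$ identifies $\QCoh(U^n)$ with $\Mod_{(f p_n)_* \cO_{U^n}}(\QCoh(Y))$ by \cite[Prop. 9.1.5.7]{LurSAG}, and under this identification $(f p_n)_*$ is the forgetful functor. Arbitrary base change of $(f p_n)_*$ along $h$ is then immediate, giving level-wise isomorphisms
\[
h^* (f p_n)_* p_n^* \cG \;\cong\; (f' p'_n)_* (h^{(n)})^* p_n^* \cG \;\cong\; (f' p'_n)_* (p'_n)^* h'^* \cG.
\]
It therefore remains to show that $h^*$ commutes with the cosimplicial totalization computing $f_* \cG$.

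For this last step, the key observation is uniform boundedness below: if $\cG \in \QCoh(X)^{\geq N}$, then flatness of $p_n$ keeps $p_n^* \cG$ in $\QCoh(U^n)^{\geq N}$, and t-exactness of the affine pushforward $(f p_n)_*$ then puts $(f p_n)_* p_n^* \cG$ in $\QCoh(Y)^{\geq N}$ uniformly in $n$. The totalization of such a uniformly bounded-below cosimplicial object is a limit of its tower of partial totalizations whose successive fibers grow increasingly coconnective, and such a limit is preserved by any t-exact functor that preserves finite limits---in particular by the flat pullback $h^*$. Combining this with the level-wise isomorphisms above yields $h^* f_* \cG \cong f'_* h'^* \cG$. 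I expect the main obstacle to be precisely this exchange of $h^*$ past the cosimplicial limit; it rests on the uniform boundedness just noted together with left-completeness of $\QCoh(Y)^+$ for geometric $Y$, which is available by descent from the affine case.
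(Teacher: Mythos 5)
Your proof is correct, but it runs along a genuinely different axis than the paper's. The paper takes the Cech nerve of the faithfully flat morphism $h$ itself (so $Y_\bul$ has affine terms because $Y_0 = Y'$ is affine and $Y$ has affine diagonal), writes $\QCoh(X)^{\geq 0}$ and $\QCoh(Y)^{\geq 0}$ as limits over $\Delta_s$ of the connective subcategories on the nerve, applies the affine finite-Tor-dimension case (Lemma \ref{lem:lower*affineftdbasechange}) levelwise, and then invokes \cite[Cor. 4.7.5.18]{LurHA} to pass the Beck--Chevalley isomorphism through the limit of $\infty$-categories. You instead cover the \emph{source} $X$, resolve $\cG$ by the cobar construction $\cG \cong \Tot\, p_{n*}p_n^*\cG$, observe that each $f p_n$ is affine (affine diagonal of $Y$), so its pushforward satisfies \emph{arbitrary} base change by \cite[Prop. 9.1.5.7]{LurSAG}, and then commute $h^*$ past the totalization using the uniform coconnectivity bound (flat $p_n^*$ and affine $(fp_n)_*$ are t-exact) together with left completeness of the standard t-structures. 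Your route buys independence from Lemma \ref{lem:lower*affineftdbasechange} and its perfect-approximation/almost-continuity input for this particular case, at the cost of an object-level argument with a convergence step; note this is exactly why the hypothesis $\cG \in \QCoh(X)^+$ enters your proof, whereas in the paper it enters through the restriction to connective subcategories. The paper's approach buys automatic coherence: working with diagrams of categories and \cite[Cor. 4.7.5.18]{LurHA} guarantees that the levelwise isomorphisms assemble to the Beck--Chevalley transformation, a point your argument handles only implicitly --- you should at least remark that the levelwise isomorphisms are natural in $[n] \in \Delta$ (they arise from Beck--Chevalley transformations for a map of augmented cosimplicial diagrams), so that they define a map of cosimplicial objects whose induced map on totalizations agrees with the Beck--Chevalley map for $f$. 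Since Lemma \ref{lem:lower*affineftdbasechange} is needed anyway for the general case of Proposition \ref{prop:lower*ftdbasechange}, your economy is local to this lemma, but the argument is sound and arguably more elementary in its inputs.
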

\begin{proof}
	Let $Y_\bul$ denote the Cech nerve of $h$ (so $Y_0 = Y'$) and $f_k: X_k \to Y_k$ the base change of $f$. 
	Given a morphism $p: i \to j$ in $\Delta_s$, let $h_p: Y_j \to Y_i$ denote the associated map and $h'_p: X_j \to X_i$ its base change. The categories $\QCoh(X_k)^{\geq 0}$ and $\QCoh(Y_k)^{\geq 0}$, together with the functors $h^*_p$, $h'^*_p$, and $\tau^{\geq 0} \circ f^*_k$, form a diagram $\Delta^1 \times \Delta_s \to \Cathatinfty$. By Lemma \ref{lem:lower*affineftdbasechange} the Beck-Chevalley transformation $h_p^* f_{i*} \to f_{j*} h'^*_p$ restricts to an isomorphism of functors $\QCoh(X_i)^{\geq 0} \to \QCoh(Y_j)^{\geq 0}$ for any $p$. Since $h$ is faithfully flat we have $\QCoh(X)^{\geq 0} \cong \lim_{\Delta_s} \QCoh(X_i)^{\geq 0}$ and $\QCoh(Y)^{\geq 0} \cong \lim_{\Delta_s} \QCoh(Y_i)^{\geq 0}$, and the claim follows from \cite[Cor. 4.7.5.18]{LurHA}. 
\end{proof}

\begin{proof}[Proof of Proposition \ref{prop:lower*almostcont}]
	Let $\cG \cong \colim \cG_\al$ be a filtered colimit in $\QCoh(X)^{\geq 0}$, let $h: X' \cong \Spec A \to Y$ be a flat cover, and define $f': X' \to Y'$, $h': X' \to X$ by base change. Since $h^*$ is continuous and conservative it suffices to show $\colim h^* f_*(\cG_\al) \to h^* f_*(\cG)$ is an isomorphism. By Lemma~\ref{lem:lower*flatcoverbasechange} and left t-exactness of $f_*$ this is equivalent to $\colim  f'_* h'^*(\cG_\al) \to f'_* h'^*(\cG)$ being an isomorphism. Since $h'^*$ is  t-exact this follows from Lemma \ref{lem:lower*almostcontaffine}. 
\end{proof}

\begin{proof}[Proof of Proposition \ref{prop:lower*ftdbasechange}]
	Let $\phi: U \cong \Spec A \to Y$ and $\theta: U' \cong \Spec A' \to U \times_Y Y'$ be flat covers. We obtain a diagram 
	\begin{equation*}
		\begin{tikzpicture}[baseline=(current  bounding  box.center),thick,>=\arrtip]
			\newcommand*{\ha}{1.5}; \newcommand*{\hb}{1.5}; \newcommand*{\hc}{1.5};
\newcommand*{\va}{-.9}; \newcommand*{\vb}{-.9}; \newcommand*{\vc}{-.9}; 
			\node (ab) at (\ha,0) {$Z'$};
			\node (ad) at (\ha+\hb+\hc,0) {$U'$};
			\node (ba) at (0,\va) {$X'$};
			\node (bc) at (\ha+\hb,\va) {$Y'$};
			\node (cb) at (\ha,\va+\vb) {$Z$};
			\node (cd) at (\ha+\hb+\hc,\va+\vb) {$U$};
			\node (da) at (0,\va+\vb+\vc) {$X$};
			\node (dc) at (\ha+\hb,\va+\vb+\vc) {$Y$};
			\draw[->] (ab) to node[above] {$g' $} (ad);
			\draw[->] (ab) to node[above left, pos=.25] {$\psi' $} (ba);
			\draw[->] (ab) to node[right,pos=.2] {$\xi' $} (cb);
			\draw[->] (ad) to node[below right] {$\psi $} (bc);
			\draw[->] (ad) to node[right] {$\xi $} (cd);
			\draw[->] (ba) to node[left] {$ $} (da);
			\draw[->] (cb) to node[above,pos=.25] {$g $} (cd);
			\draw[->] (cb) to node[above left, pos=.25] {$\phi' $} (da);
			\draw[->] (cd) to node[below right] {$\phi $} (dc);
			\draw[->] (da) to node[above,pos=.75] {$ $} (dc);
			
			\draw[-,line width=6pt,draw=white] (ba) to  (bc);
			\draw[->] (ba) to node[above,pos=.75] {$ $} (bc);
			\draw[-,line width=6pt,draw=white] (bc) to  (dc);
			\draw[->] (bc) to node[right,pos=.2] {$ $} (dc);
		\end{tikzpicture}
	\end{equation*}
	in which all but the left and right faces are Cartesian. Note that $\psi$ is faithfully flat and $\xi$ is of finite Tor-dimension, since they are the compositions of $\theta$ with the base changes of $\phi$ and $h$, respectively. Since $\psi^*$ is conservative, it suffices to show the top left arrow in
	\begin{equation*}
		\begin{tikzpicture}
			[baseline=(current  bounding  box.center),thick,>=\arrtip]
			\newcommand*{\ha}{3.5}; \newcommand*{\hb}{3.5};
			\newcommand*{\va}{-1.5};
			\node (aa) at (0,0) {$\psi^* h^* f_*(\cG)$};
			\node (ab) at (\ha,0) {$\psi^* f'_* h'^*(\cG)$};
			\node (ac) at (\ha+\hb,0) {$g'_*\psi'^* h'^*(\cG)$};
			\node (ba) at (0,\va) {$\xi^* \phi^*f_*(\cG)$};
			\node (bb) at (\ha,\va) {$\xi^* g_* \phi'^*(\cG)$};
			\node (bc) at (\ha+\hb,\va) {$g'_* \xi'^* \phi'^*(\cG)$};
			\draw[->] (aa) to node[above] {$ $} (ab);
			\draw[->] (ab) to node[above] {$  $} (ac);
			\draw[->] (ba) to node[above] {$ $} (bb);
			\draw[->] (bb) to node[above] {$ $} (bc);
			\draw[->] (aa) to node[below,rotate=90] {$\sim $} (ba);
			\draw[->] (ac) to node[below,rotate=90] {$\sim $} (bc);
		\end{tikzpicture} 
	\end{equation*}
	is an isomorphism. This follows since the bottom left and top right arrows are isomorphisms by Lemma \ref{lem:lower*flatcoverbasechange}, and the bottom right is by Lemma \ref{lem:lower*affineftdbasechange}. 
\end{proof}

Following \cite{GR17}, we can encode the coherence properties of base change isomorphisms using correspondence categories. Let $\catC$ be a category with finite limits, and let $\vert$ and $\horiz$ be classes of morphisms which are stable under composition and under base change along each other. Recall that we have an associated category $\Corr(\catC)_{\vert,\horiz}$ whose morphisms are correspondences $X \xleftarrow{h} Y \xrightarrow{f} Z$ such that $f$ is in $\vert$ and $h$ is in $\horiz$. 

We say a functor $\Phi: \catC^{\op} \to \Catinfty$ is left $\vert$-adjointable if for every Cartesian square
\begin{equation*}
	\begin{tikzpicture}
		[baseline=(current  bounding  box.center),thick,>=\arrtip]
		\node (a) at (0,0) {$X'$};
		\node (b) at (3,0) {$Y'$};
		\node (c) at (0,-1.5) {$X$};
		\node (d) at (3,-1.5) {$Y$};
		\draw[->] (a) to node[above] {$f' $} (b);
		\draw[->] (b) to node[right] {$h $} (d);
		\draw[->] (a) to node[left] {$h' $}(c);
		\draw[->] (c) to node[above] {$f $} (d);
	\end{tikzpicture}
\end{equation*}
with $f \in \vert$, the associated 
Beck-Chevalley transformation 
$ \Phi(f)^L \Phi(h) \to \Phi(h') \Phi(f')^L $ 
is an isomorphism. When $\horiz = all$ contains all morphisms, we have the following universal property, where $\CORR(\catC)$ and $\CATinfty$ are the $(\infty,2)$-categorical enhancements of $\Corr(\catC)$ and $\Catinfty$. 

\begin{Proposition}\label{prop:Corrextension}
	Restriction along $\catC^\op \subset \CORR(\catC)_{\vert,all}$ induces a monomorphism $$\Map_{\Catinftytwo}(\CORR(\catC)_{\vert,all}, \CATinfty) \to \Map_{\Catinftytwo}(\catC^\op, \CATinfty)$$ with essential image the left $\vert$-adjointable functors. In particular, any left $\vert$-adjointable functor $\Phi: \catC^\op \to \Catinfty$ extends canonically to a functor $\Corr(\catC)_{\vert,all} \to \Catinfty$ whose value on a correspondence $X \xleftarrow{h} Y \xrightarrow{f} Z$ is $\Phi(f)^L \Phi(h): \Phi(X) \to \Phi(Z)$. 
\end{Proposition}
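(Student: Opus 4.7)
The plan is to deduce this from the universal property of correspondence categories established in Gaitsgory--Rozenblyum \cite[Vol.~1, Ch.~7]{GR17}, in particular their Theorem 3.2.2, which is the general framework in which statements of this form are proved. Since the proposition as stated only involves a single pair $(\vert, all)$ and only requires the $1$-categorical truncation in a mild way, the task reduces to specializing their general result and reading off the formula.

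First, I would recall the GR setup. Given a category $\catC$ with finite limits together with classes of morphisms $(\vert, \horiz, \isom)$ satisfying appropriate stability axioms, \cite[Ch.~7, Thm.~3.2.2]{GR17} identifies $(\infty,2)$-functors $\CORR(\catC)_{\vert,\horiz;\isom} \to \CATinfty$ with functors $\Phi$ defined on the $1$-full subcategory of $\horiz$-morphisms (equipped with an appropriate $(\infty,2)$-structure) which admit left adjoints along pullbacks of $\vert$-morphisms and satisfy the Beck--Chevalley condition along every Cartesian square whose vertical legs are in $\vert$ and horizontal legs in $\horiz$. The assignment in one direction is restriction along the embedding $(\catC_\horiz)^\op \hookrightarrow \CORR(\catC)_{\vert,\horiz;\isom}$.

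Second, I would specialize to $\horiz = all$ and $\isom$ the class of isomorphisms. Then $\catC_\horiz = \catC$, the Beck--Chevalley condition as stated in \cite{GR17} unpacks to exactly the condition that $\Phi(f)^L \Phi(h) \to \Phi(h') \Phi(f')^L$ is an isomorphism for every Cartesian square with $f \in \vert$, i.e.\ left $\vert$-adjointability in the sense of the proposition. The cited theorem then yields that restriction along $\catC^\op \hookrightarrow \CORR(\catC)_{\vert,all}$ is a fully faithful functor of mapping spaces with essential image the left $\vert$-adjointable functors. Passing to underlying $(\infty,1)$-categories preserves fully faithfulness, which gives the claimed monomorphism of $\Map$-spaces.

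Finally, to identify the value of the extension on a correspondence, I would factor a morphism $X \xleftarrow{h} Y \xrightarrow{f} Z$ in $\Corr(\catC)_{\vert,all}$ as the composition
\[ X \xleftarrow{h} Y \xrightarrow{\id} Y \quad \text{followed by} \quad Y \xleftarrow{\id} Y \xrightarrow{f} Z, \]
and observe that, by the characterization of the restriction map, the first piece acts as $\Phi(h)$ and the second as $\Phi(f)^L$, so that the composite acts as $\Phi(f)^L \circ \Phi(h)$ as claimed. The main obstacle is cosmetic rather than substantive: one must match our two-class indexing $(\vert, \horiz)$ with GR's three-class indexing $(\vert, \horiz; \isom)$ and verify that their $(\infty,2)$-categorical statement specializes correctly after taking $\isom$ to be isomorphisms and truncating to $\Catinfty$, which is routine but notationally heavy.
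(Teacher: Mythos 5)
Your proposal takes essentially the same route as the paper, which offers no independent argument for this proposition but simply invokes the universal property of the $(\infty,2)$-category of correspondences, specialized to $\horiz = all$; your unpacking of the Beck--Chevalley condition and your reading-off of the value $\Phi(f)^L\Phi(h)$ via the factorization of a correspondence into a backward morphism followed by a forward one (the forward piece going to a left adjoint because $Y \xleftarrow{\id} Y \xrightarrow{f} Z$ and $Z \xleftarrow{f} Y \xrightarrow{\id} Y$ are adjoint $1$-morphisms in $\CORR(\catC)_{\vert,all}$) is exactly how the cited result is applied. The one point worth correcting is your choice of source: the paper deliberately does \emph{not} rest its claim on \cite[Thm. 7.3.2.2]{GR17}, whose published proof is incomplete (it depends on $(\infty,2)$-categorical foundations that are only sketched in loc.\ cit.), and instead follows the formulation of \cite[Thm. 2.2.7]{EH20}, whose proof is due to \cite[Thm. 4.2.6]{Mac20}; if you cite GR17 as a black box you inherit that gap, so you should route the citation through Macpherson or Elmanto--Haugseng as the paper does. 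A minor imprecision: restriction induces a monomorphism (i.e.\ a $(-1)$-truncated map) of mapping \emph{spaces}, not a ``fully faithful functor'' of them, though your intended meaning is clear and the conclusion is unaffected.
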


This property was identified in \cite[Thm. 7.3.2.2]{GR17}. We have followed the formulation of \cite[Thm. 2.2.7]{EH20}, which closely follows the proof due to \cite[Thm. 4.2.6]{Mac20}. We will refer to this proposition also to invoke any of its variants in which left is replaced with right and/or the roles of $\vert$ and $\horiz$ are swapped \cite[Var. 2.2.9-2.2.10]{EH20}. 

Returning to the case at hand, let $ftd$ denote the class of morphisms of finite Tor-dimension in $\GStkk$, so that morphisms in $\Corr(\GStkk)_{all,ftd}$ are correspondences $X \xleftarrow{h} Y \xrightarrow{f} Z$ such that $h$ is of finite Tor-dimension. By Propositions \ref{prop:Corrextension} and \ref{prop:lower*ftdbasechange} the assignment $X \mapsto \QCoh(X)^+$ extends to a functor 
\begin{equation}\label{eq:CorrQCoh+}
	\QCoh^+: \Corr(\GStkk)_{all,ftd} \to \Cathatinfty
\end{equation}
whose value on the above correspondence is $f_* h^*: \QCoh(X)^+ \to \QCoh(Z)^+$. 

\subsection{Cohomological dimension}
We have better control of $f_*$ in the following case.  Recall that a morphism $f: X \to Y$ in $\GStkk$ is of cohomological dimension $\leq n$ if $f_*(\QCoh(X)^{\leq 0}) \subset \QCoh(Y)^{\leq n}$, and is of finite cohomological dimension if it is of cohomological dimension $\leq n$ for some $n$. We caution that morphisms of infinite cohomological dimension are ubiquitous in our motivating context. For example, if $G$ is a complex reductive group, $BG_\cO \to \Spec \C$ is of infinite cohomological dimension. 

\begin{Proposition}\label{prop:fcdprops}
Morphisms of finite cohomological dimension are stable under composition and base change in $\GStkk$. A morphism $f: X \to Y$ of geometric stacks is of finite cohomological dimension if and only if its base change along any given flat cover $\Spec A \to Y$ is. In this case $f_*: \QCoh(X) \to \QCoh(Y)$ is continuous, and for any Cartesian square
	\begin{equation}\label{eq:properbasechange}
	\begin{tikzpicture}
		[baseline=(current  bounding  box.center),thick,>=\arrtip]
		\node (a) at (0,0) {$X'$};
		\node (b) at (3,0) {$Y'$};
		\node (c) at (0,-1.5) {$X$};
		\node (d) at (3,-1.5) {$Y$};
		\draw[->] (a) to node[above] {$f' $} (b);
		\draw[->] (b) to node[right] {$h $} (d);
		\draw[->] (a) to node[left] {$h' $}(c);
		\draw[->] (c) to node[above] {$f $} (d);
	\end{tikzpicture}
\end{equation}
in $\GStkk$ the Beck-Chevalley transformation $h^*f_*(\cF) \to f'_* h'^*(\cF)$ is an isomorphism for all $\cF \in \QCoh(X)$. 
\end{Proposition}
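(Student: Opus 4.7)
The plan is to handle the claims in an order that bootstraps from the special case of affine base change. Stability under composition is immediate: composing $f$ of cohomological dimension $\leq n$ with $g$ of cohomological dimension $\leq m$ yields cohomological dimension $\leq n+m$. The starting point for the rest is the \emph{affine} case of forward flat locality: if $h \colon Y' \to Y$ is affine in $\GStkk$, then both $h$ and its base change $h'$ are affine, so $h_*$ and $h'_*$ are t-exact and conservative, and for $\cF' \in \QCoh(X')^{\leq 0}$ the tautological identity $h_* f'_*(\cF') = f_* h'_*(\cF')$ transports the coh dim bound from $f$ to $f'$.

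Next I would establish continuity of $f_*$ under the finite cohomological dimension hypothesis. The key observation is that for any $\cG \in \QCoh(X)$ the cohomology $H^m f_*(\cG)$ depends only on the bounded truncation $\tau^{[m-n, m]} \cG$: left t-exactness of $f_*$ kills the contribution of $\tau^{\geq m+1} \cG$, while the cohomological dimension bound kills the contribution of $\tau^{\leq m-n-1} \cG$. Since truncation commutes with filtered colimits, applying almost continuity (Proposition \ref{prop:lower*almostcont}) to the uniformly bounded system $\tau^{[m-n, m]} \cF_\alpha$ yields $H^m f_*(\colim \cF_\alpha) \cong \colim H^m f_*(\cF_\alpha)$ for all $m$, hence continuity of $f_*$. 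The base change isomorphism for arbitrary $\cF$ then follows by replaying the strategy of Proposition \ref{prop:lower*ftdbasechange}: in the affine case, the subcategory of $A$-modules $M$ for which the Beck-Chevalley map $\theta_M \colon f_*(\cG) \otimes M \to f_*(\cG \otimes f^*(M))$ is an isomorphism contains perfects, is stable under retracts, and now also closes under arbitrary filtered colimits by continuity of $f_*$, hence exhausts $\Mod_A$; the flat descent and global assembly steps then go through verbatim.

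Finally I would handle backward flat locality and stability under arbitrary base change. If $h \colon \Spec A \to Y$ is a flat cover with $f'$ of coh dim $\leq n$, then for $\cF \in \QCoh(X)^+$ Proposition \ref{prop:lower*ftdbasechange} together with conservativity and t-exactness of $h^*$ gives $f_*(\cF) \in \QCoh(Y)^{\leq n}$. For unbounded $\cF \in \QCoh(X)^{\leq 0}$, left separatedness of $\QCoh(X)$ (inherited from the affine case via the flat cover) gives $\cF \cong \lim_m \tau^{\geq -m} \cF$, and a Milnor sequence computation shows the cohomology tower $H^{k-1} f_*(\tau^{\geq -m} \cF)$ stabilizes in $m$ for each $k > n$: the fiber of $\tau^{\geq -m-1} \cF \to \tau^{\geq -m} \cF$ is $H^{-m-1}(\cF)[m+1]$, whose $f_*$ lies in $\QCoh(Y)^{[-m-1, -m-1+n]}$, and a direct inspection shows this vanishes in degrees $\geq n$ for $m \geq 0$. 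Hence $\lim^1 = 0$ and $H^k f_*(\cF) = 0$ for $k > n$. Stability under general base change then follows by picking a flat cover $\Spec B \to Y'$: the composite $\Spec B \to Y$ is affine, so the affine forward case makes the base change of $f$ to $\Spec B$ of coh dim $\leq n$, and backward flat locality for $f'$ then concludes. The main subtlety is the $\lim^1$ vanishing in the backward flat locality step, since coh dim bounds do not in general pass through sequential limits without such a stabilization phenomenon; everything else is a direct extension of the tools established earlier in the section.
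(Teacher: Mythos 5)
Your overall architecture is sound and genuinely different from the paper's proof, which is essentially a citation: the paper deduces base-change stability and flat locality from \cite[Prop.\ A.1.9]{HLP23} and the remaining properties from \cite[Prop.\ 9.1.5.3, Prop.\ 9.1.5.7]{LurSAG} and \cite[Prop.\ A.1.5]{HLP23}. Your steps (composition, affine forward locality via t-exactness and conservativity of affine pushforward, continuity via the observation that $H^m f_*(\cG)$ depends only on $\tau^{[m-n,m]}\cG$ plus Proposition \ref{prop:lower*almostcont}, and the unbounded base-change isomorphism by rerunning the $\theta_M$ argument of Lemma \ref{lem:lower*affineftdbasechange} with full continuity replacing almost continuity) are all correct and in effect reconstruct the HLP arguments from scratch; note only that concluding continuity from isomorphisms on all $H^m$ uses that the t-structure on $\QCoh(Y)$ is left and right complete, hence separated.

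The genuine gap is exactly where you flagged it, in the backward flat locality step, and it is worse than a ``subtlety'': the Milnor sequence you invoke to compute $H^k(\lim_m f_*(\tau^{\geq -m}\cF))$ is not available in $\QCoh(Y)$ for a general geometric stack $Y$. The two-term $\lim$--$\lim^1$ sequence requires $H^k(\prod_m Z_m) \cong \prod_m H^k(Z_m)$, i.e.\ t-exactness of countable products, which fails whenever the heart $\QCoh(Y)^\heartsuit$ fails AB4${}^*$ --- the typical situation for the stacks this paper cares about, e.g.\ $BG_\cO$, whose heart is a comodule category. In such hearts, towers can have nonvanishing higher derived limits (this is the content of Neeman's counterexample to Roos's 1961 claim), so even though your tower $\{H^{k-1}f_*(\tau^{\geq -m}\cF)\}_m$ is eventually constant (which does kill $\lim^1$), higher derived-limit contributions from the cohomology in degrees marching off to $-\infty$ can pollute $H^k$ of the limit; your ``direct inspection'' controls the fibers of the tower but not the passage from degreewise stabilization to the cohomology of the limit. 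The good news is that the repair is already inside your own proof: your step establishing the base-change isomorphism on \emph{all} of $\QCoh$ along a flat cover (the Cech-descent analogue of Lemma \ref{lem:lower*flatcoverbasechange} via \cite[Cor.\ 4.7.5.18]{LurHA}) only needs the affine-level base changes $f_i \colon X_i \to Y_i$ to be of finite cohomological dimension, and in the backward-locality situation these are base changes of $f'$ along affine morphisms between affines, hence fcd by your step (b) applied to the \emph{hypothesis} that $f'$ is fcd. So one gets $h^* f_*(\cF) \cong f'_* h'^*(\cF)$ for arbitrary $\cF \in \QCoh(X)^{\leq 0}$ with no circularity, and then t-exactness and conservativity of $h^*$ give $f_*(\cF) \in \QCoh(Y)^{\leq n}$ immediately --- no left completeness, no limit of truncations, no $\lim^1$ at all. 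With that substitution your proof is complete and self-contained.
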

\begin{proof}
Stability under composition is immediate, while stability under base change and flat locality on the target follow from \cite[Prop. A.1.9]{HLP23} (whose proof applies to geometric stacks, not just Artin stacks with affine diagonal). The remaining properties then follow from \cite[Prop. 9.1.5.3, Prop. 9.1.5.7]{LurSAG} or \cite[Prop. A.1.5]{HLP23}. 
\end{proof}

Let $\Corr(\GStkk)_{fcd,all}$ denote the 1-full subcategory of $\Corr(\GStkk)$ which only includes correspondences $X \xleftarrow{h} Y \xrightarrow{f} Z$ such that $f$ is of finite cohomological dimension. By Propositions~\ref{prop:Corrextension} and \ref{prop:fcdprops} the assignment $X \mapsto \QCoh(X)$ extends to a functor 
\begin{equation}\label{eq:CorrQCohGStk}
	\QCoh: \Corr(\GStkk)_{fcd,all} \to \Cathatinfty
\end{equation}
whose value on the above correspondence is $f_* h^*: \QCoh(X) \to \QCoh(Z)$. 

\subsection{Proper and almost finitely presented morphisms}\label{sec:propafp}
Recall following \cite[Def. 17.4.1.1]{LurSAG} that a morphism $f: X \to Y$ of stacks is (locally) almost of finite presentation if, for any $n$ and any filtered colimit $A \cong \colim A_\al$ in $\CAlgkleqn$, the canonical map
\begin{equation}\label{eq:functorafp}
	\colim X(A_\al) \to X(A) \times_{Y(A)} \colim Y(A_\al)
\end{equation} 
is an isomorphism (we omit the word locally by default, as all morphisms we consider will be quasi-compact or effectively so). 

\begin{Proposition}[{\cite[Rem. 17.4.1.3, Rem. 17.4.1.5]{LurSAG}}]\label{prop:afp2of3prop} 
	Almost finitely presented morphisms are stable under composition and base change in $\Stkk$. 
	If $f$ and $g$ are composable morphisms in $\Stkk$ such that $g \circ f$ and $g$ are almost of finite presentation, then so is $f$. 
\end{Proposition}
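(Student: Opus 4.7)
The plan is to verify all three claims directly from the functor-of-points criterion \eqref{eq:functorafp}, exploiting the fact that in the category of spaces, filtered colimits commute with finite limits. No nontrivial input beyond this formal fact is required; each assertion reduces to a diagram chase.

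\textbf{Base change.} Given $f: X \to Y$ almost of finite presentation and $g: Y' \to Y$, set $f': X' := X \times_Y Y' \to Y'$. For a filtered system $A \cong \colim_\al A_\al$ in $\CAlgkleqn$, I would use the pointwise identification $X'(B) \cong X(B) \times_{Y(B)} Y'(B)$ to commute the filtered colimit past the pullback, giving
\[ \colim_\al X'(A_\al) \cong (\colim X(A_\al)) \times_{\colim Y(A_\al)} (\colim Y'(A_\al)). \]
Substituting the afp isomorphism for $f$ and applying associativity of pullbacks collapses this to $X(A) \times_{Y(A)} \colim Y'(A_\al) \cong X'(A) \times_{Y'(A)} \colim Y'(A_\al)$, which is the criterion for $f'$.

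\textbf{Composition and cancellation.} Both reduce to chaining the afp isomorphisms. Assuming $f: X \to Y$ and $g: Y \to Z$ are both afp, successive substitution yields
\[ \colim X(A_\al) \cong X(A) \times_{Y(A)} \colim Y(A_\al) \cong X(A) \times_{Y(A)} Y(A) \times_{Z(A)} \colim Z(A_\al) \cong X(A) \times_{Z(A)} \colim Z(A_\al), \]
proving $g \circ f$ is afp. If instead $g$ and $g \circ f$ are afp, one reads the same chain in reverse: rewriting $\colim X(A_\al) \cong X(A) \times_{Z(A)} \colim Z(A_\al)$ via the afp property for $g \circ f$, and then using the (inverted) afp isomorphism for $g$ to replace $\colim Z(A_\al)$ by $Y(A) \backslash \colim Y(A_\al)$ in the appropriate sense, produces $\colim X(A_\al) \cong X(A) \times_{Y(A)} \colim Y(A_\al)$.

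The ``main obstacle'' is purely cosmetic: one must keep track that all the pullbacks and colimits above are formed in the category of spaces, and that the defining quantifier over the truncation level $n$ and over filtered systems in $\CAlgkleqn$ is handled uniformly. Both points are automatic from the setup, so the proof is entirely formal once the functor-of-points criterion is in hand.
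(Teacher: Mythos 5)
Your proof is correct: the paper gives no argument of its own for this proposition, deferring entirely to \cite[Rem. 17.4.1.3, Rem. 17.4.1.5]{LurSAG}, and your formal diagram chase from the criterion (\ref{eq:functorafp}) --- using that filtered colimits of spaces commute with finite limits and that fiber products in $\Stkk$ are computed pointwise --- is exactly the argument underlying those remarks. One cosmetic repair: in the cancellation step the garbled expression ``$Y(A) \backslash \colim Y(A_\al)$'' should say that the afp isomorphism for $g$ identifies $Y(A) \times_{Z(A)} \colim Z(A_\al)$ with $\colim Y(A_\al)$, after which the canonical map for $f$ is an isomorphism by two-out-of-three, since composing it with this identification recovers the canonical map for $g \circ f$.
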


We mostly consider this condition together with properness. We say $f: X \to Y$ is proper if for any $\Spec A \to Y$, the fiber product $X \times_Y \Spec A$ is proper over $\Spec A$ in the sense of \cite[Def. 5.1.2.1]{LurSAG}. In particular, this requires that $X \times_Y \Spec A$ be a quasi-compact, separated (spectral) algebraic space. 

\begin{Proposition}\label{prop:propprops}
Proper morphisms are stable under composition and base change in $\Stkk$. If $f$ and $g$ are morphisms in $\Stkk$ such that $g \circ f$ and $g$ are proper, then so is $f$. Proper morphisms of geometric stacks are of finite cohomological dimension. If $f$ is a proper, almost finitely presented morphism of truncated geometric stacks, then $f_*$ takes $\Coh(X)$ to $\Coh(Y)$.
\end{Proposition}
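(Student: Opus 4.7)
The plan is to deduce all four assertions from their affine-target analogues, which are the corresponding results for proper morphisms of (spectral) algebraic spaces in \cite{LurSAG}, combined with the flat-local properties of finite cohomological dimension and the pushforward base change of Propositions \ref{prop:fcdprops} and \ref{prop:lower*ftdbasechange}.

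First I would treat stability under base change. Since properness is defined pointwise on affine test schemes, the stability of properness under base change along a morphism $Y' \to Y$ is immediate: any $\Spec A \to Y'$ composes to give a test map to $Y$, and the fiber product $X \times_Y \Spec A \cong (X \times_Y Y') \times_{Y'} \Spec A$ is identified. For stability under composition of $f:X\to Y$ and $g:Y\to Z$, and for the two-out-of-three property for composites, I would base change to an arbitrary $\Spec A\to Z$ and reduce to the corresponding statements for proper morphisms of spectral algebraic spaces, which are \cite[Cor.~5.1.4.3, Rem.~5.1.4.4]{LurSAG} (or their analogues). Here one uses that properness in Lurie's sense requires $X\times_Y\Spec A$ to be a quasi-compact, separated algebraic space, which is inherited correctly under the stated operations.

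For the finite cohomological dimension claim, I would use that finite cohomological dimension is flat-local on the target (Proposition \ref{prop:fcdprops}). Choose a flat cover $\Spec A \to Y$ and consider the base change $f': X' \to \Spec A$ of $f$. Then $X'$ is proper over $\Spec A$ in Lurie's sense, so $f'_*$ has finite cohomological dimension by \cite[Prop.~5.6.0.2]{LurSAG} (or the direct image boundedness for proper algebraic spaces in SAG \S5.6). Applying Proposition \ref{prop:fcdprops} gives finite cohomological dimension of $f_*$.

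Finally, for the preservation of $\Coh$ under proper almost finitely presented pushforward between truncated geometric stacks, I would separate boundedness and almost perfection. Boundedness of $f_*(\cF)$ for $\cF \in \Coh(X)$ follows: left t-exactness of $f_*$ (adjoint to the right t-exact $f^*$) gives the lower bound, and the finite cohomological dimension just established gives the upper bound. For almost perfection, I would check on a flat cover $\Spec A\to Y$. By Proposition \ref{prop:fcdprops} the base change $f'_*: \QCoh(X')\to\QCoh(\Spec A)$ is the pullback of $f_*$, and $f': X'\to\Spec A$ is again proper and almost finitely presented (Propositions \ref{prop:afp2of3prop} and the base change portion just proved), with $X'$ a truncated spectral algebraic space over a truncated affine base. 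Then $f'_*$ preserves almost perfect modules by the proper direct image theorem for spectral algebraic spaces \cite[Thm.~5.6.0.2]{LurSAG}, and the claim follows by fpqc descent of almost perfection.

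The main obstacle is ensuring that the properness hypothesis interacts correctly with affine diagonal and flat covers, so that at each reduction step one ends up with a proper morphism of spectral algebraic spaces of the kind to which Lurie's results directly apply; once that bookkeeping is in place each step is a direct appeal to \cite{LurSAG}.
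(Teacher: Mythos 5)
Your proposal is correct and follows essentially the same route as the paper: base change is immediate from the pointwise definition, composition and two-out-of-three reduce to Lurie's results for proper morphisms of spectral algebraic spaces, and both the finite cohomological dimension claim and the coherence claim are checked flat-locally via Proposition \ref{prop:fcdprops} and then settled by \cite[Thm.~5.6.0.2]{LurSAG} over an affine base. The only discrepancy is in citation bookkeeping: for finite cohomological dimension the paper invokes \cite[Prop.~2.5.4.4, Prop.~3.2.3.1]{LurSAG} (cohomological dimension bounds for quasi-compact, separated algebraic spaces) rather than anything in SAG \S 5.6, and the composition claims are sourced to \cite[Prop.~5.1.4.1, Prop.~6.3.2.2]{LurSAG}.
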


\begin{proof}
Stablity under base change is immediate, and the claims about composition follow from \cite[Prop. 5.1.4.1, Prop. 6.3.2.2]{LurSAG}. 
By Proposition \ref{prop:fcdprops} finiteness of cohomological dimension can be checked after base change along a flat cover $h: \Spec A \to Y$, where it follows from \cite[Prop. 2.5.4.4, Prop. 3.2.3.1]{LurSAG}. If $f$ is almost of finite presentation and $f'$ is its base change along $h$, then $f'_*$ preserves coherence by \cite[Thm. 5.6.0.2]{LurSAG}. Then so does $h^* f_*$ by Proposition~\ref{prop:fcdprops}, hence so does~$f_*$ by definition. 
\end{proof}

Let $\Corr(\GStkkplus)_{prop,ftd}$ denote the 1-full subcategory of $\Corr(\GStkk)$ which only includes correspondences $X \xleftarrow{h} Y \xrightarrow{f} Z$ such that $h$ is of finite Tor-dimension, $f$ is proper and almost finitely presented, and $X$ and $Z$ are truncated (hence so is $Y$). By Propositions~\ref{prop:ftdprops} and \ref{prop:propprops}, we can restrict the domain and values of either (\ref{eq:CorrQCoh+}) or (\ref{eq:CorrQCohGStk}) to obtain a functor 
\begin{equation}\label{eq:CorrCohGStkprop}
	\Coh: \Corr(\GStkkplus)_{prop,ftd} \to \Catinfty
\end{equation}
whose value on the above correspondence is $f_* h^*: \Coh(X) \to \Coh(Z)$. 

\subsection{Closed immersions}
A morphism $f: X \to Y$ in $\Stkk$ is a closed immersion if for any $\Spec A \to Y$, the morphism 
$\tau_{\leq 0}(X \times_Y \Spec A) \to \tau_{\leq 0} \Spec A$ 
is a closed immersion of ordinary affine schemes. The following statement follows immediately from its classical counterpart. 

\begin{Proposition}\label{prop:cl2of3prop}
	Closed immersions are stable under composition and base change in $\Stkk$. If $f$ and $g$ are composable morphisms in $\Stkk$ such that $g \circ f$ and $g$ are closed immersions, then so is $f$. 
\end{Proposition}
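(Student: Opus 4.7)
The plan is to reduce each of the three claims to its classical counterpart for ordinary affine schemes, directly from the definition: $f: X \to Y$ in $\Stkk$ is a closed immersion iff for every $\Spec A \to Y$ the morphism $\tau_{\leq 0}(X \times_Y \Spec A) \to \tau_{\leq 0}\Spec A$ is a closed immersion of ordinary affines. The one technical input needed throughout is that classical truncation commutes with fiber products, i.e. $\tau_{\leq 0}(X \times_Y Z) \cong \tau_{\leq 0}X \times^{\cl}_{\tau_{\leq 0}Y} \tau_{\leq 0}Z$, the right-hand side being the fiber product in classical stacks (viewed back in $\Stkk$ via $i_{\leq 0}$). This holds because the restriction $(-)_{\leq 0}$ is a right adjoint and hence preserves limits, and because $i_{\leq 0}$ is fully faithful.

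Base change is immediate: for $\Spec A \to Y'$ one has $(X \times_Y Y') \times_{Y'} \Spec A \cong X \times_Y \Spec A$, so the defining condition on $f$ at the composite $\Spec A \to Y$ gives the defining condition on $f' := f \times_Y Y'$ at $\Spec A$. For composition, with $f: X \to Y$ and $g: Y \to Z$ both closed immersions and a test $\Spec A \to Z$, the hypothesis on $g$ gives $\tau_{\leq 0}(Y \times_Z \Spec A) \cong \Spec A_0/I$ for some ideal $I \subset A_0 := \tau_{\leq 0}A$; writing $X \times_Z \Spec A \cong X \times_Y (Y \times_Z \Spec A)$ and applying truncation, the hypothesis on $f$ applied to the resulting map $\Spec A_0/I \to Y$ exhibits $\tau_{\leq 0}(X \times_Z \Spec A)$ as a closed subscheme of $\Spec A_0/I$, and hence of $\Spec A_0$.

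The 2-out-of-3 case is where one might expect to work harder, but it simplifies cleanly. Given closed immersions $g: Y \to Z$ and $g \circ f: X \to Z$ and a test $\Spec A \to Y$, the morphisms $\Spec A \to Y$ and $\id_{\Spec A}$ induce a section $\Spec A \to Y \times_Z \Spec A$ of the second projection; on classical truncations this is a section of the closed immersion $\tau_{\leq 0}(Y \times_Z \Spec A) \to \Spec A_0$, and any section of a closed immersion of ordinary affines is an isomorphism (closed immersions being monomorphisms, and a monomorphism with a right inverse being an isomorphism). Therefore $\tau_{\leq 0}(Y \times_Z \Spec A) \cong \Spec A_0$, and associativity of the fiber product together with the truncation--fiber-product compatibility yields $\tau_{\leq 0}(X \times_Y \Spec A) \cong \tau_{\leq 0}(X \times_Z \Spec A)$, which by the hypothesis on $g \circ f$ is a closed subscheme of $\Spec A_0$. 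The only nontrivial ingredient beyond bookkeeping is the compatibility of classical truncation with derived fiber products, which is standard formal nonsense.
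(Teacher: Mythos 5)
Your proof is correct, and it is precisely the reduction the paper has in mind: the paper offers no written argument beyond the remark that the statement ``follows immediately from its classical counterpart,'' and your writeup simply carries out that reduction, using the limit-preservation of the restriction functor $(-)_{\leq 0}$ (as the paper itself does implicitly in the proof of Proposition \ref{prop:classicaliso}) together with the standard retract trick for the 2-out-of-3 claim. No gaps; the bookkeeping you defer (matching the structure maps to $\Spec \tau_{\leq 0}A$ under the associativity identifications) is routine and checks out.
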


A closed immersion need not be affine (for example, the inclusion of a closed subscheme of a classical ind-scheme is typically not affine in the derived sense), but in $\GStkk$ we have the following. 

\begin{Proposition}\label{prop:immisaffforgeo}
	Closed immersions between geometric stacks are affine. 
\end{Proposition}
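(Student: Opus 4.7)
The plan is to fix an arbitrary morphism $\Spec A \to Y$ and show that $W := X \times_Y \Spec A$ is affine. The strategy unfolds in three stages: establish that $W$ is geometric, inductively show that each truncation $\tau_{\leq n} W$ is affine, then glue via convergence.

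First, I would verify $W \in \GStkk$. Choose a flat cover $p \colon \Spec B \to X$. Since $Y$ has affine diagonal, any morphism from an affine scheme to $Y$ is automatically affine; applied to $f \circ p$ this gives $\Spec B \times_Y \Spec A \cong \Spec C$ for some $C \in \CAlgk$, yielding a flat affine cover of $W$. Combined with the affine diagonal inherited via Proposition~\ref{prop:gstkprops}, this shows $W$ is geometric. The definition of closed immersion then supplies the base case of an induction: $\tau_{\leq 0} W \cong \Spec C_0$ with $C_0$ a quotient of $\pi_0(A)$.

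Next, I would argue by induction on $n$ that $\tau_{\leq n} W \cong \Spec B_n$ for some $B_n \in \CAlgk$. The step from $n-1$ to $n$ rests on the square-zero presentation of $\tau_{\leq n} W$ as an extension of $\tau_{\leq n-1} W$ by $\pi_n(\cO_W)[n]$, classified by a map out of the cotangent complex of $\tau_{\leq n-1} W$. Under the inductive hypothesis $\tau_{\leq n-1} W \cong \Spec B_{n-1}$, Tannaka duality for geometric stacks identifies $\QCoh(\tau_{\leq n-1} W)$ symmetric-monoidally with $\Mod_{B_{n-1}}$, so the extension descends to a square-zero extension $B_n \to B_{n-1}$ of commutative algebras, and $\Spec B_n$ represents $\tau_{\leq n} W$.

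Finally, to promote the tower $\{\Spec B_n\}$ to an identification of $W$, I would invoke the convergence of geometric stacks (Proposition~\ref{prop:gstkconvergent}). Setting $B := \lim_n B_n$, one checks $\tau_{\leq n} B \cong B_n$, so convergence yields, for any $R \in \CAlgk$, the chain $W(R) \cong \lim_n W(\tau_{\leq n} R) \cong \lim_n \Map_{\CAlgk}(B_n, \tau_{\leq n} R) \cong \Map_{\CAlgk}(B, R)$, exhibiting $W \cong \Spec B$.

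The main obstacle will be the inductive step, where one must transport square-zero extension data from the derived geometric setting to the ring level when the base is affine. This is exactly where Tannaka duality is indispensable, and it is what distinguishes the argument from the Artin-stack analogue in \cite[Prop.~3.4.4.9]{GR17}, which relies on pro-cotangent complex techniques not directly available in our generality.
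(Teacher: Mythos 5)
Your Stage 1 and your final convergence step are fine: geometricity of $W$ follows as you say (or directly from Proposition \ref{prop:gstkprops}), and the limit argument $W(R) \cong \lim_n W(\tau_{\leq n} R) \cong \lim_n \Map_{\CAlgk}(B_n, \tau_{\leq n}R) \cong \Map_{\CAlgk}(B,R)$ is legitimate given Proposition \ref{prop:gstkconvergent}. The genuine gap is the inductive step, and you have correctly located but not resolved it. You need two facts there: (a) that $\tau_{\leq n} W \to \tau_{\leq n-1} W$ is, \emph{at the level of stacks}, a square-zero extension classified by a derivation $L_{\tau_{\leq n-1} W} \to \pi_n(\cO_W)[n+1]$, and (b) that the stack-level square-zero extension of the affine $\Spec B_{n-1}$ by a quasi-coherent module is represented by $\Spec$ of the ring-level square-zero extension. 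Both are deformation-theoretic theorems available in the literature for spectral Deligne--Mumford stacks and for Artin stacks locally almost of finite presentation, where one has existence of the cotangent complex and infinitesimal cohesiveness; neither is established for geometric stacks in this paper's generality (fpqc stacks with affine diagonal and a flat affine cover, possibly non-Noetherian and of infinite type --- note that $W = X \times_Y \Spec A$ can be as wild as Example \ref{ex:selfintAinfty}, and base stacks like $BG_\cO$ are far from locally of finite presentation). The paper flags exactly this obstruction in Section \ref{sec:literature}: ``some deformation-theoretic tools are no longer available, and as needed we appeal to Tannaka duality \dots in their place.'' Your invocation of Tannaka duality does not plug the hole: once $\tau_{\leq n-1} W \cong \Spec B_{n-1}$ is known, the symmetric monoidal identification $\QCoh(\tau_{\leq n-1} W) \cong \Mod_{B_{n-1}}$ is immediate from descent and carries no new content; what Tannaka cannot supply is the square-zero presentation of the truncation tower itself, and establishing that for this class of stacks is essentially a harder problem than the proposition.

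For comparison, the paper's proof avoids the Postnikov induction entirely and uses Tannaka duality in a different, load-bearing way. Setting $X' := X \times_Y \Spec A$ and $B := f'_*(\cO_{X'})$, it factors $f'$ through $f'': X' \to \Spec B$ and shows $f''$ is an isomorphism by proving $f''_*: \QCoh(X') \to \Mod_B$ is an equivalence, combining Tannaka duality \cite[Thm. 9.2.0.2]{LurSAG} with Barr--Beck--Lurie monadicity \cite[Thm. 4.7.4.5]{LurHA}; the monadicity hypotheses (conservativity and preservation of small colimits for $f'_*$) are checked by a t-structure argument: the definition of closed immersion makes $\tau_{\leq 0} f''$ an isomorphism, so $f'_*$ restricts to a conservative continuous functor on hearts, whence t-exactness as in \cite[Lem. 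A.1.6]{HLP23}, and then conservativity and continuity follow from t-completeness of $\QCoh(X')$ and of $\Mod_A$. If you want to salvage your route, you would first have to develop the cotangent complex and square-zero formalism for geometric stacks in this generality --- precisely the machinery the paper's one-shot argument is designed to circumvent.
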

\begin{proof}
	Let $f: X \to Y$ be a closed immersion between geometric stacks, and let $\Spec A \to Y$ be arbitrary. The base change $f': X' \to \Spec A$ of $f$ factors through a map $f'': X' \to \Spec B$, where $B := f'_*(\cO_{X'}) \in \CAlg_A$. We claim $f''$ is an isomorphism. By Tannaka duality \cite[Thm. 9.2.0.2]{LurSAG} it suffices to show $f''_*: \QCoh(X') \to \Mod_B$ is an equivalence. By Barr-Beck-Lurie \cite[Thm. 4.7.4.5]{LurHA} it further suffices to show $f'_*: \QCoh(X') \to \Mod_A$ is conservative and preserves small colimits.
	
	Since $f$ is a closed immersion $\tau_{\leq 0} f'': \tau_{\leq 0} X' \to \tau_{\leq 0} \Spec B$ is an isomorphism, hence $f''_*$ restricts to an equivalence $\QCoh(X')^\heartsuit \cong \Mod_B^\heartsuit$. It follows that the restriction of $f'_*$ to  $\QCoh(X')^\heartsuit$ is conservative, continuous, and factors through $\Mod_A^{\heartsuit}$. It follows as in \cite[Lem. A.1.6]{HLP23} that $f'_*$ is t-exact. 
	
	Suppose now that $f'_*(\cF) \cong 0$ for some $\cF \in \QCoh(X')$. It follows from the preceding paragraph that $\cH^n (\cF) \cong 0$ for all $n \in \Z$. Since $\QCoh(X')$ is t-complete we then have $\cF \cong 0$, hence $f'_*$ is conservative. Similarly, suppose $\cF \cong \colim \cF_\al$ is a filtered colimit in $\QCoh(X')$, and let $\phi: \colim f'_*(\cF_\al) \to f'_*(\cF)$ denote the natural map. Since the t-structures on $\QCoh(X')$ and $\Mod_A$ are compatible with filtered colimits, it follows from the preceding paragraph that $\cH^n(\phi)$ is an isomorphism for all $n \in \Z$. That $\phi$ is an isomorphism, hence that~$f'_*$ is continuous, now follows from the t-completeness of $\Mod_A$. But then $f'_*$ preserves all small colimits since it is exact \cite[Prop. 5.5.1.9]{LurHTT}.	
\end{proof}

\begin{Proposition}\label{prop:classicaliso}
	Let $f: X \to Y_\al$, $h: Y' \to Y_\al$, and $i: Y_\al \to Y_\be$ be morphisms of geometric stacks. If $i$ is a closed immersion, the map $\tau_{\leq 0}(X \times_{Y_\al} Y') \to \tau_{\leq 0}(X \times_{Y_\be} Y')$ is an isomorphism. 
\end{Proposition}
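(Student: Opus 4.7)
The plan is to reduce the comparison of classical stacks to a pointwise check on classical test rings, where the result follows directly from the defining property of closed immersions together with Proposition~\ref{prop:immisaffforgeo}.

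The first step is to observe that the inclusion $\tau_{\leq 0}\CAlgk \hookrightarrow \CAlgk$ is fully faithful (classical rings being $0$-truncated), so the adjunction $i_{\leq 0} \dashv (-)_{\leq 0}$ defining $\tau_{\leq 0}$ gives $(\tau_{\leq 0} F)(R) \cong F(R)$ for every stack $F$ and every classical $R$. Since classical stacks are determined by their values on classical rings, the claim reduces to showing that
\[ X(R) \times_{Y_\al(R)} Y'(R) \longrightarrow X(R) \times_{Y_\be(R)} Y'(R) \]
is an equivalence of spaces for every classical ring $R$. A standard check (fiber products are preserved when the base is replaced by a target that contains it as a union of components) shows this holds as soon as $Y_\al(R) \to Y_\be(R)$ is a monomorphism of spaces, i.e. has empty or contractible homotopy fibers.

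The remaining task is to verify this pointwise monomorphism property. Fixing $\phi \in Y_\be(R)$, the homotopy fiber over $\phi$ identifies with the space of sections of the base change $p: W := \Spec R \times_{Y_\be} Y_\al \to \Spec R$, which is again a closed immersion by Proposition~\ref{prop:cl2of3prop}. By Proposition~\ref{prop:immisaffforgeo}, $W$ is affine, say $W \cong \Spec B$, and the defining property of closed immersions forces $R \to \pi_0 B$ to be a surjection of classical rings. Since $R$ is $0$-truncated,
\[ \Map_{R/}(B, R) \cong \Map_{R/}(\pi_0 B, R), \]
and the right-hand side is a discrete set with at most one element (namely the inverse of $R \twoheadrightarrow \pi_0 B$ when that surjection is an isomorphism). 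This yields the required monomorphism property.

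The one subtle point, which I expect to be the only real obstacle, is the initial reduction: one must know that $(\tau_{\leq 0} F)(R) \cong F(R)$ holds on the nose for classical $R$, without any sheafification correction intervening, so that the comparison of classical truncations becomes a pointwise statement. Once this is in hand the argument is a short unwinding using that closed immersions between geometric stacks are affine.
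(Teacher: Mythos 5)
Your proof is correct, but it takes a genuinely different route from the paper's. The paper argues by flat covers: it first treats the case where $X$ and $Y'$ are affine, pulls everything back along a flat cover $\Spec B_\be \to Y_\be$, and reduces to the elementary classical fact that a surjection $\pi_0 B_\be \twoheadrightarrow \pi_0 B_\al$ gives $\pi_0 A \otimes_{\pi_0 B_\al} \pi_0 B' \cong \pi_0 A \otimes_{\pi_0 B_\be} \pi_0 B'$; the general case then follows by choosing flat covers of $X$ and $Y'$. You instead reduce to a pointwise statement on classical rings and prove the reusable intermediate fact that a closed immersion of geometric stacks induces a monomorphism $Y_\al(R) \to Y_\be(R)$ for every classical $R$, by identifying the homotopy fiber over $\phi$ with the section space of $W := \Spec R \times_{Y_\be} Y_\al \to \Spec R$ and showing it is empty or contractible. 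The verification is sound: $W$ is geometric by Proposition \ref{prop:gstkprops}, the projection is a closed immersion by base-change stability (Proposition \ref{prop:cl2of3prop}), affineness comes from Proposition \ref{prop:immisaffforgeo} (which precedes the present statement and whose proof is independent of it, so there is no circularity), and your section count for the surjection $R \twoheadrightarrow \pi_0 B$ is right, as is the lemma that pullbacks are unchanged when the base is enlarged along a monomorphism through which both legs factor (using $Y_\al(R) \times_{Y_\be(R)} Y_\al(R) \simeq Y_\al(R)$). The trade-off: your argument is more conceptual and avoids the two-step Cech/cover bootstrapping, but it leans on Proposition \ref{prop:immisaffforgeo}, i.e. on Tannaka duality, where the paper's computation is entirely elementary.

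Concerning the one subtle point you flag: it is both true and avoidable. It is true because any faithfully flat algebra over a classical ring is itself classical, so the classical affine site is closed under fpqc covers inside the big site; hence restriction $(-)_{\leq 0}$ commutes with sheafification, $i_{\leq 0}$ is fully faithful, and $(\tau_{\leq 0}F)(R) \cong F(R)$ for classical $R$. But you can bypass it entirely: the map in the statement is $i_{\leq 0}$ applied to the restricted map $(X \times_{Y_\al} Y')_{\leq 0} \to (X \times_{Y_\be} Y')_{\leq 0}$, and restriction preserves fiber products (it is a right adjoint, and limits of stacks are computed objectwise), so it suffices to prove your pointwise equivalence over classical $R$ and then apply the functor $i_{\leq 0}$ --- no sheafification comparison intervenes in that direction.
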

\begin{proof}
	Let $X'_\al := X \times_{Y_\al} Y'$, $X'_\be := X \times_{Y_\be} Y'$, and suppose first that $X$ and $Y'$ are affine. Fixing a flat cover $\Spec B_\be \to Y_\be$ we obtain flat covers $\Spec A'_\al \to X'_\al$, $\Spec A'_\be \to X'_\be$ by base change. It suffices to show the induced morphism $\tau_{\leq 0} \Spec A'_\al \to \tau_{\leq 0} \Spec A'_\be$ is an isomorphism \cite[Lem. 6.2.3.16]{LurHTT}. If $\Spec A \to X$, $\Spec B' \to Y'$, and $\Spec B_\al \to Y_\al$ are also obtained by base change from $\Spec B_\be \to Y_\be$, then $\tau_{\leq 0} \Spec B_\al \to \tau_{\leq 0} \Spec B_\be$ is a closed immersion since $i$ is. But then $\tau_{\leq 0} \Spec A'_\al \to \tau_{\leq 0} \Spec A'_\be$ is an isomorphism since $A'_\al \cong A \ot_{B_\al} B'$ and $A'_\be \cong A \ot_{B_\be} B'$. 
	
	In the general case, fix flat covers $\Spec C \to X$ and $\Spec D' \to Y'$, and let $\Spec C'_\al := \Spec C \times_{Y_\al} \Spec D'$ and $\Spec C'_\be := \Spec C \times_{Y_\be} \Spec D'$. By the preceding paragraph $\tau_{\leq 0} \Spec C'_\al \to \tau_{\leq 0} \Spec C'_\be$ is an isomorphism. But since it is the base change of $\tau_{\leq 0} X'_\al \to \tau_{\leq 0} X'_\be$ along the flat cover $\tau_{\leq 0} \Spec C'_\be \to \tau_{\leq 0} X'_\be$, the claim follows. 
\end{proof}

\subsection{Convergence}
Recall that $\PreStkkconv$ denotes the category of functors $\CAlgkleqinfty \to \Spc$, and $\Stkkconv \subset \PreStkkconv$ the subcategory of functors satisfying fpqc descent. The restriction functor $(-)_{< \infty}: \PreStkk \to \PreStkkconv$ has a fully faithful right adjoint, by which we generally regard $\PreStkkconv$ as a subcategory of $\PreStkk$. One says a prestack is convergent (or nilcomplete) if it is contained in this subcategory. Explicitly, $X \in \PreStkk$ is convergent if for all $A \in \CAlgk$ the natural morphism $X(A) \to \lim X(\tau_{\leq n} A)$ is an isomorphism. We have an induced notion of convergent stack, which is unambiguous in the following sense. 

\begin{Lemma}\label{lem:convstacks}
	The inclusion $\PreStkkconv \subset \PreStkk$ identifies $\Stkkconv$ with $\Stkk \cap \PreStkkconv$. 
\end{Lemma}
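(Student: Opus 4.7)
The plan is to verify both inclusions under the identification of $\PreStkkconv$ with a reflective subcategory of $\PreStkk$. Throughout, the key observation I will exploit is that if $A \to B$ is flat in $\CAlgk$ then the derived tensor product $B \otimes_A C$ behaves t-exactly in $C$; in particular, truncations preserve flatness, and for any $k \geq 0$ there is a natural isomorphism
\begin{equation*}
\tau_{\leq n}(B^{\otimes_A k}) \;\cong\; (\tau_{\leq n} B)^{\otimes_{\tau_{\leq n} A} k},
\end{equation*}
checked on homotopy groups for $i \leq n$ using that $\pi_i(B^{\otimes_A k})$ is computed as an iterated ordinary tensor product over $\pi_0 A$ by flatness. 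This isomorphism also ensures that the pushouts of truncated flat algebras stay truncated, so the flat-pushout Cech nerve of $\tau_{\leq n} A \to \tau_{\leq n} B$ in $\CAlgkleqinfty$ agrees with its Cech nerve in $\CAlgk$.

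For the easier direction, suppose $X \in \Stkk \cap \PreStkkconv$ and let $A \to B$ be an fpqc cover in $\CAlgkleqinfty$. Then $A \to B$ is in particular an fpqc cover in $\CAlgk$, and by the compatibility above its Cech nerve in $\CAlgkleqinfty$ coincides with that in $\CAlgk$. The descent condition defining $\Stkkconv$ thus follows immediately from the one defining $\Stkk$.

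For the converse, suppose $X \in \Stkkconv$ viewed as convergent prestack, and let $A \to B$ be an fpqc cover in $\CAlgk$. I want to show $X(A) \to \Tot_{k}\, X(B^{\otimes_A k})$ is an equivalence. For each $n$ the induced $\tau_{\leq n} A \to \tau_{\leq n} B$ is flat (flatness being detected on $\pi_0$ and $\pi_i$ for $i \leq n$), hence an fpqc cover in $\CAlgkleqn \subset \CAlgkleqinfty$. Convergence of $X$ and descent on $\CAlgkleqinfty$ give
\begin{equation*}
X(A) \;\cong\; \lim_n X(\tau_{\leq n} A) \;\cong\; \lim_n \Tot_k\, X\bigl((\tau_{\leq n} B)^{\otimes_{\tau_{\leq n} A} k}\bigr) \;\cong\; \lim_n \Tot_k\, X\bigl(\tau_{\leq n}(B^{\otimes_A k})\bigr),
\end{equation*}
where the last equivalence uses the tensor-truncation compatibility above. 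Exchanging the two limits and applying convergence of $X$ at each $B^{\otimes_A k}$ yields $\Tot_k X(B^{\otimes_A k})$, as desired.

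The only nontrivial step is the tensor-truncation identity; the rest is formal manipulation of limits. I would simply verify this identity by comparing homotopy groups, invoking flatness to reduce the derived tensor product to the underived one in each degree $\leq n$. Once this is in hand, the lemma reduces to a diagram of limit interchanges, and both inclusions drop out.
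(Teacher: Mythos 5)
Your treatment of the descent half is correct and follows essentially the same route as the paper: both arguments rest on the compatibility $\tau_{\leq n}(B^{\otimes_A k}) \cong (\tau_{\leq n} B)^{\otimes_{\tau_{\leq n} A} k}$ for flat $A \to B$ (so that flat-pushout Cech nerves in $\CAlgkleqinfty$ agree with those computed in $\CAlgk$), followed by convergence $X(A) \cong \lim_n X(\tau_{\leq n} A)$ and an interchange of limits. In fact you make explicit, and correctly verify on homotopy groups, a compatibility that the paper's proof leaves implicit, and your observation that truncations of (faithfully) flat maps are (faithfully) flat is also right.

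There is, however, a genuine gap: in this paper a stack is a sheaf for the fpqc topology in the sense of \cite[Prop. B.6.1.3, Prop. A.3.3.1]{LurSAG}, and that sheaf condition has \emph{two} components: (1) $X$ preserves finite products, i.e. $X(\prod_{i} A_i) \congto \prod_i X(A_i)$, and (2) Cech descent along a single faithfully flat morphism. You verify only (2), implicitly treating an fpqc cover as a single map $A \to B$. Condition (1) is not a degenerate case of (2) --- fpqc covering families are finite families of flat maps, and product preservation is precisely what reduces family descent to single-map descent --- and it does not follow from your tensor-truncation identity, since finite products of algebras are not flat pushouts. Closing the gap in the converse direction requires the additional observation that $\tau_{\leq n}: \CAlgk \to \CAlgkleqn$ preserves finite products, which the paper deduces from the additivity of the category of connective spectra together with \cite[Cor. 3.2.2.5]{LurHA}; granting this, the same convergence and limit-interchange pattern gives $X(\textstyle\prod_i A_i) \cong \lim_n X(\tau_{\leq n} \prod_i A_i) \cong \lim_n \prod_i X(\tau_{\leq n} A_i) \cong \prod_i X(A_i)$. (In your easy direction the product condition is immediate, since $\CAlgkleqinfty$ is closed under finite products, so only the converse needs this repair.)
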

\begin{proof}
	That $\Stkk \cap \PreStkkconv \subset \Stkkconv$ follows from  the definition of the fpqc topology and from \cite[Prop. A.3.3.1]{LurSAG} (closure of $\CAlgkleqinfty$ under pushouts along flat morphisms is sufficient to apply this to $\PreStkkconv$). Now suppose $X \in \Stkkconv \subset \PreStkkconv \subset \PreStkk$. Note that $\tau_{\leq n}: \Sp^{\cn} \to \tau_{\leq n} \Sp^{\cn}$ preserves finite products since it preserves colimits and $\Sp^{\cn}$ is additive, hence $\tau_{\leq n}: \CAlgk \to \CAlgkleqn$ preserves finite products by \cite[Cor. 3.2.2.5]{LurHA}. 
	Then if $A \cong \prod_{i=1}^n A_i$ is a finite product in $\CAlgk$, we have
	$ X(A) \cong \lim_n X(\tau_{\leq n} A) \cong \lim_{n,i} X(\tau_{\leq n} A_i) \cong \lim_{i} X(A_i). $
	Similarly, if $A \to A^0$ in $\CAlgk$ is faithfully flat and $A^\bullet$ its Cech nerve, then  
	$ X(A) \cong \lim_n X(\tau_{\leq n} A) \cong \lim_{n,i} X(\tau_{\leq n} A^i) \cong \lim_{i} X(A^i). $
	Thus $X \in \Stkk$ by \cite[Prop. A.3.3.1]{LurSAG}. 
\end{proof}

We now have the following result in the geometric case. 

\begin{Proposition}\label{prop:gstkconvergent}
Geometric stacks are convergent. 
\end{Proposition}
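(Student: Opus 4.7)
The plan is to invoke Tannaka duality, as signaled in Section \ref{sec:literature}, to translate convergence of $X$ into Postnikov descent for connective module categories. The template used for Artin stacks in \cite[Prop.~3.4.4.9]{GR17}, which rests on deformation theory and Artin-Lurie representability, is not available to us here: geometric stacks in our sense need not be algebraic (for instance $BG_\cO$).

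Fix $R \in \CAlgk$ and set $R_n := \tau_{\leq n} R$, so that $R \cong \lim_n R_n$ in $\CAlgk$. The task is to show that the natural map $X(R) \to \lim_n X(R_n)$ is an equivalence. By Tannaka duality \cite[Thm.~9.2.0.2]{LurSAG}, for any $A \in \CAlgk$ the space $X(A)$ is identified with a space $\Fun^T(\QCoh(X)^{cn}, \Mod_A^{cn})$ of ``Tannakian'' functors---symmetric monoidal, colimit-preserving functors that preserve connective objects and flat objects (with the corresponding condition on faithfully flat morphisms). Since $\QCoh(X)^{cn}$ does not depend on $A$, it suffices to show that
\[ \Fun^T(\QCoh(X)^{cn}, \Mod_R^{cn}) \to \lim_n \Fun^T(\QCoh(X)^{cn}, \Mod_{R_n}^{cn}) \]
is an equivalence.

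To finish I would check two points. First, each Tannakian condition (symmetric monoidality, colimit preservation, preservation of connective and flat objects) is closed under pointwise limits in the target, so $\Fun^T(\QCoh(X)^{cn}, -)$ sends limits to limits. Second, $\Mod_R^{cn} \cong \lim_n \Mod_{R_n}^{cn}$ in the ambient $\infty$-category of presentable stable symmetric monoidal $\infty$-categories with connective $t$-structures; this is Postnikov descent for connective modules, and follows from $t$-completeness of $\Mod_R^{cn}$ together with the fact that base change along $R_{n+1} \to R_n$ sends the Postnikov tower of algebras $\{R_n\}$ to a compatible tower of their categories of connective modules. The main obstacle will be matching the precise form of Tannaka duality one invokes to a compatibly phrased module-descent statement; both ingredients are standard, and the delicate point is pinning down formulations that slot cleanly together.
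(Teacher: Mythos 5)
Your proposal matches the paper's proof in essentially all respects: after a preliminary reduction to stacks over the sphere spectrum, the paper likewise uses Tannaka duality (\cite[Prop. 9.3.0.3]{LurSAG}) to embed $\Map(\Spec A, X)$ into spaces of symmetric monoidal colimit-preserving functors $\QCoh(X)^{\leq 0} \to \Mod_A^{\leq 0}$, proves Postnikov descent $\Mod_A^{\leq 0} \cong \lim_n \Mod_{\tau_{\leq n}A}^{\leq 0}$ via left completeness, and then checks that the Tannakian conditions pass to the limit (colimit preservation via closure of $\PrL$ under limits in $\Cathatinfty$, flatness via the identification $\tau_{\leq n}G(\cF) \cong \tau_{\leq n}(\tau_{\leq n}A \otimes_A G(\cF))$). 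The one point you leave understated --- that preservation of flat objects by the limit functor follows from flat-preservation of its truncations, which is the only non-formal closure check --- is exactly the argument the paper supplies, so your plan goes through as stated.
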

\begin{proof}
	Suppose $X \in \GStkk$ and $A \in \CAlgk$. Recall again that using \cite[Cor. 5.1.6.12]{LurHTT}, \cite[Prop. A.3.3.1]{LurSAG} we have an equivalence $\Stkk \cong \Stk_{/\Spec \kk}$, where $\Stk := \Stk_S$ for $S$ the sphere spectrum. We define convergent objects in $\Stk$ the same way as in $\Stkk$. By \cite[Lem. 5.5.5.12]{LurHTT} we have that $\Map_{\Stkk}(\Spec A, X)$ is the fiber of the map $\Map_{\Stk}(\Spec A, X) \to \Map_{\Stk}(\Spec A, \Spec \kk)$ induced by composition with $X \to \Spec \kk$ over the point corresponding to $\Spec A \to \Spec \kk$. Since the same holds for each $\tau_{\leq n}A$, and since $\Spec \kk$ is convergent in $\Stk$, it follows that $X$ is convergent in $\Stkk$ if its image in $\Stk$ is convergent. 
	
	Consider the natural diagram
	\begin{equation}\label{eq:convergencediagram}
		\begin{tikzpicture}
			[baseline=(current  bounding  box.center),thick,>=\arrtip]
			\node (a) at (0,0) {$\Map_{\Stk}(\Spec A, X)$};
			\node (b) at (7,0) {$\Map_{\CAlg(\Cathatinfty)}( \QCoh(X)^{\leq 0}, \Mod_A^{\leq 0})$};
			\node (c) at (0,-1.5) {$\lim \Map_{\Stk}(\Spec \tau_{\leq n} A, X)$};
			\node (d) at (7,-1.5) {$\lim \Map_{\CAlg(\Cathatinfty)}( \QCoh(X)^{\leq 0}, \Mod_{\tau_{\leq n}A}^{\leq 0})$.};
			\draw[->] (a) to node[above] {$ $} (b);
			\draw[->] (b) to node[right] {$ $} (d);
			\draw[->] (a) to node[left] {$ $}(c);
			\draw[->] (c) to node[above] {$ $} (d);
		\end{tikzpicture}
	\end{equation}
	By \cite[Lem. 5.4.6]{LurDAGXII} the top map is a monomorphism if the corresponding map from $\Map_{\Stk}(\Spec A, X)$ to $\Map_{\CAlg(\Cathatinfty)}( \QCoh(X), \Mod_A)$ is a monomorphism, which in turn follows from Tannaka duality \cite[Prop. 9.3.0.3]{LurSAG}. Since this also holds replacing $A$ with $\tau_{\leq n}A$, and since monomorphisms are stable under limits, it follows similarly that the bottom map is a monomorphism. The right map is an isomorphism, since we have
	$$ \Mod_A^{\leq 0} \cong \lim_m \Mod_A^{[m,0]} \cong \lim_{m,n} \Mod_{\tau_{\leq n} A}^{[m,0]} \cong \lim_n \Mod_{\tau_{\leq n} A}^{\leq 0}, $$
	in $\CAlg(\Cathatinfty)$. Here the first and third equivalences follow from the relevant t-structures being left complete and $\CAlg(\Cathatinfty) \to \Cathatinfty$ preserving limits \cite[Cor. 3.2.2.5]{LurHA}, the second from $\Mod_{A}^{[m,0]} \to \Mod_{\tau_{\leq n} A}^{[m,0]}$ being an equivalence for $m \leq - n$. It follows that the left map in (\ref{eq:convergencediagram}) is a monomorphism, so we must show it is essentially surjective. 
	
	Since the horizontal maps are monomorphisms, it suffices to show the right isomorphism restricts to an essential surjection between their essential images. By Tannaka duality \cite[Prop. 9.3.0.3]{LurSAG} the essential image of the bottom map consists of systems $\{G_n: \QCoh(X)^{\leq 0} \to \Mod_{\tau_{\leq n}A}^{\leq 0}\}$ of symmetric monoidal functors which preserve small colimits and flat objects, similarly for the top map. Let $G: \QCoh(X)^{\leq 0} \to \Mod_{A}^{\leq 0}$ be the functor associated to some such system $\{G_n\}$ under the right isomorphism. Then $G$ preserves small colimits 
	since $\PrL$ is closed under limits in $\Cathatinfty$ \cite[Prop. 5.5.3.13]{LurHTT}. Moreover, if $G_n(\cF) \cong \tau_{\leq n}A \otimes_A  G(\cF)$ is flat over $\tau_{\leq n}A$ for all $n$, then since $\tau_{\leq n}G(\cF) \cong \tau_{\leq n}(\tau_{\leq n}A \otimes_A  G(\cF))$ by \cite[Prop. 7.1.3.15]{LurHA} it follows from the definition of flatness that $G(\cF)$ is flat over $A$. Thus $G$ is in the image of the top map in (\ref{eq:convergencediagram}), establishing the claim. 
\end{proof}

Since $\CAlgkleqn$ is closed under products and targets of flat morphisms in $\CAlgkleqinfty$, the restriction functor $(-)_{\leq n}: \PreStkkconv \to \PreStkkleqn$ takes $\Stkkconv$ to $\Stkkleqn$ \cite[Prop. A.3.3.1]{LurSAG}. We write $\wh{i}_{\leq n}: \Stkkleqn \to \Stkkconv$ for the resulting left adjoint and $\wh{\tau}_{\leq n}: \Stkkconv \to \Stkkconv$ for their composition. 

The functors $(-)_{\leq n}$ induce an equivalence $\Stkkconv \cong \lim \Stkkleqn$ in $\Cathatinfty$ \cite[Ex. A.7.1.6]{LurSAG}. Moreover, for any $X \in \Stkkconv$ the natural map $\colim \wh{\tau}_{\leq n} X \to X$ is an isomorphism. This is a special case of Lemma~\ref{lem:colimpres}, but more explicitly if $\wh{\tau}_{\leq n}^{\,pre}: \PreStkkconv  \to \PreStkkconv$ denotes the composition of $(-)_{\leq n}: \PreStkkconv \to \PreStkkleqn$ and its left adjoint, then $\colim \wh{\tau}_{\leq n}^{\,pre} X \to X$ is an isomorphism since $\CAlgkleqinfty = \cup_n \CAlgkleqn$ and since colimits in $\PreStkkconv$ are computed objectwise. But $\wh{\tau}_{\leq n}$ is the sheafification of $\wh{\tau}_{\leq n}^{\,pre}$, so $\colim \wh{\tau}_{\leq n} X \to X$ is an isomorphism since sheafification is continuous. In particular, if $X$ is a geometric stack Proposition \ref{prop:gstkconvergent} implies $\tau_{\leq n} X \cong \wh{\tau}_{\leq n} X$, hence we obtain the following corollary. 

\begin{Proposition}\label{prop:gstacktruncapprox}
	For any geometric stack $X$ we have $X \cong \colim \tau_{\leq n} X$ in $\Stkkconv$.
\end{Proposition}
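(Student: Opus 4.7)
The plan is to derive this as a direct corollary of Proposition \ref{prop:gstkconvergent} together with the general statement, established in the paragraph just above, that $\colim \wh{\tau}_{\leq n} X \to X$ is an isomorphism in $\Stkkconv$ for any convergent stack. The only thing remaining to check is that for geometric $X$ one has $\wh{\tau}_{\leq n} X \cong \tau_{\leq n} X$, so that the general colimit expression rewrites in the desired form.

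To verify the identification $\wh{\tau}_{\leq n} X \cong \tau_{\leq n} X$, I would first recall that $\tau_{\leq n} X$ is an $n$-truncated geometric stack, so by Proposition \ref{prop:gstkconvergent} it belongs to $\Stkkconv$. Both $\tau_{\leq n} = i_{\leq n} \circ (-)_{\leq n}$ and $\wh{\tau}_{\leq n} = \wh{i}_{\leq n} \circ (-)_{\leq n}$ start with the same restriction $X \mapsto X_{\leq n}$, so it suffices to compare the two left adjoints on the common value $X_{\leq n} \in \Stkkleqn$. For any $Y \in \Stkkconv$ the adjunctions give
\[
\Map_{\Stkkconv}(\wh{\tau}_{\leq n} X, Y) \cong \Map_{\Stkkleqn}(X_{\leq n}, Y_{\leq n}) \cong \Map_{\Stkk}(\tau_{\leq n} X, Y),
\]
and the right-hand side agrees with $\Map_{\Stkkconv}(\tau_{\leq n} X, Y)$ because $\tau_{\leq n} X$ is convergent and $\Stkkconv \subset \Stkk$ is a full subcategory (Lemma \ref{lem:convstacks}). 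Yoneda then yields $\wh{\tau}_{\leq n} X \cong \tau_{\leq n} X$ in $\Stkkconv$.

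Combining these two observations gives $X \cong \colim \wh{\tau}_{\leq n} X \cong \colim \tau_{\leq n} X$ in $\Stkkconv$, as desired. There is no real obstacle here: the substantive input is Proposition \ref{prop:gstkconvergent}, and everything else is bookkeeping with the adjunctions $\wh{i}_{\leq n} \dashv (-)_{\leq n}$ and $i_{\leq n} \dashv (-)_{\leq n}$ together with the fact that the forgetful $\Stkkconv \hookrightarrow \Stkk$ is fully faithful.
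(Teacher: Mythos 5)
Your proof is correct and takes essentially the same route as the paper: the paper likewise deduces the proposition from the general fact (established in the preceding paragraph) that $\colim \wh{\tau}_{\leq n} X \to X$ is an isomorphism in $\Stkkconv$ for any convergent stack, combined with the identification $\tau_{\leq n} X \cong \wh{\tau}_{\leq n} X$ for geometric $X$ via Proposition \ref{prop:gstkconvergent}. Your adjunction-and-Yoneda verification of that identification (using that $\tau_{\leq n}X$ is a truncated geometric stack, hence convergent, and that $\Stkkconv \subset \Stkk$ is full) simply spells out the step the paper leaves implicit.
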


Now let $\oneStkkconv \subset \Stkkconv$ denote the full subcategory consisting of $X$ such that $X(A)$ is an $(n+1)$-truncated space for all $A \in \CAlgkleqn$. Then Proposition \ref{prop:gstkconvergent} is refined by the following result, the first half of which is a variant of \cite[Cor. I.2.4.3.4]{GR17}, the second of \cite[Lem. 1.3.6]{GR14}. 

\begin{Proposition}\label{prop:oneStkkconvcompactness}
	Geometric stacks are objects of $\oneStkkconv$. Moreover, $\oneStkkconv$ is closed under filtered colimits in $\PreStkkconv$, and truncated geometric stacks are compact as objects of $\oneStkkconv$. 
\end{Proposition}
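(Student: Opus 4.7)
For $X \in \GStkk$ and $B \in \tau_{\leq n}\CAlgk$, I would deduce $(n+1)$-truncation of $X(B)$ directly from the affine diagonal hypothesis, using the characterization that a space is $(n+1)$-truncated iff all its path spaces are $n$-truncated. Given $x, y \in X(B)$, the path space between $x$ and $y$ is the value at $B$ of the fiber product $X \times_{X \times X, (x,y)} \Spec B$, which is affine because $X \to X \times X$ is. It thus has the form $\Map_{\CAlgk}(D, B)$ for some $D$, which is $n$-truncated because $B$ is an $n$-truncated object of $\CAlgk$.

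\textbf{Closure under filtered colimits.} This follows at once from two observations: filtered colimits in $\PreStkkconv$ are computed objectwise, and filtered colimits of $(n+1)$-truncated spaces remain $(n+1)$-truncated.

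\textbf{Compactness.} Let $X$ be $m$-truncated, so that $X \cong \wh{i}_{\leq m}(X_{\leq m})$ and adjunction yields $\Map_{\oneStkkconv}(X, Y) \cong \Map_{\mathrm{Stk}_{\kk, \leq m}}(X_{\leq m}, Y_{\leq m})$ for $Y \in \oneStkkconv$. Choose a flat cover $\Spec A \to X$ with $A \in \tau_{\leq m}\CAlgk$ (possible by definition of truncated). Its Cech nerve computed in $\mathrm{Stk}_{\kk, \leq m}$ consists of affine, $m$-truncated schemes $\Spec B_\bullet$ (by affine diagonal and truncation of fiber products in $\mathrm{Stk}_{\kk, \leq m}$), and since flat covers are effective epimorphisms for the fpqc topology, mapping into the sheaf $Y_{\leq m}$ yields $\Map_{\mathrm{Stk}_{\kk, \leq m}}(X_{\leq m}, Y_{\leq m}) \cong \Tot Y(B_\bullet)$. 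Each $Y(B_i)$ is $(m+1)$-truncated since $B_i \in \tau_{\leq m}\CAlgk$ and $Y \in \oneStkkconv$; the totalization of a cosimplicial space with uniformly $(m+1)$-truncated terms reduces to a finite limit; each evaluation $Y \mapsto Y(B_i)$ preserves filtered colimits (by the preceding step and objectwise computation); and finite limits commute with filtered colimits in $\Spc$. Together these imply that $\Map_{\oneStkkconv}(X, -)$ preserves filtered colimits.

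The main obstacle will be the finite-limit reduction in the compactness step: verifying that $\Tot Y(B_\bullet)$ is computed by a finite subdiagram of $\Delta$ under uniform $(m+1)$-truncation. This is a standard convergence statement for cosimplicial spaces of bounded truncation (via the Bousfield--Kan spectral sequence or the skeletal totalization tower), though its careful application in the present context --- together with the verification that the effective descent presentation of $X_{\leq m}$ behaves as expected --- is what will require the bulk of the work.
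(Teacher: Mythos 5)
Your first and third steps are sound, and in places take a genuinely different route from the paper. For truncatedness of values you use the affine diagonal directly: the path space between $x,y \in X(B)$ is (up to taking the fiber over $\id_B \in \Map_{\CAlgk}(B,B)$, a harmless refinement of your phrasing) a mapping space $\Map_{\CAlgk}(D,B)$ into an $n$-truncated algebra, hence $n$-truncated; this avoids the paper's detour through the Cech nerve and left-exactness of sheafification. For compactness you dualize the paper's argument: the paper realizes $X_{\leq n}$ as a \emph{finite colimit} of compact affines in $\tau_{\leq n+1}\Stkkleqn$, using the $(n+2)$-category reduction of the simplicial diagram to $\Delta_{s,\leq n+2}^{op}$ as in \cite[Lem. 1.3.3.10]{LurHA}, whereas you compute maps out of $X$ as a \emph{finite limit} of evaluations $Y(B_i)$ via descent along the cover and uniform truncatedness of the cosimplicial space. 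These are the same finiteness phenomenon viewed from either side, and both are legitimate.

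The genuine gap is in your second step. Membership in $\oneStkkconv$ requires not only that values on $\CAlgkleqn$ be $(n+1)$-truncated but that the prestack be an fpqc \emph{sheaf}, and this is exactly what does not follow "at once": a filtered colimit of sheaves computed objectwise in prestacks is in general not a sheaf, because the descent condition is a totalization, i.e.\ an infinite limit, and infinite limits do not commute with filtered colimits in $\Spc$. The truncatedness hypothesis is what rescues this: for a faithfully flat $A \to A^0$ with $A$ $n$-truncated, all terms of the Cech conerve $A^\bullet$ are again $n$-truncated (by flatness), so the cosimplicial spaces $Y_\al(A^\bullet)$ are uniformly $(n+1)$-truncated, their totalizations reduce to finite limits, and these commute with the filtered colimit. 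This is the content of the argument of \cite[Lem. 1.3.3]{GR14}, which the paper invokes at precisely this point --- and it is, ironically, the very finite-totalization technique you deploy later for compactness; it is needed here first. Note also that your compactness step silently relies on the corrected version of this step: you need filtered colimits in $\oneStkkconv$ to be computed objectwise in order to know that $Y \mapsto Y(B_i)$ is continuous on $\oneStkkconv$, so as written the gap propagates. Once the sheaf condition is verified as above, the rest of your argument goes through.
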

\begin{proof}
	Let $X$ be a geometric stack. To show $X \in \oneStkkconv$, it suffices to show that $X_{\leq n}$ belongs to $\tau_{\leq n+1} \Stkkleqn$, the category of $(n+1)$-truncated objects of $\Stkkleqn$, since for $A \in \CAlgkleqn$ we have $X(A) \cong \Map_{\Stkkleqn}((\Spec A)_{\leq n}, X_{\leq n})$. 

	Let $\Spec B \to X$ be a flat cover, so that $(\Spec B)_{\leq n} \to X_{\leq n}$ is a flat cover in $\Stkkleqn$. If $(\Spec B)_{\leq n}^\bullet$ is its Cech nerve, we have $X_{\leq n} \cong \colim (\Spec B)_{\leq n}^m$ in $\Stkkleqn$. Equivalently, $X_{\leq n}$ is the sheafification of the same colimit taken in $\PreStkkleqn$. Since sheafification is left exact it preserves $(n+1)$-truncated objects \cite[Prop. 5.5.6.16]{LurHTT}, hence it suffices to show the colimit taken in $\PreStkkleqn$ is $(n+1)$-truncated. For any $m$ an object of $\PreStkkleqn$ is $m$-truncated if its values on $\CAlgkleqn$ are $m$-truncated spaces \cite[Rem. 5.5.8.26]{LurHTT}. Each $(\Spec B)_{\leq n}^m$ is affine since $X$ is geometric, hence is then $n$-truncated in $\PreStkkleqn$ since $\CAlgkleqn$ is an $(n+1)$-category. The claim then follows since values of colimits in $\PreStkkleqn$ are computed objectwise, and since the geometric realization of a groupoid of $n$-truncated spaces is $(n+1)$-truncated. 
	
	We now claim $X_{\leq n}$ is compact in $\tau_{\leq n+1} \Stkkleqn$. By the argument of \cite[Lem. 1.3.3]{GR14} we have that $\tau_{\leq n+1} \Stkkleqn$ is closed under filtered colimits in $\PreStkkleqn$ (noting that by \cite[Prop. 5.5.6.16]{LurHTT} we have $\tau_{\leq n+1} \Stkkleqn = \Stkkleqn \cap \tau_{\leq n+1} \PreStkkleqn$ since $\Stkkleqn$ is closed under limits in $\PreStkkleqn$). It follows that each $(\Spec B)_{\leq n}^m$ is compact in $\tau_{\leq n+1} \Stkkleqn$ since it is so in $\PreStkkleqn$. Moreover, $X_{\leq n}$ is their colimit in $\tau_{\leq n+1} \Stkkleqn$ since it is their colimit in $\Stkkleqn$. But $\tau_{\leq n+1} \Stkkleqn$ is an $(n+2)$-category, so by the proof of \cite[Lem. 1.3.3.10]{LurHA} $X_{\leq n}$ is also the colimit in $\tau_{\leq n+1} \Stkkleqn$ of the $(\Spec B)_{\leq n}^m$ over the finite subdiagam $\Delta_{s, \leq n+2}^{op} \subset \Delta_{s}^{op}$. It follows that $X_{\leq n}$ is compact in $\tau_{\leq n+1} \Stkkleqn$ \cite[Cor. 5.3.4.15]{LurHTT}. 
	
	Next note that \cite[Prop. A.3.3.1]{LurSAG} also implies $\Stkkconv$ is the full subcategory of $X \in \PreStkkconv$ such that $X_{\leq n} \in \Stkkleqn$ for all $n$. But then $\tau_{\leq n+1} \Stkkleqn = \Stkkleqn \cap \tau_{\leq n+1} \PreStkkleqn$ implies $\oneStkkconv$ is the full subcategory of $X \in \PreStkkconv$ such that $X_{\leq n} \in \tau_{\leq n+1} \Stkkleqn$ for all $n$. Closure of $\oneStkkconv$ under filtered colimits then follows since $(-)_{\leq n}: \PreStkkconv \to \PreStkkleqn$ is continuous and since as recalled above $\tau_{\leq n+1} \Stkkleqn$ is closed under filtered colimits in $\PreStkkleqn$. 
	
	Now suppose $X$ is an $n$-truncated geometric stack, let $Y \cong \colim Y_\al$ be a filtered colimit in $\oneStkkconv$, and consider the following diagram. 
\begin{equation*}
\begin{tikzpicture}
	[baseline=(current  bounding  box.center),thick,>=\arrtip]
	\newcommand*{\ha}{6.5}; \newcommand*{\hb}{4.5};
	\newcommand*{\va}{-1.5};
	\newcommand*{\vb}{-1.5};
	\node (aa) at (0,0) {$\colim \Map_{\oneStkkconv}(X,  Y_\al)$};
	\node (ab) at (\ha,0) {$\colim \Map_{\tau_{\leq n+1} \Stkkleqn}(X_{\leq n}, (Y_\al)_{\leq n})$};
	\node (bb) at (\ha,\va) {$\Map_{\tau_{\leq n+1} \Stkkleqn}(X_{\leq n}, \colim (Y_\al)_{\leq n})$};
	\node (ca) at (0,\va+\vb) {$\Map_{\oneStkkconv}(X, Y)$};
	\node (cb) at (\ha,\va+\vb) {$\Map_{\tau_{\leq n+1} \Stkkleqn}(X_{\leq n}, Y_{\leq n})$};
	\draw[->] (aa) to node[above] {$ $} (ab);
	\draw[->] (ca) to node[above] {$ $} (cb);
	\draw[->] (ab) to node[above] {$ $} (bb);
	\draw[->] (bb) to node[left] {$ $} (cb);
	\draw[->] (aa) to node[right] {$ $} (ca);
\end{tikzpicture}
\end{equation*}
The bottom right map is an isomorphism since by the preceding paragraph $(-)_{\leq n}$ restricts to a continuous functor $\oneStkkconv \to \tau_{\leq n+1} \Stkkleqn$. Since $X$ is $n$-truncated it is the image of $X_{\leq n}$ under the left adjoint $\wh{i}_{\leq n}: \Stkkleqn \to \Stkkconv$ of $(-)_{\leq n}$, hence the horizontal maps are isomorphisms. The top right map is an isomorphism since  $X_{\leq n}\in \tau_{\leq n+1} \Stkkleqn$ is compact, hence the left map is an isomorphism and $X$ is compact in $\oneStkkconv$. 
\end{proof}

\section{Ind-geometric stacks}\label{sec:indgeom}

We now define our main objects of study and establish their basic properties. Key technical results include the consistency of the notions of truncated and reasonable geometric substack (Proposition~\ref{prop:igprestermsarereas}), and the closure of ind-geometric stacks under fiber products (Proposition~\ref{prop:indgeomfiberprods}) and ind-closed filtered colimits (Proposition~\ref{prop:indclosedclosure}). 

\subsection{Definitions}

Recall from Proposition \ref{prop:gstkconvergent} that $\GStkk$ is contained in the full subcategory $\Stkkconv \subset \Stkk$ of convergent stacks. This subcategory plays a central role in our discussion because colimits in $\Stkkconv$ are typically more natural than colimits in $\Stkk$. For example, any $\Spec A$ is the colimit of its truncations $\Spec \tau_{\leq n} A$ in $\Stkkconv$, but not in $\Stkk$ unless $A$ is itself truncated. In particular, the inclusion $\Stkkconv \subset \Stkk$ does not preserve colimits in general. 

\begin{Definition}\label{def:indgeomstack}
An \emph{ind-geometric stack} is a convergent stack $X$ which admits an expression $X \cong \colim_{\al} X_\al$ as a filtered colimit in $\Stkkconv$ of truncated geometric stacks along closed immersions. We call such an expression an \emph{ind-geometric presentation} of $X$. 
\end{Definition}

This is the natural extension of the derived notion of ind-scheme introduced in \cite{GR14}, see Proposition \ref{prop:indschdefcomp}.  We write $\indGStkk \subset \Stkkconv$ for the full subcategory of ind-geometric stacks. Any geometric stack $X$ is ind-geometric since $X \cong \colim \tau_{\leq n} X$ in $\Stkkconv$ by Proposition~\ref{prop:gstacktruncapprox}.  

\begin{Definition}\label{def:reasindgeomstack}
A \emph{reasonable presentation} is an ind-geometric presentation $X \cong \colim_{\al} X_\al$ in which the structure maps are almost finitely presented. An ind-geometric stack is \emph{reasonable} if it admits a reasonable presentation, and \emph{coherent} if it admits a reasonable presentation whose terms are coherent geometric stacks. 
\end{Definition}

This is the natural extension of the derived notion of reasonableness introduced in \cite{Ras20}, which in turn extends the classical notion of reasonableness in \cite{BD}. 
We write $\indGStkkreas \subset \indGStkk$ (resp. $\indGStkkcoh \subset \indGStkk$) for the full subcategory of reasonable (resp. coherent) ind-geometric stacks. A non-truncated geometric stack need not be reasonable as an ind-geometric stack, though we do have the following result. 

\begin{Proposition}\label{prop:loccohisreasonable}
	If a geometric stack $X$ is locally coherent (resp. coherent), then it is reasonable (resp. coherent) as an ind-geometric stack. 
\end{Proposition}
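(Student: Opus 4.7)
The plan is to use the canonical truncation presentation $X \cong \colim_n \tau_{\leq n} X$ in $\Stkkconv$ from Proposition \ref{prop:gstacktruncapprox} and verify that it is a reasonable presentation (resp. a coherent one when $X$ is coherent as a geometric stack). Each $\tau_{\leq n} X$ is an $n$-truncated geometric stack by construction. Moreover, if $X$ is coherent as a geometric stack and $\Spec A \to X$ is a flat cover with $A$ coherent, then each $\tau_{\leq n} X$ is coherent as a geometric stack: the flat cover $\Spec \tau_{\leq n} A \to \tau_{\leq n} X$ has $\tau_{\leq n} A$ coherent, since the cohomology of $\tau_{\leq n} A$ agrees with that of $A$ in degrees $[-n, 0]$ and vanishes below, and compact generation of $\QCoh^{\hs}$ is inherited because this category depends only on the classical truncation $X^{\cl}$, which is shared by $X$ and $\tau_{\leq n} X$.

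The transition maps $\tau_{\leq n} X \to \tau_{\leq m} X$ (for $n \leq m$) are closed immersions: on an affine flat cover they correspond to the further truncation $\tau_{\leq m} A \to \tau_{\leq n} A$, which induces an isomorphism $H^0(\tau_{\leq m} A) \cong H^0(\tau_{\leq n} A) = H^0(A)$ and hence an isomorphism on classical truncations.

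The main technical step is showing the transition maps are almost of finite presentation. At the affine level, $\tau_{\leq m} A \to \tau_{\leq n} A$ is almost of finite presentation in $\CAlg$ when $A$ is coherent: the $H^0$-algebra condition is trivial (the $H^0$'s agree), and $\tau_{\leq n} A$ is bounded with each $H^j(\tau_{\leq n} A) \in \{H^j(A), 0\}$ finitely presented over the coherent ring $H^0(\tau_{\leq m} A) = H^0(A)$, making $\tau_{\leq n} A$ almost perfect as a $\tau_{\leq m} A$-module. To globalize, one identifies the pullback of $\tau_{\leq n} X \to \tau_{\leq m} X$ along the flat cover $\Spec \tau_{\leq m} A \to \tau_{\leq m} X$ with $\Spec \tau_{\leq n} A \to \Spec \tau_{\leq m} A$, and invokes the fpqc-locality of almost-finite-presentation on the target together with Proposition \ref{prop:afp2of3prop}.

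The main obstacle is establishing the identification $\Spec \tau_{\leq m} A \times_{\tau_{\leq m} X} \tau_{\leq n} X \cong \Spec \tau_{\leq n} A$, and relatedly that $\Spec \tau_{\leq n} A \to \tau_{\leq n} X$ is a flat cover. These follow from the fact that $\tau_{\leq n}$ is compatible with the Cech nerve of a flat cover when interpreted on the site $\CAlgkleqn$ (using that $i_{\leq n}$, being left adjoint to restriction, preserves colimits, combined with the adjunction $\tau_{\leq n} \dashv$ inclusion at the level of commutative algebras), together with the convergence of geometric stacks (Proposition \ref{prop:gstkconvergent}) to transport the resulting identifications from $\Stkkleqn$ back to $\Stkkconv$.
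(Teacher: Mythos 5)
Your proof is correct and follows essentially the same route as the paper: both use the truncation presentation $X \cong \colim_n \tau_{\leq n} X$ of Proposition \ref{prop:gstacktruncapprox}, verify almost finite presentation of the transition maps affine-locally over a coherent flat cover (where your hands-on argument that $\tau_{\leq n}A$ is almost perfect over $\tau_{\leq m}A$ reproduces the content of the paper's citation to \cite[Cor. 5.2.2.2]{LurSAG}), globalize via flat-locality of almost finite presentation, and handle the coherent case via the invariance $\QCoh(X)^\heartsuit \cong \QCoh(\tau_{\leq n}X)^\heartsuit$. The extra details you supply (the closed-immersion check and the base-change identification $\Spec \tau_{\leq m}A \times_{\tau_{\leq m}X} \tau_{\leq n}X \cong \Spec \tau_{\leq n}A$) are correct elaborations of steps the paper asserts or cites.
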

\begin{proof}
	Let $\Spec A \to X$ be a flat cover such that $A$ is coherent. Then $\Spec \tau_{\leq n} A \to \Spec \tau_{\leq n+1 } A$ is almost of finite presentation for all $n$ \cite[Cor. 5.2.2.2]{LurSAG}. This is the base change of the affine morphism $\tau_{\leq n} X \to \tau_{n+1} X$ along a flat cover, and it follows from \cite[Prop. 4.1.4.3]{LurSAG} that $\tau_{\leq n} X \to \tau_{n+1} X$ is also almost finitely presented. Thus $X$ is reasonable by Proposition \ref{prop:gstacktruncapprox}. If additionally $\QCoh(X)^\heartsuit$ is compactly generated, hence $X$ is coherent as a geometric stack, then $X$ is coherent as an ind-geometric stack since $\QCoh(X)^\heartsuit \cong \QCoh(\tau_{\leq n}X)^\heartsuit$ for all $n$. 
\end{proof}

The following illustrates that in infinite type unreasonable geometric stacks are plentiful. 

\begin{Example}\label{ex:selfintAinfty}
Consider the self-intersection $X = \{0\} \times_{\A^\infty} \{0\}$ of the origin in $\A^\infty$. We have $X \cong \Spec A$, where $A$ is a symmetric algebra on countably many generators in degree $\shortminus 1$. In particular, $H^n (A)$ is not finite dimensional for any $n < 0$, the map $\tau_{\leq n} X \to \tau_{\leq n+1} X$ is not almost finitely presented for any $n \geq 0$, and $X$ is not reasonable as an ind-geometric stack. A related pathology is that the category $\Mod_{A}$ contains no nonzero bounded almost perfect objects, hence $\Coh(X)$ could only be sensibly interpreted as the zero category. It is to avoid such degenerate behavior that our treatment of $\Coh(-)$ in later sections is confined to the reasonable setting. 
\end{Example}

\subsection{Truncated and classical ind-geometric stacks}
The following extends Definition~\ref{def:truncgstack}. 

\begin{Definition}
An ind-geometric stack $X$ is $n$-truncated if it admits an ind-geometric presentation $X \cong \colim X_\al$ in which each $X_\al$ is an $n$-truncated geometric stack. We say $X$ is classical if it is zero-truncated. 
\end{Definition}

A typical example of a classical ind-geometric stack is the following. Suppose $X \cong \colim X_\al$ is a presentation of a classical ind-scheme as a filtered colimit of quasi-compact, semi-separated ordinary schemes (regarded as objects of $\Stkkconv$) along closed immersions, and that $G$ is a classical affine group scheme acting on $X$. For each $\al$ the induced map $X_\al \times G \to X$ factors through some $X_\be$. The closure $X'_\al \subset X_\be$ of its image is a $G$-invariant closed subscheme of $X$. We obtain a presentation $X \cong \colim X'_\al$ by closed $G$-invariant subschemes, and it follows that the quotient $X/G$ (in $\Stkkconv$) is a classical ind-geometric stack with ind-geometric presentation $X/G \cong \colim X'_\al/G$. Note that the quotients $X'_\al/G$ taken in $\Stkk$ are again geometric \cite[Prop. 9.3.1.3]{LurSAG}, hence convergent (Proposition \ref{prop:gstkconvergent}), hence they coincide with the quotients $X'_\al/G$ taken in $\Stkkconv$. 

Recall that $(-)_{\leq n}: \Stkkconv \to \Stkkleqn$ identifies the category of $n$-truncated geometric stacks with a full subcategory of $\Stkkleqn$, the inverse equivalence being given by the left adjoint $\wh{i}_{\leq n}: \Stkkleqn \to \Stkkconv$. Note that $(-)_{\leq n}$ takes filtered colimits in $\oneStkkconv$ to filtered colimits in $\tau_{\leq n+1} \Stkkleqn$, since it restricts from a continuous functor $\PreStkkconv \to \PreStkkleqn$, and since as in Proposition \ref{prop:oneStkkconvcompactness} and its proof $\oneStkkconv \subset \PreStkkconv$ and $\tau_{\leq n+1} \Stkkleqn \subset \PreStkkleqn$ are closed under filtered colimits. Since $\wh{i}_{\leq n}$ is continuous it follows that $(-)_{\leq n}$ identifies the category of $n$-truncated ind-geometric stacks with the obvious subcategory of $\Stkkleqn$.  Again letting $\wh{\tau}_{\leq n}: \Stkkconv \to \Stkkconv$ denote the composition of $(-)_{\leq n}$ and $\wh{i}_{\leq n}$, the following result states in particular that Definition \ref{def:indgeomstack} is indeed the obvious extension of \cite[Def. 1.4.2]{GR14} from schemes to geometric stacks (it is stated differently so that reasonableness may be introduced more easily).  

\begin{Proposition}\label{prop:indschdefcomp}
	A convergent stack $X$ is ind-geometric if and only if  $\wh{\tau}_{\leq n} X$ is an $n$-truncated ind-geometric stack for all $n$. 
\end{Proposition}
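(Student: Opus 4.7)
The plan is to verify both directions. The forward implication is a formal consequence of the colimit-preservation properties of $\wh\tau_{\leq n}$, while the backward implication requires combining the given presentations of each $\wh\tau_{\leq n}X$ into a single ind-geometric presentation.

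\emph{Forward.} Suppose $X \cong \colim_\al X_\al$ is an ind-geometric presentation. Since $\wh\tau_{\leq n} = \wh i_{\leq n}\circ(-)_{\leq n}$ is left adjoint to the inclusion of $n$-truncated objects in $\Stkkconv$, it preserves filtered colimits. By Proposition \ref{prop:gstkconvergent}, $\wh\tau_{\leq n}X_\al \cong \tau_{\leq n}X_\al$ whenever $X_\al$ is geometric, so $\wh\tau_{\leq n}X \cong \colim_\al \tau_{\leq n}X_\al$ is a filtered colimit of $n$-truncated geometric stacks. Each transition $\tau_{\leq n}X_\al \to \tau_{\leq n}X_\be$ is a closed immersion: the original $X_\al \to X_\be$ is affine (Proposition \ref{prop:immisaffforgeo}), and its base change along a flat cover $\Spec B \to X_\be$ has the form $\Spec B' \to \Spec B$ with $\tau_{\leq 0}B \twoheadrightarrow \tau_{\leq 0}B'$; applying $\tau_{\leq n}$ preserves this surjection on classical truncations, and flat descent completes the argument.

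\emph{Backward.} Convergence of $X$ gives $X \cong \colim_n\wh\tau_{\leq n}X$ in $\Stkkconv$. A direct functor-of-points computation shows each $\wh\tau_{\leq n}X \to \wh\tau_{\leq n+1}X$ is a closed immersion, since for an ordinary ring $B$ we have $\tau_{\leq n}B = B = \tau_{\leq n+1}B$, so the induced map on classical truncations of base changes to affines is an isomorphism. Fix presentations $\wh\tau_{\leq n}X \cong \colim_{\al \in A_n}X_{n,\al}$ with $X_{n,\al}$ being $n$-truncated geometric and transitions closed immersions. The task is to assemble these into a single filtered diagram with closed-immersion transitions and colimit $X$.

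\emph{Main obstacle.} For each $(n,\al)$, the composition $X_{n,\al} \hookrightarrow \wh\tau_{\leq n}X \hookrightarrow \wh\tau_{\leq n+1}X$ is a closed immersion, and by compactness of $X_{n,\al}$ in $\oneStkkconv$ (Proposition \ref{prop:oneStkkconvcompactness}) it factors through some $X_{n+1,\be}$. Provided $X_{n+1,\be} \hookrightarrow \wh\tau_{\leq n+1}X$ is itself a closed immersion, the two-out-of-three property (Proposition \ref{prop:cl2of3prop}) forces $X_{n,\al} \to X_{n+1,\be}$ to be a closed immersion as well. This last point is the hard part: because closed immersions are not monomorphisms in derived algebraic geometry, one must verify separately that each term of a filtered colimit along closed immersions embeds closedly in the colimit. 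I would reduce this to a ring-theoretic statement using the affineness of closed immersions (Proposition \ref{prop:immisaffforgeo}) together with compatibility of classical truncation with filtered colimits of surjections, then descend along flat covers. Once this is in hand, the pairs $(n,\al)$ organize into a filtered indexing category whose transitions are closed immersions, and whose colimit reproduces $X$ by combining the level-by-level decompositions with convergence.
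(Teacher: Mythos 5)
Your forward direction rests on a false adjunction. The functor $\wh{\tau}_{\leq n} = \wh{i}_{\leq n}\circ(-)_{\leq n}$ is a left adjoint composed \emph{after} a right adjoint, and the variance is the opposite of what you claim: since $\Map_{\Stkkconv}(\wh{i}_{\leq n}Y', X) \cong \Map_{\Stkkleqn}(Y', X_{\leq n}) \cong \Map_{\Stkkconv}(\wh{i}_{\leq n}Y', \wh{\tau}_{\leq n}X)$, the functor $\wh{\tau}_{\leq n}$ is \emph{right} adjoint to the inclusion of the essential image of $\wh{i}_{\leq n}$ --- a colocalization, whose counit is the canonical map $\wh{\tau}_{\leq n}X \to X$ (note this map points the wrong way for $\wh{\tau}_{\leq n}$ to be a reflective localization). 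Formally this gives preservation of limits, not colimits. Concretely, over an ordinary base $\kk$, take $Y = \A^1$ (which is $0$-truncated) and $X = \Spec A$: then $\Map(X, \A^1) \cong \Omega^\infty A$ while $\Map(\wh{\tau}_{\leq n}X, \A^1) \cong \Omega^\infty \tau_{\leq n}A$, and these differ whenever $H^{-i}(A) \neq 0$ for some $i > n$; this reflects the paper's warning that $\wh{\tau}_{\leq n}$ is not truncation of mapping spaces. The conclusion you need is nevertheless true, for the reason the paper gives: geometric stacks and their filtered colimits along closed immersions lie in $\oneStkkconv$ (Proposition \ref{prop:oneStkkconvcompactness}), and $(-)_{\leq n}$ restricted to $\oneStkkconv$ preserves filtered colimits because $\oneStkkconv \subset \PreStkkconv$ and $\tau_{\leq n+1}\Stkkleqn \subset \PreStkkleqn$ are closed under filtered colimits, which are computed objectwise at the prestack level; one then composes with the genuine left adjoint $\wh{i}_{\leq n}$. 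Your verification that $\wh{\tau}_{\leq n}$ preserves closed immersions of geometric stacks is fine.

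In the backward direction, the step you isolate as the main obstacle is exactly where your proof stops being one: ``I would reduce this to a ring-theoretic statement \dots then descend along flat covers'' is a plan, not an argument, and the suggested reduction does not engage with the actual difficulty, which is controlling the classical truncation of the \emph{derived} fiber product $X_{n+1,\be} \times_{\wh{\tau}_{\leq n+1}X} \Spec A$ against a colimit formed in $\Stkkconv$. The genuine proof --- this is Proposition \ref{prop:igprestermsarereas}, which does not depend on the present proposition and so could simply be cited --- reduces to classical test algebras $A$, factors $\Spec A$ through a term of the presentation by compactness (Proposition \ref{prop:factorthroughgeometric}), writes the fiber product as a filtered colimit of fiber products over the terms by left exactness of filtered colimits in $\Stkkconv$, and stabilizes the classical truncations via Proposition \ref{prop:classicaliso}. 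Moreover, even granting that lemma, you never construct the single filtered diagram out of the pairs $(n,\al)$. Both problems evaporate if you argue as the paper does: your functor-of-points computation already shows each $\wh{\tau}_{\leq n}X \to \wh{\tau}_{\leq n+1}X$ is classically an isomorphism, hence (via the factorization-plus-two-out-of-three argument you sketch, using Proposition \ref{prop:cl2of3prop}) an ind-closed immersion of ind-geometric stacks; then Proposition \ref{prop:indclosedclosure} --- ind-geometric stacks are closed under filtered colimits along ind-closed immersions in $\Stkkconv$, proved independently of the present result --- applies directly to $X \cong \colim_n \wh{\tau}_{\leq n}X$, and no merged presentation is ever needed.
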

\begin{proof}
	The only if direction follows since $\wh{\tau}_{\leq n}$ preserves closed immersions of geometric stacks, and since by the above discussion its restriction to $\oneStkkconv$ is continuous. The if direction follows from Proposition \ref{prop:indclosedclosure} below (which does not depend on the current result), since each $\wh{\tau}_{\leq n}X \to \wh{\tau}_{\leq n+1}X$ is classically an isomorphism, hence is an ind-closed immersion.
\end{proof}

\subsection{Geometric substacks} 
Truncatedness plays an essential role in our discussion due to the following variant of \cite[Lem. 1.3.6]{GR14} (though see Remark~\ref{rem:nontruncindgdef}).  The claim follows immediately from Proposition \ref{prop:oneStkkconvcompactness}, but would fail if $Y$ were not truncated: in this case $Y$ is not compact in $\oneStkkconv$, since e.g. $\id_Y$ does not factor through any truncation of $Y$. 

\begin{Proposition}\label{prop:factorthroughgeometric}
	Let $X \cong \colim X_\al$ be an ind-geometric presentation. Then for any truncated geometric stack $Y$, the natural map
	$$\colim \Map_{\Stkk}(Y, X_\al) \to \Map_{\Stkk}(Y, X)$$
	is an isomorphism. 
\end{Proposition}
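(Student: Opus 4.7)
The plan is to deduce this directly from Proposition~\ref{prop:oneStkkconvcompactness}, after carefully identifying in which ambient category the colimit $\colim X_\al$ lives.

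First I would observe that each term $X_\al$ is a truncated geometric stack, and both the target $Y$ and the terms $X_\al$ belong to the subcategory $\oneStkkconv \subset \Stkkconv$ by the first assertion of Proposition~\ref{prop:oneStkkconvcompactness}. Since that same proposition states that $\oneStkkconv$ is closed under filtered colimits inside $\PreStkkconv$, the colimit $\colim X_\al$ computed in $\PreStkkconv$ already lies in $\oneStkkconv$, hence in $\Stkkconv$. Because sheafification $\PreStkkconv \to \Stkkconv$ is a left adjoint and hence colimit-preserving, the colimit in $\Stkkconv$ agrees with the colimit in $\PreStkkconv$ in this case. Thus $X \cong \colim X_\al$ is equally well a colimit in $\oneStkkconv$.

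Next I would invoke compactness: by the second half of Proposition~\ref{prop:oneStkkconvcompactness}, a truncated geometric stack such as $Y$ is compact in $\oneStkkconv$, so
\[
\colim \Map_{\oneStkkconv}(Y, X_\al) \;\xrightarrow{\sim}\; \Map_{\oneStkkconv}(Y, X).
\]
Finally, since $\oneStkkconv \subset \Stkkconv \subset \Stkk$ are full subcategories and $Y$, each $X_\al$, and $X$ all lie in $\oneStkkconv$, each of these mapping spaces coincides with the corresponding $\Map_{\Stkk}(Y,-)$, which gives the desired isomorphism.

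There is essentially no obstacle here beyond bookkeeping, since Proposition~\ref{prop:oneStkkconvcompactness} has already done the substantive work. The one point requiring mild care is verifying that the filtered colimit defining $X$ in $\Stkkconv$ coincides with its filtered colimit in $\PreStkkconv$; this is exactly what the closure statement in Proposition~\ref{prop:oneStkkconvcompactness} ensures, since the colimit in $\PreStkkconv$ is automatically already a (convergent) stack and so requires no further sheafification.
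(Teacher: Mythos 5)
Your proof is correct and is exactly the argument the paper intends: the paper's own justification is the single remark that the claim ``follows immediately from Proposition~\ref{prop:oneStkkconvcompactness}'', whose three assertions (geometric stacks lie in $\oneStkkconv$, closure of $\oneStkkconv$ under filtered colimits in $\PreStkkconv$, and compactness of truncated geometric stacks) you deploy in the same way. Your only addition is to spell out the bookkeeping the paper leaves implicit --- that the colimit in $\Stkkconv$ needs no sheafification and so agrees with the one in $\PreStkkconv$, and that fullness of $\oneStkkconv \subset \Stkkconv \subset \Stkk$ identifies the mapping spaces --- which is accurate and harmless.
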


To discuss ind-geometric stacks more intrinsically, without referring to particular ind-geometric presentations, the following notion is useful. 

\begin{Definition}\label{def:reassubstack}
	Let $X$ be an ind-geometric stack. 
	A \emph{truncated} (resp. \emph{reasonable}) \emph{geometric substack} of $X$ is a truncated geometric stack $X'$ equipped with a closed immersion $X' \to X$ (resp. an almost finitely presented closed immersion $X' \to X$).  
\end{Definition}

\begin{Proposition}\label{prop:igprestermsarereas}
Let $X \cong \colim X_\al$ be an ind-geometric (resp. reasonable) presentation. Then for all $\al$, the structure morphism $i_\al: X_\al \to X$ realizes $X_\al$ as a truncated (resp. reasonable) geometric substack of $X$. Any other truncated (resp. reasonable) geometric substack $X' \to X$ can be factored as $X' \xrightarrow{j_\al} X_\al \xrightarrow{i_\al} X$ for some $\al$, and in any such factorization $j_\al$ is a closed immersion (resp. almost finitely presented closed immersion), hence affine. 
\end{Proposition}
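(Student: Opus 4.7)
The plan is to verify each structure map $i_\al: X_\al \to X$ is a closed immersion (and almost of finite presentation in the reasonable case), after which the remaining claims follow formally from the 2-of-3 cancellation properties and Proposition \ref{prop:factorthroughgeometric}.

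For the closed immersion property, I would take an arbitrary affine test $\Spec A \to X$ and factor it through some $X_\be$ with $\be \geq \al$ using Proposition \ref{prop:factorthroughgeometric}. Testing against classical affines $\Spec B$, the key computation is
\[ \Map(\Spec B, X_\al \times_X \Spec A) \cong \colim_{\ga \geq \be} \Map(\Spec B, X_\al \times_{X_\ga} \Spec A), \]
using Proposition \ref{prop:factorthroughgeometric} to rewrite $\Map(\Spec B, X) \cong \colim_\ga \Map(\Spec B, X_\ga)$ together with the commutation of filtered colimits with finite limits in $\Spc$. Proposition \ref{prop:classicaliso} applied to each closed immersion $X_\be \to X_\ga$ shows each term in this colimit has the same $\tau_{\leq 0}$-value, so by Yoneda $\tau_{\leq 0}(X_\al \times_X \Spec A) \cong \tau_{\leq 0}(X_\al \times_{X_\be} \Spec A)$. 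The latter is classically a closed immersion into $\tau_{\leq 0} \Spec A$ by the stability of closed immersions under base change (Proposition \ref{prop:cl2of3prop}).

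For the afp claim in the reasonable case, given $A \cong \colim_\nu A_\nu$ in $\CAlgkleqn$, an analogous argument using Proposition \ref{prop:factorthroughgeometric} to rewrite $X(A) \cong \colim_\be X_\be(A)$ and $X(A_\nu) \cong \colim_\be X_\be(A_\nu)$, combined with the afp of each transition map $X_\al \to X_\be$ (itself a composition of afp closed immersions, cf. Proposition \ref{prop:afp2of3prop}), reduces the afp condition for $i_\al$ to the afp condition for each $X_\al \to X_\be$ and yields the desired equivalence $\colim_\nu X_\al(A_\nu) \cong X_\al(A) \times_{X(A)} \colim_\nu X(A_\nu)$. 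Finally, any truncated geometric substack $X' \to X$ factors through some $X_\al$ directly by Proposition \ref{prop:factorthroughgeometric}, and the 2-of-3 properties (Propositions \ref{prop:cl2of3prop} and \ref{prop:afp2of3prop}) combined with Proposition \ref{prop:immisaffforgeo} identify $j_\al$ as a closed immersion (resp. afp closed immersion) which is automatically affine. The main obstacle is the first step, where Propositions \ref{prop:factorthroughgeometric} and \ref{prop:classicaliso} must be combined to reduce the $\tau_{\leq 0}$-fiber product $X_\al \times_X \Spec A$ to one involving a single term of the presentation; everything else is essentially formal.
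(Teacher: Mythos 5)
Your proof follows the same route as the paper's: Proposition \ref{prop:factorthroughgeometric} to factor the test map, left exactness of filtered colimits to write the base change $X_\al \times_X \Spec A$ as a filtered colimit of fiber products over single terms $X_\al \times_{X_\ga} \Spec A$, Proposition \ref{prop:classicaliso} to see this colimit is classically constant, and Propositions \ref{prop:cl2of3prop}, \ref{prop:afp2of3prop}, and \ref{prop:immisaffforgeo} for the final factorization claim; your afp argument is likewise the paper's computation. The one genuine misstep is at the very first move: you apply Proposition \ref{prop:factorthroughgeometric} to an \emph{arbitrary} affine test $\Spec A \to X$, but that proposition requires the source to be a \emph{truncated} geometric stack, and $\Spec A$ need not be truncated. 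The factorization genuinely fails otherwise: if $A$ has unbounded cohomology, then $\colim_n \Spec \tau_{\leq n} A$ is an ind-geometric presentation in $\Stkkconv$ of $\Spec A$ itself, and the identity map does not factor through any term. This is precisely why the paper remarks, just before the proposition, that the compactness claim "would fail if $Y$ were not truncated."

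The repair is short and is exactly the paper's opening line: since the closed immersion condition only concerns $\tau_{\leq 0}(X_\al \times_X \Spec A) \to \tau_{\leq 0}\Spec A$, and $\tau_{\leq 0}(X_\al \times_X \Spec A) \cong \tau_{\leq 0}(X_\al \times_X \tau_{\leq 0}\Spec A)$, one may replace $A$ by $\tau_{\leq 0} A$ before invoking Proposition \ref{prop:factorthroughgeometric}. Your choice to probe the fiber product only against classical affines $\Spec B$ shows you had the right instinct --- only classical data matters --- but the reduction must be made explicit \emph{before} the factorization, since the fiber products $X_\al \times_{X_\ga} \Spec A$ appearing in your key colimit formula do not even exist without a chosen map $\Spec A \to X_\ga$. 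No such issue arises in your afp argument, where $A \in \CAlgkleqn$ is truncated by hypothesis, nor in the final factorization of $X' \to X$, where $X'$ is truncated by definition; those parts agree with the paper essentially verbatim.
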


\begin{proof}
	To show $i_\al$ is a closed immersion, fix $\Spec A \to X$ and let $Z := X_\al \times_{X} \Spec A$. Since $\tau_{\leq 0} Z \cong \tau_{\leq 0}(X_\al \times_{X} \tau_{\leq 0} \Spec A)$ we may assume $A$ is classical.  By Proposition~\ref{prop:factorthroughgeometric} we can then factor $\Spec A \to X$ through some $X_{\al'}$, which we may assume satisfies $\al' \geq \al$. For $\al'' \geq \al'$ let $Z_{\al''} := X_\al \times_{X_{\al''}} \Spec A$. We have $Z \cong \colim_{\al'' \geq \al'} Z_{\al''}$ since filtered colimits in $\Stkkconv$ are left exact \cite[Ex. 7.3.4.7]{LurHTT}. Moreover, $\tau_{\leq 0} Z \cong \colim_{\al'' \geq \al'} \tau_{\leq 0} Z_{\al''}$ since by Proposition \ref{prop:oneStkkconvcompactness} and its the proof all terms are in $\oneStkkconv$ and $Y \mapsto \tau_{\leq 0} Y$ preserves filtered colimits in $\oneStkkconv$. To show $\tau_{\leq 0} Z \to \tau_{\leq 0} \Spec A$ is a closed immersion it then suffices to show $\tau_{\leq 0} Z_{\al'} \to \tau_{\leq 0} Z_{\al''}$ is an isomorphism for any $\al'' \geq \al'$, since $\tau_{\leq 0} Z_{\al'} \to \tau_{\leq 0} \Spec A$ is a closed immersion by hypothesis. But this follows from Proposition~\ref{prop:classicaliso}. 

	Now suppose the given presentation is reasonable, and let $A \cong \colim A_\be$ be a filtered colimit in $\CAlgkleqn$ for some $n$. By Proposition \ref{prop:factorthroughgeometric} we have $X(A_\be) \cong \colim_{\ga \geq \al} X_\ga(A_\be)$ for all~$\beta$, and likewise $X(A) \cong \colim_{\ga \geq \al} X_\ga(A)$. We then have
	$$ X_\al(A) \times_{X(A)} \colim_\be X(A_\be) \cong \colim_{\ga \geq \al} \left(X_\al(A) \times_{X_{\ga}(A)} \colim_\be X_{\ga}(A_\be)\right)$$
	since filtered colimits of spaces are left exact \cite[Prop. 5.3.3.3]{LurHTT}. But the right hand colimit is isomorphic to $\colim_\be X_\al(A_\be)$ since each individual term is by hypothesis. 
	
	Finally, by Proposition \ref{prop:factorthroughgeometric} we can factor $X' \to X$ through a morphism $j_\al: X' \to X_\al$ for some~$\al$. Then $j$ is a closed immersion by Proposition \ref{prop:cl2of3prop}, hence is affine by Proposition~\ref{prop:immisaffforgeo}, and is almost of finite presentation in the reasonable case by Proposition \ref{prop:afp2of3prop}. 
\end{proof}

\subsection{Properties of morphisms} Notions such as ind-properness extend from ind-schemes to ind-geometric stacks in the obvious way. 

\begin{Proposition}\label{prop:indPequivconditions}
	Let $f: X \to Y$ be a morphism of ind-geometric stacks, and let $X \cong \colim_\al X_\al$ be an ind-geometric presentation. The following conditions are equivalent.
	\begin{enumerate}
		\item 
		\begin{minipage}[t]{\linewidth-\labelwidth-\labelsep}
		For every diagram
		\begin{equation}\label{eq:indPdef}
			\begin{tikzpicture}
				[baseline=(current  bounding  box.center),thick,>=\arrtip]
				\node (a) at (0,0) {$X'$};
				\node (b) at (3,0) {$X$};
				\node (c) at (0,-1.5) {$Y'$};
				\node (d) at (3,-1.5) {$Y$};
				\node (e) at (5.0,-1.5) {$ $};
				\draw[->] (a) to node[above] {$ $} (b);
				\draw[->] (b) to node[right] {$f $} (d);
				\draw[->] (a) to node[left] {$f' $}(c);
				\draw[->] (c) to node[above] {$ $} (d);
			\end{tikzpicture}
		\end{equation}
		in which $X' \to X$ and $Y' \to Y$ are truncated geometric substacks, the map $f'$ is proper (resp. a closed immersion, of finite cohomological dimension). 
		\end{minipage}
	\vspace{2mm}
		\item 
		\begin{minipage}[t]{\linewidth-\labelwidth-\labelsep}
			For every $X_\al$ there exists a diagram
		\begin{equation}\label{eq:cancheckpresforindPdiag}
			\begin{tikzpicture}
				[baseline=(current  bounding  box.center),thick,>=\arrtip]
				\node (a) at (0,0) {$X_\al$};
				\node (b) at (3,0) {$X$};
				\node (c) at (0,-1.5) {$Y_\al$};
				\node (d) at (3,-1.5) {$Y$};
				\node (e) at (5.0,-1.5) {$ $};
				\draw[->] (a) to node[above] {$ $} (b);
				\draw[->] (b) to node[right] {$f $} (d);
				\draw[->] (a) to node[left] {$f_\al $}(c);
				\draw[->] (c) to node[above] {$ $} (d);
			\end{tikzpicture}
		\end{equation}
	in which $Y_\al$ is a truncated geometric substack of $Y$ and $f_\al$ is proper (resp. a closed immersion, of finite cohomological dimension). 
		\end{minipage}
	\end{enumerate}	
\end{Proposition}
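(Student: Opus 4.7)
The plan is to handle $(1) \Rightarrow (2)$ directly by factorization, and $(2) \Rightarrow (1)$ by transferring $f'$ to a base change of $f_\al$ via a fiber product construction. For $(1) \Rightarrow (2)$, fix any ind-geometric presentation $Y \cong \colim Y_\be$. Given $X_\al$, Proposition \ref{prop:igprestermsarereas} identifies $X_\al \to X$ as a truncated geometric substack, and by Proposition \ref{prop:factorthroughgeometric} the composition $f \circ i_\al$ factors through some $Y_\be$, which is itself a truncated geometric substack of $Y$ by Proposition \ref{prop:igprestermsarereas}. Applying (1) to the resulting diagram gives that $X_\al \to Y_\be$ has property P, so one may take $Y_\al := Y_\be$.

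For $(2) \Rightarrow (1)$, given a diagram as in (\ref{eq:indPdef}), first factor $X' \to X$ as a closed immersion $i : X' \to X_\al$ for some index $\al$ of the given presentation via Proposition \ref{prop:igprestermsarereas}, and invoke (2) to obtain $Y_\al$ and $f_\al : X_\al \to Y_\al$ with property P. Using filteredness together with Propositions \ref{prop:factorthroughgeometric} and \ref{prop:igprestermsarereas} applied to a presentation of $Y$, one can enlarge $Y_\al$ to arrange that $Y' \to Y$ also factors through it as a closed immersion $j : Y' \to Y_\al$, while preserving property P for $f_\al$. The map $X' \to Y_\al$ then agrees with both $f_\al \circ i$ and $j \circ f'$, producing by universal property a morphism $\phi : X' \to X_\al \times_{Y_\al} Y'$ into a geometric stack (Proposition \ref{prop:gstkprops}; the fiber product need not be truncated, but this is irrelevant). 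The projection $\pi : X_\al \times_{Y_\al} Y' \to Y'$ is the base change of $f_\al$ along $j$ and hence has property P, while the projection $\pi' : X_\al \times_{Y_\al} Y' \to X_\al$ is the base change of $j$ and hence a closed immersion. Since $\pi' \circ \phi = i$ is a closed immersion, Proposition \ref{prop:cl2of3prop} forces $\phi$ to be one as well, and $f' = \pi \circ \phi$ then has property P.

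The main technical point is the enlargement step: to replace $Y_\al$ by a truncated geometric substack of $Y$ containing both $Y_\al$ and $Y'$ while keeping $f_\al$ in the class P, one uses that closed immersions are themselves proper, closed immersions, and of finite cohomological dimension (the last via affineness, Proposition \ref{prop:immisaffforgeo}), combined with the stability-under-composition statements of Propositions \ref{prop:propprops}, \ref{prop:cl2of3prop}, and \ref{prop:fcdprops}. Once this is in place, all remaining assertions are immediate consequences of base-change stability for P (including Proposition \ref{prop:fcdprops} for the geometric-stack setting, which applies since $X_\al \times_{Y_\al} Y'$ is geometric) and the 2-out-of-3 property for closed immersions.
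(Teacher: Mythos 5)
Your overall route is genuinely different from the paper's and, once patched, it works. The paper's key move is a compactness argument: it shows any finite diagram in $\GStkkplus$ is compact in $\Fun(K, \oneStkkconv)$ (via Proposition \ref{prop:oneStkkconvcompactness} and \cite[Prop. 5.3.4.13]{LurHTT}), so the \emph{entire} four-object diagram spanned by $X'$, $Y'$, $X_\al$, $Y_\al$ — including all its commutativity data — factors at once through a single term $Y_\be$ of a presentation of $Y$; it then concludes by composition and $2$-out-of-$3$ applied directly over $Y_\be$, with an ad hoc observation (affine $g$ with $g \circ f$ of finite cohomological dimension forces $f$ to be) substituting for the missing $2$-out-of-$3$ in the fcd case. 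Your fiber-product argument replaces all of this with base-change stability of P plus $2$-out-of-$3$ for closed immersions only (Proposition \ref{prop:cl2of3prop}), and since you realize $f'$ as a \emph{composite} $\pi \circ \phi$ rather than cancelling a map, you never need the paper's ad hoc fcd observation — a genuine simplification of that case.

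However, there is a gap at the step ``The map $X' \to Y_\al$ then agrees with both $f_\al \circ i$ and $j \circ f'$.'' At the stage you have constructed, this is not justified: what you know is only that the two composites agree after further composing with $Y_\al \to Y$, and you cannot cancel that map, because in this derived setting closed immersions are \emph{not} monomorphisms (the definition only constrains $\tau_{\leq 0}$ of base changes). Moreover, to invoke the universal property of $X_\al \times_{Y_\al} Y'$ you need not mere agreement but a chosen homotopy $f_\al \circ i \simeq j \circ f'$ in $\Map(X', Y_\al)$. This is exactly the coherence issue the paper's diagram-compactness argument is engineered to dispose of wholesale. Fortunately the gap is local and fixable with tools you already cite: since $X'$ is a truncated geometric stack, Proposition \ref{prop:factorthroughgeometric} gives $\colim_\be \pi_0\Map(X', Y_\be) \congto \pi_0\Map(X', Y)$ (as $\pi_0$ commutes with filtered colimits of spaces), and the two composites do agree in $\pi_0\Map(X', Y)$ by chaining the given commutativity data; hence they become homotopic in $\Map(X', Y_{\be'})$ for some further enlargement $Y_{\be'}$, whose transition map is again a closed immersion by Proposition \ref{prop:igprestermsarereas}, preserving property P for the composite $X_\al \to Y_{\be'}$. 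Choosing any such homotopy produces $\phi: X' \to X_\al \times_{Y_{\be'}} Y'$ with $\pi' \circ \phi \simeq i$ and $\pi \circ \phi \simeq f'$ by construction, after which your argument goes through verbatim. With this one extra enlargement made explicit, your proof is complete.
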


\begin{Definition}\label{def:indP}
	A morphism $f: X \to Y$ of ind-geometric stacks is ind-proper (resp. an ind-closed immersion, of ind-finite cohomological dimension) if it satisfies the equivalent conditions of Proposition \ref{prop:indPequivconditions}.
\end{Definition}

\begin{proof}[Proof of Proposition \ref{prop:indPequivconditions}]
	Fix an ind-geometric presentation $Y \cong \colim Y_\be$. That (1) implies (2) follows since $f \circ i_\al$ factors through some $Y_\be$ by Proposition~\ref{prop:factorthroughgeometric}. To show (2) implies (1), fix a diagram (\ref{eq:indPdef}). By hypothesis and Proposition~\ref{prop:factorthroughgeometric} there exists a diagram of the left-hand form for some $\al$,  
	\begin{equation*}
		\begin{tikzpicture}[baseline=(current  bounding  box.center),thick,>=\arrtip]
			\node[matrix] at (0,0) {
				\newcommand*{\ha}{1.5}; \newcommand*{\hb}{1.5}; \newcommand*{\hc}{1.5};
				\newcommand*{\va}{-1.25}; \newcommand*{\vb}{-1.25};
				\node (aa) at (0,0) {$X'$};
				\node (ac) at (\ha+\hb,0) {$Y'$};
				\node (bb) at (\ha,\va) {$X$};
				\node (bd) at (\ha+\hb+\hc,\va) {$Y$};
				\node (ca) at (0,\va+\vb) {$X_\al$};
				\node (cc) at (\ha+\hb,\va+\vb) {$Y_\al$};
				\draw[->] (aa) to node[above] {$f' $} (ac);
				\draw[->] (bb) to node[above] {$f $} (bd);
				\draw[->] (ca) to node[above] {$f_\al $} (cc);
				\draw[->] (aa) to node[above] {$ $} (bb);
				\draw[->] (aa) to node[above] {$ $} (ca);
				\draw[->] (ca) to node[above] {$ $} (bb);
				\draw[->] (ac) to node[above] {$ $} (bd);
				\draw[->] (cc) to node[above] {$ $} (bd);\\
			};
			
			\node[matrix] at (7.0,0) {
				\newcommand*{\ha}{1.5}; \newcommand*{\hb}{1.5}; \newcommand*{\hc}{1.5}; \newcommand*{\hd}{1.5};
				\newcommand*{\va}{-1.25}; \newcommand*{\vb}{-1.25};
				\node (aa) at (0,0) {$X'$};
				\node (ac) at (\ha+\hb,0) {$Y'$};
				\node (bd) at (\ha+\hb,\va) {$Y_\be$};
				\node (be) at (\ha+\hb+\hc,\va) {$Y$,};
				\node (ca) at (0,\va+\vb) {$X_\al$};
				\node (cc) at (\ha+\hb,\va+\vb) {$Y_\al$};
				\draw[->] (aa) to node[above] {$f' $} (ac);
				\draw[->] (ca) to node[above] {$f_\al $} (cc);
				\draw[->] (aa) to node[above] {$ $} (ca);
				\draw[->] (ac) to node[above] {$ $} (bd);
				\draw[->] (cc) to node[above] {$ $} (bd);
				\draw[->] (ac) to node[above] {$ $} (be);
				\draw[->] (cc) to node[above] {$ $} (be);
				\draw[->] (bd) to node[above] {$ $} (be);\\
			};
		\end{tikzpicture}
	\end{equation*}
	where $f_\al$ is proper and $Y_\al \to Y$ is a truncated geometric substack. We claim this extends to a diagram of the right-hand form for some $Y_\be$. 
	
	To see this, note that for any finite diagram $p: K \to \GStk^+$, the natural map
	$$\colim \Map_{\Fun(K,\Stkk)}(p, Y_\be) \to \Maps_{\Fun(K,\Stkk)}(p, Y)$$
	is an isomorphism, where we let $Y$ and $Y_\be$ denote the associated constant diagrams. This follows since $p$ is compact in $\Fun(K,\oneStkkconv)$ by \cite[Prop. 5.3.4.13]{LurHTT} and Proposition \ref{prop:oneStkkconvcompactness}. The claim at hand follows by taking $p$ to be the subdiagram on the left spanned by $X'$, $Y'$, $X_\al$, and $Y_\al$. 
	
	In the right-hand diagram, the vertical maps are closed immersions by Proposition \ref{prop:igprestermsarereas}, hence $f'$ is proper by Proposition \ref{prop:propprops}. The other classes of morphisms are treated the same way, using Propositions \ref{prop:fcdprops} and \ref{prop:cl2of3prop}, and the following observation: if $f$ and $g$ are composable morphisms in $\GStkk$ such that $g \circ f$ is of finite cohomological dimension and $g$ is affine (hence $g_*$ conservative and t-exact), then $f$ is of finite cohomological dimension. 
\end{proof}

If $f: X \to Y$ is of ind-finite cohomological dimension and there exists an $n$ such that any morphism $f'$ as in (\ref{eq:indPdef}) is of cohomological dimension $\leq n$, then we say $f$ is of finite cohomological dimension. For example, an ind-closed immersion is of finite cohomological dimension (with $n= 0$), while the projection $\P^\infty := \bigcup \P^n \to \Spec \kk$ is of ind-finite, but not finite, cohomological dimension. 

\begin{Proposition}\label{prop:indPcompprops}
Ind-proper morphisms, ind-closed immersions, and morphisms of ind-finite cohomological dimension are stable under composition in $\indGStkk$.   
\end{Proposition}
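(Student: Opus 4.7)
The plan is to verify condition (2) of Proposition~\ref{prop:indPequivconditions} for the composition $g \circ f$, doing so uniformly for each of the three classes. Let $P$ denote any one of the properties in question (proper, closed immersion, finite cohomological dimension), so that by hypothesis both $f: X \to Y$ and $g: Y \to Z$ are ind-$P$. Fix any ind-geometric presentation $X \cong \colim X_\al$. For each $\al$ I will construct a truncated geometric substack $Z'_\al \to Z$ together with a morphism $X_\al \to Z'_\al$ in class $P$ compatible with $(g \circ f) \circ i_\al$; this directly verifies condition (2) for $g \circ f$ relative to the chosen presentation.

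First I would apply condition (2) for $f$ to the presentation $\{X_\al\}$: for each $\al$ this yields a truncated geometric substack $Y'_\al \to Y$ and a morphism $f'_\al: X_\al \to Y'_\al$ in class $P$, fitting in a commutative square with $f$. Next, since $Y'_\al$ is a truncated geometric stack, Proposition~\ref{prop:factorthroughgeometric} lets me factor the composite $Y'_\al \to Y \xrightarrow{g} Z$ through some truncated geometric substack $Z'_\al \to Z$, producing a morphism $h_\al: Y'_\al \to Z'_\al$ and a commutative square with $g$. I then invoke condition (1) of Proposition~\ref{prop:indPequivconditions} for $g$, applied to the substacks $Y'_\al \to Y$ and $Z'_\al \to Z$ with the induced morphism $h_\al$, to conclude that $h_\al$ itself belongs to class $P$.

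The composite $X_\al \xrightarrow{f'_\al} Y'_\al \xrightarrow{h_\al} Z'_\al$ then lies in $P$ by stability of each class under composition in $\GStkk$: Proposition~\ref{prop:propprops} for properness, Proposition~\ref{prop:cl2of3prop} for closed immersions, and Proposition~\ref{prop:fcdprops} for finite cohomological dimension. Pasting the two commutative squares from the previous step gives the required outer diagram relating $X_\al \to X$ and $Z'_\al \to Z$, which establishes condition (2) for $g \circ f$. The only point requiring care is the homotopy-coherent assembly of the various commutative squares, but this is automatic from the essentially unique factorizations supplied by Proposition~\ref{prop:factorthroughgeometric} (ultimately underpinned by the compactness of truncated geometric stacks in $\oneStkkconv$, Proposition~\ref{prop:oneStkkconvcompactness}); all substantive geometric content is absorbed into the stability-under-composition results already in hand, so no genuine obstacle arises.
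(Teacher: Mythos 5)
Your proof is correct and takes essentially the same route as the paper's: the paper likewise fixes a presentation $X \cong \colim X_\al$, uses the definition of ind-$P$ for $f$ to produce proper (resp.\ closed, finite cohomological dimension) maps $f_\al: X_\al \to Y_\al$ into truncated geometric substacks, factors $g$ on $Y_\al$ through a truncated geometric substack of $Z$ (via Proposition~\ref{prop:factorthroughgeometric} and condition (1) of Proposition~\ref{prop:indPequivconditions}), and concludes by stability of $P$ under composition in $\GStkk$. Your additional remarks on the homotopy-coherent pasting of squares are a fair gloss on what the paper leaves implicit.
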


\begin{proof} 
Let $X$, $Y$, and $Z$ be ind-geometric stacks, $f: X \to Y$ and $g: Y \to Z$ ind-proper morphisms, and $X \cong \colim X_\al$ an ind-geometric presentation. By definition there exist truncated geometric substacks $Y_\al \to Y$ and $Z_\al \to Z$ such that the restrictions of $f$ and $g$ factor through proper morphisms $f_\al: X_\al \to Y_\al$ and $g_\al: Y_\al \to Z_\al$ (note that $Y_\al \to Y$ may always be extended to a reasonable presentation, but the existence of the desired $Z_\al \to Z$ doesn't depend on this). But then $g_\al \circ f_\al$ is proper, hence $g \circ f$ is ind-proper. The other classes of morphisms are treated the same way. 
\end{proof}

\begin{Proposition}\label{prop:indPtwoofthreeprops}
	Let $f: X \to Y$ and $g: Y \to Z$ be morphisms of ind-geometric stacks. If $g \circ f$ and $g$ are ind-proper (resp. ind-closed immersions), then so is $f$. 
\end{Proposition}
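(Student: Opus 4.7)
The plan is to verify condition (2) of Proposition~\ref{prop:indPequivconditions} for $f$. Fix an ind-geometric presentation $X \cong \colim X_\al$. For each $\al$, Proposition~\ref{prop:factorthroughgeometric} lets me factor the composition $X_\al \to X \xrightarrow{f} Y$ through some truncated geometric substack $Y_\al \to Y$ as a map $f_\al : X_\al \to Y_\al$. The task is then to show $f_\al$ is proper (resp.\ a closed immersion); enlarging $Y_\al$ would only compose $f_\al$ with a further closed immersion into a larger substack and so would preserve either desired property.

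Applying ind-properness of $g$ to the substack $Y_\al \to Y$ yields a truncated geometric substack $W^{(1)} \to Z$ and a proper map (resp.\ closed immersion) $g_\al : Y_\al \to W^{(1)}$ whose composition with $W^{(1)} \to Z$ equals $Y_\al \to Y \xrightarrow{g} Z$. Applying ind-properness of $g \circ f$ to $X_\al \to X$ yields another truncated geometric substack $W^{(2)} \to Z$ and a proper map (resp.\ closed immersion) $h_\al : X_\al \to W^{(2)}$ whose composition with $W^{(2)} \to Z$ equals $X_\al \to X \xrightarrow{g \circ f} Z$. Fixing an ind-geometric presentation $Z \cong \colim Z_\ga$, Proposition~\ref{prop:igprestermsarereas} lets me factor both $W^{(1)}$ and $W^{(2)}$ through a common $Z_\ga$ via closed immersions. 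The two resulting maps $X_\al \to Z_\ga$, namely $X_\al \xrightarrow{f_\al} Y_\al \xrightarrow{g_\al} W^{(1)} \hookrightarrow Z_\ga$ and $X_\al \xrightarrow{h_\al} W^{(2)} \hookrightarrow Z_\ga$, both agree with $X_\al \to X \xrightarrow{g \circ f} Z$ after further composing with $Z_\ga \to Z$. By Proposition~\ref{prop:factorthroughgeometric} applied to $\colim_\ga \Map_{\Stkk}(X_\al, Z_\ga) \cong \Map_{\Stkk}(X_\al, Z)$, they therefore become equal after composing with some closed immersion $Z_\ga \to Z_{\ga'}$ from the presentation of $Z$.

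With this agreement in place, the composition $X_\al \xrightarrow{h_\al} W^{(2)} \hookrightarrow Z_{\ga'}$ is proper (resp.\ a closed immersion) as a composition of $h_\al$ with closed immersions, hence so is the equal map $X_\al \xrightarrow{f_\al} Y_\al \xrightarrow{g_\al} W^{(1)} \hookrightarrow Z_{\ga'}$. Since $Y_\al \xrightarrow{g_\al} W^{(1)} \hookrightarrow Z_{\ga'}$ is itself proper (resp.\ a closed immersion), the two-out-of-three statement in Proposition~\ref{prop:propprops} (resp.\ Proposition~\ref{prop:cl2of3prop}) applied to the factorization $X_\al \xrightarrow{f_\al} Y_\al \to Z_{\ga'}$ implies $f_\al$ has the desired property, completing the verification of condition (2). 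The main subtlety is the need to enlarge $Z_\ga$ to $Z_{\ga'}$ so that the two factorizations of $X_\al \to X \to Z$ through truncated geometric substacks of $Z$ coincide on the nose: derived closed immersions need not be monomorphisms, so agreement only after projection to $Z$ would not be enough to transfer properness (resp.\ being a closed immersion) across the two factorizations.
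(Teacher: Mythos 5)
Your proof is correct, but it takes a genuinely different route from the paper's, which is only two lines long. The paper factors $f\circ i_\al$ as $X_\al \xrightarrow{f_\al} Y_\al \to Y$ and then $g|_{Y_\al}$ as $Y_\al \xrightarrow{g_\al} Z_\al \to Z$, and at that point simply invokes condition (1) of Proposition~\ref{prop:indPequivconditions}: since $g\circ f$ and $g$ are ind-proper, \emph{every} morphism filling a diagram of the form (\ref{eq:indPdef}) between truncated geometric substacks is proper, so $g_\al\circ f_\al$ and $g_\al$ are proper with no further work, and the two-out-of-three property finishes the argument. You instead use only the existence form (condition (2)) of ind-properness for $g\circ f$, which produces an a priori unrelated factorization $h_\al: X_\al \to W^{(2)}$, and you must then reconcile the two factorizations through a common substack of $Z$ via Proposition~\ref{prop:factorthroughgeometric} (correctly using that two maps agreeing after composition to $Z$ become homotopic at some finite stage $Z_{\ga'}$, and that properness is invariant under homotopy). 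Your closing observation about derived closed immersions failing to be monomorphisms is a fair one, but the entire subtlety evaporates on the paper's route: the independence-of-factorization you establish by hand is exactly what the (2)$\Rightarrow$(1) implication of Proposition~\ref{prop:indPequivconditions} already encodes (via the compactness argument in $\oneStkkconv$), so you are in effect re-running a special case of that proof. One small point you use silently in the proper case: composing a proper morphism with closed immersions preserves properness (closed immersions between geometric stacks are affine by Proposition~\ref{prop:immisaffforgeo}, hence proper); the paper relies on the same fact implicitly, so this is harmless, but it deserves a citation in a careful write-up.
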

\begin{proof}
Let $X \cong \colim X_\al$  be a reasonable presentation. By Proposition~\ref{prop:factorthroughgeometric} there exist truncated geometric substacks $Y_\al \to Y$ and $Z_\al \to Z$ such that the restrictions of $f$ and $g$ factor through morphisms $f_\al: X_\al \to Y_\al$ and $g_\al: Y_\al \to Z_\al$. By hypothesis $g_\al \circ f_\al$ and $g_\al$ are proper (resp. closed immersions), hence so is $f_\al$ by Proposition~\ref{prop:afp2of3prop} (resp. Proposition~\ref{prop:cl2of3prop}). 
\end{proof}

\begin{Proposition}\label{prop:indPconsistency}
	Let $f: X \to Y$ be a morphism of geometric stacks. Then $f$ is ind-proper (resp. an ind-closed immersion, of ind-finite cohomological dimension) if and only if it is proper (resp. a closed immersion, of finite cohomological dimension). 
\end{Proposition}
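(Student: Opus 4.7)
The plan is to use the canonical ind-geometric presentations $X \cong \colim_n \tau_{\leq n} X$ and $Y \cong \colim_n \tau_{\leq n} Y$ provided by Proposition \ref{prop:gstacktruncapprox}. The key observation is that each truncation map $\tau_{\leq n} X \to X$ is a closed immersion: being classically an isomorphism, the defining condition on $\tau_{\leq 0}$ of base changes is trivially satisfied. Such a map is in particular affine and of cohomological dimension zero, and its classical part being the identity one can also check it is proper; thus it belongs to all three morphism classes under consideration, and can be freely absorbed into two-out-of-three arguments using Propositions \ref{prop:cl2of3prop}, \ref{prop:propprops}, and \ref{prop:fcdprops}.

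For the ``if'' direction I would verify formulation (2) of Proposition \ref{prop:indPequivconditions} with $X_\al = \tau_{\leq n} X$ and $Y_\al = \tau_{\leq n} Y$. Since $\tau_{\leq n} X$ is $n$-truncated, the composite $\tau_{\leq n} X \to X \xrightarrow{f} Y$ factors uniquely as $\tau_{\leq n} X \xrightarrow{f_n} \tau_{\leq n} Y \to Y$ by the universal property of the coreflection onto $n$-truncated stacks. The composite itself has the relevant property as a composition of the closed immersion $\tau_{\leq n} X \to X$ with $f$, with a cohomological-dimension bound inherited from $f$ independently of $n$. Since $\tau_{\leq n} Y \to Y$ is likewise a closed immersion sharing these properties, two-out-of-three then forces $f_n$ to have the required property, which yields condition (2).

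For the ``only if'' direction I would apply formulation (1) of Proposition \ref{prop:indPequivconditions} to the substacks $\tau_{\leq n} X \to X$ and $\tau_{\leq n} Y \to Y$, obtaining that each $f_n : \tau_{\leq n} X \to \tau_{\leq n} Y$ has the desired property (uniformly in $n$ for cohomological dimension). The closed immersion case is then immediate from the identification $\tau_{\leq 0}(X \times_Y \Spec A) \cong \tau_{\leq 0} X \times^{\mathrm{cl}}_{\tau_{\leq 0} Y} \tau_{\leq 0} \Spec A$ of the classical truncation of a derived fiber product with the classical fiber product, reducing the defining condition on $f$ to the already-known condition on $f_0 = \tau_{\leq 0} f$. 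The finite cohomological dimension case then reduces via flat-locality (Proposition \ref{prop:fcdprops}) together with the uniform bound on the $f_n$.

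The main obstacle will be the proper case of the ``only if'' direction: from properness of each $f_n$ one must deduce that each base change $X \times_Y \Spec A \to \Spec A$ is a proper spectral algebraic space in the sense of \cite[Def.~5.1.2.1]{LurSAG}. This will require a nilcompleteness argument, exploiting the convergent structure of the fiber product $X \times_Y \Spec A \cong \colim_n \tau_{\leq n}(X \times_Y \Spec A)$ together with the identification (on $n$-truncated test objects) of these truncations with $\tau_{\leq n} X \times_{\tau_{\leq n} Y} \Spec \tau_{\leq n} A$, so that properness of $f$ may be deduced from properness at all truncated levels.
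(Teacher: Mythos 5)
Your overall framework (the canonical presentations $X \cong \colim \tau_{\leq n} X$, $Y \cong \colim \tau_{\leq n} Y$ from Proposition \ref{prop:gstacktruncapprox}, reduction to the truncated maps $f_n = \tau_{\leq n} f$) matches the paper's, and your ``if'' direction via factoring $\tau_{\leq n} X \to X \to Y$ through $\tau_{\leq n} Y$ and cancelling is correct --- though note that for cohomological dimension Proposition \ref{prop:fcdprops} contains no two-out-of-three clause; what you need is the affine-cancellation observation recorded at the end of the proof of Proposition \ref{prop:indPequivconditions} (if $g \circ f$ has finite cohomological dimension and $g$ is affine, then so does $f$), which applies since closed immersions of geometric stacks are affine. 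The closed-immersion case is also fine. But the cohomological-dimension ``only if'' step contains a genuine gap. Your parenthetical claim that condition (1) of Proposition \ref{prop:indPequivconditions} yields a bound ``uniformly in $n$'' is unjustified: ind-finite cohomological dimension gives finiteness for each $f_n$ separately, and the paper's definitions distinguish ``ind-finite'' from ``finite'' cohomological dimension by exactly this uniformity (witness $\P^\infty \to \Spec \kk$), so you are assuming part of the conclusion. Worse, even granting a uniform bound $d$ on all the $f_n$, ``flat-locality together with the uniform bound'' does not finish the argument: an object of $\QCoh(X)^{\leq 0}$ that is unbounded below is not pushed forward from any single truncation $\tau_{\leq n} X$ (only sheaves in a bounded cohomological range are identified with sheaves on truncations), so bounding the $f_{n*}$ does not directly bound $f_*$ on all of $\QCoh(X)^{\leq 0}$. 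The missing ingredient is precisely what the paper cites: cohomological dimension of $f_*$ is detected on the heart (\cite[Lem. A.1.6]{HLP23}), combined with $\QCoh(X)^\heartsuit \cong \QCoh(\tau_{\leq 0} X)^\heartsuit$; this single fact simultaneously delivers the uniformity across $n$ (every $f_n$ has the same cohomological dimension as $f_0$) and the conclusion for $f$ itself.

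On the proper ``only if'' direction, which you rightly flag as the main obstacle: your nilcompleteness program would essentially re-derive what the paper obtains from one citation, namely \cite[Rem. 5.1.2.2]{LurSAG}, which says properness is detected on $\tau_{\leq 0} f$. If you carry your plan out by hand, be aware that beyond identifying the truncations of $X \times_Y \Spec A$ with truncated fiber products, you must also show that $X \times_Y \Spec A$ is a quasi-compact, separated \emph{spectral algebraic space} at all, given only that its truncations are --- being an algebraic space is part of the definition of properness used here, and its insensitivity to derived structure is part of what the cited remark packages. So your sketch is in the right direction but substantially underestimates the work relative to the paper's one-line argument; the cohomological-dimension case, by contrast, needs the heart-detection lemma and cannot be repaired by uniformity-plus-flat-locality alone.
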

\begin{proof}
Recall that $X \cong \colim \tau_{\leq n} X$ and $X \cong \colim \tau_{\leq n} Y$ are ind-geometric presentations. Properness of $f$ is equivalent to properness of $\tau_{\leq 0} f$ \cite[Rem. 5.1.2.2]{LurSAG}, hence to properness of each $\tau_{\leq n} f$, hence to ind-properness. The corresponding claim for closedness is immediate, while for finiteness of cohomological dimension it follows from \cite[Lem. A.1.6]{HLP23} and the fact that $\QCoh(X)^\heartsuit \cong \QCoh(\tau_{\leq 0} X)^\heartsuit$. 
\end{proof}

We will often say a morphism of reasonable ind-geometric stacks is almost ind-finitely presented if it is almost finitely presented (i.e. in the sense of (\ref{eq:functorafp})). This is justified by the following result. 

\begin{Proposition}\label{prop:afpdefconsistent}
Let $f: X \to Y$ be a morphism of reasonable ind-geometric stacks, and let $X \cong \colim_\al X_\al$ be a reasonable presentation. The following conditions are equivalent.
\begin{enumerate}
	\item The morphism $f$ is almost finitely presented.
	\item For every diagram (\ref{eq:indPdef}) in which $X' \to X$ and $Y' \to Y$ are reasonable geometric substacks, the map $f'$ is almost finitely presented.
	\item For every $X_\al$ there exists a diagram  (\ref{eq:cancheckpresforindPdiag}) 
	in which $Y_\al$ is a reasonable geometric substack of $Y$ and $f_\al$ is almost finitely presented. 
\end{enumerate}	
\end{Proposition}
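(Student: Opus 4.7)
The plan is to prove the cycle $(1) \Rightarrow (2) \Rightarrow (3) \Rightarrow (1)$, leaning on Propositions \ref{prop:afp2of3prop}, \ref{prop:igprestermsarereas}, and \ref{prop:factorthroughgeometric}. The implications among (1), (2), (3) that do not involve (1) as a conclusion are formal consequences of composition, the 2-of-3 property of afp morphisms (Proposition \ref{prop:afp2of3prop}), and the fact that reasonable substack inclusions are afp closed immersions (Proposition \ref{prop:igprestermsarereas}).

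More explicitly: for $(1) \Rightarrow (2)$, given a diagram \eqref{eq:indPdef}, both $X' \to X$ and $Y' \to Y$ are afp, so the composition $X' \to X \to Y$ (afp since $f$ is) equals $X' \to Y' \to Y$, and Proposition \ref{prop:afp2of3prop} applied with the afp map $Y' \to Y$ forces $f'$ to be afp. For $(3) \Rightarrow (2)$, I would factor $X' \to X$ through some $X_\al$ via an afp closed immersion (Propositions \ref{prop:factorthroughgeometric} and \ref{prop:igprestermsarereas}), use (3) to produce an afp map $X_\al \to Y_\al$, and combine these with the afp inclusion $Y_\al \to Y$ using 2-of-3 for $X' \to Y' \to Y$. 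For $(2) \Rightarrow (3)$, given $X_\al$ I would factor $X_\al \to Y$ through some term $Y_\be$ of a chosen reasonable presentation of $Y$ (such a term being automatically a reasonable substack of $Y$) and invoke (2) directly.

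The real content is $(3) \Rightarrow (1)$. Given a filtered colimit $A \cong \colim_\ga A_\ga$ in $\CAlgkleqn$, I must show the square with corners $\colim_\ga X(A_\ga)$, $X(A)$, $\colim_\ga Y(A_\ga)$, and $Y(A)$ is Cartesian in spaces. The strategy is to exhibit this as a filtered colimit of Cartesian squares, exploiting the fact that filtered colimits of spaces commute with finite limits. Fix reasonable presentations $X \cong \colim_\al X_\al$ and $Y \cong \colim_\be Y_\be$. By (3) and Proposition \ref{prop:igprestermsarereas}, for each $\al$ I pick an index $\be(\al)$ such that $X_\al \to Y$ lifts to an afp morphism $X_\al \to Y_{\be(\al)}$; composing with the afp closed immersion $Y_{\be(\al)} \to Y_\be$ makes $X_\al \to Y_\be$ afp for every $\be \geq \be(\al)$. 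Applied to each such pair, the afp condition gives that the square with corners $\colim_\ga X_\al(A_\ga)$, $X_\al(A)$, $\colim_\ga Y_\be(A_\ga)$, $Y_\be(A)$ is Cartesian.

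The pairs $(\al, \be)$ with $\be \geq \be(\al)$ organize into a filtered category whose projections to the indexing categories for $X$ and for $Y$ are both cofinal (this uses filteredness of each presentation together with the fact that $\{\be : \be \geq \be(\al)\}$ is upward-closed in the indexing of $Y$). Using Proposition \ref{prop:factorthroughgeometric} to identify $X(B) \cong \colim_\al X_\al(B)$ and $Y(B) \cong \colim_\be Y_\be(B)$ for truncated $B$, the four corners of the colimit square identify with $\colim_\ga X(A_\ga)$, $X(A)$, $\colim_\ga Y(A_\ga)$, and $Y(A)$; since filtered colimits preserve Cartesianness, the colimit square itself is Cartesian. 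The main obstacle will be the bookkeeping to check that the maps of the colimit square coincide with the natural maps induced by $f$, and to verify the claimed cofinality; these are routine but must be handled with care.
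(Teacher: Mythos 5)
Your proposal is correct in outline, and your treatment of the easy implications matches the paper's (the paper proves $(1)\Rightarrow(2)$ from Propositions \ref{prop:afp2of3prop} and \ref{prop:igprestermsarereas} exactly as you do, and dismisses $(2)\Rightarrow(3)$ as immediate). But on the substantive implication $(3)\Rightarrow(1)$ you take a genuinely different, and considerably heavier, route than the paper. You build a bi-indexed filtered family of Cartesian squares over pairs $(\al,\be)$ with $\be \geq \be(\al)$, then argue by cofinality of the two projections and left exactness of filtered colimits. The paper instead never factors through terms of a presentation of $Y$ at all: since almost finite presentation is a condition (\ref{eq:functorafp}) defined for \emph{arbitrary} morphisms of stacks, hypothesis (3) together with Proposition \ref{prop:igprestermsarereas} and the composition stability of Proposition \ref{prop:afp2of3prop} makes the composite $X_\al \to Y$ itself afp, so one can apply the defining Cartesian-square condition for $X_\al \to Y$ directly against $A \cong \colim_\ga A_\ga$. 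This yields the three-step chain $\colim_\ga X(A_\ga) \cong \colim_{\al,\ga} X_\al(A_\ga) \cong \colim_\al \bigl( X_\al(A) \times_{Y(A)} \colim_\ga Y(A_\ga) \bigr) \cong X(A) \times_{Y(A)} \colim_\ga Y(A_\ga)$, using Proposition \ref{prop:factorthroughgeometric} for the first isomorphism and left exactness of filtered colimits of spaces for the last, with a single colimit over $\al$ pulled out of a fiber product whose other two legs do not depend on $\al$. What the paper's version buys is that it entirely sidesteps the step you flag as "routine bookkeeping," which is in fact the delicate point of your approach: in the $\infty$-categorical setting, the pointwise choices $\be(\al)$ and the factorizations $X_\al \to Y_{\be(\al)}$ do not automatically assemble into a functorial filtered diagram of squares — two factorizations of $X_\al \to Y$ through different $Y_\be$ agree only after passing to a larger index, and promoting this to a coherent diagram requires either working inside $\Ind(\GStkkplus)$ via Proposition \ref{prop:indclosedclosure} (where $\Map(X,Y) \cong \lim_\al \colim_\be \Map(X_\al, Y_\be)$ encodes the coherence for you) or a rectification argument. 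So your strategy is repairable, but the repair is the real work; the paper's observation that afp can be tested on the composites $X_\al \to Y$ with ind-geometric target dissolves the problem before it arises.
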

\begin{proof}
	That (1) implies (2) follows from Propositions~\ref{prop:afp2of3prop} and \ref{prop:igprestermsarereas}, and that (2) implies (3) is immediate. To show (3) implies (1) let $A \cong \colim A_\be$ be a filtered colimit in $\CAlgkleqn$ for some $n$. Then we have
	\begin{equation*}
		\colim_\be X(A_\be) \cong \colim_{\al,\be} X_\al(A_\be) 
		\cong \colim_\al \left( X_\al(A) \times_{Y(A)} \colim_\be Y(A_\be) \right),
	\end{equation*}
	the first isomorphism using Proposition~\ref{prop:factorthroughgeometric} and the second Proposition~\ref{prop:igprestermsarereas}. But the last expression is then isomorphic to $X(A) \times_{Y(A)} \colim_\be Y(A_\be)$ by the left exactness of filtered colimits of spaces. 
\end{proof} 

Ind-closed immersions have the following closure property. Here $\indGStkkcl \subset \indGStkk$  denotes the 1-full subcategory which only includes ind-closed immersions, similarly for $\GStkkpluscl \subset \GStkkplus$. Recall that a subcategory is 1-full if for $n>1$ it includes all $n$-simplices whose edges belong to the indicated class of morphisms. 

\begin{Proposition}\label{prop:indclosedclosure} 
	The canonical functor $\Ind(\GStkkpluscl) \to \Stkkconv$ factors through an equivalence $\Ind(\GStkkpluscl) \cong \indGStkkcl$. 
	In particular, ind-geometric stacks are closed under filtered colimits along ind-closed immersions  in $\Stkkconv$. 
\end{Proposition}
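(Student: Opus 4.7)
The plan is to extend the inclusion $\iota \colon \GStkkpluscl \hookrightarrow \Stkkconv$ to a filtered-colimit-preserving functor $\hat\iota \colon \Ind(\GStkkpluscl) \to \Stkkconv$ by the universal property of $\Ind$, verify that $\hat\iota$ factors through the 1-full subcategory $\indGStkkcl$, and then check essential surjectivity and full faithfulness of the resulting functor onto $\indGStkkcl$. The ``in particular'' clause will then follow from the equivalence once one notes that a filtered colimit in $\Ind(\GStkkpluscl)$ is, up to isomorphism, represented by a single filtered diagram in $\GStkkpluscl$ (assembling the iterated filtered colimits in the usual way).

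First I would check the factorization. On objects this is immediate from Definition~\ref{def:indgeomstack}: the image of a diagram $(X_\al)$ in $\GStkkpluscl$ is the colimit $\colim X_\al$ in $\Stkkconv$, which is ind-geometric by construction. For morphisms, an arrow $(X_\al) \to (Y_\be)$ in $\Ind(\GStkkpluscl)$ is given by a compatible family of closed immersions $X_\al \to Y_{\be(\al)}$, and by Proposition~\ref{prop:igprestermsarereas} each $Y_{\be(\al)}$ is a truncated geometric substack of $Y := \colim Y_\be$. Condition~(2) of Proposition~\ref{prop:indPequivconditions} then exhibits the induced map $X \to Y$ as an ind-closed immersion. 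Essential surjectivity is immediate from Definition~\ref{def:indgeomstack}, since any ind-geometric stack admits a presentation as such a diagram.

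For full faithfulness I would identify mapping spaces on both sides. Compactness of each $X_\al$ in $\Ind(\GStkkpluscl)$ and the universal property of $\Ind$ yield
$$\Map_{\Ind(\GStkkpluscl)}((X_\al), (Y_\be)) \cong \lim_\al \colim_\be \Map_{\GStkkpluscl}(X_\al, Y_\be),$$
while $X \cong \colim X_\al$ in $\Stkkconv$ combined with Proposition~\ref{prop:factorthroughgeometric} gives
$$\Map_{\Stkkconv}(X, Y) \cong \lim_\al \colim_\be \Map_{\Stkkconv}(X_\al, Y_\be).$$
Under this identification the subspace $\Map_{\indGStkkcl}(X, Y) \subseteq \Map_{\Stkkconv}(X, Y)$ of ind-closed immersions matches the subspace where every factorization $X_\al \to Y_{\be(\al)}$ is a closed immersion: one direction is Proposition~\ref{prop:indPequivconditions}(2), and the reverse uses Proposition~\ref{prop:indPequivconditions}(1) together with Proposition~\ref{prop:igprestermsarereas} to recognize each $X_\al$ and $Y_{\be(\al)}$ as truncated geometric substacks of $X$ and $Y$ respectively. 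This matches the $\Map_{\GStkkpluscl}$-subspace on the $\Ind$-side.

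The main obstacle I anticipate is this identification of subspaces, since it requires using both directions of Proposition~\ref{prop:indPequivconditions} together with the observation that the ``closed immersion'' locus is a union of connected components (so is preserved by the filtered colimits and limits of spaces above). Once full faithfulness is established, the ``in particular'' clause follows by translating a filtered diagram in $\indGStkkcl$ into $\Ind(\GStkkpluscl)$ via the equivalence, rewriting its colimit there as a single $\GStkkpluscl$-diagram, and noting that its image in $\Stkkconv$ agrees with the original filtered colimit because $\hat\iota$ is continuous by construction.
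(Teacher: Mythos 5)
Your proof is correct and takes essentially the same route as the paper's: both identify $\Map_{\Ind(\GStkkpluscl)}(X,Y)$ with $\lim_\al \colim_\be \Map_{\GStkkpluscl}(X_\al, Y_\be)$, observe that its map into $\lim_\al \colim_\be \Map_{\GStkkplus}(X_\al, Y_\be) \cong \Map_{\indGStkk}(X,Y)$ (via the compactness result, Proposition \ref{prop:oneStkkconvcompactness}, equivalently Proposition \ref{prop:factorthroughgeometric}) is a monomorphism because closed immersions form a union of connected components stable under limits and filtered colimits, and then identify the image with the ind-closed immersions. The paper compresses this last identification to ``follows from the definitions'' where you spell it out via both directions of Proposition \ref{prop:indPequivconditions}; this is a difference of detail, not of method.
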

\begin{proof}
	By definition $\indGStkk$ is the essential image of $\Ind(\GStkkpluscl)$. Let $X \cong \colim X_\al$, $Y \cong \colim Y_\be$ be ind-geometric presentations. By abuse we denote the corresponding objects of $\Ind(\GStkkpluscl)$ by $X$ and $Y$ as well, so that
	$$\Map_{\Ind(\GStkkpluscl)}(X, Y) \cong \lim_\al \colim_\be \Map_{\GStkkpluscl}(X_\al, Y_\be).$$ 
	Now the natural map
	$$\lim_\al \colim_\be \Map_{\GStkkpluscl}(X_\al, Y_\be) \to \lim_\al \colim_\be \Map_{\GStkkplus}(X_\al, Y_\be) \cong \Map_{\indGStkk}(X, Y)$$
	is a monomorphism since monomorphisms are stable under limits and filtered colimits (note that the isomorphism on the right follows from Proposition \ref{prop:oneStkkconvcompactness}). 
	It thus suffices to show its image is exactly the subspace of ind-closed immersions, but this follows from the definitions. 
\end{proof}

\begin{Remark}\label{rem:nontruncindgdef}
	Note that a closed immersion of non-truncated geometric stacks is also an ind-closed morphism of ind-geometric stacks. It follows from Proposition \ref{prop:indclosedclosure} that $\indGStkk$ is the essential image of the (not fully faithful) functor $\Ind(\GStkkcl) \to \Stkkconv$, where $\GStkkcl \subset \GStkk$ is the 1-full subcategory which only includes closed immersions.  In other words, we obtain the same class of objects if in Definition \ref{def:indgeomstack} we do not require the $X_\al$ to be truncated. 
\end{Remark}

The following variant of Proposition \ref{prop:indclosedclosure} is proved the same way. Here $\indGStkkreasclafp \subset \indGStkkreas$ denotes the 1-full subcategory which only includes almost ind-finitely presented ind-closed immersions, similarly for $\indGStkkcohclafp \subset \indGStkkcoh$, $\GStkkplusclafp \subset \GStkkplus$, and $\GStkkpluscohclafp \subset \GStkkpluscoh$.
  
  \begin{Proposition}\label{prop:cohfiltcolims}
  	The canonical functor $\Ind(\GStkkplusclafp) \to \Stkkconv$  factors through an equivalence $\Ind(\GStkkplusclafp) \cong \indGStkkreasclafp$, and $\Ind(\GStkkpluscohclafp) \to \Stkkconv$ factors through an equivalence $\Ind(\GStkkpluscohclafp) \cong \indGStkkcohclafp$. In particular, reasonable (resp. coherent) ind-geometric stacks are closed under filtered colimits along almost ind-finitely presented ind-closed immersions  in $\Stkkconv$. 
  \end{Proposition}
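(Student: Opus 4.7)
The plan is to follow the proof of Proposition \ref{prop:indclosedclosure} essentially verbatim, with two modifications: replace ``closed immersion'' by ``almost finitely presented closed immersion'' throughout, and in the coherent case additionally require the terms of the presentations to be coherent geometric stacks. By Definition \ref{def:reasindgeomstack}, the essential image of $\Ind(\GStkkplusclafp) \to \Stkkconv$ is precisely $\indGStkkreas$, since a reasonable ind-geometric stack is exactly one admitting a reasonable presentation; likewise, $\Ind(\GStkkpluscohclafp) \to \Stkkconv$ has essential image $\indGStkkcoh$, since by definition a coherent ind-geometric stack admits a reasonable presentation whose terms are coherent.

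To upgrade this to an equivalence onto the indicated 1-full subcategories, I would fix reasonable (resp.\ coherent) presentations $X \cong \colim X_\al$ and $Y \cong \colim Y_\be$, denote the corresponding objects of $\Ind(\GStkkplusclafp)$ (resp.\ $\Ind(\GStkkpluscohclafp)$) by $X$ and $Y$ as well, and consider the natural map
$$\lim_\al \colim_\be \Map_{\GStkkplusclafp}(X_\al, Y_\be) \to \lim_\al \colim_\be \Map_{\GStkkplus}(X_\al, Y_\be) \cong \Map_{\indGStkk}(X, Y),$$
where the left-hand side is $\Map_{\Ind(\GStkkplusclafp)}(X, Y)$ and the isomorphism on the right follows from Proposition \ref{prop:oneStkkconvcompactness}. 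As in the proof of Proposition \ref{prop:indclosedclosure}, this map is a monomorphism since monomorphisms are stable under filtered colimits and limits. The same computation applies in the coherent case since $\GStkkpluscohclafp$ is a full subcategory of $\GStkkplusclafp$ and both $X_\al$ and $Y_\be$ are coherent by choice of presentation.

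It then remains to identify the image of this monomorphism with the subspace of almost ind-finitely presented ind-closed immersions $X \to Y$. This is exactly the content of Proposition \ref{prop:indPequivconditions} combined with Proposition \ref{prop:afpdefconsistent}: a morphism $f: X \to Y$ in $\indGStkk$ is an almost ind-finitely presented ind-closed immersion if and only if, for each $\al$, the composition $X_\al \to X \to Y$ factors through some $Y_\be$ via an almost finitely presented closed immersion $X_\al \to Y_\be$, which is precisely the data parametrized by the limit-colimit expression on the left. The last sentence of the proposition is then formal: a filtered colimit of almost ind-finitely presented ind-closed immersions in $\indGStkkreasclafp$ (resp.\ $\indGStkkcohclafp$) lives in the essential image of $\Ind(\GStkkplusclafp)$ (resp.\ $\Ind(\GStkkpluscohclafp)$) by the established equivalence.

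The main (minor) point to verify is that in the coherent case the factorization produced by Proposition \ref{prop:afpdefconsistent} may be taken with $Y_\be$ coherent; this is automatic once we start from a coherent presentation of $Y$, since every term of such a presentation is coherent by hypothesis. I do not anticipate any serious obstacle beyond this bookkeeping, as the entire argument is parallel to Proposition \ref{prop:indclosedclosure} with all the extra hypotheses (almost finite presentation, coherence) being preserved by the factorization steps used in the proof.
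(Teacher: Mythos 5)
Your proposal is correct and matches the paper's proof, which is simply the observation that the argument for Proposition \ref{prop:indclosedclosure} carries over verbatim: the same monomorphism $\lim_\al \colim_\be \Map_{\GStkkplusclafp}(X_\al, Y_\be) \to \Map_{\indGStkk}(X,Y)$ (via Proposition \ref{prop:oneStkkconvcompactness}), with image identified as the almost ind-finitely presented ind-closed immersions, and with the coherent case handled by restricting to presentations with coherent terms. Your extra bookkeeping via Propositions \ref{prop:indPequivconditions} and \ref{prop:afpdefconsistent} just makes explicit what the paper leaves as ``follows from the definitions.''
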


\subsection{Fiber Products} 

Now we consider fiber products of ind-geometric stacks, and the base change properties of the classes of morphisms considered above. 

\begin{Proposition}\label{prop:indgeomfiberprods}
Ind-geometric stacks are closed under finite limits in $\Stkkconv$ (and $\Stkk$). 
\end{Proposition}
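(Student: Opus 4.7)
The inclusion $\Stkkconv \hookrightarrow \Stkk$ preserves limits (it is a right adjoint as used in Lemma~\ref{lem:convstacks}), so it suffices to show closure under finite limits in $\Stkkconv$. The terminal object $\Spec\kk$ is geometric, hence ind-geometric, so the real content is closure under pullbacks.

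Given morphisms $f: X \to Z$ and $g: Y \to Z$ of ind-geometric stacks with chosen presentations $X \cong \colim_\alpha X_\alpha$, $Y \cong \colim_\beta Y_\beta$, $Z \cong \colim_\gamma Z_\gamma$, form $W := X \times_Z Y$ in $\Stkkconv$. By Proposition~\ref{prop:factorthroughgeometric}, each composition $X_\alpha \to X \to Z$ (resp. $Y_\beta \to Y \to Z$) factors through some $Z_\gamma$. Consider the filtered indexing category whose objects are tuples $(\alpha,\beta,\gamma,u,v)$ with $u: X_\alpha \to Z_\gamma$ and $v: Y_\beta \to Z_\gamma$ such factorizations, and set $W_{\alpha,\beta,\gamma,u,v} := X_\alpha \times_{Z_\gamma} Y_\beta$. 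Each $W_{\alpha,\beta,\gamma,u,v}$ is geometric by Proposition~\ref{prop:gstkprops}, hence belongs to $\oneStkkconv$ by Proposition~\ref{prop:oneStkkconvcompactness}.

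The canonical map $\colim W_{\alpha,\beta,\gamma,u,v} \to W$ is an equivalence in $\Stkkconv$: finite limits in $\Stkkconv$ coincide with those in $\PreStkkconv$, and filtered colimits of diagrams in $\oneStkkconv$ also agree with those in $\PreStkkconv$ (as in the proof of Proposition~\ref{prop:oneStkkconvcompactness}). Both are therefore computed objectwise, reducing the equivalence to the commutation of filtered colimits with finite limits in $\Spc$.

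The main technical step is to verify that each transition map $W_{\alpha,\beta,\gamma,u,v} \to W_{\alpha',\beta',\gamma',u',v'}$ is a closed immersion. Factoring the transition into three elementary steps, varying $\alpha$, $\beta$, and $\gamma$ one at a time, the $\alpha$- and $\beta$-steps are base changes of the closed immersions $X_\alpha \to X_{\alpha'}$ and $Y_\beta \to Y_{\beta'}$, and so are closed immersions by Proposition~\ref{prop:cl2of3prop}. For the $\gamma$-step $X_\alpha \times_{Z_\gamma} Y_\beta \to X_\alpha \times_{Z_{\gamma'}} Y_\beta$ induced by a closed immersion $j: Z_\gamma \to Z_{\gamma'}$, the key observation is that $(-)_{\leq 0}$ preserves limits, so the classical restrictions of both sides are the classical fiber products of $\tau_{\leq 0} X_\alpha$, $\tau_{\leq 0} Y_\beta$ over $\tau_{\leq 0} Z_\gamma$ and $\tau_{\leq 0} Z_{\gamma'}$ respectively; these coincide because $\tau_{\leq 0} Z_\gamma \to \tau_{\leq 0} Z_{\gamma'}$ is a closed immersion of classical stacks, hence a monomorphism. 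Performing the analogous computation after base change along any $\Spec A \to X_\alpha \times_{Z_{\gamma'}} Y_\beta$ shows the classical truncation of the fiber is $\tau_{\leq 0} \Spec A$, verifying the closed-immersion criterion. With this, $W$ is presented as a filtered colimit of (possibly non-truncated) geometric stacks along closed immersions in $\Stkkconv$, and Proposition~\ref{prop:indclosedclosure} together with Remark~\ref{rem:nontruncindgdef} (which waives the truncatedness hypothesis on the terms) concludes that $W$ is ind-geometric. I expect the principal subtlety to be precisely this verification that enlarging the base from $Z_\gamma$ to $Z_{\gamma'}$ does not disturb the closed-immersion property, as the derived fiber product a priori depends on the ambient base.
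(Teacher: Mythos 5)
Your proposal is correct in substance and rests on the same three pillars as the paper's proof: factorization through geometric substacks (Proposition \ref{prop:factorthroughgeometric}), invariance of the classical truncation of a fiber product when the base is enlarged along a closed immersion, and closure under filtered colimits along ind-closed immersions (Proposition \ref{prop:indclosedclosure} together with Remark \ref{rem:nontruncindgdef}, which you rightly invoke since the terms $X_\al \times_{Z_\ga} Y_\be$ need not be truncated). The organizational difference is real, though: the paper never forms your tri-indexed category of tuples $(\al,\be,\ga,u,v)$. It instead argues in two stages, each a colimit over a \emph{single} index --- first with $X$ and $Y'$ truncated geometric over ind-geometric $Y$, factoring both maps through one substack $Y_\al$ and writing $X \times_Y Y' \cong \colim_{\be \geq \al} X \times_{Y_\be} Y'$ by left exactness of filtered colimits in $\Stkkconv$; then in general writing $X' \cong \colim_{\al,\be} X_\al \times_Y Y'_\be$ as a colimit of (by stage one) ind-geometric stacks along ind-closed immersions. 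This iteration is designed precisely to avoid the one point your write-up leaves unargued: that your tuple category is a well-defined filtered $\infty$-category and that the colimit over it is cofinal enough to compute $W$. Both facts are true and follow from the compactness statements of Proposition \ref{prop:oneStkkconvcompactness} by standard Ind-category bookkeeping, but at the $\infty$-categorical level they require a genuine argument, and you should either supply it or adopt the paper's two-step reduction. For the $\ga$-step you in effect re-prove Proposition \ref{prop:classicaliso}, which you could simply cite; that said, your derivation --- restriction to classical algebras preserves finite limits, and a classical closed immersion is a monomorphism, so enlarging the base does not change the classical fiber product --- is a valid and arguably slicker alternative to the paper's proof of that proposition via flat covers and $\pi_0$ of derived tensor products, and your subsequent base-change verification of the closed-immersion criterion correctly fills in what the paper leaves implicit when it passes from Proposition \ref{prop:classicaliso} to the assertion that the transition maps are closed immersions.
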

\begin{proof}
Note that $\Stkkconv$ is closed under limits in $\Stkk$, so the two claims are equivalent. Since $\indGStkk$ contains the terminal object $\Spec \kk$, it suffices to show closure under fiber products \cite[Prop. 4.4.2.6]{LurHTT}. 
	
Let $f: X \to Y$ and $h: Y' \to Y$ be morphisms of ind-geometric stacks, and let $X' := X \times_Y Y'$. Suppose first that $X$ and $Y'$ are truncated geometric stacks, and let $Y \cong \colim Y_\al$ be an ind-geometric presentation. By Proposition \ref{prop:factorthroughgeometric} we can factor $f$ and $h$ through $Y_\al$ for some $\al$. We have $X' \cong \colim_{\be \geq \al} X'_\be$, where $X'_\be := X \times_{Y_\be} Y'$, by left exactness of filtered colimits in $\Stkkconv$ \cite[Ex. 7.3.4.7]{LurHTT}. The transition maps are closed immersions of not necessarily truncated geometric stacks by Proposition~\ref{prop:classicaliso}. It follows they are ind-closed as morphisms of ind-geometric stacks, hence $X'$ is ind-geometric by Proposition~\ref{prop:indclosedclosure}. Now suppose $X \cong \colim X_\al$ and $Y' \cong \colim Y'_\be$ are ind-geometric presentations. Then as above $X' \cong \colim X_\al \times_Y Y'_\be$ expresses $X'$ as a filtered colimit in $\Stkkconv$ of ind-geometric stacks along ind-closed immersions, so again $X'$ is ind-geometric by Proposition~\ref{prop:indclosedclosure}. 
\end{proof}

\begin{Proposition}\label{prop:indPbasechange}
	Ind-proper morphisms (resp. ind-closed immersions, morphisms of ind-finite cohomological dimension) are stable under base change in $\indGStkk$. 
\end{Proposition}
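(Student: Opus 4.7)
The plan is to verify condition (2) of Proposition~\ref{prop:indPequivconditions} for $f': \wt X \to Y'$, where $\wt X := X \times_Y Y'$ is ind-geometric by Proposition~\ref{prop:indgeomfiberprods}. I will handle the three properties uniformly, writing \emph{P} for any of: proper, closed immersion, of finite cohomological dimension.

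The first step is to reduce to the case when $Y'$ is a truncated geometric stack. Fixing an ind-geometric presentation $Y' \cong \colim_\be Y'_\be$, left exactness of filtered colimits in $\Stkkconv$ gives $\wt X \cong \colim_\be X \times_Y Y'_\be$. Assuming the truncated case has been established, each $X \times_Y Y'_\be \to Y'_\be$ is ind-P, so by condition (2) any ind-geometric presentation $X \times_Y Y'_\be \cong \colim_i W_{\be,i}$ produces, for each $W_{\be,i}$, a factorization through a truncated geometric substack $V_{\be,i} \to Y'_\be$ as a P-morphism. Composing the closed immersion $V_{\be,i} \to Y'_\be$ with the closed immersion $Y'_\be \to Y'$ (closed by Proposition~\ref{prop:cl2of3prop}) exhibits $V_{\be,i}$ as a truncated geometric substack of $Y'$ through which $W_{\be,i}$ factors as a P-morphism, verifying condition (2) for $f'$.

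The main step is thus the case $Y' = V$ truncated geometric. Choose an ind-geometric presentation $X \cong \colim_\al X_\al$. By condition (2) applied to $f$ and by Proposition~\ref{prop:factorthroughgeometric}, for each $\al$ we may select a truncated geometric substack $Y_\ga \to Y$ through which both $X_\al \to Y$ and $V \to Y$ factor, with $X_\al \to Y_\ga$ a P-morphism. As in the proof of Proposition~\ref{prop:indgeomfiberprods}, $X_\al \times_Y V \cong \colim_{\de \geq \ga} X_\al \times_{Y_\de} V$, where each term is a geometric stack (Proposition~\ref{prop:gstkprops}). Further writing $X_\al \times_{Y_\de} V \cong \colim_n \tau_{\leq n}(X_\al \times_{Y_\de} V)$ via Proposition~\ref{prop:gstacktruncapprox}, we obtain an ind-geometric presentation of $\wt X$ by truncated geometric stacks. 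Since each truncation morphism is a closed immersion into the ambient geometric stack, it suffices to show every projection $X_\al \times_{Y_\de} V \to V$ is a P-morphism.

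For this I would use the factorization $X_\al \times_{Y_\de} V \cong X_\al \times_{Y_\ga} (Y_\ga \times_{Y_\de} V)$, so that the projection to $V$ is the composition of the base change of the P-morphism $X_\al \to Y_\ga$ (a P-morphism by Proposition~\ref{prop:propprops}, Proposition~\ref{prop:cl2of3prop}, or Proposition~\ref{prop:fcdprops}) with the base change of the closed immersion $Y_\ga \to Y_\de$ (closed by Proposition~\ref{prop:cl2of3prop}). The main subtlety is that the second factor is only a closed immersion rather than a visible P-morphism; however, closed immersions between truncated geometric stacks are themselves P in each of the three cases---trivially for closed immersions, with cohomological dimension zero for the finite-cohomological-dimension case, and as proper morphisms by the same characterization of properness already implicit in the proof of Proposition~\ref{prop:indPequivconditions}. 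Composing then gives the desired P-morphism, and taking the further composition with the truncation closed immersion completes the verification.
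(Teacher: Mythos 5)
Your proof is correct, and it reaches the same punchline as the paper's --- factor a truncated geometric piece of the fiber product through a base change of a P-morphism, then compose with a closed immersion, using that closed immersions are themselves P in all three senses --- but the organization is genuinely different. The paper never reduces to a truncated target: it fixes a presentation $X \cong \colim X_\al$, factors $f$ through proper maps $\phi_\al: X_\al \to Y_\al$, forms the (ind-geometric, generally non-truncated) fiber products $Y'_\al := Y_\al \times_Y Y'$ with presentations $Y'_\al \cong \colim_\be Y'_{\al\be}$, base-changes $\phi_\al$ to get P-morphisms $X'_{\al\be} \to Y'_{\al\be}$, and then takes an \emph{arbitrary} ind-geometric presentation $X' \cong \colim X'_\ga$ of the fiber product, using compactness (Proposition \ref{prop:oneStkkconvcompactness}) to factor each $X'_\ga$ through some $X'_{\al\be}$ as a closed immersion. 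You instead first kill the ind-structure on $Y'$ and then, in the truncated case, build an explicit presentation of the fiber product by truncating the non-truncated geometric stacks $X_\al \times_{Y_\de} V$ --- the same device the paper uses in proving Proposition \ref{prop:indgeomfiberprods}. Your route buys a cleaner main case (no ind-geometric $Y'_\al$ to present), at the cost of an extra reduction layer and more bookkeeping with iterated colimits; the paper's single pass avoids constructing any specific presentation of $X'$ by playing compactness against an arbitrary one. Two points you gloss are worth making explicit, though both are covered by the paper's toolkit. First, assembling your double and triple colimits into genuine ind-geometric presentations, and knowing in the reduction step that the transition maps $X \times_Y Y'_\be \to X \times_Y Y'_{\be'}$ are ind-closed immersions so that terms of a presentation of $X \times_Y Y'$ factor through the $W_{\be,i}$ as closed immersions, is exactly the content of Proposition \ref{prop:indclosedclosure} (cf.\ Remark \ref{rem:nontruncindgdef}) together with Propositions \ref{prop:oneStkkconvcompactness} and \ref{prop:cl2of3prop}. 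Second, your choice of a single $Y_\ga$ receiving both $X_\al$ and $V$ with $X_\al \to Y_\ga$ a P-morphism is slightly misphrased: condition (2) produces a substack $Y_\al$ not a priori comparable to a term containing $V$, so one should fix a presentation of $Y$, factor both $X_\al$ and $V$ through a common term by filteredness and Proposition \ref{prop:factorthroughgeometric}, and then invoke condition (1) of Proposition \ref{prop:indPequivconditions} to see that $X_\al \to Y_\ga$ is P. With these repairs, which use only results already available at this point in the paper, your argument goes through.
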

\begin{proof}
	Let $f: X \to Y$ and $h: Y' \to Y$ be morphisms in $\indGStkk$ such that $f$ is ind-proper. If $X \cong \colim X_\al$ is an ind-geometric presentation, we have for all $\al$ a diagram
	\begin{equation*}
		\begin{tikzpicture}[baseline=(current  bounding  box.center),thick,>=\arrtip]
			\newcommand*{\ha}{1.5}; \newcommand*{\hb}{1.5}; \newcommand*{\hc}{1.5};
\newcommand*{\va}{-.9}; \newcommand*{\vb}{-.9}; \newcommand*{\vc}{-.9}; 
			\node (ab) at (\ha,0) {$X_\al'$};
			\node (ad) at (\ha+\hb+\hc,0) {$Y_\al'$};
			\node (ba) at (0,\va) {$X'$};
			\node (bc) at (\ha+\hb,\va) {$Y'$};
			\node (cb) at (\ha,\va+\vb) {$X_\al$};
			\node (cd) at (\ha+\hb+\hc,\va+\vb) {$Y_\al$};
			\node (da) at (0,\va+\vb+\vc) {$X$};
			\node (dc) at (\ha+\hb,\va+\vb+\vc) {$Y$};
			\draw[->] (ab) to node[above] {$\phi' $} (ad);
			\draw[->] (ab) to node[above] {$ $} (ba);
			\draw[->] (ab) to node[left,pos=.8] {$\psi' $} (cb);
			\draw[->] (ad) to node[above] {$ $} (bc);
			\draw[->] (ad) to node[right] {$\psi $} (cd);
			\draw[->] (ba) to node[above] {$ $} (da);
			\draw[->] (cb) to node[above,pos=.2] {$\phi $} (cd);
			\draw[->] (cb) to node[above] {$ $} (da);
			\draw[->] (cd) to node[above] {$ $} (dc);
			\draw[->] (da) to node[above,pos=.6] {$f $} (dc);
			
			\draw[-,line width=6pt,draw=white] (ba) to  (bc);
			\draw[->] (ba) to node[above,pos=.75] {$f' $} (bc);
			\draw[-,line width=6pt,draw=white] (bc) to  (dc);
			\draw[->] (bc) to node[right,pos=.2] {$h $} (dc);
		\end{tikzpicture}
	\end{equation*}
	in $\indGStkk$ such that all but the top and bottom faces are Cartesian, $Y_\al \to Y$ is a truncated geometric substack, and $\phi$ is proper. Let $Y'_\al \cong \colim_\be Y'_{\al\be}$ be an ind-geometric presentation. Then, letting $X'_{\al\be} := X'_\al \times_{Y'_\al} Y'_{\al\be}$, we have $X'_\al \cong \colim_\be X'_{\al\be}$ by left exactness of filtered colimits in $\Stkkconv$. Note that for all $\be$ the morphisms $X'_{\al\be} \to X'$ and $Y'_{\al\be} \to Y'$ are closed immersions since $Y'_{\al\be} \to Y'_\al$ and $Y_\al \to Y$ are, and in particular $Y'_{\al\be}$ is a truncated geometric substack of $Y'$. 
	
	Now let $X' \cong \colim X'_{\ga}$ be an ind-geometric presentation and fix some $\ga$. By Proposition~\ref{prop:factorthroughgeometric} we can choose $\al$ so that $X'_\ga \to X$ factors through $X_\al$, hence so that $X'_\ga \to X'$ factors through $X'_\al$.  Proposition \ref{prop:oneStkkconvcompactness}  then implies that $X'_\ga \to X'$ factors through $X'_{\al\be}$ for some $\be$. This map $X'_\ga \to X'_{\al\be}$ is a closed immersion since $X'_\ga \to X'$ and $X'_{\al\be} \to X'$ are, while $X'_{\al\be} \to Y'_{\al\be}$ is proper since it is a base change of $\phi$. Thus the composition  $X'_\ga \to X'_{\al\be} \to Y'_{\al\be}$ is proper, hence $f'$ is ind-proper. The other classes of morphisms are treated the same way. 
\end{proof}

Already the self-intersection of the origin in $\A^\infty$ illustrates that reasonable ind-geometric stacks are not closed under arbitrary fiber products. To formulate a more limited result, we say a morphism $h: X \to Y$ of ind-geometric stacks is of Tor-dimension~$\leq n$ (resp. of finite Tor-dimension) if it is geometric and its base change to any geometric stack is of Tor-dimension~$\leq n$ (resp. of finite Tor-dimension) in the sense of sense of Section~\ref{sec:geompropsofmors}. If $X$ and $Y$ are geometric, this is consistent with our previous terminology  by Proposition \ref{prop:ftdprops}, which also implies the following stability properties. 

\begin{Proposition}\label{prop:indftdcompprop}
	Morphisms of finite Tor-dimension are stable under composition and base change in $\indGStkk$.
\end{Proposition}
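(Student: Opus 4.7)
The plan is to reduce everything to the corresponding statement for morphisms of geometric stacks, namely Proposition \ref{prop:ftdprops}, via the two facts that (a) geometric morphisms are stable under composition and base change in $\Stkk$ (Proposition \ref{prop:gstkprops}), and (b) both ``geometric'' and ``of finite Tor-dimension'' are, by definition, properties of the target-side base changes along maps from geometric stacks.

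For stability under base change, suppose $h: X \to Y$ is of finite Tor-dimension in $\indGStkk$ and $Y' \to Y$ is any morphism in $\indGStkk$; let $h': X' \to Y'$ denote the base change, where $X' := X \times_Y Y'$ (which is ind-geometric by Proposition~\ref{prop:indgeomfiberprods}). To verify $h'$ is geometric, take any $\Spec A \to Y'$: then $X' \times_{Y'} \Spec A \cong X \times_Y \Spec A$, which is geometric since $h$ is. To verify the Tor-dimension condition, given a geometric stack $W$ and a map $W \to Y'$, we have $X' \times_{Y'} W \cong X \times_Y W$, and the projection to $W$ is of finite Tor-dimension by the hypothesis on $h$ applied to the composition $W \to Y' \to Y$. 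The bounded version (Tor-dimension $\leq n$) is handled identically.

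For stability under composition, let $f: X \to Y$ and $g: Y \to Z$ both be of finite Tor-dimension in $\indGStkk$. Composability as geometric morphisms is Proposition~\ref{prop:gstkprops}. For the Tor-dimension condition, fix a geometric stack $W$ with a morphism $W \to Z$ and set $Y_W := Y \times_Z W$. Since $g$ is geometric and $W$ is geometric, $Y_W$ is a geometric stack, and the projection $Y_W \to W$ is of finite Tor-dimension by the hypothesis on $g$. Similarly, $X \times_Z W \cong X \times_Y Y_W$ and the projection to $Y_W$ is of finite Tor-dimension by the hypothesis on $f$. The composite $X \times_Z W \to Y_W \to W$ is then of finite Tor-dimension by the composition half of Proposition \ref{prop:ftdprops}.

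There is no real obstacle here; the only thing worth flagging is that the Tor-dimension \emph{bound} degrades under composition (as already in Proposition \ref{prop:ftdprops}), which is why the proposition is stated only for finite Tor-dimension and not for Tor-dimension $\leq n$ under composition.
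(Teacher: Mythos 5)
Your proof is correct and is exactly the argument the paper has in mind: the paper gives no explicit proof, stating only that Proposition \ref{prop:ftdprops} implies these stability properties, and your write-up is precisely the routine unwinding of that claim via the definition (geometric, with finite-Tor-dimension base changes to geometric stacks) together with Propositions \ref{prop:gstkprops} and \ref{prop:ftdprops}. Your closing remark about the Tor-dimension bound degrading under composition, which explains why only the ``finite'' version is asserted, is also accurate and consistent with the statement of Proposition \ref{prop:ftdprops}.
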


We then have the following closure result in the reasonable case. 

\begin{Proposition}\label{prop:indPbaseprops}
Let $h: X \to Y$ be a morphism of finite Tor-dimension between ind-geometric stacks. If $Y$ is reasonable, so is $X$. 
In particular, let the following be a Cartesian diagram of ind-geometric stacks. 
\begin{equation*}
	\begin{tikzpicture}
		[baseline=(current  bounding  box.center),thick,>=\arrtip]
		\node (a) at (0,0) {$X'$};
		\node (b) at (3,0) {$Y'$};
		\node (c) at (0,-1.5) {$X$};
		\node (d) at (3,-1.5) {$Y$};
		\draw[->] (a) to node[above] {$f' $} (b);
		\draw[->] (b) to node[right] {$h $} (d);
		\draw[->] (a) to node[left] {$h' $}(c);
		\draw[->] (c) to node[above] {$f $} (d);
	\end{tikzpicture}
\end{equation*}
If $X$, $Y$, and $Y'$ are reasonable and $h$ is of finite Tor-dimension, then $X'$ is reasonable.  
\end{Proposition}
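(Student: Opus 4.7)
The plan is to deduce the second assertion from the first. In the given Cartesian square, $X'$ is ind-geometric by Proposition~\ref{prop:indgeomfiberprods}. The morphism $h': X' \to X$ is a base change of $h$, hence geometric (Proposition~\ref{prop:gstkprops}), and its further base change to any geometric stack over $X$ coincides with a base change of $h$; so $h'$ has finite Tor-dimension (with the same bound as $h$). Applying the first assertion to $h'$ then gives that $X'$ is reasonable, since $X$ is.

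For the first assertion, fix a reasonable presentation $Y \cong \colim_\al Y_\al$ and set $X_\al := X \times_Y Y_\al$, with the fiber product formed in $\Stkkconv$. Because $h$ is geometric, the base change $X_\al \to Y_\al$ is geometric, and then $X_\al$ itself is geometric by Proposition~\ref{prop:gstkprops}. For $\al \leq \be$, the transition map $X_\al \to X_\be$ is a base change of $Y_\al \to Y_\be$, hence a closed immersion almost of finite presentation by Propositions~\ref{prop:cl2of3prop} and~\ref{prop:afp2of3prop}. Since filtered colimits commute with finite limits in $\Stkkconv$ \cite[Ex. 7.3.4.7]{LurHTT}, we obtain $X \cong \colim_\al X_\al$. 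It remains only to verify that each $X_\al$ is truncated, after which $X \cong \colim_\al X_\al$ is manifestly a reasonable presentation.

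This truncatedness step is the main obstacle, and where the finite Tor-dimension hypothesis on $h$ is essential. Suppose $h$ is of Tor-dimension $\leq m$ and $Y_\al$ is $n$-truncated, with a flat cover $p: \Spec A_\al \to Y_\al$ in which $A_\al$ is an $n$-truncated ring. The base change $h'_\al: X_\al \times_{Y_\al} \Spec A_\al \to \Spec A_\al$ of $X_\al \to Y_\al$ is then of Tor-dimension $\leq m$ by Proposition~\ref{prop:ftdprops}. Since $\cO_{\Spec A_\al}$ lies in $\QCoh(\Spec A_\al)^{[-n,0]}$, and $h'^*_\al$ is right t-exact while sending $\QCoh^{\geq 0}$ into $\QCoh^{\geq -m}$, we conclude that $\cO_{X_\al \times_{Y_\al} \Spec A_\al} = h'^*_\al(\cO_{\Spec A_\al})$ lies in $\QCoh^{[-(n+m), 0]}$. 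Choosing any flat cover $\Spec B_\al \to X_\al \times_{Y_\al} \Spec A_\al$, the structure sheaf pulls back t-exactly, so $B_\al$ is $(n+m)$-truncated. The composition $\Spec B_\al \to X_\al \times_{Y_\al} \Spec A_\al \to X_\al$ is then a flat cover of $X_\al$ by a truncated affine, proving that $X_\al$ is a truncated geometric stack and completing the argument.
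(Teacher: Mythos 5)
Your proof is correct and takes essentially the same approach as the paper: base change a reasonable presentation of $Y$ along $h$, identify $X \cong \colim_\al X_\al$ by left exactness of filtered colimits in $\Stkkconv$, use base-change stability of almost finitely presented closed immersions, and deduce the second claim from the first via stability of finite Tor-dimension under base change (Proposition~\ref{prop:indftdcompprop}). The only difference is that you spell out the truncatedness of each $X_\al$, which the paper asserts in a single clause, and your verification---bounding $\cO_{X_\al}$ using the Tor-dimension bound on the pullback and then passing to a flat cover by an affine with truncated ring---is a valid justification of that step.
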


\begin{proof}
Let $Y \cong \colim Y_\al$ be a reasonable presentation. Each $X_\al := X \times_Y Y_\al$ is a truncated geometric stack since $h$ is of finite Tor-dimension. We have $X \cong \colim X_\al$ since filtered colimits are left exact in $\Stkkconv$, and this is a reasonable presentation since almost finitely presented closed immersions are stable under base change. The last claim now follows by Proposition \ref{prop:indftdcompprop}. 
\end{proof}

The coherent case is more delicate, as even coherent affine schemes are not closed under fiber products \cite[Sec. 7.3.13]{Gla89}. We give two positive results in this setting. 

\begin{Proposition}\label{prop:cohbasechangeunderafpclimm}
	Let the following be a Cartesian diagram of ind-geometric stacks. 
	\begin{equation*}
		\begin{tikzpicture}
			[baseline=(current  bounding  box.center),thick,>=\arrtip]
			\node (a) at (0,0) {$X'$};
			\node (b) at (3,0) {$Y'$};
			\node (c) at (0,-1.5) {$X$};
			\node (d) at (3,-1.5) {$Y$};
			\draw[->] (a) to node[above] {$f' $} (b);
			\draw[->] (b) to node[right] {$h $} (d);
			\draw[->] (a) to node[left] {$h' $}(c);
			\draw[->] (c) to node[above] {$f $} (d);
		\end{tikzpicture}
	\end{equation*}
	Suppose that $X$ and $Y$ are reasonable, that $Y'$ is coherent, and that $f$ is an almost ind-finitely presented ind-closed immersion. Then $X'$ is coherent and $f'$ is an almost ind-finitely presented ind-closed immersion. 
\end{Proposition}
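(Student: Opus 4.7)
The plan is to first dispatch the claim that $f'$ is an almost ind-finitely presented ind-closed immersion, and then build a reasonable presentation of $X'$ by coherent geometric stacks obtained from base change against a reasonable presentation of $Y'$ by coherent geometric substacks.

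For the first claim: Ind-closed immersions are stable under base change by Proposition \ref{prop:indPbasechange}, so $f'$ is an ind-closed immersion. For almost ind-finite presentation of $f'$, fix a reasonable presentation $X \cong \colim_\al X_\al$. Then by Proposition \ref{prop:afpdefconsistent} each composite $X_\al \to X \to Y$ is almost finitely presented (since $f$ and $X_\al \to X$ are). Base changing this along $h$ produces almost finitely presented maps $X_\al \times_Y Y' \to Y'$ by Proposition \ref{prop:afp2of3prop}; using left exactness of filtered colimits in $\Stkkconv$ we have $X' \cong \colim_\al X_\al \times_Y Y'$ along ind-closed immersions, so Proposition \ref{prop:afpdefconsistent} applied in the reverse direction yields that $f'$ is almost ind-finitely presented.

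Next, fix a reasonable presentation $Y' \cong \colim_\be Y'_\be$ with each $Y'_\be$ a coherent geometric stack (available since $Y'$ is coherent). Set $X'_\be := Y'_\be \times_{Y'} X'$. Left exactness of filtered colimits in $\Stkkconv$ gives $X' \cong \colim_\be X'_\be$. The map $X'_\be \to Y'_\be$ is an almost finitely presented closed immersion (as the base change of $f'$), and the transition maps $X'_\be \to X'_{\be'}$ are almost finitely presented closed immersions as base changes of the corresponding maps between $Y'_\be$. Thus, once we establish each $X'_\be$ is a coherent geometric stack, Proposition \ref{prop:loccohisreasonable} together with Proposition \ref{prop:cohfiltcolims} (applied to ensure we stay inside $\indGStkkcoh$) shows $X'$ is coherent. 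So the work reduces to the following local claim: if $Z$ is a coherent geometric stack and $\iota: Z' \to Z$ is an almost finitely presented closed immersion, then $Z'$ is a coherent geometric stack.

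To prove this local claim, recall $\iota$ is affine by Proposition \ref{prop:immisaffforgeo}. Starting from a flat cover $\Spec A \to Z$ with $A$ coherent, base change yields a flat cover $\Spec A' \to Z'$ where $A \to A'$ is almost of finite presentation, and the classical map $H^0(A) \to H^0(A')$ is a surjection with finitely generated kernel. The classical fact that a quotient of a coherent ring by a finitely generated ideal is coherent gives that $H^0(A')$ is coherent; combined with the fact that $A'$ is almost perfect as an $A$-module (a consequence of almost finite presentation together with the closed immersion hypothesis), each $H^n(A')$ is finitely presented as an $H^0(A)$-module, and one then promotes this to finite presentation over the coherent ring $H^0(A')$ using that coherent modules over a coherent ring form an abelian subcategory. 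Hence $A'$ is coherent, and $Z'$ is locally coherent. For compact generation of $\QCoh(Z')^\heartsuit$, pass to the classical truncations (the heart is insensitive to derived structure); the classical closed immersion $\iota^{cl}$ has $\iota^{cl}_*$ fully faithful at the heart with essential image the sheaves annihilated by the finitely generated ideal sheaf. Since $\iota^{cl}_*$ is affine it preserves filtered colimits at the heart, and so the adjunction $\Hom_{Z'}(\iota^{cl,*} G, \cF) \cong \Hom_Z(G, \iota^{cl}_* \cF)$ shows that images of compact generators of $\QCoh(Z^{cl})^\heartsuit$ under $\iota^{cl,*}$ are compact generators of $\QCoh((Z')^{cl})^\heartsuit$.

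The main obstacle is the last step: tracking the compact generation hypothesis through the closed immersion at the heart level, which forces one to verify both the algebraic fact about coherence of $A'$ and the structural argument that affine closed immersions transport compact generators, neither of which is purely formal.
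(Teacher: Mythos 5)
Your ``local claim'' (an almost finitely presented closed immersion into a coherent geometric stack has coherent source) is correct and is proved essentially as in the paper, which isolates it as Lemma \ref{lem:cohsourcegeomcase}: the ring-theoretic part via almost perfectness of $B$ over $A$ and coherence of quotients by finitely generated ideals, and compact generation of the heart via the affine pushforward adjunction (the paper's Lemma \ref{lem:affcgen}). The problem is the reduction to it. You set $X'_\be := Y'_\be \times_{Y'} X'$ and assert that $X'_\be \to Y'_\be$ is a \emph{closed immersion}, so that each $X'_\be$ is a coherent \emph{geometric} stack. But $f$, hence $f'$, is only an \emph{ind-}closed immersion, and the base change of an ind-closed immersion to a truncated geometric substack generally has ind-geometric, non-geometric source. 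Concretely, take $f: X \to Y$ to be the inclusion of the formal completion $X = \colim_n \Spec \kk[x]/(x^n)$ into $Y = \A^1$ (an almost ind-finitely presented ind-closed immersion), $Y' = Y$, $h = \id$: then with the constant presentation $Y'_\be = \A^1$ one gets $X'_\be = X$, which is not geometric. So the step ``once we establish each $X'_\be$ is a coherent geometric stack'' cannot be carried out, and Proposition \ref{prop:loccohisreasonable} does not apply to these pieces.

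The paper's proof is structured precisely to avoid this. It first treats the case where $X$ (the \emph{source} of $f$) and $Y'$ are truncated geometric: choosing a reasonable presentation $Y \cong \colim Y_\al$ and factoring $f$, $h$ through $Y_\al$, the fiber products $X'_\al := X \times_{Y_\al} Y'$ are genuinely geometric (all three inputs are geometric), each $f'_\al: X'_\al \to Y'$ is an almost finitely presented closed immersion by base change, so Lemma \ref{lem:cohsourcegeomcase} gives coherence; the transition maps $i'_{\al\be}$ are then shown to be almost finitely presented closed immersions not by base change but by the two-out-of-three properties (Propositions \ref{prop:afp2of3prop} and \ref{prop:cl2of3prop}, using $f'_\be \circ i'_{\al\be} \cong f'_\al$), and Proposition \ref{prop:cohfiltcolims} closes the colimit. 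The general case is then a \emph{double} colimit $X' \cong \colim_{\al,\be} X_\al \times_Y Y'_\be$, whose terms are coherent \emph{ind-}geometric stacks by the first case, again concluded by Proposition \ref{prop:cohfiltcolims}. Your argument for $f'$ being an almost ind-finitely presented ind-closed immersion is essentially fine (functor-of-points almost finite presentation is stable under base change, and Proposition \ref{prop:afpdefconsistent} translates it once $X'$ is known reasonable), but to repair the coherence argument you need the extra colimit layer over a presentation of $X$ (or of $Y$) together with the two-out-of-three control of the transition maps, exactly as above.
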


\begin{Lemma}\label{lem:affcgen}
	Let $f: X \to Y$ be an affine morphism of geometric stacks. If $\QCoh(Y)^\heartsuit$ is compactly generated, then so is $\QCoh(X)^\heartsuit$. 
\end{Lemma}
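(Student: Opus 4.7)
The plan is to exhibit compact generators of $\QCoh(X)^\heartsuit$ by pushing forward a set of compact generators of $\QCoh(Y)^\heartsuit$ via the adjunction $(f^\ast, f_\ast)$. The key structural ingredients are that for an affine morphism between geometric stacks, $f_\ast$ is t-exact, conservative, and preserves all small colimits \cite[Prop.~9.1.5.7]{LurSAG}, while its left adjoint $f^\ast$ is right t-exact, so $f^\ast$ sends $\QCoh(Y)^\heartsuit$ into $\QCoh(X)^{\leq 0}$.

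Fix a small set $\{N_\al\}$ of compact generators of $\QCoh(Y)^\heartsuit$ and set $P_\al := H^0 f^\ast N_\al = \tau^{\geq 0} f^\ast N_\al$, which lies in $\QCoh(X)^\heartsuit$. I claim $\{P_\al\}$ compactly generates $\QCoh(X)^\heartsuit$. For generation, take a nonzero $M \in \QCoh(X)^\heartsuit$. Conservativity and t-exactness of $f_\ast$ give $f_\ast M \in \QCoh(Y)^\heartsuit$ nonzero, so some map $N_\al \to f_\ast M$ is nonzero. Adjunction produces a nonzero morphism $f^\ast N_\al \to M$ in $\QCoh(X)$; using the fiber sequence $\tau^{\leq -1} f^\ast N_\al \to f^\ast N_\al \to \tau^{\geq 0} f^\ast N_\al$ and the vanishing of $\Map(\tau^{\leq -1} f^\ast N_\al, M)$ (since $M \in \QCoh(X)^{\geq 0}$), this factors through a nonzero $P_\al \to M$ in $\QCoh(X)^\heartsuit$.

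For compactness, recall that the t-structures on $\QCoh(X)$ and $\QCoh(Y)$ are compatible with filtered colimits, so filtered colimits in the heart agree with those in the full category. Given a filtered diagram $\{M_\be\}$ in $\QCoh(X)^\heartsuit$ with colimit $M$, the same factorization shows
\[
\Hom_{\QCoh(X)^\heartsuit}(P_\al, M_\be) \cong \pi_0 \Map_{\QCoh(X)}(f^\ast N_\al, M_\be) \cong \Hom_{\QCoh(Y)^\heartsuit}(N_\al, f_\ast M_\be),
\]
using adjunction and t-exactness of $f_\ast$. Colimit-preservation of $f_\ast$ gives $\colim_\be f_\ast M_\be \cong f_\ast M$ (the colimit computed in either $\QCoh(Y)$ or $\QCoh(Y)^\heartsuit$). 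Compactness of $N_\al$ in $\QCoh(Y)^\heartsuit$ then produces
\[
\colim_\be \Hom_{\QCoh(X)^\heartsuit}(P_\al, M_\be) \cong \Hom_{\QCoh(Y)^\heartsuit}(N_\al, f_\ast M) \cong \Hom_{\QCoh(X)^\heartsuit}(P_\al, M),
\]
so $P_\al$ is compact.

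The main bookkeeping point is to track t-structures carefully: $f^\ast$ is only right t-exact, which is why truncation by $\tau^{\geq 0}$ is needed to land in the heart, and one must verify that under these conditions the derived adjunction descends to an honest identification of $\Hom$-sets between the hearts. No input beyond standard properties of affine morphisms of geometric stacks is required.
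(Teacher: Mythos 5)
Your compactness argument is correct, and your identification $\Hom_{\QCoh(X)^\heartsuit}(P_\al, M) \cong \Hom_{\QCoh(Y)^\heartsuit}(N_\al, f_*M)$ --- i.e.\ that $\tau^{\geq 0} \circ f^*$ restricts to a left adjoint of $f_*$ on hearts --- is exactly the mechanism the paper's proof runs on. But there is a genuine gap in the generation step. What you actually prove is that the $P_\al$ jointly \emph{detect nonzero objects}: every nonzero $M$ receives a nonzero map from some $P_\al$. In a stable category this weak form of generation by compact objects does imply compact generation (apply detection to the cofiber of the counit of $\Ind(\{P_\al\}) \subseteq \catC$). In the abelian heart this inference breaks down: the comparison object is a cokernel, and a nonzero map $P_\al \to M/M'$ cannot be lifted along $M \twoheadrightarrow M/M'$ without projectivity of $P_\al$, which compactness does not supply. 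What ``$\QCoh(X)^\heartsuit$ is compactly generated'' requires is that the compacts form a strong generating family --- e.g.\ that every object is a quotient of a coproduct of $P_\al$'s --- since a Grothendieck category with a strong generating set of compact objects is locally finitely presentable; mere detection of nonzero objects by compacts is strictly weaker in the abelian setting and does not suffice as stated.

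Fortunately your own ingredients repair this. Given $M \in \QCoh(X)^\heartsuit$, let $M_1 \subseteq M$ be the sum of the images of all maps $P_\al \to M$, and suppose $M_1 \neq M$. Since $f_*$ is exact on hearts and conservative, $f_*M_1 \to f_*M$ is a proper subobject; since the $N_\al$ generate $\QCoh(Y)^\heartsuit$ in the strong sense (take for instance all compact objects, every object being a quotient of a coproduct of compacts), some $N_\al \to f_*M$ does not factor through $f_*M_1$. But its adjunct factors as $P_\al \to M$, hence lands in $M_1$, and by naturality of your adjunction isomorphism the original map $N_\al \to f_*M$ then factors through $f_*M_1$ --- a contradiction. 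So every object is a quotient of a coproduct of $P_\al$'s, which together with your (correct) compactness computation yields the lemma. The paper sidesteps this bookkeeping entirely: it observes that $f_*$ restricted to hearts is conservative, continuous, and admits the left adjoint $\tau^{\geq 0} \circ f^*$, and then cites \cite[Prop.~7.1.4.12]{LurHA}, whose proof (essentially Barr--Beck monadicity plus bar resolutions) supplies precisely the generation statement your write-up is missing.
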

\begin{proof}
	Since $f$ is affine $f_*: \QCoh(X) \to \QCoh(Y)$ is t-exact and conservative, hence restricts to a conservative functor $\QCoh(X)^\heartsuit \to \QCoh(Y)^\heartsuit$. This restriction is continuous and has a left adjoint, the restriction of $\tau^{\geq 0} \circ f^*$. Thus compact generation of $\QCoh(Y)^\heartsuit$ implies that of $\QCoh(X)^\heartsuit$ by \cite[Prop. 7.1.4.12]{LurHA} (whose proof applies to compact generation, not just compact projective generation). 
\end{proof}

\begin{Lemma}\label{lem:cohsourcegeomcase}
	Let $f: X \to Y$ be an almost finitely presented closed immersion of geometric stacks. If $Y$ is locally coherent (resp. coherent), then so is $X$.
\end{Lemma}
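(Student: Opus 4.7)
The plan is to reduce the statement to an assertion about $E_\infty$-rings and then combine a standard classical fact (that coherence of rings is preserved by quotients by finitely generated ideals) with the characterization of almost finitely presented maps as almost perfect ones.

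First, I would use that a closed immersion of geometric stacks is affine (Proposition~\ref{prop:immisaffforgeo}) to pass to a flat cover. Choose a flat cover $\Spec A \to Y$ with $A$ coherent and base change $f$ to obtain a flat cover $\Spec B \to X$, with $\Spec B \to \Spec A$ an almost finitely presented closed immersion (almost finite presentation and closedness are stable under base change by Propositions~\ref{prop:afp2of3prop} and~\ref{prop:cl2of3prop}). It then suffices to show that the $E_\infty$-ring $B$ is coherent.

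Second, I would invoke the characterization from \cite[Prop.~4.1.3.1]{LurSAG} that $A \to B$ almost of finite presentation is equivalent to $B$ being almost perfect as an $A$-module together with $\clalg{A} \to \clalg{B}$ being of finite presentation in the classical sense. Since $\clalg{A} \to \clalg{B}$ is also surjective (being a closed immersion), its kernel $I$ is a finitely generated ideal. Coherence of $A$ makes $I$ finitely presented as an $\clalg{A}$-module; then a standard two-out-of-three argument shows $\clalg{B} = \clalg{A}/I$ is coherent. Namely, given any finitely generated ideal $J \subset \clalg{B}$, one lifts to a finitely generated $\tilde J \subset \clalg{A}$ containing $I$, and the exact sequence $0 \to I \to \tilde J \to J \to 0$ of $\clalg{A}$-modules exhibits $J$ as finitely presented over $\clalg{A}$, hence over $\clalg{B}$.

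Third, I would deduce that $H^n(B)$ is finitely presented over $\clalg{B}$ for all $n$. Almost perfectness of $B$ over the coherent ring $A$ implies finite presentation of each $H^n(B)$ over $\clalg{A}$; since $I$ annihilates each $H^n(B)$ (the latter being a $\clalg{B}$-module), finite presentation descends to $\clalg{B}$ by tensoring a presentation with $\clalg{A}/I$. This completes the locally coherent case. For the coherent case, I would additionally apply Lemma~\ref{lem:affcgen} to the affine morphism $f$ to conclude that $\QCoh(X)^\heartsuit$ is compactly generated. The main obstacle is the translation between the derived condition (almost of finite presentation) and the classical behaviour on $H^0$; once this is invoked from \cite{LurSAG}, the remainder is elementary commutative algebra.
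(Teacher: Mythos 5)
Your proposal is correct and follows essentially the same route as the paper's proof: base change $f$ along a coherent flat cover $\Spec A \to Y$, use almost perfectness of $B$ as an $A$-module to get finite presentation of each $H^n(B)$ over $\clalg{A}$ and finite generation of the kernel ideal, deduce coherence of $\clalg{B}$, and invoke Lemma~\ref{lem:affcgen} for compact generation of $\QCoh(X)^\heartsuit$ in the coherent case. The only cosmetic differences are that the paper cites \cite[Thm. 2.4.1]{Gla89} where you reprove the quotient-coherence fact by hand, and that the correct reference for the almost-perfectness characterization of almost finitely presented closed immersions is \cite[Cor. 5.2.2.2]{LurSAG} rather than the proposition you name.
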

\begin{proof}
	Let $\Spec A \to Y$ be a flat cover with $A$ coherent, and $f': \Spec B \to \Spec A$ the base change of $f$ (recall that $f$ is affine by Proposition \ref{prop:immisaffforgeo}). By \cite[Cor. 5.2.2.2]{LurSAG} $B$~is almost perfect as an $A$-module, hence $H^n(B)$ is finitely presented over $H^0(A)$ for all $n \leq 0$. Moreover $H^0(B)$ is a quotient of $H^0(A)$ by a finitely generated ideal, so $H^0(B)$ is coherent and the $H^n(B)$ are finitely presented over $H^0(B)$ \cite[Thm. 2.4.1]{Gla89}. If $\QCoh(Y)^\heartsuit$ is compactly generated, then so is $\QCoh(X)^\heartsuit$ by Lemma \ref{lem:affcgen}. 
\end{proof}

\begin{proof}[Proof of Proposition \ref{prop:cohbasechangeunderafpclimm}]
	Suppose first that $X$ and $Y'$ are truncated geometric stacks, and let $Y \cong \colim Y_\al$ be a reasonable presentation. We may assume $f$ and $h$ factor through maps $f_\al: X \to Y_\al$, $h_\al: Y' \to Y_\al$ for all $\al$. Letting $X'_\al := X \times_{Y_\al} Y'$, each $f'_\al: X'_\al \to Y'$ is an almost finitely presented closed immersion by base change and Proposition~\ref{prop:afpdefconsistent}, hence $X'_\al$ is coherent by Lemma~\ref{lem:cohsourcegeomcase}. For any $\be \geq \al$ the induced map $i'_{\al \be}: X'_\al \to X'_\be$ is an almost finitely presented closed immersion since $f'_\be \circ i'_{\al \be} \cong f'_\al$ (Proposition \ref{prop:afp2of3prop}). Since $X' \cong \colim X \times_{Y_\al} Y'$ in $\Stkkconv$ by left exactness of filtered colimits, it follows that $X'$ is coherent by Proposition~\ref{prop:cohfiltcolims}. 
	
	In general, fix reasonable presentations $X \cong \colim X_\al$ and $Y' \cong \colim Y'_\be$. Then as above $X' \cong \colim X_\al \times_Y Y'_\be$ presents $X'$ as a filtered colimit of coherent ind-geometric stacks along almost ind-finitely presented ind-closed immersions, hence $X'$ is coherent by Proposition~\ref{prop:cohfiltcolims}. That $f'$ is an almost ind-finitely presented ind-closed immersion follows from Propositions~\ref{prop:afpdefconsistent} and \ref{prop:indPbasechange}. 
\end{proof}

We say an ind-geometric stack $X$ is locally Noetherian if it has a reasonable presentation $X \cong \colim X_\al$ in which each $X_\al$ is locally Noetherian (as noted before, this implies $X$ is coherent). The proof of Proposition \ref{prop:cohfiltcolims} extends to show locally Noetherian ind-geometric stacks are closed under almost finitely ind-closed immersions. If $f: X \to Y$ is a proper, almost finitely presented morphism of geometric stacks and $Y$ is locally Noetherian, it follows that $X$ is as well by base changing $f$ to a Noetherian flat cover $\Spec A \to Y$. The proof of Proposition \ref{prop:cohbasechangeunderafpclimm} then extends to show the following result (which will be strengthened in \cite{CWtm} once we have developed the notion of a tamely presented morphism). 

\begin{Proposition}\label{prop:locNoethCartesian}
	Let the following be a Cartesian diagram of ind-geometric stacks. 
	\begin{equation*}
		\begin{tikzpicture}
			[baseline=(current  bounding  box.center),thick,>=\arrtip]
			\node (a) at (0,0) {$X'$};
			\node (b) at (3,0) {$Y'$};
			\node (c) at (0,-1.5) {$X$};
			\node (d) at (3,-1.5) {$Y$};
			\draw[->] (a) to node[above] {$f' $} (b);
			\draw[->] (b) to node[right] {$h $} (d);
			\draw[->] (a) to node[left] {$h' $}(c);
			\draw[->] (c) to node[above] {$f $} (d);
		\end{tikzpicture}
	\end{equation*}
	Suppose that $X$ and $Y$ are reasonable, that $Y'$ is locally Noetherian, and that $f$ is ind-proper and almost ind-finitely presented. Then $X'$ is locally Noetherian. 
\end{Proposition}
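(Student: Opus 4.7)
The plan is to follow the template of Proposition~\ref{prop:cohbasechangeunderafpclimm} verbatim, replacing ``coherent'' by ``locally Noetherian'' throughout, using the two inputs indicated in the paragraph preceding the statement: (i) the geometric base case that a proper, almost finitely presented morphism of geometric stacks $g: U \to V$ with $V$ locally Noetherian has $U$ locally Noetherian; and (ii) the locally Noetherian analogue of Proposition~\ref{prop:cohfiltcolims}, that locally Noetherian ind-geometric stacks are closed under filtered colimits along almost ind-finitely presented ind-closed immersions. For (i), base change $g$ along a Noetherian flat cover $\Spec A \to V$ to obtain a proper, almost finitely presented $g': U' \to \Spec A$; properness forces $U'$ to be a quasi-compact algebraic space, and an \'etale affine cover $\Spec B \to U'$ then makes $\Spec B \to \Spec A$ almost of finite presentation, so $B$ is Noetherian, and composition with the base-changed flat cover $U' \to U$ witnesses $U$ as locally Noetherian.

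First reduce to the case that $X$ and $Y'$ are truncated geometric stacks by choosing reasonable presentations $X \cong \colim X_\al$ and $Y' \cong \colim Y'_\be$, the latter with each $Y'_\be$ locally Noetherian. Left exactness of filtered colimits in $\Stkkconv$ gives $X' \cong \colim_{\al,\be} X_\al \times_Y Y'_\be$, with transition maps almost ind-finitely presented ind-closed immersions; each projection $X_\al \times_Y Y'_\be \to Y'_\be$ remains ind-proper and almost ind-finitely presented by Propositions~\ref{prop:indPcompprops}, \ref{prop:indPbasechange}, \ref{prop:afp2of3prop}, and~\ref{prop:afpdefconsistent}. Thus by (ii) it suffices to handle the case $X, Y'$ truncated geometric with $Y'$ locally Noetherian.

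In this case, fix a reasonable presentation $Y \cong \colim Y_\al$. Using ind-properness and almost ind-finite presentation of $f$, combined with Proposition~\ref{prop:factorthroughgeometric}, produce $\al_0$ and a factorization $X \to Y_{\al_0} \to Y$ of $f$ with $X \to Y_{\al_0}$ proper and almost finitely presented and $Y_{\al_0}$ a reasonable geometric substack of $Y$. Enlarging $\al$, arrange that $Y_{\al_0} \to Y$ and $h: Y' \to Y$ both factor through $Y_\al$; the composite $X \to Y_{\al_0} \to Y_\al$ remains proper and almost finitely presented, since $Y_{\al_0} \to Y_\al$ is an almost finitely presented closed immersion by Proposition~\ref{prop:igprestermsarereas}. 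Setting $X'_\al := X \times_{Y_\al} Y'$, base change yields $X'_\al \to Y'$ proper and almost finitely presented, so (i) makes $X'_\al$ locally Noetherian. Transition maps $X'_\al \to X'_\be$ for $\be \geq \al$ are almost finitely presented closed immersions exactly as in the proof of Proposition~\ref{prop:cohbasechangeunderafpclimm}, $X' \cong \colim_\al X'_\al$ by left exactness of filtered colimits, and (ii) completes the argument.

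The principal obstacle, relative to Proposition~\ref{prop:cohbasechangeunderafpclimm}, is that $f$ is ind-proper rather than an ind-closed immersion, so the factorization of $f$ through a member of a chosen reasonable presentation of $Y$ is not immediate; this is handled by first producing a proper, almost finitely presented factorization through some auxiliary $Y_{\al_0}$ and then transporting it into the given presentation via Proposition~\ref{prop:factorthroughgeometric}, at which point the remaining steps mirror the coherent case.
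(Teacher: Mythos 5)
Your proposal matches the paper's own argument, which is given only as a sketch: it proves the result via exactly the two inputs you name --- the geometric base case obtained by base changing to a Noetherian flat cover $\Spec A \to Y$, and closure of locally Noetherian ind-geometric stacks under filtered colimits along almost ind-finitely presented ind-closed immersions --- and then rerunning the proof of Proposition~\ref{prop:cohbasechangeunderafpclimm}, with the ind-proper $f$ handled through factorizations via Proposition~\ref{prop:factorthroughgeometric} just as you do. The one detail where ``exactly as in Proposition~\ref{prop:cohbasechangeunderafpclimm}'' needs adjusting is that the transition maps $X'_\al \to X'_\be$ are closed immersions not by the two-out-of-three property (the maps $f'_\al: X'_\al \to Y'$ are now merely proper rather than closed immersions), but because they are classical isomorphisms by Proposition~\ref{prop:classicaliso}; almost finite presentation still follows from Proposition~\ref{prop:afp2of3prop} as you say.
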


\section{Coherent and ind-coherent sheaves}\label{sec:cohandindcohsheaves}

In this section we consider coherent and ind-coherent sheaves on ind-geometric stacks. We begin with the former, establishing the basic functorialities of ind-proper pushforward and finite Tor-dimension pullback. We then extend our discussion to ind-coherent sheaves, which are needed to discuss adjoint functorialities such as ind-proper $!$-pullback and sheaf Hom. 

The most significant complication compared to the treatment of ind-schemes in \cite{GR14}, \cite{Ras20} lies in the definition of ind-coherent sheaves. If $X$ is a geometric stack we define $\IndCoh(X)$ as the left anticompletion of $\QCoh(X)$, following a construction of \cite[App. C]{LurSAG}. This characterizes $\IndCoh(X)$ in terms of a universal property satisfied by bounded colimit-preserving functors out of it. If $X$ is classical, $\IndCoh(X)$ is (the dg nerve of) the category of injective complexes in $\QCoh(X)^\heartsuit$, introduced in \cite{Kra05}. However, in full generality we do not have $\IndCoh(X) \cong \Ind(\Coh(X))$, and the latter may be poorly behaved.  

Nonetheless, our notation (perhaps abusively) reflects that in most cases of interest these categories do coincide. Specifically, they agree when $X$ is coherent (Proposition \ref{prop:IndCohisIndofCoh}), so in this case one may safely define $\IndCoh(X)$ via ind-completion and bypass the discussion of anticompletion. But it is often convenient to have $\IndCoh(X)$ defined in greater generality, in particular since the class of coherent ind-geometric stacks (or even of coherent affine schemes) is not closed under fiber products. 

\subsection{Coherent sheaves}\label{sec:cohsubsec}
We first define $\Coh(X)$ for a reasonable ind-geometric stack $X$ (see Example \ref{ex:selfintAinfty} for an illustration of why we only consider the reasonable setting). A posteriori, this category will be computed by the formula 
\begin{equation}\label{eq:cohviaindgeopres}
	\Coh(X) \cong \colim \Coh(X_\al)
\end{equation} 
in $\Catinfty$, where $X \cong \colim X_\al$ is any reasonable presentation. In particular, any $\cF \in \Coh(X)$ can be written as $\cF \cong i_{\al*}(\cF_\al)$ for some $\al$ and $\cF_\al \in \Coh(X_\al)$. 

Suppose that $Y$ is another reasonable ind-geometric stack and that $f: X \to Y$ is ind-proper and almost ind-finitely presented. The pushforward $f_*: \Coh(X) \to \Coh(Y)$ will be defined so that $f_*(\cF) \cong j_{\al*} f_{\al*}(\cF_\al)$, where 
$X_\al \xrightarrow{f_\al} Y_\al \xrightarrow{j_\al} Y$ is any factorization of $f\circ i_\al$ through a reasonable geometric substack of $Y$. If $h: Y \to X$ is of finite Tor-dimension, the pullback $h^*: \Coh(Y) \to \Coh(X)$ will be defined so that $h^*(\cF) \cong i'_{\al*} h_{\al}^*(\cF_\al)$, where $i'_\al$ and $h_\al$ are defined by base change from $i_\al$ and $h$. 

Recall from (\ref{eq:CorrCohGStkprop}) that the corresponding functorialities for coherent sheaves on truncated geometric stacks were packaged as a functor 
\begin{equation}\label{eq:CohCorrpropcoh2}
	\Coh: \Corr(\GStkkplus)_{prop,ftd} \to \Catinfty.
\end{equation}
We now let $\Corr(\indGStkkreas)_{prop,ftd}$ denote the 1-full subcategory of $\Corr(\indGStkk)$ which only includes correspondences $X \xleftarrow{h} Y \xrightarrow{f} Z$ such that $h$ is of finite Tor-dimension, $f$ is ind-proper and almost ind-finitely presented, and $X$ and $Z$ are reasonable (hence so is $Y$ by Proposition \ref{prop:indPcompprops}). These are indeed stable under composition of correspondences by Propositions \ref{prop:indPcompprops}, \ref{prop:indPbasechange}, \ref{prop:indftdcompprop}, and \ref{prop:indPbaseprops}, and we note that $\Corr(\GStkkplus)_{prop,ftd}$ is a full subcategory of $\Corr(\indGStkk)_{prop,ftd}$. 

\begin{Definition}\label{def:cohonindgstks}
	We define a functor
	\begin{equation}\label{eq:cohcorrindgstkfunctor}
		\Coh: \Corr(\indGStkkreas)_{prop,ftd} \to \Catinfty
	\end{equation}
	by left Kan extending (\ref{eq:CohCorrpropcoh2}) along $\Corr(\GStk^+)_{prop,ftd} \subset \Corr(\indGStkkreas)_{prop,ftd}$.   
\end{Definition}

This Kan extension exists by \cite[Cor. 4.3.2.16, Cor. 4.2.4.8]{LurHTT}. Taking either $h$ or $f$ to be the identity, the values of (\ref{eq:cohcorrindgstkfunctor}) on the correspondence $X \xleftarrow{h} Y \xrightarrow{f} Z$ define functors $h^*: \Coh(X) \to \Coh(Y)$ and $f_*: \Coh(Y) \to \Coh(Z)$. The formula (\ref{eq:cohviaindgeopres}) is a consequence of the following result. 

\newcommand{\Cohgeom}{\Coh_{geom}}
\newcommand{\Cohind}{\Coh_{ind}}
\newcommand{\CohInd}{\Coh_{Ind}}

\begin{Proposition}\label{prop:cohonindgstks}
	The restriction of $\Coh$ to $\indGStkkprop$ preserves filtered colimits along almost ind-finitely presented ind-closed immersions. 
\end{Proposition}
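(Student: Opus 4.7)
The plan is to compute $\Coh$ via the pointwise formula for its defining left Kan extension and reduce the statement to a cofinality claim controlled by Proposition~\ref{prop:factorthroughgeometric}. Writing $\iota$ for the inclusion $\Corr(\GStkkplus)_{prop,ftd} \hookrightarrow \Corr(\indGStkkreas)_{prop,ftd}$, the pointwise formula for left Kan extensions gives, for each $Y \in \indGStkkreas$,
$$\Coh(Y) \cong \colim_{\iota_{/Y}} \Coh(C),$$
where $\iota_{/Y}$ has objects correspondences $C \xleftarrow{h} D \xrightarrow{f} Y$ with $C, D$ truncated reasonable geometric stacks, $h$ of finite Tor-dimension, and $f$ proper and almost finitely presented. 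Note that $D$ is automatically truncated and reasonable once $C$ is, by Propositions~\ref{prop:ftdprops} and~\ref{prop:indPbaseprops} applied to the affine map $h$.

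Given a filtered diagram $\al \mapsto X_\al$ in $\indGStkkprop$ along almost ind-finitely presented ind-closed immersions (with each $X_\al$ truncated reasonable, say via passing through a reasonable presentation), let $X$ denote its colimit (which is ind-geometric and reasonable by Proposition~\ref{prop:cohfiltcolims}). Applying the pointwise formula to $X$ and each $X_\al$ reduces the claim to showing that the natural functor
$$\Phi \colon \underset{\al}{\mathrm{colim}}\, \iota_{/X_\al} \to \iota_{/X},$$
induced by post-composing $f$ with the structure morphism $X_\al \to X$, is cofinal --- here the left-hand colimit is in $\Catinfty$, computed via the Grothendieck construction over the indexing diagram. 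By Joyal's cofinality criterion this reduces to checking that for each $\phi = (C \xleftarrow{h} D \xrightarrow{f} X) \in \iota_{/X}$, the corresponding slice is weakly contractible. Since $D$ is a truncated geometric stack, Proposition~\ref{prop:factorthroughgeometric} identifies the space of factorizations of $f$ through some $X_\al$ with $\mathrm{colim}_\al \Map_{\Stkk}(D, X_\al) \xrightarrow{\sim} \Map_{\Stkk}(D, X)$, which is by definition a contractible filtered colimit. Propositions~\ref{prop:propprops} and~\ref{prop:afp2of3prop} then guarantee that any such lift $\tilde f \colon D \to X_\al$ is automatically proper and almost finitely presented, so it lives in $\iota_{/X_\al}$; the pullback leg $C \xleftarrow{h} D$ is shared between $\phi$ and every lift, so the lifting category is controlled entirely by these mapping spaces and is filtered, hence weakly contractible.

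The main obstacle is organizing the $\infty$-categorical cofinality argument cleanly within the correspondence category --- in particular, packaging $\Phi$ as a coherent functor and checking that morphisms between lifts in $\iota_{/X}$ (which involve morphisms of correspondences compatible with the maps to $X$) are fully captured by the factorization data for $f$ alone. Once unwound, this delicate point reduces to the fact that the pullback leg $h$ and the source $C$ of the correspondence are both unchanged under lifting, so all of the lifting data is encoded in the compactness of $D$ as an object of $\oneStkkconv$ (Proposition~\ref{prop:oneStkkconvcompactness}), which is exactly what Proposition~\ref{prop:factorthroughgeometric} supplies.
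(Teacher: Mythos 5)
Your overall strategy---compute the defining left Kan extension pointwise and reduce to a cofinality statement powered by the compactness of truncated geometric stacks---is workable in outline, but there is a genuine gap at exactly the point you flag as ``the main obstacle,'' and the claim you use to dismiss it is false. In the comma category appearing in Joyal's criterion, an object over $\phi = (C \xleftarrow{h} D \xrightarrow{f} X)$ is an \emph{arbitrary} morphism $\phi \to \Phi(\al, \psi)$ in $\iota_{/X}$, where $\psi = (C' \xleftarrow{h'} D' \to X_\al)$. Such a morphism consists of a correspondence $C \leftarrow E \to C'$ together with an identification $D \cong E \times_{C'} D'$ compatible with the maps to $X$; in particular neither the source $C$ nor the backward leg $h$ is preserved, so the slice is strictly larger than the category of factorizations of $f$ through the $X_\al$. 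Your assertion that ``the pullback leg $h$ and the source $C$ of the correspondence are both unchanged under lifting'' is therefore incorrect, and the claim that the slice is filtered is unsupported (there is no evident cocone on two lifts with distinct sources $C'_1$, $C'_2$). What is missing is precisely the content of the paper's Proposition \ref{prop:corrKanext} (a variant of \cite[Thm. 9.6.1.5]{GR17}): slices in the plain category of vertical morphisms are left cofinal in slices of the correspondence category, proved by exhibiting initial objects via the closure hypothesis that $h \in \horiz$ and $C \in \catC''$ force $D \in \catC''$. Without this, or a relative version of it (e.g.\ a functorial retraction of your slice onto the subcategory of lifts with identity backward leg, whose classifying space is the contractible fiber supplied by Proposition \ref{prop:factorthroughgeometric}), the cofinality of $\Phi$ is not established; compactness of $D$ alone does not control the correspondence directions of the slice.

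For comparison, the paper's proof first applies Proposition \ref{prop:corrKanext} to strip away the correspondences entirely, so that the restriction of $\Coh$ to $\indGStkkreasprop$ is a left Kan extension along the plain inclusion $\GStkkplusprop \subset \indGStkkreasprop$; it then identifies $\indGStkkreasprop$ with a full subcategory of $\Ind(\GStkkplusprop)$ (by the argument of Proposition \ref{prop:indclosedclosure}, i.e.\ the same compactness you invoke) and concludes by transitivity of Kan extensions together with the automatic continuity of left Kan extensions into Ind-categories \cite[Lem. 5.3.5.8]{LurHTT}. That formal route also disposes of your parenthetical reduction ``with each $X_\al$ truncated reasonable, say via passing through a reasonable presentation,'' which as written is circular: replacing a filtered diagram of reasonable ind-geometric stacks by one of truncated geometric stacks is itself an instance of the full-faithfulness and compactness statements being proved, not a free assumption. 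Note also that for general reasonable terms $X_\al$ the mapping-space input you need is Proposition \ref{prop:oneStkkconvcompactness} applied to the filtered colimit in $\oneStkkconv$, rather than Proposition \ref{prop:factorthroughgeometric} verbatim, which is stated for ind-geometric presentations.
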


To show this we need the following variant of a special case of \cite[Thm. 9.6.1.5]{GR17}, whose proof we include since the cited statement is significantly more general. In the statement $\catC$ is a category with finite limits, and $\vert$ and $\horiz$ are classes of morphisms in $\catC$ which contain all isomorphisms and are stable under composition and under base change along each other. Recall that if $\catC' \subset \catC$ is a full subcategory such that $Y \in \catC'$ whenever $h: Y \to X$ is in $\horiz$ and $X \in \catC'$, then $\Corr(\catC')_{\vert,\horiz}$ is the 1-full subcategory of $\Corr(\catC)$ which only includes correspondences $X \xleftarrow{h} Y \xrightarrow{f} Z$ such that $h \in \horiz$, $f \in \vert$, and $X, Z \in \catC'$ (hence $Y \in \catC'$). 

\begin{Proposition}\label{prop:corrKanext}
	\cite[Thm. 9.6.1.5]{GR17} Let $\catC$, $\vert$, and $\horiz$ be as above, let $\catC'' \subset \catC' \subset \catC$ be full subcategories which both satisfy the above condition with respect to $\horiz$, and let $\catD$ be another category. Let $F'': \Corr(\catC'')_{\vert,\horiz} \to \catD$ be a functor, $F': \Corr(\catC')_{\vert,\horiz} \to \catD$ a left Kan extension of $F''$, and $G': \catC'_{\vert} \to \catD$ a left Kan extension of  $F''|_{\catC''_{\vert}}$. Then the canonical transformation $G' \to F'|_{\catC'_{\vert}}$ is an isomorphism. 
\end{Proposition}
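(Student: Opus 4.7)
The strategy is to apply the pointwise formula for left Kan extensions and reduce the claim to a cofinality statement between indexing categories. Fix $X \in \catC'_{\vert}$. Under the implicit assumption that $\catD$ admits the colimits needed for $F'$ and $G'$ to exist, we have
\[
F'|_{\catC'_{\vert}}(X) \cong \colim_{\mathcal{J}_1} F'' \qquad \text{and} \qquad G'(X) \cong \colim_{\mathcal{J}_0} F''|_{\catC''_{\vert}},
\]
where $\mathcal{J}_1 := \Corr(\catC'')_{\vert,\horiz} \times_{\Corr(\catC')_{\vert,\horiz}} \left(\Corr(\catC')_{\vert,\horiz}\right)_{/X}$ and $\mathcal{J}_0 := \catC''_{\vert} \times_{\catC'_{\vert}} (\catC'_{\vert})_{/X}$. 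An object of $\mathcal{J}_1$ is a pair $(Y, c)$ with $Y \in \catC''$ and $c$ a correspondence $Y \xleftarrow{h} Z \xrightarrow{f} X$ with $h \in \horiz$ and $f \in \vert$; note $Z \in \catC''$ automatically by the closure hypothesis on $\catC''$ under $\horiz$-sources. An object of $\mathcal{J}_0$ is a pair $(Y, f)$ with $Y \in \catC''$ and $f \colon Y \to X$ in $\vert$. The inclusion $\catC''_{\vert} \hookrightarrow \Corr(\catC'')_{\vert,\horiz}$ induces a natural functor $\iota \colon \mathcal{J}_0 \to \mathcal{J}_1$ sending $(Y,f)$ to $(Y, (Y \xleftarrow{\id} Y \xrightarrow{f} X))$, and the canonical transformation $G'(X) \to F'|_{\catC'_{\vert}}(X)$ is the map of colimits induced by $\iota$.

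The plan is to show $\iota$ is right cofinal by exhibiting a left adjoint $r \colon \mathcal{J}_1 \to \mathcal{J}_0$. On objects, set $r(Y, (Y \xleftarrow{h} Z \xrightarrow{f} X)) := (Z, f)$; this lies in $\mathcal{J}_0$ since $Z \in \catC''$ and $f \in \vert$, and evidently $r \circ \iota = \id_{\mathcal{J}_0}$. The unit $\eta_{(Y,c)} \colon (Y,c) \to \iota r(Y,c) = (Z, (Z \xleftarrow{\id} Z \xrightarrow{f} X))$ is represented by the morphism in $\Corr(\catC'')_{\vert,\horiz}$ given by the correspondence $Y \xleftarrow{h} Z \xrightarrow{\id} Z$, together with the canonical $2$-simplex in $\Corr(\catC')_{\vert,\horiz}$ witnessing that its composition with the identity correspondence on $Z$ recovers $c$. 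Granted that this assembles into a genuine adjunction, $\iota$ is a right adjoint, hence right cofinal by \cite[Thm. 4.1.3.1]{LurHTT}, and the induced map $G'(X) \cong \colim_{\mathcal{J}_0} F'' \circ \iota \to \colim_{\mathcal{J}_1} F'' \cong F'|_{\catC'_{\vert}}(X)$ is therefore an isomorphism natural in $X$.

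The main obstacle is the $\infty$-categorical verification of the adjunction, as mapping spaces in $\Corr(-)$ are not transparent. Concretely, for $(Y,c) \in \mathcal{J}_1$ and $(Y', f') \in \mathcal{J}_0$ one must check that precomposition with $\eta_{(Y,c)}$ induces an equivalence
\[
\Map_{\mathcal{J}_0}\bigl(r(Y,c), (Y',f')\bigr) \xrightarrow{\sim} \Map_{\mathcal{J}_1}\bigl((Y,c), \iota(Y',f')\bigr).
\]
Unwinding the definitions, a point of the right-hand side is a correspondence $Y \xleftarrow{\alpha} W \xrightarrow{\beta} Y'$ in $\Corr(\catC'')_{\vert,\horiz}$ together with a $2$-simplex identifying its composition with the identity correspondence on $Y'$ with $c$; the latter forces $W \simeq Z$, $\alpha \simeq h$, and identifies $\beta$ with a morphism $\phi \colon Z \to Y'$ in $\vert$ satisfying $f' \circ \phi = f$, which is precisely the data of the left-hand side. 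This explicit comparison (which can alternatively be extracted from the mapping-space formulas in correspondence categories appearing in \cite[Thm. 2.2.7]{EH20} or \cite[Sec. 7.1.2]{GR17}) completes the verification of the adjunction and hence of the proposition.
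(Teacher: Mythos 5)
Your proposal is correct and is essentially the paper's argument in different packaging: both reduce via the pointwise Kan extension formula to cofinality of the inclusion $\iota\colon \mathcal{J}_0 = \catC''_{\vert} \times_{\catC'_{\vert}} (\catC'_{\vert})_{/X} \to \mathcal{J}_1$, and your mapping-space identification $\Map_{\mathcal{J}_1}\bigl((Y,c), \iota(Y',f')\bigr) \simeq \Map_{\mathcal{J}_0}\bigl((Z,f),(Y',f')\bigr)$ is exactly the paper's ``up to contractible choices'' step identifying the comma category over $(h,f)$ with $((\catC''_{\vert})_{/X})_{f/}$, which has an initial object. The one refinement worth making is that you need not assemble the global left adjoint $r$ at all (whose coherent $\infty$-categorical construction you rightly flag as the delicate point): by \cite[Thm. 4.1.3.1]{LurHTT} it suffices that each comma category $\mathcal{J}_0 \times_{\mathcal{J}_1} (\mathcal{J}_1)_{(Y,c)/}$ be weakly contractible, and your pointwise computation already exhibits an initial object in it, which is precisely how the paper closes the argument.
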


\begin{proof}
	It suffices to show $(\catC''_{\vert})_{/Z} \to (\Corr(\catC'')_{\vert,\horiz})_{/Z}$ is left cofinal for all $Z \in \catC'$, since the canonical morphism $G'(Z) \to F'(Z)$ is given by taking colimits over the restrictions of $F'$ to these diagrams. It further suffices to show $((\catC''_{\vert})_{/Z})_{(h,f)/}$ is weakly contractible for any object $X \xleftarrow{h} Y \xrightarrow{f} Z$ of $(\Corr(\catC'')_{\vert,\horiz})_{/Z}$ \cite[Thm. 4.1.3.1]{LurHTT}. The category $((\catC''_{\vert})_{/Z})_{(h,f)/}$ can be identified with the category of diagrams
	\begin{equation*}
		\begin{tikzpicture}
			[baseline=(current  bounding  box.center),thick,>=\arrtip]
			\newcommand*{\ha}{3}; \newcommand*{\hb}{2.5};
			\newcommand*{\va}{-1.3}; \newcommand*{\vb}{-1.5};
			\node (aa) at (0,2*\va) {$X$};
			\node (ab) at (.5*\ha,\va) {$Y'$};
			\node (ac) at (\ha,0) {$Y$};
			\node (bb) at (\ha,2*\va) {$W'$};
			\node (bc) at (1.5*\ha,\va) {$W$};
			\node (cc) at (2*\ha,2*\va) {$Z$};
			\draw[<-] (aa) to node[above left] {$h $} (ab);
			\draw[<-] (ab) to node[above left] {$\xi'  $} (ac);
			\draw[<-] (bb) to node[above left] {$\xi  $} (bc);
			\draw[->] (ac) to node[above right] {$\phi  $} (bc);
			\draw[->] (ab) to node[above right] {$\phi'  $} (bb);
			\draw[->] (bc) to node[above right] {$\psi $} (cc);
		\end{tikzpicture}
	\end{equation*} 
	in which $W \in \catC''$, $f \cong \psi \circ \phi$, $\xi$ and $\xi'$ are isomorphisms, and $\phi$ and $\psi$ belong to $\vert$. Up to contractible choices such a diagram is determined by the subdiagram spanned by the top right edges, hence this category further identifies with $((\catC''_{\vert})_{/Z})_{f/}$. Since $X \in \catC''$, the hypothesis on $h$ implies that $Y \in \catC''$. Thus $f$ itself belongs to $(\catC''_{\vert})_{/Z}$, hence $((\catC''_{\vert})_{/Z})_{f/}$ has an initial object and is weakly contractible. 
\end{proof}

\begin{proof} [Proof of Proposition \ref{prop:cohonindgstks}]
	Write $\GStkkplusprop := \GStkkplus \cap \indGStkkreasprop$ for the category of truncated geometric stacks and proper, almost finitely presented morphisms, and let $\Cohind$ and $\Cohgeom$ denote the restrictions of $\Coh$ to $\indGStkkreasprop$ and  $\GStkkplusprop$, respectively. It follows from Proposition \ref{prop:corrKanext} that $\Cohind$ is the left Kan extension of $\Cohgeom$ along the inclusion $\GStkkplusprop \subset \indGStkkreasprop$. 
	
	The proof of Proposition \ref{prop:indclosedclosure} adapts to show that the canonical continuous functor $\Ind(\GStkkplusprop) \to \Stkkconv$ identifies $\Ind(\GStkkplusprop)$ with a subcategory of $\Stkkconv$, and that $\indGStkkreasprop$ is the intersection of $\indGStkkreas$ with $\Ind(\GStkkplusprop)$ in $\Stkkconv$. In particular, $\indGStkkreasprop$ identifies with a full subcategory of $\Ind(\GStkkplusprop)$. 
	By the transitivity of left Kan extensions \cite[Prop. 4.3.2.8]{LurHTT}, $\Cohind$ is the restriction to $\indGStkkreasprop$ of $\CohInd$, the left Kan extension of $\Cohgeom$ to $\Ind(\GStkkplusprop)$. 
	
	Now write $\indGStkkreasclafp \subset \indGStkkreas$ for the subcategory which only includes almost ind-finitely presented ind-closed immersions. Proposition \ref{prop:indclosedclosure} states that $\indGStkkreasclafp$ admits filtered colimits and its inclusion into $\Stkkconv$, hence into $\Ind(\GStkkplusprop)$, is continuous. But $\CohInd$ is continuous \cite[Lem. 5.3.5.8]{LurHTT}, hence so is its restriction to $\indGStkkreasclafp$. 
\end{proof}

\begin{Proposition}\label{prop:cohidcomp}
The category $\Coh(X)$ is small, stable, and idempotent complete for any reasonable ind-geometric stack $X$.
\end{Proposition}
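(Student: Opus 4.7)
The plan is to reduce to the geometric case via a reasonable presentation. Choose a reasonable presentation $X \cong \colim_\alpha X_\alpha$; by Proposition \ref{prop:cohonindgstks}, the induced functor $\colim_\alpha \Coh(X_\alpha) \to \Coh(X)$ is an equivalence in $\Catinfty$, where the transition maps are the pushforwards $i_{\alpha\beta *}$ along the structural almost finitely presented closed immersions $X_\alpha \to X_\beta$.

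First I would verify that each $\Coh(X_\alpha)$ is small, stable, and idempotent complete. Smallness follows since bounded almost perfect modules over a ring form an essentially small category, and this transfers to $\Coh(X_\alpha)$ via (faithfully flat, conservative) pullback along a flat cover $\Spec A \to X_\alpha$. Stability holds because bounded almost perfect sheaves are closed under finite limits, finite colimits, and shifts inside $\QCoh(X_\alpha)$. Idempotent completeness holds because $\QCoh(X_\alpha)$ is presentable (hence idempotent complete) and being bounded almost perfect is preserved under retracts.

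Next I would observe that each transition functor $i_{\alpha\beta *}: \Coh(X_\alpha) \to \Coh(X_\beta)$ is exact. Since $i_{\alpha\beta}$ is an almost finitely presented closed immersion between truncated geometric stacks, Proposition \ref{prop:immisaffforgeo} implies it is affine, so $i_{\alpha\beta *}$ is t-exact on $\QCoh$ and in particular preserves finite limits, finite colimits, and shifts, hence is exact as a functor of stable subcategories.

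Finally I would invoke standard closure properties of filtered colimits in $\Catinfty$: small $\infty$-categories are closed under small filtered colimits; stable $\infty$-categories and exact functors form a subcategory of $\Catinfty$ that is closed under filtered colimits (so that exactness of the transition maps is inherited by the colimit); and idempotent complete $\infty$-categories are closed under filtered colimits in $\Catinfty$ by \cite[Cor. 4.4.5.20]{LurHTT}. The main technical point is this last closure property, which ensures that no explicit idempotent completion is needed at the colimit stage; everything else is immediate from the preceding sections.
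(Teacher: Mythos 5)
Your proposal follows the paper's own route exactly: reduce via the colimit formula $\Coh(X) \cong \colim \Coh(X_\al)$ coming from Proposition \ref{prop:cohonindgstks}, verify the geometric case, and conclude by closure of smallness, stability, and idempotent completeness under filtered colimits. (The paper's proof is precisely this, citing \cite[Prop.~7.2.4.11]{LurHA} for the geometric case and \cite[Prop.~1.1.4.6, Lem.~7.3.5.10]{LurHA} for the closure.) Two of your justifications need repair, even though both conclusions are standard. First, conservativity of flat-cover pullback does not yield essential smallness of $\Coh(X_\al)$: a conservative functor gives no control over isomorphism classes (any groupoid, however large, maps conservatively to the point). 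Instead use the characterization recalled in Section \ref{sec:cohrelation}: by \cite[Prop.~9.1.5.1]{LurSAG}, $\Coh(X_\al)$ consists of the bounded almost compact objects of $\QCoh(X_\al)$, so every object is compact in some $\QCoh(X_\al)^{\geq n}$, and the compact objects of a presentable category form an essentially small subcategory.

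Second, and more seriously, your key closure step --- ``idempotent complete $\infty$-categories are closed under filtered colimits in $\Catinfty$,'' attributed to [HTT, Cor.~4.4.5.20] --- is mis-cited, and the unrestricted claim for bare $\infty$-categories is exactly the delicate point you cannot wave at: the simplicial set $\mathrm{Idem}$ indexing a coherent idempotent has nondegenerate simplices in every dimension and is not a compact object of $\Catinfty$, so an idempotent in a filtered colimit need not lift to any finite stage, and the naive lift-split-push argument that works for ordinary categories breaks down. The reference you want is \cite[Lem.~7.3.5.10]{LurHA} (as the paper cites), which is the version adapted to categories admitting finite colimits along finite-colimit-preserving functors; these hypotheses are available here precisely because each $\Coh(X_\al)$ is stable and the transition pushforwards $i_{\al\be*}$ are exact --- your affineness argument for that exactness, via Proposition \ref{prop:immisaffforgeo} and t-exactness of affine pushforward, is correct and is the right observation. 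With these two citations corrected, your argument coincides with the paper's.
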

\begin{proof}
If $X$ is geometric this follows from \cite[Prop. 7.2.4.11]{LurHA}.
Given (\ref{eq:cohviaindgeopres}), the general case follows from \cite[Prop. 1.1.4.6, Lem. 7.3.5.10]{LurHA}. 
\end{proof}

\subsection{Anticompletion}\label{sec:anticompletion} Before turning to ind-coherent sheaves, we review the notion of anticompleteness from \cite[App. C]{LurSAG} in slightly adapted form. Recall that a t-structure on a stable $\infty$-category $\catC$ is left complete if the natural functor $\catC \to \lim_n \catC^{\geq n}$ is an equivalence, and is right complete if $\catC \to \lim_n \catC^{\leq n}$ is an equivalence. The category $\wh{\catC} := \lim_n \catC^{\geq n}$ is called the left completion of $\catC$.
It has a canonical t-structure such that $\catC \to \wh{\catC}$ is t-exact and restricts to an equivalence $\catC^{\geq 0} \congto \wh{\catC}^{\geq 0}$ \cite[Prop. 1.2.1.17]{LurHA}. 

 If $\catD$ is another stable $\infty$-category with a t-structure, an exact functor $F: \catC \to \catD$ is bounded if there exist $m, n$ such that $F(\catC^{\geq 0}) \subset \catD^{\geq m}$ and  $F(\catC^{\leq 0}) \subset \catD^{\leq n}$. If $\catC$ and $\catD$ are presentable, we write $\LFun^b(\catC, \catD) \subset \LFun(\catC, \catD)$ for the full subcategory of bounded colimit-preserving functors. In the presentable case, a t-structure on $\catC$ is accessible if $\catC^{\geq 0}$ is also presentable, and is compatible with filtered colimits if $\catC^{\geq 0}$ is closed under filtered colimits in $\catC$. 
 
We let $\PrStbrcpl$ denote the $\infty$-category whose objects are presentable stable $\infty$-categories equipped with accessible t-structures which are right complete and compatible with filtered colimits, and whose morphisms are bounded colimit-preserving functors. Explicitly, given $\catC \in \PrSt$ we consider the set of cores $\catC^{\leq 0}$ (subcategories closed under small colimits and extensions), partially ordered by $\catC^{\leq 0}_1 < \catC^{\leq 0}_2$ if $\catC^{\leq 0}_1 \subset \catC^{\leq 0}_2[n]$ for some $n$. These posets are contravariantly functorial under taking preimages along exact functors, and $\PrStbrcpl$ is a full subcategory of the associated Cartesian fibration over $\PrSt$. 

\begin{Definition}\label{def:anticomplete}
We say $\catC \in \PrStbrcpl$ is left anticomplete if composition with $\catD \to \wh{\catD}$ induces an equivalence
$$ \LFun^{b}(\catC, \catD) \congto \LFun^{b}(\catC, \wh{\catD})$$ 
for any $\catD \in \PrStbrcpl$. 
\end{Definition}

We further let $\PrStbacpl$ and $\PrStbcpl$ denote the full subcategories of $\PrStbrcpl$ defined by only including t-structures which are respectively left anticomplete and left complete. We then have the following variant of \cite[Cor. C.3.6.4, Cor. C.5.5.11, Prop. C.5.9.2]{LurSAG}. 

\begin{Proposition}\label{prop:acplcplequiv}
	The inclusion $\PrStbcpl \into \PrStbrcpl$ admits a left adjoint, which acts on objects by $\catC \mapsto \wh{\catC}$. The inclusion $\PrStbacpl \into \PrStbrcpl$ admits a right adjoint, which we denote by $\catC \mapsto \wc{\catC}$.
	The restrictions of these adjoints define inverse equivalences between $\PrStbcpl$ and $\PrStbacpl$. 
\end{Proposition}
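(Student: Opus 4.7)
The plan is to adapt the constructions and arguments from \cite[App. C.3, App. C.5]{LurSAG}, modifying them as needed to track the boundedness condition built into morphisms in $\PrStbrcpl$.

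First, I would show that $\catC \mapsto \wh{\catC} = \lim_n \catC^{\geq n}$ defines a functor $\PrStbrcpl \to \PrStbcpl$ left adjoint to the inclusion. That $\wh{\catC}$ is presentable stable and carries a canonical t-structure making $\catC \to \wh{\catC}$ t-exact and inducing an equivalence $\catC^{\geq 0} \congto \wh{\catC}^{\geq 0}$ is \cite[Prop. 1.2.1.17]{LurHA}; left completeness is then immediate, while right completeness and compatibility with filtered colimits transfer from $\catC$ since the relevant truncations and colimit diagrams in $\wh{\catC}$ are computed levelwise in the $\catC^{\geq n}$. The universal property follows as in \cite[Cor. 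C.3.6.4]{LurSAG}: for $\catD \in \PrStbcpl$, a bounded colimit-preserving $F \colon \catC \to \catD$ restricts to functors $\catC^{\geq m} \to \catD^{\geq m-k}$ (for a uniform shift $k$) which assemble, via $\catC^{\geq m} \cong \wh{\catC}^{\geq m}$ and $\catD \cong \lim_m \catD^{\geq m}$, into a unique bounded extension $\wh{\catC} \to \catD$.

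Second, I would construct the right adjoint $(-)\wc{}$ to $\PrStbacpl \into \PrStbrcpl$ following \cite[Cor. C.5.5.11]{LurSAG}: for any $\catC \in \PrStbrcpl$ this produces a left anticomplete $\wc{\catC}$ equipped with a bounded colimit-preserving map $\wc{\catC} \to \catC$ restricting to an equivalence $\wc{\catC}^{\geq 0} \congto \catC^{\geq 0}$. The universal property---that composition with $\wc{\catC} \to \catC$ gives an equivalence $\LFun^{b}(\catE, \wc{\catC}) \congto \LFun^{b}(\catE, \catC)$ for every $\catE \in \PrStbacpl$---then follows from Definition \ref{def:anticomplete} together with the identification $\wh{\wc{\catC}} \cong \wh{\catC}$, which holds because both sides are left complete and have canonically equivalent bounded-below parts.

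Finally, to establish the equivalence $\PrStbacpl \simeq \PrStbcpl$, I would show the two adjoints become mutually inverse upon restriction. For $\catC \in \PrStbacpl$, the universal property of $\wc{}$ applied to $\catC \to \wh{\catC}$ yields a canonical map $\catC \to \wc{\wh{\catC}}$ in $\PrStbacpl$ which is an equivalence on bounded-below parts; since source and target are both left anticomplete, this lifts to a full equivalence as in \cite[Prop. C.5.9.2]{LurSAG}. The symmetric argument with $\catD \in \PrStbcpl$ produces $\catD \cong \wh{\wc{\catD}}$, using left completeness to lift from bounded-below parts. The main obstacle is the construction of $\wc{\catC}$ and verification of its universal property---the substance of \cite[App. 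C.5]{LurSAG}, which requires careful set-theoretic bookkeeping---but once these are in hand the rest is formal.
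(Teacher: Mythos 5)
Your proposal is correct and follows essentially the same route as the paper: both adjoints are obtained from Lurie's anticompletion machinery in \cite[App.\ C.3, C.5]{LurSAG} (the paper sets $\wc{\catC} := \Sp(\wc{\catC}^{\leq 0})$ via \cite[Prop.\ C.5.5.9]{LurSAG}), the universal properties are verified against Definition \ref{def:anticomplete} using the identification of left completions, and the inverse equivalences follow formally from full faithfulness of the restricted adjoints. The paper's only organizational difference is that it reduces the universal-property checks to the subcategories $\LFun^{b,\leq n}$ and then to $n=0$ by shifting, which is the same content as your uniform-shift argument.
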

\begin{proof}	
Given $\catC, \catD \in \PrStbrcpl$, write $\LFun^{b, \leq n}(\catC, \catD) \subset \LFun^{b}(\catC, \catD)$ for the full subcategory of functors which take $\catC^{\leq 0}$ to $\catD^{\leq n}$. 
Since $\catC \to \wh{\catC}$ is t-exact, composition with it induces a functor 
$ \LFun^{b, \leq n}(\wh{\catC}, \catD) \to \LFun^{b, \leq n}(\catC, \catD).$ 
The existence of the desired adjoint follows if this is an equivalence for all $n$ and for all $\catD \in \PrStbcpl$ \cite[Prop. 5.2.4.2]{LurHTT}. By shifting we can reduce to $n = 0$. That composition with $\catC \to \wh{\catC}$ induces an equivalence between right t-exact functors follows from \cite[Prop. C.3.1.1., Prop. C.3.6.3]{LurSAG} (noting that $\catD \cong \Sp(\catD^{\leq 0})$ \cite[Rem. C.3.1.5]{LurSAG}). But this further identifies right t-exact functors which are bounded since $\catC^{\geq 0} \congto \wh{\catC}^{\geq 0}$.

Now let $\wc{\catC} := \Sp(\wc{\catC}^{\leq 0}) \in \PrStbrcpl$, where $\wc{\catC}^{\leq 0}$ is the anticompletion of ${\catC}^{\leq 0}$ in the sense of \cite[Prop. C.5.5.9]{LurSAG}. We claim $\wc{\catC}$ is left anticomplete in the sense of Definition~\ref{def:anticomplete}. It suffices to show that for any $\catD \in \PrStbrcpl$, the functor
$ \LFun^{b, \leq n}(\wc{\catC}, \catD) \to \LFun^{b, \leq n}(\wc{\catC}, \wh{\catD}) $
given by composition with $\catD \to \wh{\catD}$ is an equivalence for all $n$. By shifting we can reduce to the case $n = 0$, which follows from \cite[Prop. C.3.1.1, Def. C.5.5.4]{LurSAG}. 

The left-exact functor $\wc{\catC}^{\leq 0} \to \catC^{\leq 0}$ of \cite[Prop. C.5.5.9]{LurSAG} induces a t-exact functor $\wc{\catC} \to \catC$ \cite[Prop. C.3.1.1, Prop. C.3.2.1]{LurSAG}.  
The existence of the desired adjoint follows if the induced functor 
$ \LFun^{b}(\catD, \wc{\catC}) \to \LFun^{b}(\catD, \catC)$
is an equivalence for all $\catD \in \PrStbacpl$ \cite[Prop. 5.2.4.2]{LurHTT}. But this follows from Definition~\ref{def:anticomplete}, given that the left completions of $\wc{\catC}$ and $\catC$ are equivalent \cite[Prop. C.5.5.9]{LurSAG}.

Finally, it follows by adjunction and Definition~\ref{def:anticomplete} that $\catC \mapsto \wh{\catC}$ has fully faithful restriction to $\PrStbacpl$, likewise for $\catC \mapsto \wc{\catC}$ and $\PrStbcpl$. That these restrictions are inverse equivalences now follows since $\catC \in \PrStbcpl$ implies $\catC$ is the left completion of $\wc{\catC}$ \cite[Prop. C.5.5.9]{LurSAG}.  
\end{proof}

\subsection{Ind-coherent sheaves} Recall that if $X$ is a geometric stack, the standard t-structure on $\QCoh(X)$ is accessible, left and right complete, and compatible with filtered colimits \cite[Cor. 9.1.3.2]{LurSAG}. 

\begin{Definition}\label{def:IndCohononeGStk}
The category of ind-coherent sheaves on a geometric stack $X$ is
$$ \IndCoh(X) := \wc{\QCoh(X)},$$
the left anticompletion of its category of quasicoherent sheaves. We write $\Psi_X: \IndCoh(X) \to \QCoh(X)$ for the left completion functor. 
\end{Definition}

In particular, $\Psi_X$ restricts to an equivalence $$\IndCoh(X)^+ \congto \QCoh(X)^+.$$
Unwinding the definitions, we see that $\IndCoh(X)$ is uniquely characterized by the following universal property: for all $\catC \in \PrStbrcpl$ we have
\begin{equation}\label{eq:ICunivprop} \LFun^b(\IndCoh(X), \catC) \cong \LFun^b(\QCoh(X), \wh{\catC}). 
\end{equation} 

By Proposition \ref{prop:acplcplequiv}, ind-coherent sheaves on geometric stacks inherit all bounded, colimit-preserving functorialities of quasicoherent sheaves. 
Recall from (\ref{eq:CorrQCohGStk}) that pullback and pushforward of quasicoherent sheaves were packaged as a functor 
\begin{equation*}
	\QCoh: \Corr(\GStkk)_{fcd,all} \to \Cathatinfty.
\end{equation*}
By construction its restriction to $\Corr(\GStkk)_{fcd;ftd}$, the 1-full subcategory which only includes correspondences $X \xleftarrow{h} Y \xrightarrow{f} Z$ in which $h$ is of finite Tor-dimension and $f$ is of finite cohomological dimension, lifts to a functor 
\begin{equation}\label{eq:boundedQCoh}
	\QCoh: \Corr(\GStkk)_{fcd;ftd} \to \PrStbcpl.
\end{equation}

\begin{Definition}\label{def:IndCohGStk} We define a functor 
	\begin{equation}\label{eq:boundedIndCoh}
		\IndCoh: \Corr(\GStkk)_{fcd;ftd} \to \PrStbacpl.
	\end{equation}
	by composing (\ref{eq:boundedQCoh}) with the equivalence  $\PrStbcpl \congto \PrStbacpl$ of Proposition \ref{prop:acplcplequiv}. 
\end{Definition}

If $f: X \to Y$ is a morphism of finite cohomological dimension in $\GStkk$, we write $f_*: \IndCoh(X) \to \IndCoh(Y)$ for the associated functor. To distinguish it from $f_*: \QCoh(X) \to \QCoh(Y)$ we sometimes denote them by $f_{IC*}$ and $f_{QC*}$, respectively, but usually we arrange for the meaning to be clear from context. The two are related by a canonical isomorphism $\Psi_Y f_{IC*} \cong f_{QC*} \Psi_X$, and the same remarks apply to the functor $h^*$ associated to a morphism $h$ of finite Tor-dimension. 

To extend our discussion to ind-geometric stacks, first consider the functor 
\begin{equation}\label{eq:truncIndCoh}
\IndCoh: \Corr(\GStkkplus)_{fcd;ftd} \to \PrL
\end{equation}
obtained by restricting (\ref{eq:boundedIndCoh}) to correspondences of truncated geometric stacks and composing with the forgetful functor $\PrStbacpl \to \PrL$. 
Now let $\Corr(\indGStkk)_{fcd;ftd}$ denote the 1-full subcategory of $\Corr(\indGStkk)$ which only includes correspondences $X \xleftarrow{h} Y \xrightarrow{f} Z$ such that $h$ is of finite Tor-dimension and $f$ is of ind-finite cohomological dimension. These are stable under composition of correspondences by Propositions \ref{prop:indPcompprops}, \ref{prop:indPbasechange}, and \ref{prop:indftdcompprop}. 

\begin{Definition}\label{def:IndCohindGStk}
We define a functor 
\begin{equation}\label{eq:IndCohindGStk}
\IndCoh: \Corr(\indGStkk)_{fcd;ftd} \to \PrL
\end{equation}
by left Kan extending (\ref{eq:truncIndCoh}) along  $\Corr(\GStkkplus)_{fcd;ftd} \subset \Corr(\indGStkk)_{fcd;ftd}$. 
\end{Definition}

This Kan extension exists by \cite[Cor. 4.3.2.16, Thm. 5.5.3.18]{LurHTT}. Taking either $h$ or $f$ to be the identity, the values of (\ref{eq:IndCohindGStk}) on the correspondence $X \xleftarrow{h} Y \xrightarrow{f} Z$ define functors $h^*: \IndCoh(X) \to \IndCoh(Y)$ and $f_*: \IndCoh(Y) \to \IndCoh(Z)$. We have the following variant of Proposition \ref{prop:cohonindgstks}, which is proved the same way. It implies in particular that 
\begin{equation}
	\IndCoh(X) \cong \colim \IndCoh(X_\al)
\end{equation}
in $\PrL$, where $X \cong \colim X_\al$ is any ind-geometric presentation. Here we write $\indGStkkfcd \subset \indGStkk$ for the subcategory which only includes morphisms of ind-finite cohomological dimension, identifying it with a subcategory $\Corr(\indGStkk)_{fcd;ftd}$ as before. 

\begin{Proposition}\label{prop:indcohonindgstks}
	The restriction of $\IndCoh$ to $\indGStkkfcd$ preserves filtered colimits along ind-closed immersions. 
\end{Proposition}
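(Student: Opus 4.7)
The plan is to follow closely the template of the proof of Proposition~\ref{prop:cohonindgstks}. First, I would apply Proposition~\ref{prop:corrKanext} with $\catC = \catC' = \indGStkk$, $\catC'' = \GStkkplus$, $\vert = fcd$, and $\horiz = ftd$, together with Definition~\ref{def:IndCohindGStk}, to conclude that the restriction of $\IndCoh$ to $\indGStkkfcd$ is the left Kan extension of its restriction to $\GStkkplus \cap \indGStkkfcd$ along the subcategory inclusion $\GStkkplus \cap \indGStkkfcd \subset \indGStkkfcd$.

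Next, I would adapt the proof of Proposition~\ref{prop:indclosedclosure} to show that the canonical functor $\Ind(\GStkkplus \cap \indGStkkfcd) \to \Stkkconv$ identifies $\Ind(\GStkkplus \cap \indGStkkfcd)$ with a subcategory of $\Stkkconv$, and that $\indGStkkfcd$ is the intersection of $\indGStkk$ with $\Ind(\GStkkplus \cap \indGStkkfcd)$ in $\Stkkconv$. The adaptation is essentially routine: being of finite cohomological dimension is a property stable under composition, so the comparison $\Map_{\GStkkplus \cap \indGStkkfcd}(X_\alpha, Y_\beta) \to \Map_{\GStkkplus}(X_\alpha, Y_\beta)$ is a monomorphism, and monomorphisms are stable under the relevant limits and filtered colimits of spaces. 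In particular, $\indGStkkfcd$ identifies with a full subcategory of $\Ind(\GStkkplus \cap \indGStkkfcd)$. By the transitivity of left Kan extensions, the restriction of $\IndCoh$ to $\indGStkkfcd$ is then also the restriction to $\indGStkkfcd$ of the left Kan extension of $\IndCoh|_{\GStkkplus \cap \indGStkkfcd}$ to all of $\Ind(\GStkkplus \cap \indGStkkfcd)$.

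To conclude, observe that ind-closed immersions are of finite cohomological dimension, so $\indGStkkcl \subset \indGStkkfcd$. By Proposition~\ref{prop:indclosedclosure}, $\indGStkkcl$ admits filtered colimits and its inclusion into $\Stkkconv$, hence into the full subcategory $\Ind(\GStkkplus \cap \indGStkkfcd) \subset \Stkkconv$, is continuous. Since the left Kan extension to $\Ind(\GStkkplus \cap \indGStkkfcd)$ is continuous by \cite[Lem.~5.3.5.8]{LurHTT}, so is its restriction to $\indGStkkcl$, which is the desired claim.

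The main subtlety I anticipate is in the second step: since $\GStkkplus \cap \indGStkkfcd$ is not small, one must invoke the convention (fixed in Section~\ref{sec:convnot}) that $\Ind(-)$ of a large category refers to free cocompletion under \emph{small} filtered colimits, and verify that \cite[Lem.~5.3.5.8]{LurHTT} still applies with target $\PrL$ rather than $\Catinfty$ (as in the proof of Proposition~\ref{prop:cohonindgstks}). Since $\PrL$ admits small colimits and left Kan extensions from large sources into cocomplete targets are handled by the same universal property, this should go through uniformly, but it is the place where one must be attentive to the sizing issues.
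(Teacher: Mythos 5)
Your proposal is correct and is essentially the paper's own argument: the paper proves Proposition \ref{prop:indcohonindgstks} by remarking it ``is proved the same way'' as Proposition \ref{prop:cohonindgstks}, i.e.\ precisely the sequence you spell out (Proposition \ref{prop:corrKanext} to reduce to the non-correspondence categories, the adaptation of Proposition \ref{prop:indclosedclosure} to identify $\indGStkkfcd$ with a full subcategory of $\Ind(\GStkkplus \cap \indGStkkfcd)$, transitivity of left Kan extensions, and continuity of the Ind-extension via \cite[Lem.~5.3.5.8]{LurHTT}), with $fcd$ replacing $prop$ and $\PrL$ as target. One minor slip: in your final paragraph $\Ind(\GStkkplus \cap \indGStkkfcd)$ should not be called a \emph{full} subcategory of $\Stkkconv$ (as you note correctly earlier, it is only a subcategory, since its morphisms are constrained); the continuity of $\indGStkkcl \to \Ind(\GStkkplus \cap \indGStkkfcd)$ still holds, e.g.\ because this inclusion is the Ind-extension of $\GStkkpluscl \subset \GStkkplus \cap \indGStkkfcd$.
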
 

\begin{Remark}\label{rem:IndCohnaive}
Let $X \cong \colim X_\al$ be an ind-geometric presentation with each $X_\al$ an algebraic stack of finite type. Then in particular $X$ is locally almost of finite type as a prestack, hence \cite[Sec. 10]{Gai13a} applies to give an a priori different notion of $\IndCoh(X)$ than Definition~\ref{def:IndCohindGStk}. However, Proposition \ref{prop:IndCohisIndofCoh} and \cite[Thm. 3.3.5]{DG13} imply they are consistent. 

On the other hand, while \cite[Sec. 10]{Gai13a} defines $\IndCoh(X)$ for any prestack $X$ which is locally almost of finite type, it does not appear this template can be extended to infinite type in a way that produces the category we wish to consider. The most obvious extension would be as follows (to suppress convergence issues we restrict to classical prestacks). First consider the $\PrL$-valued functor that assigns to a morphism $f: X \to Y$ of finite-type classical affine schemes the functor $f^!: \IndCoh(Y) \to \IndCoh(X)$. We may left Kan extend this to all classical affine schemes and then right Kan extend to associate a category $\IndCoh_{naive}(X)$ to an arbitrary classical prestack $X$. 

However, this category disagrees with $\IndCoh(X)$ already when $X = BG_\cO$ (here $G_\cO$ is the jet group of a complex algebraic group $G$), in which case $\IndCoh_{naive}(BG_\cO) \cong \QCoh(BG_\cO)$. But $\IndCoh(BG_\cO) \ncong \QCoh(BG_\cO)$ (or at least, the natural functor is not an equivalence) since the structure sheaf is compact in the former but not the latter (by \cite[Prop. 9.1.5.3]{LurSAG}, since $G_\cO$ is of infinite cohomological dimension). In this case we can recover $\IndCoh(BG_\cO)$ from $\IndCoh_{naive}(BG_\cO)$ by anticompletion, but when $X$ is arbitrary we cannot make sense of this anticompletion because $\IndCoh_{naive}(X)$ does not have a natural t-structure (we also cannot make sense of $\Coh(X)$ for essentially the same reason). 
\end{Remark}

\subsection{$!$-pullback and t-structures}
If a morphism $f: X \to Y$ ind-geometric stacks is ind-proper (hence of ind-finite cohomological dimension by Proposition \ref{prop:fcdprops}), we write $f^!: \IndCoh(Y) \to \IndCoh(X)$ for the right adjoint of $f_*$. As with other functors, we write $f_{QC}^!$ and $f_{IC}^!$ when the meaning of $f^!$ is not otherwise clear from context. For ind-geometric $X$ we define a standard t-structure on $\IndCoh(X)$ in terms of these adjoints, following \cite[Sec. I.3.1.2]{GR17II}. 

\begin{Proposition}\label{prop:IndCohindGStktstructure}
	If $X$ is an ind-geometric stack, $\IndCoh(X)$ has a t-structure defined by
	$$\IndCoh(X)^{\geq 0}:= \begin{Bmatrix} \text{$\cF \in \IndCoh(X)$  such that  $i^!(\cF) \in \IndCoh(X')^{\geq 0}$ for any} \\ \text{  truncated geometric substack  $i: X' \to X$ }\end{Bmatrix}. $$ 
		This t-structure is accessible, compatible with filtered colimits, right complete, and left anticomplete. If $X \cong \colim_\al X_\al$ is an ind-geometric presentation, the functors $i_{\al*}$ are t-exact and induce equivalences
	\begin{equation}\label{eq:ICindGtstruct}
		\IndCoh(X)^{\leq 0} \cong \colim_\al \IndCoh(X_\al)^{\leq 0}, \quad \IndCoh(X)^{\geq 0} \cong \colim_\al \IndCoh(X_\al)^{\geq 0}
	\end{equation}
	in $\PrL$. 
\end{Proposition}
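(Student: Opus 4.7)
Fix an ind-geometric presentation $X \cong \colim_\al X_\al$. By Proposition \ref{prop:indcohonindgstks} we have $\IndCoh(X) \cong \colim_\al \IndCoh(X_\al)$ in $\PrL$ via the $i_{\al*}$, equivalently $\IndCoh(X) \cong \lim_\al \IndCoh(X_\al)$ in $\Catinfty$ along the right adjoints $i_{\al\be}^!$, with projections $i_\al^!$. My first task is to check that each transition $i_{\al\be*}$ is t-exact: right t-exactness is automatic for a colimit-preserving functor, and left t-exactness on $\QCoh$ holds because closed immersions of geometric stacks are affine (Proposition \ref{prop:immisaffforgeo}) and affine pushforward is t-exact by \cite[Prop. 9.1.5.7]{LurSAG}. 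This passes to $\IndCoh$ via the anticompletion equivalence $\PrStbcpl \cong \PrStbacpl$ of Proposition \ref{prop:acplcplequiv}, which underlies Definition \ref{def:IndCohGStk} and preserves t-structures.

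Next I would define $\IndCoh(X)^{\leq 0}$ as the smallest full subcategory closed under small colimits and extensions containing $\bigcup_\al i_{\al*}(\IndCoh(X_\al)^{\leq 0})$, which is presentable since each $\IndCoh(X_\al)^{\leq 0}$ is. By \cite[Prop. 1.4.4.11]{LurHA} it is the $\leq 0$ part of an accessible t-structure, with $\IndCoh(X)^{\geq 0}$ the right orthogonal of $\IndCoh(X)^{\leq -1}$. An adjunction argument identifies $\IndCoh(X)^{\geq 0}$ with $\{\cF : i_\al^!\cF \in \IndCoh(X_\al)^{\geq 0} \text{ for all } \al\}$: right orthogonality amounts to $\Map(i_{\al*}\cG, \cF) \cong \Map(\cG, i_\al^!\cF) = 0$ for all $\cG \in \IndCoh(X_\al)^{\leq -1}$. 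For a general truncated geometric substack $j: Z \to X$, Proposition \ref{prop:factorthroughgeometric} factors $j = i_\al \circ j'$, and Proposition \ref{prop:cl2of3prop} ensures $j'$ is a closed immersion, so $j^! \cong j'^! i_\al^!$ with $j'^!$ left t-exact, recovering the $!$-pullback criterion in the statement. Left t-exactness of $i_{\al*}$ then follows by base change: for $\gamma \geq \al, \be$, the Cartesian square $X_\al \cong X_\al \times_{X_\gamma} X_\gamma$ gives $i_\gamma^! i_{\al*} \cong i_{\al\gamma*}$, whence $i_\be^! i_{\al*}(\cF) \cong i_{\be\gamma}^! i_{\al\gamma*}(\cF) \in \IndCoh(X_\be)^{\geq 0}$ for $\cF \in \IndCoh(X_\al)^{\geq 0}$.

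With $i_{\al*}$ known t-exact, the colimit description of $\IndCoh(X)$ together with the universal property of the constructed $\IndCoh(X)^{\leq 0}$ gives $\IndCoh(X)^{\leq 0} \cong \colim \IndCoh(X_\al)^{\leq 0}$ in $\PrL$, and dually $\IndCoh(X)^{\geq 0} \cong \colim \IndCoh(X_\al)^{\geq 0}$ in $\PrL$ (equivalently, as $\lim$ in $\Catinfty$ along the $i_{\al\be}^!|_{\geq 0}$, matching the $!$-pullback criterion). Compatibility with filtered colimits is then immediate: $\tau^{\geq 0}: \IndCoh(X) \to \IndCoh(X)^{\geq 0}$ is thereby realized as the colimit in $\PrL$ of the $\tau^{\geq 0}$'s, hence preserves filtered colimits. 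Right completeness descends analogously from right completeness of each $\IndCoh(X_\al)$ via the colimit description of $\IndCoh(X)^{\leq 0}$.

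The hardest step I expect is verifying left anticompleteness. My plan is to check the universal property of Definition \ref{def:anticomplete} directly: for any $\catD \in \PrStbrcpl$ the colimit presentation gives $\LFun^b(\IndCoh(X), \catD) \cong \lim_\al \LFun^b(\IndCoh(X_\al), \catD)$, provided one knows a colimit-preserving functor out of $\IndCoh(X)$ is bounded iff its restrictions to the $\IndCoh(X_\al)$ are bounded with uniform bounds. This uniformity is supplied by the $\PrL$ colimit descriptions of $\IndCoh(X)^{\leq 0}$ and $\IndCoh(X)^{\geq 0}$ from the previous paragraph, since a bound on a generating family propagates to its colimit-closure for a colimit-preserving functor. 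Anticompleteness of each $\IndCoh(X_\al)$ then gives $\LFun^b(\IndCoh(X_\al), \catD) \cong \LFun^b(\IndCoh(X_\al), \wh{\catD})$ levelwise, and assembling the limits produces the required equivalence for $\IndCoh(X)$.
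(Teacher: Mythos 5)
Your overall architecture differs from the paper's: you build the t-structure by hand (generation of $\IndCoh(X)^{\leq 0}$ via \cite[Prop. 1.4.4.11]{LurHA}, orthogonality to identify $\IndCoh(X)^{\geq 0}$, then property-by-property verification), whereas the paper packages everything into the category $\PrSttexacpl$ of t-exact functors between anticomplete t-structures, left Kan extends a $\PrSttexacpl$-valued lift of $\IndCoh$, and quotes Lemma \ref{lem:tfiltered} --- which rests on the equivalence between $\PrSttexrcpl$ and Grothendieck prestable $\infty$-categories \cite[Rem. C.3.1.5, Prop. C.3.2.1, Prop. C.3.3.5]{LurSAG} --- to get all four properties of the colimit t-structure at once. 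Parts of your route do work (the orthogonality identification of $\IndCoh(X)^{\geq 0}$, the reduction to arbitrary truncated substacks via factorization, which is Proposition \ref{prop:igprestermsarereas}), but there are two genuine gaps. First, your base-change claim $i_\ga^!\, i_{\al*} \cong i_{\al\ga*}$ is false: the square you invoke is not one for which any base-change theorem is available at this stage (ind-proper base change is Proposition \ref{prop:up!low*geom}, proved later and under coherence hypotheses), and the correct formula, from the standard description of colimits in $\PrL$ (cf. \cite[Lem. 1.1.10]{GR17II}, used in the proof of Proposition \ref{prop:CohsubcatofIndCoh}), is $i_\ga^!\, i_{\al*} \cong \colim_{\delta \geq \ga} i_{\ga\delta}^!\, i_{\al\delta*}$; note $i_{\ga\delta}^! i_{\ga\delta*} \ncong \id$ because of derived thickenings. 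Your conclusion (left t-exactness of $i_{\al*}$) survives, since each term $i_{\ga\delta}^! i_{\al\delta*}$ is left t-exact and $\IndCoh(X_\ga)^{\geq 0}$ is closed under filtered colimits, but the step as written is wrong and must be replaced by this colimit argument.

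Second, and more seriously, your treatment of compatibility with filtered colimits is vacuous: every morphism in $\PrL$ preserves colimits, so observing that $\tau^{\geq 0}$ is a $\PrL$-colimit of truncations says nothing. What is needed is that the \emph{inclusion} $\IndCoh(X)^{\geq 0} \subset \IndCoh(X)$ preserves filtered colimits, and this does not follow from your construction: with $\IndCoh(X)^{\geq 0} = \{\cF : i_\al^!(\cF) \geq 0\}$ you would need the $i_\al^!$ to commute with filtered colimits of coconnective objects, which is precisely the almost continuity of Proposition \ref{prop:upper!almostcontind}/Corollary \ref{cor:upper!almostcontIndCoh} --- available only for \emph{almost finitely presented} closed immersions, i.e.\ reasonable presentations, whereas the statement covers all ind-geometric stacks. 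This is exactly why the paper works on the $\leq 0$ side: the colimit of the $\IndCoh(X_\al)^{\leq 0}$ along the left-exact functors $i_{\al\be*}$ is computed in $\Grothinftylexrcpl$, and the inverse equivalences $\catC \mapsto \Sp(\catC)$, $\catC \mapsto \catC^{\leq 0}$ hand back right completeness and filtered-colimit compatibility for free. A related uniformity issue infects your anticompleteness plan: in the inverse direction of your levelwise equivalences $\LFun^b(\IndCoh(X_\al), \catD) \cong \LFun^b(\IndCoh(X_\al), \wh{\catD})$, composition with $\catD \to \wh{\catD}$ reflects upper t-bounds but \emph{not} lower ones (a right-bounded object can have vanishing left completion when $\catD$ is not left separated), so your uniform-bounds transfer does not close; the paper instead quotes closure of $\PrSttexacpl$ under filtered colimits \cite[Cor. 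C.5.5.10]{LurSAG} and never verifies Definition \ref{def:anticomplete} by hand.
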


We will use the following standard result, see e.g. \cite[Cor. 5.1.5]{BKV22}. 

\begin{Lemma}\label{lem:colimpres}
	Let $\catC \cong \colim \catC_\al$ be the colimit of a diagram $A \to \PrL$, with $F_\al: \catC_\al \to \catC$ the canonical functors and $G_\al: \catC \to \catC_\al$ their right adjoints. 
	Then for any $X \in \catC$, the objects $X \cong \colim F_\al G_\al (X)$ assemble into a diagram whose colimit is $X$. 
\end{Lemma}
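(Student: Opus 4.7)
The plan is to invoke the standard equivalence between colimits in $\PrL$ and limits in $\PrR$ (or equivalently in $\Cathatinfty$) via passage to right adjoints. Under this equivalence, $\catC \cong \colim_\al \catC_\al$ in $\PrL$ corresponds to $\catC \cong \lim_{\al^\op} \catC_\al$ in $\PrR$, with the projection functors given by the right adjoints $G_\al$. Unwinding the limit description, for any $X, Y \in \catC$ one obtains a natural identification
\[\Map_\catC(X, Y) \cong \lim_\al \Map_{\catC_\al}(G_\al X, G_\al Y).\]

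First I would verify that the assignment $\al \mapsto F_\al G_\al(X)$ genuinely defines a diagram $A \to \catC$ augmented by $X$. For $\al \to \be$ with transition functor $F_{\al\be}: \catC_\al \to \catC_\be$ (so $F_\be \circ F_{\al\be} \simeq F_\al$) and right adjoint $G_{\al\be}$ (so $G_\al \simeq G_{\al\be} \circ G_\be$), the required transition map $F_\al G_\al X \simeq F_\be F_{\al\be} G_\al X \to F_\be G_\be X$ is obtained by applying $F_\be$ to the adjoint $F_{\al\be} G_\al X \to G_\be X$ of the identity $G_\al X \to G_{\al\be} G_\be X \simeq G_\al X$. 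The cocone structure with vertex $X$ is then provided by the counits $F_\al G_\al X \to X$, and coherence of all of this data can be extracted from the (co)unit natural transformations associated to the compatible adjunctions.

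Then I would compute, for an arbitrary $Y \in \catC$,
\begin{align*}
\Map_\catC(\colim_\al F_\al G_\al X, Y)
&\cong \lim_\al \Map_\catC(F_\al G_\al X, Y) \\
&\cong \lim_\al \Map_{\catC_\al}(G_\al X, G_\al Y) \\
&\cong \Map_\catC(X, Y),
\end{align*}
where the first isomorphism is the universal property of the colimit, the second is the adjunction $F_\al \dashv G_\al$ applied termwise, and the third uses the limit description of $\catC$ in $\PrR$ recalled above. The composite isomorphism is natural in $Y$ and compatible with the cocone maps, so by Yoneda the canonical map $\colim_\al F_\al G_\al X \to X$ is an equivalence.

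The main obstacle is purely bookkeeping: checking that the diagram and cocone are coherent as $\infty$-categorical data rather than just on objects. In practice this is subsumed by the formal statement that $\PrL \to \PrR$, $F \mapsto F^R$, is an equivalence reversing the direction of morphisms, under which the diagram $\al \mapsto \catC_\al$ with transition functors $F_{\al\be}$ corresponds to the opposite diagram with transition functors $G_{\al\be}$, and the (co)units assemble into a natural transformation of diagrams $\al \mapsto F_\al G_\al \Rightarrow \id_\catC$. Given this, the computation above is purely formal; alternatively one could simply cite \cite[Cor.~5.1.5]{BKV22} as the authors suggest.
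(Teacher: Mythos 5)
Your proof is correct. Note that the paper does not actually prove this lemma --- it defers entirely to the citation \cite[Cor.~5.1.5]{BKV22} --- so there is no in-text argument to compare against; what you have written is the standard proof underlying that reference, and it is the right one. The key points are all in place: the identification of $\catC \cong \colim_\al \catC_\al$ in $\PrL$ with the limit along the right adjoints $G_\al$, the resulting formula $\Map_\catC(X,Y) \cong \lim_\al \Map_{\catC_\al}(G_\al X, G_\al Y)$ for mapping spaces in a limit of $\infty$-categories, and the Yoneda computation showing the canonical map $\colim_\al F_\al G_\al X \to X$ induced by the counits is an equivalence. You are also right that the only genuine content is the coherence bookkeeping --- that $\al \mapsto F_\al G_\al$ assembles into a diagram in $\Fun(\catC,\catC)$ with the counits forming a cone to $\id_\catC$, and that the termwise adjunction equivalences are compatible with the transition maps in $\al$. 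This can be made precise exactly as you indicate, via the equivalence $\PrL \cong (\PrR)^{\op}$, or alternatively via the $\Fun^{\LAd}/\Fun^{\RAd}$ correspondence of \cite[Cor.~4.7.5.18]{LurHA}, which the paper itself invokes repeatedly for this kind of passage between a diagram of left adjoints and the opposite diagram of right adjoints. One minor virtue of your write-up worth keeping: nothing in the argument uses filteredness of $A$, consistent with the lemma's statement for an arbitrary diagram.
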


\begin{Lemma}\label{lem:tfiltered}
	Let $\PrSttexrcpl \subset \PrStbrcpl$ denote the subcategory which only includes t-exact functors. Then $\PrSttexrcpl$ admits filtered colimits, and these are preserved by the functors to $\PrL$ given by $\catC \mapsto \catC$, $\catC \mapsto \catC^{\leq 0}$, and $\catC \mapsto \catC^{\geq 0}$. Moreover, the subcategory $\PrSttexacpl \subset \PrSttexrcpl$ which only includes left anticomplete t-structures is closed under filtered colimits. 
\end{Lemma}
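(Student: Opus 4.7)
The plan is to show that filtered colimits in $\PrSttexrcpl$ are computed as follows: given a filtered diagram $\catC_\bullet: A \to \PrSttexrcpl$, form the colimit $\catC := \colim_\al \catC_\al$ in $\PrL$ and equip it with the t-structure whose parts are $\catC^{\leq 0} := \colim_\al \catC_\al^{\leq 0}$ and $\catC^{\geq 0} := \colim_\al \catC_\al^{\geq 0}$, both again computed in $\PrL$. To justify that these are diagrams in $\PrL$, I would first note that right completeness of each $\catC_\al$ ensures $\iota_\al^{\leq 0}: \catC_\al^{\leq 0} \into \catC_\al$ preserves all small colimits (the subcategory $\catC_\al^{\leq 0}$ being closed under small colimits by \cite[Prop.~1.2.1.19]{LurHA}), while compatibility of the t-structure with filtered colimits together with closure of $\catC_\al^{\geq 0}$ under cofibers and coproducts does the same for $\iota_\al^{\geq 0}$. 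T-exactness of the transition functors turns the assignments $\al \mapsto (\iota_\al^{\leq 0}, \tau_\al^{\leq 0})$ and $\al \mapsto (\iota_\al^{\geq 0}, \tau_\al^{\geq 0})$ into compatible families of adjunctions in $\PrL$; taking their colimits gives inclusions $\iota^{\leq 0}: \catC^{\leq 0} \into \catC$ and $\iota^{\geq 0}: \catC^{\geq 0} \into \catC$ with adjoint truncation functors $\tau^{\leq 0}, \tau^{\geq 0}$, fully faithfulness following since the level-wise unit/counit isomorphisms $\tau_\al^{\bullet} \iota_\al^{\bullet} \cong \id$ pass through the colimit.

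Next I would verify the t-structure axioms. Closure under the appropriate shifts is inherited pointwise. For the truncation triangle of an arbitrary $X \in \catC$, I would invoke Lemma~\ref{lem:colimpres} to write $X \cong \colim_\al F_\al G_\al X$, where $F_\al$ are the structure functors and $G_\al$ their right adjoints, then take the filtered colimit of the level-wise triangles $F_\al \tau_\al^{\leq 0} G_\al X \to F_\al G_\al X \to F_\al \tau_\al^{\geq 1} G_\al X$, using t-exactness of $F_\al$. Orthogonality between $\catC^{\leq 0}$ and $\catC^{\geq 1}$ then reduces, via the standard mapping space formula for filtered colimits in $\PrL$, to orthogonality at each level. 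Accessibility is automatic; compatibility with filtered colimits is immediate since $\iota^{\geq 0}$ is fully faithful and in $\PrL$; and right completeness follows since the same holds for $\iota^{\leq 0}$, so $\catC^{\leq 0}$ is closed under small colimits in $\catC$ and \cite[Prop.~1.2.1.19]{LurHA} applies. The universality of this cone in $\PrSttexrcpl$ reduces to that in $\PrL$, since any t-exact cone $\{F_\al: \catC_\al \to \catD\}$ in $\PrSttexrcpl$ induces a colimit-preserving $F: \catC \to \catD$, and $F(\catC^{\leq 0}) = \colim F_\al(\catC_\al^{\leq 0}) \subset \catD^{\leq 0}$ by colimit-preservation (likewise for $\catC^{\geq 0}$). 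By construction, the forgetful functors $\catC \mapsto \catC, \catC^{\leq 0}, \catC^{\geq 0}$ then preserve these colimits into $\PrL$.

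The hard part will be showing that $\PrSttexacpl \subset \PrSttexrcpl$ is closed under these colimits. For this I would fix $\catD \in \PrStbrcpl$ and stratify $\LFun^b(\catC, \catD)$ by specific bounds: for each pair $(m, n) \in \Z^2$ let $\LFun^{b,(m,n)}(\catC, \catD)$ denote the full subcategory of colimit-preserving $F: \catC \to \catD$ with $F(\catC^{\leq 0}) \subset \catD^{\leq n}$ and $F(\catC^{\geq 0}) \subset \catD^{\geq m}$, so that $\LFun^b(\catC, \catD)$ is the filtered colimit of these as $(m, n)$ expand. The key observation is that $F(\catC^{\leq 0}) \subset \catD^{\leq n}$ holds iff each $F_\al(\catC_\al^{\leq 0}) \subset \catD^{\leq n}$ holds with the \emph{same} $n$, since $\catC^{\leq 0}$ is generated by the images of the $\catC_\al^{\leq 0}$ under colimits that $F$ preserves. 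This gives $\LFun^{b,(m,n)}(\catC, \catD) \cong \lim_\al \LFun^{b,(m,n)}(\catC_\al, \catD)$. Since $\catD \to \wh\catD$ is t-exact, composition with it preserves any fixed bounds, so anticompleteness of each $\catC_\al$ yields $\LFun^{b,(m,n)}(\catC_\al, \catD) \cong \LFun^{b,(m,n)}(\catC_\al, \wh\catD)$; taking the limit over $\al$ and then the colimit over $(m, n)$ produces the desired anticompleteness equivalence $\LFun^b(\catC, \catD) \cong \LFun^b(\catC, \wh\catD)$.
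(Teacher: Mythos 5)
Your anticompleteness step is a genuinely different (and essentially viable) route from the paper's, but the first half of your construction contains a real error on the coconnective side. You assert that $\catC_\al^{\geq 0}$ is closed under cofibers and that the inclusion $\iota_\al^{\geq 0}: \catC_\al^{\geq 0} \into \catC_\al$ preserves all small colimits, with $\tau_\al^{\geq 0}$ as its right adjoint. Both claims are false: in cohomological indexing the cofiber of $k \xrightarrow{0} k$ in $\Mod_k^{\heartsuit}$ is $k \oplus k[1]$, which has cohomology in degree $-1$, so $\catC^{\geq 0}$ is not closed under cofibers (it is closed only under limits, extensions, and — when the t-structure is compatible with filtered colimits — filtered colimits); and the adjunction runs the other way, $\tau^{\geq 0} \dashv \iota^{\geq 0}$, so $\iota^{\geq 0}$ is a morphism of $\PrR$, not of $\PrL$. (If $\iota^{\geq 0}$ were a $\PrL$-morphism, every coconnective part would be closed under all colimits, which is absurd.) This is not cosmetic: your "compatible families of adjunctions in $\PrL$" on the $\geq$ side do not exist as described, so the comparison functor $\colim_\al \catC_\al^{\geq 0} \to \catC$ is not induced the way you say, and your justification of compatibility with filtered colimits ("immediate since $\iota^{\geq 0}$ is fully faithful and in $\PrL$") collapses with it. The repair is exactly what the paper does: since the $F_\al$ are t-exact their right adjoints $G_\al$ are left t-exact, the inclusions $\catC_\al^{\geq 0} \subset \catC_\al$ are $\PrR$-morphisms, and the $\PrL$-colimit $\colim \catC_\al^{\geq 0}$, computed as a limit in $\PrR$, identifies with the full subcategory $\{X \in \catC : G_\al(X) \in \catC_\al^{\geq 0} \ \forall \al\}$, which one then shows (via Lemma \ref{lem:colimpres}) equals the right orthogonal of $\catC^{<0}$. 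Note also that the paper avoids your hand-verification of the t-structure axioms entirely by passing through Grothendieck prestable $\infty$-categories: the equivalence of $\PrSttexrcpl$ with $\Grothinftylexrcpl$ \cite[Rem. C.3.1.5, Prop. C.3.2.1]{LurSAG}, closure of the latter under filtered colimits in $\PrL$ \cite[Prop. C.3.3.5]{LurSAG}, and colimit-preservation of $\Sp(-)$ \cite[Ex. 4.8.1.22]{LurHA} deliver existence, the $\catC$ and $\catC^{\leq 0}$ statements, and the right-completeness/compatibility/accessibility properties all at once — whereas your sketch would still owe, e.g., the condition $\bigcap_n \catC^{\geq n} \cong 0$ needed to apply \cite[Prop. 1.2.1.19]{LurHA} for right completeness.

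For the anticompleteness claim the paper simply cites \cite[Cor. C.5.5.10]{LurSAG}, so your stratification of $\LFun^b(\catC,\catD)$ by fixed bounds $(m,n)$ is an honest alternative proof, and its key insight — that a compatible family $(F_\al)$ assembles to a \emph{bounded} functor only when the bounds are uniform, which is exactly what the strata enforce — is correct. Two small repairs are needed even here: the detection of the bound $F(\catC^{\geq 0}) \subset \catD^{\geq m}$ from the $F_\al$ uses that $\catD^{\geq m}$ is closed under \emph{filtered} colimits (compatibility of the t-structure), not arbitrary ones, since objects of $\catC^{\geq 0}$ are filtered colimits of images from the $\catC_\al^{\geq 0}$ by Lemma \ref{lem:colimpres}; and the stratum equivalence $\LFun^{b,(m,n)}(\catC_\al,\catD) \cong \LFun^{b,(m,n)}(\catC_\al,\wh\catD)$ requires checking that the \emph{inverse} of the anticompleteness equivalence preserves bounds, i.e., that for a bounded $F$ the bounds of $F$ are detected after composing with $\catD \to \wh\catD$. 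This is true — the completion functor is t-exact and restricts to an equivalence $\catD^{\geq k} \congto \wh{\catD}^{\geq k}$ for each $k$, hence is conservative on uniformly bounded-below objects — but it must be said, since the completion functor is not conservative on all of $\catD$.
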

\begin{proof}
Let $\Grothinftylexrcpl$ denote the category of Grothendieck prestable $\infty$-categories and left-exact functors. By \cite[Rem. C.3.1.5, Prop. C.3.2.1]{LurSAG} $\catC \mapsto \Sp(\catC)$ and $\catC \mapsto \catC^{\leq 0}$ induce inverse equivalences of $\Grothinftylexrcpl$ and $\PrSttexrcpl$. The claims about $\catC \mapsto \catC$ and $\catC \mapsto \catC^{\leq 0}$ now follow since $\Grothinftylexrcpl$ is closed under filtered colimits in $\PrL$ \cite[Prop. C.3.3.5]{LurSAG}, and since $\catC \mapsto \Sp(\catC)$ preserves small colimits in $\PrL$ \cite[Ex. 4.8.1.22]{LurHA}. 

Let $\catC \cong \colim \catC_\al$ be a filtered colimit in $\PrSttexrcpl$. Since the structure functors $F_{\al\be}: \catC_\al \to \catC_\be$, $F_\al: \catC_\al \to \catC$ are t-exact their right adjoints are left t-exact. Then since $\catC \cong \lim \catC_\al$ in $\PrR$ and since the inclusions $\catC_\al^{\geq 0} \subset \catC_\al$ are morphisms in $\PrR$, they identify the limit of the $\catC_\al^{\geq 0}$ in $\PrR$ as the full subcategory of $X \in \catC$ such that $F_\al^R(X) \in \catC_\al^{\geq 0}$ for all $\al$. This is equivalent to $\Map_\catC(F_\al(Y_\al), X) \cong 0$ for all $\al$ and all $Y_\al \in \catC_\al^{<0}$. But this is equivalent to $\Map_\catC(Y, X) \cong 0$ for all $Y \in \catC^{<0}$, since by Lemma \ref{lem:colimpres} and the previous paragraph $\catC^{<0}$ is generated under small colimits by objects of the form $F_\al(Y_\al)$ with $Y_\al \in \catC_\al^{<0}$. It follows that $\catC^{\geq 0} \cong \colim \catC_\al^{\geq 0}$ in $\PrL$. Finally, it follows from \cite[Cor. C.5.5.10]{LurSAG} and the discussion above that $\PrSttexacpl$ is closed under all colimits that exist in $\PrSttexrcpl$. 
\end{proof}

\begin{proof}[Proof of Proposition \ref{prop:IndCohindGStktstructure}]
Let $\indGStkkcl \subset \indGStkk$ denote the subcategory which only includes ind-closed immersions, similarly for $\GStkkpluscl \subset \GStkkplus$. By construction the restriction of (\ref{eq:boundedIndCoh}) to $\GStkkpluscl$ factors through $\PrSttexacpl$, and we write $\IndCoh^t: \indGStkkcl \to \PrSttexacpl$ for its left Kan extension. Adapting again the proof of Proposition \ref{prop:cohonindgstks}, we find that $\IndCoh^t(X) \cong \colim \IndCoh^t(X_\al)$ in $\PrSttexacpl$. But by Proposition \ref{prop:indcohonindgstks} $\IndCoh(X)$ is the underlying category of $\IndCoh^t(X)$. The claims now follow from Lemma \ref{lem:tfiltered} and Proposition~\ref{prop:igprestermsarereas} (the Lemma ensures each $i_\al^!$ is left t-exact, the Proposition ensures $i^!$ is left t-exact for all $i: X' \to X$). 
\end{proof}

\begin{Remark}
Since $\IndCoh(X)$ is left anticomplete, it can be recovered functorially from $\IndCoh(X)^+$. By contrast, let $\QCoh'(X)$ denote the colimit of the categories $\QCoh(X_\al)$ in $\PrSttexrcpl$. Left completeness is stable under limits rather than colimits in $\PrSttexrcpl$, so $\QCoh'(X)$ will in general be neither left complete nor left anticomplete. In particular, $\QCoh'(X)$ is a wilder category in that it cannot be recovered functorially from its bounded below objects. 
\end{Remark}

\begin{Remark}\label{rem:classical}
Let $X$ be a classical ind-geometric stack and $X \cong \colim_\al X_\al$ an ind-geometric presentation by classical geometric stacks. By construction we have $\IndCoh(X_\al)^\heartsuit \cong \QCoh(X_\al)^\heartsuit$ for all $\al$, and by Proposition \ref{prop:IndCohindGStktstructure} we have $\IndCoh(X)^\heartsuit \cong \colim_\al \IndCoh(X_\al)^\heartsuit$ in $\PrL$. It follows from \cite[Cor. 10.4.6.8, Prop. C.5.5.20, Thm. C.5.8.8]{LurSAG} that $\IndCoh(X)$ is the dg nerve of the category of injective complexes in $\IndCoh(X)^\heartsuit$, a construction first studied in \cite{Kra05}. The category $\IndCoh(X)^\heartsuit$ is, in the case of ind-schemes, the category of $\cO^!$-modules considered in \cite[Sec. 7.11.3]{BD}.
\end{Remark}

Using t-structures we can address the potential ambiguity in the definition of $\IndCoh(X)$ when $X$ is a non-truncated geometric stack.

\begin{Proposition}\label{prop:ICconsistenctdef}
Given a non-truncated geometric stack $X$, the categories $\IndCoh(X)$ defined by Definitions \ref{def:IndCohononeGStk} and \ref{def:IndCohindGStk} are canonically equivalent, and this equivalence identifies the t-structure of the former with that of Proposition \ref{prop:IndCohindGStktstructure}. 
\end{Proposition}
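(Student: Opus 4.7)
The plan is to invoke the equivalence $\PrStbcpl \cong \PrStbacpl$ of Proposition~\ref{prop:acplcplequiv}, which reduces the claim to identifying left completions. Write $\IndCoh_1(X) := \wc{\QCoh(X)}$ for the category of Definition~\ref{def:IndCohononeGStk} and $\IndCoh_2(X)$ for that of Definition~\ref{def:IndCohindGStk}. Applying Proposition~\ref{prop:indcohonindgstks} to the ind-geometric presentation $X \cong \colim_n \tau_{\leq n} X$ furnished by Proposition~\ref{prop:gstacktruncapprox}, we obtain $\IndCoh_2(X) \cong \colim_n \IndCoh(\tau_{\leq n} X)$ in $\PrL$; by Lemma~\ref{lem:tfiltered} this is also a filtered colimit in $\PrSttexacpl$, so $\IndCoh_2(X)$ is itself left anticomplete.

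I would then construct a natural comparison functor $\Phi: \IndCoh_2(X) \to \IndCoh_1(X)$ in $\PrStbacpl$. Each closed immersion $i_n: \tau_{\leq n} X \to X$ is affine by Proposition~\ref{prop:immisaffforgeo}, so the pushforward $i_{n,*}: \QCoh(\tau_{\leq n} X) \to \QCoh(X)$ is t-exact and in particular bounded. Applying anticompletion gives a compatible system of maps $\IndCoh(\tau_{\leq n} X) \to \IndCoh_1(X)$ in $\PrStbacpl$, which assembles into the desired $\Phi$. By construction $\Phi$ is t-exact on bounded-below parts, so it suffices to show the induced functor $\Phi^+: \IndCoh_2(X)^+ \to \IndCoh_1(X)^+ \cong \QCoh(X)^+$ is an equivalence; once this is established, $\Phi$ itself will be an equivalence because both sides are left anticomplete and hence determined by their bounded-below parts under the equivalence of Proposition~\ref{prop:acplcplequiv}.

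The main obstacle, and the key technical input, is the stabilization claim: for any geometric stack $X$ and any $n \geq k$, the pushforward $i_{n,*}$ restricts to an equivalence $\QCoh(\tau_{\leq n} X)^{\geq -k} \congto \QCoh(X)^{\geq -k}$. In the affine case $X = \Spec A$ this reduces to the statement $\Mod_{\tau_{\leq n}A}^{\geq -k} \cong \Mod_A^{\geq -k}$ for $n \geq k$, which is a standard consequence of \cite[Prop. 7.1.3.19]{LurHA} after translating conventions. For general geometric $X$, one chooses a flat cover $\Spec B \to X$; then $\tau_{\leq n} \Spec B \to \tau_{\leq n} X$ is a flat cover whose Cech nerve is the truncation of the Cech nerve of $\Spec B \to X$. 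The claim then follows from the affine case by Cech descent for $\QCoh^+$, which is legitimate because pushforward along the affine immersions $i_n$ commutes with flat pullback on bounded-below sheaves by Proposition~\ref{prop:lower*ftdbasechange}.

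Given this stabilization, for $n \geq k$ the transition maps $\QCoh(\tau_{\leq n} X)^{\geq -k} \to \QCoh(\tau_{\leq n+1} X)^{\geq -k}$ are equivalences by two-out-of-three, so the filtered colimit $\IndCoh_2(X)^{\geq -k} \cong \colim_n \QCoh(\tau_{\leq n} X)^{\geq -k}$ provided by (\ref{eq:ICindGtstruct}) is computed as the eventual value $\QCoh(X)^{\geq -k}$, with $\Phi^{\geq -k}$ identified with this stabilization. Taking the union over $k$ shows $\Phi^+$ is an equivalence, so $\Phi$ is as well. That the resulting equivalence $\IndCoh_1(X) \cong \IndCoh_2(X)$ identifies the two t-structures is then automatic: both correspond under Proposition~\ref{prop:acplcplequiv} to the standard t-structure on their common left completion $\QCoh(X)$, the comparison having been built from t-exact functors at every stage.
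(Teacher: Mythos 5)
There is a genuine gap: your ``key technical input'' is false. For fixed $n$, pushforward along $i_n: \tau_{\leq n}X \to X$ does \emph{not} restrict to an equivalence $\QCoh(\tau_{\leq n}X)^{\geq -k} \congto \QCoh(X)^{\geq -k}$, because the window $\geq -k$ is cohomologically unbounded above, and restriction of scalars along $A \to \tau_{\leq n}A$ fails to be fully faithful (or essentially surjective) on such half-infinite windows. Concretely, let $\kk$ be a field of characteristic zero and $A$ the free connective $\kk$-algebra on a generator $x$ in cohomological degree $-2$, so that $\tau_{\leq 1}A \cong \kk$. Take $n=1$, $k=0$: the objects $\kk$ and $\kk$ placed in cohomological degree $3$ both lie in $\Mod^{\geq 0}$, yet $\Ext^3_A(\kk,\kk) \neq 0$ (the Koszul class dual to $x$) while $\Ext^3_{\kk}(\kk,\kk) = 0$, so restriction $\Mod_{\kk}^{\geq 0} \to \Mod_A^{\geq 0}$ is not full. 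Essential surjectivity fails too: the coconnective truncation of a shift of $A$ with cohomology in degrees $2$ and $0$ carries a nontrivial action of $x$, hence is not restricted from $\kk$. The truncation results you appeal to in \cite[Sec. 7.1.3]{LurHA} only yield equivalences on windows of \emph{bounded width}: $\QCoh(\tau_{\leq n}X)^{[a,b]} \cong \QCoh(X)^{[a,b]}$ for $n \geq b-a$. Consequently your two-out-of-three argument collapses: the transition functors in $\colim_n \QCoh(\tau_{\leq n}X)^{\geq -k}$ never become equivalences at any finite stage, and the colimit is not an ``eventual value.'' No Cech descent argument can rescue the stabilization claim, since it already fails affinely.

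The paper's proof repairs exactly this step while sharing the rest of your architecture. It shows the t-exact comparison functor $\colim_n \IndCoh(\tau_{\leq n}X) \to \wc{\QCoh(X)}$ restricts to an equivalence on each bounded window $[a,b]$, via the chain $\IndCoh(\tau_{\leq n}X)^{[a,b]} \cong \QCoh(\tau_{\leq n}X)^{[a,b]} \cong \QCoh(X)^{[a,b]}$ valid for all $n \geq b-a$, and then invokes \emph{right completeness} of both t-structures --- since the functor commutes with truncations, $\catC^{\geq a} \cong \lim_b \catC^{[a,b]}$ lets one pass from all bounded windows to an equivalence on $\catC^{\geq a}$ for every $a$. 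This is the statement your stabilization was meant to deliver, and it is where the mapping spaces genuinely stabilize only range by range. From there your endgame coincides with the paper's: equivalence on all coconnective parts induces an equivalence of left completions, and Proposition \ref{prop:acplcplequiv} together with left anticompleteness of both sides (your observation via Lemma \ref{lem:tfiltered} and Proposition \ref{prop:IndCohindGStktstructure} is correct) upgrades this to an equivalence, with the t-structure identification automatic from t-exactness. So your construction of $\Phi$ via the affine immersions $i_n$, the reduction to bounded-below parts, and the anticompleteness conclusion are all sound; the one step that must be replaced is the half-infinite stabilization, substituting the bounded-window equivalence plus right completeness.
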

\begin{proof}
Temporarily denote the two categories by $\IndCoh_{geom}(X)$ and $\IndCoh_{ind}(X)$. For each~$n$ the morphism $i_n: \tau_{\leq n} X \to X$ is affine, hence yields a t-exact functor $i_{n*}: \IndCoh(\tau_{\leq n} X) \to \IndCoh_{geom}(X)$. By Propositions \ref{prop:indcohonindgstks} and \ref{prop:IndCohindGStktstructure} it suffices to show the induced t-exact functor 
\begin{equation}\label{eq:indconsistent}
\IndCoh_{ind}(X) \cong \colim \IndCoh(\tau_{\leq n} X) \to \IndCoh_{geom}(X)
\end{equation}
is an equivalence. Now for all $a \leq b$ (\ref{eq:indconsistent}) restricts to an equivalence $\IndCoh_{ind}(X)^{[a,b]} \cong  \IndCoh_{geom}(X)^{[a,b]},$ since for all $n \geq b - a$ we have
$$ \IndCoh(\tau_{\leq n} X)^{[a,b]} \cong \QCoh(\tau_{\leq n} X)^{[a,b]} \cong \QCoh( X)^{[a,b]} \cong \IndCoh(X)_{geom}^{[a,b]}. $$
By right completeness of the two t-structures it follows that (\ref{eq:indconsistent}) restricts to an equivalence $\IndCoh_{ind}(X)^{\geq a} \cong \IndCoh_{geom}(X)^{\geq a}$ for any $a$. But then (\ref{eq:indconsistent}) induces an equivalence of left completions, hence by Proposition~\ref{prop:acplcplequiv} is itself an equivalence since its source and target are left anticomplete. 
\end{proof}

\subsection{Relation to coherent sheaves}\label{sec:cohrelation} We now turn to the relationship between $\Coh(X)$ and $\IndCoh(X)$ when $X$ is reasonable. 

\begin{Proposition}\label{prop:CohsubcatofIndCoh}
	For any reasonable ind-geometric stack $X$ there is a fully faithful functor $\Coh(X) \into \IndCoh(X)$. It is induced from a canonical natural transformation between the functors $\Corr(\indGStkkreas)_{prop;ftd} \to \Cathatinfty$ obtained from (\ref{eq:cohcorrindgstkfunctor}) and (\ref{eq:IndCohindGStk}). In particular, we have a diagram 
	\begin{equation*}
		\begin{tikzpicture}
			[baseline=(current  bounding  box.center),thick,>=\arrtip]
			\node (a) at (0,0) {$\Coh(X')$};
			\node (b) at (3.2,0) {$\IndCoh(X')$};
			\node (c) at (0,-1.5) {$\Coh(X)$};
			\node (d) at (3.2,-1.5) {$\IndCoh(X),$};
			\draw[right hook->] (a) to node[above] {$ $} (b);
			\draw[->] (b) to node[right] {$i_* $} (d);
			\draw[->] (a) to node[left] {$i_* $}(c);
			\draw[right hook->] (c) to node[above] {$ $} (d);
		\end{tikzpicture}
	\end{equation*}
	for any reasonable geometric substack $i: X' \to X$. 
\end{Proposition}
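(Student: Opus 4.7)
The plan is to construct $\iota\colon \Coh \to \IndCoh$ in two stages, first on the subcategory of truncated geometric stacks and then by Kan extension. On a truncated geometric stack $X$, $\Coh(X)$ is by definition contained in $\QCoh(X)^+$, while Definition \ref{def:IndCohononeGStk} supplies an equivalence $\Psi_X\colon \IndCoh(X)^+ \xrightarrow{\sim} \QCoh(X)^+$. Inverting $\Psi_X$ on the bounded below part yields a fully faithful functor $\iota_X\colon \Coh(X) \hookrightarrow \IndCoh(X)$ factoring through $\IndCoh(X)^+$. I would package this into a natural transformation between the restrictions of (\ref{eq:CohCorrpropcoh2}) and (\ref{eq:boundedIndCoh}) to $\Corr(\GStkkplus)_{prop,ftd}$. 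Naturality here follows because both $f_*$ for proper almost finitely presented $f$ and $h^*$ for finite Tor-dimension $h$ are bounded functorialities that commute with $\Psi$ and preserve coherence (Propositions \ref{prop:ftdprops} and \ref{prop:propprops}); concretely, $\iota$ factors as $\Coh \hookrightarrow \QCoh^+ \cong \IndCoh^+ \hookrightarrow \IndCoh$, where each piece is visibly natural on $\Corr(\GStkkplus)_{prop,ftd}$.

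The second stage is to left Kan extend $\iota$ along $\Corr(\GStkkplus)_{prop,ftd} \subset \Corr(\indGStkkreas)_{prop,ftd}$. On the source this reproduces $\Coh$ by Definition \ref{def:cohonindgstks}. On the target, one must identify the Kan extension with the restriction of (\ref{eq:IndCohindGStk}) to $\Corr(\indGStkkreas)_{prop,ftd}$; this identification proceeds via Proposition \ref{prop:corrKanext}, which reduces the comparison to the one-sided (non-correspondence) version along $\GStkkplus \subset \indGStkkreas$, where it follows from Proposition \ref{prop:indcohonindgstks} asserting that $\IndCoh$ is continuous for filtered colimits along ind-closed immersions. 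The naturality square for a reasonable geometric substack $i\colon X' \to X$ in the statement is then the instance of the extended natural transformation on the correspondence $X' \xleftarrow{\id} X' \xrightarrow{i} X$.

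The remaining task, and the main obstacle, is verifying that the component $\iota_X$ remains fully faithful for a general reasonable ind-geometric $X$. Fix a reasonable presentation $X \cong \colim X_\al$. By Proposition \ref{prop:cohonindgstks}, $\Coh(X) \cong \colim \Coh(X_\al)$ as a filtered colimit in $\Catinfty$, so for $\cF, \cG \in \Coh(X)$ we may arrange (using Proposition \ref{prop:factorthroughgeometric}) $\cF \cong i_{\al*}\cF_\al$ and $\cG \cong i_{\al*}\cG_\al$ with $\cF_\al, \cG_\al \in \Coh(X_\al)$, and the hom space becomes $\colim_{\be \geq \al}\Hom_{\Coh(X_\be)}(i_{\al\be*}\cF_\al, i_{\al\be*}\cG_\al)$. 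The matching $\IndCoh$ hom space must be extracted from the $\PrL$-colimit presentation $\IndCoh(X) \cong \colim \IndCoh(X_\al)$ (Proposition \ref{prop:indcohonindgstks}); since $\iota_X(\cF), \iota_X(\cG) \in \IndCoh(X)^+$, I would confine the computation to the bounded below part using the colimit description $\IndCoh(X)^{\geq n} \cong \colim \IndCoh(X_\al)^{\geq n}$ in $\PrL$ from Proposition \ref{prop:IndCohindGStktstructure}, invoke right completeness to reduce to a fixed bounded below cut, and then use the pointwise equivalences $\IndCoh(X_\be)^+ \cong \QCoh(X_\be)^+$ together with $\iota_{X_\be}$ from Stage 1 to match the filtered colimit hom-space computations. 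The technical heart is precisely this reconciliation of a $\Catinfty$-colimit with a $\PrL$-colimit; the bounded below confinement is essential, since outside $\IndCoh(X)^+$ the $\PrL$-colimit is otherwise too wild to admit a direct comparison with $\Coh(X)$ as a $\Catinfty$-colimit.
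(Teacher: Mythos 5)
Your proposal is correct and follows essentially the same route as the paper: the transformation is built on $\Corr(\GStkkplus)_{prop,ftd}$ from the equivalence $\IndCoh(-)^+ \cong \QCoh(-)^+$, propagated to $\Corr(\indGStkkreas)_{prop,ftd}$ by left Kan extension (the paper in fact needs only that $\Coh$ is a left Kan extension, obtaining the transformation and the square directly from the adjunction of \cite[Prop.\ 4.3.2.17]{LurHTT}, so your identification of the Kan extension of the restricted $\IndCoh$ is a harmless but avoidable extra step), and full faithfulness is verified by the same filtered-colimit mapping-space computation. Your restriction to $\IndCoh(X)^{\geq n} \cong \colim \IndCoh(X_\al)^{\geq n}$, where coherent objects are genuinely compact, is just a repackaging of the paper's argument, which works in $\IndCoh(X)$ directly using $i_\al^! i_{\al*} \cong \colim_{\be \geq \al} i_{\al\be}^! i_{\al\be*}$ together with almost compactness of $\cF_\al$ and left t-exactness of the $i_{\al\be}^! i_{\al\be*}$ --- this colimit formula for the right adjoints (\cite[Lem.\ 1.1.10]{GR17II}) is the one mechanism your sketch of the ``reconciliation'' step leaves implicit and should be made explicit.
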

\begin{proof}
	By construction the inclusion $\Coh(X) \subset \IndCoh(X)$ for geometric $X$ enhances to a natural transformation of functors $\Corr(\GStkkplus)_{prop;ftd} \to \Cathatinfty$. 
	The variant of (\ref{eq:cohcorrindgstkfunctor}) appearing in the statement is the left Kan extension of its restriction to  $\Corr(\GStkkplus)_{prop;ftd}$, since following the proof of Proposition \ref{prop:cohonindgstks} the restrictions of both to $\indGStkkprop$ are left Kan extended from $\GStkkplusprop$.
	The desired natural transformation and the pictured diagram then follow from the characteristic adjunction of left Kan extensions \cite[Prop. 4.3.2.17]{LurHTT}. 
	
	Given $\cG \in \Coh(X)$ we can write $\cG \cong i_{\al*}(\cG_\al)$ for some $\cG_\al \in \Coh(X_\al)$, increasing $\al$ if needed. Identifying $\cF$, $\cG$ with their images in $\IndCoh(X)$ we then have 
	\begin{align*}
		\Map_{IC(X)}(\cF, \cG) &\cong \Map_{IC(X_\al)}(\cF_\al, i_\al^! i_{\al*}(\cG_\al))\\
		&\cong \Map_{IC(X_\al)}(\cF_\al, \colim_{\be \geq \al} i_{\al\be}^! i_{\al\be*}(\cG_\al))\\
		&\cong \colim_{\be \geq \al} \Map_{IC(X_\al)}(\cF_\al,  i_{\al\be}^! i_{\al\be*}(\cG_\al))\\
		&\cong \colim_{\be \geq \al} \Map_{IC(X_\be)}(i_{\al\be*}(\cF_\al),   i_{\al\be*}(\cG_\al)).
	\end{align*} 
	Here the second isomorphism follows from Proposition \ref{prop:indcohonindgstks} and \cite[Lemma 1.1.10]{GR17II}, and the third follows since $\cF_\al$ and $\cG_\al$ are coherent and each $i_{\al\be}^! i_{\al\be*}$ is left t-exact. But since $\Coh(X_\be)$ is a full subcategory of $\IndCoh(X_\be)$ for all $\be \geq \al$, the last expression is equivalent to $\Map_{\Coh(X)}(\cF, \cG)$
	\cite[Lem. 0.2.1]{Roz}. 
\end{proof}

Recall from \cite[Prop. 9.1.5.1]{LurSAG} that when $X$ is geometric, $\Coh(X)$ can characterized as the full subcategory of bounded, almost compact objects in $\QCoh(X)$ (i.e. of $\cF \in \QCoh(X)^+$ such that $\tau^{\geq n}\cF$ is compact in $\QCoh(X)^{\geq n}$ for all $n$). The equivalence $\IndCoh(X)^+ \cong \QCoh(X)^+$ thus also identifies $\Coh(X)$ with the full subcategory of bounded, almost compact objects in $\IndCoh(X)$. We will see in Proposition \ref{prop:cohisacinindcoh} that, when $X$ is reasonable, the image of $\Coh(X)$ in $\IndCoh(X)$ still consists of bounded, almost compact objects. When $X$ is coherent, the two categories in fact determine one another. 

\begin{Proposition}\label{prop:IndCohisIndofCoh}
	If $X$ is a coherent ind-geometric stack then the canonical functor $\Ind(\Coh(X)) \to \IndCoh(X)$ is an equivalence, and induces equivalences $\Ind(\Coh(X)^{\leq 0}) \cong \IndCoh(X)^{\leq 0}$, $\Ind(\Coh(X)^{\geq 0}) \cong \IndCoh(X)^{\geq 0}$. 
\end{Proposition}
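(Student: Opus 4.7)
My plan is to reduce the claim to the case of a coherent geometric stack, and then to exhibit $\Coh(X)$ as a set of compact generators of $\IndCoh(X)$, with parallel statements for the bounded halves of the t-structure. For the reduction, fix a coherent presentation $X \cong \colim_\al X_\al$ (available by hypothesis). By \eqref{eq:cohviaindgeopres} and Proposition~\ref{prop:indcohonindgstks}, $\Coh(X) \cong \colim_\al \Coh(X_\al)$ in $\Catinfty$ and $\IndCoh(X) \cong \colim_\al \IndCoh(X_\al)$ in $\PrL$; since $\Ind: \Catinfty \to \PrL$ is a left adjoint, $\Ind(\Coh(X)) \cong \colim_\al \Ind(\Coh(X_\al))$ in $\PrL$. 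The transition maps $i_{\al\be}$ are closed immersions, hence affine by Proposition~\ref{prop:immisaffforgeo}, so each $i_{\al\be*}$ is t-exact; combined with the t-structure statement in Proposition~\ref{prop:IndCohindGStktstructure}, this yields analogous colimit descriptions of $\Coh(X)^{\leq 0}$, $\Coh(X)^{\geq 0}$, $\IndCoh(X)^{\leq 0}$, and $\IndCoh(X)^{\geq 0}$. It thus suffices to treat the case that $X$ is coherent geometric.

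Assume now that $X$ is coherent geometric. The central lemma is that $\Coh(X)^{\geq 0}$ is a set of compact generators of $\QCoh(X)^{\geq 0} \cong \IndCoh(X)^{\geq 0}$. Compactness holds because each $\cF \in \Coh(X)^{\geq 0}$ is bounded and almost perfect, hence compact in $\QCoh(X)^{\geq 0}$. For generation, suppose $\cG \in \QCoh(X)^{\geq 0}$ has $\Map(\cF, \cG) \cong 0$ for every $\cF \in \Coh(X)^{\geq 0}$. Taking $\cF$ in the heart $\Coh(X)^\heartsuit$ and its shifts $\cF[-n] \in \Coh(X)^{\geq n}$ for $n \geq 0$, we obtain $\Hom_{\QCoh(X)^\heartsuit}(\cF, H^n(\cG)) \cong 0$ for every $n \geq 0$. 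Compact generation of $\QCoh(X)^\heartsuit$ by $\Coh(X)^\heartsuit$, which follows from coherence of $X$, then forces $H^n(\cG) \cong 0$ for all $n$, so $\cG \cong 0$ by right completeness. Applying the same reasoning after shifts, each $\Coh(X)^{\geq n}$ is a set of compact generators of $\IndCoh(X)^{\geq n}$, so $\Ind(\Coh(X)^{\geq 0}) \cong \IndCoh(X)^{\geq 0}$.

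To extend to the full and bounded-above cases, I check that $\Coh(X)$ is a set of compact generators of $\IndCoh(X)$. For compactness, each $\cF \in \Coh(X)$ lies in some $\IndCoh(X)^{\geq -a}$; for a filtered colimit $\cG \cong \colim \cG_\al$ in $\IndCoh(X)$, the t-compatibility of filtered colimits and the compactness already established on $\IndCoh(X)^{\geq -a}$ give $\Map(\cF, \cG) \cong \Map(\cF, \tau^{\geq -a}\cG) \cong \Map(\cF, \colim \tau^{\geq -a}\cG_\al) \cong \colim \Map(\cF, \cG_\al)$. For generation, if $\Map(\cF, \cG) \cong 0$ for all $\cF \in \Coh(X)$, then taking $\cF \in \Coh(X)^{\geq n}$ gives $\Map(\cF, \tau^{\geq n}\cG) \cong 0$, so $\tau^{\geq n}\cG \cong 0$ by the previous paragraph; right completeness of $\IndCoh(X)$ then forces $\cG \cong 0$. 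Thus $\Ind(\Coh(X)) \to \IndCoh(X)$ is an equivalence, and $\Ind(\Coh(X)^{\leq 0}) \cong \IndCoh(X)^{\leq 0}$ follows by the analogous argument inside $\IndCoh(X)^{\leq 0}$, using the shifts $\cF[m]$ with $m \geq 0$ of objects of $\Coh(X)^\heartsuit$ to successively kill the nonpositive cohomology sheaves of any orthogonal $\cG$. The main subtlety I anticipate is justifying that right completeness of $\IndCoh(X)$ is strong enough to deduce vanishing of $\cG$ in this anticomplete (rather than left complete) category, a point that should follow from the colimit presentation $\cG \cong \colim \tau^{\geq -n}\cG$ that right completeness provides.
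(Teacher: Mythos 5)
Your reduction to the ind-geometric case is essentially the paper's (which combines Propositions \ref{prop:cohonindgstks}, \ref{prop:indcohonindgstks}, and \ref{prop:CohsubcatofIndCoh} with \cite[Lem. 7.3.5.11]{LurHA}), though your justification ``$\Ind: \Catinfty \to \PrL$ is a left adjoint'' is false -- the free cocompletion under \emph{all} colimits is presheaves, not $\Ind$ -- and you genuinely need the cited lemma, which applies because each $\Coh(X_\al)$ is stable and idempotent complete (Proposition \ref{prop:cohidcomp}). Your coconnective step is also repairable: $\Map(\cF[-n],\cG) \cong 0$ does not directly yield $\Hom(\cF, H^n(\cG)) \cong 0$ (there are obstruction terms of the form $\Ext^j(\cF, H^{n-j}(\cG))$), but an induction killing $H^0(\cG), H^1(\cG), \dots$ from the bottom works since $\cG$ is bounded below. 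The paper instead handles the entire geometric case by quoting \cite[Thm. C.6.7.1]{LurSAG}, and the reason it must do so is exactly where your argument breaks: the bootstrap from the coconnective case to all of $\IndCoh(X)$.

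That bootstrap has two fatal errors. First, you use t-structure orthogonality in the wrong direction: maps from a coconnective object into a connective one need not vanish (these are higher Ext groups), so neither $\Map(\cF,\cG) \cong \Map(\cF, \tau^{\geq -a}\cG)$ (your compactness step) nor the implication $\Map(\cF,\cG) \cong 0 \Rightarrow \Map(\cF, \tau^{\geq n}\cG) \cong 0$ (your generation step) holds. Concretely, over $A = k[x]/x^2$ one has $\pi_0\Map(k, k[N]) \cong \Ext^N_A(k,k) \neq 0$ while $\tau^{\geq -a}(k[N]) \cong 0$ for $N > a$. Note that if your truncation argument were valid it would prove $\Coh(X)$ compact in $\IndCoh(X)$ for every \emph{reasonable} $X$, whereas Proposition \ref{prop:cohisacinindcoh} gives only almost compactness there; genuine compactness is precisely where coherence must enter, and no argument using coherence only through the heart can produce it. Second, in the anticomplete category $\IndCoh(X)$ the implication ``$\tau^{\geq n}\cG \cong 0$ for all $n$ implies $\cG \cong 0$'' is false: right completeness kills $\bigcap_n \IndCoh(X)^{\geq n}$, not $\bigcap_n \IndCoh(X)^{\leq -n}$, and the latter is nonzero. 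Indeed, by Remark \ref{rem:classical}, for classical $X$ the category $\IndCoh(X)$ is the homotopy category of injective complexes, and the unbounded acyclic complex $\cdots \xrightarrow{x} k[x]/x^2 \xrightarrow{x} k[x]/x^2 \xrightarrow{x} \cdots$ is a nonzero object all of whose coconnective truncations vanish (its image under $\Psi_X$ is zero). Your anticipated fix, ``$\cG \cong \colim \tau^{\geq -n}\cG$ by right completeness,'' is not what right completeness provides (that gives $\cG \cong \colim_n \tau^{\leq n}\cG$, useless here); it is essentially left completeness, which fails for $\IndCoh$ by design. Such objects are detected by maps from $\Coh(X)$ -- this is a Tate-cohomology-type computation -- but not by cohomology objects, so no argument passing only through $H^n(\cG)$ or $\tau^{\geq n}\cG$ can establish generation. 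The same gap infects your argument for $\Ind(\Coh(X)^{\leq 0}) \cong \IndCoh(X)^{\leq 0}$. This is the content that \cite[Thm. C.6.7.1]{LurSAG} supplies and that cannot be recovered by the elementary devissage you propose.
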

\begin{proof}
	For a coherent geometric stack this follows from \cite[Thm. C.6.7.1]{LurSAG}. The ind-geometric case then follows from Propositions \ref{prop:cohonindgstks}, \ref{prop:indcohonindgstks}, and \ref{prop:CohsubcatofIndCoh}, since ind-completion of idempotent-complete categories admitting finite colimits commutes with filtered colimits \cite[Lem. 7.3.5.11]{LurHA}. 
\end{proof}

\begin{Remark}
	In light of Proposition \ref{prop:IndCohisIndofCoh}, the notion of coherent ind-geometric stack is in a sense formally dual to the notion of perfect stack considered in \cite{BZFN10}.
\end{Remark}

\subsection{Adjunction of pushforward and pullback}

Suppose $f: X \to Y$ is both of finite Tor-dimension and ind-finite cohomological dimension. Then we have separately defined functors $f_{IC*}$ and $f^*_{IC}$, but the definition does not explicitly entail any direct relationship between them. Nonetheless, the two functors are adjoint in the expected way. 

\begin{Proposition}\label{prop:up*low*adj}
	Let $X$ and $Y$ be ind-geometric stacks and $f: X \to Y$ a morphism which is both of finite Tor-dimension and of ind-finite cohomological dimension. Then $f_{IC*}$ is right adjoint to $f^*_{IC}$. 
\end{Proposition}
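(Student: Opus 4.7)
My plan is to first handle the case when $X, Y \in \GStkk$, where the adjunction will transport from the standard $\QCoh$ adjunction via the anticompletion equivalence. The standard adjunction $f_{QC}^* \dashv f_{QC*}$ consists of bounded, colimit-preserving functors: finite Tor-dimension of $f$ gives boundedness of $f_{QC}^*$, and finite cohomological dimension together with Proposition~\ref{prop:fcdprops} gives boundedness of $f_{QC*}$. The pair thus lies in $\PrStbcpl$. Applying the equivalence $\wc{(-)}: \PrStbcpl \congto \PrStbacpl$ of Proposition~\ref{prop:acplcplequiv}---a functor, hence a preserver of adjoint pairs---transports this to an adjunction in $\PrStbacpl$, which by Definition~\ref{def:IndCohGStk} is precisely $(f_{IC}^*, f_{IC*})$.

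\textbf{Reduction.} For the general case, I would fix an ind-geometric presentation $Y \cong \colim_\beta Y_\beta$ with closed immersions $j_\beta: Y_\beta \to Y$. Since $f$ is of finite Tor-dimension and hence geometric, each fiber product $X_\beta := X \times_Y Y_\beta$ is a geometric stack, with $j'_\beta: X_\beta \to X$ a closed immersion. By Proposition~\ref{prop:indclosedclosure} and Remark~\ref{rem:nontruncindgdef}, $X \cong \colim_\beta X_\beta$ is an ind-geometric presentation (with possibly non-truncated $X_\beta$). The restrictions $f_\beta: X_\beta \to Y_\beta$ are of finite Tor-dimension by Proposition~\ref{prop:indftdcompprop} and of finite cohomological dimension by Proposition~\ref{prop:indPequivconditions}, so the geometric case supplies adjunctions $f_\beta^* \dashv f_{\beta*}$.

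\textbf{Assembly and main obstacle.} By Proposition~\ref{prop:indcohonindgstks}, $\IndCoh(X) \cong \colim_\beta \IndCoh(X_\beta)$ and $\IndCoh(Y) \cong \colim_\beta \IndCoh(Y_\beta)$ in $\PrL$, with transition maps $j'_{\beta\beta'*}$ and $j_{\beta\beta'*}$. The functoriality of $\IndCoh$ on $\Corr(\indGStkk)_{fcd;ftd}$ yields canonical isomorphisms $f_{\beta'}^* \circ j_{\beta\beta'*} \cong j'_{\beta\beta'*} \circ f_\beta^*$ and $f_{\beta'*} \circ j'_{\beta\beta'*} \cong j_{\beta\beta'*} \circ f_{\beta*}$ from composition of correspondences, making $\{f_\beta^*\}$ and $\{f_{\beta*}\}$ into compatible families in $\PrL$ whose colimits are $f^*$ and $f_*$. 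The units $\eta_\beta$ and counits $\epsilon_\beta$ of the levelwise adjunctions are natural with respect to the transition maps and so assemble into a unit and counit for $(f^*, f_*)$, the triangle identities holding levelwise. I expect the assembly step to be the main obstacle: the $(f^*, f_*)$ adjunction is not encoded in the 2-morphism structure of $\Corr$ and must be imported via $\QCoh$ in the geometric case and then propagated, although the base change compatibilities needed for propagation come for free from the correspondence formalism.
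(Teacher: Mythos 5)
Your architecture matches the paper's proof exactly: settle the geometric case by transporting the $\QCoh$ adjunction across the anticompletion equivalence, then base change along a presentation of $Y$ (the $X_\beta$ are in fact automatically truncated, since finite Tor-dimension pullback of a truncated algebra is truncated, so Remark~\ref{rem:nontruncindgdef} is not needed) and assemble the levelwise adjunctions. However, there are two genuine gaps, and they sit precisely where the paper does its real work. First, the parenthetical ``a functor, hence a preserver of adjoint pairs'' is false as reasoned: $\PrStbcpl$ and $\PrStbacpl$ are $\infty$-categories whose mapping \emph{spaces} contain only invertible natural transformations, so the unit and counit of an adjunction --- non-invertible $2$-morphisms --- are simply not part of the structure that an equivalence of these categories transports. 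What rescues the conclusion is that the equivalence of Proposition~\ref{prop:acplcplequiv} is implemented by equivalences of full functor categories $\LFun^b(\wc{\catC},\wc{\catD}) \cong \LFun^b(\wh{\catC},\wh{\catD})$, which do see non-invertible transformations; but one must then verify that these equivalences are compatible with composition and whiskering, so that the images of $\wh{u}$ and $\wh{\ep}$ still satisfy the triangle identities. This is exactly the content and the bulk of the proof of the paper's Lemma~\ref{lem:acplcpladjunction}, which your one-line argument skips.

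Second, the assembly step as you describe it --- units and counits ``assemble,'' triangle identities ``holding levelwise'' --- is not a valid argument in the $\infty$-categorical setting, where one cannot check coherence conditions levelwise on a colimit of functors. The paper instead passes to right adjoints, so that $f_*^R \cong \lim f_{\al*}^R$ and $f^{*R} \cong \lim f_\al^{*R}$, and invokes the machinery of $\Fun^{\LAd}(\Delta^1,\Cathatinfty) \cong \Fun^{\RAd}((\Delta^1)^{\op},\Cathatinfty)$ and its closure under limits \cite[Cor.~4.7.5.18]{LurHA}. To apply that machinery one needs a point you assert comes ``for free from the correspondence formalism'' but which does not: the isomorphisms $f_\be^* i_{\al\be*} \cong i'_{\al\be*}f_\al^*$ supplied by $\Corr$-functoriality must be identified with the Beck--Chevalley transformations associated to the isomorphisms $i_{\al\be*}f_{\al*} \cong f_{\be*}i'_{\al\be*}$ via the levelwise adjunctions. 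This identification holds in $\QCoh$ by construction, but after anticompletion it must be re-verified, and it is supplied precisely by the unit/counit compatibility clause of Lemma~\ref{lem:acplcpladjunction} --- the same clause your geometric case omitted. Without it, the two filtered systems you build could fail to correspond under \cite[Cor.~4.7.5.18]{LurHA}, and the adjointness of the limit functors would not follow.
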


\begin{Lemma}\label{lem:acplcpladjunction}
	Let $\wh{\catC}, \wh{\catD}$ be the left completions of $\wc{\catC}, \wc{\catD} \in \PrStbacpl$. Let $\wc{F}: \wc{\catC} \to \wc{\catD}$ and $\wc{G}: \wc{\catD} \to \wc{\catC}$ be bounded colimit-preserving functors, and let $\wh{F}: \wh{\catC} \to \wh{\catD}$ and $\wh{G}: \wh{\catD} \to \wh{\catC}$ be their images under the equivalences $\LFun^b(\wc{\catC}, \wc{\catD}) \cong \LFun^b(\wh{\catC}, \wh{\catD})$ and $\LFun^b(\wc{\catD}, \wc{\catC}) \cong \LFun^b(\wh{\catD}, \wh{\catC})$. Then $\wc{G}$ is right adjoint to $\wc{F}$ if and only if $\wh{G}$ is right adjoint to $\wh{F}$. If this is the case, the equivalences $\LFun^b(\wc{\catC}, \wc{\catC}) \cong \LFun^b(\wh{\catC}, \wh{\catC})$ and $\LFun^b(\wc{\catD}, \wc{\catD}) \cong \LFun^b(\wh{\catD}, \wh{\catD})$ together identify pairs of a compatible unit and counit for the two adjunctions. 
\end{Lemma}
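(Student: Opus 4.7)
The plan is to deduce the lemma from the observation that the equivalence of Proposition~\ref{prop:acplcplequiv}, though stated at the level of $(\infty,1)$-categories, in fact upgrades to an equivalence of the $\infty$-categorical enrichments given by $\LFun^b$, and that adjunction data is formally preserved by any such enriched equivalence. Concretely, I would first argue that the equivalences
$$\LFun^b(\wc{\catC},\wc{\catD})\;\cong\;\LFun^b(\wh{\catC},\wh{\catD})$$
used in the statement arise as equivalences of $\infty$-categories, not merely of underlying mapping spaces. This is because the universal properties invoked in the proof of Proposition~\ref{prop:acplcplequiv} (namely \cite[Prop.~C.3.1.1, Prop.~C.3.6.3, Def.~C.5.5.4]{LurSAG}) are stated and proved at the level of functor $\infty$-categories, and passing to the subcategories of bounded right t-exact functors preserves this enrichment.

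Second, I would observe that the resulting equivalence sends identities to identities and respects composition, since it is the restriction to mapping objects of a functorial equivalence of $\infty$-categories. In particular, the composites $\wc G \wc F$ and $\wc F \wc G$ correspond under these equivalences to $\wh G \wh F$ and $\wh F \wh G$, and the mapping spaces
$$\Map(\id_{\wc{\catC}},\,\wc G\wc F), \qquad \Map(\wc F\wc G,\,\id_{\wc{\catD}})$$
are identified with $\Map(\id_{\wh{\catC}},\,\wh G\wh F)$ and $\Map(\wh F\wh G,\,\id_{\wh{\catD}})$ respectively. Thus candidate units and counits on one side transport canonically to the other.

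Finally, an adjunction in an $\infty$-category is characterized by a unit and counit satisfying the two triangle identities (or equivalently by a unit satisfying a representability condition), see \cite[Prop.~5.2.2.8]{LurHTT}. Since all of this data --- morphisms in $\LFun^b$ between parallel functors, and equations among their pastings --- is preserved by an equivalence of $\infty$-categories that respects composition and identities, $\wc G$ is right adjoint to $\wc F$ if and only if $\wh G$ is right adjoint to $\wh F$. Moreover the correspondence between units and counits asserted in the last sentence of the lemma is immediate from the same identification of mapping spaces.

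The main delicate point is the first step: upgrading Proposition~\ref{prop:acplcplequiv} from an equivalence of underlying $(\infty,1)$-categories to an equivalence at the level of $\LFun^b$. Once this is done, the transport of adjointness is a formal consequence of the way adjunctions are expressed in terms of units, counits, and triangle identities. Alternatively, one could characterize $\wc G(Y)$ as representing the functor $\Map_{\wc{\catD}}(\wc F(-),Y)$ and transport representability directly across the equivalence; the bookkeeping is slightly longer but avoids any explicit appeal to triangle identities.
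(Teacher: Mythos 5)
Your overall strategy is the same as the paper's: the paper likewise transports the adjunction across the equivalences $\LFun^b(\wc{\catC},\wc{\catD}) \cong \LFun^b(\wh{\catC},\wh{\catD})$ by expressing it as a unit and counit satisfying the triangle identities (in the formulation of \cite[Def. 2.1.1]{RV22}, which is equivalent to \cite[Def. 5.2.2.1]{LurHTT} by \cite[Prop. F.5.6]{RV22}) and checking that this data corresponds. The difference lies in how the delicate point you yourself flag is handled. You propose to upgrade Proposition~\ref{prop:acplcplequiv} to an equivalence of enriched ($(\infty,2)$-categorical) structures and then invoke formal preservation of adjunctions. But the results of \cite{LurSAG} cited there only produce, for each pair of objects, an equivalence of functor categories; assembling these into a coherently composition-compatible equivalence of enrichments requires coherence data that the cited universal properties do not hand you, and would be more work than the lemma itself. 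The paper instead verifies by hand only the finitely many compatibilities actually needed: since $\wc{F}$ corresponds to $\wh{F}$ precisely when $\Psi_{\catD} \wc{F} \cong \wh{F}\Psi_{\catC}$ (the equivalence being a zig-zag of pre- and post-composition with the completion functors), one checks directly that $\wc{F}\wc{G}\wc{F}$ corresponds to $\wh{F}\wh{G}\wh{F}$, and a commuting square of functor categories shows that the whiskerings $\id_{\wc{F}} \cdot \wc{u}$ and $\wc{\ep} \cdot \id_{\wc{F}}$ correspond to $\id_{\wh{F}} \cdot \wh{u}$ and $\wh{\ep} \cdot \id_{\wh{F}}$. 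Your sketch becomes a complete proof if you replace the global enrichment claim with these concrete verifications, which follow from exactly the observation you make.

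One genuine caution about your proposed alternative at the end: there is no equivalence between $\wc{\catC}$ and $\wh{\catC}$ themselves, only the restriction $\wc{\catC}^{\geq 0} \congto \wh{\catC}^{\geq 0}$, so mapping spaces out of general objects do not agree and representability of $\Map_{\wc{\catD}}(\wc{F}(-),Y)$ cannot be transported pointwise across anything. The equivalences in play live at the level of functor categories, not of the underlying categories, which is precisely why the unit/counit/triangle-identity formulation is the right (and essentially forced) vehicle here.
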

\begin{proof}
	We consider the if direction, the other being symmetric. Recall that $\wh{G}$ being right adjoint to $\wh{F}$ is equivalent to the existence of unit and counit transformations $\wh{u}: \id_{\whD} \to \wh{G} \wh{F}$, $\wh{\ep}: \wh{F} \wh{G} \to \id_{\whC}$ and diagrams 
	\begin{equation*}
		\begin{tikzpicture}
			[baseline=(current  bounding  box.center),thick,>=\arrtip]
			\newcommand*{\ha}{1.5};
			\newcommand*{\va}{1.5};
			
			\node [matrix] at (0,0) {
				\node (ba) at (0,0) {$\wh{F} $};
				\node (ab) at (\ha,\va) {$\wh{F} \wh{G} \wh{F}$};
				\node (bc) at (\ha+\ha,0) {$\wh{F}$};
				\draw[->] (ba) to node[above left] {$\id_{\wh{F}} \cdot \wh{u} $} (ab);
				\draw[->] (ab) to node[above right] {$\wh{\ep} \cdot \id_{\wh{F}} $} (bc);
				\draw[->] (ba) to node[below] {$\id_{\wh{F}} $} (bc);\\
			};
			
			\node [matrix] at (6,0) {
				\node (ba) at (0,0) {$\wh{G} $};
				\node (ab) at (\ha,\va) {$\wh{G} \wh{F} \wh{G}$};
				\node (bc) at (\ha+\ha,0) {$\wh{G}$.};
				\draw[->] (ba) to node[above left] {$\wh{u} \cdot \id_{\wh{G}} $} (ab);
				\draw[->] (ab) to node[above right] {$  \id_{\wh{G}} \cdot \wh{\ep} $} (bc);
				\draw[->] (ba) to node[below] {$\id_{\wh{G}} $} (bc);\\
			};
		\end{tikzpicture}
	\end{equation*} 
	This is a reformulation of \cite[Def. 2.1.1]{RV22}, which is equivalent to \cite[Def. 5.2.2.1]{LurHTT} by \cite[Prop. F.5.6]{RV22}. 
	
	By hypotheses $\wh{u}$ and $\wh{\ep}$ are morphisms in $\LFun^b(\whD,\whD)$ and $\LFun^b(\whC,\whC)$, and we write $\wc{u}$ and $\wc{\ep}$ for the corresponding morphisms in $\LFun^b(\wcD,\wcD)$ and $\LFun^b(\wcC,\wcC)$. The above diagrams are respectively in $\LFun^b(\whC,\whD)$ and $\LFun^b(\whD,\whC)$, and we claim the corresponding diagrams in $\LFun^b(\wcC,\wcD)$ and $\LFun^b(\wcD,\wcC)$ witness $\wc{u}$ and $\wc{\ep}$ as the unit and counit of an adjunction between $\wc{F}$ and $\wc{G}$. In other words, we claim the equivalence $\LFun^b(\whC,\whD) \cong \LFun^b(\wcC,\wcD)$ takes $\wh{F} \wh{G} \wh{F}$, $\id_{\wh{F}} \cdot \wh{u}$, and $\wh{\ep} \cdot \id_{\wh{F}} $ respectively to $\wc{F} \wc{G} \wc{F}$, $\id_{\wc{F}} \cdot \wc{u}$, and $\wc{\ep} \cdot \id_{\wc{F}} $, similarly for the right diagram.  
	
	Write $\Psi_\catC: \wcC \to \whC$ and $\Psi_\catD: \wcD \to \whD$ for the canonical functors. Then $\wc{F}$ is characterized in $\LFun^b(\wcC, \wcD)$ by the condition $\Psi_\catD \wc{F} \cong \wh{F} \Psi_\catC$, similarly for $\wc{G}$. It follows that $\Psi_\catD \wc{F} \wc{G} \wc{F} \cong \wh{F} \wh{G} \wh{F} \Psi_{\catC}$, which likewise characterizes $\wc{F} \wc{G} \wc{F}$ as the functor corresponding to $\wh{F} \wh{G} \wh{F}$. Now consider the following diagram, in which all horizontal arrows equivalences.
	\begin{equation*}
		\begin{tikzpicture}
			[baseline=(current  bounding  box.center),thick,>=\arrtip]
			\newcommand*{\ha}{4.0}; \newcommand*{\hb}{4.0};
			\newcommand*{\va}{-1.5};
			\node (aa) at (0,0) {$\LFun^b(\whC,\whC)$};
			\node (ab) at (\ha,0) {$\LFun^b(\wcC,\whC)$};
			\node (ac) at (\ha+\hb,0) {$\LFun^b(\wcC,\wcC)$};
			\node (ba) at (0,\va) {$\LFun^b(\whC,\whD)$};
			\node (bb) at (\ha,\va) {$\LFun^b(\wcC,\whD)$};
			\node (bc) at (\ha+\hb,\va) {$\LFun^b(\wcC,\wcD)$};
			\draw[->] (aa) to node[above] {$-  \Psi_\catC $} (ab);
			\draw[<-] (ab) to node[above] {$\Psi_\catC  - $} (ac);
			\draw[->] (ba) to node[above] {$-  \Psi_\catC $} (bb);
			\draw[<-] (bb) to node[above] {$\Psi_\catD - $} (bc);
			\draw[->] (aa) to node[left] {$\wh{F} - $} (ba);
			\draw[->] (ab) to node[right] {$\wh{F} - $} (bb);
			\draw[->] (ac) to node[right] {$\wc{F} - $} (bc);
		\end{tikzpicture}
	\end{equation*}
	Here the compositions around the left square are evidently isomorphic, while those around the right square are because of the isomorphism $\Psi_\catD \wc{F} \cong \wh{F} \Psi_\catC$. The morphism $\id_{\wh{F}} \cdot \wh{u}$ is the image of $\wh{u}$ under the left vertical map, while $\id_{\wc{F}} \cdot \wc{u}$ is the image of $\wc{u}$ under the right. But by definition $\wc{u}$ is the image of $\wh{u}$ under the overall top equivalence, hence $\id_{\wc{F}} \cdot \wc{u}$ is the image of $\id_{\wh{F}} \cdot \wh{u}$  under the overall bottom equivalence. The remaining conditions are checked the same way. 
\end{proof}

\begin{proof}[Proof of Proposition \ref{prop:up*low*adj}]
	We let subscripts be implicit in the proof, $f_*$ always meaning $f_{IC*}$, etc. If $X$ and $Y$ are truncated and geometric, the claim follows immediately from Lemma~\ref{lem:acplcpladjunction}. In general, let $Y \cong \colim Y_\al$ be an ind-geometric presentation and $f_\al: X_\al \to Y_\al$ the base change of $f$. We then have an ind-geometric presentation $X \cong \colim X_\al$ since $f$ is of finite Tor-dimension and filtered colimits in $\Stkkconv$ are left exact. Let $A$ denote the index category and $\Delta^1_{\al \be} \subset A$ the morphism associated to $\al \leq \be$. 
		
	By construction we have a functor $\Delta^1 \times A \to \PrL$ taking $\Delta^1 \times \Delta^1_{\al \be}$ to the diagram witnessing the isomorphism $i_{\al \be*}f_{\al*} \cong f_{\be *} i'_{\al \be *}$, and a similar functor $(\Delta^1)^{\op} \times A \to \PrL$ packaging the isomorphisms $f_{\be}^* i_{\al \be*} \cong  i'_{\al \be *} f_{\al}^*$. Passing to right adjoints, these are equivalent to the data of a functor $A^{\op} \to \Fun((\Delta^1)^\op, \Cathatinfty)$ taking $\al$ to $f_{\al*}^R$ and a functor $A^{\op} \to \Fun(\Delta^1, \Cathatinfty)$ taking $\al$ to $f_{\al}^{*R}$. 
	
	Note that the unit/counit compatibility of Lemma \ref{lem:acplcpladjunction} implies more precisely that the isomorphism $f_{\be}^* i_{\al \be*} \cong  i'_{\al \be *} f_{\al}^*$ is the Beck-Chevalley transformation associated to the isomorphism $i_{\al \be*}f_{\al*} \cong f_{\be *} i'_{\al \be *}$. In the notation of \cite[Def. 4.7.5.16]{LurHA}, the above functors thus take values in $\Fun^{\RAd}((\Delta^1)^\op, \Cathatinfty)$ and  $\Fun^{\LAd}(\Delta^1, \Cathatinfty)$, respectively, and correspond under the equivalence  $\Fun^{\RAd}((\Delta^1)^\op, \Cathatinfty) \cong \Fun^{\LAd}(\Delta^1, \Cathatinfty)$ of \cite[Cor. 4.7.5.18]{LurHA}. By the same result these subcategories are closed under limits in $\Fun((\Delta^1)^\op, \Cathatinfty)$ and $\Fun(\Delta^1, \Cathatinfty)$. But by Proposition \ref{prop:indcohonindgstks} and \cite[Cor. 5.1.2.3]{LurHTT} we have $f_{*}^R \cong \lim f_{\al*}^R$ and $f^{*R} \cong \lim f_{\al}^{*R}$, hence $f_{*}^R$ is right adjoint to $f^{*R}$, hence $f_{*}$ is right adjoint to $f^{*}$. 
\end{proof}

The definition of ind-coherent pushforward and $*$-pullback also defines base change isomorphisms for suitable Cartesian squares. These isomorphisms are compatible with the adjunction of Proposition \ref{prop:up*low*adj} in the following sense. 

\begin{Proposition}
	Let the following be a Cartesian diagram of ind-geometric stacks in which $f$ is both of finite Tor-dimension and of ind-finite cohomological dimension. 
	\begin{equation*}
		\begin{tikzpicture}
			[baseline=(current  bounding  box.center),thick,>=\arrtip]
			\node (a) at (0,0) {$X'$};
			\node (b) at (3,0) {$Y'$};
			\node (c) at (0,-1.5) {$X$};
			\node (d) at (3,-1.5) {$Y$};
			\draw[->] (a) to node[above] {$f' $} (b);
			\draw[->] (b) to node[right] {$h $} (d);
			\draw[->] (a) to node[left] {$h' $}(c);
			\draw[->] (c) to node[above] {$f $} (d);
		\end{tikzpicture}
	\end{equation*}
	If $h$ is of finite Tor-dimension (resp. of  ind-finite cohomological dimension), the isomorphism $f^* h_* \cong h'_* f'^*$ of functors $\IndCoh(Y') \to \IndCoh(X)$ is the Beck-Chevalley transformation associated to the isomorphism $ h'^* f^* \cong  f'^* h^*$ of functors $\IndCoh(Y) \to \IndCoh(X')$ (resp. the isomorphism $f_* h'_* \cong h_* f'_*$ of functors $\IndCoh(X') \to \IndCoh(Y)$). 
\end{Proposition}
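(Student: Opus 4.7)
The plan is to reduce to the case where $X, Y, X', Y'$ are truncated and geometric, transfer from $\IndCoh$ to $\QCoh$ via the anticompletion equivalence, and conclude from the construction of $\QCoh$ on correspondences together with bounded-below base change.

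In the truncated geometric case, $\IndCoh$ on $\Corr(\GStkkplus)_{fcd;ftd}$ is obtained from the analogous $\QCoh$-valued functor by postcomposition with the equivalence $\PrStbcpl \cong \PrStbacpl$ of Proposition \ref{prop:acplcplequiv} (Definition \ref{def:IndCohGStk}), and Lemma \ref{lem:acplcpladjunction} identifies adjunctions across this equivalence with compatible units and counits. Since Beck-Chevalley mates are defined entirely from units and counits, they transfer across this equivalence, and it suffices to prove the analogue for $\QCoh$. In case (b), the iso $f^* h_* \cong h'_* f'^*$ is a composition-of-correspondences iso in \eqref{eq:CorrQCohGStk}, which is obtained by the right-adjointable variant of Proposition \ref{prop:Corrextension} applied to the contravariant $\QCoh \colon \GStkk^{\op} \to \Cathatinfty$ with $\vert = fcd$ (using Proposition \ref{prop:fcdprops}); by construction this iso is the Beck-Chevalley mate of the pullback iso $h'^* f^* \cong f'^* h^*$ via the $h$-adjunctions, and this mate agrees with the mate of the pushforward iso $f_* h'_* \cong h_* f'_*$ via the same adjunctions by coherence of the two functorialities on the given Cartesian square. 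In case (a), the iso $f^* h_* \cong h'_* f'^*$ is defined to be the $h$-adjunction mate of the pullback iso (where $h^*, h'^*$ have right adjoints by the adjoint functor theorem, since they are morphisms in $\PrL$), and its being an iso follows from Proposition \ref{prop:lower*ftdbasechange} applied to the transposed Cartesian square, in which $f$ of finite Tor-dimension plays the role of the finite-Tor-dimension morphism in the statement of that proposition.

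To pass to the ind-geometric case, fix an ind-geometric presentation $Y \cong \colim Y_\al$ and obtain compatible presentations of $X, X', Y'$ by base change (in case (b), further refining the presentation of $Y'$). As in the proof of Proposition \ref{prop:up*low*adj}, the adjunctions and base-change isos on the ind-geometric stacks are realized as limits of the corresponding data on the truncated geometric pieces, taken in $\Fun^{\RAd}((\Delta^1)^{\op}, \Cathatinfty)$ and $\Fun^{\LAd}(\Delta^1, \Cathatinfty)$. Because the mate construction is defined pointwise from units and counits, it commutes with these limits, and the identification of the base-change iso with the specified mate descends from the geometric case.

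The main obstacle is this last coherence check: verifying that mate formation on squares of adjoints is natural with respect to limits in $\Fun^{\RAd}((\Delta^1)^{\op}, \Cathatinfty)$ and $\Fun^{\LAd}(\Delta^1, \Cathatinfty)$. This follows from the observation that mate formation is a natural operation on these ``square of adjoints'' categories in $\Cathatinfty$, and as such is preserved under limits of the ambient functor diagrams used to assemble the ind-geometric adjunctions out of their truncated geometric pieces.
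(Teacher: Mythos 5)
Your overall skeleton does match the paper's proof — the truncated geometric case via the unit/counit transfer of Lemma \ref{lem:acplcpladjunction}, and the ind-geometric case via limits of adjunction data in $\Fun^{\LAd}(\Delta^1, \Cathatinfty)$ and $\Fun^{\RAd}((\Delta^1)^{\op}, \Cathatinfty)$ as in the proof of Proposition \ref{prop:up*low*adj} — but your adjunction bookkeeping is crossed between the two cases, and this is a genuine gap rather than a slip. In case (b), $h$ is only assumed of ind-finite cohomological dimension, so $h^*_{IC}$ and hence the iso $h'^* f^* \cong f'^* h^*$ need not exist on $\IndCoh$ at all; this is exactly why the statement pairs case (b) with the pushforward iso $f_* h'_* \cong h_* f'_*$, and the mate producing $f^* h_* \to h'_* f'^*$ must be taken with respect to the $f$-adjunctions $f^* \dashv f_*$, $f'^* \dashv f'_*$ of Proposition \ref{prop:up*low*adj}. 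That choice is the entire content: those adjunctions are the nontrivially established data (the two functors are defined by separate Kan extensions), and the proposition verifies the correspondence-functor isomorphisms against \emph{them}. Conversely, in case (a) your transfer-to-$\QCoh$ step breaks down for the $h$-adjunction: Lemma \ref{lem:acplcpladjunction} only moves adjunctions both of whose members are bounded and colimit-preserving, whereas $h_{IC*}$ is a mere abstract right adjoint — neither bounded nor continuous when $h$ has infinite cohomological dimension — so it does not correspond to $h_{QC*}$ under the anticompletion dictionary. Relatedly, your appeal to Proposition \ref{prop:lower*ftdbasechange} for the transposed square yields only an objectwise isomorphism on $\QCoh(Y')^+$; that neither identifies the two specified transformations as the proposition requires, nor extends to unbounded objects, since there is no continuity of $h_{IC*}$ to invoke.

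The ind-geometric step has a second gap. You propose to obtain compatible presentations of $X$, $X'$, $Y'$ by base change from one presentation of $Y$, but in case (b) $h$ is not of finite Tor-dimension, so $Y' \times_Y Y_\al$ need not be a truncated geometric stack, and "further refining the presentation of $Y'$" does not repair this. The paper instead bootstraps in stages: the case where $h$ is the inclusion of an ind-geometric substack is extracted from the proof of Proposition \ref{prop:up*low*adj}; then $X$, $Y$ truncated geometric with $Y'$ ind-presented, by a limit over the presentation of $Y'$; then general $Y$, where even after invoking closure of $\Fun^{\LAd}$ and $\Fun^{\RAd}$ under limits and the equivalence of \cite[Cor. 4.7.5.18]{LurHA} (which is the rigorous form of your "mate formation commutes with limits"), one must still verify the hypothesis for the transition maps $f'^{*R}_\be \to f'^{*R}_\al$ and $f'^{R}_{\be*} \to f'^{R}_{\al*}$; the paper does this with a diagram of isomorphisms in which the Beck-Chevalley property propagates to the remaining arrow because all other arrows already carry it. Your closing paragraph correctly identifies limit-compatibility of mates as an obstacle, but that part is supplied wholesale by \cite[Cor. 4.7.5.18]{LurHA}; the real work is producing the per-term inputs to the limit argument, which your proposal leaves unaddressed.
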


\begin{proof}
	Consider the case with $h$ of ind-finite cohomological dimension, the finite Tor-dimension case following by a variation of the same argument. If $X$, $Y$, and $Y'$ are truncated and geometric the claim follows immediately from Lemma \ref{lem:acplcpladjunction}. If $h$ is the immersion of an ind-geometric substack the claim was established during the proof of Proposition \ref{prop:up*low*adj}. 
	
	We pass to right adjoints and identify the isomorphisms $f^{*R} h_*^R \cong h'^R_* f'^{*R}$ and $f_*^R h'^R_* \cong h^R_* f'^R_*$ respectively with a morphism $f^{*R} \to f'^{*R}$ in $\Fun(\Delta^1, \Cathatinfty)$ and a morphism $f_*^{R} \to f'^{R}_*$ in $\Fun((\Delta^1)^{\op}, \Cathatinfty)$, performing such identifications without comment in the rest of the proof. In the notation of \cite[Def. 4.7.5.16]{LurHA}, we want to show these belong to $\Fun^{\LAd}(\Delta^1, \Cathatinfty)$ and $\Fun^{\RAd}((\Delta^1)^{\op}, \Cathatinfty)$, respectively, and correspond to each other under the equivalence $\Fun^{\LAd}(\Delta^1, \Cathatinfty) \cong \Fun^{\RAd}((\Delta^1)^{\op}, \Cathatinfty)$ of \cite[Cor. 4.7.5.18]{LurHA}. 
	
	First suppose $X$ and $Y$ are truncated and geometric, let $Y' \cong \colim Y'_\al$ be an ind-geometric presentation, and write $f'_\al: X'_\al \to Y'_\al$ for the base change of $f'$. As in the proof of Proposition~\ref{prop:up*low*adj}, we have $f'^{*R} \cong \lim f'^{*R}_\al$ in both $\Fun^{\LAd}(\Delta^1, \Cathatinfty)$ and $\Fun(\Delta^1, \Cathatinfty)$, hence $f^{*R} \to f'^{*R}$ is in $\Fun^{\LAd}(\Delta^1, \Cathatinfty)$ since each $f^{*R} \to f'^{*R}_\al$ is. Similarly, $f_*^{R} \to f'^{R}_{*} \cong \lim f'^{R}_{\al*}$ is in $\Fun^{\RAd}((\Delta^1)^{\op}, \Cathatinfty)$ since each $f_*^{R} \to f'^{R}_{\al*}$ is, and it corresponds to $f^{*R} \to f'^{*R}$ under \cite[Cor. 4.7.5.18]{LurHA} since each $f_*^{R} \to f'^{R}_{\al*}$ corresponds to $f^{*R} \to f'^{*R}_\al$. 
	
Now let $Y \cong \colim Y_\al$ be an ind-geometric presentation. For any $\al$ we have a diagram
\begin{equation*}
	\begin{tikzpicture}[baseline=(current  bounding  box.center),thick,>=\arrtip]
			\newcommand*{\ha}{1.5}; \newcommand*{\hb}{1.5}; \newcommand*{\hc}{1.5};
\newcommand*{\va}{-.9}; \newcommand*{\vb}{-.9}; \newcommand*{\vc}{-.9}; 
		\node (ab) at (\ha,0) {$X'_\al$};
		\node (ad) at (\ha+\hb+\hc,0) {$Y'_\al$};
		\node (ba) at (0,\va) {$X'$};
		\node (bc) at (\ha+\hb,\va) {$Y'$};
		\node (cb) at (\ha,\va+\vb) {$X_\al$};
		\node (cd) at (\ha+\hb+\hc,\va+\vb) {$Y_\al$};
		\node (da) at (0,\va+\vb+\vc) {$X$};
		\node (dc) at (\ha+\hb,\va+\vb+\vc) {$Y$};
		\draw[->] (ab) to node[above] {$f'_\al $} (ad);
		\draw[->] (ab) to node[above left, pos=.25] {$j'_{\al} $} (ba);
		\draw[->] (ab) to node[right,pos=.2] {$h'_\al $} (cb);
		\draw[->] (ad) to node[below right] {$j_\al $} (bc);
		\draw[->] (ad) to node[right] {$h_\al $} (cd);
		\draw[->] (ba) to node[left] {$ $} (da);
		\draw[->] (cb) to node[above,pos=.25] {$f_\al $} (cd);
		\draw[->] (cb) to node[above left, pos=.25] {$i'_\al $} (da);
		\draw[->] (cd) to node[below right] {$i_\al $} (dc);
		\draw[->] (da) to node[above,pos=.75] {$ $} (dc);
		
		\draw[-,line width=6pt,draw=white] (ba) to  (bc);
		\draw[->] (ba) to node[above,pos=.75] {$ $} (bc);
		\draw[-,line width=6pt,draw=white] (bc) to  (dc);
		\draw[->] (bc) to node[right,pos=.2] {$ $} (dc);
	\end{tikzpicture}
\end{equation*}
with Cartesian faces. We have already shown that the compositions $f^{*R} \to f^{*R}_\al \to f'^{*R}_\al$ and $f^{R}_* \to f^{R}_{\al*} \to f'^{R}_{\al*}$ belong to $\Fun^{\LAd}(\Delta^1, \Cathatinfty)$ and $\Fun^{\RAd}((\Delta^1)^{\op}, \Cathatinfty)$, and correspond under \cite[Cor. 4.7.5.18]{LurHA}. By closure of these subcategories under limits, and by Proposition \ref{prop:indcohonindgstks}, it suffices to show $f'^{*R}_\be \to f'^{*R}_\al$ and $f'^{R}_{\be*} \to f'^{R}_{\al*}$ belong to $\Fun^{\LAd}(\Delta^1, \Cathatinfty)$ and $\Fun^{\RAd}((\Delta^1)^{\op}, \Cathatinfty)$ and correspond under \cite[Cor. 4.7.5.18]{LurHA}. 

Consider the following diagram in $\Fun(\IndCoh(X_\be), \IndCoh(Y'_\al))$.
\begin{equation*}
	\begin{tikzpicture}
		[baseline=(current  bounding  box.center),thick,>=\arrtip]
		\newcommand*{\ha}{3.5}; \newcommand*{\hb}{3.5};
		\newcommand*{\va}{-1.5};
		\node (aa) at (0,0) {$ j^R_{\al \be *} h^R_{\be*} f^{*R}_\be $};
		\node (ab) at (\ha,0) {$  j^R_{\al \be *}  f'^{*R}_\be h'^R_{\be*}$};
		\node (ac) at (\ha+\hb,0) {$  f'^{*R}_\al   j'^R_{\al \be *} h'^R_{\be*} $};
		\node (ba) at (0,\va) {$ h_{\al*}^R   i^R_{\al \be *} f^{*R}_\be$};
		\node (bb) at (\ha,\va) {$  h_{\al*}^R  f^{*R}_\al i'^R_{\al \be *}$};
		\node (bc) at (\ha+\hb,\va) {$  f'^{*R}_\al    h'^R_{\al*} i'^R_{\al \be *}$};
		\draw[->] (aa) to node[above] {$\sim $} (ab);
		\draw[->] (ab) to node[above] {$\sim $} (ac);
		\draw[->] (ba) to node[above] {$\sim $} (bb);
		\draw[->] (bb) to node[above] {$\sim $} (bc);
		\draw[->] (aa) to node[below,rotate=90] {$\sim $} (ba);
		\draw[->] (ac) to node[below,rotate=90] {$\sim $} (bc);
	\end{tikzpicture} 
\end{equation*}
We have already shown $f'^{*R}_\al$ and $f'^{R}_{\al*}$, etc., are adjoint, and the claim at hand is equivalent to the top right arrow being the Beck-Chevalley transformation associated to the isomorphism $j'^R_{\al\be *} f'^R_{\be*} \cong f'^R_{\al*} j^R_{\al \be *}$. But this follows since we have shown the corresponding claim for the top left and bottom arrows, and since all arrows in the diagram are isomorphisms. 
\end{proof}

\section{Ind-proper $!$-pullback}\label{sec:upper!}

In this section we further study the functor $f^!: \IndCoh(Y) \to \IndCoh(X)$ associated to an ind-proper morphism $f: X \to Y$ of ind-geometric stacks. We first establish the almost continuity of $f^!$ in the almost ind-finitely presented case (Proposition \ref{prop:upper!almostcontind}). This is then used to extend the compatibility between pushforward and $!$-pullback to the ind-geometric setting (Proposition \ref{prop:up!low*geom}). 

\subsection{Almost continuity} 

We begin with the geometric case, where we have the following generalizations of \cite[Lem. 6.4.1.5, Prop. 6.4.1.4]{LurSAG}. Recall that $f^!$ being almost continuous means its restriction to $\QCoh(Y)^{\geq n}$ is continuous for all $n$. 

\begin{Proposition}\label{prop:upper!almostcont}
	If  $f: X \to Y$ is a proper, almost finitely presented morphism of geometric stacks, then $f^!: \QCoh(Y) \to \QCoh(X)$ is almost continuous. 
\end{Proposition}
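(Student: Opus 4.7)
The proof follows the template used for Proposition \ref{prop:lower*almostcont}: we first establish the affine case via a compact-generation argument, then reduce the general case to it by flat descent on $Y$.

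\textbf{Affine case.} Suppose $Y = \Spec A$. The definition of properness in \cite[Def. 5.1.2.1]{LurSAG} forces $X$ to be a quasi-compact, separated spectral algebraic space proper over $Y$, hence $\QCoh(X)$ is compactly generated by perfect complexes. Since $f$ has finite cohomological dimension by Proposition \ref{prop:propprops}, $f_*$ is continuous and admits a right adjoint $f^!$. Almost continuity of $f^!$ reduces by shifting to continuity of $f^!|_{\QCoh(Y)^{\geq 0}}$, whose left adjoint is $L := \tau^{\geq 0} \circ f_*$. By \cite[Prop. 5.5.7.2]{LurHTT}, this continuity is equivalent to $L$ preserving compact objects. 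Any perfect complex on $X$ is bounded and almost perfect, hence coherent; by the last claim of Proposition \ref{prop:propprops} (whose underlying input \cite[Thm. 5.6.0.2]{LurSAG} applies without the truncation hypothesis), $f_*$ sends coherent sheaves to coherent sheaves. Hence $L$ carries perfect complexes to connective coherent objects, which lie in the compact objects of $\QCoh(Y)^{\geq 0} \cong \Mod_A^{\geq 0}$.

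\textbf{Descent step.} For general $Y$, fix a flat affine cover $h: Y' = \Spec A \to Y$ and form the Cartesian square with base changes $f': X' \to Y'$ (proper and almost finitely presented by Propositions \ref{prop:afp2of3prop} and \ref{prop:propprops}) and $h': X' \to X$ (flat, hence of finite Tor-dimension). Given a filtered diagram $\{\cF_\al\}$ in $\QCoh(Y)^{\geq 0}$ with colimit $\cF$, the values $f^!(\cF_\al)$ are uniformly bounded below (since $f$ has finite cohomological dimension), so the comparison map $\phi: \colim f^!(\cF_\al) \to f^!(\cF)$ lies in $\QCoh(X)^+$. By flat descent, $\phi$ is an isomorphism iff $h'^*\phi$ is. Granting a base change isomorphism $h'^* f^! \cong f'^! h^*$ on $\QCoh(Y)^{\geq 0}$ and using continuity of $h^*$ and $h'^*$, this reduces to the isomorphism $\colim f'^!(h^*\cF_\al) \to f'^!(\colim h^*\cF_\al)$ which follows from the affine case applied to $f'$.

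\textbf{Main obstacle.} The central technical point is establishing the base change $h'^* f^! \cong f'^! h^*$ on bounded below objects. This does \emph{not} follow by formal adjunction from Proposition \ref{prop:lower*ftdbasechange}, since passing $h^* f_* \cong f'_* h'^*$ to right adjoints yields only $f^! h_* \cong h'_* f'^!$, which is the wrong direction. Instead one proceeds as in Lemma \ref{lem:lower*flatcoverbasechange}: resolve $\cF \in \QCoh(Y)^+$ via the Cech nerve of $h$, and combine the base change for $f_*$ with the affine case of the present proposition (established in the first step) to propagate the isomorphism through the cosimplicial resolution. Since this bootstrap uses only the affine case and not the general statement of Proposition \ref{prop:upper!almostcont}, no circularity arises in the logical order.
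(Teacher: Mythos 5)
Your proposal is correct, and its global architecture is the same as the paper's: reduce along a faithfully flat affine cover using conservativity of $h'^*$ and the uniform left-boundedness of the $f^!(\cF_\al)$, pass across the base-change isomorphism $h'^* f^! \cong f'^! h^*$ on bounded-below objects (Lemma \ref{lem:upper!flatcoverbasechange}), and finish with the affine case. The genuine difference is where the affine-level inputs come from. The paper imports them from \cite{LurSAG}: inside Lemma \ref{lem:upper!flatcoverbasechange} the affine-to-affine $!$-base change is \cite[Prop. 6.4.1.4]{LurSAG}, and the affine case of almost continuity is in substance \cite[Lem. 6.4.1.5]{LurSAG} (the paper's closing citation of Lemma \ref{lem:lower*almostcontaffine} is not literally sufficient, since that lemma concerns $f_*$; one also needs coherence-preservation as you invoke it). You prove these inputs directly, and your affine argument --- compact generation of $\QCoh(X)$ by perfects, $f^!|_{\QCoh(Y)^{\geq 0}}$ right adjoint to $\tau^{\geq 0} \circ f_*$, then \cite[Prop. 5.5.7.2]{LurHTT} and \cite[Thm. 5.6.0.2]{LurSAG} --- is sound (pedantically, in the paper's cohomological conventions $\tau^{\geq 0} f_*(P)$ is coconnective rather than ``connective,'' but its compactness in $\Mod_A^{\geq 0}$ is exactly the definition of almost perfect). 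One step in your bootstrap is compressed: the affine-to-affine $!$-base change does not follow from the $f_*$ base change and affine almost continuity quite as directly as stated. One first applies the conservative $h'_*$, uses $f^! h_* \cong h'_* f'^!$ (the ``wrong-direction'' adjoint, which is in fact an ingredient here) together with the projection formula to reduce to showing $f^!(\cF) \ot f^*(M) \to f^!(\cF \ot M)$ is an isomorphism for $M$ of finite Tor-dimension, and then runs the filtered-colimit-of-perfects argument of Lemma \ref{lem:lower*affineftdbasechange}, via \cite[Prop. 9.6.7.1]{LurSAG} and your step 1; this is \cite[Lem. 6.4.1.8]{LurSAG}. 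With that made explicit your proof is complete, non-circular as you argue, and if anything more self-contained than the paper's.
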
 

\begin{Proposition}\label{prop:upper!ftdbasechange}
	Let the following be a Cartesian diagram of geometric stacks.
	\begin{equation*}
		\begin{tikzpicture}
			[baseline=(current  bounding  box.center),thick,>=\arrtip]
			\node (a) at (0,0) {$X'$};
			\node (b) at (3,0) {$Y'$};
			\node (c) at (0,-1.5) {$X$};
			\node (d) at (3,-1.5) {$Y$};
			\draw[->] (a) to node[above] {$f' $} (b);
			\draw[->] (b) to node[right] {$h $} (d);
			\draw[->] (a) to node[left] {$h' $}(c);
			\draw[->] (c) to node[above] {$f $} (d);
		\end{tikzpicture}
	\end{equation*}
	If $h$ is of finite Tor-dimension and $f$ is proper and almost finitely presented, then the Beck-Chevalley map $h'^* f^!(\cF) \to f'^! h^*(\cF)$ in $\QCoh(X')$ is an isomorphism for all $\cF \in \QCoh(Y)^+$. 
\end{Proposition}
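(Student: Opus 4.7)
The plan is to mirror the three-step template used in the proof of Proposition \ref{prop:lower*ftdbasechange}: (1) handle the case where $Y$ and $Y'$ are both affine, (2) handle the case where $Y'$ is affine and $h$ is a faithfully flat cover, and (3) combine these via a cube diagram for the general case.

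For the affine case, fix $Y = \Spec A$, $Y' = \Spec B$, and let $\catC \subset \QCoh(Y)^+$ denote the full subcategory of $\cF$ for which the Beck-Chevalley map is an isomorphism. Since $h$ is of finite Tor-dimension, $h^*$ is continuous and carries $\QCoh(Y)^{\geq 0}$ into $\QCoh(Y')^{\geq -N}$ for some $N$, while $f^!$ and $f'^!$ are almost continuous by Proposition \ref{prop:upper!almostcont} (applied to $f$ and to its base change $f'$, which is again proper and almost finitely presented). The source and target of the Beck-Chevalley map are therefore exact functors of $\cF$ preserving filtered colimits in each $\QCoh(Y)^{\geq m}$, so $\catC$ is stable under shifts, cofibers inside $\QCoh(Y)^+$, and such filtered colimits. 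Writing $\cF \cong \colim_n \tau^{\leq n}\cF$ and building bounded modules from free ones (using that any classical $A$-module is a filtered colimit of cofibers of maps $A^r \to A^s$), one reduces to the single case $\cF = \cO_Y$, which is exactly the base change assertion $h'^* \omega_f \cong \omega_{f'}$ for relative dualizing sheaves. Since $f$ is almost of finite presentation, one descends $A \cong \colim_\al A_\al$ over Noetherian $\kk$-subalgebras of finite type and $f$ to a proper, almost finitely presented morphism $f_\al: X_\al \to \Spec A_\al$, invokes the classical Grothendieck duality base change theorem in the Noetherian setting, and passes back to the colimit using almost continuity of $f^!$ a second time.

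The flat-cover case is then a Cech nerve argument analogous to Lemma \ref{lem:lower*flatcoverbasechange}: the Beck-Chevalley maps on each level of the Cech nerve of $h$ are isomorphisms by the affine case, and one assembles them via flat descent for $\QCoh(-)^{\geq 0}$ together with \cite[Cor. 4.7.5.18]{LurHA}. For the general case, take flat covers $\phi: \Spec A \to Y$ and $\theta: \Spec A' \to \Spec A \times_Y Y'$, producing the cube of Cartesian faces used in the proof of Proposition \ref{prop:lower*ftdbasechange}; since the composition $\psi: \Spec A' \to Y'$ is faithfully flat, $\psi^*$ is conservative, and applying it to the Beck-Chevalley map and using the first two cases on the various faces yields the result. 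The main obstacle is the base case $\cF = \cO_Y$ in the first step: this is precisely base change for the relative dualizing sheaf, and the combination of afp-descent to the Noetherian setting with almost continuity of $f^!$ (Proposition \ref{prop:upper!almostcont}) is what makes this argument work outside the Noetherian range---the bounded-below hypothesis on $\cF$ is used essentially in the final colimit passage.
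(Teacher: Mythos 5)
Your steps (2) and (3) — the Cech-nerve argument for a faithfully flat affine cover and the cube of flat covers with conservativity of $\psi'^*$ — are exactly the paper's proof (Lemma \ref{lem:upper!flatcoverbasechange} and the main argument). The divergence, and the problem, is step (1): the paper does not prove the affine case at all, but cites \cite[Prop. 6.4.1.4]{LurSAG} for it (both inside the Cech argument and for the bottom-right arrow of the comparison square), and your attempt to reprove that input from scratch has a genuine gap. The dévissage from $\QCoh(Y)^+$ to $\cO_Y$ is already misstated — a classical $A$-module is not a filtered colimit of cofibers of maps $A^r \to A^s$ in $\Mod_A$ (such cofibers have $H^{-1}$), and uniformly bounded-below filtered colimits of perfect complexes only generate coherent objects, not frees; this part is repairable (e.g.\ via bar resolutions plus the skeletal filtration, or by reducing to bounded almost perfect $\cF$), so it is not the main issue.

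The main issue is the base case $\cF = \cO_Y$. Noetherian approximation writes $A \cong \colim A_\al$ and descends $f$ to proper, almost finitely presented $f_\al : X_\al \to \Spec A_\al$, but to compare $f^!(\cO_Y) = \omega_f$ with $\omega_{f_\al}$ you need a base-change isomorphism along the structure map $\Spec A \to \Spec A_\al$, which is neither flat nor of finite Tor-dimension — so it is supplied neither by classical Noetherian duality theory nor by the proposition you are proving. Your phrase ``passes back to the colimit using almost continuity of $f^!$ a second time'' conflates two different continuity statements: Proposition \ref{prop:upper!almostcont} concerns filtered colimits \emph{of sheaves on a fixed base} $Y$, whereas here the base ring itself varies, and the compatibility of $f^!$ with filtered colimits of rings is a separate (hard) ingredient — it is essentially the content of \cite[Sec. 6.4]{LurSAG} leading to Prop. 6.4.1.4, which is why the paper cites it rather than reproving it. There is also a mild architectural circularity to flag: in the paper, Proposition \ref{prop:upper!almostcont} for general geometric stacks is itself proved \emph{using} Lemma \ref{lem:upper!flatcoverbasechange}, so inside your step (1) you may only invoke the affine-target instance of almost continuity, which must be established independently (it is, via compact generation of $\QCoh(\Spec A)^{\geq n}$ and coherence-preservation of $f_*$, cf. \cite[Lem. 6.4.1.5]{LurSAG}); as written, your appeal to the general proposition is not available at that point in the argument.
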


\begin{Lemma}\label{lem:upper!flatcoverbasechange}
	Proposition \ref{prop:upper!ftdbasechange} is true when $Y'$ is affine and $h$ is faithfully flat. 
\end{Lemma}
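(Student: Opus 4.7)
The plan is to mirror the proof of Lemma \ref{lem:lower*flatcoverbasechange} almost verbatim, using flat descent along the Cech nerve of $h$ to reduce to an affine base-change statement (the $f^!$-analog of Lemma \ref{lem:lower*affineftdbasechange}) which presumably has been established in a prior lemma parallel to the $f_*$ case. By shifting, I can reduce at the outset to $\cF \in \QCoh(Y)^{\geq 0}$, so the argument can be carried out within appropriate bounded-below subcategories.

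First I would set up the geometric data. Since $Y$ is geometric (hence has affine diagonal) and $Y'$ is affine, every term $Y_k$ of the Cech nerve $Y_\bullet$ of $h$ is affine, and every structure map $h_k: Y_k \to Y$ is faithfully flat (and thus of finite Tor-dimension). Base-changing $f$ produces $f_k: X_k \to Y_k$, each proper and almost finitely presented by base-change stability (Propositions \ref{prop:propprops} and \ref{prop:afp2of3prop}). For $p: i \to j$ in $\Delta_s$, the associated $h_p: Y_j \to Y_i$ is a morphism of affine schemes of finite Tor-dimension, and $h'_p: X_j \to X_i$ is its base change.

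Next I would assemble these data into a diagram $\Delta^1 \times \Delta_s \to \Cathatinfty$ whose two rows are the $\Delta_s$-diagrams $k \mapsto \QCoh(Y_k)^{\geq 0}$ and $k \mapsto \QCoh(X_k)^{\geq -d}$ (with $d$ large enough to contain the image of each $f_k^!$ on bounded-below objects, given finite cohomological dimension of $f$), connected by the $f_k^!$. Functoriality of this assembly comes from the correspondence-category formalism (Proposition \ref{prop:Corrextension}, in its right-adjointability variant). By the assumed affine base-change lemma applied to each Cartesian square formed by $h_p, h'_p, f_i, f_j$, each Beck--Chevalley transformation $h'^*_p f_i^! \to f_j^! h_p^*$ is an isomorphism on bounded-below objects. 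Since $h$ is faithfully flat, flat descent gives $\QCoh(Y)^{\geq 0} \cong \lim_{\Delta_s} \QCoh(Y_i)^{\geq 0}$ and $\QCoh(X)^{\geq -d} \cong \lim_{\Delta_s} \QCoh(X_i)^{\geq -d}$, whence the conclusion follows from \cite[Cor. 4.7.5.18]{LurHA} and a final shift argument to pass from $\QCoh^{\geq 0}$ to $\QCoh^+$.

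The main obstacle is the t-structure bookkeeping together with the functoriality of $!$-pullback. Unlike $f_*$, which is left t-exact for any geometric $f$, the functor $f^!$ is only left t-exact up to the shift $d$ determined by the cohomological dimension of $f$; so one cannot take $\tau^{\geq 0} \circ f^!$ in direct analogy with the template, but must either shift uniformly or work with $\QCoh^{\geq -d}$ throughout. Second, one must verify that the pointwise Beck--Chevalley maps assemble into a genuine natural transformation of $\Delta_s$-diagrams rather than merely being defined simplex-by-simplex, which is where the $\Corr$-category machinery of Proposition \ref{prop:Corrextension} does the real work.
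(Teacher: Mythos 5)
Your proof is correct and follows essentially the same template as the paper's: the Cech nerve of $h$, a uniform cohomological-dimension bound $n$ (from Proposition \ref{prop:propprops}, uniform over $k$ since each $Y_k$ is affine over $Y$), the shifted adjunction between $\tau^{\geq 0} \circ f_{k*}$ and $f_k^!$ on $\QCoh(X_k)^{\geq -n}$ and $\QCoh(Y_k)^{\geq 0}$, flat descent for the bounded-below subcategories, and the final appeal to \cite[Cor. 4.7.5.18]{LurHA}. Two points of divergence are worth noting. First, the affine input you assume was ``presumably established in a prior lemma'' is not paper-internal: the paper cites \cite[Prop. 6.4.1.4]{LurSAG} directly for the Beck--Chevalley isomorphism $h'^*_p f_i^! \to f_j^! h_p^*$ over affine bases, so your reduction is sound but the needed statement comes from the literature rather than from an analogue of Lemma \ref{lem:lower*affineftdbasechange}. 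Second, and more substantively, your assembly step has the adjoints backwards. The paper forms the diagram $\Delta^1 \times \Delta_s \to \Cathatinfty$ out of the \emph{left} adjoints $\tau^{\geq 0} \circ f_{k*}$ together with $h^*_p$, $h'^*_p$, whose coherence is already supplied by the correspondence functor (\ref{eq:CorrQCoh+}); adjointability is then checked square-by-square, and \cite[Cor. 4.7.5.18]{LurHA} itself produces the right adjoints in the limit. Assembling the $f_k^!$ directly via a right-adjointability variant of Proposition \ref{prop:Corrextension}, as you propose, is both circular-adjacent (the $!$-pullback squares cannot be formed into a diagram until the Beck--Chevalley maps are known to be isomorphisms) and unnecessary (once those isomorphisms are known pointwise, \cite[Cor. 4.7.5.18]{LurHA} does all the coherent assembling, so no further $\Corr$-machinery is needed). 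Since you do establish the pointwise isomorphisms before invoking the assembly, this is an organizational misdiagnosis rather than a gap, and with that reorganization your argument coincides with the paper's.
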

\begin{proof}
	Let $Y_\bul$ denote the Cech nerve of $h$ (so $Y_0 = Y'$) and $f_k: X_k \to Y_k$ the base change of $f$. 
	Given a morphism $p: i \to j$ in $\Delta_s$, let $h_p: Y_j \to Y_i$ denote the associated map and $h'_p: X_j \to X_i$ its base change. By Proposition \ref{prop:propprops} we can choose $n$ so that $f_*$ takes $\QCoh(X)^{\leq 0}$ to $\QCoh(Y)^{\leq n}$, hence so does each $f_{k}$ since $Y_k$ is affine over $Y$. Then $\tau^{\geq 0} \circ f_*$ and $f^!$ restrict to an adjunction between $\QCoh(X^{\geq -n})$ and $\QCoh(Y)^{\geq 0}$, likewise for each $f_k$. 	The categories $\QCoh(X_k)^{\geq -n}$ and $\QCoh(Y_k)^{\geq 0}$, together with the functors $h^*_p$, $h'^*_p$, and $\tau^{\geq 0} \circ f_{k*}$, form a diagram $\Delta^1 \times \Delta_s \to \Cathatinfty$. By \cite[Prop. 6.4.1.4]{LurSAG} the Beck-Chevalley transformation $h'^*_p f^!_i \to f'^!_j h^*_p$ restricts to an isomorphism of functors $\QCoh(Y_i)^{\geq 0} \to \QCoh(X_j)^{\geq -n}$ for any $p$. Since $h$ is faithfully flat we have $\QCoh(X)^{\geq -n} \cong \lim_{\Delta_s} \QCoh(X_i)^{\geq -n}$ and $\QCoh(Y)^{\geq 0} \cong \lim_{\Delta_s} \QCoh(Y_i)^{\geq 0}$, and the claim follows from \cite[Cor. 4.7.5.18]{LurHA}. 
\end{proof}

\begin{proof}[Proof of Proposition \ref{prop:upper!almostcont}]
	Let $\cF \cong \colim \cF_\al$ be a filtered colimit in $\QCoh(Y)^{\geq 0}$, let $h: X' \cong \Spec A \to Y$ be a flat cover, and define $f': X' \to Y'$, $h': X' \to X$ by base change. Since $h'^*$ is continuous and conservative it suffices to show $\colim h'^* f^!(\cF_\al) \to h'^* f^!(\cF)$ is an isomorphism. By Lemma~\ref{lem:upper!flatcoverbasechange} and left boundedness of $f^!$ this is equivalent to $\colim  f'^! h^*(\cF_\al) \to f'^! h^*(\cF)$ being an isomorphism. Since $h^*$ is  t-exact this follows from Lemma \ref{lem:lower*almostcontaffine}. 
\end{proof}

\begin{proof}[Proof of Proposition \ref{prop:upper!ftdbasechange}]
	Let $\phi: U \cong \Spec A \to Y$ and $\theta: U' \cong \Spec A' \to U \times_Y Y'$ be flat covers. We obtain a diagram 
\begin{equation*}
	\begin{tikzpicture}[baseline=(current  bounding  box.center),thick,>=\arrtip]
			\newcommand*{\ha}{1.5}; \newcommand*{\hb}{1.5}; \newcommand*{\hc}{1.5};
\newcommand*{\va}{-.9}; \newcommand*{\vb}{-.9}; \newcommand*{\vc}{-.9}; 
		\node (ab) at (\ha,0) {$Z'$};
		\node (ad) at (\ha+\hb+\hc,0) {$U'$};
		\node (ba) at (0,\va) {$X'$};
		\node (bc) at (\ha+\hb,\va) {$Y'$};
		\node (cb) at (\ha,\va+\vb) {$Z$};
		\node (cd) at (\ha+\hb+\hc,\va+\vb) {$U$};
		\node (da) at (0,\va+\vb+\vc) {$X$};
		\node (dc) at (\ha+\hb,\va+\vb+\vc) {$Y$};
		\draw[->] (ab) to node[above] {$g' $} (ad);
		\draw[->] (ab) to node[above left, pos=.25] {$\psi' $} (ba);
		\draw[->] (ab) to node[right,pos=.2] {$\xi' $} (cb);
		\draw[->] (ad) to node[below right] {$\psi $} (bc);
		\draw[->] (ad) to node[right] {$\xi $} (cd);
		\draw[->] (ba) to node[left] {$ $} (da);
		\draw[->] (cb) to node[above,pos=.25] {$g $} (cd);
		\draw[->] (cb) to node[above left, pos=.25] {$\phi' $} (da);
		\draw[->] (cd) to node[below right] {$\phi $} (dc);
		\draw[->] (da) to node[above,pos=.75] {$ $} (dc);
		
		\draw[-,line width=6pt,draw=white] (ba) to  (bc);
		\draw[->] (ba) to node[above,pos=.75] {$ $} (bc);
		\draw[-,line width=6pt,draw=white] (bc) to  (dc);
		\draw[->] (bc) to node[right,pos=.2] {$ $} (dc);
	\end{tikzpicture}
\end{equation*}
in which all but the left and right faces are Cartesian. Note that $\psi$ is faithfully flat and $\xi$ is of finite Tor-dimension, since they are the compositions of $\theta$ with the base changes of $\phi$ and $h$, respectively. Since $\psi'^*$ is conservative, it suffices to show the top left arrow in
\begin{equation*}
	\begin{tikzpicture}
		[baseline=(current  bounding  box.center),thick,>=\arrtip]
		\newcommand*{\ha}{3.5}; \newcommand*{\hb}{3.5};
		\newcommand*{\va}{-1.5};
		\node (aa) at (0,0) {$\psi'^* h'^* f^!(\cF)$};
		\node (ab) at (\ha,0) {$\psi'^* f'^! h^*(\cF)$};
		\node (ac) at (\ha+\hb,0) {$g'^!\psi^* h^*(\cF)$};
		\node (ba) at (0,\va) {$\xi'^* \phi'^*f^!(\cF)$};
		\node (bb) at (\ha,\va) {$\xi'^* g^! \phi^*(\cF)$};
		\node (bc) at (\ha+\hb,\va) {$g'^! \xi^* \phi^*(\cF)$};
		\draw[->] (aa) to node[above] {$ $} (ab);
		\draw[->] (ab) to node[above] {$  $} (ac);
		\draw[->] (ba) to node[above] {$ $} (bb);
		\draw[->] (bb) to node[above] {$ $} (bc);
		\draw[->] (aa) to node[below,rotate=90] {$\sim $} (ba);
		\draw[->] (ac) to node[below,rotate=90] {$\sim $} (bc);
	\end{tikzpicture} 
\end{equation*}
is an isomorphism. This follows since the bottom left and top right arrows are isomorphisms by Lemma \ref{lem:upper!flatcoverbasechange}, and the bottom right is by \cite[Prop. 6.4.1.4]{LurSAG}. 
\end{proof}

If $f: X \to Y$ is a proper morphism of geometric stacks, $f_{QC}^!$ and $f_{IC}^!$ are left bounded since $f_{QC*}$ and $f_{IC*}$ are bounded. Given the equivalences $\IndCoh(-)^+ \cong \QCoh(-)^+$ and the isomorphism $\Psi_Y f_{IC*} \cong f_{QC*} \Psi_X$, Propositions \ref{prop:upper!almostcont} and \ref{prop:upper!ftdbasechange}  imply the following. 

\begin{Corollary}\label{cor:upper!almostcontIndCoh}
If  $f: X \to Y$ is a proper, almost finitely presented morphism of geometric stacks, then $f^!: \IndCoh(Y) \to \IndCoh(X)$ is almost continuous. 
\end{Corollary}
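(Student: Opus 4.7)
The plan is to deduce the corollary by transporting Proposition \ref{prop:upper!almostcont} through the equivalence $\IndCoh(-)^+ \cong \QCoh(-)^+$, using that both $f^!_{IC}$ and $f^!_{QC}$ are left bounded and that the standard t-structures on both sides are compatible with filtered colimits. Concretely, I need to show that for every $n$, the restriction $f^!_{IC}|_{\IndCoh(Y)^{\geq n}}$ preserves filtered colimits as a functor into $\IndCoh(X)$.

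The first step is to record the relevant bounds. Since $f$ is proper, Proposition \ref{prop:propprops} gives that $f_{QC*}$ and $f_{IC*}$ are of finite cohomological dimension, say $\leq m$. By adjunction, $f^!_{QC}(\QCoh(Y)^{\geq 0}) \subset \QCoh(X)^{\geq -m}$ and $f^!_{IC}(\IndCoh(Y)^{\geq 0}) \subset \IndCoh(X)^{\geq -m}$. In particular, $f^!_{IC}$ sends $\IndCoh(Y)^{\geq n}$ to $\IndCoh(X)^{\geq n-m} \subset \IndCoh(X)^+$.

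Next, I need to compare $f^!_{IC}$ and $f^!_{QC}$ on bounded-below parts. The functor $\Psi$ is t-exact and restricts to an equivalence $\IndCoh(-)^{\geq 0} \congto \QCoh(-)^{\geq 0}$, and the isomorphism $\Psi_Y f_{IC*} \cong f_{QC*} \Psi_X$ restricts on bounded-below objects to an isomorphism of functors between equivalent categories. Passing to right adjoints on the $+$ parts (where $f_{IC*}$ and $f_{QC*}$ both preserve boundedness and become part of an adjunction with $f^!$), Lemma \ref{lem:acplcpladjunction} identifies $\Psi_X f^!_{IC}(\cG) \cong f^!_{QC} \Psi_Y(\cG)$ for all $\cG \in \IndCoh(Y)^+$.

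Finally, given a filtered colimit $\cF \cong \colim \cF_\al$ in $\IndCoh(Y)^{\geq n}$, I use that $\IndCoh(Y)^{\geq n}$ is closed under filtered colimits in $\IndCoh(Y)$ (compatibility of the t-structure with filtered colimits) so the colimit is computed in $\IndCoh(Y)$. Applying the continuous, t-exact functor $\Psi_Y$ yields a filtered colimit $\Psi_Y(\cF) \cong \colim \Psi_Y(\cF_\al)$ in $\QCoh(Y)^{\geq n}$. Proposition \ref{prop:upper!almostcont} then gives $f^!_{QC}\Psi_Y(\cF) \cong \colim f^!_{QC}\Psi_Y(\cF_\al)$ in $\QCoh(X)$, a filtered colimit in $\QCoh(X)^{\geq n-m}$. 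Transporting back along $\IndCoh(X)^+ \cong \QCoh(X)^+$ using the intertwining from the previous step gives the isomorphism $f^!_{IC}(\cF) \cong \colim f^!_{IC}(\cF_\al)$ in $\IndCoh(X)^{\geq n-m}$, and the compatibility of the t-structure on $\IndCoh(X)$ with filtered colimits ensures this colimit is also the colimit computed in $\IndCoh(X)$, as required. The only subtle point is the intertwining $\Psi \circ f^!_{IC} \cong f^!_{QC} \circ \Psi$ on bounded-below objects, but this follows formally from the compatibility of $\Psi$ with the pushforwards together with the adjoint-functor machinery of Lemma \ref{lem:acplcpladjunction}.
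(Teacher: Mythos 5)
Your proof is correct and is essentially the paper's own argument, which deduces the corollary in exactly this way: left boundedness of $f^!_{QC}$ and $f^!_{IC}$ from boundedness of the pushforwards (Proposition \ref{prop:propprops}), the equivalences $\IndCoh(-)^+ \cong \QCoh(-)^+$ together with the isomorphism $\Psi_Y f_{IC*} \cong f_{QC*} \Psi_X$, and then Proposition \ref{prop:upper!almostcont}. One quibble: your appeal to Lemma \ref{lem:acplcpladjunction} for the intertwining $\Psi_X f^!_{IC} \cong f^!_{QC} \Psi_Y$ is not literally valid, since that lemma concerns bounded \emph{colimit-preserving} functors and $f^!$ need not preserve colimits on the full categories --- but your parallel observation, that the adjunction $(f_*, f^!)$ restricts to the bounded-below subcategories (as $f_*$ is bounded and $f^!$ left bounded) where $\Psi$ is a t-exact equivalence intertwining the pushforwards, already yields the intertwining of the right adjoints without the lemma.
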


\begin{Corollary}\label{cor:upper!ftdbasechangeIndCoh}
	Under the hypotheses of Proposition \ref{prop:upper!ftdbasechange}, the Beck-Chevalley map $h'^* f^!(\cG) \to f'^! h^*(\cG)$ in $\IndCoh(X')$ is an isomorphism for all $\cG \in \IndCoh(Y)^+$. 
\end{Corollary}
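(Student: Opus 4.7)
The strategy is to reduce the statement to its quasi-coherent counterpart, Proposition \ref{prop:upper!ftdbasechange}, by means of the equivalences $\Psi_Z : \IndCoh(Z)^+ \congto \QCoh(Z)^+$ available on each of the four geometric stacks in the diagram. The claim is already remarked in the text to follow from Propositions \ref{prop:upper!almostcont} and \ref{prop:upper!ftdbasechange}; the proposal below is to make that remark precise.

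First I would confirm that both terms in the Beck-Chevalley transformation lie in $\IndCoh(X')^+$ whenever $\cG \in \IndCoh(Y)^+$. Since $f$ is proper, Proposition \ref{prop:propprops} implies $f_{IC*}$ is bounded, and hence its right adjoint $f^!_{IC}$ is left bounded; in particular $f^!_{IC}(\cG) \in \IndCoh(X)^+$. Finite Tor-dimension of $h$, and of its base change $h'$ (stable by Proposition \ref{prop:ftdprops}), means that both $h^*$ and $h'^*$ are bounded and preserve bounded below objects. Consequently $h'^* f^!_{IC}(\cG)$ and $f'^!_{IC} h^*(\cG)$ both belong to $\IndCoh(X')^+$.

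Next I would apply $\Psi_{X'}$ to the Beck-Chevalley morphism and identify the result with its quasi-coherent analogue. The functor $\IndCoh$ of Definition \ref{def:IndCohGStk} was constructed so that $\Psi_Z f_{IC*} \cong f_{QC*} \Psi_X$ and $\Psi_Z h^*_{IC} \cong h^*_{QC} \Psi_W$ hold for the relevant morphisms. By mate calculus with respect to the adjunction $(f_{IC*}, f^!_{IC})$ and $(f_{QC*}, f^!_{QC})$, the first of these isomorphisms yields a canonical natural transformation $\Psi_X f^!_{IC} \to f^!_{QC} \Psi_Y$. Restricted to $\IndCoh(Y)^+$, both sides land in $\QCoh(X)^+$, and since $\Psi$ is an equivalence $\IndCoh(-)^+ \congto \QCoh(-)^+$, this restriction is an isomorphism. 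Similarly $\Psi_{X'} f'^!_{IC} \cong f'^!_{QC} \Psi_{Y'}$ on $\IndCoh(Y')^+$. Applying $\Psi_{X'}$ to the transformation $h'^*_{IC} f^!_{IC}(\cG) \to f'^!_{IC} h^*_{IC}(\cG)$ therefore produces the quasi-coherent Beck-Chevalley transformation $h'^*_{QC} f^!_{QC}(\Psi_Y \cG) \to f'^!_{QC} h^*_{QC}(\Psi_Y \cG)$ with input $\Psi_Y(\cG) \in \QCoh(Y)^+$. By Proposition \ref{prop:upper!ftdbasechange} this is an isomorphism, and since $\Psi_{X'}$ is conservative on $\IndCoh(X')^+$, the original transformation is an isomorphism as well.

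The main technical point is the verification that the mate transformation $\Psi_X f^!_{IC} \to f^!_{QC} \Psi_Y$ is an isomorphism on $\IndCoh(Y)^+$, together with the identification of $\Psi_{X'}$ applied to the Beck-Chevalley map with the quasi-coherent Beck-Chevalley map; both amount to diagram-chasing with the given $\Psi$-compatibilities and the adjunctions, but require care because the full $\Psi$ is typically neither fully faithful nor continuous outside the bounded below range. The key leverage is that after restricting to $+$, the functor $\Psi$ becomes an equivalence, which is precisely the regime in which $\cG$ and the intermediate objects are constrained to live by the boundedness verification of the previous paragraph.
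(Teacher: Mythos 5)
Your proposal is correct and follows essentially the same route as the paper, which derives the corollary from exactly these ingredients: left boundedness of $f^!_{QC}$ and $f^!_{IC}$ (since the pushforwards are bounded), the equivalences $\IndCoh(-)^+ \cong \QCoh(-)^+$ together with $\Psi_Y f_{IC*} \cong f_{QC*}\Psi_X$, and then Propositions \ref{prop:upper!almostcont} and \ref{prop:upper!ftdbasechange}. The mate identification $\Psi_X f^!_{IC} \cong f^!_{QC}\Psi_Y$ on bounded-below objects, and the transfer of the Beck--Chevalley map along $\Psi_{X'}$, are precisely the steps the paper leaves implicit, so your write-up is a faithful elaboration rather than a different argument.
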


We then have the following ind-geometric extension of Proposition \ref{prop:upper!almostcont}. 

\begin{Proposition}\label{prop:upper!almostcontind} 
Let $X$ and $Y$ be reasonable ind-geometric stacks and $f: X \to Y$ an ind-proper, almost ind-finitely presented morphism. If $f$ is of finite cohomological dimension, then $f^!: \IndCoh(Y) \to \IndCoh(X)$ is almost continuous. If $f$ is of ind-finite cohomological dimension and $X$ and $Y$ are coherent, then $f^!$ is continuous. 
\end{Proposition}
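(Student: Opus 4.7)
My plan is to first handle the second claim, which is a direct compactness argument, and then reduce the first claim to the geometric case of Corollary \ref{cor:upper!almostcontIndCoh} via a key lemma about $!$-pullback along reasonable geometric substacks. For the second claim, assume $X$ and $Y$ are coherent. By Proposition \ref{prop:IndCohisIndofCoh} we have $\IndCoh(X) \cong \Ind(\Coh(X))$ and $\IndCoh(Y) \cong \Ind(\Coh(Y))$, and together with Proposition \ref{prop:cohidcomp} these identify $\Coh(X)$ and $\Coh(Y)$ with the respective subcategories of compact objects. By Definition \ref{def:cohonindgstks} the pushforward takes $\Coh(X)$ into $\Coh(Y)$, and by Proposition \ref{prop:CohsubcatofIndCoh} this is compatible with the pushforward on $\IndCoh$. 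Hence $f_*$ preserves compact objects, so its right adjoint $f^!$ is continuous.

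For the almost continuity claim I fix a reasonable presentation $X \cong \colim_\al X_\al$ and use Propositions \ref{prop:indPequivconditions} and \ref{prop:afpdefconsistent} to factor each composite $f \circ i_\al$ as $X_\al \xrightarrow{f_\al} Y_\al \xrightarrow{j_\al} Y$, where $j_\al$ realizes $Y_\al$ as a reasonable geometric substack and $f_\al$ is proper and almost finitely presented; the finite cohomological dimension of $f$ gives a uniform bound on the cohomological dimensions of the $f_\al$. Given a filtered colimit $\cG \cong \colim_i \cG_i$ in $\IndCoh(Y)^{\geq m}$, both sides of the comparison morphism $\colim_i f^!(\cG_i) \to f^!(\cG)$ lie in a fixed $\IndCoh(X)^{\geq m'}$ by this uniform bound. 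By Lemma \ref{lem:colimpres} applied to $\IndCoh(X) \cong \colim_\al \IndCoh(X_\al)$ (Proposition \ref{prop:indcohonindgstks}), the family $\{i_\al^!\}$ is jointly conservative, so it suffices to verify the isomorphism after applying each $i_\al^! f^! \cong f_\al^! \circ j_\al^!$. Since $f_\al^!$ is almost continuous by Corollary \ref{cor:upper!almostcontIndCoh} and $j_\al^!$ is left t-exact, the problem reduces to establishing almost continuity of $j_\al^!$.

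The key lemma is thus: for any reasonable geometric substack $j : Y' \to Y$, the functor $j^!$ is almost continuous. Fixing a reasonable presentation $Y \cong \colim_\be Y_\be$ with structure maps $j'_\be : Y_\be \to Y$, Proposition \ref{prop:igprestermsarereas} factors $j$ as $Y' \xrightarrow{k} Y_{\be_0} \xrightarrow{j'_{\be_0}} Y$ with $k$ an almost finitely presented closed immersion of geometric stacks, so $j^! \cong k^! \circ j'^!_{\be_0}$ and Corollary \ref{cor:upper!almostcontIndCoh} handles $k^!$. It thus suffices to show that each $j'^!_{\be_0}$ is almost continuous, i.e. that its restriction to $\IndCoh(Y)^{\geq 0}$ is continuous. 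By Proposition \ref{prop:IndCohindGStktstructure} we have $\IndCoh(Y)^{\geq 0} \cong \colim_\be \IndCoh(Y_\be)^{\geq 0}$ in $\PrL$, so the universal property of $\PrL$-colimits reduces this to showing that each composite $j'^!_{\be_0} \circ j'_{\be*} : \IndCoh(Y_\be)^{\geq 0} \to \IndCoh(Y_{\be_0})^{\geq 0}$ is continuous.

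I plan to evaluate this composite by cases, using that pushforward along a closed immersion between geometric stacks is fully faithful on bounded-below objects (which reduces via $\IndCoh^+ \cong \QCoh^+$ to the standard quasi-coherent fact, and transfers to the structure maps $j'_{\be*}$ since full faithfulness is preserved under filtered colimits of categories). When $\be_0 \leq \be$, factoring $j'_{\be_0} = j'_\be \circ p$ with $p : Y_{\be_0} \to Y_\be$ and invoking $j'^!_\be \circ j'_{\be*} \cong \id$ yields $j'^!_{\be_0} \circ j'_{\be*} \cong p^!$, which is almost continuous by Corollary \ref{cor:upper!almostcontIndCoh}. When $\be \leq \be_0$, factoring $j'_\be = j'_{\be_0} \circ q$ with $q : Y_\be \to Y_{\be_0}$ yields $j'^!_{\be_0} \circ j'_{\be*} \cong q_*$, a continuous pushforward. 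When $\be$ and $\be_0$ are incomparable, choosing any $\ga \geq \be, \be_0$ in the filtered index category reduces to a composition of the two preceding types of functors. The main obstacle is this case analysis, specifically the use of full faithfulness of closed immersion pushforwards to rewrite the composites in a form where almost continuity reduces to the geometric statement already in hand.
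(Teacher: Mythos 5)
Your overall skeleton matches the paper's proof: the second claim via compact generation of $\IndCoh$ by $\Coh$ plus preservation of coherence under $f_*$, and the first claim by factoring $f \circ i_\al$ through a reasonable geometric substack, using the uniform cohomological-dimension bound and joint conservativity of the $i_\al^!$ to reduce everything to almost continuity of $j^!$ for the inclusion $j$ of a reasonable geometric substack. However, your proof of that key lemma contains a genuine error. Pushforward along a closed immersion is \emph{not} fully faithful on bounded-below objects; it is fully faithful only at the level of hearts. Already for $j: \Spec k \to \A^1$ one has $\Map(j_*(k), j_*(k)) \cong \Ext^*_{k[x]}(k,k) \cong k \oplus k[-1]$, so $j^!j_*(k) \ncong k$. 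The proposed reduction ``via $\IndCoh^+ \cong \QCoh^+$ to the standard quasi-coherent fact'' has no target, since no such quasi-coherent fact exists. Consequently $j'^!_\be j'_{\be*} \cong \id$ fails, and with it your identifications $j'^!_{\be_0} j'_{\be*} \cong p^!$ and $j'^!_{\be_0} j'_{\be*} \cong q_*$, so the entire case analysis collapses. (That these pushforwards are not fully faithful is visible in the paper itself: the proof of Proposition \ref{prop:CohsubcatofIndCoh} computes $\Map_{IC(X)}(i_{\al*}\cF_\al, i_{\al*}\cG_\al)$ as a nontrivial filtered colimit of the $i^!_{\al\be}i_{\al\be*}$ over $\be \geq \al$, precisely because $i_{\al*}$ is not fully faithful termwise.)

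There is a second, independent gap: the step ``the universal property of $\PrL$-colimits reduces this to showing that each composite $j'^!_{\be_0} \circ j'_{\be*}$ is continuous'' is not a valid reduction. The universal property classifies \emph{colimit-preserving} functors out of $\colim_\be \IndCoh(Y_\be)^{\geq 0}$; it gives no criterion for testing continuity of an arbitrary functor on the structure maps. To identify $j'^!_{\be_0}$ with the functor induced by a compatible family of continuous composites, you would already need to know that $j'^!_{\be_0}$ commutes with the relevant filtered colimits, which is circular. The paper's argument avoids both problems at once: it passes from the colimit $\IndCoh(Y)^{\geq 0} \cong \colim_\be \IndCoh(Y_\be)^{\geq 0}$ in $\PrL$ to the equivalent limit along the right adjoints $i^!_{\be\be'}$, observes that these transition functors are continuous by Corollary \ref{cor:upper!almostcontIndCoh}, and invokes the fact (from the proof of \cite[Prop. 5.5.3.13]{LurHTT}) that categories admitting filtered colimits, with continuous functors between them, form a subcategory of $\Cathatinfty$ closed under limits --- so the limit projections, i.e. the restricted $j'^!_\be$, are automatically continuous. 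Replacing your key-lemma argument with this one, the rest of your proposal goes through and coincides with the paper's proof.
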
 

\begin{proof} 
	The second claim follows since $f_*$ preserves coherence (Proposition \ref{prop:propprops}) and $\IndCoh(X)$ and $\IndCoh(Y)$ are compactly generated by coherent sheaves in this case (Proposition~\ref{prop:IndCohisIndofCoh}). For the first claim, let $X \cong \colim X_\al$ be a reasonable presentation. By the proof of \cite[Prop. 5.5.3.13]{LurHTT}, categories admitting filtered colimits and continuous functors among them form a subcategory of $\Cathatinfty$ which is closed under limits. 
	Since the restriction $i_{\al\be}^!: \IndCoh(X_\be)^{\geq 0} \to \IndCoh(X_\al)^{\geq 0}$ is continuous for all $\be \geq \al$ (Corollary \ref{cor:upper!almostcontIndCoh}), it follows that $i_\al^!$ is almost continuous for any $\al$. 
	
	Now let $\cF \cong \colim \cF_\be$ be a filtered colimit in $\IndCoh(Y)^{\geq 0}$. For any $\al$ we can refactor $f \circ i_\al$ as $i'_\al \circ f_\al$ for some reasonable geometric substack $i'_\al: Y_\al \to Y$ and some proper, almost finitely presented map $f_\al: X_\al \to Y_\al$.  Note that $i'^!_\al$ is almost continuous since we may extend $i'_\al$ to a reasonable presentation of $Y$. Since $\IndCoh(X) \cong \lim \IndCoh(X_\al)$ in $\Cathatinfty$, it suffices to show the second factor in 
	$$ \colim_\be i^!_\al f^! (\cF_\be) \to  i^!_\al  \colim_\be f^! (\cF_\be) \to i^!_\al f^! (\colim_\be \cF_\be) $$
	is an isomorphism for all $\al$. The first factor is an isomorphism since $f$ is of finite cohomological dimension, hence $f^!$ is left bounded, and since $i_\al^!$ is almost continuous. But $i_\al^! f^! \cong f^!_\al i'^!_\al$, so the composition is an isomorphism by the left t-exactness of $i'^!_\al$ and the almost continuity of $f^!_\al$ and $i'^!_\al$. 
	\end{proof}

We can now establish the following claim from Section \ref{sec:cohrelation}. 

\begin{Proposition}\label{prop:cohisacinindcoh}
If $X$ is a reasonable ind-geometric stack, the full subcategory $\Coh(X) \subset \IndCoh(X)$ consists of bounded, almost compact objects. 
\end{Proposition}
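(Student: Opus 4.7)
The plan is to reduce the claim to the geometric case, where it is the classical statement \cite[Prop. 9.1.5.1]{LurSAG}, by writing $\cF$ as a pushforward from a reasonable geometric substack and then exploiting the almost continuity of $!$-pullback along closed immersions, Proposition \ref{prop:upper!almostcontind}.

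First I would address boundedness. By (\ref{eq:cohviaindgeopres}), any $\cF \in \Coh(X)$ is of the form $i_{\al*}(\cF_\al)$ for some reasonable geometric substack $i_\al: X_\al \to X$ and some $\cF_\al \in \Coh(X_\al)$. Since $\cF_\al$ is bounded in $\QCoh(X_\al) \cong \IndCoh(X_\al)^+$ and $i_{\al*}$ is t-exact by Proposition \ref{prop:IndCohindGStktstructure}, it follows that $\cF$ is bounded in $\IndCoh(X)$.

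Next, for almost compactness, I would fix $n$ and a filtered colimit $\cG \cong \colim_\be \cG_\be$ in $\IndCoh(X)^{\geq n}$, and show the natural map $\colim_\be \Map(\tau^{\geq n}\cF, \cG_\be) \to \Map(\tau^{\geq n}\cF, \cG)$ is an isomorphism. Because the targets lie in $\IndCoh(X)^{\geq n}$, we have $\Map(\tau^{\geq n}\cF, -) \cong \Map(\cF, -)$ here, and applying the $(i_{\al*}, i_\al^!)$ adjunction identifies this with $\Map(\cF_\al, i_\al^!(-))$. Two ingredients then combine: on the one hand, $i_\al$ is an almost ind-finitely presented ind-closed immersion, hence ind-proper and of finite cohomological dimension $\leq 0$, so by Proposition \ref{prop:upper!almostcontind} $i_\al^!$ is almost continuous; and since $i_\al^!$ is left t-exact as the right adjoint of the t-exact $i_{\al*}$, it restricts to a continuous functor $\IndCoh(X)^{\geq n} \to \IndCoh(X_\al)^{\geq n}$. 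On the other hand, $\cF_\al \in \Coh(X_\al)$ is bounded and almost compact in $\QCoh(X_\al)$ by the geometric case \cite[Prop. 9.1.5.1]{LurSAG}, so $\tau^{\geq n}\cF_\al$ is compact in $\QCoh(X_\al)^{\geq n} \cong \IndCoh(X_\al)^{\geq n}$; hence $\Map(\cF_\al, -)$ preserves filtered colimits on $\IndCoh(X_\al)^{\geq n}$. Stringing these together gives
$$ \colim_\be \Map(\cF, \cG_\be) \cong \colim_\be \Map(\cF_\al, i_\al^!\cG_\be) \cong \Map(\cF_\al, i_\al^!\cG) \cong \Map(\cF, \cG), $$
as required.

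The only genuinely nontrivial ingredient is the almost continuity of $i_\al^!$, and this is already done in Proposition \ref{prop:upper!almostcontind} (which itself rests on the geometric base case, Corollary \ref{cor:upper!almostcontIndCoh}). Everything else is a formal consequence of adjunctions, t-exactness, and the identification $\IndCoh(X_\al)^+ \cong \QCoh(X_\al)^+$, so I do not expect any real obstacle beyond bookkeeping.
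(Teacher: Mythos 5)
Your proof is correct and follows essentially the same route as the paper: the paper's (very terse) proof also writes $\cF \cong i_{\al*}(\cF_\al)$, derives boundedness from the t-exactness of $i_{\al*}$, and derives almost compactness from the left (t-)exactness and almost continuity of $i_\al^!$ via Proposition \ref{prop:upper!almostcontind}, combined with the geometric case through $\IndCoh(X_\al)^+ \cong \QCoh(X_\al)^+$. Your write-up merely makes explicit the adjunction bookkeeping that the paper leaves implicit.
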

\begin{proof}
Fix a reasonable presentation $X \cong \colim_\al X_\al$. Given $\cF \in \Coh(X)$ we can write $\cF \cong i_{\al*}(\cF_\al)$ for some $\al$ and some $\cF_\al \in \Coh(X_\al)$. The t-exactness of $i_{\al*}$ implies $\cF$ is bounded in $\IndCoh(X)$, while the left boundedness and almost continuity of $i_\al^!$ (Proposition~\ref{prop:upper!almostcontind}) imply $\cF$ is almost compact. 
\end{proof}

\subsection{Pushforward and $!$-pullback}\label{sec:!pullpush}
We turn next to the commutation of pushforward and ind-proper $!$-pullback, generalizing the result of \cite[Prop. 2.9.2]{GR14} for ind-schemes of ind-finite type. To simplify the proof we assume the stacks involved are coherent, then discuss how the result may be generalized. We do caution that in the following statement the coherence of $X'$ is an additional hypothesis beyond the coherence of $X$, $Y$, and~$Y'$. 

\begin{Proposition}\label{prop:up!low*geom}
	Let the following be a Cartesian diagram of ind-geometric stacks.
	\begin{equation*}
		\begin{tikzpicture}
			[baseline=(current  bounding  box.center),thick,>=\arrtip]
			\node (a) at (0,0) {$X'$};
			\node (b) at (3,0) {$Y'$};
			\node (c) at (0,-1.5) {$X$};
			\node (d) at (3,-1.5) {$Y$};
			\draw[->] (a) to node[above] {$f' $} (b);
			\draw[->] (b) to node[right] {$h $} (d);
			\draw[->] (a) to node[left] {$h' $}(c);
			\draw[->] (c) to node[above] {$f $} (d);
		\end{tikzpicture}
	\end{equation*}
	Suppose that all stacks in the diagram are coherent, that $h$ is of ind-finite cohomological dimension, and that $f$ is ind-proper and almost ind-finitely presented. Then for any $\cF \in \IndCoh(Y')$ the Beck-Chevalley map $h'_* f'^!(\cF) \to f^! h_*(\cF)$ is an isomorphism. 
\end{Proposition}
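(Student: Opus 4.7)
The plan is to proceed in three steps: (i) reduce to coherent test objects by continuity, (ii) prove the geometric case via adjoint mates in $\QCoh$, and (iii) reduce the general case to the geometric case by factoring through reasonable geometric substacks.

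For step (i), both $h'_* f'^!$ and $f^! h_*$ are continuous functors $\IndCoh(Y') \to \IndCoh(X)$. Indeed $f^!$ and $f'^!$ are continuous by Proposition \ref{prop:upper!almostcontind} (since $f, f'$ are ind-proper and almost ind-finitely presented between coherent ind-geometric stacks), while $h_*, h'_*$ are continuous since $h, h'$ are of ind-finite cohomological dimension. Because $\IndCoh(Y')$ is compactly generated by $\Coh(Y')$ (Proposition \ref{prop:IndCohisIndofCoh}), it suffices to verify the Beck-Chevalley isomorphism on $\cF \in \Coh(Y')$.

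For step (ii), assume all four stacks are coherent geometric, so that $f$ is of finite cohomological dimension by Proposition \ref{prop:propprops}. Then $f^!$ is left bounded and $h_*$ is left t-exact, placing both sides of the Beck-Chevalley map on $\cF \in \Coh(Y') \subset \IndCoh(Y')^+$ in $\IndCoh(X)^+ \cong \QCoh(X)^+$ via $\Psi_X$. It thus suffices to verify the $\QCoh$ statement $f^!_{QC} h_{QC*} \cong h'_{QC*} f'^!_{QC}$; but this is the mate, under the adjunctions $(h^*_{QC} \dashv h_{QC*})$, $(h'^*_{QC} \dashv h'_{QC*})$, $(f_{QC*} \dashv f^!_{QC})$, $(f'_{QC*} \dashv f'^!_{QC})$ (the latter two supplied by the adjoint functor theorem, since $f_{QC*}, f'_{QC*}$ are colimit-preserving), of the $\QCoh$ base change $h^*_{QC} f_{QC*} \cong f'_{QC*} h'^*_{QC}$ furnished by Proposition \ref{prop:fcdprops}.

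For step (iii), I would first prove the intermediate claim: if $\iota: Z \hookrightarrow Y$ is any reasonable geometric substack, with base changes $\iota': W \hookrightarrow X$ and $f_Z: W \to Z$ along $f$, then the Beck-Chevalley map $\iota'_* f_Z^! \to f^! \iota_*$ is an isomorphism. Using Proposition \ref{prop:indcohonindgstks} and the equivalence $\IndCoh(X) \cong \lim_\al \IndCoh(X_\al)$ under $\{i_\al^!\}$, it suffices to check after applying $i_\al^!$ for each $\al$ in a reasonable presentation $X \cong \colim X_\al$. For $\al$ large enough that $f \circ i_\al = j_\al \circ f_\al$ factors through a reasonable geometric substack $j_\al: Y_\al \hookrightarrow Y$ containing $Z$, the composition-of-right-adjoints identity $i_\al^! f^! \cong f_\al^! j_\al^!$ combined with $j_\al^! \iota_* \cong \widetilde{\iota}_*$ and $i_\al^! \iota'_* \cong \widetilde{\iota}'_*$ (full faithfulness of closed-immersion pushforwards, where $\widetilde{\iota}: Z \to Y_\al$ and $\widetilde{\iota}': W \to X_\al$ are the induced immersions) reduces the claim to the geometric case of (ii) on the Cartesian subsquare $\{W \to Z;\, X_\al \to Y_\al\}$. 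Granting this, for a general $\cF \cong j_{0*} \cF_0 \in \Coh(Y')$ with $j_0: Y'_0 \hookrightarrow Y'$ a reasonable geometric substack, use Proposition \ref{prop:factorthroughgeometric} and Definition \ref{def:indP} to select a reasonable geometric substack $j'_0: Y_0 \hookrightarrow Y$ so that $h \circ j_0 = j'_0 \circ h_0$ with $h_0: Y'_0 \to Y_0$ of finite cohomological dimension; set $X_0 := X \times_Y Y_0$, $X'_0 := X' \times_{Y'} Y'_0$ with associated base-change maps $i_0, i'_0, h'_0, f_0, f'_0$, noting that $X_0, X'_0$ are coherent geometric by Proposition \ref{prop:cohbasechangeunderafpclimm} (using coherence of $X, X'$). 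Chasing the Beck-Chevalley through the cube yields
\[
h'_* f'^!(\cF) \cong h'_* i'_{0*} f'^!_0(\cF_0) \cong i_{0*} h'_{0*} f'^!_0(\cF_0) \cong i_{0*} f_0^! h_{0*}(\cF_0) \cong f^! j'_{0*} h_{0*}(\cF_0) \cong f^! h_*(\cF),
\]
where the first and fourth invoke the intermediate claim (for $f'$ and $f$, respectively), the second is $h' \circ i'_0 = i_0 \circ h'_0$, and the third is the geometric case of (ii) applied to the Cartesian subsquare $\{X'_0 \to Y'_0;\, X_0 \to Y_0\}$.

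The main obstacle will be functorial bookkeeping: verifying that the isomorphisms produced in this chain are the canonical Beck-Chevalley transformations rather than merely abstract equivalences. This reduces to a compatibility of adjoint mates under composition of Cartesian squares, in the same spirit as Lemma \ref{lem:acplcpladjunction}; the identification holds at each step by naturality of the mate construction, but requires careful tracking of which units and counits are being composed through the successive reductions.
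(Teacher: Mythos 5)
Your steps (i) and (ii) are exactly the paper's first reduction and its geometric case (adjoint mates of the $\QCoh$ base change supplied by Propositions \ref{prop:propprops} and \ref{prop:fcdprops}), but your step (iii) contains a genuine error. The proof of your intermediate claim rests on the isomorphisms $j_\al^! \iota_* \cong \widetilde{\iota}_*$ and $i_\al^! \iota'_* \cong \widetilde{\iota}'_*$, which amount to $j^! j_* \cong \id$ for a closed immersion $j$. This is false at the derived level: pushforward along a closed immersion is fully faithful on hearts but not on $\IndCoh$ (for $j: \Spec \kk \to \A^1$ the Koszul resolution gives $j^! j_*(\kk) \cong \kk \oplus \kk[-1]$), and indeed the paper itself computes $i_\al^! i_{\al*} \cong \colim_{\be \geq \al} i_{\al\be}^! i_{\al\be*}$ in the proof of Proposition \ref{prop:CohsubcatofIndCoh}, which is not the identity. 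The paper's route around this is to never evaluate $i_\al^!$ against a pushforward directly in these cases: it instead expresses both substack-inclusion cases ($h$ a substack inclusion with $f$ geometric, and $f$ a substack inclusion with $h$ arbitrary) as filtered colimits of right-adjointable squares, using $\IndCoh(-) \cong \colim \IndCoh((-)_\al)$ in $\PrL$ (Proposition \ref{prop:indcohonindgstks}) and concluding by \cite[Prop. 4.7.5.19]{LurHA}. One could salvage your limit-side computation using the colimit formula for $i_\al^! i_{\al*}$ together with almost continuity (Proposition \ref{prop:upper!almostcontind}), but that is essentially re-deriving the adjointable-colimit argument.

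There is a second gap in the final chain: your third isomorphism applies ``the geometric case of (ii)'' to the square $\{X'_0 \to Y'_0;\ X_0 \to Y_0\}$, but $X_0 = X \times_Y Y_0$ and $X'_0 = X' \times_{Y'} Y'_0$ are only ind-geometric, not geometric — take $f: \P^\infty \to \Spec \kk$ with $Y_0 = \Spec \kk$, so that $X_0 = \P^\infty$ — so (ii) does not apply, and neither does your intermediate claim (the vertical maps $h_0$, $h'_0$ are not substack inclusions). This is precisely why the paper establishes the mixed case ``$X$, $Y$ geometric, $Y'$ ind-geometric'' (its third paragraph, via writing $\cF \cong i_{\al*}(\cF_\al)$ and a two-out-of-three argument) before assembling its cube, whose geometric vertices are terms $X_\al$ of a presentation of the \emph{source} $X$ together with factorizations $f_\al: X_\al \to Y_\al$; the non-geometric vertices $X'_\al = X_\al \times_X X'$, $Y'_\al = Y_\al \times_Y Y'$ then only ever appear in squares covered by the previously established mixed cases. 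Your cube instead fixes geometric substacks of $Y$ and $Y'$ and pulls back all of $X$, which forces ind-geometric sources into the one square where you need the geometric case. Finally, note that each auxiliary application requires coherence of the fiber products involved, which must be tracked via Proposition \ref{prop:cohbasechangeunderafpclimm} at every step, as the paper does.
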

\begin{proof}
By Proposition \ref{prop:upper!almostcontind}, all functors in the statement are continuous, hence by coherence of~$Y$ it suffices to show the claim for $\cF \in \Coh(Y)$. When $X$, $Y$, and $Y'$ are geometric, the functors in the statement are left bounded and compatible with the equivalences $\IndCoh(-)^+ \cong \QCoh(-)^+$, hence it suffices to show the map $h'_{QC*} f'^!_{QC}(\cF) \to f^!_{QC} h_{QC*}(\cF)$ is an isomorphism. But this follows since it is obtained from the isomorphism $f'_{QC*} h'^*_{QC} \congto  h_{QC}^* f_{QC*}$ in $\Fun(\QCoh(X),\QCoh(Y'))$ by taking right adjoints. 

Next suppose that $f$ is geometric and that $h$ is the inclusion of a reasonable geometric substack, which we may assume is a term in a reasonable presentation $Y \cong \colim Y_\al$. Writing $f_\al: X_\al \to Y_\al$ and $i'_{\al\be}: X_\al \to X_\be$ for the base changes of $f$ and $i_{\al\be}$, the functors $f^!_\al$, $i^!_{\al\be}$, and $i'^!_{\al\be}$ form a diagram $(A \times \Delta^1)^{\op} \to \Cathatinfty$, where $A$ is our index category. Each $X_\al$ is coherent by Proposition~\ref{prop:cohbasechangeunderafpclimm} and is geometric since $f$ is geometric, hence by the first paragraph $i'_{\al\be*} f^!_\al \to f^!_\be i_{\al\be*}$ is an isomorphism for all $\al \leq \be$. Since  $\IndCoh(X) \cong \colim \IndCoh(X_\al)$ and  $\IndCoh(Y) \cong \colim \IndCoh(Y_\al)$ in $\PrL$ (Proposition~\ref{prop:indcohonindgstks}), the claim follows by \cite[Prop. 4.7.5.19]{LurHA}. 

Suppose now that $X$ and $Y$ are geometric (hence so are $f$ and $f'$), and let $Y' \cong \colim Y'_\al$ be a reasonable presentation. Since $\cF \in \Coh(Y')$ we have $\cF \cong i_{\al*}(\cF_\al)$ for some~$\al$ and some $\cF_\al \in \Coh(Y'_\al)$. We want to show the second factor of 
\begin{equation*}
	 h'_* i'_{\al*} f^!_\al(\cF_\al) \to h'_* f'^! i_{\al*}(\cF_\al) \to f^! h_*  i_{\al*}(\cF_\al)
\end{equation*}
is an isomorphism, where $f_\al: X'_\al \to Y'_\al$ is the base change of $f$. Given that $X'_\al$ is coherent by Proposition~\ref{prop:cohbasechangeunderafpclimm}, this follows since the first factor is an isomorphism by the previous paragraph and the composition is by the first paragraph. 

Next suppose that that $f$ is the inclusion of a reasonable geometric substack, which again we may assume is a term in a reasonable presentation $Y \cong \colim Y_\al$. Writing $h_\al: Y'_\al \to Y_\al$ and $i'_{\al\be}: Y'_\al \to Y'_\be$ for the base changes of $h$ and $i_{\al\be}$, the functors $h_{\al*}$, $i_{\al\be*}$, and $i'_{\al\be*}$ form a diagram $A \times \Delta^1 \to \PrL$, where $A$ is our index category. Each $Y'_\al$ is coherent by Proposition~\ref{prop:cohbasechangeunderafpclimm}, hence by the previous paragraph $h_{\al*} i'^!_{\al\be} \to h_{\be*}  i^!_{\al\be}$ is an isomorphism for all $\al \leq \be$. Since $\IndCoh(Y') \cong \colim \IndCoh(Y'_\al)$ and  $\IndCoh(Y) \cong \colim \IndCoh(Y_\al)$ in $\PrL$ (Proposition~\ref{prop:indcohonindgstks}), the claim follows by \cite[Prop. 4.7.5.19]{LurHA}. 

In the general case, let $X \cong \colim X_\al$ be a reasonable presentation. For any $\al$ we can factor $f \circ i_\al$ through some reasonable geometric substack $j_\al: Y_\al \to Y$, and have a diagram
\begin{equation*}
	\begin{tikzpicture}[baseline=(current  bounding  box.center),thick,>=\arrtip]
			\newcommand*{\ha}{1.5}; \newcommand*{\hb}{1.5}; \newcommand*{\hc}{1.5};
\newcommand*{\va}{-.9}; \newcommand*{\vb}{-.9}; \newcommand*{\vc}{-.9}; 
		\node (ab) at (\ha,0) {$X'_\al$};
		\node (ad) at (\ha+\hb+\hc,0) {$Y'_\al$};
		\node (ba) at (0,\va) {$X'$};
		\node (bc) at (\ha+\hb,\va) {$Y'$};
		\node (cb) at (\ha,\va+\vb) {$X_\al$};
		\node (cd) at (\ha+\hb+\hc,\va+\vb) {$Y_\al$};
		\node (da) at (0,\va+\vb+\vc) {$X$};
		\node (dc) at (\ha+\hb,\va+\vb+\vc) {$Y$};
		\draw[->] (ab) to node[above] {$f'_\al $} (ad);
		\draw[->] (ab) to node[above left, pos=.25] {$i'_\al $} (ba);
		\draw[->] (ab) to node[right,pos=.2] {$h'_\al $} (cb);
		\draw[->] (ad) to node[below right] {$j'_\al $} (bc);
		\draw[->] (ad) to node[right] {$h_\al $} (cd);
		\draw[->] (ba) to node[left] {$h' $} (da);
		\draw[->] (cb) to node[above,pos=.25] {$f_\al $} (cd);
		\draw[->] (cb) to node[above left, pos=.25] {$i_\al $} (da);
		\draw[->] (cd) to node[below right] {$j_\al $} (dc);
		\draw[->] (da) to node[above,pos=.75] {$f $} (dc);
		
		\draw[-,line width=6pt,draw=white] (ba) to  (bc);
		\draw[->] (ba) to node[above,pos=.75] {$ $} (bc);
		\draw[-,line width=6pt,draw=white] (bc) to  (dc);
		\draw[->] (bc) to node[right,pos=.2] {$h $} (dc);
	\end{tikzpicture}
\end{equation*} 
with all faces but the top and bottom Cartesian. We then have a diagram
\begin{equation*}
	\begin{tikzpicture}
		[baseline=(current  bounding  box.center),thick,>=\arrtip]
		\newcommand*{\ha}{3.5}; \newcommand*{\hb}{3.5};
		\newcommand*{\va}{-1.5};
		\node (aa) at (0,0) {$h'_{\al*} i'^!_\al f'^!(\cF)$};
		\node (ab) at (\ha,0) {$i^!_\al h'_* f'^!(\cF)$};
		\node (ac) at (\ha+\hb,0) {$i^!_\al f^! h_*(\cF)$};
		\node (ba) at (0,\va) {$h'_{\al*} f'^!_\al j'^!_{\al}(\cF)$};
		\node (bb) at (\ha,\va) {$ f^!_\al h_{\al*} j'^!_{\al}(\cF)$};
		\node (bc) at (\ha+\hb,\va) {$f^!_\al j^!_{\al}  h_* (\cF)$};
		\draw[->] (aa) to node[above] {$ $} (ab);
		\draw[->] (ab) to node[above] {$  $} (ac);
		\draw[->] (ba) to node[above] {$ $} (bb);
		\draw[->] (bb) to node[above] {$ $} (bc);
		\draw[->] (aa) to node[below,rotate=90] {$\sim $} (ba);
		\draw[->] (ac) to node[below,rotate=90] {$\sim $} (bc);
	\end{tikzpicture} 
\end{equation*}
in $\IndCoh(Y_\al)$. Since the functors $i^!_\al$ determine an isomorphism $\IndCoh(Y) \cong \lim \IndCoh(Y_\al)$ in  $\Cathatinfty$, it suffices to show the top right arrow 
is an isomorphism for all $\al$.  But, given that $X'_\al$ and $Y'_\al$ are coherent by Proposition \ref{prop:cohbasechangeunderafpclimm}, the top left and bottom right arrows are isomorphisms by the previous paragraph, and the bottom left is by the third paragraph. 
\end{proof}

As mentioned above, the coherence of $X$, $Y$, and $Y'$ does not imply that of $X'$, though in motivating applications $X$, $Y$, and $Y'$ satisfy stronger hypotheses that do imply this. A basic case is when $Y'$ is locally Noetherian, in which case so is $X'$ by the hypotheses on $f$. More generally, we will see in \cite{CWtm} that if $Y'$ is ind-tamely presented and $h$ is affine, then coherence of $X'$ follows from that of $X$. 

One can remove the coherence hypotheses in Proposition \ref{prop:up!low*geom} at the cost of adding other hypotheses, and in particular adding boundedness conditions on $\cF$. The most obvious complication that arises is that $f^!$ need not take $\IndCoh(Y)^+$ to $\IndCoh(X)^+$ unless $f$ is of finite (rather than merely ind-finite) cohomological dimension. 

Beyond imposing this condition on $f$, we can also note that $f^!$ does in general take $\IndCoh(Y)^+_{lim}$ to $\IndCoh(X)^+_{lim}$, where $\IndCoh(Y)^{+}_{lim} \subset  \IndCoh(Y)$ is the full subcategory of $\cF$ such that $i^!(\cF) \in \IndCoh(Y'')^+$ for every truncated geometric substack $i: Y'' \to Y$. Though $h_*$ does not in general take  $\IndCoh(Y')^+_{lim}$ to $\IndCoh(Y)^+_{lim}$, it does if we further assume $h$ is formally geometric. Here we say an ind-geometric stack $Z$ is formally geometric if its underlying classical stack $\tau_{\leq 0}Z$ is geometric, and we define formally geometric morphisms by base change to affine schemes. For example, the inclusion of any truncated geometric substack is a formally geometric morphism. With significant additional care, the proof of Proposition \ref{prop:up!low*geom} can then be extended to show the following. 

\begin{Proposition}\label{prop:up!low*indgeomgeneral}
	Let the following be a Cartesian diagram of ind-geometric stacks.
	\begin{equation*}
		\begin{tikzpicture}
			[baseline=(current  bounding  box.center),thick,>=\arrtip]
			\node (a) at (0,0) {$X'$};
			\node (b) at (3,0) {$Y'$};
			\node (c) at (0,-1.5) {$X$};
			\node (d) at (3,-1.5) {$Y$};
			\draw[->] (a) to node[above] {$f' $} (b);
			\draw[->] (b) to node[right] {$h $} (d);
			\draw[->] (a) to node[left] {$h' $}(c);
			\draw[->] (c) to node[above] {$f $} (d);
		\end{tikzpicture}
	\end{equation*}
	Suppose that  $X$ and $Y$ are reasonable, that $h$ is of ind-finite cohomological dimension, and that $f$ is ind-proper and almost ind-finitely presented. 
	Suppose also that $f$ is of finite cohomological dimension  (resp. that $h$ is formally geometric). 
	Then for any $\cF \in \IndCoh(Y')^+$ (resp. $\cF \in \IndCoh(Y')^+_{lim}$) the Beck-Chevalley map $h'_* f'^!(\cF) \to f^! h_*(\cF)$ is an isomorphism. 
\end{Proposition}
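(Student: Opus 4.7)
The plan is to adapt the proof of Proposition~\ref{prop:up!low*geom}, replacing its initial reduction to coherent sheaves (via compact generation) with an analysis that respects the boundedness hypotheses on $\cF$.

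When $X, Y, Y'$ are all geometric stacks, $X'$ is also geometric (Proposition~\ref{prop:gstkprops}), $h$ has finite cohomological dimension (Proposition~\ref{prop:indPconsistency}), and Proposition~\ref{prop:fcdprops} provides the base change iso $f'_* h'^* \cong h^* f_*$ of functors $\IndCoh(X) \to \IndCoh(Y')$. Taking mates under the adjunctions $f_* \dashv f^!$ and $h^* \dashv h_*$ (and analogues for $f', h'$), the Beck-Chevalley map $h'_* f'^!(\cF) \to f^! h_*(\cF)$ is then an isomorphism for every $\cF \in \IndCoh(Y')$, unconditionally. This serves as the base case.

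The reductions of Paragraphs 3--6 of Proposition~\ref{prop:up!low*geom}'s proof --- which successively treat $h$ a reasonable geometric substack inclusion with $f$ geometric, then $X, Y$ geometric, then $f$ a reasonable geometric substack inclusion, and finally the general case --- still apply, but now with care for boundedness. In place of writing $\cF \cong i_{\al*}(\cF_\al)$ with $\cF_\al \in \Coh(Y'_\al)$ (which required coherence), I will use Lemma~\ref{lem:colimpres} to write $\cF \cong \colim_\al i_{\al*} i_\al^!(\cF)$ for any reasonable presentation $Y' \cong \colim_\al Y'_\al$, and verify the Beck-Chevalley map is an iso termwise on each $i_{\al*}i_\al^!(\cF)$ before passing to the colimit. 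In case~(a), where $f^!$ is left bounded (as $f$ has finite cohomological dimension) and $\cF \in \IndCoh(Y')^{\geq n}$ for some uniform $n$, each $i_\al^!(\cF)$ lies in $\IndCoh(Y'_\al)^{\geq n}$; since $i_{\al*}$ is t-exact and $\IndCoh(Y')^{\geq n}$ is closed under filtered colimits, Proposition~\ref{prop:upper!almostcontind} permits commuting $f^!$ and $f'^!$ past the colimit, and the intermediate reductions parallel those of the original proof (now using the unconditional geometric base case in place of Paragraph~2 there).

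Case~(b), with $\cF \in \IndCoh(Y')^+_{lim}$ and $h$ formally geometric, is the principal obstacle: the boundedness is not uniform, $f^!$ need not be left bounded, and the colimit-commutation step of case~(a) breaks down. The strategy is to verify the asserted iso in $\IndCoh(X)^+_{lim}$ by $!$-restricting to each truncated geometric substack $j: X'' \to X$ and invoking Proposition~\ref{prop:IndCohindGStktstructure}. The formally-geometric hypothesis on $h$ guarantees that $h_*(\cF) \in \IndCoh(Y)^+_{lim}$, so that $j^! f^! h_*(\cF) \cong (f \circ j)^!(h_* \cF)$ lies in $\IndCoh(X'')^+$; symmetrically, $j^! h'_* f'^!(\cF)$ is bounded below via the preservation of $+_{lim}$ by $f'^!$ and $h'_*$. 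A sequence of base change identities --- built from the geometric base case above, Corollary~\ref{cor:upper!ftdbasechangeIndCoh}, and the already-established case~(a) --- then reduces the $j^!$-restricted assertion to an iso of bounded-below objects on truncated geometric pieces. The principal care required is to organise these nested base changes so that each invocation of case~(a) or of the geometric base case genuinely falls within its hypotheses, and to verify that formal-geometricity supplies exactly the bounded-below control needed at each step.
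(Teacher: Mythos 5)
The paper never actually proves this proposition: it is stated after the remark that ``with significant additional care, the proof of Proposition \ref{prop:up!low*geom} can then be extended,'' and the details are omitted. Your skeleton---rerun the reductions of Proposition \ref{prop:up!low*geom}, replacing the coherence/compact-generation step with boundedness bookkeeping---is the intended route, but as written it contains two genuine errors and leaves the hard part unexecuted.

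First, your geometric base case is wrong as stated. For $h$ merely of finite cohomological dimension there is no functor $h^*$ on $\IndCoh$ at all: ind-coherent $*$-pullback is defined only for morphisms of finite Tor-dimension (Definition \ref{def:IndCohGStk}), and the adjunction $h^* \dashv h_*$ under which you take mates is Proposition \ref{prop:up*low*adj}, which likewise requires finite Tor-dimension. The adjoint-transposition argument is valid in $\QCoh$ (via Proposition \ref{prop:fcdprops}), but transporting it to $\IndCoh$ through $\Psi$ works only on bounded-below objects, where $\IndCoh(-)^+ \cong \QCoh(-)^+$; on unbounded objects anticompletion changes the category and $f^!_{IC}$ is uncontrolled. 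This is precisely why the statement carries the hypothesis $\cF \in \IndCoh(Y')^+$: in Proposition \ref{prop:up!low*geom} the unconditional geometric case rested on coherence (continuity of all functors plus compact generation by $\Coh$), which you have discarded. Your case (a) happens to survive because you only ever apply the base case to uniformly bounded-below objects, but the word ``unconditionally'' must go.

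Second, in the ind-geometric reductions you take a reasonable presentation of $Y'$ and invoke almost continuity of $f'^!$ via Proposition \ref{prop:upper!almostcontind}. But the proposition assumes only that $X$ and $Y$ are reasonable; $Y'$ (hence $X'$) is an arbitrary ind-geometric stack, so no reasonable presentation exists and Proposition \ref{prop:upper!almostcontind} applies neither to $f'$ nor to the substack inclusions of $Y'$. The decomposition $\cF \cong \colim i_{\al*} i_\al^!(\cF)$ (Lemma \ref{lem:colimpres} with Proposition \ref{prop:indcohonindgstks}) is still available for an arbitrary ind-geometric presentation, and the right-hand side $f^! h_*$ does commute with it (continuity and left t-exactness of $h_*$, plus almost continuity of $f^!$, which is legitimate since $X$, $Y$ are reasonable and $f$ has finite cohomological dimension in case (a)); but commuting $f'^!$ past the colimit on the left-hand side is unjustified. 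You would need to reorganize---for instance, testing the map after $i_\al^!$ along a presentation of $X$, as in the final paragraph of the proof of Proposition \ref{prop:up!low*geom}, so that almost-continuity results are only ever invoked for $f$ and for inclusions on the reasonable side. Finally, your case (b) is a plan rather than a proof: the conservativity of the $!$-restrictions and the $\IndCoh(-)^+_{lim}$-preservation statements are the easy part, while the nested base-change verification you defer in your last sentence is exactly the ``significant additional care'' the paper alludes to, so nothing there is established.
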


\section{External products and sheaf Hom}\label{sec:sheafHom}

Given a geometric stack $Y$, sheaf Hom from $\cF \in \QCoh(Y)$ is defined by the adjunction
\begin{equation*}
	- \otimes \cF : \QCoh(Y) \leftrightarrows \QCoh(Y) : \cHom(\cF, -).
\end{equation*}
If $Y$ is a reasonable ind-geometric stack and $\cF \in \Coh(Y)$, there is still a natural functor $\cHom(\cF, -): \IndCoh(Y) \to \IndCoh(Y)$, despite the absence of a tensor product of ind-coherent sheaves in general. This is because we do still have an external tensor product, and for any ind-geometric $X$ we can consider the adjunction 
\begin{equation*}
	- \boxtimes \cF : \IndCoh(X) \leftrightarrows \IndCoh(X \times Y) : (- \boxtimes \cF)^R.
\end{equation*}
To make explicit their dependence on $X$, we will denote these functors by $e_{\cF,X}$ and $e_{\cF,X}^R$. When $X$ and $Y$ are geometric, we have an isomorphism $\cHom(\cF, -) \cong e_{\cF,Y}^R \Delta_{Y*}$, and this formula provides a useful definition of sheaf Hom in the ind-geometric setting. 

In this section we show that ind-coherent sheaf Hom, and more fundamentally the functor~$e_{\cF,X}^R$, retains many good properties from the quasi-coherent setting. In particular, it is almost continuous (Proposition \ref{prop:eFXRalmostcont}) and compatible with pushforward (Propositions \ref{prop:eFXRlower*} and \ref{prop:indhomrelationcoherent}). We also show the ind-coherent external product is itself compatible with ind-proper $!$-pullback (Proposition \ref{prop:eFXupper!compatindgeom}). A basic technical theme is the close analogy between $e_{\cF,X}^R$ and ind-proper $!$-pullback, with various key proofs in this section being parallel to corresponding proofs in Section~\ref{sec:upper!}. 

\subsection{Quasi-coherent sheaf Hom} 
We begin by collecting some basic facts about quasi-coherent sheaf Hom on geometric stacks. An immediate compatibility is that for any morphism $f: X \to Y$ in $\GStkk$ and any $\cF \in \QCoh(Y)$, the isomorphism $f^*(- \otimes \cF) \cong f^*(-) \otimes f^*(\cF)$ gives rise to an isomorphism of right adjoints
\begin{equation}\label{eq:QCHomupper*}
	\cHom(\cF, f_*(-)) \cong f_* \cHom(f^*(\cF), -).
\end{equation}
We also have the following results, which generalize various special cases in the literature.  For example, when $h$ is flat a different proof of Proposition \ref{prop:cHomftdbasechange} is given in \cite[Prop. 9.5.3.3]{LurSAG}, and when the objects involved are schemes the result appears as \cite[Lem. 0AA7]{Sta}. 


\begin{Proposition}\label{prop:cHomalmostcont}
	If $Y$ is a geometric stack and $\cF \in \QCoh(Y)$ is almost perfect, then $\cHom(\cF,-): \QCoh(Y) \to \QCoh(Y)$ is left bounded and almost continuous. 
\end{Proposition}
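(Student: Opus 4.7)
The plan is to follow the same template as the proofs of Propositions~\ref{prop:lower*almostcont} and \ref{prop:upper!almostcont}: first establish the result on an affine base, then reduce the general case to the affine case via a flat cover and a base-change isomorphism for $\cHom$.

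Step 1 (affine case). Suppose $Y = \Spec A$, so $\cF \in \Mod_A$ is almost perfect; in particular $\cF \in \Mod_A^{\leq m}$ for some $m$, and each $\tau^{\geq k}\cF$ is compact in $\Mod_A^{\geq k}$. For left boundedness, note that $\cF[m] \in \Mod_A^{\leq 0}$, so $-\otimes \cF[m]$ is right $t$-exact; its right adjoint $\cHom(\cF[m],-)$ is then left $t$-exact, and $\cHom(\cF,-) \cong \cHom(\cF[m],-)[m]$ carries $\Mod_A^{\geq 0}$ into $\Mod_A^{\geq -m}$. For almost continuity (which reduces to $n=0$ by shifting), the fiber sequence $\tau^{\leq -1}\cF \to \cF \to \tau^{\geq 0}\cF$ and orthogonality of the $t$-structure give $\cHom(\cF, \cG) \cong \cHom(\tau^{\geq 0}\cF, \cG)$ for $\cG \in \Mod_A^{\geq 0}$, and $\tau^{\geq 0}\cF$ is compact in $\Mod_A^{\geq 0}$. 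Continuity of the enriched Hom out of a compact object on $\Mod_A^{\geq 0}$ then follows as in the proof of Lemma~\ref{lem:lower*almostcontaffine}, by applying \cite[Prop. 5.5.7.2]{LurHTT} to the restricted left adjoint.

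Step 2 (base change along flat pullback). For a flat morphism $h: X \to Y$ of geometric stacks and $\cF$ almost perfect on $Y$, I will show the Beck-Chevalley map $h^* \cHom(\cF, \cG) \to \cHom(h^*\cF, h^*\cG)$ is an isomorphism for all $\cG \in \QCoh(Y)^+$. When $\cF$ is perfect this is immediate from $\cHom(\cF,-) \cong \cF^\vee \otimes -$ together with the compatibility of $h^*$ with tensor products and duals. The almost-perfect case can then be obtained by approximating $\cF$ by perfect modules: for each $N$ there is a perfect $\cF_N \to \cF$ whose fiber lies in $\Mod_A^{\leq -N}$, so by the boundedness from Step~1 both sides of the Beck-Chevalley map agree in any prescribed cohomological range once $N$ is large enough. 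Alternatively one can cite \cite[Prop. 9.5.3.3]{LurSAG}.

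Step 3 (reduction). Given general geometric $Y$, let $h: \Spec A \to Y$ be a faithfully flat cover. For a filtered colimit $\cG \cong \colim \cG_\al$ in $\QCoh(Y)^{\geq 0}$, both $\colim \cHom(\cF, \cG_\al)$ and $\cHom(\cF, \cG)$ lie in $\QCoh(Y)^{\geq -m}$ by Step 1, and $h^*$ restricted to $\QCoh(Y)^+$ is continuous, $t$-exact, and conservative. So it suffices to check the canonical map becomes an isomorphism after $h^*$. Applying Step~2 on both sides converts this to $\colim \cHom(h^*\cF, h^*\cG_\al) \to \cHom(h^*\cF, h^*\cG)$ in $\Mod_A$, which is an isomorphism by Step~1 since $h^*\cF$ is almost perfect on $\Spec A$ and $h^*$ is $t$-exact. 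Left boundedness of $\cHom(\cF,-)$ on $Y$ reduces to the affine case the same way.

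The main obstacle will be Step~2: establishing base change for $\cHom$ along flat pullback when $\cF$ is merely almost perfect rather than perfect. Everything else is a routine application of the template from Section~\ref{sec:geompropsofmors} and standard $t$-structure manipulations, but the approximation argument (or the citation replacing it) carries the real content of the proof.
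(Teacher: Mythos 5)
Your three-step architecture is exactly the paper's (affine case, base change for $\cHom$ along the cover, then descent; your Step 3 coincides essentially verbatim with the paper's final argument, and your left-boundedness argument is fine), but Step 1 contains a false claim. The reduction $\cHom(\cF,\cG) \cong \cHom(\tau^{\geq 0}\cF, \cG)$ for $\cG \in \Mod_A^{\geq 0}$ does not hold: orthogonality of the t-structure kills $\pi_0$ of the mapping \emph{space}, not the internal Hom. The fiber sequence runs $\cHom(\tau^{\geq 0}\cF,\cG) \to \cHom(\cF,\cG) \to \cHom(\tau^{\leq -1}\cF,\cG)$, and the third term lies in $\Mod_A^{\geq 1}$ but is generally nonzero: take $A = \kk$ a field, $\cF = \kk[1]$, $\cG = \kk$, so that $\tau^{\geq 0}\cF \cong 0$ while $\cHom(\cF,\cG) \cong \kk[-1] \neq 0$. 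Moreover, even granting the reduction, compactness of $\tau^{\geq 0}\cF$ in $\Mod_A^{\geq 0}$ is not the hypothesis \cite[Prop. 5.5.7.2]{LurHTT} requires: that result asks the \emph{left adjoint} to preserve compact objects, i.e. one needs $\tau^{\geq 0}(P \otimes \cF)$ compact in $\Mod_A^{\geq 0}$ for \emph{every} perfect $P$, and compactness of $\tau^{\geq 0}\cF$ is only the case $P = A$ (internal-Hom continuity must be tested against all compact generators). The paper's Lemma \ref{lem:cHomalmostcontaffine} avoids truncating $\cF$ altogether: apply \cite[Prop. 5.5.7.2]{LurHTT} directly to $\tau^{\geq n}(\cF \otimes -): \QCoh(Y) \to \QCoh(Y)^{\geq n}$, which preserves compacts because perfect tensor almost perfect is almost perfect. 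Your Step 1 is repairable by this one-line substitution, but as written it both asserts a false isomorphism and misapplies the cited proposition.

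Step 2 has a second gap: the approximation ``for each $N$ there is a perfect $\cF_N \to \cF$ with fiber in $\Mod_A^{\leq -N}$'' is an affine statement, yet in Step 3 you need base change along $h: \Spec A \to Y$ with $Y$ an arbitrary geometric stack, where $\cF$ lives on $Y$; almost perfect sheaves on a general geometric stack need not admit global perfect approximations (this is a resolution-property-type assumption, not automatic). Your fallback citation of \cite[Prop. 9.5.3.3]{LurSAG} does rescue the flat case --- the paper itself points to it as an alternative proof --- so the proof closes via the citation, and your self-diagnosis that this is where the real content sits is accurate. The paper instead proves the stronger finite-Tor-dimension statement (Proposition \ref{prop:cHomftdbasechange}, needed later anyway) by running the perfect-approximation argument only affine-locally (Lemma \ref{lem:cHomaffineftdbasechange}, where \cite[Prop. 9.6.7.1]{LurSAG} supplies the approximations as filtered colimits of perfects of bounded Tor-dimension) and then descending along the Cech nerve of the cover via \cite[Cor. 4.7.5.18]{LurHA} (Lemma \ref{lem:cHomflatcoverbasechange}); that Cech step is precisely what substitutes for the global approximations your argument lacks.
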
 

\begin{Proposition}\label{prop:cHomftdbasechange}
	Let $X$ and $Y$ be geometric stacks, $h: X \to Y$ a morphism of finite Tor-dimension, and $\cF \in \QCoh(Y)$ an almost perfect sheaf. Then the Beck-Chevalley map $h^* \cHom(\cF, \cG) \to \cHom(h^*(\cF), h^*(\cG))$ is an isomorphism for all $\cG \in \QCoh(Y)^+$. 
\end{Proposition}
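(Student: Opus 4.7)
The plan is to follow the three-step template used in the proofs of Propositions \ref{prop:lower*ftdbasechange} and \ref{prop:upper!ftdbasechange}: dispose of the affine case, upgrade to faithfully flat morphisms with affine source, and reduce the general case by a diagram chase over a conservative flat cover.

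For the affine case $Y = \Spec A$, $X = \Spec B$, let $\theta_\cF$ denote the comparison map $B \otimes_A \cHom_A(\cF, \cG) \to \cHom_B(B \otimes_A \cF, B \otimes_A \cG)$. For perfect $\cF$ this is an isomorphism because $\cHom_A(\cF, -) \cong \cF^\vee \otimes_A -$ is base-change compatible. For almost perfect $\cF$, I would invoke the standard approximation producing, for each $n$, a perfect $P$ and a map $P \to \cF$ whose fiber lies in $\Mod_A^{\leq -n}$. Normalizing so that $\cG \in \Mod_A^{\geq 0}$ and letting $d$ bound the Tor-dimension of $B$ over $A$, the fiber sequences obtained by applying $\cHom_A(-, \cG)$ and $\cHom_B(-, h^*\cG)$ to the sequence $\mathrm{fib} \to P \to \cF$ have their rightmost terms in $\Mod^{\geq n}$ and $\Mod^{\geq n - d}$ respectively, and likewise after applying $B \otimes_A -$ on the left side of $\theta$. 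Consequently, in any given cohomological degree $m$, choosing $n > m + d$ reduces the isomorphy of $\theta_\cF$ in degree $m$ to that of $\theta_P$.

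Next, I would establish the flat-cover variant, analogous to Lemma \ref{lem:upper!flatcoverbasechange}: the statement holds when $X$ is affine and $h : X \to Y$ is faithfully flat. Taking the Cech nerve $X_\bullet$ of $h$ (each $X_k$ affine because $Y$ has affine diagonal), the $\cHom$ functors and their Beck-Chevalley transitions assemble into a diagram $\Delta^1 \times \Delta_s \to \Cathatinfty$ which is pointwise an isomorphism by the affine case applied to each transition. Since faithfully flat descent gives $\QCoh(Y)^+ \cong \lim_{\Delta_s} \QCoh(X_k)^+$, the claim follows from \cite[Cor. 4.7.5.18]{LurHA}.

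For the general case, I would mimic the diagram chase in the proof of Proposition \ref{prop:upper!ftdbasechange}: choose a flat cover $\phi : U = \Spec A \to Y$ and a flat cover $V = \Spec C \to U \times_Y X$, producing a faithfully flat $\phi' : V \to X$ and a finite-Tor-dimension morphism $h_V : V \to U$ with $h \phi' \cong \phi h_V$. Since $\phi'^*$ is conservative, it is enough to show that $\phi'^*$ applied to $\beta_h(\cF,\cG): h^* \cHom(\cF, \cG) \to \cHom(h^*\cF, h^*\cG)$ is an isomorphism. Identifying $\phi'^* h^* \cong h_V^* \phi^*$ and factoring through the intermediate term $h_V^* \cHom_U(\phi^* \cF, \phi^* \cG)$, the resulting commutative square exhibits the map as a composition whose factors are isomorphisms by the second step applied to $\phi$ and $\phi'$ and by the first step applied to $h_V$. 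The main obstacle will be the first step, where one must carefully track the interaction of contravariance in $\cHom$, right t-exactness of $h^*$, and the Tor-dimension bound against the perfect approximation of $\cF$; the second step is formally dual to Lemma \ref{lem:upper!flatcoverbasechange}, and the third is a formal Beck-Chevalley chase once the earlier pieces are in place.
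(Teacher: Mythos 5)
Your proposal is correct, and its outer two steps coincide with the paper's proof: the faithfully flat case via the Cech nerve and \cite[Cor. 4.7.5.18]{LurHA} is exactly Lemma \ref{lem:cHomflatcoverbasechange}, and your concluding diagram chase over the covers $\phi$ and $\phi'$ reproduces the paper's final argument essentially verbatim. Where you genuinely diverge is the affine kernel. The paper (Lemma \ref{lem:cHomaffineftdbasechange}) holds $\cF$ fixed and varies the base-change module: using conservativity of $h_*$ and the projection formula it reduces to the transformation $\theta_M: \cHom(\cF,\cG) \ot M \to \cHom(\cF, \cG \ot M)$, observes that the $M$ for which $\theta_M$ is an isomorphism form a stable subcategory closed under retracts containing $A$ (hence all perfect modules), and then writes an arbitrary $M$ of Tor-dimension $\leq n$ as a filtered colimit of perfect modules of Tor-dimension $\leq n$ \cite[Prop. 9.6.7.1]{LurSAG}, concluding via almost continuity of $\cHom(\cF,-)$ (Lemma \ref{lem:cHomalmostcontaffine}) and the uniform lower bound on the $\cG \ot M_\al$. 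You instead hold $B$ fixed and approximate $\cF$ by perfect modules, trading the Lazard-type colimit and the continuity lemma for a connectivity count against the Tor-dimension bound $d$; your estimates are sound (both third terms of the compared fiber sequences land in $\Mod^{\geq n-d}$, so $\theta_\cF$ agrees with $\theta_P$ in any fixed degree once $n \gg 0$, and equivalences in $\Mod_B$ are detected on homotopy groups), granting the standard perfect-approximation characterization of almost perfect modules. Each route buys something: the paper's argument is word-for-word parallel to Lemma \ref{lem:lower*affineftdbasechange} and reuses a lemma it needs anyway for Proposition \ref{prop:cHomalmostcont}, while yours is more self-contained and makes the role of almost perfectness of $\cF$ explicit rather than routing it through almost continuity. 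One small repair in your second step: \cite[Cor. 4.7.5.18]{LurHA} concerns diagrams of adjunctions, and $\cHom(\cF_k,-)$ restricted to $\QCoh^+$ is not literally one leg of an adjunction of the displayed diagram; as in Lemma \ref{lem:cHomflatcoverbasechange}, you should fix $n$ with $\cF \in \QCoh(Y)^{\leq n}$ and pass to the restricted adjunctions $\tau^{\geq n}(\cF_k \ot -): \QCoh(X_k)^{\geq 0} \leftrightarrows \QCoh(X_k)^{\geq n} : \cHom(\cF_k,-)$ before taking the limit over $\Delta_s$. This is a routine adjustment, not a gap in the strategy.
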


\begin{Lemma}\label{lem:cHomalmostcontaffine}
	Proposition \ref{prop:cHomalmostcont} is true when $Y$ is affine. 
\end{Lemma}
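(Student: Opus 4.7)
The plan is as follows. Since $\cF$ is almost perfect it is cohomologically bounded above, so after shifting we may assume $\cF \in \Mod_A^{\leq 0}$, where $Y = \Spec A$. Left boundedness of $\cHom(\cF,-)$ then follows from the standard connectivity estimate: $\R\Hom_A(\cF,\cG) \in \Mod_A^{\geq 0}$ whenever $\cF \in \Mod_A^{\leq 0}$ and $\cG \in \Mod_A^{\geq 0}$, so $\cHom(\cF, -)$ sends $\Mod_A^{\geq 0}$ into $\Mod_A^{\geq 0}$.

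For almost continuity, since the t-structure on $\Mod_A$ is compatible with filtered colimits and $\cHom(\cF,-)$ has just been shown to be left bounded, it is enough to show that for each integer $k$ the functor $H^k \cHom(\cF,-)$ preserves filtered colimits on $\Mod_A^{\geq 0}$; the case of $\Mod_A^{\geq n}$ for general $n$ reduces to this by a further shift.

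The key input will be the perfect-approximation characterization of almost perfect modules over a connective $A$: for every $m$ there exists a perfect $P_m \in \Mod_A^{\leq 0}$ and a map $P_m \to \cF$ whose cofiber $C_m$ lies in $\Mod_A^{\leq -m-1}$. Applying $\R\Hom_A(-, \cG)$ to the cofiber sequence $P_m \to \cF \to C_m$ yields a fiber sequence whose third term $\R\Hom_A(C_m, \cG)$ lies in $\Sp^{\geq m+1}$ for $\cG \in \Mod_A^{\geq 0}$. From the associated long exact sequence, $H^k \R\Hom_A(\cF, \cG) \to H^k \R\Hom_A(P_m, \cG)$ is then an isomorphism as soon as $m \geq k+1$.

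Since $P_m$ is perfect it is compact in $\Mod_A$, so $\R\Hom_A(P_m,-)$, and therefore each $H^k \R\Hom_A(P_m,-)$, preserves all filtered colimits. Combining these observations yields continuity of $H^k \R\Hom_A(\cF,-)$ on $\Mod_A^{\geq 0}$ for every $k$, as required. The only mildly subtle ingredient is invoking the perfect-approximation description of almost perfect modules; the rest is routine bookkeeping with connectivity estimates, so no serious obstacle is anticipated.
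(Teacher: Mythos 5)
Your proof is correct, but it takes a genuinely different route from the paper's. The paper argues entirely through the left adjoint: since $\Mod_A$ is compactly generated by perfect modules and $\cF \ot P$ is almost perfect whenever $P$ is perfect, the functor $\tau^{\geq n}(\cF \ot -) \colon \Mod_A \to \Mod_A^{\geq n}$ preserves compact objects, so its right adjoint --- the restriction of $\cHom(\cF,-)$ to $\Mod_A^{\geq n}$ --- preserves filtered colimits by \cite[Prop. 5.5.7.2]{LurHTT}, with left boundedness read off from right boundedness of $- \ot \cF$. You instead approximate $\cF$ itself by perfect modules $P_m \to \cF$ with highly coconnective cofiber (the approximation characterization of almost perfect modules over a connective ring, as in \cite[Sec. 7.2.4]{LurHA}) and verify continuity one cohomological degree at a time via the resulting long exact sequence. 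Both arguments rest on the same two inputs, compact generation of $\Mod_A$ by perfects and almost perfectness of $\cF$, but the paper's version is shorter and deliberately follows the same adjoint-functor template it reuses elsewhere (compare the proof of Lemma \ref{lem:lower*almostcontaffine}), whereas yours is more hands-on, avoids the abstract compactness criterion, and yields the slightly finer quantitative statement that $H^k \cHom(\cF,-)$ on $\Mod_A^{\geq 0}$ is computed by any perfect approximation of order $m \geq k+1$. Two points you should make explicit when writing this up: the isomorphism $H^k\cHom(\cF,\cG) \cong H^k\cHom(P_m,\cG)$ must be natural in $\cG$, which it is because it is induced by the single fixed map $P_m \to \cF$ --- this naturality is what lets you transport continuity from $P_m$ to $\cF$; and your bookkeeping conflates the internal Hom object in $\Mod_A$ with the mapping spectrum, which is harmless here since both connectivity and equivalences of $A$-modules are detected on underlying spectra, but deserves a word.
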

\begin{proof}
	In this case $\QCoh(Y)$ is compactly generated by perfect sheaves. If $\cG \in \QCoh(Y)$ is perfect then $\cF \otimes \cG$ is almost perfect, hence almost continuity follows by applying \cite[Prop. 5.5.7.2]{LurHTT} to $\tau^{\geq n} (\cF \ot -): \QCoh(Y) \to \QCoh(Y)^{\geq n}$. Left boundedness follows since $\cF$ and hence $- \ot \cF$ are right bounded. 
\end{proof}

\begin{Lemma}\label{lem:cHomaffineftdbasechange}
	Proposition \ref{prop:cHomftdbasechange} is true when $X$ and $Y$ are affine. 
\end{Lemma}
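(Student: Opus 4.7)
The plan is to mirror the template of Lemma \ref{lem:lower*affineftdbasechange}. Write $Y \cong \Spec A$ and $X \cong \Spec B$, and let $M, N \in \Mod_A$ correspond to $\cF, \cG$, with $M$ almost perfect and $N$ bounded below. Using the extension-of-scalars adjunction, the target $\Hom_B(B \ot_A M, B \ot_A N)$ is naturally identified with $\Hom_A(M, B \ot_A N)$, and the Beck-Chevalley morphism $h^* \cHom(\cF,\cG) \to \cHom(h^*\cF, h^*\cG)$ becomes the evaluation at $B' = B$ of the natural transformation
$$ \theta_{B'} : B' \ot_A \Hom_A(M, N) \to \Hom_A(M, B' \ot_A N),$$
which is defined for any $B' \in \Mod_A$.

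First I would introduce the full subcategory $\catC \subset \Mod_A$ of those $B'$ such that $\theta_{B'}$ is an isomorphism. The assignment $B' \mapsto \theta_{B'}$ extends to a functor $\Mod_A \to \Mod_A^{\Delta^1}$ which is exact in $B'$, since both source and target are (the target because $- \ot_A N$ is exact and $\Hom_A(M,-)$ is exact). Hence $\catC$ is a stable subcategory closed under retracts. Clearly $A \in \catC$, so $\catC$ contains all perfect $A$-modules.

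Next, since $B$ is of Tor-dimension $\leq n$ over $A$ for some $n$, I would write $B \cong \colim_\al B_\al$ as a filtered colimit of perfect $A$-modules of Tor-dimension $\leq n$, using \cite[Prop. 9.6.7.1]{LurSAG}. Then the source $B \ot_A \Hom_A(M,N)$ is the colimit of the $B_\al \ot_A \Hom_A(M,N)$ by continuity of the tensor product. On the target side, since $N \in \Mod_A^{\geq 0}$ and each $B_\al$ has Tor-dimension $\leq n$, the diagram $\{B_\al \ot_A N\}$ lies uniformly in $\Mod_A^{\geq -n}$, so by the almost continuity of $\Hom_A(M,-)$ (Lemma \ref{lem:cHomalmostcontaffine}) we have $\Hom_A(M, B \ot_A N) \cong \colim_\al \Hom_A(M, B_\al \ot_A N)$. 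Thus $\theta_B$ is the filtered colimit of the isomorphisms $\theta_{B_\al}$, hence is itself an isomorphism.

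The main obstacle is the bookkeeping in identifying the Beck-Chevalley map with the explicit transformation $\theta_B$; once this reduction is made the argument is entirely parallel to Lemma \ref{lem:lower*affineftdbasechange}, with the almost continuity of $\Hom_A(M,-)$ playing the role previously played by the almost continuity of $f_*$.
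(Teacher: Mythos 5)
Your proof is correct and follows essentially the same route as the paper: after identifying the Beck--Chevalley map with the transformation $\theta_{B'} : B' \ot_A \Hom_A(M,N) \to \Hom_A(M, B' \ot_A N)$ at $B' = B$ (the paper does this via conservativity of $h_*$ and the projection formula, which is what your extension-of-scalars identification amounts to), the paper runs exactly your argument --- the stable, retract-closed subcategory $\catC$ containing $A$ and hence all perfect modules, the expression of $B$ as a filtered colimit of perfect $A$-modules of Tor-dimension $\leq n$ via \cite[Prop.\ 9.6.7.1]{LurSAG}, and the conclusion from continuity of tensoring, the uniform lower bound on the $B_\al \ot_A N$, and the almost continuity of $\cHom(\cF,-)$ from Lemma \ref{lem:cHomalmostcontaffine}. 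No gaps; the bookkeeping step you flag is handled in the paper exactly as you anticipate.
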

\begin{proof}
	Let $Y \cong \Spec A$ and $X \cong \Spec B$. Since $h$ is affine $h_*$ conservative, hence it suffices to show $h_* h^* \cHom(\cF, \cG) \to h_* \cHom(h^*(\cF), h^*(\cG))$ is an isomorphism. Rewriting the second term using the projection formula, one sees this is the specialization of the Beck-Chevalley map $\theta_M: \cHom(\cF, \cG) \ot M \to \cHom(\cF, \cG \ot M)$ in the case $M = B$.
	
	Write $\catC$ for the full subcategory of  $M \in \Mod_A$ such that $\theta_{M}$ is an isomorphism. The assignment $M \mapsto \theta_{M}$ extends to a functor $\Mod_A \to \Mod_A^{\Delta^1}$, which is exact since the source and target of $\theta_M$ are exact in $M$. It follows that $\catC$ is a stable subcategory closed under retracts, as isomorphisms form such a subcategory of $\Mod_A^{\Delta^1}$. Clearly $A \in \catC$, hence $\catC$ contains all perfect $A$-modules. If $M$ is of Tor-dimension $\leq n$, then we can write it as a filtered colimit $M \cong \colim_\al M_\al$ of perfect $A$-modules of Tor-dimension $\leq n$ \cite[Prop. 9.6.7.1]{LurSAG}. The claim now follows since tensoring is continuous, since the $\cG \ot M_\al$ are uniformly bounded below, and since $\cHom(\cF,-)$ is almost continuous by Lemma~\ref{lem:cHomalmostcontaffine}. 
\end{proof}

\begin{Lemma}\label{lem:cHomflatcoverbasechange}
	Proposition \ref{prop:cHomftdbasechange} is true when $X$ is affine and $h$ is faithfully flat. 
\end{Lemma}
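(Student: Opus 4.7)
The plan is to follow the template of Lemmas \ref{lem:lower*flatcoverbasechange} and \ref{lem:upper!flatcoverbasechange} almost verbatim, substituting $\cHom(\cF_k, -)$ in place of $f_{k*}$ or $f_k^!$. Let $X_\bullet$ denote the Cech nerve of $h$, so $X_0 = X$ and each $X_k$ is affine (being an iterated affine base change of $X$ along $h$, which is affine). For any morphism $p: i \to j$ in $\Delta_s$, let $h_p: X_j \to X_i$ denote the associated map, which is affine and flat, and let $h_k: X_k \to Y$ denote the structure maps. Set $\cF_k := h_k^*(\cF)$; each $\cF_k$ is almost perfect since pullback preserves almost perfectness, and in particular lies in $\QCoh(X_k)^{\leq 0}$, so that $\cHom(\cF_k, -)$ restricts to a left bounded endofunctor of $\QCoh(X_k)^{\geq 0}$ by Lemma \ref{lem:cHomalmostcontaffine}.

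These data assemble into a diagram $\Delta^1 \times \Delta_s \to \Cathatinfty$ with object part $\QCoh(X_k)^{\geq 0}$ appearing in both columns, horizontal functors $\cHom(\cF_k, -)$, and vertical functors $h_p^*$. By Lemma \ref{lem:cHomaffineftdbasechange}, which applies to each $h_p$ since $X_i$ and $X_j$ are affine and $h_p$ is of finite Tor-dimension, the Beck-Chevalley transformation $h_p^* \cHom(\cF_i, -) \to \cHom(\cF_j, h_p^*(-))$ restricts to an isomorphism of functors $\QCoh(X_i)^{\geq 0} \to \QCoh(X_j)^{\geq 0}$. Since $h$ is faithfully flat we have $\QCoh(Y)^{\geq 0} \cong \lim_{\Delta_s} \QCoh(X_k)^{\geq 0}$, so the claim for $\cG \in \QCoh(Y)^{\geq 0}$ follows from \cite[Cor. 4.7.5.18]{LurHA}. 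The general case $\cG \in \QCoh(Y)^+$ follows by shifting.

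The main subtlety will be to verify that the functor on the limit category $\QCoh(Y)^{\geq 0}$ extracted from the $\cHom(\cF_k, -)$ really agrees with $\cHom(\cF, -)$. The cleanest route is to pass through adjoints via Cor. 4.7.5.18: the functors $- \otimes \cF_k : \QCoh(X_k)^{\leq 0} \to \QCoh(X_k)^{\leq 0}$ are cosimplicially compatible (tensor products commute with flat pullback) and their limit is $- \otimes \cF$ on $\QCoh(Y)^{\leq 0}$ by descent; taking right adjoints levelwise and at the limit then identifies the limit of the $\cHom(\cF_k, -)$ with $\cHom(\cF, -)$, and the Beck-Chevalley isomorphism of this lemma is the component of this identification at the structure map $h = h_0$.
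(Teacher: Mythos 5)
Your skeleton is exactly the paper's: take the Cech nerve of $h$, feed in the affine case (Lemma~\ref{lem:cHomaffineftdbasechange}) levelwise, use faithfully flat descent of coconnective parts, and invoke \cite[Cor. 4.7.5.18]{LurHA} to pass between the diagram of left adjoints and the diagram of Beck-Chevalley squares. Two pieces of bookkeeping are off, however, and the second is a genuine error as written. First, almost perfectness does not imply connectivity: it only gives $\cF \in \QCoh(Y)^{\leq n}$ for some $n$, so your claim that $\cF_k \in \QCoh(X_k)^{\leq 0}$ is unjustified. This is fixable by a shift, but the paper instead fixes $n$ with $\cF \in \QCoh(Y)^{\leq n}$ and works with \emph{two} coconnective windows: $\cHom(\cF_k,-)$ carries $\QCoh(X_k)^{\geq n}$ to $\QCoh(X_k)^{\geq 0}$, with left adjoint $\tau^{\geq n}(\cF_k \ot -): \QCoh(X_k)^{\geq 0} \to \QCoh(X_k)^{\geq n}$, so the relevant diagram $\Delta^1 \times \Delta_s \to \Cathatinfty$ has these two families of subcategories in its columns, not a single endofunctor of $\QCoh(X_k)^{\geq 0}$.

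Second, in your final paragraph the adjunction you pass through is the wrong one: the right adjoint of $- \ot \cF_k: \QCoh(X_k)^{\leq 0} \to \QCoh(X_k)^{\leq 0}$ is $\tau^{\leq 0}\cHom(\cF_k,-)$ restricted to connective objects, not $\cHom(\cF_k,-)$. Gluing those adjoints over $\Delta_s$ only identifies truncated sheaf Homs of \emph{connective} $\cG$, which says nothing about the map $h^*\cHom(\cF,\cG) \to \cHom(h^*(\cF),h^*(\cG))$ for $\cG \in \QCoh(Y)^+$, the objects the lemma is actually about (after shifting these are coconnective, not connective). The correct move, and the one the paper makes, is to run \cite[Cor. 4.7.5.18]{LurHA} on the diagram of left adjoints $\tau^{\geq n}(\cF_k \ot -)$ between the coconnective subcategories above; these assemble coherently because pullback is symmetric monoidal and each $h_p^*$ is t-exact, their limit is $\tau^{\geq n}(\cF \ot -)$ on $\QCoh(Y)^{\geq 0}$ by the descent equivalences $\QCoh(Y)^{\geq m} \cong \lim_{\Delta_s} \QCoh(X_k)^{\geq m}$, and the corollary then both produces $\cHom(\cF,-)$ as the right adjoint of the limit and shows the levelwise Beck-Chevalley squares (in particular at level $0$, i.e. for $h$ itself) are isomorphisms. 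With these two corrections your argument coincides with the paper's proof.
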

\begin{proof}
	Let $X_\bul$ denote the Cech nerve of $h$ (so $X_0 = X$), and let $h_k: X_k \to Y$ denote the natural map. 
	Given a morphism $p: i \to j$ in $\Delta_s$, let $h_p: X_j \to X_i$ denote the associated map. Choose $n$ so that $\cF \in \QCoh(Y)^{\leq n}$. Then $\tau^{\geq n}(\cF \ot -)$ and $\cHom(\cF,-)$ restrict to an adjunction between $\QCoh(Y)^{\geq 0}$ and $\QCoh(Y)^{\geq n}$, similarly for $\cF_k = h_k^*(\cF) \in \QCoh(X_k)^{\leq n}$. The categories $\QCoh(X_k)^{\geq 0}$ and $\QCoh(X_k)^{\geq n}$, together with the functors $h^*_p$ and $\tau^{\geq n}(\cF_k \ot -)$, form a diagram $\Delta^1 \times \Delta_s \to \Cathatinfty$. By Lemma \ref{lem:cHomaffineftdbasechange} the Beck-Chevalley transformation $h_p^* \cHom(\cF_i,-) \to \cHom(\cF_j, h^*_p(-))$ restricts to an isomorphism of functors $\QCoh(X_i)^{\geq n} \to \QCoh(X_j)^{\geq 0}$ for any $p$. Since $h$ is faithfully flat we have $\QCoh(X)^{\geq m} \cong \lim_{\Delta_s} \QCoh(X_i)^{\geq m}$ for any $m$, and the claim follows from \cite[Cor. 4.7.5.18]{LurHA}. 
\end{proof}

\begin{proof}[Proof of Proposition \ref{prop:cHomalmostcont}]
	Again left boundedness follows since $\cF$ is right bounded. 	Let $\cG \cong \colim \cG_\al$ be a filtered colimit in $\QCoh(Y)^{\geq 0}$ and let $h: X \cong \Spec A \to Y$ be a flat cover. Since $h^*$ is continuous and conservative it suffices to show $\colim h^* \cHom(\cF,\cG_\al) \to h^* \cHom(\cF,\cG)$ is an isomorphism. By Lemma~\ref{lem:cHomflatcoverbasechange} this is equivalent to $\colim  \cHom(h^*(\cF),h^*(\cG_\al)) \to  \cHom(h^*(\cF),h^*(\cG))$ being an isomorphism. Since $h^*$ is t-exact this follows from Lemma~\ref{lem:cHomalmostcontaffine}. 
\end{proof}

\begin{proof}[Proof of Proposition \ref{prop:cHomftdbasechange}]
	Let $\phi: V \cong \Spec A \to Y$ be a flat cover, and consider the diagram
	\begin{equation*}
		\begin{tikzpicture}
			[baseline=(current  bounding  box.center),thick,>=\arrtip]
			\node (a) at (0,0) {$U$};
			\node (b) at (3,0) {$X$};
			\node (c) at (0,-1.5) {$V$};
			\node (d) at (3,-1.5) {$Y$};
			\draw[->] (a) to node[above] {$\psi $} (b);
			\draw[->] (b) to node[right] {$h $} (d);
			\draw[->] (a) to node[left] {$g $}(c);
			\draw[->] (c) to node[above] {$\phi $} (d);
		\end{tikzpicture}
	\end{equation*}
	induced by a flat cover $\xi: U \cong \Spec B \to V \times_Y X$. Note that $\psi$ is faithfully flat and $g$ is of finite Tor-dimension, since they are the compositions of $\xi$ with the base changes of $\phi$ and $f$, respectively. Since $\psi^*$ is conservative it suffices to show the top left arrow in 
	\begin{equation*}
		\begin{tikzpicture}
			[baseline=(current  bounding  box.center),thick,>=\arrtip]
			\newcommand*{\ha}{4.5}; \newcommand*{\hb}{5.0};
			\newcommand*{\va}{-1.5};
			\node (aa) at (0,0) {$\psi^* h^* \cHom(\cF,\cG)$};
			\node (ab) at (\ha,0) {$\psi^*  \cHom(h^*(\cF),h^*(\cG))$};
			\node (ac) at (\ha+\hb,0) {$  \cHom(\psi^*h^*(\cF),\psi^*h^*(\cG))$};
			\node (ba) at (0,\va) {$g^* \phi^* \cHom(\cF,\cG)$};
			\node (bb) at (\ha,\va) {$g^*  \cHom(\phi^*(\cF),\phi^*(\cG))$};
			\node (bc) at (\ha+\hb,\va) {$\cHom(g^*\phi^*(\cF),g^*\phi^*(\cG))$};
			\draw[->] (aa) to node[above] {$ $} (ab);
			\draw[->] (ab) to node[above] {$  $} (ac);
			\draw[->] (ba) to node[above] {$ $} (bb);
			\draw[->] (bb) to node[above] {$ $} (bc);
			\draw[->] (aa) to node[below,rotate=90] {$\sim $} (ba);
			\draw[->] (ac) to node[below,rotate=90] {$\sim $} (bc);
		\end{tikzpicture} 
	\end{equation*}
	is an isomorphism. This follows since the bottom left and top right arrows are isomorphisms by Lemma \ref{lem:cHomflatcoverbasechange}, and the bottom right is by Lemma \ref{lem:cHomaffineftdbasechange}. 
\end{proof}


\subsection{Quasi-coherent external products}
Let $X$ and $Z$ be geometric stacks, $p_X: X \times Z \to X$ and $p_Z: X \times Z \to Z$ the projections, and $\cF \in \QCoh(Z)$. By definition the external product $e_{\cF,X}: \QCoh(X) \to \QCoh(X \times Z)$ and its right adjoint are given by
$$e_{\cF,X} := p_X^*(-) \ot p_Z^*(\cF), \quad \quad e_{\cF,X}^R := p_{X*} \cHom(p_Z^*(\cF), - ).$$ 
Propositions \ref{prop:lower*almostcont}, \ref{prop:lower*ftdbasechange}, \ref{prop:cHomalmostcont}, and \ref{prop:cHomftdbasechange} immediately imply the following results. 

\begin{Proposition}\label{prop:eFXRQCohalmostcont}
If $Y$ and $Z$ are geometric stacks and $\cF \in \QCoh(Z)$ is almost perfect, then $e_{\cF,X}^R: \QCoh(X \times Z) \to \QCoh(X)$ is left bounded and almost continuous.
\end{Proposition}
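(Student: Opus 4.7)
The plan is to unpack the definition $e_{\cF,X}^R \cong p_{X*} \cHom(p_Z^*(\cF),-)$ and recognize the result as the composition of two functors, each handled by one of the cited propositions. First I would verify that $p_Z^*(\cF)$ is almost perfect: this follows because almost perfect sheaves are stable under $*$-pullback along an arbitrary morphism of geometric stacks. Indeed, almost perfectness can be checked flat-locally on the target, where it reduces to the standard fact (used already in the proof of Proposition \ref{prop:ftdprops}) that almost perfect modules are stable under base change of scalars. Applying Proposition \ref{prop:cHomalmostcont} to $p_Z^*(\cF)$ then gives that $\cHom(p_Z^*(\cF),-) : \QCoh(X \times Z) \to \QCoh(X \times Z)$ is left bounded and almost continuous.

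Next, the projection $p_X : X \times Z \to X$ is a morphism of geometric stacks, so $p_{X*}$ is left t-exact (as the right adjoint of the right t-exact functor $p_X^*$), and by Proposition \ref{prop:lower*almostcont} it is almost continuous. Left boundedness of $e_{\cF,X}^R$ is then immediate: if $\cHom(p_Z^*(\cF),-)$ sends $\QCoh(X\times Z)^{\geq 0}$ into $\QCoh(X\times Z)^{\geq -m}$ for some fixed $m$, then composing with the left t-exact $p_{X*}$ keeps the result in $\QCoh(X)^{\geq -m}$.

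For almost continuity, fix an integer $n \geq 0$ and a filtered colimit $\cG \cong \colim \cG_\al$ in $\QCoh(X \times Z)^{\geq n}$. By the uniform left boundedness of $\cHom(p_Z^*(\cF),-)$ there is a fixed integer $m$ such that all $\cHom(p_Z^*(\cF),\cG_\al)$ and $\cHom(p_Z^*(\cF),\cG)$ lie in $\QCoh(X \times Z)^{\geq n-m}$, and by almost continuity of $\cHom(p_Z^*(\cF),-)$ the natural map
\begin{equation*}
\colim \cHom(p_Z^*(\cF),\cG_\al) \to \cHom(p_Z^*(\cF),\cG)
\end{equation*}
is an isomorphism. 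Applying $p_{X*}$, which is continuous on $\QCoh(X\times Z)^{\geq n-m}$ by Proposition \ref{prop:lower*almostcont}, preserves this filtered colimit, yielding $\colim e_{\cF,X}^R(\cG_\al) \congto e_{\cF,X}^R(\cG)$.

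There is no real obstacle here: the statement is essentially the conjunction of the two cited almost continuity results, and the only technicality is bookkeeping of the boundedness shift introduced by $\cF$ so that the output of $\cHom(p_Z^*(\cF),-)$ lands where the almost continuity of $p_{X*}$ can be invoked.
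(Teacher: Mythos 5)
Your proposal is correct and matches the paper's argument: the paper derives this proposition directly from Propositions \ref{prop:lower*almostcont} and \ref{prop:cHomalmostcont} (stating they "immediately imply" it), exactly via the factorization $e_{\cF,X}^R \cong p_{X*}\cHom(p_Z^*(\cF),-)$, with the uniform boundedness shift from $\cHom(p_Z^*(\cF),-)$ feeding into the almost continuity of $p_{X*}$ just as you spell out. Your write-up simply makes explicit the bookkeeping (stability of almost perfect sheaves under $*$-pullback, left t-exactness of $p_{X*}$) that the paper leaves implicit.
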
 

\begin{Proposition}
Let $X$, $Y$, and $Z$ be geometric stacks, $h: X \to Y$ a morphism of finite Tor-dimension, and $\cF \in \QCoh(Z)$ an almost perfect sheaf. Then the Beck-Chevalley map $h^* e_{\cF,Y}^R(\cG) \to e_{\cF,X}^R (h \times \id_Z)^*(\cG)$ is an isomorphism for all $\cG \in \QCoh(Y \times Z)^+$. 
\end{Proposition}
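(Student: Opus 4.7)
The plan is to decompose the Beck--Chevalley map as a composition of two base change isomorphisms, each of which is an instance of a previously established result.

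Recall the Cartesian square
\begin{equation*}
\begin{tikzpicture}[baseline=(current bounding box.center),thick,>=\arrtip]
\node (a) at (0,0) {$X\times Z$};
\node (b) at (3.5,0) {$Y\times Z$};
\node (c) at (0,-1.3) {$X$};
\node (d) at (3.5,-1.3) {$Y$.};
\draw[->] (a) to node[above]{$h\times\id_Z$} (b);
\draw[->] (b) to node[right]{$p_Y$} (d);
\draw[->] (a) to node[left]{$p_X$} (c);
\draw[->] (c) to node[above]{$h$} (d);
\end{tikzpicture}
\end{equation*}
By Proposition~\ref{prop:ftdprops} the morphism $h\times\id_Z$ is of finite Tor-dimension (being a base change of $h$), and the compatibility $p_Z\circ(h\times\id_Z)=p_Z$ (between the two projections to $Z$) gives $(h\times\id_Z)^*p_Z^*\cF\cong p_Z^*\cF$.

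First, I would unwind the definitions. Writing $e_{\cF,Y}^R(\cG)=p_{Y*}\cHom(p_Z^*\cF,\cG)$ and $e_{\cF,X}^R(h\times\id_Z)^*(\cG)=p_{X*}\cHom(p_Z^*\cF,(h\times\id_Z)^*\cG)$, the Beck--Chevalley map factors as
\begin{equation*}
h^*p_{Y*}\cHom(p_Z^*\cF,\cG)\longrightarrow p_{X*}(h\times\id_Z)^*\cHom(p_Z^*\cF,\cG)\longrightarrow p_{X*}\cHom(p_Z^*\cF,(h\times\id_Z)^*\cG),
\end{equation*}
the first arrow being the Beck--Chevalley map for $(-)_*$ in the Cartesian square and the second being induced by the Beck--Chevalley map for sheaf Hom along $h\times\id_Z$.

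Next I would check that each arrow is an isomorphism. Since $\cF$ is almost perfect, so is $p_Z^*\cF$ (as pullback preserves almost perfectness), and by Proposition~\ref{prop:cHomalmostcont} the functor $\cHom(p_Z^*\cF,-)$ is left bounded; in particular $\cHom(p_Z^*\cF,\cG)\in\QCoh(Y\times Z)^+$ since $\cG\in\QCoh(Y\times Z)^+$. Thus the first arrow is an isomorphism by Proposition~\ref{prop:lower*ftdbasechange} applied to the above Cartesian square and the bounded-below sheaf $\cHom(p_Z^*\cF,\cG)$. The second arrow is an isomorphism by Proposition~\ref{prop:cHomftdbasechange} applied to the finite-Tor-dimension morphism $h\times\id_Z$, the almost perfect sheaf $p_Z^*\cF$, and the bounded-below sheaf $\cG\in\QCoh(Y\times Z)^+$, together with the identification $(h\times\id_Z)^*p_Z^*\cF\cong p_Z^*\cF$.

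There is no real obstacle here once the square and the boundedness of $\cHom(p_Z^*\cF,\cG)$ are in hand; the argument is a formal composition of the two base change results already established in the subsection. The only point requiring any care is keeping straight the two different projections $p_Z$ (from $X\times Z$ and from $Y\times Z$) and verifying that their compositions with $h\times\id_Z$ match, so that $\cF$ pulls back to itself on $X\times Z$.
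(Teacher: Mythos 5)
Your proof is correct and is essentially the paper's own argument: the paper derives this proposition as an immediate consequence of Propositions \ref{prop:lower*ftdbasechange}, \ref{prop:cHomalmostcont}, and \ref{prop:cHomftdbasechange}, precisely via the decomposition $e_{\cF,X}^R = p_{X*}\cHom(p_Z^*(\cF),-)$ and the pasting of the two Beck--Chevalley maps that you spell out. Your write-up just makes explicit the details the paper leaves implicit, namely the boundedness of $\cHom(p_Z^*(\cF),\cG)$ needed to invoke the pushforward base change, the finite Tor-dimension of $h\times\id_Z$, and the identification $(h\times\id_Z)^*p_Z^*(\cF)\cong p_Z^*(\cF)$.
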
 

We will be most interested in external products when $\kk$ is an ordinary (Noetherian) ring of finite global dimension, in which case we have the following boundedness condition. 

\begin{Proposition}\label{prop:cohextprod}
	Suppose $\kk$ is an ordinary ring of finite global dimension. Then $e_{\cF,X}$ is left bounded for any $X, Z \in \GStkk$ and $\cF \in \QCoh(X)^+$. In particular, if $X$ and $Z$ are truncated and $\cG \in \Coh(X)$, $\cF \in \Coh(Z)$, then $X\times Z$ is truncated and $\cG \boxtimes \cF \in \Coh(X \times Z)$. 
\end{Proposition}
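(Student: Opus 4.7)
The plan is to reduce everything to a module-level computation via flat covers and then exploit the finite global dimension of $\kk$ to bound the derived tensor product over $\kk$. First I would choose flat covers $\Spec A \to X$ and $\Spec B \to Z$, so that $\Spec(A \otimes_\kk B) \to X \times Z$ is a flat cover by Proposition~\ref{prop:gstkprops}. If $\cG$ restricts to $M \in \Mod_A$ and $\cF$ restricts to $N \in \Mod_B$, then unwinding $e_{\cF,X}(\cG) = p_X^*(\cG) \otimes p_Z^*(\cF)$ shows that it restricts to $M \otimes_\kk N$ as an $(A \otimes_\kk B)$-module. Since flat pullback is t-exact and the notions of truncatedness, almost perfectness, and membership in $\QCoh^{\geq a}$ can all be tested after flat descent, every remaining question becomes one about the cohomological amplitude of $M \otimes_\kk N$ over $\kk$.

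Next I would let $d$ be the global dimension of $\kk$ and invoke the convergent Grothendieck spectral sequence
\begin{equation*}
E_2^{p,q} = \bigoplus_{i+j=q} \mathrm{Tor}^\kk_{-p}(H^i(M), H^j(N)) \Longrightarrow H^{p+q}(M \otimes_\kk N),
\end{equation*}
whose $E_2$ page vanishes outside $-d \leq p \leq 0$ by the hypothesis on $\kk$. This yields the key estimate: if $M \in \Mod_\kk^{\geq a}$ and $N \in \Mod_\kk^{\geq b}$, then $M \otimes_\kk N \in \Mod_\kk^{\geq a+b-d}$. Taking $b$ so that $p_Z^*(\cF)$ restricts to an object of $\Mod_B^{\geq b}$ immediately gives the left boundedness of $e_{\cF,X}$, by at most $b-d$.

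For the coherence statement, I would choose $A$ and $B$ truncated (possible since $X$ and $Z$ are). Applying the estimate to $A$ and $B$ themselves shows $A \otimes_\kk B$ is truncated, so $X \times Z$ is truncated. Given $\cG \in \Coh(X)$ and $\cF \in \Coh(Z)$ with corresponding bounded almost perfect modules $M$ and $N$, the standard stability of almost perfect modules under base change and under tensor product of modules over a fixed ring (as in \cite[Sec.~7.2.4]{LurHA}) gives that $p_X^*(\cG) \otimes p_Z^*(\cF)$ is almost perfect on $\Spec(A \otimes_\kk B)$, hence that $\cG \boxtimes \cF$ is almost perfect on $X \times Z$. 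Boundedness above is immediate from right t-exactness of tensor product, and boundedness below is the key estimate. Hence $\cG \boxtimes \cF \in \Coh(X \times Z)$.

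The only nontrivial input is the amplitude bound of the second paragraph, which is essentially a restatement of the definition of global dimension together with the convergence of the spectral sequence; everything else is a routine flat-descent argument. The one point I would double-check carefully is the precise identification of $e_{\cF,X}(\cG)$ with $M \otimes_\kk N$ in the derived sense (and the passage from $A \otimes_\kk B$-modules back to sheaves on $X \times Z$), so that the amplitude estimate really does transfer to $e_{\cF,X}$ and not merely to its restriction to one chart.
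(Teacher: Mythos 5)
Your proposal is correct and follows essentially the same route as the paper's proof: reduce to the affine case via flat covers using the compatibility $(g \times h)^*e_{\cF,X}(\cG) \cong e_{h^*(\cF),U}\, g^*(\cG)$, bound the amplitude of $M \otimes_\kk N$ by restricting scalars to $\Mod_\kk$ and using the finite global dimension of $\kk$, and deduce coherence from the stability of almost perfect modules under pullback and tensor product. Your Tor spectral sequence is simply an explicit unpacking of the amplitude estimate the paper obtains from the identity $(\cG \boxtimes \cF)_\kk \cong (\cG)_\kk \otimes_\kk (\cF)_\kk$, and the point you flag for double-checking is exactly the base change identity above, which the paper uses.
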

\begin{proof}
If $\cF \in \QCoh(Z)^{\geq n}$ and $\kk$ is of global dimension $d$, then we claim $e_{\cF,X}$ takes $\QCoh(X)^{\geq 0}$ to $\QCoh(X \times Z)^{\geq n-d}$. When $X$ and $Z$ are affine, this follows from the fact that $(\cG \boxtimes \cF)_\kk \cong (\cG)_\kk \ot_\kk (\cF)_\kk$, where $(-)_\kk$ denotes restriction of scalars to $\Mod_\kk$. If $g: U \cong \Spec A \to X$ and $h: V \cong \Spec B \to Z$ are flat covers, the general case follows since $(g \times h)^*e_{\cF,X}(\cG) \cong e_{h^*(\cF), U} g^*(\cG)$. The last claim follows since almost perfect sheaves are closed under pullbacks and tensor products. 
\end{proof}

\begin{Corollary}
Suppose $\kk$ is an ordinary ring of finite global dimension. If $X$ and $Y$ are reasonable ind-geometric stacks, then so is $X \times Y$. 
\end{Corollary}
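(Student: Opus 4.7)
The plan is to form the ``product'' presentation and check that it satisfies all the conditions of a reasonable presentation. Fix reasonable presentations $X \cong \colim_\al X_\al$ and $Y \cong \colim_\be Y_\be$, indexed by filtered categories $A$ and $B$. Since $A \times B$ is again filtered, the natural candidate presentation is $X \times Y \cong \colim_{(\al,\be)} X_\al \times Y_\be$, and the task is to verify this.

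First I would check that this colimit computes $X \times Y$ in $\Stkkconv$. This uses exactly the left exactness of filtered colimits in $\Stkkconv$ invoked in the proof of Proposition \ref{prop:indgeomfiberprods}; more precisely, $X \times Y \cong \colim_\al (X_\al \times Y) \cong \colim_\al \colim_\be (X_\al \times Y_\be) \cong \colim_{(\al,\be)} X_\al \times Y_\be$ in $\Stkkconv$.

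Next, each $X_\al \times Y_\be$ is geometric (Proposition \ref{prop:gstkprops}) and is truncated by the last sentence of Proposition \ref{prop:cohextprod}, which is the one place where the hypothesis on $\kk$ enters. For the transition maps, given $\al \leq \al'$ and $\be \leq \be'$, the morphism $X_\al \times Y_\be \to X_{\al'} \times Y_{\be'}$ factors as
\[
X_\al \times Y_\be \xrightarrow{i_{\al\al'} \times \id} X_{\al'} \times Y_\be \xrightarrow{\id \times j_{\be\be'}} X_{\al'} \times Y_{\be'},
\]
each factor being a base change of an almost finitely presented closed immersion. By stability of closed immersions (Proposition \ref{prop:cl2of3prop}) and of almost finitely presented morphisms (Proposition \ref{prop:afp2of3prop}) under base change and composition, the whole morphism is an almost finitely presented closed immersion.

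Thus $X \times Y$ is exhibited as a filtered colimit in $\Stkkconv$ of truncated geometric stacks along almost finitely presented closed immersions, which is exactly a reasonable presentation. No step is especially delicate here: the entire content of the corollary is that the finite global dimension hypothesis, via Proposition \ref{prop:cohextprod}, ensures truncatedness is preserved by products of geometric stacks --- everything else is formal from the stability results already in the paper.
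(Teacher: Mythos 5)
Your proof is correct and is exactly the argument the paper intends: the Corollary is stated without proof precisely because it follows from Proposition \ref{prop:cohextprod} (truncatedness of products, the sole use of the finite global dimension hypothesis) combined with the formal stability results (Propositions \ref{prop:gstkprops}, \ref{prop:cl2of3prop}, \ref{prop:afp2of3prop}) and left exactness of filtered colimits in $\Stkkconv$, just as you lay out. Your factorization of the transition maps and the verification that the product presentation computes $X \times Y$ are both accurate.
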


Proposition~\ref{prop:cohextprod} also ensures that external products commute with the various almost continuous functorialities we have considered, as described by the following three results.

\begin{Proposition}\label{prop:eFXlower*compat}
	Suppose $\kk$ is an ordinary ring of finite global dimension. 
	Let $X$, $Y$, and $Z$ be geometric stacks, $f: X \to Y$ a morphism, and $\cF \in \QCoh(Z)^+$. Then the Beck-Chevalley map $e_{\cF,Y} f_*(\cG) \to (f \times \id_Z)_* e_{\cF,X}(\cG)$ is an isomorphism for all $\cG \in \QCoh(X)^+$. 
\end{Proposition}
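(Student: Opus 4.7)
The plan is to unpack the Beck-Chevalley transformation as a composition of a base change map and a projection formula map. Writing $p_X: X \times Z \to X$, $p_Y: Y \times Z \to Y$, $p_Z: Y \times Z \to Z$, and $p'_Z: X \times Z \to Z$ for the projections, and using that $(p'_Z)^* = (f \times \id_Z)^* p_Z^*$, the map in question factors as
\begin{equation*}
p_Y^* f_*(\cG) \otimes p_Z^*(\cF) \xrightarrow{\alpha \otimes \id} (f \times \id_Z)_* p_X^*(\cG) \otimes p_Z^*(\cF) \xrightarrow{\beta} (f \times \id_Z)_*\bigl(p_X^*(\cG) \otimes (f \times \id_Z)^* p_Z^*(\cF)\bigr),
\end{equation*}
where $\alpha$ is the base change transformation for the Cartesian square with top edge $f \times \id_Z$ and right edge $p_Y$, and $\beta$ is the projection formula for the morphism $f \times \id_Z$.

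First I would verify that $p_Y$ is of finite Tor-dimension. Since $\kk$ has finite global dimension $d$, every $\kk$-module has flat dimension $\leq d$, so the structure morphism $Z \to \Spec \kk$ has $*$-pullback of Tor-dimension $\leq d$ (this is checked after a flat cover of $Z$, where the pullback is simply tensoring with the coordinate ring over $\kk$). By stability under base change (Proposition~\ref{prop:ftdprops}), $p_Y$ is of finite Tor-dimension. Since $\cG \in \QCoh(X)^+$, Proposition~\ref{prop:lower*ftdbasechange} then guarantees that $\alpha$ is an isomorphism.

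The more substantive step is to show that $\beta$ is an isomorphism on the object $\cH := p_X^*(\cG)$. I would establish the required projection formula for $(f \times \id_Z)_*$ following the template of the proof of Proposition~\ref{prop:lower*ftdbasechange}: the affine case is tautological, arising from $M \otimes_A N \cong M \otimes_B (N \otimes_A B)$ for a ring map $A \to B$, and one propagates to the general case via the Cech nerve of a flat cover, using flat pullback base change (which is unconditional). Both $\cH$ and $p_Z^*(\cF)$ are bounded below --- the latter by Proposition~\ref{prop:cohextprod}, which is precisely where the finite global dimension hypothesis on $\kk$ enters again --- and this boundedness ensures the descent from affine covers can be carried out using almost continuity of pushforward (Proposition~\ref{prop:lower*almostcont}) together with the continuity of the tensor product.

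The main obstacle is the projection formula for $\beta$, which though standard is not recorded as a separate proposition in the excerpt. However, its proof follows the same flat-descent template used for Propositions~\ref{prop:lower*almostcont} and~\ref{prop:lower*ftdbasechange}, with the role of the affine base case played by the tautology at the level of modules, so no genuinely new technique is required.
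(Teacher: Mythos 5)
Your factorization of the Beck--Chevalley map into a base change map $\alpha$ followed by a projection formula map $\beta$ for $f \times \id_Z$ is legitimate, and your treatment of $\alpha$ is correct: every structure map to $\Spec \kk$ has Tor-dimension bounded by the global dimension of $\kk$, so $p_Y$ is of finite Tor-dimension by base change and Proposition \ref{prop:lower*ftdbasechange} applies to bounded-below $\cG$. The gap is in $\beta$, which is in fact the entire difficulty. The projection formula is tautological only for \emph{affine morphisms} \cite[Cor. 6.3.4.3]{LurSAG}, whereas $f \times \id_Z$ is a general geometric morphism. Cech descent along a flat cover of $Y \times Z$ reduces you only to the case of an \emph{affine target}; the source remains a general geometric stack, and that case is precisely the content of the argument in Lemma \ref{lem:lower*affineftdbasechange}, which proves $f_*(\cG) \ot M \to f_*(\cG \ot f^*(M))$ is an isomorphism only for $M$ of \emph{finite Tor-dimension}: one writes $M$ as a filtered colimit of perfect modules of uniformly bounded Tor-amplitude, so that the objects $\cG \ot f^*(M_\al)$ are uniformly bounded below and the almost continuity of $f_*$ can be invoked. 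Your coefficient object $p_Z^*(\cF)$ is bounded below (Proposition \ref{prop:cohextprod}) but typically of \emph{infinite} Tor-dimension: on an affine chart $U \times V \cong \Spec (A \ot_\kk B)$ it becomes $A \ot_\kk \cF_B$, whose Tor-amplitude over $A \ot_\kk B$ is that of $\cF_B$ over $B$, unbounded for general $\cF$. For such $M$ the filtered-colimit argument collapses --- $f^*$ of an infinite-Tor-dimension module need not be bounded below, so no uniform lower bound is available and $f_*$, being only almost continuous, cannot be commuted past the colimit. So ``almost continuity of pushforward together with continuity of the tensor product'' does not carry out the descent you propose.

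The paper's proof never asserts a projection formula for $f \times \id_Z$. When $Y$ and $Z$ are affine, it uses that $p_Y$ is affine, hence $p_{Y*}$ conservative, together with the unconditional projection formula for the affine morphisms $p_X$ and $p_Y$ and the unconditional base change $f^* p_{Y*} \cong p_{X*}(f \times \id_Z)^*$ along the finite-cohomological-dimension morphism $p_Y$ (Proposition \ref{prop:fcdprops}). This identifies $p_{Y*}$ applied to the whole composite with the single Beck--Chevalley map $f_*(\cG) \ot M \to f_*(\cG \ot f^*(M))$ for $M = p_{Y*}p_Z^*(\cF) \cong q_Y^* q_{Z*}(\cF)$ --- and this is where the finite global dimension of $\kk$ genuinely enters: $q_{Z*}(\cF)$ is a bounded-below $\kk$-module, hence of finite Tor-dimension over $\kk$, so $M$ \emph{does} have finite Tor-dimension and the argument of Lemma \ref{lem:lower*affineftdbasechange} applies. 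In other words, pushing down along $p_Y$ trades the infinite Tor-dimension of $\cF$ in the $Z$-direction for a coefficient module induced from $\kk$, where finite global dimension rescues the argument; your route confronts that infinite Tor-dimension directly and has no tool to handle it. (A posteriori your $\beta$ is an isomorphism on the objects $p_X^*(\cG)$, since $\alpha$ and the composite are isomorphisms --- but that is a consequence of the paper's proof, not available as an input to yours.) Your final reduction to affine $Y$ and $Z$ by flat covers does match the paper's second step and is fine as far as it goes.
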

\begin{proof}
	First suppose $Y$ and $Z$ are affine. Then $p_Y: Y \times Z \to Y$ is affine, hence $p_{Y*}$ is conservative and it suffices to show  $$p_{Y*} e_{\cF,Y} f_*(\cG) \to p_{Y*} (f \times \id_Z)_* e_{\cF,X}(\cG) \congto f_* p_{X*} e_{\cF,X}(\cG)$$ is an isomorphism. Using the projection formula for the affine morphisms $p_X$ and $p_Y$ \cite[Cor. 6.3.4.3]{LurSAG}, we can identify this with the Beck-Chevalley map $$f_*(\cG) \ot p_{Y*} p^*_Z(\cF) \to f_*(\cG \ot f^* p_{Y*} p^*_Z(\cF)).$$ 
	But $p_{Y*} p^*_Z(\cF) \cong q^*_{Y}q_{Z*}(\cF)$, where $q_Y$, $q_Z$ are the structure maps to $\Spec \kk$, hence $p_{Y*} p^*_Z(\cF)$ is of finite Tor-dimension by our hypotheses on $\kk$ and left boundedness of $\cF$. The claim now follows from the proof of Lemma~\ref{lem:lower*affineftdbasechange}. 	
	
	In the general case, let $g: U \cong \Spec A \to Y$ and $h: V \cong \Spec B \to Z$ be flat covers. We obtain a diagram
	\begin{equation*}
		\begin{tikzpicture}[baseline=(current  bounding  box.center),thick,>=\arrtip]
			\newcommand*{\ha}{1.5}; \newcommand*{\hb}{1.5}; \newcommand*{\hc}{1.5};
\newcommand*{\va}{-.9}; \newcommand*{\vb}{-.9}; \newcommand*{\vc}{-.9}; 
			\node (ab) at (\ha,0) {$W \times V$};
			\node (ad) at (\ha+\hb+\hc,0) {$U \times V$};
			\node (ba) at (0,\va) {$X \times Z$};
			\node (bc) at (\ha+\hb,\va) {$Y \times Z$};
			\node (cb) at (\ha,\va+\vb) {$W$};
			\node (cd) at (\ha+\hb+\hc,\va+\vb) {$U$};
			\node (da) at (0,\va+\vb+\vc) {$X$};
			\node (dc) at (\ha+\hb,\va+\vb+\vc) {$Y$};
			\draw[->] (ab) to node[above] {$ $} (ad);
			\draw[->] (ab) to node[above left, pos=.25] {$g' \times h $} (ba);
			\draw[->] (ab) to node[right,pos=.2] {$ $} (cb);
			\draw[->] (ad) to node[above left, pos=.75] {$ $} (bc);
			\draw[->] (ad) to node[right] {$ $} (cd);
			\draw[->] (ba) to node[left] {$ $} (da);
			\draw[->] (cb) to node[above,pos=.25] {$ $} (cd);
			\draw[->] (cb) to node[above left, pos=.25] {$g' $} (da);
			\draw[->] (cd) to node[below right] {$g $} (dc);
			\draw[->] (da) to node[above,pos=.75] {$ $} (dc);
			
			\draw[-,line width=6pt,draw=white] (ba) to  (bc);
			\draw[->] (ba) to node[above,pos=.75] {$ $} (bc);
			\draw[-,line width=6pt,draw=white] (bc) to  (dc);
			\draw[->] (bc) to node[right,pos=.2] {$ $} (dc);
		\end{tikzpicture}
	\end{equation*}
	in which all but the left and right faces are Cartesian. Since $(g \times h)^*$ is conservative, it suffices to show the top left arrow in 
	\begin{equation*}
		\begin{tikzpicture}
			[baseline=(current  bounding  box.center),thick,>=\arrtip]
			\newcommand*{\ha}{4.8}; \newcommand*{\hb}{5.5};
			\newcommand*{\va}{-1.5};
			\node (aa) at (0,0) {$(g \times h)^* e_{\cF,Y} f_*(\cG)$};
			\node (ab) at (\ha,0) {$(g \times h)^* (f \times \id_Z)_* e_{\cF,X} (\cG)$};
			\node (ac) at (\ha+\hb,0) {$ (f' \times \id_V)_* (g' \times h)^* e_{\cF,X} (\cG)$};
			\node (ba) at (0,\va) {$ e_{h^*(\cF),U} g^* f_*(\cG)$};
			\node (bb) at (\ha,\va) {$e_{h^*(\cF),U} f'_* g'^*(\cG)$};
			\node (bc) at (\ha+\hb,\va) {$(f' \times \id_V)_* e_{h^*(\cF),W} g'^*(\cG)$};
			\draw[->] (aa) to node[above] {$ $} (ab);
			\draw[->] (ab) to node[above] {$  $} (ac);
			\draw[->] (ba) to node[above] {$ $} (bb);
			\draw[->] (bb) to node[above] {$ $} (bc);
			\draw[->] (aa) to node[below,rotate=90] {$\sim $} (ba);
			\draw[->] (ac) to node[below,rotate=90] {$\sim $} (bc);
		\end{tikzpicture} 
	\end{equation*}
	is an isomorphism. This follows since the bottom left arrow is an isomorphism by Proposition~\ref{prop:lower*ftdbasechange}, the top right is by this and Proposition~\ref{prop:cohextprod}, and the bottom right is by the previous paragraph. 
\end{proof}

\begin{Proposition}\label{prop:eFXupper!compat}
	Suppose $\kk$ is an ordinary ring of finite global dimension. 
	Let $X$, $Y$, and $Z$ be geometric stacks, $f: X \to Y$ a proper, almost finitely presented morphism, and $\cF \in \QCoh(Z)^+$. Then the Beck-Chevalley map $e_{\cF,X}f^!(\cG) \to  (f \times \id_Z)^!e_{\cF, Y} (\cG)$ is an isomorphism for all $\cG \in \QCoh(Y)^+$. 
\end{Proposition}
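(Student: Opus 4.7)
The plan is to prove this by adapting the proof of Proposition \ref{prop:eFXlower*compat}. By construction, $\alpha_\cG$ is the Beck-Chevalley transformation associated to the isomorphism $(f \times \id_Z)_* e_{\cF,X} \cong e_{\cF,Y} f_*$ of Proposition \ref{prop:eFXlower*compat} via the adjunction $((f \times \id_Z)_*, (f \times \id_Z)^!)$. I would first handle the affine base case $Y = \Spec A$, $Z = \Spec B$: here $p_X: X \times Z \to X$ is affine (being the base change of $Z \to \Spec \kk$), so $p_{X*}$ is conservative and it suffices to check $p_{X*}(\alpha_\cG)$ is an isomorphism. Using the projection formula for $p_X$ together with $p_{X*}p_Z^*(\cF) \cong q_X^*q_{Z*}(\cF)$ (verified by flat cover of $X$, reducing to a direct computation when $X$ is affine), the LHS becomes $f^!(\cG) \otimes f^*(\cM)$ with $\cM := q_Y^*q_{Z*}(\cF)$, using $q_X = q_Y \circ f$. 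For the RHS, $p_Y: Y \times Z \to Y$ is of finite Tor-dimension since $A \otimes_\kk B$ has finite Tor-dimension over $A$ by the finite global dimension hypothesis on $\kk$; Proposition \ref{prop:lower*ftdbasechange} then gives $p_Y^* f_* \cong (f \times \id_Z)_* p_X^*$ on $\QCoh(X)^+$, and taking right-adjoint mates (using restricted Yoneda within $\QCoh(X)^+$) yields $p_{X*}(f \times \id_Z)^! \cong f^! p_{Y*}$ on $\QCoh(Y \times Z)^+$. Since $e_{\cF,Y}(\cG) \in \QCoh(Y \times Z)^+$ by Proposition \ref{prop:cohextprod}, the RHS becomes $f^!(\cG \otimes p_{Y*}p_Z^*\cF) = f^!(\cG \otimes \cM)$, and $p_{X*}(\alpha_\cG)$ is thereby identified with the natural projection-formula map $f^!(\cG) \otimes f^*(\cM) \to f^!(\cG \otimes \cM)$.

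The key additional input will be a projection formula for $f^!$: for $f$ proper almost finitely presented, $\cG \in \QCoh(Y)^+$, and $\cM \in \QCoh(Y)$ of finite Tor-dimension, $f^!(\cG) \otimes f^*(\cM) \cong f^!(\cG \otimes \cM)$. I would establish this as a preliminary lemma. For perfect $\cM$ it follows from the projection formula for $f_*$ via the direct adjunction chain $\Map(\cH, f^!(\cG) \otimes f^*(\cM)) \cong \Map(f_*(\cH) \otimes \cM^\vee, \cG) \cong \Map(\cH, f^!(\cG \otimes \cM))$, using dualizability of $\cM$. For general finite-Tor-dimension $\cM$, I would write $\cM \cong \colim \cM_\al$ as a filtered colimit of perfect modules of uniformly bounded Tor-dimension (\cite[Prop. 9.6.7.1]{LurSAG}) and use the almost continuity of $f^!$ (Proposition \ref{prop:upper!almostcont}) together with the uniform lower boundedness of $\cG \otimes \cM_\al$ (guaranteed by finite global dimension of $\kk$) to pass to the colimit. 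The specific $\cM = q_Y^*q_{Z*}(\cF)$ appearing above is of finite Tor-dimension because $q_{Z*}(\cF) = \cF|_\kk$ is, again by finite global dimension of $\kk$.

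For the general (non-affine) case, I would mimic the cube argument in the proof of Proposition \ref{prop:eFXlower*compat}: pick flat covers $g: U \to Y$ and $h: V \to Z$ by affines and form the cube involving the base change $g': W := X \times_Y U \to X$. Conservativity of $(g' \times h)^*$ reduces the claim to checking an isomorphism after pulling back via the various flat covers. The commutation of flat $*$-pullbacks with $e_{\cF,-}$ (from the proof of Proposition \ref{prop:eFXlower*compat}) combined with the base change for $(f \times \id_Z)^!$ along flat morphisms (Proposition \ref{prop:upper!ftdbasechange}) reduces the statement to its analogue for $(W, U, V)$ with $U, V$ affine, which is the affine base case. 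The hard part will be establishing the projection formula for $f^!$ and verifying that the mate manipulation in the affine case — specifically, that $p_{X*}(\alpha_\cG)$ is naturally the projection-formula map — works out correctly; both are routine given almost continuity and the standard mate construction, but require careful bookkeeping with units, counits, and Beck-Chevalley transformations.
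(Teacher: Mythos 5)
Your proposal is correct and follows essentially the same route as the paper's proof: reduce via conservativity of $p_{X*}$ in the affine case to the projection-formula map $f^!(\cG)\otimes f^*(\cM) \to f^!(\cG\otimes\cM)$ with $\cM \cong q_Y^*q_{Z*}(\cF)$ of finite Tor-dimension, then handle the general case by the flat-cover cube argument using Propositions \ref{prop:upper!ftdbasechange} and \ref{prop:cohextprod}. The only differences are cosmetic: the paper simply cites \cite[Lem. 6.4.1.8]{LurSAG} for the $f^!$-projection formula rather than re-proving it by perfect approximation, and it obtains $p_{X*}(f\times\id_Z)^! \cong f^! p_{Y*}$ by taking right adjoints of the unbounded base-change isomorphism of Proposition \ref{prop:fcdprops} (via the first paragraph of the proof of Proposition \ref{prop:up!low*geom}) --- since proper morphisms are of finite cohomological dimension, this avoids the bounded-below mate bookkeeping your version via Proposition \ref{prop:lower*ftdbasechange} requires.
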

\begin{proof}
	First suppose $Y$ and $Z$ are affine. Then $p_X: X \times Z \to X$ is affine, hence $p_{X*}$ is conservative and it suffices to show the first factor of $$ p_{X*} e_{\cF,X} f^!(\cG) \to p_{X*} (f \times \id_Z)^! e_{\cF, Y}(\cG) \to  f^! p_{Y*} e_{\cF, Y}(\cG) $$
	is an isomorphism. The second factor is an isomorphism by the first paragraph of the proof of Proposition \ref{prop:up!low*geom}, so it suffices to show the composition is.  Using the projection formula for the affine morphisms $p_X$ and $p_Y$ \cite[Cor. 6.3.4.3]{LurSAG}, we can identify this with the Beck-Chevalley map $$f^!(\cG) \ot f^*p_{Y*} p^*_Z(\cF) \to f^!(\cG \ot p_{Y*} p^*_Z(\cF)).$$ 
	But $p_{Y*} p^*_Z(\cF) \cong q^*_{Y}q_{Z*}(\cF)$, where $q_Y$, $q_Z$ are the structure maps to $\Spec \kk$, hence $p_{Y*} p^*_Z(\cF)$ is of finite Tor-dimension by our hypotheses on $\kk$ and left boundedness of $\cF$. The claim now follows from \cite[Lem. 6.4.1.8]{LurSAG}. 
	
	In the general case, let $g: U \cong \Spec A \to Y$ and $h: V \cong \Spec B \to Z$ be flat covers. We obtain a diagram
	\begin{equation}
		\begin{tikzpicture}[baseline=(current  bounding  box.center),thick,>=\arrtip]
			\newcommand*{\ha}{1.5}; \newcommand*{\hb}{1.5}; \newcommand*{\hc}{1.5};
\newcommand*{\va}{-.9}; \newcommand*{\vb}{-.9}; \newcommand*{\vc}{-.9}; 
			\node (ab) at (\ha,0) {$W$};
			\node (ad) at (\ha+\hb+\hc,0) {$W \times V$};
			\node (ba) at (0,\va) {$X$};
			\node (bc) at (\ha+\hb,\va) {$X \times Z$};
			\node (cb) at (\ha,\va+\vb) {$U$};
			\node (cd) at (\ha+\hb+\hc,\va+\vb) {$U \times V$};
			\node (da) at (0,\va+\vb+\vc) {$Y$};
			\node (dc) at (\ha+\hb,\va+\vb+\vc) {$Y \times Z$};
			\draw[<-] (ab) to node[above] {$ $} (ad);
			\draw[->] (ab) to node[above left, pos=.25] {$g' $} (ba);
			\draw[->] (ab) to node[right,pos=.2] {$f' $} (cb);
			\draw[->] (ad) to node[below right] {$ $} (bc);
			\draw[->] (ad) to node[right] {$ $} (cd);
			\draw[->] (ba) to node[left] {$f $} (da);
			\draw[<-] (cb) to node[above,pos=.25] {$ $} (cd);
			\draw[->] (cb) to node[above left, pos=.25] {$g $} (da);
			\draw[->] (cd) to node[below right] {$g \times h $} (dc);
			\draw[<-] (da) to node[above,pos=.75] {$ $} (dc);
			
			\draw[-,line width=6pt,draw=white] (ba) to  (bc);
			\draw[<-] (ba) to node[above,pos=.75] {$ $} (bc);
			\draw[-,line width=6pt,draw=white] (bc) to  (dc);
			\draw[->] (bc) to node[right,pos=.2] {$ $} (dc);
		\end{tikzpicture}
	\end{equation}
	in which all but the top and bottom faces are Cartesian. Since $(g' \times h)^*$ is conservative, it suffices to show the top left arrow in
	\begin{equation*}
		\begin{tikzpicture}
			[baseline=(current  bounding  box.center),thick,>=\arrtip]
			\newcommand*{\ha}{4.8}; \newcommand*{\hb}{5.5};
			\newcommand*{\va}{-1.5};
			\node (aa) at (0,0) {$(g' \times h)^* e_{\cF,X} f^!(\cG)$};
			\node (ab) at (\ha,0) {$(g' \times h)^* (f \times \id_Z)^! e_{\cF,Y} (\cG)$};
			\node (ac) at (\ha+\hb,0) {$ (f' \times \id_V)^! (g \times h)^* e_{\cF,Y} (\cG)$};
			\node (ba) at (0,\va) {$ e_{h^*(\cF),W} g'^* f^!(\cG)$};
			\node (bb) at (\ha,\va) {$e_{h^*(\cF),W} f'^! g^*(\cG)$};
			\node (bc) at (\ha+\hb,\va) {$(f' \times \id_V)^! e_{h^*(\cF),U} g^*(\cG)$};
			\draw[->] (aa) to node[above] {$ $} (ab);
			\draw[->] (ab) to node[above] {$  $} (ac);
			\draw[->] (ba) to node[above] {$ $} (bb);
			\draw[->] (bb) to node[above] {$ $} (bc);
			\draw[->] (aa) to node[below,rotate=90] {$\sim $} (ba);
			\draw[->] (ac) to node[below,rotate=90] {$\sim $} (bc);
		\end{tikzpicture} 
	\end{equation*}
	is an isomorphism. This follows since the bottom left arrow is an isomorphism by Proposition~\ref{prop:upper!ftdbasechange}, the top right is by this and Proposition \ref{prop:cohextprod}, and the bottom right is by the first paragraph. 
\end{proof}

To express the compatibility of external products and sheaf Hom, we write $\eop_{\cF,X}: \QCoh(X) \to \QCoh(Z \times X)$ for $\cF \boxtimes -$, the external product with $\cF$ on the left. Given $\cF' \in \Coh(Z')$, the associativity isomorphism $ \cF' \boxtimes (- \boxtimes \cF) \cong (\cF' \boxtimes -) \boxtimes \cF,$ is written as $\eop_{\cF',X \times Z}e_{\cF,X} \cong e_{\cF, Z' \times X} \eop_{\cF',X}$ in this notation. We obtain a Beck-Chevalley transformation, which in standard notation would be written as
$$ \cF' \boxtimes p_{X*} \cHom(p_Z^*(\cF),-) \to p_{Z' \times X *} \cHom(p_Z^*(\cF), \cF' \boxtimes -). $$ 

\begin{Proposition}\label{prop:eFXcHomcompat}
	Suppose $\kk$ is an ordinary ring of finite global dimension. 
	Let $X$ $Z$ be geometric stacks, let $\cF \in \QCoh(X)$ be almost perfect, and let $\cF' \in \QCoh(Z)^+$ be arbitrary. Then the Beck-Chevalley map $\eop_{\cF',X} \cHom(\cF,\cG) \to \cHom(p_X^*(\cF), \eop_{\cF',X}(\cG))$ is an isomorphism for all $\cG \in \QCoh(X)^+$. 
\end{Proposition}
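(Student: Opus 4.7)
The plan is to follow the template of Propositions \ref{prop:eFXlower*compat} and \ref{prop:eFXupper!compat}: reduce to the affine case via flat covers, then handle the affine case by combining the trivial case $\cF' = \kk$ with an approximation argument using the finite global dimension of $\kk$.

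First I would pick flat covers $g: U \cong \Spec A \to X$ and $h: V \cong \Spec B \to Z$, making $h \times g: V \times U \to Z \times X$ a flat cover. Since $(h \times g)^*$ is conservative and t-exact, it suffices to check the pullback of the given Beck-Chevalley map is an isomorphism. Compatibility of external products with $*$-pullback is formal, and Proposition \ref{prop:cohextprod} (using finite global dimension of $\kk$) ensures $\cF' \boxtimes \cG \in \QCoh(Z \times X)^+$. One application of Proposition \ref{prop:cHomftdbasechange} to $g$ and a second to $h \times g$ then reduce the claim to the case $X = \Spec A$, $Z = \Spec B$.

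In this affine setting, the $(B \otimes_\kk -, \mathrm{oblv})$ adjunction along $\mathrm{oblv}: \Mod_{A \otimes_\kk B} \to \Mod_A$ identifies the map in question with
\begin{equation*}
	\theta_{\cF'}: \cF' \otimes_\kk \Hom_A(\cF, \cG) \to \Hom_A(\cF, \cF' \otimes_\kk \cG),
\end{equation*}
as an $A$-linear map natural in $\cF' \in \Mod_\kk$ (the $B$-action being recovered from that on $\cF'$). Conservativity of $\mathrm{oblv}$ reduces to checking $\theta_{\cF'}$ is an isomorphism. When $\cF' = \kk$, $\theta_\kk$ is the identity. The class of $\cF' \in \Mod_\kk$ for which $\theta_{\cF'}$ is an isomorphism is a stable subcategory closed under retracts, so it contains all perfect $\kk$-modules.

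For general $\cF' \in \Mod_B^+$, t-exactness of $\Mod_B \to \Mod_\kk$ gives $\cF' \in \Mod_\kk^+$, and the finite global dimension $d$ of $\kk$ ensures $\cF'$ has Tor-amplitude $\leq d$ over $\kk$. By \cite[Prop. 9.6.7.1]{LurSAG}, we may then write $\cF' \cong \colim \cF'_\al$ as a filtered colimit of perfect $\kk$-modules $\cF'_\al$ of Tor-amplitude uniformly $\leq d$. The left side of $\theta$ is continuous in this colimit by continuity of $- \otimes_\kk -$. For the right side, if $\cG \in \Mod_A^{\geq m}$ then the uniform Tor-amplitude bound forces $\cF'_\al \otimes_\kk \cG$ to lie uniformly in $\Mod_A^{\geq m - d}$, so Proposition \ref{prop:cHomalmostcont} (applied to the almost perfect $\cF$) yields continuity of the right side along this colimit. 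Since each $\theta_{\cF'_\al}$ is an isomorphism, so is $\theta_{\cF'}$. The main obstacle is the bookkeeping needed to verify that the $B$-action on $\cF'$ plays no essential role --- naturality in $\cF'$ as a $\kk$-module suffices --- and that the finite global dimension of $\kk$ provides the uniform lower bound required to invoke almost continuity on the right-hand side.
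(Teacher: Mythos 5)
Your proof is correct and follows essentially the same route as the paper: the same flat-cover rectangle (two applications of Proposition \ref{prop:cHomftdbasechange}, with Proposition \ref{prop:cohextprod} supplying the boundedness of $\cF' \boxtimes \cG$ needed for the second one) reduces to the affine case, which you then settle via conservativity of the affine pushforward $p_{X*}$ and the projection formula, identifying the map with $\theta_{\cF'}: \cF' \otimes_\kk \cHom(\cF,\cG) \to \cHom(\cF, \cF' \otimes_\kk \cG)$. The only divergence is minor: where the paper, after observing that $p_{X*}p_Z^*(\cF') \cong q_X^* q_{Z*}(\cF')$ has finite Tor-dimension, concludes by citing \cite[Lem. 6.5.3.7]{LurSAG}, you re-derive that statement by hand using the stable-subcategory and perfect-approximation argument of Lemma \ref{lem:cHomaffineftdbasechange}, which is equally valid.
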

\begin{proof}
	First suppose $X$ and $Z$ are affine. Then $p_X: X \times Z \to X$ is affine, hence $p_{X*}$ is conservative and it suffices to show  show 
	$$ p_{X*} \eop_{\cF', X} \cHom(\cF,\cG) \to p_{X*} \cHom(p_X^*(\cF), \eop_{\cF', X}(\cG)) \congto \cHom(\cF, p_{X*} \eop_{\cF',X}(\cG))  $$
	is an isomorphism. 
	Using the projection formula for the affine morphisms $p_X$ and $p_Y$ \cite[Cor. 6.3.4.3]{LurSAG}, we can identify this with the Beck-Chevalley map $$ p_{X*} p^*_Z(\cF') \otimes \cHom(\cF,\cG) \to \cHom(\cF, p_{X*} p^*_Z(\cF') \otimes \cG).  $$ But $p_{X*} p^*_Z(\cF) \cong q^*_{X}q_{Z*}(\cF)$, where $q_X$, $q_Z$ are the structure maps to $\Spec \kk$, hence $p_{X*} p^*_Z(\cF)$ is of finite Tor-dimension by our hypotheses on $\kk$ and left boundedness of $\cF$. The claim now follows from \cite[Lem. 6.5.3.7]{LurSAG}. 

In the general case, let $g: U \cong \Spec A \to X$ and $h: V \cong \Spec B \to Z$ be flat covers. Since $(h \times g)^*$ is conservative, it suffices to show the top left arrow in 
\begin{equation*}
	\begin{tikzpicture}
		[baseline=(current  bounding  box.center),thick,>=\arrtip]
		\newcommand*{\ha}{5.1}; \newcommand*{\hb}{5.8};
		\newcommand*{\va}{-1.5};
		\node (aa) at (0,0) {$(h \times g)^* \eop_{\cF',X} \cHom(\cF,\cG)$};
		\node (ab) at (\ha,1.0) {$(h \times g)^* \cHom(p^*_X(\cF), \eop_{\cF',X}(\cG))$};
		\node (ac) at (\ha+\hb,0) {$\cHom( (h \times g)^* p^*_X(\cF), (h \times g)^* \eop_{\cF',X}(\cG))$};
		\node (ba) at (0,\va) {$ \eop_{h^*(\cF'),U} g^* \cHom(\cF,\cG)$};
		\node (bb) at (\ha,\va) {$\eop_{h^*(\cF'),U} \cHom(g^*(\cF),g^*(\cG)) $};
		\node (bc) at (\ha+\hb,\va) {$\cHom(p^*_U g^*(\cF), \eop_{h^*(\cF'),U}g^*(\cG)) $};
		\draw[->] (aa) to node[above] {$ $} (ab);
		\draw[->] (ab) to node[above] {$  $} (ac);
		\draw[->] (ba) to node[above] {$ $} (bb);
		\draw[->] (bb) to node[above] {$ $} (bc);
		\draw[->] (aa) to node[below,rotate=90] {$\sim $} (ba);
		\draw[->] (ac) to node[below,rotate=90] {$\sim $} (bc);
	\end{tikzpicture} 
\end{equation*}
is an isomorphism. This follows since the bottom left arrow is an isomorphism by Proposition~\ref{prop:cHomftdbasechange}, the top right is by this and Proposition~\ref{prop:cohextprod}, and the bottom right is by the previous paragraph. 
\end{proof}

\begin{Proposition}\label{prop:eFXeFXRcompat}
Suppose $\kk$ is an ordinary ring of finite global dimension. 
Let $X$, $Z$, and $Z'$ be geometric stacks, let $\cF \in \QCoh(Z)$ be almost perfect, and let $\cF' \in \QCoh(Z')^+$ be arbitrary. Then the Beck-Chevalley map $\eop_{\cF',X} e_{\cF,X}^R(\cG) \to e_{\cF, Z' \times X}^R \eop_{\cF',X \times Z}(\cG)$ is an isomorphism for all $\cG \in \QCoh(X \times Z)^+$. 
\end{Proposition}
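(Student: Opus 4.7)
My plan is to deduce the claimed isomorphism as a formal composition of the two preceding propositions (\ref{prop:eFXlower*compat} and \ref{prop:eFXcHomcompat}), by unfolding the definition $e_{\cF,X}^R = p_{X*} \circ \cHom(p_Z^*(\cF),-)$ and then moving $\eop_{\cF',X}$ past $p_{X*}$ and past $\cHom(p_Z^*(\cF),-)$ in succession.

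More precisely, the strategy will be as follows. Starting from $\eop_{\cF',X}\, p_{X*} \cHom(p_Z^*(\cF),\cG)$, I will first apply the ``left'' version of Proposition~\ref{prop:eFXlower*compat} to the morphism $p_X \colon X \times Z \to X$, which yields an isomorphism with $(\id_{Z'} \times p_X)_* \eop_{\cF',X\times Z} \cHom(p_Z^*(\cF),\cG)$. I will then apply Proposition~\ref{prop:eFXcHomcompat} on $X \times Z$, with the almost perfect sheaf being $p_Z^*(\cF)$ (almost perfect since $p_Z$ is flat and almost perfect sheaves are stable under pullback), to further rewrite this as $(\id_{Z'} \times p_X)_*\cHom\bigl(p_{X\times Z}^* p_Z^*(\cF),\, \eop_{\cF',X\times Z}(\cG)\bigr)$. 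Identifying $p_{X \times Z}^* p_Z^*(\cF) \cong q_Z^*(\cF)$ and $\id_{Z'} \times p_X \cong q_{Z' \times X}$, where $q_Z \colon Z' \times X \times Z \to Z$ and $q_{Z' \times X} \colon Z' \times X \times Z \to Z' \times X$ are the projections, this is exactly $e_{\cF,Z'\times X}^R \eop_{\cF',X\times Z}(\cG)$.

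To invoke the two intermediate propositions I need to check their bounded-below hypotheses on each intermediate object. This is where Proposition~\ref{prop:cHomalmostcont} enters: since $\cF$ is almost perfect, $p_Z^*(\cF)$ is almost perfect on $X \times Z$, so $\cHom(p_Z^*(\cF),-)$ is left bounded, giving $\cHom(p_Z^*(\cF),\cG) \in \QCoh(X \times Z)^+$ from $\cG \in \QCoh(X \times Z)^+$; and $\cF'\in\QCoh(Z')^+$ is given. Thus the hypotheses of both \ref{prop:eFXlower*compat} and \ref{prop:eFXcHomcompat} are met. Note no hypothesis on $\kk$ beyond being Noetherian is actually needed for the \emph{individual} intermediate isomorphisms \ref{prop:eFXlower*compat} and \ref{prop:eFXcHomcompat} (they already assumed the finite global dimension of $\kk$), so I can freely chain them.

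The main technical obstacle, as with similar ``Beck--Chevalley compatibility'' arguments earlier in the paper, is verifying that the composite natural transformation produced by this chain actually coincides with the Beck--Chevalley map singled out in the statement, rather than with some other natural transformation between the same functors. This is a formal unit/counit diagram chase: I will express the Beck--Chevalley map of the statement using the associativity isomorphism $\eop_{\cF',X\times Z}\, e_{\cF,X} \cong e_{\cF,Z'\times X}\, \eop_{\cF',X}$ (from the associativity of external tensor products) together with the unit and counit of the adjunction $e_{\cF,Z'\times X} \dashv e_{\cF,Z'\times X}^R$, then check that the intermediate Beck--Chevalley maps of \ref{prop:eFXlower*compat} and \ref{prop:eFXcHomcompat} assemble to this map under the identifications $\eop_{\cF',X}\,p_{X*} \cong (\id_{Z'}\times p_X)_*\eop_{\cF',X\times Z}$ and $p_{X\times Z}^* p_Z^*(\cF) \cong q_Z^*(\cF)$. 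Once this bookkeeping is carried out, the proof is complete.
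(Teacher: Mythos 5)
Your proposal is correct and follows essentially the same route as the paper: the paper's proof simply factors the Beck--Chevalley map as the composition $\eop_{\cF',X}\, p_{X*}\cHom(p_Z^*(\cF),\cG) \to p_{Z'\times X*}\, \eop_{\cF',X\times Z}\cHom(p_Z^*(\cF),\cG) \to p_{Z'\times X*}\cHom(p_Z^*(\cF),\eop_{\cF',X\times Z}(\cG))$ and invokes Propositions~\ref{prop:eFXlower*compat} and \ref{prop:eFXcHomcompat}, exactly as you do. The only difference is that the paper treats the compatibility of the composite with the stated Beck--Chevalley map as holding ``by definition'' (the Beck--Chevalley transformation of a composite adjunction is the composite of the two), where you propose to verify it by an explicit unit/counit chase; your remark that $\cHom(p_Z^*(\cF),\cG)\in\QCoh(X\times Z)^+$ via left boundedness (Proposition~\ref{prop:cHomalmostcont}) is the correct justification for chaining the hypotheses, which the paper leaves implicit.
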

\begin{proof}
By definition the given map factors as 
\begin{align*}
\eop_{\cF',X} p_{X*}\cHom(p_Z^*(\cF),\cG) &\to p_{Z' \times X*} \eop_{\cF',X \times Z} \cHom(p_Z^*(\cF),\cG)\\ &\to p_{Z' \times X *}\cHom(p_Z^*(\cF), \eop_{\cF',X \times Z}(\cG)),
\end{align*}
hence is an isomorphism by Propositions \ref{prop:eFXlower*compat} and \ref{prop:eFXcHomcompat}. 
\end{proof}

\subsection{Ind-geometric external products: the coherent case} \label{sec:indextcohcase}

We now consider external products of coherent sheaves on reasonable ind-geometric stacks. The constructions in the remaining sections will make crucial use of Proposition~\ref{prop:cohextprod}, hence \emph{we assume that $\kk$ is an ordinary ring of finite global dimension for the rest of the paper}. 

Let $X \cong \colim X_\al$ and $Z \cong \colim Z_\be$ be reasonable presentations. We will define 
\begin{equation}\label{eq:cohext}
- \boxtimes -: \Coh(X) \times \Coh(Z) \to \Coh(X \times Z)	
\end{equation}
 so that it fits into a diagram
\begin{equation}\label{eq:cohextcompat}
	\begin{tikzpicture}
		[baseline=(current  bounding  box.center),thick,>=\arrtip]
		\node (a) at (0,0) {$\Coh(X_\al) \times \Coh(Z_\be)$};
		\node (b) at (5.0,0) {$\Coh(X_\al \times Z_\be)$};
		\node (c) at (0,-1.5) {$\Coh(X) \times \Coh(Z) $};
		\node (d) at (5.0,-1.5) {$\Coh(X \times Z)$};
		\draw[->] (a) to node[above] {$- \boxtimes - $} (b);
		\draw[->] (b) to node[right] {$(i_{\al} \times i_\be)_* $} (d);
		\draw[->] (a) to node[left] {$i_{\al*} \times i_{\be*} $}(c);
		\draw[->] (c) to node[above] {$- \boxtimes - $} (d);
	\end{tikzpicture}
\end{equation}
for all $\al$, $\be$. The behavior of the ind-geometric external product on objects is completely determined by these diagrams, given the behavior of the geometric external product. 

Following \cite[Sec. 9]{GR17}, external products are most fully encoded as lax symmetric monoidal structures on sheaf theories. Recall that the restriction $\QCoh: \Affk^\op \to \Cathatinfty$ enhances to a symmetric monoidal functor $\QCoh: \Affk^\op  \to \PrStk$, where $\Affk^\op \cong \CAlgk$ and where $\PrStk$ denotes the category of presentable, stable $\Mod_\kk$-module categories \cite[Rem. 4.8.5.19]{LurHA}. It follows that $\QCoh: \Affk^\op \to \Cathatinfty$ is itself lax symmetric monoidal \cite[Cor. 4.8.1.4]{LurHA}. By Proposition~\ref{prop:cohextprod} we obtain a lax symmetric monoidal structure on the induced functor
\begin{equation}\label{eq:cohaffineextsec}
	\Coh: \Affkftd^\op \to \Cathatinfty,
\end{equation}
where $\Affkftd \subset \Affk$ is the 1-full subcategory which only includes morphisms of finite Tor-dimension. 
Now recall from Definition \ref{def:cohonindgstks} that the basic functorialities of coherent sheaves on reasonable ind-geometric stacks were packaged as a functor
\begin{equation}\label{eq:cohcorrindgstkfunctorextsec}
	\Coh: \Corr(\indGStkkreas)_{prop,ftd} \to \Catinfty
\end{equation}
extending (\ref{eq:cohaffineextsec}). 

\begin{Proposition}\label{prop:reascohlaxsymmstruct}
The functor (\ref{eq:cohcorrindgstkfunctorextsec}) has a canonical lax symmetric monoidal structure extending that of (\ref{eq:cohaffineextsec}). 
\end{Proposition}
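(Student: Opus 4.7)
The plan is to proceed in two stages, first upgrading $\Coh$ to a lax symmetric monoidal functor on the truncated geometric correspondence category, then transporting this structure through the left Kan extension that produced (\ref{eq:cohcorrindgstkfunctorextsec}).

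First I observe that $\Corr(\GStkkplus)_{prop,ftd}$ inherits a symmetric monoidal structure from Cartesian product in $\Stkk$: truncated geometric stacks are closed under fiber products by Proposition \ref{prop:gstkprops} together with our standing hypothesis that $\kk$ has finite global dimension (which ensures that the tensor product of two truncated $\kk$-algebras is truncated), and the classes $prop$, $ftd$, and $afp$ are each stable under fiber products. Likewise $\Corr(\indGStkkreas)_{prop,ftd}$ is closed under products by Propositions \ref{prop:indgeomfiberprods} and \ref{prop:indPbaseprops}. The inclusion $\Corr(\GStkkplus)_{prop,ftd} \hookrightarrow \Corr(\indGStkkreas)_{prop,ftd}$ and the restriction $\Affkftd^\op \hookrightarrow \Corr(\GStkkplus)_{prop,ftd}$ are both symmetric monoidal.

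\textbf{Stage 1.} I would construct the lax symmetric monoidal enhancement of $\Coh|_{\Corr(\GStkkplus)_{prop,ftd}}$ by enhancing the external product. On objects the structure is $(\cG_1, \cG_2) \mapsto \cG_1 \boxtimes \cG_2 \in \Coh(X_1 \times X_2)$, which lies in $\Coh$ by Proposition~\ref{prop:cohextprod}. To promote this to a map of $\infty$-operads $\Coh^\otimes : \Corr(\GStkkplus)_{prop,ftd}^\otimes \to \Catinfty^\times$, I would use the formulation of Proposition \ref{prop:Corrextension} for correspondence categories, now in the symmetric monoidal setting. The coherence needed is that external product commutes with the structure maps of the correspondence: commutation with $*$-pullback along $h_1 \times h_2$ is tautological from the definition of $\boxtimes$, while commutation with pushforward along $f_1 \times f_2$ is Proposition~\ref{prop:eFXlower*compat} applied iteratively (using Proposition \ref{prop:cohextprod} to guarantee the intermediate sheaves remain in $\QCoh^+$ so the proposition applies). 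The Beck-Chevalley coherences needed for the base-change squares in $\Corr$ to be compatible with the monoidal structure reduce in the same way to Proposition \ref{prop:eFXlower*compat} and the tautological compatibility with $*$-pullback. This yields the lax symmetric monoidal structure on the truncated geometric version, which restricts on the diagonal correspondences to the lax symmetric monoidal structure inherited from $\QCoh^\otimes$ on $\Affkftd^\op$.

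\textbf{Stage 2.} Now I would transport the structure through left Kan extension using Lurie's operadic left Kan extension machinery (\cite[Thm.~3.1.2.3]{LurHA}). Writing $\catC^\otimes = \Corr(\GStkkplus)_{prop,ftd}^\otimes$ and $\catD^\otimes = \Corr(\indGStkkreas)_{prop,ftd}^\otimes$, the operadic Kan extension of a map of $\infty$-operads $F^\otimes : \catC^\otimes \to \Catinfty^\times$ along $\catC^\otimes \hookrightarrow \catD^\otimes$ exists and its underlying functor computes the ordinary left Kan extension, provided that for each sequence of objects $(Y_1, \dots, Y_n)$ in $\catD$, the indexing diagram for the operadic colimit is sifted-equivalent to a product of the corresponding diagrams for each $Y_i$. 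In our situation, fixing reasonable presentations $Y_i \cong \colim_{\al_i} Y_{i,\al_i}$, the slice $(\catC^\otimes)^{\mathrm{act}}_{/(Y_1,\dots,Y_n)}$ is cofinally computed by tuples of reasonable geometric substacks, and their Cartesian products $\prod Y_{i,\al_i}$ form a cofinal subsystem inside $(\catC)_{/Y_1 \times \cdots \times Y_n}$ since any reasonable geometric substack of $\prod Y_i$ factors through some $\prod Y_{i,\al_i}$ (by Propositions~\ref{prop:factorthroughgeometric} and \ref{prop:igprestermsarereas} applied iteratively). This cofinality, combined with the fact that the colimit along reasonable presentations computing $\Coh(Y_1 \times \cdots \times Y_n)$ (Proposition~\ref{prop:cohonindgstks}) is compatible with the external product, verifies the operadic Kan extension hypothesis and produces the desired lax symmetric monoidal structure on (\ref{eq:cohcorrindgstkfunctorextsec}). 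The diagram (\ref{eq:cohextcompat}) then follows from the universal property of the Kan extension.

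The main technical obstacle will be the bookkeeping in Stage 2: carefully verifying the cofinality statement that lets an operadic Kan extension compute as a product of ordinary Kan extensions in each variable, and checking that $\Corr(\indGStkkreas)_{prop,ftd}^\otimes$ is an $\infty$-operad (equivalently, that its symmetric monoidal structure behaves correctly with respect to the 1-fullness constraints). A key convenience is that we need only the \emph{lax} structure, so we do not need to verify that external products preserve colimits; this keeps the argument formal and avoids any delicate continuity checks beyond those in Stage 1.
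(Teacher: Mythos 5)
Your Stage 2 is essentially the paper's final step: the paper left Kan extends the lax structure from $\Corr(\GStkkplus)_{prop,ftd}$ to $\Corr(\indGStkkreas)_{prop,ftd}$ using Proposition \ref{prop:laxext} (whose cofinality hypothesis, that $\catC'_{X/} \times \catC'_{Y/} \to \catC'_{X \otimes Y/}$ is left cofinal, is exactly your observation that reasonable geometric substacks of a product factor through products of reasonable substacks) together with Proposition \ref{prop:corrKanext} to reconcile the Kan extension with the correspondence-category structure. But your Stage 1 has a genuine gap, and it sits at the heart of the proposition. A lax symmetric monoidal structure on a functor out of an $\infty$-category of correspondences is an infinite hierarchy of coherence data; object-level isomorphisms such as Proposition \ref{prop:eFXlower*compat}, or the tautological compatibility of $\boxtimes$ with $*$-pullback, are \emph{consequences} of that structure and cannot be assembled into it by hand. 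The "symmetric monoidal version of Proposition \ref{prop:Corrextension}" you invoke also does not typecheck when applied to $\Coh$ directly: the extension theorem manufactures functors out of correspondence categories from adjointability data, but $f_*$ on $\Coh$ is not an adjoint of $h^*$ on $\Coh$ --- the adjointability lives at the level of $\QCoh$, where base change against finite Tor-dimension pullback holds only on bounded-below objects (Proposition \ref{prop:lower*ftdbasechange}). The monoidal enhancements of the correspondence-extension theorem that actually exist (\cite[Cor. 1.2.2]{Ste20}, \cite[Cor. 3.4.2.2]{LurSAG}) take as input a \emph{genuinely} symmetric monoidal functor satisfying unrestricted base change, and $\QCoh$ on geometric stacks is only lax symmetric monoidal: the map $\QCoh(X) \ot \QCoh(Z) \to \QCoh(X \times Z)$ fails to be an equivalence already for $X = Z = BG_\cO$.

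The paper's proof is structured precisely to dodge this. It performs the monoidal correspondence extension exactly once, for $\QCoh: \Corr(\AlgSpk) \to \PrStk$, where the input is honestly symmetric monoidal (Künneth holds for algebraic spaces, by the right Kan extension from $\Affk$ being symmetric monoidal per \cite[Cor. 9.4.2.3, Prop. 9.6.1.1]{LurSAG}) and base change is unconditional; after that, every step is a formal transport of \emph{lax} structures along restrictions and right/left Kan extensions via Propositions \ref{prop:laxext} and \ref{prop:corrKanext}: right Kan extend to $\Corr(\GStkk)_{alg,all}$, restrict to $\Corr(\GStkkplus)_{prop,ftd}$ landing in $\Coh$ via Proposition \ref{prop:cohextprod}, then left Kan extend as in your Stage 2. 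To repair your argument you should replace Stage 1 with this detour through algebraic spaces (or else prove a monoidal correspondence-extension theorem accepting lax inputs with bounded-below base change, which is substantially more than the bookkeeping you flag); your closing remark that needing only the lax structure "keeps the argument formal" is true of the Kan extension steps, but the coherent construction over the geometric correspondence category is irreducibly the hard part.
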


The proof will use the following standard result, see \cite[Prop. 14.5.1]{Ras14} or, for a close variant, \cite[Lem. 2.16]{AFT17}. Here if $\catC$ is a symmetric monoidal category, then $\catC^\otimes \to \Fin_*$ denotes the associated coCartesian fibration.

\begin{Proposition}\label{prop:laxext}
	Let $\catC$, $\catD$ be symmetric monoidal categories, $\catC' \subset \catC$ a full symmetric monoidal subcategory, and $\Phi': \catC' \to \catD$ a lax symmetric monoidal functor. Suppose that $\catD$ admits small limits and that $\catC'_{/X} \times \catC'_{/Y} \to \catC'_{/X \otimes Y}$ is right cofinal for all $X, Y \in \catC$. Then the right Kan extension $\Phi: \catC \to \catD$ of $\Phi'$ admits a canonical lax symmetric monoidal structure extending that of $\Phi'$. Explicitly, it is the given by the right Kan extension $\Phi^\otimes: \catC^\otimes \to \catD^\otimes$ of $\Phi'^{\otimes}: \catC'^\otimes \to \catD^\otimes$ relative to $\Fin_*$. If instead $\catD$ admits small colimits and $\catC'_{X/} \times \catC'_{Y/} \to \catC'_{X \otimes Y/}$ is left cofinal for all $X, Y \in \catC$, then the corresponding claim holds for left Kan extensions.
\end{Proposition}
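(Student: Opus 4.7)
The plan is to encode the desired lax symmetric monoidal structure as a morphism of coCartesian fibrations over $\Fin_*$, construct it as a $\Fin_*$-relative right Kan extension, and then verify fiberwise that the resulting functor preserves inert edges. A symmetric monoidal category $\catC$ is packaged as a coCartesian fibration $\catC^\otimes \to \Fin_*$, a lax symmetric monoidal functor as a $\Fin_*$-functor preserving inert edges, and a fully faithful symmetric monoidal subcategory $\catC' \subset \catC$ as a fiberwise-fully-faithful inclusion $\catC'^\otimes \into \catC^\otimes$ over $\Fin_*$. I would apply the pointwise criterion for relative right Kan extensions (the $\Fin_*$-fibered analogue of \cite[Prop. 4.3.2.15]{LurHTT}) to produce $\Phi^\otimes: \catC^\otimes \to \catD^\otimes$, given the existence of the requisite fiberwise limits.

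The heart of the argument is the computation of $\Phi^\otimes$ on each object $(X_1, \ldots, X_n) \in \catC^n \cong \catC^\otimes_{\langle n \rangle}$. By the pointwise formula, $\Phi^\otimes(X_1, \ldots, X_n)$ is computed as a $p_\catD$-limit (hence a limit in $\catD^n$) of the diagram indexed by the appropriate slice $K := \catC'^\otimes \times_{\catC^\otimes}(\catC^\otimes)_{/(X_\bullet)}$. Using the inert-active factorization system on $\catC^\otimes$, objects of $K$ are encoded by $n$-tuples of finite families $(Y_{i,j})_{j \in S_i}$ in $\catC'$ together with morphisms $\bigotimes_{j \in S_i} Y_{i,j} \to X_i$ in $\catC$, so that $K$ decomposes as a product $\prod_{i=1}^n K_{X_i}$ of active slices in $\catC'^\otimes$ over the $X_i$. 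The binary hypothesis that $\catC'_{/X} \times \catC'_{/Y} \to \catC'_{/X \otimes Y}$ is right cofinal, iterated, implies that the inclusion $\catC'_{/X_i} \into K_{X_i}$ is right cofinal. Combining with the Fubini-type decomposition of limits in $\catD^n$ via inert edges yields
$$\Phi^\otimes(X_1, \ldots, X_n) \simeq \bigl(\Phi(X_1), \ldots, \Phi(X_n)\bigr),$$
where $\Phi := \Phi^\otimes|_{\langle 1 \rangle}$ is the ordinary right Kan extension of $\Phi' = \Phi'^\otimes|_{\langle 1 \rangle}$ along $\catC' \into \catC$.

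With this fiberwise identification in hand, lax symmetric monoidality is automatic: an inert edge in $\catC^\otimes$ lying over $\alpha: \langle n \rangle \to \langle m \rangle$ acts on fibers as the projection $\catC^n \to \catC^m$ given by $\alpha$, and under the identification above is sent to the corresponding projection $\catD^n \to \catD^m$, which is inert in $\catD^\otimes$. The left Kan extension variant is formally dual, with left cofinality replacing right cofinality and colimits replacing limits. The main obstacle is making the cofinality reduction in the second paragraph precise: one must iterate the binary hypothesis to an $n$-ary right cofinality statement and verify that the active slice $K_{X_i}$ in $\catC'^\otimes$ is genuinely built out of binary tensor comparisons in the way suggested by the Segal-type description $\catC'^\otimes_{\langle n \rangle} \simeq (\catC')^n$. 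Once this reduction is established, the rest of the argument proceeds by formal manipulation of coCartesian fibrations and pointwise Kan extensions.
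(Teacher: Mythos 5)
A preliminary remark: the paper does not prove this proposition at all — it quotes it as standard, citing \cite[Prop. 14.5.1]{Ras14} and \cite[Lem. 2.16]{AFT17} — so the comparison is with those proofs. Your outline follows the same architecture they use (encode everything as fibrations over $\Fin_*$, construct $\Phi^\otimes$ as a $\Fin_*$-relative Kan extension, compute it pointwise, check inert edges), so the approach is the intended one; the problems are in the execution.

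First, the variance is scrambled. For a \emph{right} Kan extension the pointwise formula is a limit over the comma category $\catC'^\otimes \times_{\catC^\otimes} (\catC^\otimes)_{(X_\bullet)/}$ of objects \emph{under} $(X_\bullet)$, so the objects of your $K$ should carry morphisms $X_i \to \bigotimes_{j \in S_i} Y_{i,j}$, not $\bigotimes_j Y_{i,j} \to X_i$, and the cofinality that identifies limits is the initial (limit-type) one, whereas right cofinality identifies colimits. Your $K = \catC'^\otimes \times_{\catC^\otimes}(\catC^\otimes)_{/(X_\bullet)}$ together with right cofinality is the configuration for the \emph{left} Kan extension clause, yet you use it to compute a limit. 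In fairness, the proposition as printed appears to have the decorations of its two clauses interchanged (compare the paper's own use of ``left cofinal'' for the colimit-type notion in the proof of Proposition \ref{prop:corrKanext}), so you have partly inherited a typo; but a correct write-up must sort out the pairing, and feeding a slice category into a limit computation via a colimit-type cofinality cannot be repaired by choice of conventions.

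Second, and more seriously, in your accounting the cofinality hypothesis never actually does any work, which means the argument as sketched proves too much. Cofinality of the singleton inclusion $\catC'_{/X_i} \into K_{X_i}$ — families over a \emph{single} $X_i$ — is automatic and needs no hypothesis: $\catC' \subset \catC$ is a full symmetric monoidal subcategory, hence closed under tensor, so the relevant comma categories have initial objects furnished by $\bigotimes_j Y_{i,j}$ itself. If your decomposition $K \simeq \prod_i K_{X_i}$ plus this free cofinality sufficed, the proposition would hold with no hypothesis at all, which is false. The binary condition compares slices over two \emph{different} objects with the slice over their tensor, and it is needed exactly where your sketch is silent: the comma category for the relative Kan extension contains morphisms lying over non-identity maps of $\Fin_*$ (including basepoint-collapsing components and the active multiplication maps), it does not decompose as $\prod_i K_{X_i}$ on the nose, and verifying the relative ($p$-)limit condition and inert-edge preservation forces a comparison between (co)limits indexed by the slice of $X \otimes Y$ and by the product of the slices of $X$ and $Y$ — this is precisely where the displayed hypothesis and its $n$-ary iterates (together with the unit case, using $1 \in \catC'$) enter, and it is the entire content of the cited results. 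You flag this step as ``the main obstacle,'' but the route you propose for it — iterating the binary hypothesis to get $\catC'_{/X_i} \into K_{X_i}$ cofinal — targets the wrong comparison and would sidestep the obstacle rather than meet it.
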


\begin{proof}[Proof of Proposition \ref{prop:reascohlaxsymmstruct}]
Let $\AlgSpk \subset \GStkk$ denote the full subcategory of (geometric, spectral) algebraic spaces. By Proposition \ref{prop:laxext} the symmetric monoidal structure on $\QCoh: \Affk^\op  \to \PrStk$ extends to a lax symmetric monoidal structure on its right Kan extension $\QCoh: \AlgSpk^\op  \to \PrStk$. By \cite[Cor. 9.4.2.3, Prop. 9.6.1.1]{LurSAG} this is in fact symmetric monoidal. By \cite[Cor. 1.2.2]{Ste20}, \cite[Cor. 3.4.2.2]{LurSAG} this extends to a symmetric monoidal structure functor on the functor $\QCoh: \Corr(\AlgSpk) \to \PrStk$, hence to a lax symmetric monoidal structure on the induced functor $\QCoh: \Corr(\AlgSpk) \to \Cathatinfty$. 

Propositions \ref{prop:laxext} and \ref{prop:corrKanext} extend this to a lax symmetric monoidal structure on the right Kan extension $\QCoh: \Corr(\GStkk)_{alg,all} \to \Cathatinfty$, where $alg$ is the class of relative algebraic spaces. Using Proposition~\ref{prop:cohextprod} this induces a lax symmetric monoidal structure on the restriction $\Coh: \Corr(\GStkkplus)_{prop,ftd} \to \Cathatinfty$. The functor (\ref{eq:cohcorrindgstkfunctorextsec}) is the left Kan extension of this, and again inherits a lax symmetric monoidal structure by Propositions \ref{prop:laxext} and \ref{prop:corrKanext}. 
\end{proof}

In particular, the data of the lax symmetric monoidal structure of Proposition \ref{prop:reascohlaxsymmstruct} includes the data of a functor (\ref{eq:cohext}) for any $X$ and $Z$, together with the data of the diagrams (\ref{eq:cohextcompat}). 

As a corollary, we obtain an external product of ind-coherent sheaves
on coherent ind-geometric stacks. Given Proposition \ref{prop:cohidcomp}, and inspecting the proof of Proposition \ref{prop:reascohlaxsymmstruct}, we first note that (\ref{eq:cohcorrindgstkfunctorextsec}) lifts to a lax symmetric monoidal functor to the category of idempotent-complete categories with finite colimits. If $X$ and $Z$ are coherent, it then follows from \cite[Lem. 5.3.2.11]{LurHA} that there is a unique extension of (\ref{eq:cohext}) to a continuous functor
\begin{equation}\label{eq:cohextcohcase}
	- \boxtimes -: \IndCoh(X) \otimes \IndCoh(Z) \to \IndCoh(X \times Z),
\end{equation}
where the left-hand term refers to the tensor product in $\PrL$. 

To obtain a global statement, note that the full subcategory $\Corr(\indGStkkNoeth)_{prop,ftd} \subset \Corr(\indGStkkreas)_{prop,ftd}$ of locally Noetherian ind-geometric stacks is a symmetric monoidal subcategory. We restrict to it the functor 
\begin{equation}\label{eq:IndCohindGStkextsec}
	\IndCoh: \Corr(\indGStkk)_{fcd;ftd} \to \PrL
\end{equation}
of Definition \ref{def:IndCohindGStk}. Together with \cite[Lem. 5.3.2.11]{LurHA}, Propositions \ref{prop:cohidcomp}, \ref{prop:IndCohisIndofCoh}, and \ref{prop:reascohlaxsymmstruct} then immediately imply the following. 

\begin{Proposition}\label{prop:cohIClaxsymmstruct}
The restriction of (\ref{eq:IndCohindGStkextsec}) to $\Corr(\indGStkkNoeth)_{prop,ftd}$ has a canonical lax symmetric monoidal structure which extends the restriction of the lax symmetric monoidal structure on (\ref{eq:cohcorrindgstkfunctorextsec}) defined by Proposition \ref{prop:reascohlaxsymmstruct}. 
\end{Proposition}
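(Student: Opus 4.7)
The plan is to transport the lax symmetric monoidal structure of Proposition~\ref{prop:reascohlaxsymmstruct} along Ind-completion, then identify the result with the restriction of $\IndCoh$.

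First I would refine Proposition~\ref{prop:reascohlaxsymmstruct} by observing that its proof produces a lax symmetric monoidal functor valued not merely in $\Cathatinfty$ but in the symmetric monoidal subcategory $\Cathatinfty^{\mathrm{Idem,ex}} \subset \Cathatinfty$ of small idempotent-complete stable $\infty$-categories. Each of the Kan extensions in that proof can be carried out inside $\Cathatinfty^{\mathrm{Idem,ex}}$, since this subcategory is closed under the relevant small limits and filtered colimits by \cite[Prop.~1.1.4.6, Lem.~7.3.5.10]{LurHA}, while Proposition~\ref{prop:cohidcomp} confirms that every $\Coh(X)$ so produced lies there.

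Next I would invoke \cite[Lem.~5.3.2.11]{LurHA}, which exhibits Ind-completion as a symmetric monoidal functor $\mathrm{Ind}: \Cathatinfty^{\mathrm{Idem,ex}} \to \PrL$. Composing with the refined functor of the previous step yields a lax symmetric monoidal functor $\mathrm{Ind} \circ \Coh : \Corr(\indGStkkreas)_{prop,ftd} \to \PrL$.

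Finally I would restrict to the symmetric monoidal subcategory $\Corr(\indGStkkNoeth)_{prop,ftd}$ (whose closure under the relevant pullbacks is ensured by Proposition~\ref{prop:locNoethCartesian}) and identify $\mathrm{Ind} \circ \Coh$ there with $\IndCoh$. By Proposition~\ref{prop:IndCohisIndofCoh}, applied on locally Noetherian geometric stacks (which are coherent), the two agree pointwise, and on the 1-full subcategory of truncated locally Noetherian geometric stacks they are even naturally equivalent as functors, since there the common prescription is Ind-completion of the category of coherent modules. Both functors then extend to $\Corr(\indGStkkNoeth)_{prop,ftd}$ as the left Kan extension of this common restriction: for $\IndCoh$ by Proposition~\ref{prop:indcohonindgstks}, and for $\mathrm{Ind} \circ \Coh$ by its construction combined with Proposition~\ref{prop:cohonindgstks} and the continuity of Ind-completion in $\PrL$.

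The main obstacle is upgrading these pointwise equivalences to an equivalence of \emph{lax symmetric monoidal} functors. Because both structures arise from iterated Kan extension along the same symmetric monoidal inclusions (applying Propositions~\ref{prop:laxext} and~\ref{prop:corrKanext}), the uniqueness clauses of those results promote the agreement of the underlying functors to agreement of their lax symmetric monoidal refinements, completing the identification and transferring the desired structure to $\IndCoh|_{\Corr(\indGStkkNoeth)_{prop,ftd}}$.
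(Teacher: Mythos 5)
Your proposal is correct and takes essentially the same route as the paper: lift the lax symmetric monoidal structure on $\Coh$ from Proposition \ref{prop:reascohlaxsymmstruct} to a functor valued in idempotent-complete stable categories (using Proposition \ref{prop:cohidcomp}), apply \cite[Lem. 5.3.2.11]{LurHA} to make $\Ind(-)$ symmetric monoidal into $\PrL$, restrict to the locally Noetherian symmetric monoidal subcategory, and identify the result with $\IndCoh$ via Proposition \ref{prop:IndCohisIndofCoh}. The only differences are cosmetic: you make explicit, via uniqueness of left Kan extensions, the naturality of the identification $\Ind \circ \Coh \cong \IndCoh$ that the paper treats as immediate (and your remark that only the final Kan extension, rather than all of them, actually needs to be performed inside the subcategory of idempotent-complete stable categories would tighten the first step).
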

Note that the obstruction to extending this result to general coherent ind-geometric stacks is that these are not closed under products. However, in \cite{CWtm} we will extend it to a certain class of well-behaved coherent ind-geometric stacks, and this will encompass all motivating examples we have in mind (i.e. those appearing in \cite{CW2}). 

\subsection{Ind-geometric external products: the general case} 
If $X$ and $Z$ are coherent ind-geometric stacks and $\cG \in \IndCoh(X)$, $\cF \in \IndCoh(Z)$, we defined  the external product $\cG \boxtimes \cF \in \IndCoh(X \times Z)$ in the previous section. We now extend this definition to include the case where $X$ and $Z$ are not necessarily coherent, which will in turn require us to assume either $\cF$ or $\cG$ is bounded.

As a result, we will not attempt to generalize the full data of a lax symmetric monoidal structure on $\IndCoh(-)$ as in Proposition \ref{prop:cohIClaxsymmstruct}, as boundedness hypotheses make even formulating a precise generalization cumbersome. Moreover, the material in this section is not needed for our intended applications, which only concern the coherent setting.  But as in previous sections, the poor formal properties of coherent stacks mean that even if one only wants to prove a given result in the coherent setting, it is convenient if the structures involved in the proof are defined in the general ind-geometric setting, where one can make constructions more freely. 

To start, let $X$ and $Z$ be geometric stacks. The assignment $\cF \mapsto -\boxtimes \cF$ extends to a functor $\QCoh(Z) \to \LFun(\QCoh(X), \QCoh(X \times Z))$. By Proposition~\ref{prop:cohextprod} this restricts to a functor $\QCoh(Z)^b \to \LFun^b(\QCoh(X), \QCoh(X \times Z))$, where $\QCoh(Z)^b  \subset \QCoh(Z)$ is the full subcategory of bounded sheaves. Since $\QCoh(Z)^b \cong \IndCoh(Z)^b$, it follows from the universal property of $\IndCoh(-)$ that we have a canonical diagram of the following form. 
\begin{equation}\label{eq:eFXdef}
	\begin{tikzpicture}
		[baseline=(current  bounding  box.center),thick,>=\arrtip]
		\node (a) at (0,0) {$\IndCoh(X) \times \IndCoh(Z)^b$};
		\node (b) at (5.8,0) {$\IndCoh(X \times Z)$};
		\node (c) at (0,-1.5) {$\QCoh(X) \times \QCoh(Z)^b$};
		\node (d) at (5.8,-1.5) {$\QCoh(X \times Z)$};
		\draw[->] (a) to node[above] {$- \boxtimes - $} (b);
		\draw[->] (b) to node[right] {$\Psi_{X \times Z} $} (d);
		\draw[->] (a) to node[left] {$\Psi_X \times \Psi_Z $}(c);
		\draw[->] (c) to node[above] {$- \boxtimes - $} (d);
	\end{tikzpicture}
\end{equation}

We want to generalize the top arrow of this diagram to the case where $X$ and $Z$ are ind-geometric. As in the case of coherent sheaves, the behavior of this extension on objects will be determined by its compatibility with pushforward from truncated geometric substacks. To formalize this we will use the fact that the above diagram is functorial in the following sense. Here $\Corr(\GStkk)_{alg;ftd}$ denotes the 1-full subcategory of $\Corr(\GStkk)$ which only includes correspondences $X \xleftarrow{h} Y \xrightarrow{f} Z$ such that $h$ is of finite Tor-dimension and $f$ is a relative algebraic space. 

\begin{Proposition}\label{prop:eFXfunctoriality}
	There exists a diagram 
		\begin{equation}\label{eq:eFXfunctorialclaim}
		\begin{tikzpicture}
			[baseline=(current  bounding  box.center),thick,>=\arrtip]
			\node (a) at (0,0) {$\IndCoh(-) \times \IndCoh(-)^{b}$};
			\node (b) at (5.8,0) {$\IndCoh(- \times -)$};
			\node (c) at (0,-1.5) {$\QCoh(-) \times \QCoh(-)^b$};
			\node (d) at (5.8,-1.5) {$\QCoh(- \times -)$};
			\draw[->] (a) to node[above] {$- \boxtimes - $} (b);
			\draw[->] (b) to node[right] {$\Psi_{(- \times -)} $} (d);
			\draw[->] (a) to node[left] {$\Psi_{(-)} \times \Psi_{(-)} $}(c);
			\draw[->] (c) to node[above] {$- \boxtimes - $} (d);
		\end{tikzpicture}
	\end{equation}
of functors $\Corr(\GStkk)_{alg;ftd}^{\times 2} \to \Cathatinfty$ which specializes to the diagram (\ref{eq:eFXdef}) when evaluated on any $X, Z \in \GStkk$.
\end{Proposition}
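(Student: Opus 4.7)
The plan is to construct the required diagram in two stages: first establish the bottom square as a natural diagram using the lax symmetric monoidal structure on $\QCoh$ already in hand, then lift to the top square via the universal property of anticompletion.

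For the bottom half, recall that the proof of Proposition~\ref{prop:reascohlaxsymmstruct} produces a lax symmetric monoidal structure on $\QCoh: \Corr(\GStkk)_{alg,all} \to \Cathatinfty$. Restricting to $\Corr(\GStkk)_{alg;ftd}$ and unpacking, the multiplication of this lax structure supplies a natural transformation of functors $\Corr(\GStkk)_{alg;ftd}^{\times 2} \to \Cathatinfty$, namely $\QCoh(-) \times \QCoh(-) \to \QCoh(- \times -)$, whose value on $(X,Z)$ is $(\cG,\cF) \mapsto \cG \boxtimes \cF$. By Proposition~\ref{prop:cohextprod} (where the hypothesis on $\kk$ is crucial), this restricts to a natural transformation $\QCoh(-) \times \QCoh(-)^b \to \QCoh(- \times -)$, giving the bottom edge and the functors $F_{21}$, $F_{22}$ of (\ref{eq:eFXfunctorialclaim}).

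For the top half, the idea is to apply the universal property (\ref{eq:ICunivprop}) in families. Pointwise: for each $Z \in \GStkk$ and each $\cF \in \IndCoh(Z)^b$, the functor $- \boxtimes \Psi_Z(\cF): \QCoh(X) \to \QCoh(X\times Z)$ is colimit-preserving and bounded (the target being left complete and identified with the left completion of $\IndCoh(X\times Z)$), so by (\ref{eq:ICunivprop}) it lifts uniquely to a bounded colimit-preserving $- \boxtimes \cF : \IndCoh(X) \to \IndCoh(X \times Z)$ compatible with $\Psi$. To make this functorial rather than pointwise, the cleanest route is to lift the bottom square of (\ref{eq:eFXfunctorialclaim}) to a diagram valued in $\PrStbcpl$ (using the explicit boundedness supplied by Proposition~\ref{prop:cohextprod}), then apply the equivalence $\PrStbcpl \cong \PrStbacpl$ of Proposition~\ref{prop:acplcplequiv} to transport to an $\PrStbacpl$-valued diagram; the $\PrStbcpl$-valued diagram recovers the bottom half upon forgetting to $\Cathatinfty$, the $\PrStbacpl$-valued one recovers the top half, and the adjunction of Proposition~\ref{prop:acplcplequiv} provides the vertical edges $\Psi$ and the commutativity of the square.

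The main technical obstacle is precisely this assembly step: one must verify that the natural transformation $\boxtimes$ is itself bounded at the level of $\PrStbcpl$-valued diagrams, so that the formalism of Proposition~\ref{prop:acplcplequiv} applies uniformly across the correspondence category and not merely to individual objects. Concretely, this amounts to exhibiting $\boxtimes$ as a morphism in $\Fun(\Corr(\GStkk)_{alg;ftd}^{\times 2}, \Fun(\Delta^1, \PrStbcpl))$, where the $\Delta^1$ records the direction of the external product. Once this is in place, the uniqueness clause of the universal property (\ref{eq:ICunivprop}) automatically upgrades the pointwise construction of $F_{11}\to F_{12}$ to a natural transformation of functors $\Corr(\GStkk)_{alg;ftd}^{\times 2} \to \Cathatinfty$: any two such lifts must agree after composition with the bottom $\Psi\circ-$, hence must coincide, which simultaneously supplies the 2-cell filling the square. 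Specialization to a single pair $(X,Z)$ recovers diagram (\ref{eq:eFXdef}) by construction.
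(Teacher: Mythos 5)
Your skeleton agrees with the paper's proof: the bottom square is extracted from the lax symmetric monoidal structure on $\QCoh$ built in the proof of Proposition \ref{prop:reascohlaxsymmstruct} (restricted using Proposition \ref{prop:cohextprod}), and the top square is to be produced by transport through the anticompletion formalism of Proposition \ref{prop:acplcplequiv}; the pointwise lift of $- \boxtimes \Psi_Z(\cF)$ via (\ref{eq:ICunivprop}) is also correct. The gap is in the step you yourself flag as the main obstacle, and it is not merely technical: the external product cannot be exhibited as a morphism in $\Fun(\Delta^1, \PrStbcpl)$, because its source $\QCoh(X) \times \QCoh(Z)^b$ is not an object of $\PrStbcpl$ --- the factor $\QCoh(Z)^b$ is not presentable, and $- \boxtimes -$ is colimit-preserving and bounded only in the first variable, the second variable serving as a parameter. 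Consequently there is no $\PrStbcpl$-valued diagram to which the equivalence $\PrStbcpl \congto \PrStbacpl$ could be applied, and your proposed formulation of the assembly step is false as stated. The actual content of the proposition is precisely the repair: the paper introduces hybrid categories such as $\Cathat{}_{cc}^{\Delta^1} := \Cathatinftysup{\Delta^1} \times_{\Cathatinftysup{\times 2}} (\PrStbcpl \times \Cathatinfty \times \PrStbcpl)$, whose objects are two-variable functors $F: \wh{\catC} \times \catD \to \wh{\catE}$ in which only the outer factors are subject to (anti)completion while the parameter category $\catD$ is carried along untouched, with boundedness imposed on the adjoint functor $\catD \to \Fun(\wh{\catC}, \wh{\catE})$ (the superscript $b$); the identification $\IndCoh(Z)^b \cong \QCoh(Z)^b$ via $\Psi_Z$ is what lets the same parameter appear in both rows of (\ref{eq:eFXfunctorialclaim}).

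Relatedly, your assertion that the uniqueness clause of (\ref{eq:ICunivprop}) ``automatically upgrades'' the pointwise lifts to a natural transformation is not an argument in the $\infty$-categorical setting: pointwise contractibility of the spaces of lifts must still be organized coherently over $\Corr(\GStkk)_{alg;ftd}^{\times 2}$, and supplying that coherence is what the paper's proof consists of. It constructs a functor $\Cathat{}_{cc}^{b,\Delta^1} \to \Cathatinftysup{\Delta^1 \times \Delta^1}$ sending each object to the entire square, via a chain of equivalences $\Cathat{}_{aa}^{b,\Delta^1} \congto \Cathat{}_{ac}^{b,\Delta^1} \xleftarrow{\sim} \Cathat{}_{cc}^{b,\Delta^1}$ assembled from spans of $\Delta^2$-shaped functor categories; the comparison functors are maps of bifibrations over $\PrStbacpl \times \Cathatinfty \times \PrStbacpl$ whose fibers are exactly the equivalences $\LFun^b(\wc{\catC}, \wc{\catE}) \congto \LFun^b(\wc{\catC}, \wh{\catE})$ furnished by Definition \ref{def:anticomplete}, and the global equivalence is concluded from \cite[Prop. 2.4.7.6]{LurHTT}. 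Since your proposal reduces the proposition to a claim that fails as formulated, and the mechanism replacing that claim constitutes essentially the whole proof, the gap is genuine --- though your strategy is the right one and the repair is the paper's hybrid-category construction rather than a change of approach.
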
 

We postpone the proof of Proposition \ref{prop:eFXfunctoriality} while we use it define external products in the desired generality. To simplify the needed constructions we restrict our attention to the case where $Z$ is reasonable and $\cF$ is coherent. 

Note first that the top arrow of (\ref{eq:eFXfunctorialclaim}) can be encoded as a functor $\Corr(\GStkk)_{alg;ftd}^{\times 2} \to \Cathatinftysup{\Delta^1}$, where $\Cathatinftysup{\Delta^1} := \Fun(\Delta^1, \Cathatinfty)$. Restricting its domain and values we obtain a functor $\Corr(\GStkkplus)_{alg;ftd}  \times \Corr(\GStkkplus)_{prop;ftd} \to \Cathatinftysup{\Delta^1}$ of the form
$$ \IndCoh(-) \times \Coh(-) \to \IndCoh(- \times -). $$
For any $X$, $Z \in \GStkk$ the specialization of this expression preserves small colimits in $\IndCoh(X)$, hence there exists a unique extension to a functor $\Corr(\GStkkplus)_{alg;ftd}  \times \Corr(\GStkkplus)_{prop;ftd} \to (\PrL)^{\Delta^1}$ of the form
\begin{equation}\label{eq:PrLextprodgeom}
 \IndCoh(-) \ot \Ind(\Coh(-)) \to \IndCoh(- \times -). 
\end{equation} 

Next define a functor $\Corr(\indGStkk)_{alg;ftd} \times \Corr(\indGStkkreas)_{prop;ftd} \to (\PrL)^{\Delta^1}$ by left Kan extending (\ref{eq:PrLextprodgeom}). Here $\Corr(\indGStkk)_{alg;ftd}$ refers to correspondences whose forward morphism is a relative ind-algebraic space in the obvious sense. This extension exists, and moreover is of the same form as (\ref{eq:PrLextprodgeom}), since $(\PrL)^{\Delta^1}$ admits small colimits and  $(\PrL)^{\Delta^1} \to (\PrL)^{\times 2}$ preserves them \cite[Cor. 5.1.2.3]{LurHTT}, since the tensor product in $\PrL$ preserves small colimits in each variable \cite[Rem. 4.8.1.23]{LurHA}, and since ind-completion of idempotent-complete categories admitting finite colimits commutes with filtered colimits \cite[Lem. 7.3.5.11]{LurHA}. 

\begin{Definition}\label{def:eFXonindgstacks}
We define a functor $$\Corr(\indGStkk)_{alg;ftd}  \times \Corr(\indGStkkreas)_{prop;ftd} \to \Cathatinftysup{\Delta^1}$$ of the form
$$- \boxtimes -: \IndCoh(-) \times \Coh(-) \to \IndCoh(- \times -)$$
by taking the functor $\Corr(\indGStkk)_{alg;ftd} \times \Corr(\indGStkkreas)_{prop;ftd} \to (\PrL)^{\Delta^1}$ defined above, passing to its underlying $\Cathatinftysup{\Delta^1}$-valued functor, and then composing with the canonical natural transformation $\IndCoh(-) \times \Coh(-) \to \IndCoh(-) \ot \Ind(\Coh(-))$. 
\end{Definition}

When $X$ and $Z$ are reasonable, it follows by construction that the functor
$$ - \boxtimes -: \IndCoh(X) \times \Coh(Z) \to \IndCoh(X \times Z) $$
is compatible with the coherent external product (\ref{eq:cohext}) in the obvious sense. We now return to the proof of Proposition \ref{prop:eFXfunctoriality}. 
 
\begin{proof}[Proof of Proposition \ref{prop:eFXfunctoriality}]
First recall from the proof of Proposition \ref{prop:reascohlaxsymmstruct} that the functor $\QCoh: \Corr(\GStkk)_{alg;ftd} \to \Cathatinfty$ has a lax symmetric monoidal structure. Part of the data of this is the bottom arrow of (\ref{eq:eFXfunctorialclaim}), which we regard as a functor $\Corr(\GStkk)_{alg;ftd}^{\times 2} \to \Cathatinftysup{\Delta^1}$ taking $(X, Z)$ to the functor $\QCoh(X) \times \QCoh(Z)^b \to \QCoh(X \times Z)$. 

By construction this functor $\Corr(\GStkk)_{alg;ftd}^{\times 2} \to \Cathatinftysup{\Delta^1}$ factors through the category $\Cathat{}_{cc}^{b,\Delta^1}$ defined as follows. Using the notation from Section \ref{sec:anticompletion}, we set $\Pr_{cc} := \PrStbcpl \times \Cathatinfty \times \PrStbcpl$, regarding this as a category over $\Cathatinftysup{\times 2}$ via $(\wh{\catC}, \catD, \wh{\catE}) \mapsto (\wh{\catC} \times \catD, \wh{\catE})$. We then write $\Cathat{}_{cc}^{\Delta^1} := \Cathatinftysup{\Delta^1} \times_{\Cathatinftysup{\times 2}} \Pr_{cc}$ for the category of tuples  $(\wh{\catC}, \catD, \wh{\catE}, F)$, where $\wh{\catC}, \wh{\catE} \in \PrStbcpl$, $\catD \in \Cathatinfty$, and $F: \wh{\catC} \times \catD \to \wh{\catE}$. Finally, we write $\Cathat{}_{cc}^{b,\Delta^1}$ for the full subcategory of such tuples whose associated functor $\catD \to \Fun(\wh{\catC}, \wh{\catE})$ takes values in $\LFun^b(\wh{\catC}, \wh{\catE})$. 

We will prove the claim by constructing a functor $\Cathat{}_{cc}^{b,\Delta^1} \to \Cathatinftysup{\Delta^1 \times \Delta^1}$ which takes the bottom arrow in (\ref{eq:eFXdef}) to the entire diagram. Let us set $\Pr_{aa} := \PrStbacpl \times \Cathatinfty \times \PrStbacpl$ and $\Pr_{ac} := \PrStbacpl \times \Cathatinfty \times \PrStbcpl$, defining $\Cathat{}_{aa}^{\Delta^1}$, etc., as above. The main step will be to first construct a diagram
\begin{equation}\label{eq:eFXfunctorial3}
	\begin{tikzpicture}
		[baseline=(current  bounding  box.center),thick,>=\arrtip]
		\newcommand*{\ha}{3}; \newcommand*{\hb}{3};
		\newcommand*{\va}{-1.5};
		\node (aa) at (0,0) {$\Cathat{}_{aa}^{\Delta^1} $};
		\node (ab) at (\ha,0) {$\Cathat{}_{ac}^{\Delta^1}$};
		\node (ac) at (\ha+\hb,0) {$\Cathat{}_{cc}^{\Delta^1}$};
		\node (ba) at (0,\va) {$\Cathat{}_{aa}^{b,\Delta^1} $};
		\node (bb) at (\ha,\va) {$\Cathat{}_{ac}^{b,\Delta^1}$};
		\node (bc) at (\ha+\hb,\va) {$\Cathat{}_{cc}^{b,\Delta^1}$};
		\draw[->] (aa) to node[above] {$ $} (ab);
		\draw[<-] (ab) to node[above] {$ $} (ac);
		\draw[->] (ba) to node[above] {$ \sim $} (bb);
		\draw[<-] (bb) to node[above] {$ \sim $} (bc);
		\draw[right hook->] (ba) to node[left] {$  $} (aa);
		\draw[right hook->] (bb) to node[right] {$  $} (ab);
		\draw[right hook->] (bc) to node[right] {$  $} (ac);
	\end{tikzpicture}
\end{equation} 
in which the bottom functors are equivalences, and such that under these equivalences the bottom arrow in (\ref{eq:eFXdef}) corresponds respectively to the top arrow and the overall composition in~(\ref{eq:eFXdef}). 

Let us explicitly construct the top left functor in (\ref{eq:eFXfunctorial3}) and show that it restricts to an equivalence $\Cathat{}^{b,\Delta^1_{01}}_{aa} \congto \Cathat{}^{b,\Delta^1_{01}}_{ac}$; the construction of the right square is parallel. To do this we introduce the following pair of diagrams.
\begin{equation}\label{eq:eFXfunctorial4}
	\begin{tikzpicture}
		[baseline=(current  bounding  box.center),thick,>=\arrtip]
		\newcommand*{\ha}{2.7}; \newcommand*{\hb}{2.7};
		\newcommand*{\va}{-1.5};
		\newcommand*{\vb}{-1.5};
		
		\node [matrix] at (0,0) {
		\node (aa) at (0,0) {$\Cathatinftysup{\Delta^2} $};
		\node (ab) at (\ha,0) {$\Cathatinftysup{\Lambda^2_2}$};
		\node (ac) at (\ha+\hb,0) {$\Cathatinftysup{\Delta^1_{02}}$};
		\node (ba) at (0,\va) {$\Cathatinftysup{\Lambda^2_1} $};
		\node (bb) at (\ha,\va) {$\Cathatinftysup{\Delta^0_0 \cup \Delta^1_{12}}$};
		\node (bc) at (\ha+\hb,\va) {$\Cathatinftysup{\Delta^0_0 \cup \Delta^0_{2}}$};
		\node (ca) at (0,\va+\vb) {$\Cathatinftysup{\Delta^1_{01}}$};
		\node (cb) at (\ha,\va+\vb) {$\Cathatinftysup{\Delta^0_{0} \cup \Delta^0_1}$};
		\node (cc) at (\ha+\hb,\va+\vb) {$\Pr_{aa}$};
		\draw[->] (aa) to node[above] {$ $} (ab);
		\draw[->] (ab) to node[above] {$ $} (ac);
		\draw[->] (ba) to node[above] {$ $} (bb);
		\draw[->] (bb) to node[above] {$ $} (bc);
		\draw[->] (ca) to node[above] {$ $} (cb);
		\draw[<-] (cb) to node[above] {$ $} (cc);
		\draw[->] (aa) to node[left] {$  $} (ba);
		\draw[->] (ba) to node[left] {$  $} (ca);
		\draw[->] (ab) to node[right] {$  $} (bb);
		\draw[->] (bb) to node[right] {$  $} (cb);
		\draw[->] (ac) to node[right] {$  $} (bc);
		\draw[<-] (bc) to node[right] {$  $} (cc);
		\draw[->] (cc) to node[right] {$  $} (bb);\\
		};

\node [matrix] at (7.5,0) {
\node (aa) at (0,0) {$\Cathat{}^{\Delta^2}_{aac} $};
\node (ab) at (\ha,0) {$\Cathat{}^{\Lambda^2_2}_{aac}$};
\node (ac) at (\ha+\hb,0) {$\Cathat{}^{\Delta^1_{02}}_{ac}$};
\node (ba) at (0,\va) {$\Cathat{}^{\Lambda^2_1}_{aac} $};
\node (ca) at (0,\va+\vb) {$\Cathat{}^{\Delta^1_{01}}_{aa}$};
\node (cc) at (\ha+\hb,\va+\vb) {$\Pr_{aa}$};
\draw[->] (aa) to node[above] {$ $} (ab);
\draw[->] (ab) to node[above] {$ $} (ac);
\draw[->] (ca) to node[above] {$ $} (cc);
\draw[->] (aa) to node[left] {$  $} (ba);
\draw[->] (ba) to node[left] {$  $} (ca);
\draw[->] (ac) to node[right] {$  $} (cc);\\
};
	\end{tikzpicture}
\end{equation} 
Here the subscripts in e.g. $\Delta^1_{02}$ indicate a particular 1-simplex of $\Delta^2$, and the arrows in the left diagram not involving $\Pr_{aa}$ are induced by restriction. The unit of the localization $\catC \mapsto \wh{\catC}$ on $\PrStbrcpl$ induces a functor $\PrStbacpl \to \Cathatinftysup{\Delta^1}$ taking $\wc{\catC}$ to $\wc{\catC} \to \wh{\catC}$, and the diagonal arrow out of $\Pr_{aa}$ is the induced functor $(\wc{\catC}, \catD, \wc{\catE}) \mapsto (\wc{\catC} \times \catD, \wc{\catE} \to \wh{\catE})$. The horizontal and vertical arrows out of $\Pr_{aa}$ thus take $(\wc{\catC}, \catD, \wc{\catE})$ to  $(\wc{\catC} \times \catD, \wc{\catE})$ and $(\wc{\catC} \times \catD, \wh{\catE})$, respectively. 
	
	In the right diagram, $\Cathat{}^{\Delta^1_{01}}_{aa}$ and $\Cathat{}^{\Delta^1_{02}}_{ac}$ are respectively the fiber products of the bottom row and right column of the left diagram (which is consistent with our existing notation after forgetting subscripts). The remaining three categories are the fiber products of their counterparts on the left with $\Pr_{aa}$ over $\Cathatinftysup{\Delta^0_0 \cup \Delta^1_{12}}$. Note that their natural maps to $\Pr_{aa}$ indeed factor through those of $\Cathat{}^{\Delta^1_{01}}_{aa}$ and $\Cathat{}^{\Delta^1_{02}}_{ac}$ as indicated. 
	
	We claim the leftmost vertical functors in the right diagram are equivalences. For the top left, this follows since it is base changed from its counterpart on the left, which is an equivalence by \cite[Cor. 2.3.2.2]{LurHTT}. For the bottom left, this follows from the bottom left square of the left diagram being Cartesian. Composing the inverse equivalences with the top arrows we obtain a functor $\Cathat{}^{\Delta^1_{01}}_{aa} \to \Cathat{}^{\Delta^1_{01}}_{ac}$ as desired. 
	
	The fiber of this functor over a particular $(\wc{\catC},\catD, \wc{\catE}) \in \Pr_{aa}$ is the map $\Fun(\wc{\catC} \times \catD, \wc{\catE})^{\cong} \to \Fun(\wc{\catC} \times \catD, \wh{\catE})^{\cong}$ given by composition with $\wc{\catE} \to \wh{\catE}$ (the superscripts indicate that non-invertible natural transformations are excluded). Since the corresponding map $\LFun^b(\wc{\catC},\wc{\catE}) \to \LFun^b(\wc{\catC},\wh{\catE})$ is an equivalence by definition, it follows that $\Cathat{}^{\Delta^1_{01}}_{aa} \to \Cathat{}^{\Delta^1_{01}}_{ac}$ restricts to a functor $\Cathat{}^{b,\Delta^1_{01}}_{aa} \to \Cathat{}^{b,\Delta^1_{01}}_{ac}$ which in turns restricts to an isomorphism of fibers over $\Pr_{aa}$. 
	
	Recall that $\Cathatinftysup{\Delta^1}$ is a bifibration over $\Cathatinftysup{\times 2}$ \cite[Cor. 2.4.7.11]{LurHTT}. It follows from the definitions that bifibrations are stable under pullback along products of maps and under restriction to full subcategories. In particular, $\Cathat{}^{b,\Delta^1_{01}}_{aa}$ and $\Cathat{}^{b,\Delta^1_{01}}_{ac}$ are bifibrations over $\Pr_{aa}$, factored as the product of $\PrStbacpl \times \Cathatinfty$ and $\PrStbacpl$. It now follows from \cite[Prop. 2.4.7.6]{LurHTT} and the previous paragraph that $\Cathat{}^{b,\Delta^1_{01}}_{aa} \to \Cathat{}^{b,\Delta^1_{01}}_{ac}$ is an equivalence. 
	
	To complete the proof, note that by construction the bottom row of (\ref{eq:eFXfunctorial3}) factors as 
	\begin{equation*}\label{eq:eFXfunctorial2}
		\Cathat{}_{aa}^{b,\Delta^1_{01}} \xleftarrow{\sim} \Cathat{}_{aac}^{b,\Delta^2} \congto \Cathat{}_{ac}^{b,\Delta^1_{02}} \xleftarrow{\sim} \Cathat{}_{acc}^{b,\Delta^2} \congto \Cathat{}_{cc}^{b,\Delta^1_{12}}.
	\end{equation*}  
Here we again use subscripts to indicate edges in $\Delta^2$, $\Cathat{}_{acc}^{\Delta^2}$ is the evident counterpart of $\Cathat{}_{aac}^{\Delta^2}$, and $\Cathat{}_{acc}^{b,\Delta^2}$, $\Cathat{}_{aac}^{b,\Delta^2}$ are the full subcategories corresponding to $\Cathat{}_{aa}^{b,\Delta^1}$. The middle three terms in the above factorization map respectively to $\Cathatinftysup{\Delta^2}$, $\Cathatinftysup{\Delta^1}$, and $\Cathatinftysup{\Delta^2}$ compatibly with the relevant maps, hence we obtain the desired functor
$$ \Cathat{}_{cc}^{b,\Delta^1} \to \Cathatinftysup{\Delta^2} \times_{\Cathatinftysup{\Delta^1_{02}}} \Cathatinftysup{\Delta^2} \cong \Cathatinftysup{\Delta^1 \times \Delta^1}.\qedhere$$
\end{proof}

\subsection{Ind-geometric external products: properties}

Let $X$ and $Z$ be ind-geometric stacks with $Z$ reasonable, and suppose $\cF \in \Coh(Z)$. We again write $e_{\cF, X}$ for the induced functor $- \boxtimes \cF: \IndCoh(X) \to \IndCoh(X \times Z)$. We now extend a few basic results about $e_{\cF,X}$ from the geometric setting, in particular its compatibility with proper $!$-pullback. 

First we record more explicitly the naturality of $e_{\cF,X}$ implied by Definition \ref{def:eFXonindgstacks}. If $f: X \to Y$ is a relative ind-algebraic space (for example, an ind-affine morphism such as the inclusion of a truncated geometric substack), $g: Z' \to Z$ is ind-proper and almost ind-finitely presented, and $\cF' \in \Coh(Z')$, then we have an isomorphism 
\begin{equation}\label{eq:eFXlow*compat}
e_{g_*(\cF'),Y} f_* \cong (f \times g)_* e_{\cF',X}.
\end{equation}
Likewise if $h: X \to Y$ and $g: Z' \to Z$ are of finite Tor-dimension and $\cF \in \Coh(Z)$, then we have an isomorphism
\begin{equation}\label{eq:eFXup*compat}
	e_{g^*(\cF),X} h^* \cong (h \times g)^* e_{\cF,Y}.
\end{equation}

\begin{Proposition}\label{prop:eFXbounded}
	Let $X$ and $Z$ be ind-geometric stacks such that $Z$ is reasonable, and suppose $\cF \in \Coh(Z)$. Then $e_{\cF,X}$ is bounded. 
\end{Proposition}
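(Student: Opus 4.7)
The plan proceeds in three reductions. First, since $Z$ is reasonable we write $\cF \cong i_{\be*}(\cF_\be)$ for some reasonable geometric substack $i_\be \colon Z_\be \to Z$ and some $\cF_\be \in \Coh(Z_\be)$. Applying~(\ref{eq:eFXlow*compat}) gives $e_{\cF, X} \cong (\id_X \times i_\be)_* \circ e_{\cF_\be, X}$, and since $i_\be$ is affine (Proposition~\ref{prop:immisaffforgeo}) so is $\id_X \times i_\be$, making $(\id_X \times i_\be)_*$ t-exact. Hence we may assume $Z$ is a truncated geometric stack; fix $n$ with $\cF \in \Coh(Z)^{[-n,n]}$, and let $d$ be the global dimension of $\kk$.

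Second, in the case where $X$ is itself truncated geometric, I would deduce boundedness of $e_{\cF,X}$ from its quasi-coherent counterpart. Diagram~(\ref{eq:eFXdef}) gives $\Psi_{X \times Z} \circ e_{\cF, X} \cong e_{\cF, X}^{QC} \circ \Psi_X$. Proposition~\ref{prop:cohextprod} yields $e_{\cF, X}^{QC}(\QCoh(X)^{\geq 0}) \subset \QCoh(X \times Z)^{\geq -n-d}$, which combined with the equivalence $\IndCoh(-)^{\geq 0} \cong \QCoh(-)^{\geq 0}$ gives left boundedness. For right boundedness, $e_{\cF, X}^{QC}$ manifestly sends $\QCoh^{\leq 0}$ to $\QCoh^{\leq n}$ since $p_Z^*(\cF) \in \QCoh^{\leq n}$; and because $\Psi$ is t-exact and restricts to an equivalence on hearts $\IndCoh^\heartsuit \cong \QCoh^\heartsuit$, membership in $\IndCoh^{\leq n}$ is detected on cohomology via $\Psi$, so $e_{\cF, X}(\IndCoh^{\leq 0}) \subset \IndCoh^{\leq n}$ as well.

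Third, for general ind-geometric $X$, fix an ind-geometric presentation $X \cong \colim_\al X_\al$. By Proposition~\ref{prop:IndCohindGStktstructure}, $\IndCoh(X)^{\geq 0} \cong \colim_\al \IndCoh(X_\al)^{\geq 0}$ and $\IndCoh(X)^{\leq 0} \cong \colim_\al \IndCoh(X_\al)^{\leq 0}$ in $\PrL$, with t-exact structural maps $i_{\al*}$. Lemma~\ref{lem:colimpres} then gives, for $\cG \in \IndCoh(X)^{\geq 0}$, a filtered colimit presentation $\cG \cong \colim_\al i_{\al*}(i_\al^! \cG)$ with $i_\al^! \cG \in \IndCoh(X_\al)^{\geq 0}$, and for $\cG \in \IndCoh(X)^{\leq 0}$ the analogous presentation $\cG \cong \colim_\al i_{\al*}(\tau^{\leq 0} i_\al^! \cG)$, where $\tau^{\leq 0} \circ i_\al^!$ is the right adjoint (in $\PrL$) of the restricted functor $i_{\al*} \colon \IndCoh(X_\al)^{\leq 0} \to \IndCoh(X)^{\leq 0}$. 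Applying the continuous functor $e_{\cF, X}$ and invoking~(\ref{eq:eFXlow*compat}), each term rewrites as $(i_\al \times \id_Z)_* \bigl(e_{\cF, X_\al}(\cG_\al)\bigr)$, which the geometric step places in $\IndCoh(X_\al \times Z)^{\geq -n-d}$ or $\IndCoh(X_\al \times Z)^{\leq n}$, respectively. T-exactness of $(i_\al \times \id_Z)_*$ and closure of the bounded-below (resp.\ bounded-above) subcategories under filtered (resp.\ arbitrary small) colimits then places $e_{\cF, X}(\cG)$ in the corresponding subcategory.

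The main technical subtlety is the right-boundedness step in the ind-geometric case: because $i_\al^!$ is only left t-exact, the naive decomposition $\cG \cong \colim_\al i_{\al*}(i_\al^! \cG)$ cannot be fed directly into the geometric base case. The resolution, forced by the $\PrL$-colimit structure on $\IndCoh(X)^{\leq 0}$, is to replace $i_\al^!$ by its truncation $\tau^{\leq 0} \circ i_\al^!$; this is the right adjoint adapted to the $\leq 0$ part and what makes the bootstrapping run.
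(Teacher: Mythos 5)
Your proof is correct and follows essentially the same route as the paper's: factor $\cF$ through a reasonable geometric substack, obtain the precise bounds on presentation terms from (the proof of) Proposition \ref{prop:cohextprod} via the equivalences $\IndCoh(-)^+ \cong \QCoh(-)^+$, and --- the key point, which you identify --- handle the coconnective side with the decomposition $\cG \cong \colim_\al i_{\al*}\tau^{\leq 0} i_\al^!(\cG)$ furnished by Proposition \ref{prop:IndCohindGStktstructure} and Lemma \ref{lem:colimpres}, since $\tau^{\leq 0} \circ i_\al^!$ is right adjoint to the restricted $i_{\al*}$. The only cosmetic difference is your separate first reduction: t-exactness of $(\id_X \times i_\be)_*$ for ind-geometric $X$ does not follow from Proposition \ref{prop:immisaffforgeo} alone (the source is not geometric, so a small colimit argument over a presentation of $X$ is needed), and the paper sidesteps this by combining the two inclusions into the single pushforward $(i_\al \times i_\be)_*$ along a presentation term of $X \times Z$, whose t-exactness is directly Proposition \ref{prop:IndCohindGStktstructure}.
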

\begin{proof}
	Fix an ind-geometric presentation $X \cong \colim X_\al$ and write $\cF \cong i_*(\cF')$, where $i: Z' \to Z$ is a reasonable geometric substack and $\cF' \in \Coh(Z') \cap \IndCoh(Z')^{[m,n]}$. If $\cG_\al \in \IndCoh(X_\al)^{\geq 0}$ for some $\al$, then by  t-exactness of $(i_\al \times i)_*$ and the proof of Proposition~\ref{prop:cohextprod} we have $e_{\cF,X}i_{\al*}(\cG_\al) \cong (i_\al \times i)_* e_{\cF',X_\al}(\cG_\al) \in \IndCoh(X)^{\geq m'}$, where $m'$ is $m$ minus the global dimension of $\kk$. Similarly, if $\cG_\al \in \IndCoh(X_\al)^{\leq 0}$ then $e_{\cF,X}i_{\al*}(\cG_\al) \in \IndCoh(X)^{\leq n}$. 
	
	Given that $\cG \cong \colim i_{\al*} i_{\al}^!(\cG)$ for any $\cG\in \IndCoh(X)$ (Proposition \ref{prop:indcohonindgstks} and Lemma \ref{lem:colimpres}), it follows that $e_{\cF,X}$ takes $\IndCoh(X)^{\geq 0}$ to $\IndCoh(X \times Z)^{\geq m'}$ since $i_{\al*} i_{\al}^!$ is left t-exact and $\IndCoh(X \times Z)^{\geq m'}$ is closed under filtered colimits. If $\cG\in \IndCoh(X)^{\leq 0}$, then we additionally have $\cG \cong \colim i_{\al*} \tau^{\leq 0} i_{\al}^!(\cG)$. This again follows from Proposition \ref{prop:IndCohindGStktstructure} and Lemma~\ref{lem:colimpres}, given that $\tau^{\leq 0} i_\al^!$ is right adjoint to the restriction $i_{\al*}: \IndCoh(X_\al)^{\leq 0} \to \IndCoh(X)^{\leq 0}$. It now follows that $e_{\cF,X}$ takes $\IndCoh(X)^{\leq 0}$ to $\IndCoh(X \times Z)^{\leq n}$. 
\end{proof}

We now extend Proposition \ref{prop:eFXupper!compat} to the ind-geometric setting. For simplicity we give a proof assuming coherence hypotheses, then indicate how the statement may be generalized. 

\begin{Proposition}\label{prop:eFXupper!compatindgeom}
	Let $X$, $Y$, and $Z$ be ind-geometric stacks with $X$, $Y$, $X \times Z$, and $Y \times Z$ coherent, and let $f: X \to Y$ be an ind-proper, almost ind-finitely presented morphism. Then for all $\cF \in \Coh(Z)$ and $\cG \in \IndCoh(Y)$ the Beck-Chevalley map $e_{\cF,X}f^!(\cG) \to  (f \times \id_Z)^!e_{\cF, Y} (\cG)$ is an isomorphism. 
\end{Proposition}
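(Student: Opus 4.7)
The plan is to reduce the claim to its geometric counterpart, Proposition~\ref{prop:eFXupper!compat}, via three successive reductions: first to $\cG \in \Coh(Y)$ by continuity; then from ind-geometric $X, Y$ to reasonable geometric substacks via base change; and finally from $Z$ to a reasonable geometric substack the same way.

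For the first reduction I would invoke Proposition~\ref{prop:upper!almostcontind} using the coherence hypotheses on $X$, $Y$, $X \times Z$, $Y \times Z$ (and the fact that $f \times \id_Z$ is ind-proper and almost ind-finitely presented as a base change of $f$) to conclude that $f^!$ and $(f \times \id_Z)^!$ are continuous. Combined with continuity of $e_{\cF,X}$ and $e_{\cF,Y}$ by construction, and the equivalence $\IndCoh(Y) \cong \Ind(\Coh(Y))$ from Proposition~\ref{prop:IndCohisIndofCoh}, this reduces the claim to $\cG \in \Coh(Y)$.

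For the second reduction I would write $\cG \cong j_*(\cG_\al)$ with $j: Y_\al \to Y$ a reasonable geometric substack and $\cG_\al \in \Coh(Y_\al)$, and form the Cartesian square with base changes $j': X_\al \to X$ and $f_\al: X_\al \to Y_\al$. Then $f_\al$ is proper and almost finitely presented by Propositions~\ref{prop:propprops} and \ref{prop:afpdefconsistent}, while $X_\al, Y_\al, X_\al \times Z, Y_\al \times Z$ are all coherent by repeated applications of Proposition~\ref{prop:cohbasechangeunderafpclimm} (to $j$ and then to the afp ind-closed immersion $j \times \id_Z$). Applying Proposition~\ref{prop:up!low*geom} to commute $f^!$ past $j_*$ and $(f \times \id_Z)^!$ past $(j \times \id_Z)_*$, together with the compatibility (\ref{eq:eFXlow*compat}) of the external product with $j_*$ and $j'_*$, will identify the Beck-Chevalley map $\alpha_{j_*\cG_\al}$ with $(j' \times \id_Z)_*$ applied to the analogous Beck-Chevalley map $e_{\cF,X_\al} f_\al^!(\cG_\al) \to (f_\al \times \id_Z)^! e_{\cF,Y_\al}(\cG_\al)$.

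For the third reduction I would write $\cF \cong i_*(\cF')$ for a reasonable geometric substack $i: Z' \to Z$ with $\cF' \in \Coh(Z')$, use (\ref{eq:eFXlow*compat}) to rewrite $e_{\cF,X_\al}$ and $e_{\cF,Y_\al}$ as $(\id \times i)_*$ of $e_{\cF',X_\al}$ and $e_{\cF',Y_\al}$ (with coherence of $X_\al \times Z'$ and $Y_\al \times Z'$ again from Proposition~\ref{prop:cohbasechangeunderafpclimm}), and commute $(f_\al \times \id_Z)^!$ past $(\id \times i)_*$ by Proposition~\ref{prop:up!low*geom}. This leaves the Beck-Chevalley map $e_{\cF',X_\al} f_\al^!(\cG_\al) \to (f_\al \times \id_{Z'})^! e_{\cF',Y_\al}(\cG_\al)$ between truncated geometric stacks with bounded inputs, which is precisely Proposition~\ref{prop:eFXupper!compat}. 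The main obstacle is the diagrammatic compatibility check in the last two reductions: one must verify that the base change isomorphisms of Proposition~\ref{prop:up!low*geom} and the external-product--pushforward compatibility (\ref{eq:eFXlow*compat}) interact under adjunction so as to carry the original Beck-Chevalley map to the Beck-Chevalley map on the substacks. This is routine but tedious, parallel in spirit to the coherence checks carried out in Section~\ref{sec:!pullpush} following Proposition~\ref{prop:up*low*adj}.
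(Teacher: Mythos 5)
Your first reduction (to $\cG \in \Coh(Y)$ via Propositions \ref{prop:upper!almostcontind} and \ref{prop:IndCohisIndofCoh}) is sound, but the second reduction contains a genuine gap. If $j: Y_\al \to Y$ is a reasonable geometric substack, the base change $X_\al := X \times_Y Y_\al$ is in general only an \emph{ind-geometric} stack, and $f_\al : X_\al \to Y_\al$ is only ind-proper, not proper: Proposition \ref{prop:propprops} does not apply because $f$ itself is not proper, and base change (Proposition \ref{prop:indPbasechange}) only preserves ind-properness. Concretely, take $f: \P^\infty \to \Spec \kk$, which is ind-proper and almost ind-finitely presented, and $Y_\al = Y = \Spec \kk$; then $X_\al = \P^\infty$ is not geometric. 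Consequently your third reduction does not terminate in Proposition \ref{prop:eFXupper!compat} but in a Beck--Chevalley map $e_{\cF',X_\al} f_\al^! \to (f_\al \times \id_{Z'})^! e_{\cF',Y_\al}$ with geometric target and ind-geometric source --- a case your plan never addresses, and which is the actual content of the proposition.

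The paper closes this case by running the reduction in the opposite direction: it fixes a reasonable presentation $X \cong \colim X_\al$ of the \emph{source} and uses ind-properness (Proposition \ref{prop:indPequivconditions}) to factor $f \circ i_\al$ through a genuinely \emph{proper} map $f_\al: X_\al \to Y_\al$ to a reasonable geometric substack of $Y$, then reduces along the limit $\IndCoh(X \times Z) \cong \lim_\al \IndCoh(X_\al \times Z)$ computed by the functors $(i_\al \times \id_Z)^!$. This requires, as a separate prior step, the case where $f$ is itself the inclusion of a reasonable geometric substack, proved for \emph{arbitrary} $\cG$ by showing the maps $e_{\cF,Y_\al} i_{\al\be}^! \to (i_{\al\be} \times \id_Z)^! e_{\cF,Y_\be}$ are isomorphisms and applying \cite[Prop. 4.7.5.19]{LurHA} to the colimits $\IndCoh(Y) \cong \colim \IndCoh(Y_\al)$ and $\IndCoh(Y \times Z) \cong \colim \IndCoh(Y_\al \times Z)$ in $\PrL$ (with coherence of $Y_\al \times Z$ supplied by Proposition \ref{prop:cohbasechangeunderafpclimm}). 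Note that your device of writing $\cG \cong i_{\be*}(\cG_\be)$ does not dispose of this substack case, since $i_\al^! i_{\be*}$ admits no simplification of the required kind. Your final step --- writing $\cF \cong i_*(\cF')$ and commuting $(\id \times i)_*$ past $!$-pullback via (\ref{eq:eFXlow*compat}) and Proposition \ref{prop:up!low*geom} --- does agree with the paper's last paragraph, but the middle of your argument must be rebuilt around the substack case and the limit decomposition over a presentation of the source.
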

\begin{proof}
	First suppose $X$, $Y$, and $Z$ are truncated and geometric. Then $e_{\cF,X}f^!(\cG)$ and $ (f \times \id_Z)^!e_{\cF, Y}$ are continuous and $\IndCoh(Y)$ is compactly generated, so it suffices to consider $\cG \in \Coh(Y)$. But the restrictions of all functors involved to left bounded subcategories commute with the equivalences $\IndCoh(-)^+ \cong \QCoh(-)^+$, so the claim follows from Proposition \ref{prop:eFXupper!compat}. 

	Now suppose that $Z$ is truncated and geometric, and that $f$ is the inclusion of a reasonable geometric substack, which we may assume is a term in a reasonable presentation $Y \cong \colim Y_\al$. By construction the functors $e_{\cF,Y_\al}$, $i_{\al*}$, and $(i_\al \times \id_Z)_*$ form a diagram $A \times \Delta^1 \to \Cathatinfty$, where $A$ is our index category.	Each $Y_\al \times Z$ is coherent by Proposition \ref{prop:cohbasechangeunderafpclimm}, hence by the previous paragraph $e_{\cF,Y_\al}i_{\al\be}^! \to  (i_{\al\be} \times \id_Z)^!e_{\cF, Y_\be}$ is an isomorphism for all $\al, \be$. 
	Since $\IndCoh(Y) \cong \colim \IndCoh(Y_\al)$ and  $\IndCoh(Y \times Z) \cong \colim \IndCoh(Y_\al \times Z)$ in $\PrL$ (Proposition \ref{prop:indcohonindgstks}), the claim follows by \cite[Prop. 4.7.5.19]{LurHA}. 
	
	Still assuming $Z$ is truncated and geometric, let $X \cong \colim X_\al$ be a reasonable presentation. For any $\al$ we can find a reasonable geometric substack $j_\al: Y_\al \to Y$ fitting into a diagram
\begin{equation*}
	\begin{tikzpicture}[baseline=(current  bounding  box.center),thick,>=\arrtip]
			\newcommand*{\ha}{1.5}; \newcommand*{\hb}{1.5}; \newcommand*{\hc}{1.5};
\newcommand*{\va}{-.9}; \newcommand*{\vb}{-.9}; \newcommand*{\vc}{-.9}; 
		\node (ab) at (\ha,0) {$X_\al$};
		\node (ad) at (\ha+\hb+\hc,0) {$X_\al \times Z$};
		\node (ba) at (0,\va) {$X$};
		\node (bc) at (\ha+\hb,\va) {$X \times Z$};
		\node (cb) at (\ha,\va+\vb) {$Y_\al$};
		\node (cd) at (\ha+\hb+\hc,\va+\vb) {$Y_\al \times Z$};
		\node (da) at (0,\va+\vb+\vc) {$Y$};
		\node (dc) at (\ha+\hb,\va+\vb+\vc) {$Y \times Z$};
		\draw[<-] (ab) to node[above] {$  $} (ad);
		\draw[->] (ab) to node[above left, pos=.25] {$i_{\al} $} (ba);
		\draw[->] (ab) to node[left,pos=.8] {$f_\al $} (cb);
		\draw[->] (ad) to node[below right] {$  $} (bc);
		\draw[->] (ad) to node[right] {$  $} (cd);
		\draw[->] (ba) to node[left] {$f $} (da);
		\draw[<-] (cb) to node[above,pos=.25] {$  $} (cd);
		\draw[->] (cb) to node[above left, pos=.25] {$j_\al $} (da);
		\draw[->] (cd) to node[below right] {$ $} (dc);
		\draw[<-] (da) to node[above,pos=.75] {$ $} (dc);
		
		\draw[-,line width=6pt,draw=white] (ba) to  (bc);
		\draw[<-] (ba) to node[above,pos=.75] {$ $} (bc);
		\draw[-,line width=6pt,draw=white] (bc) to  (dc);
		\draw[->] (bc) to node[right,pos=.2] {$ $} (dc);
	\end{tikzpicture}
\end{equation*}
	in which all but the left and right faces are Cartesian. We then have a diagram
	\begin{equation*}
		\begin{tikzpicture}
			[baseline=(current  bounding  box.center),thick,>=\arrtip]
			\newcommand*{\ha}{4.0}; \newcommand*{\hb}{5.4};
			\newcommand*{\va}{-1.5};
			\node (aa) at (0,0) {$e_{\cF,X_\al} i^!_\al f^!(\cG)$};
			\node (ab) at (\ha,0) {$(i_\al \times \id_Z)^! e_{\cF,X} f^!(\cG)$};
			\node (ac) at (\ha+\hb,0) {$(i_\al \times \id_Z)^! (f \times \id_Z)^! e_{\cF,Y}(\cG)$};
			\node (ba) at (0,\va) {$ e_{\cF,X_\al} f^!_\al j^!_\al(\cG)$};
			\node (bb) at (\ha,\va) {$ (f_\al \times \id_Z)^! e_{\cF,Y_\al} j^!_\al(\cG)$};
			\node (bc) at (\ha+\hb,\va) {$(f_\al \times \id_Z)^! (j_\al \times \id_Z)^! e_{\cF,Y} (\cG)$};
			\draw[->] (aa) to node[above] {$ $} (ab);
			\draw[->] (ab) to node[above] {$  $} (ac);
			\draw[->] (ba) to node[above] {$ $} (bb);
			\draw[->] (bb) to node[above] {$ $} (bc);
			\draw[->] (aa) to node[below,rotate=90] {$\sim $} (ba);
			\draw[->] (ac) to node[below,rotate=90] {$\sim $} (bc);
		\end{tikzpicture} 
	\end{equation*}
in $\IndCoh(X_\al \times Z)$. Since the functors $(i_\al \times \id_Z)^!$ determine an isomorphism $\IndCoh(X \times Z) \cong \lim \IndCoh(X_\al \times Z)$ in $\Cathatinfty$, it suffices to show the top right arrow is an isomorphism for all $\al$. But, given that $X_\al \times Z$ and $Y_\al \times Z$ are coherent by Proposition \ref{prop:cohbasechangeunderafpclimm}, the bottom right and top left arrows are isomorphisms by the previous paragraph, and the bottom left is by the first paragraph. 

Now let $Z$ be ind-geometric, and write $\cF \cong i_*(\cF')$ for some reasonable geometric substack $i: Z' \to Z$ and some $\cF' \in \Coh(Z')$. By (\ref{eq:eFXlow*compat}) we have isomorphisms $e_{\cF,Y} \cong (\id_Y \times i)_* e_{\cF', Y}$ and $e_{\cF,X} \cong (\id_X \times i)_* e_{\cF', X}$. Thus we are trying to show the composition
$$ (\id_X \times i)_* e_{\cF', X}f^! \to  (\id_X \times i)_* (f \times \id_{Z'})^! e_{\cF', Y} \to (f \times \id_Z)^! (\id_Y \times i)_* e_{\cF', Y} $$ 
is an isomorphism. But, given that $X \times Z'$ and $Y \times Z'$ are coherent by Proposition \ref{prop:cohbasechangeunderafpclimm}, the first factor is an isomorphism by the previous paragraph, and the second factor is by Proposition \ref{prop:up!low*geom}. 
\end{proof}

With more care, one can show the following weaker result in the general case (recall the definition of $\IndCoh(-)^+_{lim}$ from Section \ref{sec:!pullpush}). 

\begin{Proposition}
Let $X$, $Y$, and $Z$ be ind-geometric stacks with $Z$ reasonable, let $f: X \to Y$ be an ind-proper, almost ind-finitely presented morphism. Then $e_{\cF, Y}$ takes $\IndCoh(Y)^+_{lim}$ to $\IndCoh(Y \times Z)^+_{lim}$ for all $\cF \in \Coh(Z)$, and for all $\cG \in \IndCoh(Y)^+_{lim}$ the Beck-Chevalley map $e_{\cF,X}f^!(\cG) \to  (f \times \id_Z)^!e_{\cF, Y} (\cG)$ is an isomorphism. 
\end{Proposition}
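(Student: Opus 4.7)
The proof follows the structure of Proposition \ref{prop:eFXupper!compatindgeom} with two systematic adjustments: appeals to Proposition \ref{prop:up!low*geom} are replaced by appeals to the formally geometric case of Proposition \ref{prop:up!low*indgeomgeneral}, and the preservation of $(-)^+_{lim}$ is tracked at each stage.

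First I would reduce to the case where $Z$ is truncated and geometric. Writing $\cF \cong i_*(\cF')$ with $i: Z' \to Z$ a reasonable geometric substack and $\cF' \in \Coh(Z')$, formula (\ref{eq:eFXlow*compat}) gives $e_{\cF, Y} \cong (\id_Y \times i)_* e_{\cF', Y}$ and $e_{\cF, X} \cong (\id_X \times i)_* e_{\cF', X}$. The morphisms $\id \times i$ are closed immersions of ind-geometric stacks, hence formally geometric, ind-proper, and almost ind-finitely presented. Proposition \ref{prop:up!low*indgeomgeneral} then furnishes both the preservation of $(-)^+_{lim}$ under $(\id_Y \times i)_*$ and the base change $(f \times \id_Z)^! (\id_Y \times i)_* \cong (\id_X \times i)_* (f \times \id_{Z'})^!$ on $\IndCoh(Y \times Z')^+_{lim}$. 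Since $(\id_X \times i)_*$ is conservative, this reduces both claims for $Z$ to the corresponding claims for truncated geometric $Z'$.

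Assuming $Z$ truncated and geometric, I would then run a three-step induction as in Proposition \ref{prop:eFXupper!compatindgeom}. The base case, $X$ and $Y$ also truncated and geometric, is Proposition \ref{prop:eFXupper!compat} applied to $\cG \in \IndCoh(Y)^+$ via the equivalence $\IndCoh(Y)^+ \cong \QCoh(Y)^+$, all functors in sight being left bounded by Proposition \ref{prop:eFXbounded} and properness of $f$; preservation of $(-)^+_{lim} = (-)^+$ is immediate from Proposition \ref{prop:eFXbounded}. Next, for $f = i_{\al_0}: Y_{\al_0} \to Y$ the inclusion of a reasonable geometric substack, extended to a reasonable presentation $Y \cong \colim_\al Y_\al$, I would package the base-case isomorphisms $e_{\cF, Y_\al} i_{\al\be}^! \cong (i_{\al\be} \times \id_Z)^! e_{\cF, Y_\be}$ together with the underlying pushforward compatibilities $(i_{\al\be} \times \id_Z)_* e_{\cF, Y_\al} \cong e_{\cF, Y_\be} i_{\al\be*}$ as a diagram of shape $A \times \Delta^1$ in $\PrL$. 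Invoking the colim-of-right-adjoints formula \cite[Cor. 4.7.5.18]{LurHA} as in the proof of Proposition \ref{prop:up*low*adj} yields a compatible natural transformation of right adjoints whose restriction to $\IndCoh(Y)^+_{lim}$ gives simultaneously preservation of $(-)^+_{lim}$ and the Beck-Chevalley isomorphism. The case of general $X$ and $f$ reduces to the substack case by fixing a reasonable presentation $X \cong \colim_\al X_\al$ and factoring each $f \circ i_\al$ through a reasonable geometric substack of $Y$, exactly as in the last step of Proposition \ref{prop:eFXupper!compatindgeom}.

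The main obstacle is the substack case. A direct attempt would compute $(i_{\al_0} \times \id_Z)^! e_{\cF, Y}(\cG) \cong (i_{\al_0} \times \id_Z)^! \colim_\be (i_\be \times \id_Z)_* e_{\cF, Y_\be}(i_\be^!(\cG))$ using continuity of $e_{\cF, Y}$, and try to commute the $!$-pullback past the colimit by almost continuity, but the $i_\be^!(\cG)$ are not uniformly bounded below for a general $\cG \in \IndCoh(Y)^+_{lim}$. The abstract adjoint argument above circumvents this non-uniformity by passing to the limit of right adjoints before restricting to $(-)^+_{lim}$, on which each individual $!$-restriction is a priori bounded below.
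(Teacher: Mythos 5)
The paper never actually proves this proposition: it is stated as the ``weaker result in the general case'' that one ``can show'' with more care, so there is no official argument to measure you against. Your outline follows exactly the strategy the surrounding remarks suggest (rerun Proposition \ref{prop:eFXupper!compatindgeom}, swapping Proposition \ref{prop:up!low*geom} for the formally geometric case of Proposition \ref{prop:up!low*indgeomgeneral} and tracking $(-)^+_{lim}$), and your diagnosis of the central obstacle --- the $i_\be^!(\cG)$ are not uniformly bounded below --- is correct. Two points, however, need repair.

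First, an ordering problem. You perform the reduction to truncated geometric $Z$ \emph{first}, while $X$ and $Y$ are still arbitrary ind-geometric. The second factor in your factorization is then the Beck-Chevalley map for the square whose ind-proper vertical morphism is $f \times \id_Z: X \times Z \to Y \times Z$, and Proposition \ref{prop:up!low*indgeomgeneral} requires the source and target of that morphism to be \emph{reasonable} --- which $X \times Z$ and $Y \times Z$ need not be, since the proposition does not assume $X$ and $Y$ reasonable. The fix is to reorder: carry out the $Z$-reduction at the stage where $X$ and $Y$ are truncated geometric, so that $X \times Z$ and $Y \times Z$ are reasonable by the Corollary to Proposition \ref{prop:cohextprod} (using the standing finite global dimension hypothesis on $\kk$), and $e_{\cF',Y}(\cG) \in \IndCoh(Y \times Z')^+ \subset \IndCoh(Y \times Z')^+_{lim}$; the subsequent substack and general-$f$ steps for arbitrary reasonable $Z$ then use only previously established cases and never invoke Proposition \ref{prop:up!low*indgeomgeneral} again. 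Relatedly, conservativity of $(\id_X \times i)_*$ is neither needed nor obviously available beyond the geometric case: your factorization already exhibits the map as a composite of two isomorphisms.

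Second, the substack case is under-argued at its crux. Restricting the $A \times \Delta^1$ diagram to bounded-below subcategories (preserved by the t-exact $i_{\al\be*}$ and the bounded $e_{\cF,Y_\al}$, with left t-exact right adjoints) and applying \cite[Cor. 4.7.5.18]{LurHA} produces a limit functor $E: \IndCoh(Y)^+_{lim} \to \IndCoh(Y \times Z)^+_{lim}$ with $(i_\al \times \id_Z)^! E \cong e_{\cF,Y_\al} i_\al^!$ --- here one should also record that $\IndCoh(Y)^+_{lim} \cong \lim_\al \IndCoh(Y_\al)^+$, which needs cofinality of the $Y_\al$ (resp.\ $Y_\al \times Z$, via Proposition \ref{prop:cohextprod}) among truncated geometric substacks (Proposition \ref{prop:igprestermsarereas}) and left t-exactness of $!$-pullback along closed immersions. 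But the statement to be proved is that the \emph{canonical} Beck-Chevalley maps $e_{\cF,Y_\al} i_\al^!(\cG) \to (i_\al \times \id_Z)^! e_{\cF,Y}(\cG)$ are isomorphisms; as written, your argument produces \emph{some} functor with the right $!$-restrictions rather than identifying $E$ with $e_{\cF,Y}|_{(-)^+_{lim}}$ via exactly those maps. That identification is where the ``significant additional care'' lives: for instance, one can check for $\cG \in \IndCoh(Y)^+_{lim}$ that $E(\cG)$ and $e_{\cF,Y}(\cG)$ are colimits of the \emph{same} diagram $\be \mapsto (i_\be \times \id_Z)_* e_{\cF,Y_\be}(i_\be^!(\cG))$, using Lemma \ref{lem:colimpres}, (\ref{eq:eFXlow*compat}), and the coherence supplied by the $\Fun^{\LAd}/\Fun^{\RAd}$ formalism, and that the resulting comparison is the Beck-Chevalley map. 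Your skeleton is the right one, but this step should be spelled out rather than gestured at.
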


We have the following companion to Proposition~\ref{prop:ICconsistenctdef}, which says that Definition~\ref{def:eFXonindgstacks} behaves as expected on non-truncated geometric stacks. 

\begin{Proposition}\label{prop:eFXconsistenctdef} 
	Let $X$ and $Z$ be geometric stacks such that $Z$ is reasonable, and suppose $\cF \in \Coh(Z)$. Then we have an isomorphism $\Psi_{X \times Z}e_{\cF,X} \cong e_{\Psi_Z(\cF),X} \Psi_X$ of functors $\IndCoh(X) \to \QCoh(X \times Z)$. 
\end{Proposition}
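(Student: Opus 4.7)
The strategy is to reduce to the case of truncated geometric stacks, where the claim holds by construction, and then extend via the presentations $X \cong \colim_n \tau_{\leq n} X$ and $Z \cong \colim_n \tau_{\leq n} Z$. Both sides of the claimed isomorphism are bounded, colimit-preserving functors $\IndCoh(X) \to \QCoh(X \times Z)$: the left side by Proposition~\ref{prop:eFXbounded} together with the t-exactness of $\Psi$, and the right side by Proposition~\ref{prop:cohextprod}.

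When $X$ and $Z$ are moreover truncated, the isomorphism is essentially immediate from the construction: the diagram~(\ref{eq:eFXdef}) commutes for bounded $\cF$ by definition, and Definition~\ref{def:eFXonindgstacks} is obtained by left Kan extending from the truncated geometric setting, so on truncated inputs it recovers~(\ref{eq:eFXdef}).

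For the general case, write $j_n : \tau_{\leq n} X \to X$ and $i_m : \tau_{\leq m} Z \to Z$ for the truncation maps, which are closed immersions of geometric stacks. By Propositions~\ref{prop:indcohonindgstks} and~\ref{prop:ICconsistenctdef} together with Lemma~\ref{lem:colimpres}, every $\cG \in \IndCoh(X)$ is canonically the colimit $\colim_n j_{n*} j_n^! \cG$; and since $\cF \in \Coh(Z)$ is bounded, we may write $\cF \cong i_{m*} \cF_m$ for some $m$ and some $\cF_m \in \Coh(\tau_{\leq m} Z)$. For $\cH_n \in \IndCoh(\tau_{\leq n} X)$ we then chain together the naturality isomorphism~(\ref{eq:eFXlow*compat}), the built-in compatibility $\Psi \circ f_{IC*} \cong f_{QC*} \circ \Psi$ (applied to $f = j_n \times i_m$, which is affine and hence of finite cohomological dimension), the truncated case established above, the QCoh analogue of the naturality in Proposition~\ref{prop:eFXlower*compat}, and once more the compatibility of $\Psi$ with pushforward, to arrive at
$$\Psi_{X \times Z}\, e_{\cF, X}(j_{n*} \cH_n) \;\cong\; e_{\Psi_Z \cF,\, X}\, \Psi_X(j_{n*} \cH_n).$$
Passing to the colimit in $n$ and using that each functor in sight preserves colimits then yields the claim on all of $\IndCoh(X)$.

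The main technical subtlety is in packaging these pointwise isomorphisms into a coherent natural transformation of functors rather than a family of isomorphisms on objects. This is handled by the higher functoriality embedded in the construction: Proposition~\ref{prop:eFXfunctoriality} already realizes the entire square~(\ref{eq:eFXfunctorialclaim}) as a functor $\Corr(\GStkk)_{alg;ftd}^{\times 2} \to \Cathatinftysup{\Delta^1 \times \Delta^1}$, and the chain of natural isomorphisms above can be extracted as the image of an appropriate composition of correspondences under this functor together with the filtered-colimit description of $\IndCoh(X)$.
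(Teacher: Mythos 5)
Your overall strategy---reduce to truncated geometric stacks, where the identity holds essentially by construction, and glue along filtered presentations using the functoriality of Proposition~\ref{prop:eFXfunctoriality}---is the same as the paper's. But there is a genuine gap in how you handle $Z$. You use the truncation presentation $Z \cong \colim_m \tau_{\leq m} Z$ and claim that, since $\cF$ is bounded, $\cF \cong i_{m*}(\cF_m)$ for some $\cF_m \in \Coh(\tau_{\leq m} Z)$. This is unjustified: by (\ref{eq:cohviaindgeopres}), $\Coh(Z)$ is a colimit of $\Coh(Z_\be)$ over a \emph{reasonable} presentation, and the closed immersions $\tau_{\leq m} Z \to Z$ need not be almost finitely presented unless $Z$ is, say, locally coherent (Proposition~\ref{prop:loccohisreasonable}); a truncation is a truncated geometric substack but in general not a reasonable one. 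Concretely, writing $\cF \cong i_{\be*}(\cF_\be)$ for a reasonable substack $Z_\be \to Z$, the induced factorization $Z_\be \to \tau_{\leq m} Z$ is a closed immersion, but you cannot conclude it is almost finitely presented (Proposition~\ref{prop:afp2of3prop} requires the second map to be such), so pushing $\cF_\be$ forward to the truncation need not land in $\Coh(\tau_{\leq m} Z)$. The same failure breaks your appeal to (\ref{eq:eFXlow*compat}) with $g = i_m$, whose hypotheses require $g$ to be ind-proper \emph{and} almost ind-finitely presented. The repair is exactly what the paper does: keep an arbitrary reasonable presentation $Z \cong \colim Z_\be$ and write $\cF \cong i_{\be*}(\cF_\be)$ there. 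Truncations are fine on the $X$ side, where only an ind-geometric presentation is needed.

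Beyond this, your closing paragraph correctly names the coherence problem but underestimates what remains after invoking Proposition~\ref{prop:eFXfunctoriality}. The paper lifts the filtered system of squares to the slice category $(\PrL)^{\Delta^1}_{/e_{\Psi_Z(\cF),X}}$, takes the colimit there using Proposition~\ref{prop:indcohonindgstks}, and then must still \emph{identify} the resulting vertical arrows, which are colimits of the composites $i_{\al*,QC}\Psi_{X_\al}$, with $\Psi_X$ and $\Psi_{X \times Z}$. This identification is not formal: $\QCoh(X)$ is not the colimit of the $\QCoh(X_\al)$ (see the Remark following Proposition~\ref{prop:IndCohindGStktstructure}), so the paper argues that these vertical arrows are t-exact and induce equivalences on left completions, whence they agree with the $\Psi$ functors by anticompleteness (Propositions~\ref{prop:IndCohindGStktstructure} and \ref{prop:acplcplequiv}). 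Your pointwise argument via Lemma~\ref{lem:colimpres} handles this at the level of objects but not at the level of functors, which is what the statement asserts; that functorial identification is the one substantive step your sketch leaves unaddressed.
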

\begin{proof}
	Let $X \cong \colim X_\al$ , $Z \cong \colim Z_\be$ be respectively  an ind-geometric and a reasonable presentation, and write $\cF \cong i_{\be*}(\cF_\be)$ for some $\be$ and $\cF_\be \in \Coh(Z_\be)$. By Proposition~\ref{prop:eFXfunctoriality} the functors $e_{i_{\be\ga*}(\cF_\be),X_\al}$ 
	form a filtered system in $(\Pr^L)^{\Delta^1}$ which lifts to a filtered system in $(\Pr^L)^{\Delta^1}_{/e_{\Psi_Z(\cF),X}}$ (i.e. given termwise by taking $e_{i_{\be\ga*}(\cF_\be),X_\al}$ to the diagram realizing the isomorphism $(i_{\al} \times i_{\ga})_{*,QC} \Psi_{X_\al \times Z_\ga} e_{i_{\be\ga*}(\cF_\be),X_\al} \cong e_{\Psi_Z(\cF),X} i_{\al*,QC} \Psi_{X_\al}$). By Proposition~\ref{prop:indcohonindgstks} and \cite[Prop. 1.2.13.8]{LurHTT} its colimit in $(\Pr^L)^{\Delta^1}_{/e_{\Psi_Z(\cF),X}}$ is a diagram whose top and bottom arrows are $e_{\cF,X}$ and $e_{\Psi_Z(\cF),X}$. But the vertical arrows in this diagram are t-exact and induce equivalences of left completions by Proposition~\ref{prop:IndCohindGStktstructure}, and by t-exactness of the $\Psi_{(-)}$ functors and the pushforward functors in the filtered system, hence they are isomorphic to $\Psi_X$ and $\Psi_{X \times Z}$. 
\end{proof}

\subsection{Ind-coherent sheaf Hom}
Let $X$ and $Z$ be ind-geometric stacks such that $Z$ is reasonable, and suppose $\cF \in \Coh(Z)$. By construction $e_{\cF, X}$ has a right adjoint $ e_{\cF,X}^R: \IndCoh(X \times Z) \to \IndCoh(X)$.   When $X = Z$, we define ind-coherent sheaf Hom via the formula
\begin{equation}\label{eq:cHom}
 \cHom(\cF, - ) := e_{\cF,X}^R \Delta_{X*}: \IndCoh(X) \to \IndCoh(X).
\end{equation}
In this section we generalize various basic properties about quasi-coherent sheaf Hom on geometric stacks to this setting, in particular its compatibility with pushforward (Propositions~\ref{prop:eFXRlower*} and \ref{prop:indhomrelationcoherent}) and external products (Proposition \ref{prop:eFXReFXindgeomcoh}). We begin with the following justification of definition (\ref{eq:cHom}). 

\begin{Proposition}\label{prop:eFXRprops1}
Let $X$ and $Z$ be ind-geometric stacks such that $Z$ is reasonable, and let $\cF \in \Coh(Z)$. Then $e_{\cF,X}^R$ is left bounded. If $X$ and $Z$ are geometric, the Beck-Chevalley map $\Psi_X e_{\cF,X}^R(\cG) \to e_{\Psi_Z(\cF),X}^R \Psi_{X \times Z} (\cG)$ is an isomorphism for all $\cG \in \IndCoh(X \times Z)^+$, and the induced map $\Psi_X \cHom(\cF,\cG) \to \cHom(\Psi_X(\cF),\Psi_X(\cG))$ is an isomorphism for all $\cG \in \IndCoh(X)^+$. 
\end{Proposition}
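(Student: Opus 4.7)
My plan is to address the three assertions in sequence, with the middle one carrying essentially all of the content.

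For the first assertion (left boundedness of $e_{\cF,X}^R$) I will appeal directly to Proposition~\ref{prop:eFXbounded}: since $e_{\cF,X}$ is bounded, it sends $\IndCoh(X)^{\leq 0}$ into $\IndCoh(X \times Z)^{\leq n}$ for some $n$. By adjunction, for $\cH \in \IndCoh(X)^{\leq -1}$ and $\cG \in \IndCoh(X \times Z)^{\geq 0}$ we have $\Map(\cH, e_{\cF,X}^R(\cG)) \cong \Map(e_{\cF,X}(\cH), \cG) = 0$, forcing $e_{\cF,X}^R$ to take $\IndCoh(X \times Z)^{\geq 0}$ into $\IndCoh(X)^{\geq -n}$.

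For the second assertion I assume $X$ and $Z$ are geometric. The first observation is that both sides of the Beck--Chevalley map lie in $\QCoh(X)^+$ when $\cG \in \IndCoh(X \times Z)^+$: the source by step one, the target by Proposition~\ref{prop:eFXRQCohalmostcont}. I will then exploit that $\Psi_X$ and $\Psi_{X \times Z}$ restrict to equivalences on bounded-below subcategories. Because $e_{\cF,X}$ is bounded it restricts to a functor $\IndCoh(X)^+ \to \IndCoh(X \times Z)^+$, whose right adjoint is necessarily the restriction of $e_{\cF,X}^R$ (since $e_{\cF,X}^R$ is left bounded, mapping spaces from bounded-below objects are unchanged by restriction). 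The analogous statement holds in the quasi-coherent setting. The compatibility $\Psi_{X \times Z} e_{\cF,X} \cong e_{\Psi_Z(\cF),X} \Psi_X$ of Proposition~\ref{prop:eFXconsistenctdef} therefore restricts to an isomorphism of left adjoints in a square whose vertical arrows are equivalences, and passing to right adjoints yields the isomorphism of $(\Psi_X e_{\cF,X}^R)|_+$ with $(e_{\Psi_Z(\cF),X}^R \Psi_{X \times Z})|_+$. The main technical point, and the only real obstacle in the proof, is verifying that this restricted-level identification matches the Beck--Chevalley transformation canonically defined on the full categories via the units and counits of the $e$-adjunctions; this is a routine but somewhat fiddly diagram chase of the standard sort.

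For the third assertion I will use the formula $\cHom(\cF,-) \cong e_{\cF,X}^R \Delta_{X*}$ (and its quasi-coherent analogue). Since $X$ is geometric with affine diagonal, $\Delta_X$ is affine, so $\Delta_{X*,IC}$ is t-exact (and hence sends $\IndCoh(X)^+$ into $\IndCoh(X \times X)^+$) and satisfies $\Psi_{X \times X} \Delta_{X*,IC} \cong \Delta_{X*,QC} \Psi_X$. For $\cG \in \IndCoh(X)^+$ the second assertion then applies to $\Delta_{X*,IC}(\cG)$, giving
\[
 \Psi_X \cHom(\cF,\cG) \;=\; \Psi_X e_{\cF,X}^R \Delta_{X*,IC}(\cG) \;\congto\; e_{\Psi_X(\cF),X}^R \Psi_{X \times X} \Delta_{X*,IC}(\cG) \;\cong\; e_{\Psi_X(\cF),X}^R \Delta_{X*,QC} \Psi_X(\cG),
\]
which is $\cHom(\Psi_X(\cF),\Psi_X(\cG))$ by definition. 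Unwinding, this composite is precisely the natural map in the statement.
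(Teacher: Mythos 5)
Your proposal is correct and follows essentially the same route as the paper's proof: left boundedness of $e_{\cF,X}^R$ from the boundedness of $e_{\cF,X}$ (Proposition~\ref{prop:eFXbounded}), restriction of both adjunctions to an adjunction between the bounded-below subcategories where $\Psi_{(-)}$ is an equivalence, the compatibility $\Psi_{X \times Z} e_{\cF,X} \cong e_{\Psi_Z(\cF),X} \Psi_X$ of Proposition~\ref{prop:eFXconsistenctdef}, and $t$-exactness of the affine $\Delta_{X*}$ for the third claim. The only imprecision is your ``by definition'' at the end: in the quasi-coherent setting $\cHom(\Psi_X(\cF),-)$ is defined as the right adjoint of $- \otimes \Psi_X(\cF)$, so identifying it with $e_{\Psi_X(\cF),X}^R \Delta_{X*,QC}$ requires the isomorphism $- \otimes \Psi_X(\cF) \cong \Delta_X^* e_{\Psi_X(\cF),X}$ and a passage to right adjoints, which is exactly the extra step the paper's proof supplies.
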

\begin{proof}
 Since $e_{\cF, X}$ is bounded (Proposition \ref{prop:eFXbounded}),  $e_{\cF,X}^R$ is left bounded and the two functors restrict to an adjunction between $\IndCoh(X)^+$ and $\IndCoh(X \times Z)^+$. The analogous statement holds for $e_{\Psi_Z(\cF), X}$ and $e_{\Psi_Z(\cF),X}^R$, and the second claim follows and since $\Psi_{(-)}$ restricts to an equivalence $\IndCoh(-)^+ \congto \QCoh(-)^+$ and since $\Psi_{X \times Z} e_{\cF,X} \cong e_{\Psi_Z(\cF),X} \Psi_X$ (Proposition~\ref{prop:eFXconsistenctdef}). The third follows since $\Delta_{X*}$ is also compatible with the $\Psi_{(-)}$ functors, and since we have an isomorphism $- \ot \Psi_Z(\cF) \cong \Delta_X^* e_{\Psi_Z(\cF),X}$ of functors $\QCoh(X) \to \QCoh(X)$. 
\end{proof}

Propositions \ref{prop:cHomalmostcont}, \ref{prop:cHomftdbasechange}, and \ref{prop:eFXRprops1} immediately imply the following. 

\begin{Corollary}\label{cor:cHomalmostcontind}
	If $Y$ is a geometric stack and $\cF \in \QCoh(Y)$ is coherent, then $\cHom(\cF,-): \IndCoh(Y) \to \IndCoh(Y)$ is left bounded and almost continuous. 
\end{Corollary}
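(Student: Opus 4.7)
The plan is to reduce the two claims about ind-coherent sheaf Hom on $Y$ to their quasi-coherent counterparts by passing through the left completion functor $\Psi_Y \colon \IndCoh(Y) \to \QCoh(Y)$. The three structural facts to lean on are: $\Psi_Y$ is colimit-preserving (this is part of the universal property defining $\IndCoh(Y)$ as an anticompletion), $\Psi_Y$ restricts to an equivalence $\IndCoh(Y)^+ \cong \QCoh(Y)^+$, and on bounded-below subcategories it intertwines ind-coherent and quasi-coherent $\cHom$ via the isomorphism from Proposition \ref{prop:eFXRprops1}.

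First I would dispatch left boundedness. By definition $\cHom(\cF, -) = e^R_{\cF,Y} \Delta_{Y*}$, and Proposition \ref{prop:eFXRprops1} gives that $e^R_{\cF,Y}$ is left bounded; since $\Delta_{Y*}$ is t-exact (as the pushforward along an affine morphism between geometric stacks, see Proposition \ref{prop:immisaffforgeo}), the composition $\cHom(\cF,-)$ is left bounded on $\IndCoh(Y)$.

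For almost continuity, fix $n$ and a filtered colimit $\cG \cong \colim_\al \cG_\al$ in $\IndCoh(Y)^{\geq n}$. I need to show the canonical comparison map
\[
\phi \colon \colim_\al \cHom(\cF, \cG_\al) \to \cHom(\cF, \cG)
\]
is an isomorphism in $\IndCoh(Y)$. By the first step, both sides lie in $\IndCoh(Y)^+$, so since $\Psi_Y$ restricts to an equivalence $\IndCoh(Y)^+ \cong \QCoh(Y)^+$ it is enough to show $\Psi_Y(\phi)$ is an isomorphism. Proposition \ref{prop:eFXRprops1} supplies a natural isomorphism $\Psi_Y \cHom(\cF, -) \cong \cHom(\Psi_Y(\cF), \Psi_Y(-))$ on $\IndCoh(Y)^+$, and $\Psi_Y(\cF) \cong \cF$ since $\cF$ is coherent (hence bounded). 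Using that $\Psi_Y$ is continuous and t-exact, $\Psi_Y(\phi)$ is identified with
\[
\colim_\al \cHom(\cF, \Psi_Y\cG_\al) \to \cHom(\cF, \Psi_Y\cG)
\]
in $\QCoh(Y)$, a filtered colimit taken in $\QCoh(Y)^{\geq n}$. This is an isomorphism by the almost continuity of quasi-coherent sheaf Hom from Proposition \ref{prop:cHomalmostcont}.

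The argument is essentially formal once the right bookkeeping is in place; the only point that requires any care is the identification $\Psi_Y\phi \cong \cHom(\cF, \Psi_Y(-))$ applied to the colimit diagram, for which one needs to invoke Proposition \ref{prop:eFXRprops1} both termwise and on the colimit (the latter being legitimate since the colimit lies in $\IndCoh(Y)^+$). The analogous base-change-style compatibility encoded in Proposition \ref{prop:cHomftdbasechange} underwrites that the quasi-coherent $\cHom(\cF,-)$ being compared is the intrinsic one on $Y$ rather than one pulled back from a flat cover, which in practice is what allows Proposition \ref{prop:cHomalmostcont} to apply directly; no further work is required.
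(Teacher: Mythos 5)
Your proof is correct and is exactly the argument the paper intends: the paper offers no written proof, asserting that Propositions \ref{prop:cHomalmostcont}, \ref{prop:cHomftdbasechange}, and \ref{prop:eFXRprops1} immediately imply the corollary, and your reduction along $\Psi_Y$ --- left boundedness from Proposition \ref{prop:eFXRprops1} giving a uniform bound below, then the equivalence $\IndCoh(Y)^+ \cong \QCoh(Y)^+$ plus continuity of $\Psi_Y$ transferring the comparison map to the quasi-coherent one handled by Proposition \ref{prop:cHomalmostcont} --- is precisely that implicit reduction. Two cosmetic points: affineness of $\Delta_Y$ is part of the definition of a geometric stack (Definition \ref{def:1}) rather than a consequence of Proposition \ref{prop:immisaffforgeo}, which concerns closed immersions; and your closing appeal to Proposition \ref{prop:cHomftdbasechange} is not needed for this corollary (the paper lists it because the same sentence also covers the companion Corollary \ref{cor:cHomftdbasechangeind}).
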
 

\begin{Corollary}\label{cor:cHomftdbasechangeind}
	Let $X$ and $Y$ be geometric stacks, $h: X \to Y$ a morphism of finite Tor-dimension, and $\cF \in \Coh(Y)$. Then the Beck-Chevalley map $h^* \cHom(\cF, \cG) \to \cHom(h^*(\cF), h^*(\cG))$ is an isomorphism for all $\cG \in \IndCoh(Y)^+$. 
\end{Corollary}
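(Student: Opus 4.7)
The plan is to reduce to the quasi-coherent statement of Proposition~\ref{prop:cHomftdbasechange} via the equivalence $\Psi_{(-)}: \IndCoh(-)^+ \congto \QCoh(-)^+$. First I would verify that both the source and target of the Beck-Chevalley map lie in $\IndCoh(X)^+$. For the source, $h^*_{IC}$ is t-exact (being a pullback of finite Tor-dimension), and by Proposition~\ref{prop:eFXRprops1} the functor $\cHom(\cF,-)$ is left bounded, so $\cHom(\cF, \cG) \in \IndCoh(Y)^+$ and hence $h^*_{IC}\cHom(\cF,\cG) \in \IndCoh(X)^+$. For the target, $h^*_{IC}(\cG) \in \IndCoh(X)^+$, and by Proposition~\ref{prop:ftdprops} the sheaf $h^*_{QC}(\cF) \cong \Psi_X h^*_{IC}(\cF)$ lies in $\Coh(X)$, so Proposition~\ref{prop:eFXRprops1} again gives that $\cHom(h^*_{IC}(\cF), h^*_{IC}(\cG)) \in \IndCoh(X)^+$. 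Since $\Psi_X$ is conservative on $\IndCoh(X)^+$, it therefore suffices to show that the Beck-Chevalley map becomes an isomorphism after applying $\Psi_X$.

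Next I would chain together compatibility isomorphisms. On the source we have
\[ \Psi_X h^*_{IC} \cHom_{IC}(\cF, \cG) \cong h^*_{QC} \Psi_Y \cHom_{IC}(\cF, \cG) \cong h^*_{QC} \cHom_{QC}(\Psi_Y(\cF), \Psi_Y(\cG)), \]
using that $\Psi$ intertwines $*$-pullback (by construction of $h^*_{IC}$, and Definition~\ref{def:IndCohGStk}) and the third assertion of Proposition~\ref{prop:eFXRprops1}. On the target we have
\[ \Psi_X \cHom_{IC}(h^*_{IC}(\cF), h^*_{IC}(\cG)) \cong \cHom_{QC}(\Psi_X h^*_{IC}(\cF), \Psi_X h^*_{IC}(\cG)) \cong \cHom_{QC}(h^*_{QC}\Psi_Y(\cF), h^*_{QC}\Psi_Y(\cG)), \]
again by Proposition~\ref{prop:eFXRprops1} and the compatibility of $\Psi$ with $*$-pullback. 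A bookkeeping check shows that the resulting map between these two expressions is precisely the quasi-coherent Beck-Chevalley map of Proposition~\ref{prop:cHomftdbasechange}, applied to the coherent (hence almost perfect) sheaf $\Psi_Y(\cF) \in \QCoh(Y)$ and the object $\Psi_Y(\cG) \in \QCoh(Y)^+$. That proposition declares this map an isomorphism, which completes the argument.

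The only step requiring care is the identification of the two Beck-Chevalley maps; everything else is a direct application of the cited results. This identification is essentially formal from the fact that each of the isomorphisms above is itself constructed as the adjoint mate of a compatibility already known on the quasi-coherent side (namely $h^*_{QC}(- \ot \cF) \cong h^*_{QC}(-) \ot h^*_{QC}(\cF)$), so the only real content left is Proposition~\ref{prop:cHomftdbasechange} itself.
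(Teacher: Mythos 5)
Your proposal is correct and takes essentially the same route as the paper, whose proof consists of exactly this reduction: it cites Propositions \ref{prop:cHomalmostcont}, \ref{prop:cHomftdbasechange}, and \ref{prop:eFXRprops1} and declares the corollary immediate, using the equivalence $\Psi_{(-)}: \IndCoh(-)^+ \congto \QCoh(-)^+$, the compatibility $\Psi_X h^*_{IC} \cong h^*_{QC}\Psi_Y$ built into Definition \ref{def:IndCohGStk}, and the third assertion of Proposition \ref{prop:eFXRprops1} to transport the Beck--Chevalley map to the quasi-coherent side. One small correction: $h^*_{IC}$ is \emph{not} t-exact for $h$ of finite Tor-dimension (only bounded, with right t-exactness plus a shift controlled by the Tor-dimension; t-exactness would require flatness), but boundedness is all your argument actually needs to place $h^*\cHom(\cF,\cG)$ in $\IndCoh(X)^+$, so the proof stands.
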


Next we observe the following naturality properties of $e_{\cF,X}^R$. Suppose that $f: X \to Y$ and $g: Z' \to Z$ are ind-proper morphisms of ind-geometric stacks, that $Z'$ and $Z$ are reasonable, and that $g$ is almost ind-finitely presented. Then if $\cF \in \Coh(Z')$ and $\cF \cong g_*(\cF')$, the isomorphism (\ref{eq:eFXlow*compat}) yields an isomorphism of right adjoints
\begin{equation}\label{eq:eFXRupper!com}
	e_{\cF',X}^R (f \times g)^! \cong f^! e_{\cF,Y}^R.
\end{equation}

Similarly, suppose that $h:X \to Y$ and $g: Z' \to Z$ are of finite Tor-dimension as well as of ind-finite cohomological dimension. Then if $\cF \in \Coh(Z)$ and $\cF' \cong g^*(\cF)$, the isomorphism (\ref{eq:eFXup*compat}) yields (implicitly using Proposition \ref{prop:up*low*adj}) an isomorphism of right adjoints
\begin{equation}\label{eq:eFXRlower*com}
	e_{\cF,Y}^R  (h \times g)_* \cong h_* e_{\cF',X}^R.
\end{equation}

We now consider the generalizations of Propositions \ref{prop:eFXRQCohalmostcont} and \ref{prop:cHomalmostcont}. 

\begin{Proposition}\label{prop:eFXRalmostcont}
	Let $X$ and $Z$ be reasonable ind-geometric stacks, and suppose $\cF \in \Coh(Z)$. Then $ e_{\cF,X}^R$ is almost continuous. If $X$ and $X \times Z$ are coherent, then $ e_{\cF,X}^R$ is continuous.
\end{Proposition}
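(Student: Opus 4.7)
The plan is to follow closely the template of Proposition~\ref{prop:upper!almostcontind}, reducing step by step until we land in the geometric setting where we can invoke Proposition~\ref{prop:eFXRQCohalmostcont} via Proposition~\ref{prop:eFXRprops1}. The continuous claim is simpler, so I would dispatch it first: when $X$ and $X \times Z$ are coherent, Proposition~\ref{prop:IndCohisIndofCoh} identifies $\IndCoh(X) \cong \Ind(\Coh(X))$ and $\IndCoh(X \times Z) \cong \Ind(\Coh(X \times Z))$, so compact objects are exactly coherent sheaves. The external product constructed in Section~\ref{sec:indextcohcase} (via Proposition~\ref{prop:reascohlaxsymmstruct}) takes $\Coh(X)$ to $\Coh(X \times Z)$, so $e_{\cF,X}$ preserves compact objects and its right adjoint $e_{\cF,X}^R$ is therefore continuous.

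For the almost continuous claim, I would first reduce to the case where $Z$ is geometric. Since $Z$ is reasonable and $\cF \in \Coh(Z)$, we can write $\cF \cong j_*(\cF')$ for some reasonable geometric substack $j: Z' \to Z$ and some $\cF' \in \Coh(Z')$ (by (\ref{eq:cohviaindgeopres})). The isomorphism (\ref{eq:eFXRupper!com}) applied with $f = \id_X$ gives $e_{\cF,X}^R \cong e_{\cF',X}^R (\id_X \times j)^!$. Here $(\id_X \times j)^!$ is almost continuous by Proposition~\ref{prop:upper!almostcontind}, so almost continuity of $e_{\cF,X}^R$ follows from that of $e_{\cF',X}^R$. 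Next, fix a reasonable presentation $X \cong \colim X_\al$. Since $\IndCoh(X) \cong \lim \IndCoh(X_\al)$ via the $i_\al^!$, to prove $\colim_\be e_{\cF',X}^R(\cG_\be) \to e_{\cF',X}^R(\cG)$ is an isomorphism for a filtered colimit $\cG \cong \colim \cG_\be$ in $\IndCoh(X \times Z')^{\geq 0}$, it suffices to check the image under each $i_\al^!$. Using (\ref{eq:eFXRupper!com}) with $f = i_\al$ and $g = \id_{Z'}$, we have $i_\al^! e_{\cF',X}^R \cong e_{\cF',X_\al}^R (i_\al \times \id_{Z'})^!$. Combining this with the left boundedness of $e_{\cF',X}^R$ (Proposition~\ref{prop:eFXRprops1}) and almost continuity of $i_\al^!$ and $(i_\al \times \id_{Z'})^!$ reduces the problem to showing that $e_{\cF',X_\al}^R$ is almost continuous when $X_\al$ and $Z'$ are geometric.

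For this final step, $X_\al$ is a truncated geometric stack and $Z'$ is reasonable geometric, so by Proposition~\ref{prop:eFXRprops1} the restriction of $e_{\cF',X_\al}^R$ to $\IndCoh(X_\al \times Z')^+$ is identified via the equivalences $\IndCoh(-)^+ \cong \QCoh(-)^+$ with the quasi-coherent functor $e_{\Psi_{Z'}(\cF'),X_\al}^R$, which is almost continuous by Proposition~\ref{prop:eFXRQCohalmostcont}. Assembling the chain of isomorphisms
\begin{equation*}
i_\al^! \bigl(\colim_\be e_{\cF',X}^R(\cG_\be)\bigr) \cong \colim_\be e_{\cF',X_\al}^R (i_\al \times \id_{Z'})^!(\cG_\be) \cong e_{\cF',X_\al}^R (i_\al \times \id_{Z'})^!\bigl(\colim_\be \cG_\be\bigr) \cong i_\al^! e_{\cF',X}^R(\cG)
\end{equation*}
finishes the argument. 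The main obstacle is purely bookkeeping: at each reduction, one must confirm that boundedness is preserved (so that almost continuity can actually be applied) and that the base-change-type identity (\ref{eq:eFXRupper!com}) has the right form; but no new input beyond the functorialities already established in Sections~\ref{sec:upper!} and the present section is required.
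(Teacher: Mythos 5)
Your proof is correct and takes essentially the same approach as the paper's: the continuity claim via coherence-preservation of $e_{\cF,X}$ together with compact generation by coherent sheaves (Proposition \ref{prop:IndCohisIndofCoh}), and almost continuity by combining the truncated geometric base case (Propositions \ref{prop:eFXRQCohalmostcont} and \ref{prop:eFXRprops1}) with the limit decomposition $\IndCoh(X) \cong \lim \IndCoh(X_\al)$, the isomorphism (\ref{eq:eFXRupper!com}), and the almost continuity of ind-proper $!$-pullback (Proposition \ref{prop:upper!almostcontind}). The only difference is the order of reductions --- you pass from ind-geometric $Z$ to a reasonable geometric substack $Z'$ at the start via $e_{\cF,X}^R \cong e_{\cF',X}^R(\id_X \times j)^!$, whereas the paper performs exactly this reduction as its final step --- which is immaterial since the same isomorphism and t-exactness bookkeeping do the work either way.
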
 

\begin{proof}
The second claim follows since by construction $e_{\cF,X}$ preserves coherence. When $X$ and $Z$ are truncated geometric stacks the first claim holds by Propositions \ref{prop:eFXRQCohalmostcont} and \ref{prop:eFXRprops1}. Still assuming $Z$ is truncated and geometric, let $X \cong \colim X_\al$ be a reasonable presentation and $\cG \cong \colim \cG_\be$ a filtered colimit in $\IndCoh(X \times Z)^{\geq 0}$. Since $\IndCoh(X) \cong \lim \IndCoh(X_\al)$ in $\Cathatinfty$, it suffices to show the second factor in 
	$$ \colim_\be i_\al^! e_{\cF,X}^R(\cG_\be) \to  i_\al^! \colim_\be e_{\cF,X}^R(\cG_\be) \to i_\al^! e_{\cF,X}^R( \colim_\be \cG_\be)  $$
	is an isomorphism for all $\al$. The first factor is an isomorphism since $e_{\cF,X}^R$ is left bounded (Proposition \ref{prop:eFXRprops1}) and $i_\al^!$ is almost continuous (Proposition \ref{prop:upper!almostcontind}). But $i_\al^! e_{\cF,X}^R \cong e_{\cF,X_\al}^R (i_\al \times \id_Z)^!$ by (\ref{eq:eFXRupper!com}), so the composition is an isomorphism by the left t-exactness of $(i_\al \times \id_Z)^!$ and the almost continuity of $e_{\cF,X_\al}^R$ and $(i_\al \times \id_Z)^!$.  
	
	Finally, suppose $Z \cong \colim Z_\al$ is a reasonable presentation, and write $\cF \cong i_{\al*}(\cF_\al)$ for some~$\al$ and $\cF_\al \in \Coh(Z_\al)$. By (\ref{eq:eFXRupper!com}) we have $e_{\cF,X}^R \cong e_{\cF_\al,X}^R i_\al^!$, and the claim follows since $i_\al^!$ is left t-exact and since $e_{\cF_\al,X}^R$ and $i_\al^!$ are almost continuous. 
\end{proof}
	
	\begin{Corollary}
Let $X$ be a reasonable (resp. coherent) ind-geometric stack and $\cF \in \Coh(X)$. Then $\cHom(\cF,-): \IndCoh(X) \to \IndCoh(X)$ is almost continuous (resp. continuous).
	\end{Corollary}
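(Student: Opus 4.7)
The plan is to exploit the definition $\cHom(\cF,-) := e_{\cF,X}^R \Delta_{X*}$ and analyze the two factors separately. First I would observe that $\Delta_X : X \to X \times X$ is affine: fixing a reasonable presentation $X \cong \colim X_\al$, each $X_\al$ is geometric and has affine diagonal by Definition~\ref{def:1}, while $\Delta_X|_{X_\al}$ factors as the affine map $\Delta_{X_\al}$ followed by the closed immersion $X_\al \times X_\al \into X \times X$ of a reasonable geometric substack. In particular $\Delta_X$ is of cohomological dimension $0$, so by Propositions~\ref{prop:indcohonindgstks} and~\ref{prop:IndCohindGStktstructure}, $\Delta_{X*} : \IndCoh(X) \to \IndCoh(X \times X)$ is continuous and t-exact.

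The almost continuous case should then follow from Proposition~\ref{prop:eFXRalmostcont} applied with $Z = X$: for each $n$, the restriction of $\cHom(\cF,-)$ to $\IndCoh(X)^{\geq n}$ factors as $\IndCoh(X)^{\geq n} \xrightarrow{\Delta_{X*}} \IndCoh(X \times X)^{\geq n} \xrightarrow{e_{\cF,X}^R} \IndCoh(X)$, a composition of continuous functors. For the continuous case, where $X$ is coherent, I would use that Proposition~\ref{prop:IndCohisIndofCoh} gives $\IndCoh(X) \cong \Ind(\Coh(X))$ with $\Coh(X)$ consisting of compact objects. Writing a general $\cG$ as $\cG \cong \colim_\al \cG_\al$ with $\cG_\al \in \Coh(X)$, and testing against an arbitrary compact $\cH \in \Coh(X)$, adjunction and continuity of $\Delta_{X*}$ give
$$\Map(\cH, \cHom(\cF,\cG)) \cong \Map(\cH \boxtimes \cF, \colim_\al \Delta_{X*}(\cG_\al)).$$
By the coherent external product from Section~\ref{sec:indextcohcase}, $\cH \boxtimes \cF$ lies in $\Coh(X \times X)$ and hence is bounded below, in say $\IndCoh(X \times X)^{\geq a}$; maps out of it factor through $\tau^{\geq a}$, so the right-hand side equals $\Map(\cH \boxtimes \cF, \colim_\al \tau^{\geq a}\Delta_{X*}(\cG_\al))$, which is a filtered colimit in $\IndCoh(X \times X)^{\geq a}$. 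Almost compactness of $\cH \boxtimes \cF$ (Proposition~\ref{prop:cohisacinindcoh}) should ensure $\Map(\cH \boxtimes \cF, -)$ commutes with this colimit; unwinding the adjunction and using compactness of $\cH$ in $\IndCoh(X)$ would then yield $\Map(\cH, \cHom(\cF,\cG)) \cong \Map(\cH, \colim_\al \cHom(\cF, \cG_\al))$, and joint conservativity of the $\Map(\cH,-)$ over $\cH \in \Coh(X)$ would give the required isomorphism $\cHom(\cF,\cG) \cong \colim_\al \cHom(\cF, \cG_\al)$.

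The hardest part will be handling the continuous case without being able to invoke the continuous clause of Proposition~\ref{prop:eFXRalmostcont} directly: that clause requires $X \times X$ to be coherent, but the category of coherent ind-geometric stacks is not closed under products (coherent rings themselves fail to be closed under tensor products, as noted in Section~\ref{sec:cohsubsec}). The strategy outlined above bypasses this by leveraging compact generation of $\IndCoh(X)$ by $\Coh(X)$ together with the \emph{almost} (rather than strict) compactness of coherent sheaves on $X \times X$, trading the missing coherence of $X \times X$ for careful t-structure truncation.
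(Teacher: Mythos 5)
Your treatment of the almost continuous case is correct and is essentially the paper's own argument: the paper's proof is precisely the observation that $\cHom(\cF,-) = e_{\cF,X}^R \Delta_{X*}$, that $\Delta_{X*}$ is continuous (and t-exact, so that it carries filtered colimits in $\IndCoh(X)^{\geq n}$ to filtered colimits in $\IndCoh(X\times X)^{\geq n}$), and that $e_{\cF,X}^R$ is almost continuous by Proposition~\ref{prop:eFXRalmostcont} with $Z = X$, which only needs $X$ reasonable. You have also correctly identified the delicate point in the continuous case: the continuity clause of Proposition~\ref{prop:eFXRalmostcont} requires $X \times X$ coherent, which does not follow from coherence of $X$.

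However, your workaround for the continuous case has a genuine gap at the truncation step. The claim that for $\cH \boxtimes \cF \in \IndCoh(X\times X)^{\geq a}$ one has $\Map(\cH\boxtimes\cF, \cX) \cong \Map(\cH\boxtimes\cF, \tau^{\geq a}\cX)$ is false: the truncation adjunction runs the other way. Since $\tau^{\geq a}$ is \emph{left} adjoint to the inclusion $\IndCoh^{\geq a} \subset \IndCoh$, one may truncate the \emph{source} when the \emph{target} is bounded below ($\Map(\cP,\cY) \cong \Map(\tau^{\geq a}\cP, \cY)$ for $\cY \in \IndCoh^{\geq a}$), but maps from a bounded-below object into the coconnective part of the target need not vanish — they compute high Ext groups. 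Already for $A = \C[x]/(x^2)$ one has $\Ext^n_A(\C,\C) \neq 0$ for all $n \geq 0$, so a coherent sheaf concentrated in degrees $\geq a$ admits nonzero maps into objects concentrated in arbitrarily negative degrees. Without this step, the almost compactness of $\cH \boxtimes \cF$ (Proposition~\ref{prop:cohisacinindcoh}) only gives commutation of $\Map(\cH\boxtimes\cF,-)$ with filtered colimits that are \emph{uniformly} bounded below, whereas the objects $\Delta_{X*}(\cG_\al)$ with $\cG_\al \in \Coh(X)$ are individually but not uniformly bounded below. Indeed, the possible failure of $\Map(\cH\boxtimes\cF,-)$ to commute with such colimits when $\cH\boxtimes\cF$ is merely almost compact (rather than compact, which is what coherence of $X\times X$ would buy via Proposition~\ref{prop:IndCohisIndofCoh}) is exactly the phenomenon your argument was meant to circumvent, so the gap is not repairable by a small adjustment. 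The paper itself simply invokes the continuity clause of Proposition~\ref{prop:eFXRalmostcont}, i.e.\ it uses the product coherence; in the intended applications this holds (e.g.\ in the locally Noetherian case, and for the tamely presented stacks of \cite{CWtm}), so the correct fixes are either to carry coherence of $X \times X$ as a hypothesis or to restrict to such a well-behaved class, not the truncation argument you propose.
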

\begin{proof}
Follows from Proposition \ref{prop:eFXRalmostcont} and continuity of $\Delta_{X*}$. 
\end{proof}

If $X$, $Y$, and $Z$ are geometric stacks and $\cF \in \QCoh(Z)$, then for any $f: X \to Y$ the isomorphism $(f\times \id_Z)^* e_{\cF, Y} \cong e_{\cF,X} f^*$ of functors $\QCoh(Y) \to \QCoh(X \times Z)$ yields an isomorphism 
\begin{equation}\label{eq:QCeFXlow*}
	f_* e_{X,\cF}^R \cong e_{Y,\cF}^R (f\times \id_Z)_*
\end{equation}
of right adjoints $\QCoh(X \times Z) \to \QCoh(Y)$. This is an external counterpart of the isomorphism (\ref{eq:QCHomupper*}). 

Now suppose that $X$, $Y$, and $Z$ are ind-geometric and $f: X \to Y$ is of ind-finite cohomological dimension. In this setting $f_*: \IndCoh(X) \to \IndCoh(Y)$ typically does not have a left adjoint. We can still define an analogue of (\ref{eq:QCeFXlow*}), however, by considering the Beck-Chevalley transformation
\begin{equation}\label{eq:ICeFXlow*}
	f_* e_{\cF,X}^R \to e_{\cF,Y}^R (f\times \id_Z)_*
\end{equation} 
associated to the isomorphism
$(f\times \id_Z)_* e_{\cF,X} \cong e_{\cF,Y}f_*$ in $\Fun(\IndCoh(X) , \IndCoh(Y \times Z))$. 
In the geometric case one can check that if we restrict to bounded below subcategories, (\ref{eq:ICeFXlow*}) is identified with the isomorphisms (\ref{eq:QCeFXlow*}) under the equivalences $\IndCoh(-)^+ \cong \QCoh(-)^+$.

\begin{Proposition}\label{prop:eFXRlower*}
	Let $X$, $Y$, and $Z$ be coherent ind-geometric stacks such that $X \times Z$ and $Y \times Z$ are coherent, and let $f: X \to Y$ be a morphism of ind-finite cohomological dimension. 
	Then for any $\cF \in \Coh(Z)$ and $\cG \in \IndCoh(X \times Z)$, the Beck-Chevalley map  $f_* e_{\cF,X}^R(\cG) \to e_{\cF,Y}^R (f \times \id_Z)_*(\cG)$ is an isomorphism. 
\end{Proposition}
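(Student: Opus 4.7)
The plan is first to reduce the statement to the case where $X$, $Y$, and $Z$ are all truncated geometric, and then handle this base case by passing to quasi-coherent sheaves.

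By the coherence hypotheses and Proposition \ref{prop:eFXRalmostcont}, all four functors appearing in the Beck-Chevalley map are continuous, so it suffices to check the map on $\cG \in \Coh(X \times Z)$. In the base case where $X$, $Y$, and $Z$ are all truncated geometric, $f$ is of finite cohomological dimension (Proposition \ref{prop:indPconsistency}), so both sides lie in $\IndCoh(Y)^+$; via Proposition \ref{prop:eFXRprops1} and the compatibility $\Psi_Y \circ f_* \cong f_* \circ \Psi_X$, the Beck-Chevalley map is identified with the quasi-coherent analog (\ref{eq:QCeFXlow*}), which is an isomorphism. The conservativity (in fact equivalence) of $\Psi_Y$ on $\IndCoh(Y)^+$ then yields the claim.

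To reduce to this base case I would handle $X$, $Y$, and $Z$ in turn. To reduce $X$: fix a reasonable presentation $X \cong \colim X_\gamma$, use Lemma \ref{lem:colimpres} to check the Beck-Chevalley map after pre-composing with each $(i_{X_\gamma} \times \id_Z)_*$, and invoke Proposition \ref{prop:eFXupper!compatindgeom} together with the adjunction $i_{X_\gamma *} \dashv i_{X_\gamma}^!$ to derive the compatibility $e_{\cF, X}^R (i_{X_\gamma} \times \id_Z)_* \cong i_{X_\gamma *} e_{\cF, X_\gamma}^R$, reducing to the Beck-Chevalley map for $f \circ i_{X_\gamma} : X_\gamma \to Y$. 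To reduce $Z$: write $\cF \cong i_{Z_\be *}(\cF_\be)$ for a reasonable geometric substack $Z_\be \subset Z$, use (\ref{eq:eFXRupper!com}) to express the sheaf Hom functors in terms of $e_{\cF_\be, -}^R$, and factor the Beck-Chevalley map through the intermediate $e_{\cF_\be, Y}^R (f \times \id_{Z_\be})_*(\id_X \times i_{Z_\be})^!(\cG)$; the first factor reduces to the Beck-Chevalley map for $\cF_\be \in \Coh(Z_\be)$, and the second is $e_{\cF_\be, Y}^R$ applied to a base-change isomorphism described below. To reduce $Y$: fix a reasonable presentation $Y \cong \colim Y_\al$, apply the jointly conservative family of $!$-pullbacks $j_{Y_\al}^!$ on $\IndCoh(Y)$, and use the same base-change isomorphism together with (\ref{eq:eFXRupper!com}) to rewrite both sides in terms of the base change $f'_\al: X'_\al \to Y_\al$.

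The hard part is the base-change isomorphism $(h \times \id_Z)^!(f \times \id_Z)_* \cong (f' \times \id_Z)_*(h' \times \id_Z)^!$ in a Cartesian square with $h$ ind-proper and almost ind-finitely presented and $f$ of ind-finite cohomological dimension (and the variant without the $\id_Z$ factors), which is needed for both the $Y$-reduction and the $Z$-reduction. Proposition \ref{prop:up!low*geom} as stated requires the horizontal map to be ind-proper, but a Cartesian square is symmetric under the transposition exchanging horizontal and vertical edges, and applying Proposition \ref{prop:up!low*geom} to the transposed square interchanges the roles of the hypotheses on $f$ and $h$, yielding precisely the required isomorphism. Coherence of all intermediate stacks (such as $X'_\al$, $X \times Z_\be$, and $Y \times Z_\be$) is preserved by Proposition \ref{prop:cohbasechangeunderafpclimm}, so all applications of Proposition \ref{prop:up!low*geom} remain valid throughout the reduction.
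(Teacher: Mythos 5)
There is a genuine gap, and it sits at the single step your reduction most depends on. You claim the compatibility $e_{\cF,X}^R (i_{X_\gamma} \times \id_Z)_* \cong i_{X_\gamma *}\, e_{\cF,X_\gamma}^R$ follows from Proposition \ref{prop:eFXupper!compatindgeom} together with the adjunction $i_{X_\gamma *} \dashv i_{X_\gamma}^!$. It does not. The commuting square of left adjoints (\ref{eq:eFXlow*compat}), $e_{\cF,X}\, i_{\gamma *} \cong (i_\gamma \times \id_Z)_*\, e_{\cF,X_\gamma}$, has two distinct mates: taking mates with respect to the adjunctions $(i_{\gamma*}, i_\gamma^!)$ gives the map $e_{\cF,X_\gamma} i_\gamma^! \to (i_\gamma \times \id_Z)^! e_{\cF,X}$ whose invertibility is Proposition \ref{prop:eFXupper!compatindgeom}, while taking mates with respect to $(e_{\cF,X}, e_{\cF,X}^R)$ gives the map $i_{\gamma*}\, e_{\cF,X_\gamma}^R \to e_{\cF,X}^R (i_\gamma \times \id_Z)_*$ that you need. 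Invertibility of one mate does not formally imply invertibility of the other (compare: flat base change $f^*h_* \cong h'_* f'^*$ does not formally yield $h_* f'^! \cong f^! h_*$; the latter is the nontrivial content of Proposition \ref{prop:up!low*geom}). Taking right adjoints of (\ref{eq:eFXlow*compat}) only produces the $!$-form (\ref{eq:eFXRupper!com}), never the pushforward form. Your desired compatibility \emph{is} the proposition being proved, in the special case where $f$ is an ind-closed immersion, and it requires its own argument. The paper supplies one: it first proves the geometric case via the quasi-coherent identification (as in your base case), then uses $\IndCoh(X) \cong \colim \IndCoh(X_\delta)$ and $\IndCoh(X \times Z) \cong \colim \IndCoh(X_\delta \times Z)$ in $\PrL$ together with \cite[Prop. 4.7.5.19]{LurHA}, seeded by the Beck--Chevalley isomorphisms for the geometric transition maps $i_{\delta\delta'}$. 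Without this colimit argument (or a reroute, e.g.\ applying your conservative-family $Y$-reduction to the ind-closed immersion itself), your $X$-reduction has no support.

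A secondary issue: your base case is too narrow. The $Y$-reduction produces base changes $X'_\al = X \times_Y Y_\al$ that are coherent ind-geometric, and even when $X$ has already been reduced to a truncated geometric substack these fiber products are geometric but typically \emph{not} truncated (cf.\ the discussion around Proposition \ref{prop:classicaliso}). So the reductions do not terminate in the ``all truncated geometric'' case you prove; you need the case of geometric $X$, $Y$ with only $Z$ truncated --- which the quasi-coherent argument does cover, and which is exactly how the paper states its base case --- and you need the source reduction to remain available for ind-geometric sources over geometric targets, as in the paper's third paragraph. Otherwise your plan (continuity reduction, QC base case via Proposition \ref{prop:eFXRprops1}, the transposed application of Proposition \ref{prop:up!low*geom}, the use of (\ref{eq:eFXRupper!com}), and the coherence bookkeeping via Proposition \ref{prop:cohbasechangeunderafpclimm}) matches the paper's proof closely and those portions are sound.
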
 
\begin{proof}
	By Proposition \ref{prop:eFXRalmostcont}, all functors in the statement are continuous, hence by coherence of $X \times Z$ it suffices to show the claim for $\cG \in \Coh(X \times Z)$. When $X$, $Y$, and $Z$ are geometric and $Z$ is truncated the claim then follows from (\ref{eq:QCeFXlow*}), since these functors are also left bounded (Proposition \ref{prop:eFXRprops1}) and compatible with the equivalences $\IndCoh(-)^+ \cong \QCoh(-)^+$.
	
	Next suppose $f$ is the inclusion of a term in a reasonable presentation $Y \cong \colim Y_\al$, still assuming $Z$ is truncated and geometric. By construction the functors $e_{\cF,Y_\al}^R$, $i^!_{\al}$, and $(i_\al \times \id_Z)^!$ form a diagram $(A \times \Delta^1)^{\op} \to \Cathatinfty$, where $A$ is our index category. Each $Y_\al \times Z$ is coherent by Proposition \ref{prop:cohbasechangeunderafpclimm}, hence by the first paragraph $i_{\al\be*} e_{\cF,Y_\al}^R \to e_{\cF,Y_\be}^R (i_{\al\be} \times \id_Z)_*$ is an isomorphism for all $\al \leq \be$. Since  $\IndCoh(Y) \cong \colim \IndCoh(Y_\al)$ and  $\IndCoh(Y \times Z) \cong \colim \IndCoh(Y_\al \times Z)$ in $\PrL$ (Proposition \ref{prop:indcohonindgstks}), the claim follows by \cite[Prop. 4.7.5.19]{LurHA}. 

	Now let $X \cong \colim X_\al$ be a reasonable presentation, supposing again that $Y$ and $Z$ are geometric and $Z$ is truncated. Since $\cG \in \Coh(X \times Z)$ we have $\cG \cong (i_\al \times \id_Z)_*(\cG_\al)$ for some~$\al$ and some $\cG_\al \in \Coh(X_\al \times Z)$. We want to show the second factor of 
		\begin{equation*}
		f_* i_{\al*} e_{\cF,X_\al}^R(\cG_\al) \to f_* e_{\cF,X}^R (i_{\al} \times \id_Z)_*(\cG_\al) \to e_{\cF,Y}^R (f \times \id_Z)_* (i_{\al} \times \id_Z)_*(\cG_\al)
	\end{equation*}
is an isomorphism. Given that $X_\al \times Z$ is coherent by Proposition~\ref{prop:cohbasechangeunderafpclimm}, this follows since the first factor is by the previous paragraph and the composition is by the first paragraph. 

In the general case, let $Y \cong \colim Y_\al$ be a reasonable presentation, and write $\cF \cong j_*(\cF')$ for some reasonable geometric substack $j: Z' \to Z$ and some $\cF' \in \Coh(Z')$. For any $\al$ we have a diagram
\begin{equation*}
	\begin{tikzpicture}[baseline=(current  bounding  box.center),thick,>=\arrtip]
			\newcommand*{\ha}{1.5}; \newcommand*{\hb}{1.5}; \newcommand*{\hc}{1.5};
\newcommand*{\va}{-.9}; \newcommand*{\vb}{-.9}; \newcommand*{\vc}{-.9}; 
		\node (ab) at (\ha,0) {$X_\al$};
		\node (ad) at (\ha+\hb+\hc,0) {$X_\al \times Z'$};
		\node (ba) at (0,\va) {$X$};
		\node (bc) at (\ha+\hb,\va) {$X \times Z$};
		\node (cb) at (\ha,\va+\vb) {$Y_\al$};
		\node (cd) at (\ha+\hb+\hc,\va+\vb) {$Y_\al \times Z'$};
		\node (da) at (0,\va+\vb+\vc) {$Y$};
		\node (dc) at (\ha+\hb,\va+\vb+\vc) {$Y \times Z$};
		\draw[<-] (ab) to node[above] {$ $} (ad);
		\draw[->] (ab) to node[above left, pos=.25] {$i'_\al $} (ba);
		\draw[->] (ab) to node[right,pos=.2] {$f_\al $} (cb);
		\draw[->] (ad) to node[below right] {$ $} (bc);
		\draw[->] (ad) to node[right] {$ $} (cd);
		\draw[->] (ba) to node[left] {$f $} (da);
		\draw[<-] (cb) to node[above,pos=.25] {$ $} (cd);
		\draw[->] (cb) to node[above left, pos=.25] {$i_\al $} (da);
		\draw[->] (cd) to node[below right] {$ $} (dc);
		\draw[<-] (da) to node[above,pos=.75] {$ $} (dc);
		
		\draw[-,line width=6pt,draw=white] (ba) to  (bc);
		\draw[<-] (ba) to node[above,pos=.75] {$ $} (bc);
		\draw[-,line width=6pt,draw=white] (bc) to  (dc);
		\draw[->] (bc) to node[right,pos=.2] {$ $} (dc);
	\end{tikzpicture}
\end{equation*} 
with all faces but the top and bottom Cartesian. We have a diagram
\begin{equation*}
	\begin{tikzpicture}
		[baseline=(current  bounding  box.center),thick,>=\arrtip]
		\newcommand*{\ha}{5.2}; \newcommand*{\hb}{5.8};
		\newcommand*{\va}{-1.5};
		\node (aa) at (0,0) {$f_{\al*} i'^!_\al e_{\cF,X}^R(\cG)$};
		\node (ab) at (\ha,0) {$i^!_\al f_* e_{\cF,X}^R(\cG)$};
		\node (ac) at (\ha+\hb,0) {$i^!_\al e_{\cF,Y}^R (f \times \id_Z)_*(\cG)$};
		\node (ba) at (0,\va) {$f_{\al*} e_{\cF',X_\al}^R (i'_\al \times j)^! (\cG)$};
		\node (bb) at (\ha,\va) {$e_{\cF',Y_\al}^R (f_{\al} \times \id_{Z'})_* (i'_\al \times j)^!(\cG)$};
		\node (bc) at (\ha+\hb,\va) {$e_{\cF',Y_\al}^R  (i_\al \times j)^! (f \times \id_Z)_{*}(\cG)$};
		\draw[->] (aa) to node[above] {$ $} (ab);
		\draw[->] (ab) to node[above] {$  $} (ac);
		\draw[->] (ba) to node[above] {$ $} (bb);
		\draw[->] (bb) to node[above] {$ $} (bc);
		\draw[->] (aa) to node[below,rotate=90] {$\sim $} (ba);
		\draw[->] (ac) to node[below,rotate=90] {$\sim $} (bc);
	\end{tikzpicture} 
\end{equation*}
in $\IndCoh(Y_\al)$, where the vertical isomorphisms are given by (\ref{eq:eFXRupper!com}). Since the functors $i^!_\al$ determine an isomorphism $\IndCoh(Y) \cong \lim \IndCoh(Y_\al)$ in  $\Cathatinfty$, it suffices to show the top right arrow
is an isomorphism for all $\al$. Proposition~\ref{prop:cohbasechangeunderafpclimm} implies that $X_\al$, $X_\al \times Z'$, and $Y_\al \times Z'$ (note that e.g. $i_\al \times j$ factors as $(i_\al \times \id_Z) \circ (\id_{Y_\al} \times j)$). The claim then follows since the top left and bottom right arrows are isomorphisms by Proposition \ref{prop:up!low*geom} and the bottom left is by the previous paragraph. 
\end{proof}

As with the analogous Proposition \ref{prop:up!low*geom}, the coherence hypotheses in Proposition \ref{prop:eFXRlower*} simplify the proof considerably but are not entirely essential. With more work one can show the following extension. 

\begin{Proposition}\label{prop:eFXRlower*general}
	Let $X$, $Y$, and $Z$ be ind-geometric stacks such that $Y$ and $Z$ are reasonable, and let $f: X \to Y$ be a morphism of ind-finite cohomological dimension. 
	Then for any $\cF \in \Coh(Z)$ and $\cG \in \IndCoh(X \times Z)^+$ the Beck-Chevalley map  $f_* e_{\cF,X}^R(\cG) \to e_{\cF,Y}^R (f \times \id_Z)_*(\cG)$ is an isomorphism. 
\end{Proposition}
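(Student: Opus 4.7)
The plan is to follow the four-step inductive scheme from the proof of Proposition \ref{prop:eFXRlower*}, replacing the coherence-driven continuity arguments with boundedness arguments that exploit the hypothesis $\cG \in \IndCoh(X \times Z)^+$. The enabling ingredients are the left boundedness of $e_{\cF,-}^R$ (Proposition \ref{prop:eFXRprops1}), the equivalences $\IndCoh(-)^+ \cong \QCoh(-)^+$ for geometric stacks, the left t-exactness of $!$-pullbacks (Proposition \ref{prop:IndCohindGStktstructure}), and the almost continuity results of Propositions \ref{prop:upper!almostcontind} and \ref{prop:eFXRalmostcont}.

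First I would handle the case where $X$, $Y$, $Z$ are geometric with $Z$ truncated. Here all four functors appearing in the Beck-Chevalley map are left bounded, and on bounded-below subcategories they correspond to their quasi-coherent counterparts via the $\Psi$ equivalences (using Propositions \ref{prop:eFXRprops1} and \ref{prop:eFXconsistenctdef}, plus the parallel compatibility of $f_*$ with $\Psi$). The assertion thus reduces to the quasi-coherent isomorphism (\ref{eq:QCeFXlow*}).

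Next I would extend in the $Y$-direction: assume $X$ is the reasonable geometric substack $j_{\be_0}$ in a reasonable presentation $Y \cong \colim Y_\be$, still with $Z$ truncated geometric. For each $\be \leq \ga$ the previous step gives the isomorphism $j_{\be\ga*} e_{\cF,Y_\be}^R \cong e_{\cF,Y_\ga}^R (j_{\be\ga} \times \id_Z)_*$ on bounded-below subcategories; assembling these via Proposition \ref{prop:eFXfunctoriality} and the colimit formulas $\IndCoh(Y) \cong \colim \IndCoh(Y_\be)$, $\IndCoh(Y \times Z) \cong \colim \IndCoh(Y_\be \times Z)$ (Proposition \ref{prop:indcohonindgstks}) yields the claim. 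Then, for arbitrary reasonable ind-geometric $X$, I would pick a reasonable presentation $X \cong \colim X_\al$, factor each $f \circ i_\al$ through a reasonable geometric substack $j_\al \colon Y_\al \to Y$ via $f_\al \colon X_\al \to Y_\al$, and write $\cG \cong \colim (i_\al \times \id_Z)_* (i_\al \times \id_Z)^!(\cG)$. Left t-exactness of $(i_\al \times \id_Z)^!$ keeps each term bounded below. Using the previous two cases at each $\al$-level, together with the almost continuity of $e_{\cF,Y}^R$ and of $!$-pullbacks along truncated geometric substacks of $Y$, a diagram chase parallel to the final paragraph in the proof of Proposition \ref{prop:eFXRlower*} then produces the isomorphism after applying $i^!\colon \IndCoh(Y) \to \IndCoh(Y')$ for an arbitrary truncated geometric substack $i \colon Y' \to Y$, from which the claim follows since the collection of such $i^!$ is jointly conservative.

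Finally, to pass from $Z$ truncated geometric to $Z$ reasonable ind-geometric, I would write $\cF \cong j_*(\cF')$ with $j \colon Z' \to Z$ a reasonable geometric substack and $\cF' \in \Coh(Z')$. The isomorphisms in (\ref{eq:eFXRupper!com}) identify $e_{\cF,X}^R \cong e_{\cF',X}^R (\id_X \times j)^!$ and $e_{\cF,Y}^R \cong e_{\cF',Y}^R (\id_Y \times j)^!$, and the Beck-Chevalley map then factors through the base change $(\id_Y \times j)^! (f \times \id_Z)_* \cong (f \times \id_{Z'})_* (\id_X \times j)^!$, whose validity on bounded-below objects is provided by Proposition \ref{prop:up!low*indgeomgeneral} (with $\id_Y \times j$ formally geometric). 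The other factor is the previous case applied with $Z$ replaced by $Z'$. The most delicate point will be the extension in the $X$-direction: one must carefully verify that the colimit decomposition of $\cG$ is compatible with both sides of the Beck-Chevalley map, and that the resulting almost-continuity arguments yield an honest isomorphism rather than merely an isomorphism after further truncation, for which the t-structural content of Proposition \ref{prop:IndCohindGStktstructure} is essential.
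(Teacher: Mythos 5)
First, a point of context: the paper does not actually prove this proposition. It appears immediately after Proposition \ref{prop:eFXRlower*} with only the remark that ``with more work one can show the following extension,'' so your overall plan --- rerun the four-step scheme of Proposition \ref{prop:eFXRlower*}, substituting boundedness and $\Psi$-identifications for the coherence-driven continuity arguments --- is exactly the route the authors indicate, and your geometric base case and your reduction in the $Z$-direction via (\ref{eq:eFXRupper!com}) are sound. Two corrections there, though. Minor: you invoke the formally-geometric branch of Proposition \ref{prop:up!low*indgeomgeneral}, which requires $\cG \in \IndCoh(-)^+_{lim}$, a membership you never verify; the applicable branch is the finite-cohomological-dimension one, since $\id_Y \times j$ is an ind-closed immersion (hence of cohomological dimension $0$), and that branch needs only $\cG \in \IndCoh(-)^+$. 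More substantively: your $X$-direction step proves less than the statement. You take ``arbitrary \emph{reasonable} ind-geometric $X$'' with a reasonable presentation, but the proposition assumes only that $Y$ and $Z$ are reasonable. For a general ind-geometric presentation $X \cong \colim X_\al$ the transition maps need not be almost ind-finitely presented, so Proposition \ref{prop:upper!almostcontind} does not give almost continuity of the $i_\al^!$, and Proposition \ref{prop:eFXRalmostcont} (almost continuity of $e^R_{\cF,X}$) is likewise unavailable --- both are load-bearing in your diagram chase. One can still decompose $\cG \cong \colim (i_\al \times \id_Z)_*(i_\al \times \id_Z)^!(\cG)$ by Lemma \ref{lem:colimpres} without any continuity, and probe both sides via $i_\ga^!$ along reasonable substacks of $Y$ using (\ref{eq:eFXRupper!com}) and Proposition \ref{prop:up!low*indgeomgeneral}, but that is a different argument from the one you wrote.

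The deeper gap is that your proposal never engages with the difference between finite and \emph{ind}-finite cohomological dimension, which is where the real ``more work'' lies. Since $f$ is only of ind-finite cohomological dimension, $(f \times \id_Z)_*$ does not preserve $\IndCoh(-)^+$: it is computed as a filtered colimit of pushforwards whose cohomological dimensions are unbounded (already $\P^\infty \to \Spec \kk$ exhibits this), so $(f \times \id_Z)_*(\cG)$ may fail to be bounded below even though $\cG$ is. Consequently, on the right-hand side $e^R_{\cF,Y}$ is evaluated outside the bounded-below locus where Proposition \ref{prop:eFXRprops1} and the $\Psi$-equivalences operate, and almost continuity of $e^R_{\cF,Y}$ only lets you commute it past filtered colimits whose terms are \emph{uniformly} bounded below --- which the relevant decomposition $(f \times \id_Z)_*(\cG) \cong \colim (j_\al \times \id_Z)_*(f_\al \times \id_Z)_*(i_\al \times \id_Z)^!(\cG)$ is not, since the cohomological dimensions of the $f_\al$ grow. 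This obstruction survives the reduction to geometric targets (replace $Y$ by a truncated geometric substack $Y_\ga$ and $f$ by $f_\ga \colon X_\ga \to Y_\ga$, which is still only of ind-finite cohomological dimension), so your closing diagnosis --- that the delicacy lies in t-structure bookkeeping in the $X$-direction --- misidentifies the actual difficulty. As written, your argument establishes the statement only when $f$ is of finite cohomological dimension and $X$ is reasonable; bridging to the stated generality requires a new idea controlling $e^R_{\cF,Y}$ on the unbounded-below objects produced by ind-finite-cohomological-dimension pushforward, which neither your proposal nor the paper supplies explicitly.
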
 

Next recall that if $f: X \to Y$ is a proper morphism of geometric stacks and $\cF \in \QCoh(X)$, the projection isomorphism $f_*(\cF \ot f^*(-)) \cong f_*(\cF) \ot -$ yields an isomorphism
\begin{equation}\label{eq:QCHomupper!}
	f_* \cHom(\cF, f^!(-)) \cong \cHom(f_*(\cF), -) 
\end{equation}
of right adjoints. Suppose instead that $X$ and $Y$ are reasonable ind-geometric stacks and that $f$ is ind-proper and almost ind-finitely presented. Again $f_*: \IndCoh(X) \to \IndCoh(Y)$  will typically not have a left adjoint, but we can define a transformation
\begin{equation}\label{eq:cHomup!map}
	f_* \cHom(\cF, f^!(-)) \to \cHom(f_*(\cF), -)
\end{equation} 
of functors $\IndCoh(Y) \to \IndCoh(Y)$ as the composition
$$ f_* e_{\cF,X}^R \Delta_{X *} f^! \to e_{\cF,Y}^R (f \times \id_X)_*\Delta_{X *} f^! \to e_{f_*(\cF),Y}^R \Delta_{Y*} $$
of Beck-Chevalley maps. Note that we implicitly use the isomorphism $e_{f_*(\cF),Y}^R \cong   e_{\cF,Y}^R (\id_Y \times f)^!$ of (\ref{eq:eFXRupper!com}). 
In the geometric case one can check that if we restrict to left bounded subcategories, (\ref{eq:cHomup!map}) is identified with (\ref{eq:QCHomupper!}) under the equivalences $\IndCoh(-)^+ \cong \QCoh(-)^+$. 

\begin{Proposition}\label{prop:indhomrelationcoherent}
	Let $X$ and $Y$ be coherent ind-geometric stacks such that $X \times X$, $X \times Y$, and $Y \times Y$ are coherent, and let $f: X \to Y$ be an ind-proper, almost ind-finitely presented morphism. Then for any $\cF \in \Coh(X)$ and $\cG \in \IndCoh(Y)$ the natural map $f_*\cHom(\cF, f^!(\cG)) \to \cHom(f_*(\cF), \cG)$ is an isomorphism. 
\end{Proposition}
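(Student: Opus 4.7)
The plan is to unpack the definition of the map $f_* \cHom(\cF, f^!(-)) \to \cHom(f_*(\cF), -)$ given in (\ref{eq:cHomup!map}) as the composition of two Beck--Chevalley transformations, and to verify each one separately under the coherence hypotheses. The first factor is $f_* e_{\cF, X}^R \Delta_{X*} f^! \to e_{\cF, Y}^R (f \times \id_X)_* \Delta_{X*} f^!$, which is exactly an instance of Proposition \ref{prop:eFXRlower*} applied to $f$: since $f$ is ind-proper it is of ind-finite cohomological dimension (Propositions \ref{prop:propprops} and \ref{prop:indPequivconditions}), and the coherence of $X$, $Y$, $X \times X$, and $Y \times X \cong X \times Y$ is given.

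The second factor is $e_{\cF, Y}^R (f \times \id_X)_* \Delta_{X*} f^! \to e_{f_*(\cF), Y}^R \Delta_{Y*}$. Using the isomorphism $e_{f_*(\cF), Y}^R \cong e_{\cF, Y}^R (\id_Y \times f)^!$ obtained from (\ref{eq:eFXRupper!com}) (applied with its ``$f$'' set to $\id_Y$ and its ``$g$'' set to our $f$), and the equality $(f \times \id_X) \circ \Delta_X = \Gamma_f := (f, \id_X)$, the task reduces to showing that the Beck--Chevalley map $\Gamma_{f*} f^! \to (\id_Y \times f)^! \Delta_{Y*}$ is an isomorphism. The relevant Cartesian square is
\begin{equation*}
	\begin{tikzpicture}
		[baseline=(current  bounding  box.center),thick,>=\arrtip]
		\node (a) at (0,0) {$X$};
		\node (b) at (3,0) {$Y$};
		\node (c) at (0,-1.5) {$Y \times X$};
		\node (d) at (3,-1.5) {$Y \times Y$,};
		\draw[->] (a) to node[above] {$f $} (b);
		\draw[->] (b) to node[right] {$\Delta_Y $} (d);
		\draw[->] (a) to node[left] {$\Gamma_f $}(c);
		\draw[->] (c) to node[above] {$\id_Y \times f $} (d);
	\end{tikzpicture}
\end{equation*}
and I would apply Proposition \ref{prop:up!low*geom} to it, with $\id_Y \times f$ playing the role of the ind-proper morphism and $\Delta_Y$ playing the role of the morphism of ind-finite cohomological dimension.

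To verify these hypotheses: $\id_Y \times f$ is the base change of $f$ along the projection $p_2: Y \times Y \to Y$, hence is ind-proper and almost ind-finitely presented by Propositions \ref{prop:indPbasechange} and \ref{prop:afp2of3prop}. For $\Delta_Y$, I would fix a reasonable presentation $Y \cong \colim Y_\al$ and observe that $Y_\al \times Y_\al \to Y \times Y$ is an ind-closed immersion from a truncated geometric stack (by stability of ind-closed immersions under base change, Proposition \ref{prop:indPbasechange}), through which $\Delta_Y \circ i_\al$ factors via $\Delta_{Y_\al}$; the latter is affine since $Y_\al$ is geometric, hence of cohomological dimension zero, so Proposition \ref{prop:indPequivconditions} gives that $\Delta_Y$ is of ind-finite cohomological dimension. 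The coherence of $X$, $Y$, $Y \times X$, and $Y \times Y$ is given by the hypotheses of the proposition.

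The main obstacle is a bookkeeping one: to correctly identify the orientation of the Cartesian square so that the direction of the Beck--Chevalley map needed matches the one provided by Proposition \ref{prop:up!low*geom}. In particular, it is $\id_Y \times f$ (not $\Delta_Y$) that plays the role of the proposition's ind-proper morphism, which is the transpose of the most naive square one might draw for pulling $\Delta_Y$ back along $\id_Y \times f$. Once this orientation is sorted, the remaining verifications reduce to standard base-change and composition properties of the classes of morphisms involved.
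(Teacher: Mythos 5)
Your proof is correct and is precisely the argument the paper compresses into its one-line proof: the first Beck--Chevalley factor of (\ref{eq:cHomup!map}) is an instance of Proposition \ref{prop:eFXRlower*} with $Z = X$, and the second, after the identification $e_{f_*(\cF),Y}^R \cong e_{\cF,Y}^R(\id_Y \times f)^!$ from (\ref{eq:eFXRupper!com}), is Proposition \ref{prop:up!low*geom} applied to the graph/diagonal square in exactly the orientation you describe, with $\id_Y \times f$ as the ind-proper leg. Your hypothesis checks are all sound, the only cosmetic quibble being that the closed immersion $Y_\al \times Y_\al \to Y \times Y$ is most directly justified by Proposition \ref{prop:cl2of3prop} (composition of base changes of the closed immersions $i_\al$) rather than Proposition \ref{prop:indPbasechange}, and that truncatedness of $Y_\al \times Y_\al$ uses the standing finite-global-dimension assumption on $\kk$ via Proposition \ref{prop:cohextprod}.
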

\begin{proof}
	Follows from Propositions \ref{prop:up!low*geom} and \ref{prop:eFXRlower*}. 
\end{proof}

Similarly, conditioned on proofs of Propositions \ref{prop:up!low*indgeomgeneral} and \ref{prop:eFXRlower*general} one obtains the following. 

\begin{Proposition}\label{prop:indhomrelation}
	Let $X$ and $Y$ be reasonable ind-geometric stacks, and let $f: X \to Y$ be an ind-proper, almost ind-finitely presented morphism of finite cohomological dimension. Then for any $\cF \in \Coh(X)$ and $\cG \in \IndCoh(Y)^+$ the natural map $f_*\cHom(\cF, f^!(\cG)) \to \cHom(f_*(\cF), \cG)$ is an isomorphism. 
\end{Proposition}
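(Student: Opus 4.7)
The plan is to mimic the proof of Proposition \ref{prop:indhomrelationcoherent}, replacing the uses of Propositions \ref{prop:up!low*geom} and \ref{prop:eFXRlower*} with their bounded-below counterparts \ref{prop:up!low*indgeomgeneral} and \ref{prop:eFXRlower*general}. Unwinding the definition (\ref{eq:cHomup!map}), the natural map factors as
\[
f_* e_{\cF,X}^R \Delta_{X*} f^!(\cG) \longrightarrow e_{\cF,Y}^R (f \times \id_X)_* \Delta_{X*} f^!(\cG) \longrightarrow e_{\cF,Y}^R (\id_Y \times f)^! \Delta_{Y*}(\cG),
\]
where the target has been identified with $e_{f_*(\cF),Y}^R \Delta_{Y*}(\cG) = \cHom(f_*(\cF),\cG)$ via the isomorphism $e_{f_*(\cF),Y}^R \cong e_{\cF,Y}^R(\id_Y \times f)^!$ of (\ref{eq:eFXRupper!com}). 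The first arrow is the Beck-Chevalley transformation of Proposition \ref{prop:eFXRlower*general}, while the second is the Beck-Chevalley transformation associated to the Cartesian square
\[
\begin{tikzpicture}[baseline=(current bounding box.center),thick,>=\arrtip]
\node (a) at (0,0) {$X$};
\node (b) at (3,0) {$Y \times X$};
\node (c) at (0,-1.5) {$Y$};
\node (d) at (3,-1.5) {$Y \times Y$,};
\draw[->] (a) to node[above] {$\Gamma_f$} (b);
\draw[->] (b) to node[right] {$\id_Y \times f$} (d);
\draw[->] (a) to node[left] {$f$} (c);
\draw[->] (c) to node[above] {$\Delta_Y$} (d);
\end{tikzpicture}
\]
in which $\Gamma_f := (f \times \id_X) \circ \Delta_X$ is the twisted graph. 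This square is indeed Cartesian, and the Beck-Chevalley map in question is $\Gamma_{f*} f^! \to (\id_Y \times f)^! \Delta_{Y*}$.

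First I would apply Proposition \ref{prop:eFXRlower*general} to the first arrow. This requires $\Delta_{X*} f^!(\cG) \in \IndCoh(X \times X)^+$: since $f$ is of finite cohomological dimension, $f^!$ is left bounded, so $f^!(\cG) \in \IndCoh(X)^+$; and the diagonal $\Delta_X$ of an ind-geometric stack is ind-affine (for any reasonable presentation $X \cong \colim X_\al$, $\Delta_X$ restricts on each $X_\al$ to the composition of the affine morphism $\Delta_{X_\al}$ with the closed immersion $i_\al \times i_\al$), hence $\Delta_{X*}$ is t-exact. Next I would apply the first case of Proposition \ref{prop:up!low*indgeomgeneral} to the second arrow. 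The vertical morphism $\id_Y \times f$ is ind-proper, almost ind-finitely presented, and of finite cohomological dimension (all inherited from $f$); the horizontal morphism $\Delta_Y$ is of ind-finite cohomological dimension by the same ind-affine argument; and $Y$, $Y \times Y$ are reasonable since $\kk$ is of finite global dimension, using the corollary to Proposition \ref{prop:cohextprod}. The hypothesis $\Delta_{Y*}(\cG) \in \IndCoh(Y \times Y)^+$ follows from $\cG \in \IndCoh(Y)^+$ and t-exactness of $\Delta_{Y*}$.

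The main obstacle is really just this bookkeeping of boundedness and reasonableness hypotheses; once the geometric input (the Cartesian square relating $\Gamma_f$, $\Delta_Y$, and the diagonals) is in place, the argument reduces to two clean invocations of the base-change results already established. No new ideas beyond those of Proposition \ref{prop:indhomrelationcoherent} are required.
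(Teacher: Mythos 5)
Your proposal is correct and is precisely the paper's intended argument: the paper obtains this proposition, exactly as it obtains Proposition \ref{prop:indhomrelationcoherent} from Propositions \ref{prop:up!low*geom} and \ref{prop:eFXRlower*}, by factoring the map (\ref{eq:cHomup!map}) into its two Beck-Chevalley constituents and applying Propositions \ref{prop:eFXRlower*general} and \ref{prop:up!low*indgeomgeneral} (the latter via the Cartesian square relating $\Gamma_f$, $\Delta_Y$, and $\id_Y \times f$, with the finite cohomological dimension of $f$ supplying the left boundedness of $f^!$). The only bookkeeping nit is that Proposition \ref{prop:up!low*indgeomgeneral} requires both corners of the bottom row to be reasonable, i.e.\ $Y \times X$ as well as $Y \times Y$, which follows from the same corollary of Proposition \ref{prop:cohextprod} you already cite.
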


At the level of objects, the extension of sheaf Hom from the geometric to the ind-geometric setting is uniquely determined by these results, since we can always write $\cF \in \Coh(X)$ as~$i_*(\cF')$ for some reasonable geometric substack $i: X' \to X$ and $\cF' \in \Coh(X')$ (note that $i$ is of cohomological dimension zero). 

With Proposition \ref{prop:eFXRlower*} in hand, we can also establish the ind-geometric extension of Proposition \ref{prop:eFXcHomcompat}. 

\begin{Proposition}\label{prop:eFXReFXindgeomcoh}
	Let $X$, $Z$, and $Z'$ be reasonable ind-geometric stacks such that $X$, $X \times Z$, $Z' \times X$, and $Z' \times X \times Z$ are coherent. Then for all $\cF \in \Coh(Z)$, $\cF' \in \Coh(Z')$, and $\cG \in \IndCoh(X \times Z)$, the Beck-Chevalley map $\eop_{\cF',X} e_{\cF,X}^R(\cG) \to e_{\cF, Z' \times X}^R \eop_{\cF',X \times Z}(\cG)$ is an isomorphism.
\end{Proposition}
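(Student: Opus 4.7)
The plan is to reduce to the geometric case (Proposition~\ref{prop:eFXcHomcompat}) by simultaneously shrinking each of $X$, $Z$, and $Z'$ to a reasonable geometric substack, using the naturality of the Beck-Chevalley map under pushforward and $!$-pullback along ind-closed immersions.

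Under the coherence hypotheses, Proposition~\ref{prop:eFXRalmostcont} shows that both $e_{\cF,X}^R$ and $e_{\cF,Z'\times X}^R$ are continuous, while the $\eop_{\cF',-}$ are continuous by construction. Both sides of the Beck-Chevalley map are therefore continuous in $\cG$, so by Proposition~\ref{prop:IndCohisIndofCoh} it suffices to verify the isomorphism for $\cG \in \Coh(X \times Z)$. Fix reasonable presentations $X \cong \colim X_\al$, $Z \cong \colim Z_\be$, and $Z' \cong \colim Z'_\ga$, and write $\cG \cong (i_\al \times i_\be)_*(\cG_{\al\be})$ with $\cG_{\al\be} \in \Coh(X_\al \times Z_\be)$, $\cF \cong i_{\be*}(\cF_\be)$, and $\cF' \cong i_{\ga*}(\cF'_\ga)$.

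The reduction then uses, in combination: the $\eop$-form of (\ref{eq:eFXlow*compat}) to commute $i_\al$ and $i_\ga$ past $\eop_{\cF',-}$; Proposition~\ref{prop:eFXRlower*} to commute $i_\al$ past $e_{\cF_\be,-}^R$ (whose coherence hypotheses are supplied by repeated application of Proposition~\ref{prop:cohbasechangeunderafpclimm} to $X_\al \times Z$, $Z'_\ga \times X$, $Z'_\ga \times X \times Z$, etc.); the identity (\ref{eq:eFXRupper!com}) to rewrite $e_{\cF,-}^R \cong e_{\cF_\be,-}^R(\id \times i_\be)^!$; the fact that pushforward along $\id \times i_\be$ is fully faithful, so that $(\id \times i_\be)^!(\id \times i_\be)_* \cong \id$; and the $\eop$-analog of Proposition~\ref{prop:eFXupper!compatindgeom} (proved by the same argument) to commute $(\id \times i_\be)^!$ past $\eop_{\cF',-}$. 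Combining these identities, both sides of the Beck-Chevalley map can be written as $(i_\ga \times i_\al)_*$ applied to the corresponding Beck-Chevalley map for the geometric data $(X_\al, Z_\be, Z'_\ga, \cF_\be, \cF'_\ga, \cG_{\al\be})$, and $(i_\ga \times i_\al)_*$ is conservative as a composition of fully faithful pushforwards along ind-closed immersions.

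In the resulting geometric base case, all four functors are left bounded by Propositions~\ref{prop:eFXRprops1} and~\ref{prop:eFXbounded}, and their restrictions to bounded below subcategories are compatible with $\Psi_{(-)}$ by Propositions~\ref{prop:eFXRprops1} and~\ref{prop:eFXconsistenctdef}. The claim therefore reduces via the equivalences $\IndCoh(-)^+ \cong \QCoh(-)^+$ to Proposition~\ref{prop:eFXcHomcompat}. The main obstacle is the coherence check that the full Beck-Chevalley map itself \emph{is} the image of the reduced Beck-Chevalley map under $(i_\ga \times i_\al)_*$, which amounts to a diagram chase in a commutative cube in $\Cathatinfty$ of the sort performed in several places earlier in the paper, together with the need to spell out the $\eop$-analogs of Propositions~\ref{prop:eFXRlower*} and~\ref{prop:eFXupper!compatindgeom} invoked along the way.
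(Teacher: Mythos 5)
There is a genuine gap, and it is the step on which your whole reduction pivots: the claim that pushforward along $\id_X \times i_\be$ is fully faithful, so that $(\id_X \times i_\be)^!(\id_X \times i_\be)_* \cong \id$. This is false for ind-coherent sheaves in the derived setting. For a closed immersion $i: X' \to X$ the composite $i^!i_*$ computes derived Hom along the immersion: already for $i: \Spec \kk \to \A^1$ one has $i^!i_*(\kk) \cong \kk \oplus \kk[-1]$, and the failure persists for ind-closed immersions into ind-geometric stacks, where instead $i_\al^! i_{\al*} \cong \colim_{\be \geq \al} i_{\al\be}^! i_{\al\be*}$ --- this colimit formula is precisely what the paper exploits in the proof of Proposition \ref{prop:CohsubcatofIndCoh}, and it is manifestly not the identity (full faithfulness of $\Coh(X) \into \IndCoh(X)$ there is recovered only after passing to the colimit, using left t-exactness of the $i_{\al\be}^! i_{\al\be*}$ and coherence of the objects). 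Without this cancellation, your two sides are not identified with $(i_\ga \times i_\al)_*$ applied to the geometric Beck--Chevalley map: uncanceled factors of the form $(\cdots \times i_\be)^!$ remain, and the endgame of your argument collapses. (A side remark: even if the identification held, what you would need is merely that a functor preserves isomorphisms, which is trivial; invoking conservativity points in the wrong direction.)

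The paper's proof never cancels $i^!i_*$; it carries the $!$-pullbacks along and commutes them. It proceeds in stages: (i) for truncated geometric $X$, $Z$, $Z'$, continuity plus compact generation reduces to $\cG \in \Coh(X \times Z)$, and the equivalences $\IndCoh(-)^+ \cong \QCoh(-)^+$ reduce to the quasi-coherent statement --- note this is Proposition \ref{prop:eFXeFXRcompat}, not Proposition \ref{prop:eFXcHomcompat} alone, which is only the sheaf-Hom half (you would also need Proposition \ref{prop:eFXlower*compat}); (ii) for ind-geometric $X$, rather than writing $\cG$ as a pushforward, it uses the limit decomposition $\IndCoh(Z' \times X) \cong \lim_\al \IndCoh(Z' \times X_\al)$ and tests the map against each $(\id_{Z'} \times i_\al)^!$, commuting via (\ref{eq:eFXRupper!com}) and Proposition \ref{prop:eFXupper!compatindgeom}; (iii) for ind-geometric $Z$ and $Z'$, it writes $\cF$ and $\cF'$ as pushforwards from reasonable geometric substacks and factors the full Beck--Chevalley map into a chain of four Beck--Chevalley maps, each an isomorphism by (\ref{eq:eFXlow*compat}), (\ref{eq:eFXRupper!com}), Proposition \ref{prop:eFXupper!compatindgeom}, Proposition \ref{prop:eFXRlower*}, and Proposition \ref{prop:up!low*geom} --- the $!$-pullbacks introduced by (\ref{eq:eFXRupper!com}) are commuted past pushforwards via base change, never canceled. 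Your global reduction to $\cG \in \Coh(X \times Z)$ and the object-level handling of $\cG$ via Proposition \ref{prop:eFXRlower*} and $\eop$-naturality are legitimate variants (the paper makes the analogous move inside the proof of Proposition \ref{prop:eFXRlower*} itself), but if you repair the argument by retaining the $(\cdots \times i_\be)^!$ factors and commuting them as above, it becomes essentially the paper's proof rather than a shortcut around it.
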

\begin{proof}
	First suppose $X$, $Z$, and $Z'$ are truncated and geometric. Then $\eop_{\cF',X} e_{\cF,X}^R(\cG)$ and $e_{\cF, Z' \times X}^R \eop_{\cF',X \times Z}$ are continuous (Proposition \ref{prop:eFXRalmostcont}) and $\IndCoh(X \times Z)$ is compactly generated, so it suffices to consider $\cG \in \Coh(X \times Z)$. But the restrictions of all functors involved to left bounded subcategories commute with the equivalences $\IndCoh(-)^+ \cong \QCoh(-)^+$ (Proposition \ref{prop:eFXRprops1}), so the claim follows from Proposition \ref{prop:eFXeFXRcompat}. 
	
	Still assuming $Z$ and $Z'$ are truncated and geometric, let $X \cong \colim X_\al$ be a reasonable presentation. For any $\al$ we have a diagram 
	\begin{equation*}
		\begin{tikzpicture}
			[baseline=(current  bounding  box.center),thick,>=\arrtip]
			\newcommand*{\ha}{5.1}; \newcommand*{\hb}{6.1};
			\newcommand*{\va}{-1.5};
			\node (aa) at (0,0) {$ \eop_{\cF',X_\al} i^!_\al e_{\cF,X}^R$};
			\node (ab) at (\ha,0) {$(\id_{Z'} \times i_\al)^! \eop_{\cF',X} e_{\cF,X}^R$};
			\node (ac) at (\ha+\hb,0) {$(\id_{Z'} \times i_\al)^! e_{\cF, Z' \times X}^R \eop_{\cF',X \times Z}$};
			\node (ba) at (0,\va) {$ \eop_{\cF',X_\al} e_{\cF,X_\al}^R (i_\al \times \id_Z)^!$};
			\node (bb) at (\ha,\va) {$ e_{\cF,Z' \times X_\al}^R\eop_{\cF',X_\al \times Z}(i_\al \times \id_Z)^!$};
			\node (bc) at (\ha+\hb,\va) {$e_{\cF,Z' \times X_\al}^R(\id_{Z'} \times i_\al \times \id_Z)^! \eop_{\cF', X \times Z}$,};
			\draw[->] (aa) to node[above] {$ $} (ab);
			\draw[->] (ab) to node[above] {$  $} (ac);
			\draw[->] (ba) to node[above] {$ $} (bb);
			\draw[->] (bb) to node[above] {$ $} (bc);
			\draw[->] (aa) to node[below,rotate=90] {$\sim $} (ba);
			\draw[->] (ac) to node[below,rotate=90] {$\sim $} (bc);
		\end{tikzpicture} 
	\end{equation*}
	the vertical isomorphisms being given by (\ref{eq:eFXRupper!com}). Since the functors $(\id_{Z'} \times i_\al)^!$ determine an isomorphism $\IndCoh(Z' \times X) \cong \lim \IndCoh(Z' \times X_\al)$ in $\Cathatinfty$, it suffices to show the top right arrow is an isomorphism for all $\al$. But, given that $X_\al \times Z$, $Z' \times X_\al$, and $Z' \times X_\al \times Z$ are coherent by Proposition \ref{prop:cohbasechangeunderafpclimm}, the bottom right and top left arrows are isomorphisms by Proposition \ref{prop:eFXupper!compatindgeom}, and the bottom left is by the first paragraph. 
	
	Now let $Z$ and $Z'$ be ind-geometric, and write $\cF \cong i_{\al*}(\cF_\al)$, $\cF' \cong i_{\be*}(\cF'_\be)$ for some reasonable geometric substacks $i_{\al}: Z_\al \to Z$, $i_\be: Z'_\be \to Z'$ and some $\cF_\al \in \Coh(Z_\al)$, $\cF'_\be \in \Coh(Z'_\be)$. Using (\ref{eq:eFXlow*compat}) and (\ref{eq:eFXRupper!com}) the map in the statement factors as 
	\begin{align*}
		(i_\be \times \id_X)_* \eop_{\cF'_\be, X} e_{\cF_\al,X}^R (\id_X \times i_\al)^! & \to (i_\be \times \id_X)_* e_{\cF_\al,Z'_\be \times X}^R \eop_{\cF'_\be, X \times Z_\al}  (\id_X \times i_\al)^! \\
		& \to (i_\be \times \id_X)_* e_{\cF_\al,Z'_\be \times X}^R   (\id_{Z'_\be \times X} \times i_\al)^! \eop_{\cF'_\be, X \times Z} \\
		& \to e_{\cF_\al,Z' \times X}^R (i_\be \times \id_{X \times Z_\al})_*   (\id_{Z'_\be \times X} \times i_\al)^! \eop_{\cF'_\be, X \times Z} \\
		& \to e_{\cF_\al,Z' \times X}^R   (\id_{Z' \times X} \times i_\al)^! (i_\be \times \id_{X \times Z})_*  \eop_{\cF'_\be, X \times Z}. 
	\end{align*}
	But, given that $X \times Z'$ and $Y \times Z'$ are coherent by Proposition \ref{prop:cohbasechangeunderafpclimm}, the first factor is an isomorphism by the previous paragraph, the second is by Proposition \ref{prop:eFXupper!compatindgeom}, the third is by Proposition \ref{prop:eFXRlower*}, and the fourth is by Proposition \ref{sec:!pullpush}. 
\end{proof}

As with other results in this section, one can prove a weaker claim in the general case. 

\begin{Proposition}
	Let $X$, $Z$, and $Z'$ be ind-geometric stacks with $Z$ and $Z'$ reasonable. Then for all $\cF \in \Coh(Z)$, $\cF' \in \Coh(Z')$, and $\cG \in \IndCoh(X \times Z)^+$, the Beck-Chevalley map $\eop_{\cF',X} e_{\cF,X}^R(\cG) \to e_{\cF, Z' \times X}^R \eop_{\cF',X \times Z}(\cG)$ is an isomorphism.
\end{Proposition}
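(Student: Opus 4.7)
The plan is to follow the same three-step reduction as in Proposition \ref{prop:eFXReFXindgeomcoh}, but replacing the compact-generation arguments (which required coherence) with boundedness and almost-continuity arguments, so that the hypothesis $\cG \in \IndCoh(X \times Z)^+$ suffices.

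First, I would handle the case where $X$, $Z$, and $Z'$ are truncated and geometric. Here all four functors $\eop_{\cF',X}$, $e_{\cF,X}^R$, $e_{\cF,Z' \times X}^R$, $\eop_{\cF', X \times Z}$ are left bounded by Propositions \ref{prop:eFXbounded} and \ref{prop:eFXRprops1}, and by Propositions \ref{prop:eFXconsistenctdef} and \ref{prop:eFXRprops1} their restrictions to left bounded subcategories commute with the equivalences $\IndCoh(-)^+ \cong \QCoh(-)^+$. Thus for $\cG \in \IndCoh(X \times Z)^+$ the claim reduces to Proposition \ref{prop:eFXeFXRcompat}.

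Next I would fix a reasonable presentation $X \cong \colim X_\al$, still assuming $Z$ and $Z'$ are truncated geometric. Applying (\ref{eq:eFXRupper!com}) vertically, the same diagram as in the proof of Proposition \ref{prop:eFXReFXindgeomcoh} reduces the claim to showing $(\id_{Z'} \times i_\al)^! e_{\cF, Z' \times X}^R \eop_{\cF', X \times Z}(\cG) \to e_{\cF, Z' \times X_\al}^R (\id_{Z'} \times i_\al \times \id_Z)^! \eop_{\cF', X \times Z}(\cG)$ is an isomorphism for each $\al$. The previous paragraph handles the bottom left arrow, while the top left and bottom right arrows are instances of the bounded-below generalization of Proposition \ref{prop:eFXupper!compatindgeom} mentioned in the text. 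Note $(\id_{Z'} \times i_\al \times \id_Z)^! \eop_{\cF', X \times Z}(\cG)$ lies in $\IndCoh(Z' \times X_\al \times Z)^+$ since $\eop_{\cF', X \times Z}$ is bounded (Proposition \ref{prop:eFXbounded}) and $(\id_{Z'} \times i_\al \times \id_Z)^!$ is left bounded.

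Finally, for general reasonable $Z$ and $Z'$ I would write $\cF \cong i_{\al*}(\cF_\al)$ and $\cF' \cong j_{\be*}(\cF'_\be)$ for reasonable geometric substacks $i_\al: Z_\al \to Z$ and $j_\be: Z'_\be \to Z'$, then factor the Beck-Chevalley map exactly as in the end of the proof of Proposition \ref{prop:eFXReFXindgeomcoh} into four arrows coming respectively from the previous paragraph, from the bounded-below generalization of Proposition \ref{prop:eFXupper!compatindgeom}, from Proposition \ref{prop:eFXRlower*general}, and from Proposition \ref{prop:up!low*indgeomgeneral}. Each factor is an isomorphism provided its input lies in the relevant $\IndCoh^+$ subcategory, which follows by repeatedly invoking Propositions \ref{prop:eFXbounded} and \ref{prop:eFXRprops1} together with left t-exactness of the $!$-pullbacks involved.

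The main obstacle is not any single step but rather the bookkeeping: one must verify at every stage that the intermediate sheaves remain bounded below, so that the bounded-below versions of the auxiliary base-change results (Propositions \ref{prop:up!low*indgeomgeneral}, \ref{prop:eFXRlower*general}, and the unnamed bounded-below version of Proposition \ref{prop:eFXupper!compatindgeom}) apply. Once this bookkeeping is done, there is no new geometric input beyond what was used in the coherent case.
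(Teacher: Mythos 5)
The paper itself gives no proof of this proposition (it is stated as the unproved ``weaker claim in the general case'' following Proposition \ref{prop:eFXReFXindgeomcoh}), so there is no argument to compare against line by line; your plan is certainly the intended template, and your first step is correct and even cleaner than the coherent case: for truncated geometric $X$, $Z$, $Z'$, boundedness (Propositions \ref{prop:eFXbounded} and \ref{prop:eFXRprops1}) reduces the claim on $\IndCoh^+$ directly to Proposition \ref{prop:eFXeFXRcompat}, with no compact generation needed.

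However, your second step has a genuine gap, and it is not bookkeeping. You ``fix a reasonable presentation $X \cong \colim X_\al$,'' but $X$ is not assumed reasonable here --- only $Z$ and $Z'$ are --- so only an ind-geometric presentation exists, and its structure maps $i_\al$ are closed immersions that need not be almost finitely presented. The top-left and bottom-right arrows of the step-2 diagram are exchanges of $\eop_{\cF',-}$ with $!$-pullback along the $X$-direction maps $i_\al$ and $i_\al \times \id_Z$, and every version of this exchange available in the paper (Proposition \ref{prop:eFXupper!compatindgeom} and its unnamed $\IndCoh^+_{lim}$ generalization) requires the ind-proper map to be \emph{almost ind-finitely presented}: that hypothesis is what makes $f^!$ almost continuous (Proposition \ref{prop:upper!almostcont}) and $f_*\cO$ almost perfect. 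Worse, the exchange is genuinely false for non-a.f.p.\ closed immersions, even on bounded-below objects. Take $\kk$ a field, $j: \Spec \kk \into \Spec \kk[x_1, x_2, \ldots]$, $\cG = j_*(\kk)$ (in the heart), $Z' = \A^1$, $\cF' = \cO_{Z'}$. Writing $\kk \cong \colim_n \kk[x_1,x_2,\ldots]/(x_1,\ldots,x_n)$ and using Koszul duality at each finite stage (where the exchange does hold, the quotients being perfect), the Beck--Chevalley map after applying the conservative functor $(\id_{Z'} \times j)_*$ becomes the canonical comparison $(\lim_n -) \otimes_\kk \kk[y] \to \lim_n(- \otimes_\kk \kk[y])$; in cohomological degree $1$, with surjective transition maps killing all $\lim^1$ terms, this is the injection $\bigl(\prod_{i \geq 1} \kk\bigr) \otimes_\kk \kk[y] \to \prod_{i \geq 1} \kk[y]$, which is not surjective (any family $(f_i)$ of unbounded degree lies outside the image). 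So the second paragraph of the coherent proof cannot simply be transcribed, and your closing claim that ``there is no new geometric input beyond what was used in the coherent case'' is wrong: handling the limit over an arbitrary ind-geometric presentation of $X$ is precisely the ``care'' the paper alludes to and the one point a proof must supply. A secondary instance of the same issue: in your final step, Propositions \ref{prop:up!low*indgeomgeneral} and \ref{prop:eFXRlower*general} do not apply as stated, since they require reasonableness of stacks such as $Z' \times X$ (the source and target of the ind-proper map, respectively the target of $f$), which fails for arbitrary $X$; and since the unnamed generalization of Proposition \ref{prop:eFXupper!compatindgeom} is stated for $\IndCoh^+_{lim}$ rather than $\IndCoh^+$, your bookkeeping must track membership in $\IndCoh^+_{lim}$ --- harmless for the initial input, as $\IndCoh^{\geq n} \subset \IndCoh^+_{lim}$ by the very definition of the t-structure in Proposition \ref{prop:IndCohindGStktstructure}, but a strictly different condition once intermediate pushforwards and pullbacks have been applied.
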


\bibliographystyle{amsalpha}
\bibliography{bibKoszul}

\end{document}